\begin{document}
\renewcommand{\nomname}{Glossary of Notation}
\renewcommand{\pagedeclaration}[1]{, #1}
\def\G[#1]#2#3{\nomenclature[#1]{#2}{#3}}
\def\I#1{\index{#1}}
\def\e#1\e{\begin{equation}#1\end{equation}}
\def\ea#1\ea{\begin{align}#1\end{align}}
\def\eq#1{{\rm(\ref{#1})}}
\theoremstyle{plain}
\newtheorem{thm}{Theorem}[section]
\newtheorem{lem}[thm]{Lemma}
\newtheorem{prop}[thm]{Proposition}
\newtheorem{cor}[thm]{Corollary}
\theoremstyle{definition}
\newtheorem{dfn}[thm]{Definition}
\newtheorem{ex}[thm]{Example}
\newtheorem{rem}[thm]{Remark}
\numberwithin{equation}{section}
\def\dim{\mathop{\rm dim}\nolimits}
\def\inc{\mathop{\rm inc}\nolimits}
\def\Ker{\mathop{\rm Ker}}
\def\Coker{\mathop{\rm Coker}}
\def\Im{\mathop{\rm Im}}
\def\rank{\mathop{\rm rank}}
\def\Obj{{\rm Obj}}
\def\Hom{\mathop{\rm Hom}\nolimits}
\def\Aut{\mathop{\rm Aut}}
\def\coh{\mathop{\rm coh}}
\def\qcoh{\mathop{\rm qcoh}}
\def\supp{\mathop{\rm supp}}
\def\Sh{\mathop{\rm Sh}}
\def\id{\mathop{\rm id}\nolimits}
\def\top{{\kern.05em\rm top}}
\def\Iso{{\rm Iso}}
\def\tr{{\rm tr}}
\def\nt{{\rm nt}}
\def\Spec{\mathop{\rm Spec}\nolimits}
\def\MSpec{\mathop{\rm MSpec}\nolimits}
\def\Ho{{\mathop{\rm Ho}}}
\def\Top{{\mathop{\bf Top}}}
\def\CRings{{\mathop{\bf C^{\bs\iy}Rings}}}
\def\CRingsco{{\mathop{\bf C^{\bs\iy}Rings^{co}}}}
\def\CRingsfg{{\mathop{\bf C^{\bs\iy}Rings^{fg}}}}
\def\CRingsfp{{\mathop{\bf C^{\bs\iy}Rings^{fp}}}}
\def\CRingsfa{{\mathop{\bf C^{\bs\iy}Rings^{fa}}}}
\def\CRS{{\mathop{\bf C^{\bs\iy}RS}}}
\def\LCRS{{\mathop{\bf LC^{\bs\iy}RS}}}
\def\ACSch{{\mathop{\bf AC^{\bs\iy}Sch}}}
\def\ACSchfp{{\mathop{\bf AC^{\bs\iy}Sch^{fp}}}}
\def\ACSchfa{{\mathop{\bf AC^{\bs\iy}Sch^{fa}}}}
\def\CSch{{\mathop{\bf C^{\bs\iy}Sch}}}
\def\CSchlfp{{\mathop{\bf C^{\bs\iy}Sch^{lfp}}}}
\def\CSchlf{{\mathop{\bf C^{\bs\iy}Sch^{lf}}}}
\def\bCSch{{\mathop{\bf{\bar C}^{\bs\iy}Sch}}}
\def\bCSchlfp{{\mathop{\bf{\bar C}^{\bs\iy}Sch^{lfp}}}}
\def\bCSchlf{{\mathop{\bf{\bar C}^{\bs\iy}Sch^{lf}}}}
\def\CSta{{\mathop{\bf C^{\bs\iy}Sta}}}
\def\CStalfp{{\mathop{\bf C^{\bs\iy}Sta^{lfp}}}} 
\def\DMCSta{{\mathop{\bf DMC^{\bs\iy}Sta}}}
\def\DMCStalf{{\mathop{\bf DMC^{\bs\iy}Sta^{lf}}}}
\def\DMCStalfp{{\mathop{\bf DMC^{\bs\iy}Sta^{lfp}}}}
\def\Presta{{\mathop{\bf Presta}\nolimits}}
\def\Sta{{\mathop{\bf Sta}\nolimits}}
\def\GSta{{\mathop{\bf GSta}\nolimits}}
\def\Man{{\mathop{\bf Man}}}
\def\Manb{{\mathop{\bf Man^b}}}
\def\Manc{{\mathop{\bf Man^c}}}
\def\ManSta{{\mathop{\bf ManSta}}}
\def\Orb{{\mathop{\bf Orb}}}
\def\Euc{{\mathop{\bf Euc}}}
\def\Sets{{\mathop{\bf Sets}}}
\def\ul{\underline}
\def\bs{\boldsymbol}
\def\ge{\geqslant}
\def\le{\leqslant\nobreak}
\def\O{{\mathcal O}}
\def\N{{\mathbin{\mathbb N}}}
\def\R{{\mathbin{\mathbb R}}}
\def\Z{{\mathbin{\mathbb Z}}}
\def\C{{\mathbin{\mathbb C}}}
\def\CP{{\mathbin{\mathbb{CP}}}}
\def\fC{{\mathbin{\mathfrak C}\kern.05em}}
\def\fD{{\mathbin{\mathfrak D}}}
\def\fE{{\mathbin{\mathfrak E}}}
\def\fF{{\mathbin{\mathfrak F}}}
\def\fm{{\mathbin{\mathfrak m}}}
\def\cB{{\mathbin{\cal B}}}
\def\cC{{\mathbin{\cal C}}}
\def\cD{{\mathbin{\cal D}}}
\def\cE{{\mathbin{\cal E}}}
\def\cF{{\mathbin{\cal F}}}
\def\cG{{\mathbin{\cal G}}}
\def\cH{{\mathbin{\cal H}}}
\def\cI{{\mathbin{\cal I}}}
\def\cJ{{\mathbin{\cal J}}}
\def\cH{{\mathbin{\cal H}}}
\def\cP{{\mathbin{\cal P}}}
\def\cS{{\mathbin{\cal S}}}
\def\cT{{\cal T}}
\def\cU{{\mathbin{\cal U}}}
\def\cV{{\mathbin{\cal V}}}
\def\cW{{\mathbin{\cal W}}}
\def\cX{{\mathbin{\cal X}}}
\def\hcX{{\mathbin{\hat{\cal X}}}{}}
\def\tcX{{\mathbin{\tilde{\cal X}}}{}}
\def\cY{{\mathbin{\cal Y}}}
\def\cZ{{\mathbin{\cal Z}}}
\def\ucU{{\mathbin{\ul{\cal U\kern -0.2em}\kern .2em}{}}}
\def\ucX{{\mathbin{\ul{\cal X\kern -0.2em}\kern .1em}{}}}
\def\bucX{{\mathbin{\ul{\bar{\cal X}\kern -0.2em}\kern .1em}{}}}
\def\oM{{\mathbin{\smash{\,\,\overline{\!\!\mathcal M\!}\,}}}}
\def\fCmod{{\mathbin{{\mathfrak C}\text{\rm -mod}}}}
\def\fCmodco{{\mathbin{{\mathfrak C}\text{\rm -mod}^{\rm co}}}}
\def\fCmodfp{{\mathbin{{\mathfrak C}\text{\rm -mod}^{\rm fp}}}}
\def\fDmod{{\mathbin{{\mathfrak D}\text{\rm -mod}}}}
\def\fDmodco{{\mathbin{{\mathfrak D}\text{\rm -mod}^{\rm co}}}}
\def\fDmodfp{{\mathbin{{\mathfrak D}\text{\rm -mod}^{\rm fp}}}}
\def\Rmod{{\mathbin{R\text{\rm -mod}}}}
\def\OWmod{{\mathbin{\O_W\text{\rm -mod}}}}
\def\OXmod{{\mathbin{\O_X\text{\rm -mod}}}}
\def\OYmod{{\mathbin{\O_Y\text{\rm -mod}}}}
\def\OZmod{{\mathbin{\O_Z\text{\rm -mod}}}}
\def\m{{\mathfrak m}}
\def\ua{{\underline{a}}{}}
\def\ub{{\underline{b\kern -0.1em}\kern 0.1em}{}}
\def\uc{{\underline{c}}{}}
\def\ud{{\underline{d}}{}}
\def\ue{{\underline{e}}{}}
\def\uf{{\underline{f\!}\,}{}}
\def\buf{{\underline{\bar f\!}\,}{}}
\def\ug{{\underline{g\!}\,}{}}
\def\uh{{\underline{h\!}\,}{}}
\def\ui{{\underline{i\kern -0.07em}\kern 0.07em}{}}
\def\uj{{\underline{j\kern -0.1em}\kern 0.1em}{}}
\def\uk{{\underline{k\kern -0.1em}\kern 0.1em}{}}
\def\um{{\underline{m\kern -0.1em}\kern 0.1em}{}}
\def\un{{\underline{n\kern -0.1em}\kern 0.1em}{}}
\def\ulp{{\underline{p\kern -0.15em}\kern 0.15em}{}}
\def\uq{{\underline{q\kern -0.15em}\kern 0.15em}{}}
\def\ur{{\underline{r\kern -0.15em}\kern 0.15em}{}}
\def\us{{\underline{s\kern -0.15em}\kern 0.15em}{}}
\def\ut{{\underline{t\kern -0.1em}\kern 0.1em}{}}
\def\uu{{\underline{u\kern -0.1em}\kern 0.1em}{}}
\def\uv{{\underline{v\!}\,}{}}
\def\uw{{\underline{w\!}\,}{}}
\def\uF{{\underline{F\!}\,}{}}
\def\uG{{\underline{G\!}\,}{}}
\def\uH{{\underline{H\!}\,}{}}
\def\uP{{\underline{P\!}\,}{}}
\def\uQ{{\underline{Q\!}\,}{}}
\def\uS{{\underline{S\!}\,}{}}
\def\uT{{\underline{T\!}\,}{}}
\def\uU{{\underline{U\kern -0.25em}\kern 0.2em}{}}
\def\utU{{{\underline{\ti U\kern -0.25em}\kern 0.2em}{}}}
\def\utV{{{\underline{\ti V\kern -0.25em}\kern 0.2em}{}}}
\def\uV{{\underline{V\kern -0.25em}\kern 0.2em}{}}
\def\uW{{\underline{W\!\!}\,\,}{}}
\def\uX{{\underline{X\!}\,}{}}
\def\uY{{\underline{Y\!\!}\,\,}{}}
\def\uZ{{\underline{Z\!}\,}{}}
\def\bW{{\bs W}}
\def\bX{{\bs X}}
\def\bY{{\bs Y}}
\def\bZ{{\bs Z}}
\def\upi{{\underline{\pi\!}\,}}
\def\uid{{\underline{\id\kern -0.1em}\kern 0.1em}{}}
\def\urho{{\underline{\rho\!}\,}}
\def\uphi{{\underline{\phi\!}\,}}
\def\umu{{\underline{\smash{\mu}}\kern 0.1em}}
\def\uDe{{\underline{\De\kern -0.1em}\kern 0.1em}{}}
\def\al{\alpha}
\def\be{\beta}
\def\ga{\gamma}
\def\de{\delta}
\def\io{\iota}
\def\ep{\epsilon}
\def\la{\lambda}
\def\ka{\kappa}
\def\th{\theta}
\def\ze{\zeta}
\def\up{\upsilon}
\def\vp{\varphi}
\def\si{\sigma}
\def\om{\omega}
\def\De{\Delta}
\def\La{\Lambda}
\def\Om{\Omega}
\def\Ga{\Gamma}
\def\Th{\Theta}
\def\pd{\partial}
\def\ts{\textstyle}
\def\st{\scriptstyle}
\def\sst{\scriptscriptstyle}
\def\w{\wedge}
\def\sm{\setminus}
\def\sh{\sharp}
\def\op{\oplus}
\def\od{\odot}
\def\op{\oplus}
\def\ot{\otimes}
\def\ov{\overline}
\def\bigop{\bigoplus}
\def\bigot{\bigotimes}
\def\iy{\infty}
\def\es{\emptyset}
\def\ra{\rightarrow}
\def\rra{\rightrightarrows}
\def\Ra{\Rightarrow}
\def\ab{\allowbreak}
\def\longra{\longrightarrow}
\def\Longra{\Longrightarrow}
\def\hookra{\hookrightarrow}
\def\dashra{\dashrightarrow}
\def\t{\times}
\def\lt{\ltimes}
\def\ci{\circ}
\def\ti{\tilde}
\def\d{{\rm d}}
\def\ha{{\ts\frac{1}{2}}}
\def\md#1{\vert #1 \vert}
\def\bmd#1{\big\vert #1 \big\vert}
\title{Algebraic Geometry over $C^\iy$-rings}
\author{Dominic Joyce}
\date{}
\maketitle

\begin{abstract}
If $X$ is a manifold then the $\R$-algebra $C^\iy(X)$ of smooth
functions $c:X\ra\R$ is a $C^\iy$-{\it ring}. That is, for each
smooth function $f:\R^n\ra\R$ there is an $n$-fold operation
$\Phi_f:C^\iy(X)^n\ra C^\iy(X)$ acting by
$\Phi_f:(c_1,\ldots,c_n)\mapsto f(c_1,\ldots,c_n)$, and these
operations $\Phi_f$ satisfy many natural identities. Thus,
$C^\iy(X)$ actually has a far richer structure than the obvious
$\R$-algebra structure.

We explain the foundations of a version of algebraic geometry in
which rings or algebras are replaced by $C^\iy$-rings. As schemes
are the basic objects in algebraic geometry, the new basic objects
are $C^\iy$-{\it schemes}, a category of geometric objects which
generalize manifolds, and whose morphisms generalize smooth maps. We
also study {\it quasicoherent sheaves\/} on $C^\iy$-schemes, and $C^\iy$-{\it stacks}, in particular {\it Deligne--Mumford\/ $C^\iy$-stacks}, a 2-category of geometric objects generalizing orbifolds.

Many of these ideas are not new: $C^\iy$-rings and $C^\iy$-schemes have long been part of synthetic differential geometry. But we develop them in new directions. In \cite{Joyc2,Joyc3,Joyc4}, the author uses these tools to define {\it d-manifolds\/} and {\it d-orbifolds}, `derived' versions of manifolds and orbifolds related to Spivak's `derived manifolds'~\cite{Spiv}.
\end{abstract}

\setcounter{tocdepth}{2}
\tableofcontents

\section{Introduction}
\label{ag1}

Let $X$ be a smooth manifold, and write $C^\iy(X)$ for the set of
smooth functions $c:X\ra\R$. Then $C^\iy(X)$ is a commutative
$\R$-algebra, with operations of addition, multiplication, and
scalar multiplication defined pointwise. However, $C^\iy(X)$ has
much more structure than this. For example, if $c:X\ra\R$ is smooth
then $\exp(c):X\ra\R$ is smooth, and this defines an operation
$\exp:C^\iy(X)\ra C^\iy(X)$ which cannot be expressed algebraically
in terms of the $\R$-algebra structure. More generally, if $n\ge 0$
and $f:\R^n\ra\R$ is smooth, define an $n$-fold operation
$\Phi_f:C^\iy(X)^n\ra C^\iy(X)$ by
\begin{equation*}
\bigl(\Phi_f(c_1,\ldots,c_n)\bigr)(x)=f\bigl(c_1(x),\ldots,c_n(x)\bigr),
\end{equation*}
for all $c_1,\ldots,c_n\in C^\iy(X)$ and $x\in X$. These operations
satisfy many identities: suppose $m,n\ge 0$, and $f_i:\R^n\ra\R$ for
$i=1,\ldots,m$ and $g:\R^m\ra\R$ are smooth functions. Define a
smooth function $h:\R^n\ra\R$ by
\begin{equation*}
h(x_1,\ldots,x_n)=g\bigl(f_1(x_1,\ldots,x_n),\ldots,f_m(x_1
\ldots,x_n)\bigr),
\end{equation*}
for all $(x_1,\ldots,x_n)\in\R^n$. Then for all $c_1,\ldots,c_n\in
C^\iy(X)$ we have
\e
\Phi_h(c_1,\ldots,c_n)=\Phi_g\bigl(\Phi_{f_1}(c_1,\ldots,c_n),
\ldots,\Phi_{f_m}(c_1,\ldots,c_n)\bigr).
\label{ag1eq1}
\e

A $C^\iy$-{\it ring\/}\I{C-ring@$C^\iy$-ring}
$\bigl(\fC,(\Phi_f)_{f:\R^n\ra\R\,\,C^\iy} \bigr)$ is a set $\fC$
with operations $\Phi_f:\fC^n\ra\fC$ for all $f:\R^n\ra\R$ smooth
satisfying identities \eq{ag1eq1}, and one other condition. For example
$C^\iy(X)$ is a $C^\iy$-ring for any manifold $X$, but there are
also many $C^\iy$-rings which do not come from manifolds, and can be
thought of as representing geometric objects which generalize
manifolds.

The most basic objects in conventional algebraic geometry are
commutative rings $R$, or commutative $\mathbb K$-algebras $R$ for
some field $\mathbb K$. The
`spectrum'\I{C-ring@$C^\iy$-ring!spectrum functor $\Spec$} $\Spec R$
of $R$ is an affine scheme, and $R$ is interpreted as an algebra of
functions on $\Spec R$. More general kinds of spaces in algebraic
geometry --- schemes and stacks --- are locally modelled on affine
schemes $\Spec R$. This book lays down the foundations of {\it
Algebraic Geometry over\/ $C^\iy$-rings}, in which we replace
commutative rings in algebraic geometry by $C^\iy$-rings. It
includes the study of $C^\iy$-{\it schemes} and {\it
Deligne--Mumford\/ $C^\iy$-stacks}, two classes of geometric spaces
generalizing manifolds and orbifolds,\I{orbifold} respectively.

This is not a new idea, but was studied years ago as part of {\it
synthetic differential geometry},\I{synthetic differential
geometry|(} which grew out of ideas of Lawvere in the 1960s; see for
instance Dubuc \cite{Dubu3} on $C^\iy$-schemes, and the books by
Moerdijk and Reyes \cite{MoRe2} and Kock \cite{Kock}. However, we
have new things to say, as we are motivated by different problems
(see below), and so are asking different questions.

Following Dubuc's discussion of `models of synthetic differential
geometry' \cite{Dubu1} and oversimplifying a bit, synthetic
differential geometers are interested in $C^\iy$-schemes as they
provide a category $\CSch$ of geometric objects which includes
smooth manifolds and certain `infinitesimal' objects, and all fibre
products exist in $\CSch$, and $\CSch$ has some other nice
properties to do with open covers, and exponentials of
infinitesimals.

Synthetic differential geometry concerns proving theorems about
manifolds using synthetic reasoning involving `infinitesimals'. But
one needs to check these methods of synthetic reasoning are valid.
To do this you need a `model', some category of geometric spaces
including manifolds and infinitesimals, in which you can think of
your synthetic arguments as happening. Once you know there exists at
least one model with the properties you want, then as far as
synthetic differential geometry is concerned the job is done. For
this reason $C^\iy$-schemes have not been developed very far in
synthetic differential geometry.\I{synthetic differential
geometry|)}

Recently, $C^\iy$-rings and $C^\iy$-ringed spaces appeared in a very
different context, in the theory of {\it derived differential geometry}, the differential-geometric analogue of the derived algebraic geometry of Lurie \cite{Luri} and To\"en--Vezzosi \cite{Toen,ToVe}, which studies {\it derived smooth manifolds\/} and {\it derived smooth orbifolds}. This began with a short section in Lurie \cite[\S 4.5]{Luri}, where he sketched how to define an $\iy$-category of {\it derived\/ $C^\iy$-schemes}, including derived manifolds. 

Lurie's student David Spivak \cite{Spiv} worked out the details of this, defining an $\iy$-category of derived manifolds. Simplifications and extensions of Spivak's theory were given by Borisov and Noel \cite{Bori,BoNo} and the author \cite{Joyc2,Joyc3,Joyc4}. An alternative approach to the foundations of derived differential geometry involving differential graded $C^\iy$-rings is proposed by Carchedi and Roytenberg~\cite{CaRo1,CaRo2}.

The author's notion of derived manifolds \cite{Joyc2,Joyc3,Joyc4} are called {\it d-manifolds},\I{d-manifold|(} and are built using our theory of $C^\iy$-schemes and quasicoherent sheaves upon them below. They form a 2-category. We also study orbifold versions, {\it d-orbifolds},\I{d-orbifold|(} which are built using our theory of Deligne--Mumford $C^\iy$-stacks and their quasicoherent sheaves below.

Many areas of symplectic geometry\I{symplectic geometry} involve studying moduli spaces of $J$-holomorphic curves in a symplectic manifold, which are made into {\it Kuranishi spaces\/} in the framework of Fukaya, Oh, Ohta and Ono \cite{FOOO,FuOn}. The author argues that {\it Kuranishi spaces are really derived orbifolds}, and has given a new definition \cite{Joyc5,Joyc7} of a 2-category of Kuranishi spaces $\bf Kur$ which is equivalent to the 2-category of d-orbifolds $\bf dOrb$ from \cite{Joyc2,Joyc3,Joyc4}. Because of this, derived differential geometry will have important applications in symplectic geometry.

To set up our theory of d-manifolds and d-orbifolds requires a lot
of preliminary work on $C^\iy$-schemes and $C^\iy$-stacks, and
quasicoherent sheaves upon them. That is the purpose of this book.
We have tried to present a complete, self-contained account which
should be understandable to readers with a reasonable background in
algebraic geometry, and we assume no familiarity with synthetic
differential geometry.\I{synthetic differential geometry} We expect
this material may have other applications quite different to those
the author has in mind in~\cite{Joyc2,Joyc3,Joyc4}.\I{d-orbifold|)}\I{d-manifold|)}

Section \ref{ag2} explains $C^\iy$-rings. The archetypal examples of
$C^\iy$-rings, $C^\iy(X)$ for manifolds $X$, are discussed in
\S\ref{ag3}. Section \ref{ag4} studies $C^\iy$-schemes, and \S\ref{ag5} modules over $C^\iy$-rings and sheaves of modules over $C^\iy$-schemes.

Sections \ref{ag6}--\ref{ag9} discuss $C^\iy$-stacks. Section \ref{ag6} defines the 2-category $\CSta$ of $C^\iy$-{\it stacks}, analogues of Artin
stacks in algebraic geometry, and \S\ref{ag7} the 2-subcategory
$\DMCSta$ of {\it Deligne--Mumford\/ $C^\iy$-stacks}, which are
$C^\iy$-stacks locally modelled on $[\uU/G]$ for $\uU$ an affine
$C^\iy$-scheme and $G$ a finite group acting on $\uU$, and are
analogues of Deligne--Mumford stacks in algebraic geometry. We show
that orbifolds\I{orbifold} $\Orb$ may be regarded as a
2-subcategory of $\DMCSta$. Section \ref{ag8} studies quasicoherent sheaves on Deligne--Mumford $C^\iy$-stacks, generalizing \S\ref{ag5}, and
\S\ref{ag9} orbifold strata of Deligne--Mumford $C^\iy$-stacks.

Appendix \ref{agA} summarizes background on stacks from
\cite{Arti,BEFF,Gome,LaMo,Metz,Nooh}, for use in \S\ref{ag6}--\S\ref{ag9}. Stacks are a very technical area, and \S\ref{agA} is too terse to help a beginner learn the subject, it is intended only to establish notation and definitions for those already familiar with stacks. Readers with no experience of stacks are advised to first consult an introductory text such as Vistoli \cite{Vist}, Gomez \cite{Gome}, Laumon and Moret-Bailly \cite{LaMo}, or the online `Stacks Project'~\cite{Jong}.

Much of \S\ref{ag2}--\S\ref{ag4} is already understood in synthetic differential geometry,\I{synthetic differential geometry} such as in the work of Dubuc \cite{Dubu3} and Moerdijk and Reyes \cite{MoRe2}. But we believe it is worthwhile giving a detailed and self-contained exposition, from our own point of view. Sections \ref{ag5}--\ref{ag9} are new, so far as the author knows, though \S\ref{ag5}--\S\ref{ag8} are based on well known material in algebraic geometry.
\medskip

\noindent{\it Acknowledgements.} I would like to thank Omar Antolin, Eduardo Dubuc, Kelli Francis-Staite, Jacob Gross, Jacob Lurie, and Ieke Moerdijk for helpful conversations, and a referee for many useful comments. This research was supported by EPSRC grants EP/H035303/1 and EP/J016950/1.

\section{\texorpdfstring{$C^\iy$-rings}{C∞-rings}}
\label{ag2}
\I{C-ring@$C^\iy$-ring|(}

We begin by explaining the basic objects out of which our theories
are built, $C^\iy$-{\it rings}, or {\it smooth rings}. The
archetypal example of a $C^\iy$-ring is the vector space $C^\iy(X)$
of smooth functions $c:X\ra\R$ for a manifold $X$. Everything in this
section is known to experts in synthetic differential
geometry,\I{synthetic differential geometry} and much of it can be
found in Moerdijk and Reyes \cite[Ch.~I]{MoRe2}, Dubuc
\cite{Dubu1,Dubu2,Dubu3,Dubu4} or Kock \cite[\S III]{Kock}. We
introduce some new notation for brevity, in particular, our {\it
fair\/}\I{C-ring@$C^\iy$-ring!fair} $C^\iy$-rings are known in the
literature as `finitely generated\I{C-ring@$C^\iy$-ring!finitely
generated} and germ determined\I{C-ring@$C^\iy$-ring!germ
determined} $C^\iy$-rings'.

\subsection{\texorpdfstring{Two definitions of $C^\iy$-ring}{Two definitions of C∞-ring}}
\label{ag21}
\I{C-ring@$C^\iy$-ring!definition|(}

We first define $C^\iy$-rings in the style of classical algebra.

\begin{dfn} A $C^\iy$-{\it ring\/} is a set
$\fC$\G[CDE]{$\fC,{\mathfrak D},{\mathfrak
E},\ldots$}{$C^\iy$-rings}\G[Phif]{$\Phi_f:\fC^n\ra\fC$}{operations
on $C^\iy$-ring $\fC$, for smooth $f:\R^n\ra\R$} together with
operations
\begin{equation*}
\smash{\Phi_f:\fC^n={\buildrel {\!\ulcorner\,\text{$n$ copies
}\,\urcorner\!} \over {\fC\t\cdots \t \fC}}\longra \fC}
\end{equation*}
for all $n\ge 0$ and smooth maps $f:\R^n\ra\R$, where by convention
when $n=0$ we define $\fC^0$ to be the single point $\{\es\}$. These
operations must satisfy the following relations: suppose $m,n\ge 0$,
and $f_i:\R^n\ra\R$ for $i=1,\ldots,m$ and $g:\R^m\ra\R$ are smooth
functions. Define a smooth function $h:\R^n\ra\R$ by
\begin{equation*}
h(x_1,\ldots,x_n)=g\bigl(f_1(x_1,\ldots,x_n),\ldots,f_m(x_1
\ldots,x_n)\bigr),
\end{equation*}
for all $(x_1,\ldots,x_n)\in\R^n$. Then for all
$(c_1,\ldots,c_n)\in\fC^n$ we have
\begin{equation*}
\Phi_h(c_1,\ldots,c_n)=\Phi_g\bigl(\Phi_{f_1}(c_1,\ldots,c_n),
\ldots,\Phi_{f_m}(c_1,\ldots,c_n)\bigr).
\end{equation*}
We also require that for all $1\le j\le n$, defining
$\pi_j:\R^n\ra\R$ by $\pi_j:(x_1,\ldots,x_n)\mapsto x_j$, we have
$\Phi_{\pi_j}(c_1,\ldots,c_n)=c_j$ for
all~$(c_1,\ldots,c_n)\in\fC^n$.

Usually we refer to $\fC$ as the $C^\iy$-ring, leaving the
operations $\Phi_f$ implicit.

A {\it morphism\/} between $C^\iy$-rings $\bigl(\fC,(\Phi_f)_{
f:\R^n\ra\R\,\,C^\iy}\bigr)$, $\bigl({\mathfrak
D},(\Psi_f)_{f:\R^n\ra\R\,\,C^\iy}\bigr)$ is a map
$\phi:\fC\ra{\mathfrak D}$ such that $\Psi_f\bigl(\phi
(c_1),\ldots,\phi(c_n)\bigr)=\phi\ci\Phi_f(c_1,\ldots,c_n)$ for all
smooth $f:\R^n\ra\R$ and $c_1,\ldots,c_n\in\fC$. We will write
$\CRings$\G[CRings]{$\CRings$}{category of $C^\iy$-rings} for the
category of $C^\iy$-rings.
\label{ag2def1}
\end{dfn}

Here is the motivating example, which we will study at greater
length in~\S\ref{ag3}:

\begin{ex} Let $X$ be a manifold, which may be without boundary, or
with boundary, or with corners.\I{manifold!C-ring of@$C^\iy$-ring
of}\I{manifold with corners!C-ring of@$C^\iy$-ring of} Write
$C^\iy(X)$ for the set of smooth functions $c:X\ra\R$. For $n\ge 0$
and $f:\R^n\ra\R$ smooth, define $\Phi_f:C^\iy(X)^n\ra C^\iy(X)$ by
\e
\bigl(\Phi_f(c_1,\ldots,c_n)\bigr)(x)=f\bigl(c_1(x),\ldots,c_n(x)\bigr),
\label{ag2eq1}
\e
for all $c_1,\ldots,c_n\in C^\iy(X)$ and $x\in X$. It is easy to see
that $C^\iy(X)$ and the operations $\Phi_f$ form a $C^\iy$-ring.
\label{ag2ex1}
\end{ex}

\begin{ex} Take $X$ to be the point $*$ in Example \ref{ag2ex1}. Then
$C^\iy(*)=\R$, with operations $\Phi_f:\R^n\ra\R$ given by
$\Phi_f(x_1,\ldots,x_n)=f(x_1,\ldots,x_n)$. This makes $\R$ into the
simplest nonzero example of a $C^\iy$-ring, the initial object in~$\CRings$.

\label{ag2ex2}
\end{ex}

Note that $C^\iy$-rings are far more general than those coming from
manifolds. For example, if $X$ is any topological space we could
define a $C^\iy$-ring $C^0(X)$ to be the set of {\it continuous\/}
$c:X\ra\R$ with operations $\Phi_f$ defined as in \eq{ag2eq1}. For
$X$ a manifold with $\dim X>0$, the $C^\iy$-rings $C^\iy(X)$ and
$C^0(X)$ are different.

There is a more succinct definition of $C^\iy$-rings using category
theory:

\begin{dfn} Write $\Man$\G[Man]{$\Man$}{category of manifolds} for
the category of manifolds, and $\Euc$\G[Euc]{$\Euc$}{category of
Euclidean spaces $\R^n$ and smooth maps} for the full subcategory of
$\Man$ with objects the Euclidean spaces $\R^n$. That is, the
objects of $\Euc$ are $\R^n$ for $n=0,1,2,\ldots,$ and the morphisms
in $\Euc$ are smooth maps $f:\R^m\ra\R^n$. Write
$\Sets$\G[Sets]{$\Sets$}{category of sets} for the category of sets.
In both $\Euc$ and $\Sets$ we have notions of (finite) products of
objects (that is, $\R^{m+n}=\R^m\t\R^n$, and products $S\t T$ of
sets $S,T$), and products of morphisms. 

Define a ({\it category-theoretic\/}) $C^\iy$-{\it ring\/} to be a
product-preserving functor $F:\Euc\ra\Sets$. Here $F$ should also preserve the empty product, that is, it maps $\R^0$ in $\Euc$ to the terminal object in $\Sets$, the point $*$.

$C^\iy$-rings in this sense are an example of an {\it algebraic theory\/} in the sense of Ad\'amek, Rosick\'y and Vitale \cite{ARV}, and many of the basic categorical properties of $C^\iy$-rings follow from this.
\label{ag2def2}
\end{dfn}

Here is how this relates to Definition \ref{ag2def1}. Suppose
$F:\Euc\ra\Sets$ is a product-preserving functor. Define
$\fC=F(\R)$. Then $\fC$ is an object in $\Sets$, that is, a set.
Suppose $n\ge 0$ and $f:\R^n\ra\R$ is smooth. Then $f$ is a morphism
in $\Euc$, so $F(f):F(\R^n)\ra F(\R)=\fC$ is a morphism in $\Sets$.
Since $F$ preserves products $F(\R^n)=F(\R)\t\cdots\t F(\R)=\fC^n$,
so $F(f)$ maps $\fC^n\ra\fC$. We define $\Phi_f:\fC^n\ra\fC$ by
$\Phi_f=F(f)$. The fact that $F$ is a functor implies that the
$\Phi_f$ satisfy the relations in Definition \ref{ag2def1}, so
$\bigl(\fC,(\Phi_f)_{f:\R^n\ra\R \,\,C^\iy}\bigr)$ is a $C^\iy$
ring.

Conversely, if $\bigl(\fC,(\Phi_f)_{f:\R^n\ra\R\,\,C^\iy}\bigr)$ is
a $C^\iy$-ring then we define $F:\Euc\ra\Sets$ by $F(\R^n)=\fC^n$,
and if $f:\R^n\ra\R^m$ is smooth then $f=(f_1,\ldots,f_m)$ for
$f_i:\R^n\ra\R$ smooth, and we define $F(f):\fC^n\ra\fC^m$ by
$F(f):(c_1,\ldots,c_n)\mapsto\bigl(
\Phi_{f_1}(c_1,\ldots,c_n),\ldots,\Phi_{f_m}(c_1,\ldots,c_n)\bigr)$.
Then $F$ is a product-preserving functor. This defines a 1-1
correspondence between $C^\iy$-rings in the sense of Definition
\ref{ag2def1}, and category-theoretic $C^\iy$-rings in the sense of
Definition~\ref{ag2def2}.\I{C-ring@$C^\iy$-ring!definition|)}

As in Moerdijk and Reyes \cite[p.~21--22]{MoRe2} we have:

\begin{prop} In the category $\CRings$ of\/ $C^\iy$-rings, all limits and all filtered colimits exist, and regarding $C^\iy$-rings as functors $F:\Euc\ra\Sets$ as in Definition\/ {\rm\ref{ag2def2},} they may be computed objectwise in $\Euc$ by taking the corresponding limits/filtered colimits in $\Sets$.

Also, all small colimits\I{C-ring@$C^\iy$-ring!colimit}\I{colimit}\I{category!colimit}
exist, though in general they are not computed objectwise in $\Euc$ by taking colimits in $\Sets$. In particular, pushouts\I{category!pushout}\I{pushout}
and all finite colimits exist in~$\CRings$.
\label{ag2prop1}
\end{prop}

We will write $\fD\amalg_{\phi,\fC,\psi}\fE$ or
$\fD\amalg_\fC\fE$\G[CDEb]{$\fC\amalg_\fD\fE$}{pushout of
$C^\iy$-rings $\fC,\fD,\fE$} for the pushout of morphisms
$\phi:\fC\!\ra\!\fD$, $\psi:\fC\!\ra\!\fE$ in $\CRings$. When
$\fC\!=\!\R$, the initial object in $\CRings$, pushouts
$\fD\amalg_\R\fE$ are called {\it
coproducts\/}\I{C-ring@$C^\iy$-ring!coproduct}\I{coproduct} and are
usually written
$\fD\ot_\iy\nobreak\fE$.\G[CDEc]{$\fC\ot_\iy\fD$}{coproduct of
$C^\iy$-rings $\fC,\fD$} For $\R$-algebras $A,B$ the coproduct is the tensor product $A\ot B$. But the coproduct $\fD\ot_\iy\fE$ of $C^\iy$-rings $\fD,\fE$ is generally different from their coproduct $\fD\ot\fE$ as $\R$-algebras. For example we have $C^\iy(\R^m)\ot_\iy C^\iy(\R^n)\cong C^\iy(\R^{m+n})$, which contains but is much larger than the tensor product $C^\iy(\R^m)\ot C^\iy(\R^n)$ for~$m,n>0$.

\subsection{\texorpdfstring{$C^\iy$-rings as commutative $\R$-algebras, and ideals}{C∞-rings as commutative R-algebras, and ideals}}
\label{ag22}
\I{C-ring@$C^\iy$-ring!as commutative $\R$-algebra|(}%
\I{C-ring@$C^\iy$-ring!ideal|see{ideal in $C^\iy$-ring}}\I{ideal in
C-ring@ideal in $C^\iy$-ring|(}

Every $C^\iy$-ring $\fC$ has an underlying commutative $\R$-algebra:

\begin{dfn} Let $\fC$ be a $C^\iy$-ring. Then we may give $\fC$ the
structure of a {\it commutative\/ $\R$-algebra}. Define addition
`$+$' on $\fC$ by $c+c'=\Phi_f(c,c')$ for $c,c'\in\fC$, where
$f:\R^2\ra\R$ is $f(x,y)=x+y$. Define multiplication `$\,\cdot\,$'
on $\fC$ by $c\cdot c'=\Phi_g(c,c')$, where $g:\R^2\ra\R$ is
$f(x,y)=xy$. Define scalar multiplication by $\la\in\R$ by $\la
c=\Phi_{\la'}(c)$, where $\la':\R\ra\R$ is $\la'(x)=\la x$. Define
elements 0 and 1 in $\fC$ by $0=\Phi_{0'}(\es)$ and
$1=\Phi_{1'}(\es)$, where $0':\R^0\ra\R$ and $1':\R^0\ra\R$ are the
maps $0':\es\mapsto 0$ and $1':\es\mapsto 1$. The relations on the
$\Phi_f$ imply that all the axioms of a commutative $\R$-algebra are
satisfied. In Example \ref{ag2ex1}, this yields the obvious
$\R$-algebra structure on the smooth functions~$c:X\ra\R$.

Here is another way to say this. In an $\R$-algebra $A$, the
$n$-fold `operations' $\Phi:A^n\ra A$, that is, all the maps $A^n\ra
A$ we can construct using only addition, multiplication, scalar
multiplication, and the elements $0,1\in A$, correspond exactly to
polynomials $p:\R^n\ra\R$. Since polynomials are smooth, the
operations of an $\R$-algebra are a subset of those of a
$C^\iy$-ring, and we can truncate from $C^\iy$-rings to
$\R$-algebras. As there are many more smooth functions $f:\R^n\ra\R$
than there are polynomials, a $C^\iy$-ring has far more structure
and operations than a commutative $\R$-algebra.
\label{ag2def3}
\end{dfn}

\begin{dfn} An {\it ideal\/} $I$ in $\fC$ is an ideal $I\subset\fC$
in $\fC$ regarded as a commutative $\R$-algebra. Then we make the
quotient $\fC/I$ into a $C^\iy$-ring as follows. If $f:\R^n\ra\R$ is
smooth, define $\Phi_f^I:(\fC/I)^n\ra\fC/I$ by
\begin{equation*}
\Phi_f^I(c_1+I,\ldots,c_n+I)=\Phi_f(c_1,\ldots,c_n)+I.
\end{equation*}
To show this is well-defined, we must show it is independent of the
choice of representatives $c_1,\ldots,c_n$ in $\fC$ for
$c_1+I,\ldots,c_n+I$ in $\fC/I$. By Hadamard's Lemma\I{Hadamard's
Lemma} there exist smooth functions $g_i:\R^{2n}\ra\R$ for
$i=1,\ldots,n$ with
\begin{equation*}
f(y_1,\ldots,y_n)-f(x_1,\ldots,x_n)=\ts\sum_{i=1}^n(y_i-x_i)
g_i(x_1,\ldots,x_n,y_1,\ldots,y_n)
\end{equation*}
for all $x_1,\ldots,x_n,y_1,\ldots,y_n\in\R$. If $c_1',\ldots,c_n'$
are alternative choices for $c_1,\ab\ldots,\ab c_n$, so that
$c_i'+I=c_i+I$ for $i=1,\ldots,n$ and $c_i'-c_i\in I$, we have
\begin{equation*}
\Phi_f(c_1',\ldots,c_n')-\Phi_f(c_1,\ldots,c_n)=\ts\sum_{i=1}^n(c_i'-c_i)\Phi_{g_i}(c'_1,\ldots,c'_n,c_1,\ldots,c_n).
\end{equation*}
The second line lies in $I$ as $c_i'-c_i\in I$ and $I$ is an ideal,
so $\Phi_f^I$ is well-defined, and clearly $\bigl(\fC/I,(\Phi_f^I)_{
f:\R^n\ra\R\,\,C^\iy}\bigr)$ is a $C^\iy$-ring.

If $\fC$ is a $C^\iy$-ring, we will use the notation $(f_a:a\in A)$
to denote the ideal in $\fC$ generated by a collection of elements
$f_a$, $a\in A$ in $\fC$, in the sense of commutative $\R$-algebras.
That is,
\begin{equation*}
(f_a:a\in A)=\bigl\{\ts\sum_{i=1}^nf_{a_i}\cdot c_i:\text{$n\ge 0$,
$a_1,\ldots,a_n\in A$, $c_1,\ldots,c_n\in\fC$}\bigr\}.
\end{equation*}

\label{ag2def4}
\end{dfn}

\begin{dfn} A $C^\iy$-ring $\fC$ is called {\it finitely
generated\/}\I{C-ring@$C^\iy$-ring!finitely generated|(} if there
exist $c_1,\ldots,c_n$ in $\fC$ which generate $\fC$ over all
$C^\iy$-operations. That is, for each $c\in\fC$ there exists a smooth
map $f:\R^n\ra\R$ with $c=\Phi_f(c_1,\ldots,c_n)$. (This is a
much weaker condition than $\fC$ being finitely generated as a
commutative $\R$-algebra.)

By Kock \cite[Prop.~III.5.1]{Kock}, $C^\iy(\R^n)$ is the free
$C^\iy$-ring with $n$ generators. Given such $\fC,c_1,\ldots,c_n$,
define $\phi:C^\iy(\R^n)\ra\fC$ by $\phi(f)=\Phi_f(c_1,\ldots,c_n)$
for smooth $f:\R^n\ra\R$, where $C^\iy(\R^n)$ is as in Example
\ref{ag2ex1} with $X=\R^n$. Then $\phi$ is a surjective morphism of
$C^\iy$-rings, so $I=\Ker\phi$ is an ideal in $C^\iy(\R^n)$, and
$\fC\cong C^\iy(\R^n)/I$ as a $C^\iy$-ring. Thus, $\fC$ is finitely
generated if and only if $\fC\cong C^\iy(\R^n)/I$ for some $n\ge 0$
and ideal $I$ in~$C^\iy(\R^n)$.

An ideal $I$ in a $C^\iy$-ring $\fC$ is called {\it finitely
generated\/}\I{ideal in C-ring@ideal in $C^\iy$-ring!finitely
generated} if $I$ is a finitely generated ideal of the underlying
commutative $\R$-algebra of $\fC$ in Definition \ref{ag2def3}, that
is, $I=(i_1,\ldots,i_k)$ for some $i_1,\ldots,i_k\in\fC$. A
$C^\iy$-ring $\fC$ is called {\it finitely
presented\/}\I{C-ring@$C^\iy$-ring!finitely presented|(} if
$\fC\cong C^\iy(\R^n)/I$ for some $n\ge 0$, where $I$ is a finitely
generated ideal in~$C^\iy(\R^n)$.

A difference with conventional algebraic geometry is that
$C^\iy(\R^n)$ is not noetherian,\I{C-ring@$C^\iy$-ring!not
noetherian} so ideals in $C^\iy(\R^n)$ may not be finitely
generated, and $\fC$ finitely generated does not imply $\fC$
finitely presented.
\label{ag2def5}
\end{dfn}

Write $\CRingsfg$\G[CRingsfg]{$\CRingsfg$}{category of finitely
generated $C^\iy$-rings} and
$\CRingsfp$\G[CRingsfp]{$\CRingsfp$}{category of finitely presented
$C^\iy$-rings} for the full subcategories of finitely generated and
finitely presented $C^\iy$-rings
in~$\CRings$.\I{C-ring@$C^\iy$-ring!finitely generated|)}

\begin{ex} A {\it Weil algebra\/}\I{Weil algebra}
\cite[Def.~1.4]{Dubu1} is a finite-dimensional commutative
$\R$-algebra $W$ which has a maximal ideal $\fm$ with $W/\fm\cong\R$
and $\fm^n=0$ for some $n>0$. Then by Dubuc \cite[Prop.~1.5]{Dubu1}
or Kock \cite[Th.~III.5.3]{Kock}, there is a unique way to make $W$
into a $C^\iy$-ring compatible with the given underlying commutative
$\R$-algebra. This $C^\iy$-ring is finitely presented
\cite[Prop.~III.5.11]{Kock}. $C^\iy$-rings from Weil algebras are
important in synthetic differential geometry,\I{synthetic
differential geometry} in arguments involving infinitesimals. See
\cite[\S 2]{BuDu} for a detailed study of this.\I{ideal in
C-ring@ideal in $C^\iy$-ring|)}\I{C-ring@$C^\iy$-ring!finitely
presented|)}\I{C-ring@$C^\iy$-ring!as commutative $\R$-algebra|)}
\label{ag2ex3}
\end{ex}

\subsection{\texorpdfstring{Local $C^\iy$-rings, and localization}{Local C∞-rings, and localization}}
\label{ag23}
\I{C-ring@$C^\iy$-ring!local|(}
\begin{dfn} A $C^\iy$-ring $\fC$ is called {\it local\/} if regarded as an $\R$-algebra, as in Definition \ref{ag2def3}, $\fC$ is a local $\R$-algebra with residue field $\R$. That is, $\fC$ has a unique maximal ideal $\m_\fC$ with~$\fC/\m_\fC\cong\R$.

If $\fC,\fD$ are local $C^\iy$-rings with maximal ideals
$\m_\fC,\m_\fD$, and $\phi:\fC\ra\fD$ is a morphism of $C^\iy$
rings, then using the fact that $\fC/\m_\fC\cong\R\cong\fD/\m_\fD$
we see that $\phi^{-1}(\m_\fD)=\m_\fC$, that is, $\phi$ is a {\it
local\/} morphism of local $C^\iy$-rings. Thus, there is no
difference between morphisms and local morphisms.
\label{ag2def6}
\end{dfn}

\begin{rem} We use the term `local $C^\iy$-ring' following Dubuc \cite[Def.~4]{Dubu3}. They are also called $C^\iy$-{\it local rings\/} in Dubuc \cite[Def.~2.13]{Dubu2}, {\it pointed local\/ $C^\iy$-rings\/} in \cite[\S I.3]{MoRe2} and {\it Archimedean local\/ $C^\iy$-rings\/} in \cite[\S 3]{MQR}.

Moerdijk and Reyes \cite{MQR,MoRe1,MoRe2} use the term `local $C^\iy$-ring' to mean a $C^\iy$-ring which is a local $\R$-algebra, but which need not have residue field~$\R$.
\label{ag2rem}
\end{rem}

The next example is taken from Moerdijk and Reyes~\cite[\S I.3]{MoRe2}.

\begin{ex} Write $C^\iy(\N)$ for the $\R$-algebra of all functions $f:\N\ra\R$. It is a finitely generated $C^\iy$-ring isomorphic to $C^\iy(\R)/\{f\in C^\iy(\R):f\vert_\N=0\}$. Let $\cF$ be a {\it non-principal ultrafilter\/} on $\N$, in the sense of Comfort and Negrepontis \cite{CoNe}, and let $I\subset\fC$ be the prime ideal of $f:\N\ra\R$ such that $\{n\in\N:f(n)=0\}$ lies in $\cF$. Then $\fC=C^\iy(\N)/I$ is a finitely generated $C^\iy$-ring which is a local $\R$-algebra by \cite[Ex.~I.3.2]{MoRe2}, that is, it has a unique maximal ideal $\fm_\fC$, but its residue field is not $\R$ by \cite[Cor.~I.3.4]{MoRe2}. Hence $\fC$ is a local $C^\iy$-ring in the sense of \cite{MQR,MoRe1,MoRe2}, but not in our sense.
\label{ag2ex4}
\end{ex}

{\it Localizations\/}\I{C-ring@$C^\iy$-ring!localization|(} of
$C^\iy$-rings are studied in \cite{Dubu2,Dubu3,MQR,MoRe1},
see~\cite[p.~23]{MoRe2}.

\begin{dfn} Let $\fC$ be a $C^\iy$-ring and $S$ a subset of $\fC$. A
{\it localization\/} $\fC[s^{-1}:s\in S]$ of $\fC$ at $S$ is a
$C^\iy$-ring $\fD=\fC[s^{-1}:s\in S]$ and a morphism $\pi:\fC\ra\fD$
such that $\pi(s)$ is invertible in $\fD$ for all $s\in S$, with the
universal property that if $\fE$ is a $C^\iy$-ring and
$\phi:\fC\ra\fE$ a morphism with $\phi(s)$ invertible in $\fE$ for
all $s\in S$, then there is a unique morphism $\psi:\fD\ra\fE$ with
$\phi=\psi\ci\pi$.

A localization $\fC[s^{-1}:s\in S]$ always exists --- it can be
constructed by adjoining an extra generator $s^{-1}$ and an extra
relation $s\cdot s^{-1}-1=0$ for each $s\in S$ --- and is unique up
to unique isomorphism. When $S=\{c\}$ we have an exact sequence
$0\ra I\ra \fC\ot_\iy C^\iy(\R)\,{\buildrel\pi \over\longra}\,
\fC[c^{-1}]\ra 0$, where $\fC\ot_\iy C^\iy(\R)$ is the
coproduct\I{C-ring@$C^\iy$-ring!coproduct}\I{coproduct} of
$\fC,C^\iy(\R)$ as in \S\ref{ag21}, with pushout morphisms
$\io_1:\fC\ra\fC\ot_\iy C^\iy(\R)$, $\io_2:C^\iy(\R)\ra\fC\ot_\iy
C^\iy(\R)$, and $I$ is the ideal in $\fC\ot_\iy C^\iy(\R)$ generated
by $\io_1(c)\cdot\io_2(x)-1$, where $x$ is the generator
of~$C^\iy(\R)$.

An $\R$-{\it point\/}\I{C-ring@$C^\iy$-ring!R-point@$\R$-point} $x$
of a $C^\iy$-ring $\fC$ is a $C^\iy$-ring morphism $x:\fC\ra\R$,
where $\R$ is regarded as a $C^\iy$-ring as in Example \ref{ag2ex2}.
By \cite[Prop.~I.3.6]{MoRe2}, a map $x:\fC\ra\R$ is a morphism of
$C^\iy$-rings if and only if it is a morphism of the underlying
$\R$-algebras, as in Definition \ref{ag2def3}. Define $\fC_x$ to be
the localization $\fC_x=\fC[s^{-1}:s\in\fC$, $x(s)\ne 0]$, with
projection $\pi_x:\fC\ra\fC_x$. Then $\fC_x$ is a local $C^\iy$-ring
by \cite[Lem.~1.1]{MoRe1}. The $\R$-points of $C^\iy(\R^n)$ are just
evaluation at points $x\in\R^n$. This also holds for $C^\iy(X)$ for any manifold~$X$.
\label{ag2def7}
\end{dfn}

In a new result, we can describe these local $C^\iy$-rings $\fC_x$ explicitly. Note that the surjectivity of $\pi_x:\fC\ra\fC_x$ in the next proposition is surprising. It does not hold for general localizations of $C^\iy$-rings --- for instance, $\pi:C^\iy(\R)\ra C^\iy(\R)[x^{-1}]$ is injective but not surjective, as $x^{-1}\notin\Im\pi$ --- or for localizations $\pi_x:A\ra A_x$ of rings or $\mathbb K$-algebras in conventional algebraic geometry.

\begin{prop} Let\/ $\fC$ be a $C^\iy$-ring, $x:\fC\ra\R$ an $\R$-point of\/ $\fC,$ and $\fC_x$ the localization, with projection $\pi_x:\fC\ra\fC_x$. Then $\pi_x$ is surjective with kernel an ideal\/ $I\subset\fC,$ so that\/ $\fC_x\cong\fC/I,$ where
\e
I=\bigl\{\text{$c\in\fC:$ there exists\/ $d\in\fC$ with\/ $x(d)\ne 0$ in $\R$ and\/ $c\cdot d=0$ in $\fC$}\bigr\}.
\label{ag2eq2}
\e

\label{ag2prop2}
\end{prop}

\begin{proof} Clearly $I$ in \eq{ag2eq2} is closed under multiplication by elements of $\fC$. Let $c_1,c_2\in I$, so there exist $d_1,d_2\in\fC$ with $x(d_1)\ne 0\ne x(d_2)$ and $c_1d_1=0=c_2d_2$. Then $d_1d_2\in\fC$ with $x(d_1d_2)=x(d_1)x(d_2)\ne 0$, and $(c_1+c_2)(d_1\cdot d_2)=d_2(c_1d_1)+d_1(c_2d_2)=0$, so $c_1+c_2\in I$. Hence $I$ is an ideal, and $\fC/I$ a $C^\iy$-ring.

Suppose $c\in I$, so there exists $d\in\fC$ with $x(d)\ne 0$ and $cd=0$. Then $\pi_x(d)$ is invertible in $\fC_x$ by definition. Thus
\begin{equation*}
\pi_x(c)=\pi_x(c)\pi_x(d)\pi_x(d)^{-1}=\pi_x(cd)\pi_x(d)^{-1}=\pi_x(0)\pi_x(d)^{-1}=0.
\end{equation*}
Therefore $I\subseteq\Ker\pi_x$. So $\pi_x:\fC\ra\fC_x$ factorizes uniquely as $\pi_x=\imath\ci\pi$, where $\pi:\fC\ra\fC/I$ is the projection and $\imath:\fC/I\ra\fC_x$ is a $C^\iy$-ring morphism.

Suppose $c\in\fC$ with $x(c)\ne 0$, and write $\ep=\ha\md{x(c)}$. Choose smooth functions $\eta:\R\ra\R\sm\{0\}$, so that $\eta^{-1}:\R\ra\R\sm\{0\}$ is also smooth, such that $\eta(t)=t$ for all $t\in(x(c)-\ep,x(c)+\ep)$, and $\ze:\R\ra\R$ such that $\ze(t)=0$ for all $t\in\R\sm(x(c)-\ep,x(c)+\ep)$, so that $(\eta-\id_\R)\cdot\ze=0$, and~$\ze(x(c))=1$. 

Set $c_1=\Phi_\eta(c)$, $c_2=\Phi_{\eta^{-1}}(c)$ and $d=\Phi_\ze(c)$ in $\fC$, using the $C^\iy$-ring operations from $\eta,\eta^{-1},\ze$. Then $c_1c_2=1$ in $\fC$, as $\eta\cdot \eta^{-1}=1$, and $x(d)=x(\Phi_\ze(c))=\ze(x(c))=1$, as $x:\fC\ra\R$ is a $C^\iy$-ring morphism. Also
\begin{equation*}
(c_1-c)\cdot d=\bigl(\Phi_\eta(c)-\Phi_{\id_\R}(c)\bigr)\Phi_\ze(c)=\Phi_{(\eta-\id_\R)\ze}(c)=\Phi_0(c)=0.
\end{equation*}
Hence $c_1-c\in I$ as $x(d)\ne 0$, so $c+I=c_1+I$. But then $(c+I)(c_2+I)=(c_1+I)(c_2+I)=c_1c_2+I=1+I$ in $\fC/I$, so $\pi(c)=c+I$ is invertible in $\fC/I$. 

As this holds for all $c\in\fC$ with $x(c)\ne 0$, by the universal property of $\fC_x$ there exists a unique $C^\iy$-ring morphism $\jmath:\fC_x\ra\fC/I$ with $\pi=\jmath\ci\pi_x$. Since $\pi_x,\pi$ are surjective, $\pi_x=\imath\ci\pi$ and $\pi=\jmath\ci\pi_x$ imply that $\imath:\fC/I\ra\fC_x$ and $\jmath:\fC_x\ra\fC/I$ are inverse, so both are isomorphisms.
\end{proof}

\begin{ex} For $n\ge 0$ and $p\in\R^n$, define $C^\iy_p(\R^n)$ to
be the set of germs of smooth functions $c:\R^n\ra\R$ at $p\in\R^n$,
made into a $C^\iy$-ring in the obvious way. Then $C^\iy_p(\R^n)$ is
a local $C^\iy$-ring in the sense of Definition \ref{ag2def6}. Here
are three different ways to define $C^\iy_p(\R^n)$, which yield
isomorphic $C^\iy$-rings:
\begin{itemize}
\setlength{\itemsep}{0pt}
\setlength{\parsep}{0pt}
\item[(a)] Defining $C^\iy_p(\R^n)$ as the germs of functions
of smooth functions at $p$ means that points of $C^\iy_p(\R^n)$
are $\sim$-equivalence classes $[(U,c)]$ of pairs $(U,c)$, where
$U\subseteq\R^n$ is open with $p\in U$ and $c:U\ra\R$ is smooth,
and $(U,c)\sim(U',c')$ if there exists $p\in V\subseteq U\cap
U'$ open with $c\vert_V\equiv c'\vert_V$.
\item[(b)] As the localization $(C^\iy(\R^n))_p=
C^\iy(\R^n)[g\in C^\iy(\R^n):g(p)\ne 0]$. Then points of
$(C^\iy(\R^n))_p$ are equivalence classes $[f/g]$ of
fractions $f/g$ for $f,g\in C^\iy(\R^n)$ with $g(p)\ne 0$, and
fractions $f/g$, $f'/g'$ are equivalent if there exists $h\in
C^\iy(\R^n)$ with $h(p)\ne 0$ and $h(fg'-f'g)\equiv 0$.
\item[(c)] As the quotient $C^\iy(\R^n)/I$, where $I$ is the ideal
of $f\in C^\iy(\R^n)$ with $f\equiv 0$ near $p\in\R^n$.
\end{itemize}
One can show (a)--(c) are isomorphic using the fact that if $U$ is
any open neighbourhood of $p$ in $\R^n$ then there exists smooth
$\eta:\R^n\ra[0,1]$ such that $\eta\equiv 0$ on an open
neighbourhood of $\R^n\sm U$ in $\R^n$ and $\eta\equiv 1$ on an open
neighbourhood of $p$ in $U$. By Moerdijk and Reyes \cite[Prop.~I.3.9]{MoRe2}, any finitely generated local $C^\iy$-ring is a quotient of some~$C^\iy_p(\R^n)$.\I{C-ring@$C^\iy$-ring!local|)}\I{C-ring@$C^\iy$-ring!localization|)}
\label{ag2ex5}
\end{ex}

\subsection{\texorpdfstring{Fair $C^\iy$-rings}{Fair C∞-rings}}
\label{ag24}
\I{C-ring@$C^\iy$-ring!fair|(}

We now discuss an important class of $C^\iy$-rings, which we call
{\it fair\/} $C^\iy$-rings, for brevity. Although our term `fair' is
new, we stress that the idea is already well-known, being originally
introduced by Dubuc \cite{Dubu2}, \cite[Def.~11]{Dubu3}, who first
recognized their significance, under the name `$C^\iy$-rings of
finite type presented by an ideal of local character', and in more
recent works would be referred to as `finitely
generated\I{C-ring@$C^\iy$-ring!finitely generated} and
germ-determined $C^\iy$-rings'.

\begin{dfn} An ideal $I$ in $C^\iy(\R^n)$ is called {\it
fair\/}\I{ideal in C-ring@ideal in $C^\iy$-ring!fair|(} if for each
$f\in C^\iy(\R^n)$, $f$ lies in $I$ if and only if $\pi_p(f)$ lies
in $\pi_p(I)\subseteq C^\iy_p(\R^n)$ for all $p\in\R^n$, where
$C^\iy_p(\R^n)$ is as in Example \ref{ag2ex5} and
$\pi_p:C^\iy(\R^n)\ra C^\iy_p(\R^n)$ is the natural projection
$\pi_p:c\mapsto[(\R^n,c)]$. A $C^\iy$-ring $\fC$ is called {\it
fair\/} if it is isomorphic to $C^\iy(\R^n)/I$, where $I$ is a fair
ideal. Equivalently, $\fC$ is fair if it is finitely generated and
whenever $c\in\fC$ with $\pi_p(c)=0$ in $\fC_p$ for all $\R$-points
$p:\fC\ra\R$ then $c=0$, using the notation of
Definition~\ref{ag2def7}.

Dubuc \cite{Dubu2}, \cite[Def.~11]{Dubu3} calls fair ideals {\it
ideals of local character},\I{ideal in C-ring@ideal in
$C^\iy$-ring!of local character} and Moerdijk and Reyes
\cite[I.4]{MoRe2} call them {\it germ determined},\I{ideal in
C-ring@ideal in $C^\iy$-ring!germ-determined} which has now become
the accepted term. Fair $C^\iy$-rings are also sometimes called {\it
germ determined\/ $C^\iy$-rings},\I{C-ring@$C^\iy$-ring!germ
determined} a more descriptive term than `fair', but the definition
of germ determined $C^\iy$-rings $\fC$ in \cite[Def.~I.4.1]{MoRe2}
does not require $\fC$ finitely generated, so does not equate
exactly to our fair $C^\iy$-rings. By Dubuc \cite[Prop.~1.8]{Dubu2},
\cite[Prop.~12]{Dubu3} any finitely generated ideal $I$ is fair, so
$\fC$ finitely presented\I{C-ring@$C^\iy$-ring!finitely presented}
implies $\fC$ fair. We write
$\CRingsfa$\G[CRingsfa]{$\CRingsfa$}{category of fair $C^\iy$-rings}
for the full subcategory of fair $C^\iy$-rings in~$\CRings$.
\label{ag2def8}
\end{dfn}

\begin{prop} Suppose\/ $I\subset C^\iy(\R^m)$ and\/ $J\subset
C^\iy(\R^n)$ are ideals with\/ $C^\iy(\R^m)/I\cong C^\iy(\R^n)/J$ as
$C^\iy$-rings. Then $I$ is finitely generated, or fair, if and only
if\/ $J$ is finitely generated, or fair, respectively.
\label{ag2prop3}
\end{prop}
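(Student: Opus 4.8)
\emph{Proof strategy.} The plan is to show that ``finitely generated'' and ``good'' are intrinsic properties of the $C^\iy$-ring $\fC$, not of the chosen presentation. Write $\phi:C^\iy(\R^m)\ra\fC$ and $\psi:C^\iy(\R^n)\ra\fC$ for the surjections with $\Ker\phi=I$ and $\Ker\psi=J$, and let $x_1,\dots,x_m$ and $y_1,\dots,y_n$ be the coordinate functions. Since $C^\iy(\R^m),C^\iy(\R^n)$ are free $C^\iy$-rings on $m,n$ generators (Definition \ref{ag2def5}) and $\phi,\psi$ are surjective, I may choose $a_1,\dots,a_n\in C^\iy(\R^m)$ with $\phi(a_j)=\psi(y_j)$ and $b_1,\dots,b_m\in C^\iy(\R^n)$ with $\psi(b_i)=\phi(x_i)$. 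These define $C^\iy$-ring morphisms $\al:C^\iy(\R^n)\ra C^\iy(\R^m)$, $\al(g)=g\ci a$, and $\be:C^\iy(\R^m)\ra C^\iy(\R^n)$, $\be(f)=f\ci b$, where $a=(a_1,\dots,a_n)$, $b=(b_1,\dots,b_m)$; checking on generators gives $\phi\ci\al=\psi$ and $\psi\ci\be=\phi$, whence $J=\al^{-1}(I)$, $\be(I)\subseteq J$ and $\al(J)\subseteq I$.

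First the finitely generated case: suppose $I=(i_1,\dots,i_k)$. Put $c_j=\be(\al(y_j))-y_j\in C^\iy(\R^n)$. Then $\psi(c_j)=\phi(\al(y_j))-\psi(y_j)=0$ and $\psi(\be(i_l))=\phi(i_l)=0$, so all $c_j,\be(i_l)$ lie in $J$. Conversely let $g\in J$. Because $\be\ci\al$ is a $C^\iy$-ring morphism, $\be(\al(g))=g\bigl(\be\al(y_1),\dots,\be\al(y_n)\bigr)=g(y_1+c_1,\dots,y_n+c_n)$ in $C^\iy(\R^n)$, so Hadamard's Lemma (as in Definition \ref{ag2def4}) gives $\be(\al(g))-g\in(c_1,\dots,c_n)$. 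Also $\phi(\al(g))=\psi(g)=0$, so $\al(g)\in I$; writing $\al(g)=\sum_l i_l\,d_l$ and applying the ring homomorphism $\be$ gives $\be(\al(g))\in(\be(i_1),\dots,\be(i_k))$. Hence $g\in(\be(i_1),\dots,\be(i_k),c_1,\dots,c_n)$, so $J=(\be(i_1),\dots,\be(i_k),c_1,\dots,c_n)$ is finitely generated; interchanging $I$ and $J$ gives the converse.

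Now the good case: suppose $I=(f_1,\dots,f_k,\fm_X^\iy)$ with $X\subseteq\R^m$ closed. Repeating the argument, for $g\in J$ write $\al(g)=\sum_l f_l\,d_l+e$ with $e\in\fm_X^\iy$; then $\be(e)=e\ci b$ vanishes to all orders on $b^{-1}(X)$, so $\be(e)\in\fm^\iy_{b^{-1}(X)}$, and as above $\be(\al(g))-g\in(c_1,\dots,c_n)$, giving $J\subseteq(\be(f_1),\dots,\be(f_k),c_1,\dots,c_n)+\fm^\iy_{b^{-1}(X)}$. Each $\be(f_l)$ and each $c_j$ lies in $J$, and $\be(\fm_X^\iy)\subseteq J$ since $\psi\ci\be=\phi$ annihilates $\fm_X^\iy\subseteq I$. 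Thus the whole proof reduces to the reverse inclusion $\fm^\iy_{b^{-1}(X)}\subseteq J$: granting it, $J=(\be(f_1),\dots,\be(f_k),c_1,\dots,c_n)+\fm^\iy_{b^{-1}(X)}$ is a good ideal, and the $I\leftrightarrow J$ symmetry finishes the proposition.

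Using $J=\al^{-1}(I)$, the inclusion $\fm^\iy_{b^{-1}(X)}\subseteq J$ says precisely: $h\ci a\in(f_1,\dots,f_k,\fm_X^\iy)$ whenever $h\in C^\iy(\R^n)$ and $h$ together with all its derivatives vanishes on $b^{-1}(X)$. This is the main obstacle, and the place where the non-noetherian nature of $C^\iy(\R^n)$ bites: the naive statement ``$\be(\fm_X^\iy)$ generates $\fm^\iy_{b^{-1}(X)}$'' is false for an arbitrary smooth map $b$, so one must use that $a,b$ arise from an isomorphism. The two structural facts to exploit are that $h\ci a$ is automatically flat on $a^{-1}(b^{-1}(X))=(b\ci a)^{-1}(X)$, and that $b\ci a$ is an ``infinitesimal identity'': $(b\ci a)_i-x_i=\al(\be(x_i))-x_i\in I$ for each $i$, since $\phi\ci\al\ci\be=\phi$. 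I expect to conclude either by adjusting the lift $a$ within its coset modulo $I$ so that $b\ci a$ carries the germ of $X$ into $X$ (making $h\ci a$ flat on $X$, hence in $\fm_X^\iy$), or by a Whitney-type extension argument producing from $h$ a member of $\fm_X^\iy$ that accounts for $h\ci a$ modulo $(f_1,\dots,f_k)$; I anticipate that Proposition \ref{ag2prop2} and related results on ideals of smooth functions (in the companion form $\fm_A^\iy+\fm_B^\iy=\fm_{A\cap B}^\iy$ for suitably situated closed $A,B$, which such methods yield) will enter at this step.
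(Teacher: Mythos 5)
Your overall route is the same as the paper's: choose mutually inverse lifts of the isomorphism (your $a,b$ are the paper's $(g_1,\ldots,g_n)$ and $(f_1,\ldots,f_m)$), use Hadamard's Lemma together with the relation elements $c_j=\be(\al(y_j))-y_j$ to show that one ideal is generated by the image of generators of the other plus these relation elements plus the image of the flat part, and appeal to the $I\leftrightarrow J$ symmetry. Your finitely generated case is complete and correct, and is the mirror image of the paper's equation \eq{ag2eq5}.

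In the good case, however, your argument stops one step short of a proof, and the missing step is the real content of the proposition. The reduction to the single inclusion $\fm^\iy_{b^{-1}(X)}\subseteq J$ is routine; at that point you offer only two candidate strategies (adjusting the lift $a$ within its coset modulo $I$, or a Whitney-type extension argument) and an expectation that Proposition \ref{ag2prop2} will enter, without carrying any of this out. You are right that the naive statement fails for an arbitrary smooth map --- for instance, pulling $\fm^\iy_{\{0\}}$ back along $x\mapsto e^{-1/x^2}$ does not generate $\fm^\iy_{\{0\}}$, since every element of the generated ideal is $O(e^{-2/x^2})$ near $0$ --- so the isomorphism really must be used, and spotting this subtlety is to your credit; but flagging the obstacle is not the same as overcoming it, so as written you have proved the finitely generated statement and only reduced the good statement to an unproved claim. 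The paper disposes of exactly this step by invoking Proposition \ref{ag2prop2}: with $X=\bs g^{-1}(Y)$ it asserts that the ideal generated by $\bs g^*(\fm_Y^\iy)$ is $\fm_X^\iy$, whence \eq{ag2eq5} becomes $I=\bigl(x_i-\Phi_{f_i}(g_1,\ldots,g_n),\;\bs g^*(h_j),\;\fm_X^\iy\bigr)$, so $I$ is good. To complete your version you must actually prove your reverse inclusion (equivalently, that $\al$ carries every function flat on $b^{-1}(X)$ into $I$), for example by making precise how Proposition \ref{ag2prop2} interacts with the relation elements $c_j$; until then the good case is open.
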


\begin{proof} Write $\phi:C^\iy(\R^m)/I\ra C^\iy(\R^n)/J$ for the
isomorphism, and $x_1,\ldots,x_m$ for the generators of
$C^\iy(\R^m)$, and $y_1,\ldots,y_n$ for the generators of
$C^\iy(\R^n)$. Since $\phi$ is an isomorphism we can choose
$f_1,\ldots,f_m\in C^\iy(\R^n)$ with $\phi(x_i+I)=f_i+J$ for
$i=1,\ldots,m$ and $g_1,\ldots,g_n\in C^\iy(\R^m)$ with
$\phi(g_i+I)=y_i+J$ for $i=1,\ldots,n$. It is now easy to show that
\begin{align*}
I&=\bigl(x_i-f_i\bigl(g_1(x_1,\ldots,x_m),\ldots,g_n(x_1,\ldots,x_m)
\bigr),\; i=1,\ldots,m, \\
&\quad\text{and}\quad h\bigl(g_1(x_1,\ldots,x_m),\ldots,
g_n(x_1,\ldots,x_m)\bigr) ,\; h\in J\bigr).
\end{align*}

Hence, if $J$ is generated by $h_1,\ldots,h_k$ then $I$ is generated
by $x_i-f_i(g_1,\ldots,g_n)$ for $i=1,\ldots,m$ and
$h_j(g_1,\ldots,g_n)$ for $j=1,\ldots,k$, so $J$ finitely generated
implies $I$ finitely generated. Applying the same argument to
$\phi^{-1}:C^\iy(\R^n)/J\ra C^\iy(\R^m)/I$, we see that $I$ is
finitely generated if and only if $J$ is.

Suppose $I$ is fair, and let $f\in C^\iy(\R^n)$ with
$\pi_q(f)\in\pi_q(J)\subseteq C^\iy_q(\R^n)$ for all $q\in\R^n$. We
will show that $f\in J$, so that $J$ is fair. Consider the function
$f'=f(g_1,\ldots,g_n)\in C^\iy(\R^m)$. If $p=(p_1,\ldots,p_m)$ in
$\R^m$ and $q=(q_1,\ldots,q_n)=\bigl(g_1
(p_1,\ldots,p_m),\ldots,g_n(p_1,\ldots,p_m)\bigr)$ then
$\phi:C^\iy(\R^m)/I\ra C^\iy(\R^n)/J$ localizes to an isomorphism
$\phi_p:C^\iy_p(\R^m)/\pi_p(I)\ra C^\iy_q(\R^n)/\pi_q(J)$ which maps
$\phi_p:\pi_p(f')+\pi_p(I)\mapsto\pi_q(f)+\pi_q(J)$. Since
$\pi_q(f)\in\pi_q(J)$, this gives $\pi_p(f')\in\pi_p(I)$ for all
$p\in\R^m$, so $f'\in I$ as $I$ is fair. But $\phi(f'+I)=f+J$, so
$f'\in I$ implies $f\in J$. Therefore $J$ is fair. Conversely, $J$
is fair implies $I$ is fair.
\end{proof}

\begin{ex} The local $C^\iy$-ring\I{C-ring@$C^\iy$-ring!local} $C^\iy_p(\R^n)$ of Example \ref{ag2ex5} is the quotient of
$C^\iy(\R^n)$ by the ideal $I$ of functions $f$ with $f\equiv 0$
near $p\in\R^n$. For $n>0$ this $I$ is fair, but not finitely
generated. So $C^\iy_p(\R^n)$ is fair, but not finitely
presented,\I{C-ring@$C^\iy$-ring!finitely presented} by
Proposition~\ref{ag2prop3}.
\label{ag2ex6}
\end{ex}

The following example taken from Dubuc \cite[Ex.~7.2]{Dubu4} shows
that localizations\I{C-ring@$C^\iy$-ring!localization} of fair
$C^\iy$-rings need not be fair:

\begin{ex} Let $\fC$ be the local $C^\iy$-ring\I{C-ring@$C^\iy$-ring!local} $C^\iy_0(\R)$, as in Example \ref{ag2ex5}. Then $\fC\cong C^\iy(\R)/I$, where $I$ is the ideal of all $f\in C^\iy(\R)$ with $f\equiv 0$ near 0 in $\R$. This $I$ is fair, so $\fC$ is fair. Let $c=[(x,\R)]\in\fC$. Then the localization $\fC[c^{-1}]$ is the $C^\iy$-ring of germs at 0 in $\R$ of smooth functions $\R\sm\{0\}\ra\R$. Taking $y=x^{-1}$ as a generator of $\fC[c^{-1}]$, we see that $\fC[c^{-1}]\cong C^\iy(\R)/J$, where $J$ is the ideal of compactly supported
functions in $C^\iy(\R)$. This $J$ is not fair, so by Proposition
\ref{ag2prop3}, $\fC[c^{-1}]$ is not fair.
\label{ag2ex7}
\end{ex}

Recall from category theory that if $\cC$ is a subcategory of a
category $\cD$, a {\it reflection\/}\I{functor!reflection}\I{reflection functor} $R:\cD\ra\cC$ is a left adjoint\I{functor!adjoint}\I{adjoint
functor} to the inclusion $\cC\hookra\cD$. That is, $R:\cD\ra\cC$ is
a functor with natural isomorphisms
$\Hom_\cC(R(D),C)\cong\Hom_\cD(D,C)$ for all $C\in\cC$ and
$D\in\cD$. We will define a reflection for
$\CRingsfa\subset\CRingsfg$, following Moerdijk and Reyes
\cite[p.~48--49]{MoRe2} (see also Dubuc~\cite[Th.~13]{Dubu3}).

\begin{dfn} Let $\fC$ be a finitely
generated\I{C-ring@$C^\iy$-ring!finitely generated} $C^\iy$-ring.
Let $I_\fC$ be the ideal of all $c\in\fC$ such that $\pi_p(c)=0$ in
$\fC_p$ for all $\R$-points $p:\fC\ra\R$. Then
$\fC/I_\fC$ is a finitely generated $C^\iy$-ring, with projection
$\pi:\fC\ra\fC/I_\fC$. It has the same $\R$-points as $\fC$, that
is, morphisms $p:\fC/I_\fC\ra\R$ are in 1-1 correspondence with
morphisms $p':\fC\ra\R$ by $p'=p\ci\pi$, and the local rings
$(\fC/I_\fC)_p$ and $\fC_{p'}$ are naturally isomorphic. It follows
that $\fC/I_\fC$ is fair. Define a functor $R_{\rm fg}^{\rm
fa}:\CRingsfg\ra\CRingsfa$\G[Rfg]{$R_{\rm fg}^{\rm
fa}:\CRingsfg\ra\CRingsfa$}{reflection functor} by $R_{\rm fg}^{\rm
fa}(\fC)=\fC/I_\fC$ on objects, and if $\phi:\fC\ra\fD$ is a
morphism then $\phi(I_\fC)\subseteq I_\fD$, so $\phi$ induces a
morphism $\phi_*:\fC/I_\fC\ra\fD/I_\fD$, and we set $R_{\rm fg}^{\rm
fa}(\phi)=\phi_*$. It is easy to see $\smash{R_{\rm fg}^{\rm fa}}$
is a reflection.\I{functor!reflection}\I{reflection functor}

If $I$ is an ideal in $C^\iy(\R^n)$, write $\bar I$ for the set of
$f\in C^\iy(\R^n)$ with $\pi_p(f)\in\pi_p(I)$ for all $p\in\R^n$.
Then $\bar I$ is the smallest fair ideal in $C^\iy(\R^n)$ containing
$I$, the {\it germ-determined closure\/} of $I$, and~$R_{\rm
fg}^{\rm fa}\bigl(C^\iy(\R^n)/I\bigr)\cong C^\iy(\R^n)/\bar I$.
\label{ag2def9}
\end{dfn}

\begin{ex} Let $\eta:\R\ra[0,\iy)$ be smooth with $\eta(x)>0$ for
$x\in(0,1)$ and $\eta(x)=0$ for $x\notin(0,1)$. Define $I\subseteq
C^\iy(\R)$ by
\begin{equation*}
I=\bigl\{\ts\sum_{a\in A}g_a(x)\eta(x-a):\text{$A\subset\Z$ is
finite, $g_a\in C^\iy(\R)$, $a\in A$}\bigr\}.
\end{equation*}
Then $I$ is an ideal in $C^\iy(\R)$, so $\fC=C^\iy(\R)/I$ is a
$C^\iy$-ring. The set of $f\in C^\iy(\R)$ such that $\pi_p(f)$ lies
in $\pi_p(I)\subseteq C^\iy_p(\R)$ for all $p\in\R$ is
\begin{equation*}
\bar I=\bigl\{\ts\sum_{a\in\Z}g_a(x)\eta(x-a):g_a\in C^\iy(\R),\;
a\in \Z\bigr\},
\end{equation*}
where the sum $\sum_{a\in\Z}g_a(x)\eta(x-a)$ makes sense as at most
one term is nonzero at any point $x\in\R$. Since $\bar I\ne I$, we
see that $I$ is {\it not fair}, so $\fC=C^\iy(\R)/I$ is {\it not a
fair\/ $C^\iy$-ring}. In fact $\bar I$ is the smallest fair ideal
containing $I$. We have $I_{C^\iy(\R)/I}=\bar I/I$, and~$R_{\rm
fg}^{\rm fa}\bigl(C^\iy(\R)/I)=C^\iy(\R)/\bar I$.\I{ideal in
C-ring@ideal in $C^\iy$-ring!fair|)}
\label{ag2ex8}
\end{ex}

\begin{prop} Let\/ $\fC$ be a $C^\iy$-ring, and\/ $G$ a finite
group acting on $\fC$ by automorphisms. Then the fixed subset\/
$\fC^G$\G[CDEd]{$\fC^G$}{$C^\iy$-subring fixed by finite group $G$
acting on $C^\iy$-ring $\fC$} of\/ $G$ in $\fC$ has the structure of
a $C^\iy$-ring in a unique way, such that the inclusion
$\fC^G\hookra\fC$ is a $C^\iy$-ring morphism. If\/ $\fC$ is fair, or
finitely presented,\I{C-ring@$C^\iy$-ring!finitely presented} then
$\fC^G$ is also fair, or finitely presented.
\label{ag2prop4}
\end{prop}

\begin{proof} For the first part, let $f:\R^n\ra\R$ be smooth, and
$c_1,\ldots,c_n\in\fC^G$. Then $\ga\cdot\Phi_f(c_1,\ldots,c_n)=
\Phi_f(\ga\cdot c_1,\ldots, \ga\cdot c_n)=\Phi_f(c_1,\ldots,c_n)$
for each $\ga\in G$, so $\Phi_f(c_1,\ldots,c_n)\in\fC^G$. Define
$\Phi_f^G:(\fC^G)^n\ra\fC^G$ by $\Phi_f^G=\Phi_f\vert_{(\fC^G)^n}$.
It is now trivial to check that the operations $\Phi_f^G$ for smooth
$f:\R^n\ra\R$ make $\smash{\fC^G}$ into a $C^\iy$-ring, uniquely
such that $\fC^G\hookra\fC$ is a $C^\iy$-ring morphism.

Suppose now that $\fC$ is finitely
generated.\I{C-ring@$C^\iy$-ring!finitely generated} Choose a finite
set of generators for $\fC$, and by adding the images of these
generators under $G$, extend to a set of (not necessarily distinct)
generators $x_1,\ldots,x_n$ for $\fC$, on which $G$ acts freely by
permutation. This gives an exact sequence $0\hookra I\ra
C^\iy(\R^n)\ra\fC\ra 0$, where $C^\iy(\R^n)$ is freely generated by
$x_1,\ldots,x_n$. Here $\R^n$ is a direct sum of copies of the
regular representation of $G$, and $C^\iy(\R^n)\ra\fC$ is
$G$-equivariant. Hence $I$ is a $G$-invariant ideal in
$C^\iy(\R^n)$, which is fair, or finitely generated, respectively.
Taking $G$-invariant parts gives an exact sequence $0\hookra I^G\ra
C^\iy(\R^n)^G\,{\buildrel\pi\over\longra}\,\fC^G\ra 0$, where
$C^\iy(\R^n)^G,\fC^G$ are clearly $C^\iy$-rings.

As $G$ acts linearly on $\R^n$ it acts by automorphisms on the
polynomial ring $\R[x_1,\ldots,x_n]$. By a classical theorem of Hilbert \cite[p.~274]{Weyl}, $\R[x_1,\ldots,x_n]^G$ is a finitely presented
$\R$-algebra, so we can choose generators $p_1,\ldots,p_l$ for
$\R[x_1,\ldots,x_n]^G$, which induce a surjective $\R$-algebra
morphism $\R[p_1,\ldots,p_l]\ra \R[x_1,\ldots,x_n]^G$ with kernel generated by~$q_1,\ldots,q_m\in\R[p_1,\ldots,p_l]$.

By results of Bierstone \cite{Bier} for $G$ a finite group and Schwarz \cite{Schw} for $G$ a compact Lie group, any $G$-invariant smooth function on $\R^n$ may be written as a smooth function of the generators $p_1,\ldots,p_l$ of $\R[x_1,\ldots,x_n]^G$, giving a surjective morphism
$C^\iy(\R^l)\ra C^\iy(\R^n)^G$, whose kernel is the ideal in $C^\iy(\R^l)$ generated by $q_1,\ldots,q_m$. Thus $C^\iy(\R^n)^G$ is finitely presented.

Also $\fC^G$ is generated by $\pi(p_1),\ldots,\pi(p_l)$, so $\fC^G$
is finitely generated, and we have an exact sequence $0\hookra J\ra
C^\iy(\R^l)\,{\buildrel\pi \over\longra}\,\fC^G\ra 0$, where $J$ is
the ideal in $C^\iy(\R^l)$ generated by $q_1,\ldots,q_m$ and the
lifts to $C^\iy(\R^l)$ of a generating set for the ideal $I^G$
in~$C^\iy(\R^n)^G\cong C^\iy(\R^l)/(q_1,\ldots,q_m)$.

Suppose now that $I$ is fair. Then for $f\in C^\iy(\R^n)^G$, $f$
lies in $I^G$ if and only if $\pi_p(f)\in\pi_p(I)\subseteq
C^\iy_p(\R^n)$ for all $p\in\R^n$. If $H$ is the subgroup of $G$
fixing $p$ then $H$ acts on $C^\iy_p(\R^n)$, and $\pi_p(f)$ is
$H$-invariant as $f$ is $G$-invariant, and $\pi_p(I)^H=\pi_p(I^G)$.
Thus we may rewrite the condition as $f$ lies in $I^G$ if and only
if $\pi_p(f)\in\pi_p(I^G)\subseteq C^\iy_p(\R^n)$ for all $p\in\R^n$.
Projecting from $\R^n$ to $\R^n/G$, this says that $f$ lies in $I^G$
if and only if $\pi_p(f)$ lies in $\pi_p(I^G)\subseteq
\bigl(C^\iy(\R^n)^G\bigr){}_p$ for all $p\in\R^n/G$. Since $C^\iy(\R^n)^G$ is finitely presented, it follows as in \cite[Cor.~I.4.9]{MoRe2} that $J$ is fair, so $\fC^G$ is fair.

Suppose $I$ is finitely generated in $C^\iy(\R^n)$, with generators
$f_1,\ldots,f_k$. As $\R^n$ is a sum of copies of the regular
representation of $G$, so that every irreducible representation of
$G$ occurs as a summand of $\R^n$, one can show that $I^G$ is
generated as an ideal in $C^\iy(\R^n/G)$ by the $n(k+1)$ elements
$f_i^G$ and $(f_ix_j)^G$ for $i=1,\ldots,k$ and $j=1,\ldots,n$,
where $f^G=\frac{1}{\md{G}}\sum_{\ga\in G}f\ci\ga$ is the
$G$-invariant part of $f\in C^\iy(\R^n)$. Therefore $J$ is finitely
generated by $q_1,\ldots,q_m$ and lifts of $f_i^G,(f_ix_j)^G$. Hence
if $\fC$ is finitely presented then $\fC^G$ is finitely
presented.\I{C-ring@$C^\iy$-ring!fair|)}
\end{proof}

\subsection{\texorpdfstring{Pushouts of $C^\iy$-rings}{Pushouts of C∞-rings}}
\label{ag25}
\I{category!pushout|(}\I{pushout|(}\I{C-ring@$C^\iy$-ring!pushout|(}

Proposition \ref{ag2prop1} shows that pushouts of $C^\iy$-rings
exist. For finitely generated $C^\iy$-rings, we can describe these
pushouts explicitly.

\begin{ex} Suppose the following is a pushout diagram of $C^\iy$-rings:
\begin{equation*}
\xymatrix@C=70pt@R=14pt{ \fC \ar[r]_\be \ar[d]^\al & \fE \ar[d]_\de \\
\fD \ar[r]^\ga & \fF,}
\end{equation*}
so that $\fF=\fD\amalg_\fC\fE$, with $\fC,\fD,\fE$ finitely
generated.\I{C-ring@$C^\iy$-ring!finitely generated} Then we have
exact sequences
\e
\begin{gathered}
0\ra I\hookra C^\iy(\R^l)\,{\buildrel\phi\over \longra}\,\fC\ra
0,\quad 0\ra J\hookra C^\iy(\R^m)\,{\buildrel\psi\over
\longra}\,\fD\ra 0,\\
\text{and}\quad 0\ra K\hookra C^\iy(\R^n)\,{\buildrel\chi\over
\longra}\,\fE\ra 0,
\end{gathered}
\label{ag2eq3}
\e
where $\phi,\psi,\chi$ are morphisms of $C^\iy$-rings, and $I,J,K$
are ideals in $C^\iy(\R^l),\ab C^\iy(\R^m),\ab C^\iy(\R^n)$. Write
$x_1,\ldots,x_l$ and $y_1,\ldots,y_m$ and $z_1,\ldots,z_n$ for the
generators of $C^\iy(\R^l),C^\iy(\R^m),C^\iy(\R^n)$ respectively.
Then $\phi(x_1),\ldots,\phi(x_l)$ generate $\fC$, and
$\al\ci\phi(x_1),\ldots,\al\ci\phi(x_l)$ lie in $\fD$, so we may
write $\al\ci\phi(x_i)=\psi(f_i)$ for $i=1,\ldots,l$ as $\psi$ is
surjective, where $f_i:\R^m\ra\R$ is smooth. Similarly
$\be\ci\phi(x_1),\ldots,\be\ci\phi(x_l)$ lie in $\fE$, so we may
write $\be\ci\phi(x_i)=\chi(g_i)$ for $i=1,\ldots,l$, where
$g_i:\R^n\ra\R$ is smooth.

Then from the explicit construction of pushouts of $C^\iy$-rings we
obtain an exact sequence with $\xi$ a morphism of $C^\iy$-rings
\e
\smash{\xymatrix@C=30pt{ 0 \ar[r] & L \ar[r] & C^\iy(\R^{m+n}) \ar[r]^(0.63)\xi & \fF \ar[r] & 0, }}
\label{ag2eq4}
\e
where we write the generators of $C^\iy(\R^{m+n})$ as
$y_1,\ldots,y_m,z_1,\ldots,z_n$, and then $L$ is the ideal in
$C^\iy(\R^{m+n})$ generated by the elements $d(y_1,\ldots,y_m)$ for
$d\in J\subseteq C^\iy(\R^m)$, and $e(z_1,\ldots,z_n)$ for $e\in
K\subseteq C^\iy(\R^n)$, and $f_i(y_1,\ldots,y_m)-
g_i(z_1,\ldots,z_n)$ for $i=1,\ldots,l$.

For the case of {\it
coproducts\/}\I{C-ring@$C^\iy$-ring!coproduct}\I{coproduct}
$\fD\ot_\iy\fE$, with $\fC=\R$, $l=0$ and $I=\{0\}$, we have
\begin{equation*}
\bigl(C^\iy(\R^m)/J\bigr)\ot_\iy\bigl(C^\iy(\R^n)/K\bigr)\cong
C^\iy(\R^{m+n})/(J,K).
\end{equation*}

\label{ag2ex9}
\end{ex}

\begin{prop} The subcategories\/ $\CRingsfg$ and\/ $\CRingsfp$ are
closed under pushouts and all finite
colimits\I{C-ring@$C^\iy$-ring!colimit}\I{colimit}\I{category!colimit}
in\/~$\CRings$.
\label{ag2prop5}
\end{prop}

\begin{proof} First we show $\CRingsfg,\CRingsfp$ are closed under
pushouts. Suppose $\fC,\fD,\fE$ are finitely
generated,\I{C-ring@$C^\iy$-ring!finitely generated} and use the
notation of Example \ref{ag2ex9}. Then $\fF$ is finitely generated
with generators $y_1,\ldots,y_m,z_1,\ldots,z_n$, so $\CRingsfg$ is
closed under pushouts. If $\fC,\fD,\fE$ are finitely
presented\I{C-ring@$C^\iy$-ring!finitely presented} then we can take
$J=(d_1,\ldots,d_j)$ and $K=(e_1,\ldots,e_k)$, and then Example
\ref{ag2ex9} gives
\e
\begin{split}
L=\bigl(&d_p(y_1,\ldots,y_m),\; p=1,\ldots,j,\;
e_p(z_1,\ldots,z_n),\; p=1,\ldots,k,\\ &
f_p(y_1,\ldots,y_m)-g_p(z_1,\ldots,z_n),\; p=1,\ldots,l\bigr).
\end{split}
\label{ag2eq5}
\e
So $L$ is finitely generated, and $\fF\cong C^\iy(\R^{m+n})/L$ is
finitely presented. Thus $\CRingsfp$ is closed under pushouts.

Now $\R$ is an initial object in $\CRingsfg,\CRingsfp
\subset\CRings$, and all finite colimits may be constructed by
repeated pushouts involving the initial object. Hence
$\CRingsfg,\CRingsfp$ are closed under finite colimits.
\end{proof}

Here is an example from Dubuc \cite[Ex.~7.1]{Dubu4}, Moerdijk and
Reyes~\cite[p.~49]{MoRe2}.

\begin{ex} Consider the
coproduct\I{C-ring@$C^\iy$-ring!coproduct}\I{coproduct}
$C^\iy(\R)\ot_\iy C^\iy_0(\R)$, where $C^\iy_0(\R)$ is the
$C^\iy$-ring of germs of smooth functions at 0 in $\R$ as in Example
\ref{ag2ex5}. Then $C^\iy(\R),\ab C^\iy_0(\R)$ are fair
$C^\iy$-rings,\I{C-ring@$C^\iy$-ring!fair} but $C^\iy_0(\R)$ is not
finitely presented. By Example \ref{ag2ex9}, $C^\iy(\R)\ot_\iy
C^\iy_0(\R)=C^\iy(\R)\amalg_\R C^\iy_0(\R)\cong C^\iy(\R^2)/L$,
where $L$ is the ideal in $C^\iy(\R^2)$ generated by functions
$f(x,y)=g(y)$ for $g\in C^\iy(\R)$ with $g\equiv 0$ near $0\in\R$.
This ideal $L$ is not fair, since for example one can find $f\in
C^\iy(\R^2)$ with $f(x,y)=0$ if and only if $\md{xy}\le 1$, and then
$f\notin L$ but $\pi_p(f)\in\pi_p(L)\subseteq C^\iy_p(\R^2)$ for all
$p\in\R^2$. Hence $C^\iy(\R)\ot_\iy C^\iy_0(\R)$ is not a fair
$C^\iy$-ring, by Proposition \ref{ag2prop3}, and pushouts of fair
$C^\iy$-rings need not be fair.
\label{ag2ex10}
\end{ex}

Our next result is referred to in the last part of
Dubuc~\cite[Th.~13]{Dubu3}.

\begin{prop} $\CRingsfa$\I{C-ring@$C^\iy$-ring!fair} is not closed
under pushouts in\/ $\CRings$. Nonetheless, pushouts and all finite
colimits\I{C-ring@$C^\iy$-ring!colimit}\I{colimit}\I{category!colimit}
exist in\/ $\CRingsfa,$ although they may not coincide with pushouts
and finite colimits in~$\CRings$.
\label{ag2prop6}
\end{prop}

\begin{proof} Example \ref{ag2ex10} shows that $\CRingsfa$ is not
closed under pushouts in $\CRings$. To construct finite colimits in
$\CRingsfa$, we first take the colimit in $\CRingsfg$, which exists
by Propositions \ref{ag2prop1} and \ref{ag2prop5}, and then apply
the reflection functor\I{functor!reflection}\I{reflection functor}
$R_{\rm fg}^{\rm fa}$. By the universal properties of colimits and
reflection functors, the result is a
colimit in~$\CRingsfa$.\I{category!pushout|)}\I{pushout|)}%
\I{C-ring@$C^\iy$-ring!pushout|)}
\end{proof}

\subsection{Flat ideals}
\label{ag26}
\I{ideal in C-ring@ideal in $C^\iy$-ring!flat|(}

The following class of ideals in $C^\iy(\R^n)$ is defined by
Moerdijk and Reyes \cite[p.~47, p.~49]{MoRe2} (see also Dubuc
\cite[\S 1.7(a)]{Dubu2}), who call them {\it flat ideals\/}:

\begin{dfn} Let $X$ be a closed subset of $\R^n$. Define
$\fm_X^\iy$\G[mXi]{$\fm_X^\iy$}{flat ideal of $f\in C^\iy(\R^n)$
vanishing to all orders on $X\subseteq\R^n$} to be the ideal of all
functions $g\in C^\iy(\R^n)$ such that $\pd^kg\vert_X\equiv 0$ for
all $k\ge 0$, that is, $g$ and all its derivatives vanish at each
$x\in X$. If the interior $X^\ci$ of $X$ in $\R^n$ is dense in $X$,
that is $\smash{\ov{(X^\ci)}}=X$, then $\pd^kg\vert_X\equiv 0$ for
all $k\ge 0$ if and only if $g\vert_X\equiv 0$. In this case
$C^\iy(\R^n)/\fm_X^\iy\cong C^\iy(X):=\bigl\{f\vert_X:f\in
C^\iy(\R^n)\bigr\}$.
\label{ag2def10}
\end{dfn}

Flat ideals are always fair.\I{ideal in C-ring@ideal in
$C^\iy$-ring!fair} Here is an example from~\cite[Th.~I.1.3]{MoRe2}.

\begin{ex} Take $X$ to be the point $\{0\}$. If $f,f'\in
C^\iy(\R^n)$ then $f-f'$ lies in $\fm_{\{0\}}^\iy$ if and only if
$f,f'$ have the same Taylor series at 0. Thus
$C^\iy(\R^n)/\fm_{\{0\}}^\iy$ is the $C^\iy$-ring of Taylor series
at 0 of $f\in C^\iy(\R^n)$. Since any formal power series in
$x_1,\ldots,x_n$ is the Taylor series of some $f\in C^\iy(\R^n)$, we
have $C^\iy(\R^n)/\fm_{\{0\}}^\iy\cong\R[[x_1,\ldots,x_n]]$. Thus
the $\R$-algebra of formal power series $\R[[x_1,\ldots,x_n]]$ can
be made into a $C^\iy$-ring.
\label{ag2ex11}
\end{ex}

The following nontrivial result is proved by Reyes and van Qu\^e
\cite[Th.~1]{ReVa}, generalizing an unpublished result of A.P.
Calder\'on in the case $X=Y=\{0\}$. It can also be found in Moerdijk
and Reyes~\cite[Cor.~I.4.12]{MoRe2}.

\begin{prop} Let\/ $X\subseteq\R^m$ and\/ $Y\subseteq\R^n$ be
closed. Then as ideals in $C^\iy(\R^{m+n})$ we
have\/~$(\fm_X^\iy,\fm_Y^\iy)=\fm_{X\t Y}^\iy$.
\label{ag2prop7}
\end{prop}

Moerdijk and Reyes \cite[Cor.~I.4.19]{MoRe2} prove:

\begin{prop} Let\/ $X\subseteq\R^n$ be closed with\/ $X\ne\es,\R^n$.
Then the ideal\/ $\fm_X^\iy$ in $C^\iy(\R^n)$ is not countably
generated.
\label{ag2prop8}
\end{prop}

We can use these to study $C^\iy$-rings of manifolds with
corners.\I{manifold with corners!C-ring of@$C^\iy$-ring of}

\begin{ex} Let $0<k\le n$, and consider the closed subset
$\R^n_k=[0,\iy)^k\t\R^{n-k}$ in $\R^n$, the local model for
manifolds with corners. Write $C^\iy(\R^n_k)$ for the $C^\iy$-ring
$\bigl\{f\vert_{\R^n_k}:f\in C^\iy(\R^n)\bigr\}$. Since the interior
$(\R^n_k)^\ci=(0,\iy)^k\t\R^{n-k}$ of $\R^n_k$ is dense in $\R^n_k$,
as in Definition \ref{ag2def10} we have
$C^\iy(\R^n_k)=C^\iy(\R^n)/\smash{\fm_{\R^n_k}^\iy}$. As
$\smash{\fm_{\R^n_k}^\iy}$ is not countably generated by Proposition
\ref{ag2prop8}, it is not finitely generated, and thus
$C^\iy(\R^n_k)$ is not a finitely presented
$C^\iy$-ring,\I{C-ring@$C^\iy$-ring!finitely presented} by
Proposition~\ref{ag2prop3}.

Consider the coproduct\I{C-ring@$C^\iy$-ring!coproduct}\I{coproduct}
$C^\iy(\R^m_k)\ot_\iy C^\iy(\R^n_l)$ in $\CRings$, that is, the
pushout\I{category!pushout}\I{pushout} $C^\iy(\R^m_k)\amalg_\R
C^\iy(\R^n_l)$ over the trivial $C^\iy$-ring $\R$. By Example
\ref{ag2ex9} and Proposition \ref{ag2prop7} we have
\begin{align*}
C^\iy(\R^m_k)\ot_\iy C^\iy(\R^n_l)&\cong C^\iy(\R^{m+n})/
(\fm_{\R^m_k}^\iy,\fm_{\R^n_l}^\iy)=
C^\iy(\R^{m+n})/\fm_{\R^m_k\t\R^n_l}^\iy\\
&=C^\iy(\R^m_k\t \R^n_l)\cong C^\iy(\R^{m+n}_{k+l}).
\end{align*}
This is an example of Theorem \ref{ag3thm} below, with
$X\!=\!\R^m_k$, $Y\!=\!\R^n_l$ and~$Z\!=\!*$.\I{ideal in
C-ring@ideal in $C^\iy$-ring!flat|)}
\label{ag2ex12}
\end{ex}

\section{\texorpdfstring{The $C^\iy$-ring $C^\iy(X)$ of a manifold $X$}{The C∞-ring of a manifold}}
\label{ag3}
\I{C-ring@$C^\iy$-ring!of a manifold $X$|(}\I{manifold|(}\I{manifold
with corners|(}\I{manifold!with boundary|see{manifold with
corners}}\I{manifold!with corners|see{manifold with corners}}

We now study the $C^\iy$-rings $C^\iy(X)$ of manifolds $X$ defined
in Example \ref{ag2ex1}. We are interested in {\it manifolds without
boundary\/} (locally modelled on $\R^n$), and in {\it manifolds with
boundary\/} (locally modelled on $[0,\iy)\t\R^{n-1}$), and in {\it
manifolds with corners\/} (locally modelled on
$[0,\iy)^k\t\R^{n-k}$). Manifolds with corners were considered by
the author \cite{Joyc1,Joyc6}, and we use the conventions of those papers.

The $C^\iy$-rings of manifolds with boundary are discussed by Reyes
\cite{Reye} and Kock \cite[\S III.9]{Kock}, but Kock appears to have
been unaware of Proposition \ref{ag2prop7}, which makes
$C^\iy$-rings of manifolds with boundary easier to understand.

If $X,Y$ are manifolds with corners of dimensions $m,n$, then
\cite[\S 2.1]{Joyc6} defined $f:X\ra Y$ to be {\it weakly
smooth\/}\I{manifold with corners!weakly smooth map} if $f$ is
continuous and whenever $(U,\phi),(V,\psi)$ are charts on $X,Y$ then
$\psi^{-1}\ci f\ci\phi:(f\ci\phi)^{-1}(\psi(V))\ra V$ is a smooth
map from $(f\ci\phi)^{-1}(\psi(V))\subset\R^m$ to $V\subset\R^n$. A
{\it smooth map\/}\I{manifold with corners!smooth map} is a weakly
smooth map $f$ satisfying some extra conditions over $\pd^kX,\pd^lY$ in \cite[\S 2.1]{Joyc6}. If $\pd Y=\es$ these conditions are vacuous, so for manifolds without boundary, weakly smooth maps and smooth maps coincide. Write $\Man,\Manb,\Manc$\G[Manb]{$\Manb$}{category of manifolds with boundary}\G[Manc]{$\Manc$}{category of manifolds with corners} for the categories of manifolds without boundary, and with boundary, and with corners, respectively, with morphisms smooth maps.

\begin{prop}{\bf(a)} If\/ $X$ is a manifold without boundary then
the $C^\iy$-ring $C^\iy(X)$ of Example\/ {\rm\ref{ag2ex1}} is
finitely presented.\I{C-ring@$C^\iy$-ring!finitely presented|(}
\smallskip

\noindent{\bf(b)} If\/ $X$ is a manifold with boundary, or with
corners, and\/ $\pd X\ne\es,$ then the $C^\iy$-ring $C^\iy(X)$ of
Example\/ {\rm\ref{ag2ex1}} is fair,\I{C-ring@$C^\iy$-ring!fair} but
is not finitely presented.
\label{ag3prop1}
\end{prop}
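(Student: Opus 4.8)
\emph{Part (a).} I would Whitney-embed $X$ as a closed submanifold of some $\R^n$ and identify $X$ with its image. Using a tubular neighbourhood and a bump function to extend smooth functions from $X$ to $\R^n$, restriction gives a surjective $C^\iy$-ring morphism $C^\iy(\R^n)\ra C^\iy(X)$ with kernel the vanishing ideal $I_X=\{f\in C^\iy(\R^n):f\vert_X\equiv0\}$, so $C^\iy(X)\cong C^\iy(\R^n)/I_X$ and it remains to prove $I_X$ finitely generated. The plan: fix an open $U\supseteq X$ in $\R^n$ and a smooth retraction $\pi:U\ra X$ from a tubular neighbourhood, arranged so that the segment from $\pi(y)$ to $y$ lies in $U$ for all $y\in U$; set $f_i(y)=y_i-\pi(y)_i\in C^\iy(U)$, which vanish exactly on $X$; integrating along that segment (a version of Hadamard's Lemma, cf.\ Definition~\ref{ag2def4}) shows every $h\in C^\iy(U)$ with $h\vert_X\equiv0$ equals $\sum_{i=1}^nf_ig_i$ on $U$ for some $g_i\in C^\iy(U)$. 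Then pick $\chi\in C^\iy(\R^n)$ with $\chi\equiv1$ near $X$, $\supp\chi\subseteq U$, and $\rho\in C^\iy(\R^n)$ with $\rho\ge0$, $\rho^{-1}(0)=X$, and put $\ti f_i=\chi f_i$ (extended by $0$ over $\R^n\sm U$). For $h\in I_X$, writing $h=\chi h+(1-\chi)h$, a cutoff argument puts $\chi h$ in $(\ti f_1,\dots,\ti f_n)$, while $(1-\chi)h=\rho\,w$ with $w$ equal to $(1-\chi)h/\rho$ off $X$ and to $0$ near $X$ (where $(1-\chi)h$ vanishes), hence smooth; so $I_X=(\ti f_1,\dots,\ti f_n,\rho)$ and $C^\iy(X)$ is finitely presented.

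\emph{Part (b), goodness.} The plan uses the standard fact that $X$ is a closed submanifold-with-corners, dense in its interior, of a manifold \emph{without} boundary $M_0$ of the same dimension (e.g.\ build $M_0$ by attaching external collars; cf.\ \cite{Joyc1}). Whitney-embed $M_0$ as a closed submanifold of some $\R^n$, and let $M\supseteq X'$ be the images; then $X'$ is closed in $\R^n$, is the closure in $M$ of its interior $(X')^\ci$, and as in~(a) $C^\iy(X)\cong C^\iy(\R^n)/I_{X'}$ with $I_{X'}=\{f:f\vert_{X'}\equiv0\}$. I would then show $I_{X'}=(I_M,\fm_{X'}^\iy)$, where $I_M=\{f:f\vert_M\equiv0\}$: the inclusion $\supseteq$ is clear, and for $\subseteq$, given $f\in I_{X'}$ the restriction $f\vert_M\in C^\iy(M)$ vanishes on $X'$, hence to infinite order there since $(X')^\ci$ is dense in $X'$, so pulling $f\vert_M$ back along a tubular-neighbourhood retraction of $M$ in $\R^n$ and cutting off yields $h\in\fm_{X'}^\iy$ with $h\vert_M=f\vert_M$, and $f-h\in I_M$. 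By~(a), $I_M=(f_1,\dots,f_k)$ is finitely generated, so $I_{X'}=(f_1,\dots,f_k,\fm_{X'}^\iy)$ is good and $C^\iy(X)$ is good.

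\emph{Part (b), not finitely presented.} Here, by Proposition~\ref{ag2prop3}, it is enough to show $I_{X'}$ is not finitely generated. The image of $I_{X'}$ under $C^\iy(\R^n)\ra C^\iy(\R^n)/I_M=C^\iy(M)$ is $\{\phi\in C^\iy(M):\phi\vert_{X'}\equiv0\}$, which equals the ideal $\fm_{X'}^\iy$ of $C^\iy(M)$ since $(X')^\ci$ is dense in $X'$; so if $I_{X'}$ were finitely generated, so would $\fm_{X'}^\iy\subseteq C^\iy(M)$ be. But $X'$ is a proper closed subset of $M$ with nonempty frontier $X'\sm(X')^\ci$ (it contains the image of $\pd X\ne\es$), so $\fm_{X'}^\iy$ is not even countably generated in $C^\iy(M)$ --- the manifold analogue of Proposition~\ref{ag2prop4}, which holds because the argument of Moerdijk and Reyes \cite[Cor.~I.4.19]{MoRe2} is local near a frontier point and so runs verbatim in a coordinate chart of $M$. (Alternatively, one can localize $C^\iy(X)$ at a $c$ with $\{c\ne0\}\cong\R^m_1$; this localization is good with the same spectrum as the good ring $C^\iy(\R^m_1)$, hence $\cong C^\iy(\R^m_1)$ by \S\ref{ag4}, so finite presentation of $C^\iy(X)$ would propagate to $C^\iy(\R^m_1)$ by Proposition~\ref{ag2prop5}, contradicting Example~\ref{ag3ex1}.) Therefore $C^\iy(X)$ is not finitely presented.

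\emph{Expected obstacle.} The proof is largely assembly of \S\ref{ag2} once two points are in place: the finite generation of the vanishing ideal of a closed submanifold in~(a), an elementary but fiddly patching argument; and, in~(b), the non-countable-generation input, where one must transport the Moerdijk--Reyes argument from $\R^n$ to a general manifold $M$ (or route through the spectrum functor of \S\ref{ag4}). I expect this latter transfer to be the main thing requiring care.
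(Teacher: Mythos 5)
Your proposal is correct and follows essentially the same route as the paper: part (a) is the Lawvere closed-embedding-plus-retraction argument (which the paper merely cites to Dubuc and Moerdijk--Reyes and you work out in detail), and in part (b) you embed $X$ in a manifold without boundary of the same dimension, then closedly in $\R^n$, and exhibit the kernel of $C^\iy(\R^n)\ra C^\iy(X)$ as a finitely generated ideal plus a flat ideal $\fm_Y^\iy$ of a closed set $Y$, which is exactly the paper's strategy (the paper takes $Y=\bar U$ for $U\subseteq\R^N$ open with $U\cap i(X')=i(X^\ci)$, where you take $Y=i(X)$; this is only a cosmetic difference). For the failure of finite presentation the paper is just as brief as your main route --- it simply asserts that Proposition \ref{ag2prop4} shows the ideal is not countably generated --- so your reduction to the flat ideal in $C^\iy(M)$, together with your fully rigorous fallback via localizing at a boundary chart and combining Propositions \ref{ag2prop5} and \ref{ag4prop6} with Example \ref{ag3ex1}, handles that step at least as carefully as the paper does.
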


\begin{proof} Part (a) is proved in Dubuc \cite[p.~687]{Dubu3} and
Moerdijk and Reyes \cite[Th.~I.2.3]{MoRe2} following an observation
of Lawvere, that if $X$ is a manifold without boundary then we can
choose a closed embedding $i:X\hookra\R^N$ for $N\gg 0$, and then
$X$ is a retract of an open neighbourhood $U$ of $i(X)$, so we have
an exact sequence $0\ra I\ra C^\iy(\R^N)\,\smash{{\buildrel
i^*\over\longra}} \,C^\iy(X)\ra 0$ in which the ideal $I$ is
finitely generated, and thus the $C^\iy$-ring $C^\iy(X)$ is finitely
presented.

For (b), if $X$ is an $n$-manifold with boundary, or with corners,
then roughly by gluing on a `collar' $\pd X\t(-\ep,0]$ to $X$ along $\pd X$ for small $\ep>0$, we can embed $X$ as a closed subset in an $n$-manifold $X'$ without boundary, such that the inclusion $X\hookra X'$ is locally
modelled on the inclusion of $\R^n_k=[0,\iy)^k\t\R^{n-k}$ in $(-\ep,\iy)^k\t\R^{n-k}$ for $k\le n$. Choose a closed embedding $i:X'\hookra\R^N$ for $N\gg 0$ as above, giving $0\ra I'\ra C^\iy(\R^N)\,{\buildrel i^*\over\longra}
\,C^\iy(X')\ra 0$ with $I'$ generated by $f_1,\ldots,f_k\in
C^\iy(\R^N)$. Let $i(X')\subset T\subset\R^N$ be an open tubular neighbourhood of $i(X')$ in $\R^N$, with projection $\pi:T\ra i(X')$. Set $U=\pi^{-1}(i(X^\ci))\subset T\subset\R^N$, where $X^\ci$ is the interior of $X$. Then $U$ is open in $\R^N$ with $i(X^\ci)=U\cap i(X')$, and the closure $\bar U$ of $U$ in $\R^N$ has~$i(X)=\bar U\cap i(X')$.

Let $I$ be the ideal $(f_1,\ldots,f_k,\fm_{\bar U}^\iy)$ in
$C^\iy(\R^N)$. Then $I$ is fair, as $(f_1,\ldots,f_k)$ and
$\fm_{\bar U}^\iy$ are fair. Since $U$ is open in $\R^N$ and dense
in $\bar U$, as in Definition \ref{ag2def10} we have $g\in\fm_{\bar
U}^\iy$ if and only if $g\vert_{\bar U}\equiv 0$. Therefore the
isomorphism $(i_*)_*:C^\iy(\R^N)/I'\ra C^\iy(X')$ identifies the
ideal $I/I'$ in $C^\iy(X')$ with the ideal of $f\in C^\iy(X')$ such
that $f\vert_X\equiv 0$, since $X=i^{-1}(\bar U)$. Hence
\begin{equation*}
C^\iy(\R^N)/I\!\cong\!C^\iy(X')/\bigl\{f\!\in\! C^\iy(X'):f\vert_X
\!\equiv\!0\bigr\}\!\cong\!\bigl\{f\vert_X\!:\!f\!\in\! C^\iy(X')
\bigr\}\!\cong\! C^\iy(X).
\end{equation*}
As $I$ is a fair ideal, this implies that $C^\iy(X)$ is a fair
$C^\iy$-ring.\I{C-ring@$C^\iy$-ring!fair} If $\pd X\ne\es$ then
using Proposition \ref{ag2prop8} we can show $I$ is not countably
generated, so $C^\iy(X)$ is not finitely presented by
Proposition~\ref{ag2prop3}.\I{C-ring@$C^\iy$-ring!finitely
presented|)}
\end{proof}

Next we consider the transformation $X\mapsto C^\iy(X)$ as a
functor.

\begin{dfn} Write $\CRings^{\bf op}$, $(\CRingsfp)^{\bf op}$,
$(\CRingsfa)^{\bf op}$ for the opposite categories of
$\CRings,\CRingsfp,\CRingsfa$ (i.e.\ directions of morphisms are
reversed). Define functors
\begin{align*}
F_\Man^\CRings:\Man&\longra(\CRingsfp)^{\bf op}\subset
\CRings^{\bf op},\\
F_\Manb^\CRings:\Manb&\longra(\CRingsfa)^{\bf op}\subset
\CRings^{\bf op},\\
F_\Manc^\CRings:\Manc&\longra(\CRingsfa)^{\bf op}\subset \CRings^{\bf op}
\end{align*}
as follows. On objects the functors $F_{\Man^*}^\CRings$ map
$X\mapsto C^\iy(X)$, where $C^\iy(X)$ is a $C^\iy$-ring as in
Example \ref{ag2ex1}. On morphisms, if $f:X\ra Y$ is a smooth map of
manifolds then $f^*:C^\iy(Y)\ra C^\iy(X)$ mapping $c\mapsto c\ci f$
is a morphism of $C^\iy$-rings, so that $f^*:C^\iy(Y)\ra C^\iy(X)$
is a morphism in $\CRings$, and $f^*:C^\iy(X)\ra C^\iy(Y)$ a
morphism in $\CRings^{\bf op}$, and $F_{\Man^*}^\CRings$ maps
$f\mapsto f^*$. Clearly $F_\Man^\CRings,F_\Manb^\CRings,
F_\Manc^\CRings$ are functors.
\label{ag3def}
\end{dfn}

If $f:X\ra Y$ is only {\it weakly smooth\/}\I{manifold with
corners!weakly smooth map|(} then $f^*:C^\iy(Y)\ra C^\iy(X)$ in
Definition \ref{ag3def} is still a morphism of $C^\iy$-rings. From
\cite[Prop.~I.1.5]{MoRe2} we deduce:

\begin{prop} Let\/ $X,Y$ be manifolds with corners. Then the map
$f\mapsto f^*$ from weakly smooth maps\/ $f:X\ra Y$ to morphisms
of\/ $C^\iy$-rings $\phi:C^\iy(Y)\ra C^\iy(X)$ is a $1$-$1$
correspondence.
\label{ag3prop2}
\end{prop}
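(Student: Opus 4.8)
The plan is to prove the two directions of the claimed bijection separately, in each case reducing to the description of the $\R$-points of $C^\iy(\R^N)$ in Definition~\ref{ag2def7} by means of a closed embedding of $Y$ into Euclidean space. As in the proof of Proposition~\ref{ag3prop1}, I would first choose a closed embedding $\io:Y\hookra\R^N$ for $N\gg0$ (such exist for manifolds with corners, e.g.\ by embedding $Y$ as a closed subset of a boundaryless manifold $Y'$ and $Y'$ properly in $\R^N$) and identify $Y$ with $\io(Y)$, so that $C^\iy(Y)=\{g\vert_Y:g\in C^\iy(\R^N)\}\cong C^\iy(\R^N)/I_Y$ for $I_Y=\{g\in C^\iy(\R^N):g\vert_Y\equiv0\}$, with projection $\pi:C^\iy(\R^N)\ra C^\iy(Y)$; the restrictions $\bar y_1,\dots,\bar y_N\in C^\iy(Y)$ of the coordinate functions separate points of $Y$. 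Injectivity of $f\mapsto f^*$ is then immediate: if $f\ne g:X\ra Y$ are weakly smooth, pick $x_0\in X$ with $f(x_0)\ne g(x_0)$ and $c\in C^\iy(Y)$ with $c(f(x_0))\ne c(g(x_0))$; then $f^*(c)$ and $g^*(c)$ differ at $x_0$.

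For surjectivity, let $\phi:C^\iy(Y)\ra C^\iy(X)$ be a morphism of $C^\iy$-rings. For each $x\in X$, composing $\pi$ with $\phi$ and then with evaluation $\mathrm{ev}_x:C^\iy(X)\ra\R$ yields an $\R$-point of $C^\iy(\R^N)$, which by Definition~\ref{ag2def7} equals $\mathrm{ev}_p$ for a unique $p\in\R^N$. This $\R$-point annihilates $I_Y=\Ker\pi$, and a point of $\R^N$ lies in the closed set $Y$ exactly when every element of $I_Y$ vanishes there (a bump-function argument), so $p\in Y$; define $f(x)=p$. Then $\mathrm{ev}_x\ci\phi$ is evaluation at $f(x)$, whence $(f^*c)(x)=c(f(x))=(\mathrm{ev}_x\ci\phi)(c)=\phi(c)(x)$ for all $c\in C^\iy(Y)$, $x\in X$, so $f^*=\phi$ provided $f$ is weakly smooth. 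This step uses only the $\R^N$-case of the $\R$-point correspondence, which is essentially~\cite[Prop.~I.1.5]{MoRe2}.

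It remains to verify the set map $f:X\ra Y$ is weakly smooth, and this I expect to be the main technical step. The $j$-th component of $\io\ci f:X\ra\R^N$ is $\bar y_j\ci f=\phi(\bar y_j)\in C^\iy(X)$, so $\io\ci f$ is weakly smooth and, since $\io$ is a topological embedding, $f$ is continuous. Near any $x_0\in X$, after a change of coordinates in $\R^N$ the embedding $\io$ identifies a corner chart $V\subseteq\R^n_k$ of $Y$ (with $n=\dim Y$) with $V\t\{0\}\subseteq\R^n\t\R^{N-n}$; so in this chart the components of $f$ are the first $n$ components of $\io\ci f$, which are smooth. As weak smoothness is a local condition tested in corner charts, $f$ is weakly smooth, completing the proof. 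The delicate points are this local-normal-form argument for $Y\hookra\R^N$ and the reduction of the $\R$-point correspondence for $C^\iy(Y)$ to that for $C^\iy(\R^N)$ via $I_Y$; the rest are routine checks.
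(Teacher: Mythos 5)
Your proof is correct, but it takes a different route from the paper: the paper gives no argument at all here, deducing the proposition in one line from the cited result \cite[Prop.~I.1.5]{MoRe2}, whereas you give a self-contained proof. Your route — choose a closed embedding $\iota:Y\hookrightarrow\mathbb{R}^N$ of the type constructed in the proof of Proposition \ref{ag3prop1}(b), identify $C^\infty(Y)\cong C^\infty(\mathbb{R}^N)/I_Y$, use the fact from Definition \ref{ag2def7} that $\mathbb{R}$-points of $C^\infty(\mathbb{R}^N)$ are evaluations to recover $f$ pointwise from $\mathrm{ev}_x\circ\phi\circ\pi$, and then verify weak smoothness via the local normal form of $\iota$ — is essentially the argument that the citation is standing in for, and it has the merit of explicitly covering the boundary/corners case, which is the setting of the proposition rather than that of the reference. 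The only step deserving care is the one you flag: the local normal form $V\times\{0\}\subseteq\mathbb{R}^n\times\mathbb{R}^{N-n}$ with $V\subseteq\mathbb{R}^n_k$ is not automatic for an arbitrary closed embedding of a manifold with corners, but it does hold for the embedding you chose, since there $Y$ sits inside a boundaryless $Y'$ with the inclusion locally modelled on $\mathbb{R}^n_k\subseteq\mathbb{R}^n$, charts of $Y$ extend to charts of $Y'$, and a tubular neighbourhood of $Y'$ in $\mathbb{R}^N$ supplies the remaining $N-n$ coordinates; with that observation made, all the remaining steps (injectivity via separating coordinates, $p\in Y$ via a bump function in $I_Y$, continuity of $f$ from $\iota$ being a topological embedding) are sound.
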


In the category of manifolds $\Man$, the morphisms are weakly smooth maps. So $F_\Man^\CRings$ is both injective on morphisms (faithful),\I{functor!faithful|(} and surjective on morphisms (full),\I{functor!full|(} as in Moerdijk and Reyes
\cite[Th.~I.2.8]{MoRe2}. But in $\Manb,\Manc$ the morphisms are
smooth maps, a proper subset of weakly smooth maps, so the functors
are injective but not surjective on morphisms. That is:

\begin{cor} The functor $F_\Man^\CRings:\Man\ra(\CRingsfp)^{\rm
op}$ is full and faithful. However, the functors $F_\Manb^\CRings:
\Manb\ra(\CRingsfa)^{\bf op}$ and\/ $F_\Manc^\CRings:
\Manc\ra(\CRingsfa)^{\bf op}$ are faithful, but not full.
\label{ag3cor}
\end{cor}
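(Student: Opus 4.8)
The plan is to obtain the Corollary as a formal consequence of Proposition \ref{ag3prop2}, which identifies morphisms of $C^\iy$-rings $\phi:C^\iy(Y)\ra C^\iy(X)$ with weakly smooth maps $f:X\ra Y$ via $f\mapsto f^*$. Manifolds without boundary and manifolds with boundary are special cases of manifolds with corners, so Proposition \ref{ag3prop2} applies in all three cases; and since $\CRingsfp$ and $\CRingsgo$ are \emph{full} subcategories of $\CRings$, the morphism sets in $(\CRingsfp)^{\rm op}$ and $(\CRingsgo)^{\rm op}$ coincide with the corresponding sets of $C^\iy$-ring morphisms (the three functors being well-defined into these targets by Definition \ref{ag3def} and Proposition \ref{ag3prop1}). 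So only fullness and faithfulness need to be checked.

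First I would show \textbf{faithfulness} of all three functors. On morphisms each acts by $f\mapsto f^*$, which by Proposition \ref{ag3prop2} is the restriction of a \emph{bijection} between weakly smooth maps $X\ra Y$ and $C^\iy$-ring morphisms $C^\iy(Y)\ra C^\iy(X)$, hence is injective; in $\Man$ the morphisms are precisely the weakly smooth maps, while in $\Manb$ and $\Manc$ the morphisms are smooth maps, a subclass of the weakly smooth ones. So in each case $f\mapsto f^*$ is injective on Hom-sets and the functor is faithful.

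Next I would prove \textbf{fullness} of $F_\Man^\CRings$. Given manifolds without boundary $X,Y$ and a $C^\iy$-ring morphism $\phi:C^\iy(Y)\ra C^\iy(X)$, Proposition \ref{ag3prop2} produces a unique weakly smooth $f:X\ra Y$ with $\phi=f^*$; since $\pd Y=\es$, the extra conditions of \cite[\S 3]{Joyc1} defining smoothness are vacuous, so $f$ is smooth, i.e.\ a morphism of $\Man$, giving $\phi=F_\Man^\CRings(f)$. With faithfulness this shows $F_\Man^\CRings$ is full and faithful. For \textbf{non-fullness} of $F_\Manb^\CRings$ and $F_\Manc^\CRings$ it suffices to exhibit one weakly smooth map of manifolds with boundary which is not smooth in the sense of \cite{Joyc1}: as observed above such maps exist, an explicit one being $f:[0,\iy)\ra[0,\iy)$ with $f(x)=e^{-1/x}$ for $x>0$ and $f(0)=0$. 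This is the restriction of a smooth function on $\R$, so weakly smooth, but it is not smooth: $f^*$ of the boundary coordinate $y$ on the target is $e^{-1/x}$, which near $0$ is neither identically zero nor equal to $F\cdot x^a$ for any positive smooth $F$ and non-negative integer $a$, so it violates the boundary conditions for smooth maps in \cite[\S 3]{Joyc1}. Then $f^*:C^\iy([0,\iy))\ra C^\iy([0,\iy))$ is a morphism of good $C^\iy$-rings by Proposition \ref{ag3prop1}(b), hence a morphism in $(\CRingsgo)^{\rm op}$; if it equalled $g^*=F_\Manb^\CRings(g)$ for some smooth $g:[0,\iy)\ra[0,\iy)$, then since $f,g$ are both weakly smooth the injectivity in Proposition \ref{ag3prop2} would give $f=g$, contradicting that $f$ is not smooth. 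So $F_\Manb^\CRings$ is not full, and viewing the same $f$ in $\Manc$ shows $F_\Manc^\CRings$ is not full.

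The hardest part will be choosing a correct witness for the last step: several natural-looking maps of manifolds with boundary, such as $x\mapsto x^2$ or $x\mapsto x+e^{-1/x}$ on $[0,\iy)$, turn out on unwinding the definitions to be smooth in the sense of \cite{Joyc1} (each equals a positive smooth function times a power of $x$ near $0$), so one needs a genuinely flat map like $x\mapsto e^{-1/x}$ and a careful check against the conditions of \cite[\S 3]{Joyc1}. Everything else is formal, using only Propositions \ref{ag3prop1} and \ref{ag3prop2} and that $\CRingsfp,\CRingsgo$ are full subcategories of $\CRings$.
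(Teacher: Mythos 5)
Your proposal is correct and follows essentially the same route as the paper: everything is deduced from Proposition \ref{ag3prop2}, noting that morphisms in $\Man$ are exactly the weakly smooth maps (giving full and faithful), while in $\Manb,\Manc$ the smooth maps form a proper subset of the weakly smooth maps (giving faithful but not full). The only difference is that you supply an explicit witness, $f(x)=e^{-1/x}$ on $[0,\iy)$, for the properness of that subset, which the paper asserts without an example; your check that this map violates the boundary conditions of \cite[\S 3]{Joyc1} is correct.
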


Of course, if we defined $\Manb,\Manc$ to have morphisms weakly
smooth maps, then $F_\Manb^\CRings,F_\Manc^\CRings$ would be full
and faithful.\I{manifold with corners!weakly smooth
map|)}\I{functor!faithful|)}\I{functor!full|)}

Let $X,Y,Z$ be manifolds and $f:X\ra Z$, $g:Y\ra Z$ be smooth maps.
If $X,Y,Z$ are without boundary then $f,g$ are called {\it
transverse\/}\I{manifold!transverse fibre product|(} if whenever
$x\in X$ and $y\in Y$ with $f(x)=g(y)=z\in Z$ we have $T_zZ=\d
f(T_xX)+\d g(T_yY)$. If $f,g$ are transverse then a fibre
product\I{fibre product}\I{category!fibre product} $X\t_ZY$ exists
in~$\Man$.

For manifolds with boundary, or with corners, the situation is more
complicated, as explained in \cite[\S 6]{Joyc1}, \cite[\S 4.3]{Joyc6}. In the definition of {\it smooth\/}\I{manifold with corners!smooth map} $f:X\ra Y$ we impose extra conditions over $\pd^jX,\pd^kY$, and in the definition of transverse\I{manifold with corners!transverse fibre product|(} $f,g$ we impose extra conditions over $\pd^jX,\pd^kY,\pd^lZ$. With these more restrictive definitions of smooth and transverse maps,
transverse fibre products exist in $\Manc$ by \cite[Th.~6.3]{Joyc1} (see also \cite[Th.~4.27]{Joyc6}). The na\"\i ve definition of transversality is not a sufficient condition for fibre products to exist. Note too that a fibre product of manifolds with boundary may be a manifold with corners, so fibre products work best in $\Man$ or $\Manc$ rather than~$\Manb$.

Our next theorem is given in \cite[Th.~16]{Dubu3} and
\cite[Prop.~I.2.6]{MoRe2} for manifolds without boundary, and the
special case of products $\Man\t\Manb\ra\Manb$ follows from Reyes
\cite[Th.~2.5]{Reye}, see also Kock \cite[\S III.9]{Kock}. It can be
proved by combining the usual proof in the without boundary case,
the proof of \cite[Th.~6.3]{Joyc1}, and Proposition~\ref{ag2prop7}.

\begin{thm} The functors\/ $F_\Man^\CRings,F_\Manc^\CRings$ preserve
transverse fibre products in $\Man,\Manc,$ in the sense of\/
{\rm\cite[\S 6]{Joyc1}}. That is, if the following is a Cartesian
square\I{Cartesian square} of manifolds with\/ $g,h$ transverse
\e
\begin{gathered}
\xymatrix@C=70pt@R=14pt{ W \ar[r]_f \ar[d]^e & Y \ar[d]_h \\
X \ar[r]^g & Z,}
\end{gathered}
\label{ag3eq1}
\e
so that\/ $W=X\t_{g,Z,h}Y,$ then we have a
pushout\I{category!pushout}\I{pushout} square of\/ $C^\iy$-rings
\e
\begin{gathered}
\xymatrix@C=100pt@R=14pt{ *+[r]{C^\iy(Z)} \ar[r]_{h^*} \ar[d]^{g^*} &
*+[l]{C^\iy(Y)} \ar[d]_{f^*} \\ *+[r]{C^\iy(X)} \ar[r]^{e^*} & *+[l]{C^\iy(W),\!} }
\end{gathered}
\label{ag3eq2}
\e
so that\/ $C^\iy(W)=C^\iy(X)\amalg_{g^*,C^\iy(Z),h^*}
C^\iy(Y)$.\I{C-ring@$C^\iy$-ring|)}\I{C-ring@$C^\iy$-ring!of a
manifold $X$|)}\I{manifold|)}\I{manifold with corners|)}\I{manifold
with corners!transverse fibre product|)}\I{manifold!transverse fibre
product|)}
\label{ag3thm}
\end{thm}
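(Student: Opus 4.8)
The plan is to reduce the general theorem to two separate ingredients: the classical case of manifolds without boundary, where the result is already known (Dubuc \cite[Th.~16]{Dubu3}, Moerdijk--Reyes \cite[Prop.~I.2.6]{MoRe2}), and the corners structure, which is controlled by the flat ideals $\fm_X^\iy$ and Proposition \ref{ag2prop2}. Concretely, given a Cartesian square \eq{ag3eq1} in $\Manc$ with $g,h$ transverse, I would first embed everything into the without-boundary setting: choose $n$-manifolds $X',Y',Z'$ without boundary containing $X,Y,Z$ as closed subsets, locally modelled on the inclusion $\R^n_k\hookra\R^n$, with $X'\cong X^\ci$ etc., exactly as in the proof of Proposition \ref{ag3prop1}(b). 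One then checks that $g,h$ extend to smooth maps $g':X'\ra Z'$, $h':Y'\ra Z'$, and that transversality in the sense of \cite[\S 6]{Joyc1} of $g,h$ implies transversality (in the ordinary sense) of $g',h'$, so that $W'=X'\t_{Z'}Y'$ is a manifold without boundary, with the analogous Cartesian square yielding a pushout of $C^\iy$-rings
\e
C^\iy(W')=C^\iy(X')\amalg_{g'^*,C^\iy(Z'),h'^*}C^\iy(Y')
\label{ag3thmeq1}
\e
by the known without-boundary case. The compatibility \cite[Th.~6.3]{Joyc1} of fibre products with corners should give that $W$ is the closed subset of $W'$ cut out by the appropriate corner conditions, in a way compatible with the projections $e,f$.

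Next I would translate the corner data into ideals. Using closed embeddings $X'\hookra\R^M$, $Y'\hookra\R^N$ and the construction of Proposition \ref{ag3prop1}(b), write $C^\iy(X)\cong C^\iy(\R^M)/I_X$ with $I_X=(f_1,\dots,f_a,\fm_{\bar U_X}^\iy)$, and similarly for $Y$, $Z$, $W$. The without-boundary pushout \eq{ag3thmeq1} identifies $C^\iy(W')$ with a quotient $C^\iy(\R^{M+N})/L'$ by the explicit pushout formula of Example \ref{ag2ex9}, where $L'$ is generated by the defining ideals of $X',Y'$ together with the matching relations $g'_i-h'_i$. The claim then reduces to showing that the ideal $L$ defining $C^\iy(W)$ inside $C^\iy(\R^{M+N})$ is exactly the pushout ideal predicted by Example \ref{ag2ex9}, namely $L'$ together with the flat-ideal contributions from the corners of $X$ and $Y$. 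Here Proposition \ref{ag2prop2} is the crucial input: it gives $(\fm_{\bar U_X}^\iy,\fm_{\bar U_Y}^\iy)=\fm_{\bar U_X\t\bar U_Y}^\iy$, and pulling this closed set back along the embedding of $W'$ should yield precisely the closed subset of $W'$ whose flat ideal cuts out $C^\iy(W)$ from $C^\iy(W')$. Combining, $L$ has exactly the form prescribed by the pushout construction, so \eq{ag3eq2} is a pushout square.

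I expect the main obstacle to be the geometric bookkeeping at the corners rather than any $C^\iy$-ring subtlety: one must verify that the extra conditions over $\pd^jX,\pd^kY,\pd^lZ$ in the definitions of smooth map and transversality in \cite[\S 6]{Joyc1} are exactly what is needed so that (i) $g',h'$ are honestly transverse after extending to $X',Y',Z'$, and (ii) the closed subset $\bar U_W\subseteq W'$ with $C^\iy(W)=C^\iy(W')/\fm_{\bar U_W}^\iy$ coincides with the pullback of $\bar U_X\t\bar U_Y$ (intersected with the matching-relation locus) under the embedding $W'\hookra\R^{M+N}$. Once this identification of closed sets is in hand, the rest is a mechanical combination of Example \ref{ag2ex9}, Proposition \ref{ag2prop2}, and the without-boundary theorem. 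As the excerpt itself notes, this is how the proof goes: ``combining the usual proof in the without boundary case, the proof of \cite[Th.~6.3]{Joyc1}, and Proposition~\ref{ag2prop2}.''
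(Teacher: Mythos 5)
The paper offers no detailed proof of Theorem \ref{ag3thm}: it only records the recipe of combining the usual without-boundary proof, the proof of \cite[Th.~6.3]{Joyc1}, and Proposition \ref{ag2prop2}, so your outline is certainly in the intended spirit. But your concrete reduction has a genuine gap at the step where you assert that transversality of $g,h$ in the sense of \cite[\S 6]{Joyc1} makes the extensions $g':X'\ra Z'$, $h':Y'\ra Z'$ transverse, so that $W'=X'\t_{Z'}Y'$ is a manifold without boundary. Corner transversality constrains $g,h$ only at points of $W=X\t_ZY$; it says nothing about the extensions on $X'\sm X$ and $Y'\sm Y$, whereas for $W'$ to exist you need transversality at \emph{every} point of the set-theoretic fibre product $\{(x',y'):g'(x')=h'(y')\}$, and this can fail away from $X\t Y$. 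Shrinking does not obviously repair it: the failure locus is closed and disjoint from $X\t Y$, but when $X,Y$ are noncompact it can approach $X\t Y$ ``at infinity'', and then no neighbourhood of $X\t Y$ of product form $X''\t Y''$ avoids it. So you must either add a genuine perturbation/general-position argument for the extensions relative to $X,Y$ (delicate exactly where points near $\pd X$ or $\pd Y$ pair with far-away points of the other factor, since there the perturbation must vanish), or give up the single global $W'$ and run the comparison locally near points of $W$, using the explicit local models for $X,Y,Z,W$ supplied by the proof of \cite[Th.~6.3]{Joyc1} --- which is what the paper's one-line recipe is pointing at.

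A second, smaller gap is at the end: identifying closed subsets is not enough. What you actually need is an equality of ideals, $(I_{W'},\fm^\iy_{\bar U_X\t\bar U_Y})=(I_{W'},\fm^\iy_{\bar U_W})$ in $C^\iy(\R^{M+N})$, and two closed sets with the same intersection with $W'$ need not generate the same ideal modulo $I_{W'}$. The efficient repair uses the paper's own machinery: the pushout ideal is good by Proposition \ref{ag2prop6} and $I_W$ is good by the construction of Proposition \ref{ag3prop1}(b), hence both are fair (germ-determined) by Definition \ref{ag2def11}, so it suffices to compare germs at each point of $\R^{M+N}$; away from $W$ both germs are the unit ideal, and at points of $W$ the comparison is precisely a computation in the local corner models coming from \cite[Th.~6.3]{Joyc1}, where Proposition \ref{ag2prop2} together with a Hadamard/Taylor-type argument does the real work. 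With these two points addressed, the remaining assembly via Example \ref{ag2ex9} and the without-boundary theorem of \cite[Th.~16]{Dubu3}, \cite[Prop.~I.2.6]{MoRe2} goes through as you describe.
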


\section{\texorpdfstring{$C^\iy$-ringed spaces and $C^\iy$-schemes}{C∞-ringed spaces and C∞-schemes}}
\label{ag4}

In algebraic geometry, if $A$ is an affine scheme and $R$ the ring
of regular functions on $A$, then we can recover $A$ as the spectrum
of the ring $R$, $A\cong\Spec R$. One of the ideas of synthetic
differential geometry,\I{synthetic differential geometry} as in
\cite[\S I]{MoRe2}, is to regard a manifold $X$ as the `spectrum' of
the $C^\iy$-ring $C^\iy(X)$ in Example \ref{ag2ex1}. So we can try
to develop analogues of the tools of scheme theory for smooth
manifolds, replacing rings by $C^\iy$-rings throughout. This was
done by Dubuc \cite{Dubu2,Dubu3}. The analogues of the algebraic
geometry notions \cite[\S II.2]{Hart} of ringed spaces, locally
ringed spaces, and schemes, are called $C^\iy$-ringed spaces, local
$C^\iy$-ringed spaces and $C^\iy$-schemes. The material of \S\ref{ag46}--\S\ref{ag49} is new.

\subsection{Some basic topology}
\label{ag41}

Later we will use several properties of topological spaces, e.g. second countable, metrizable, Lindel\"of, \ldots, so we now recall their definitions and some relationships between them. Let $X$ be a topological space, with topology $\cT$. Then:
\begin{itemize}
\setlength{\itemsep}{0pt}
\setlength{\parsep}{0pt}
\item A {\it basis\/} for $\cT$ is a family $\cB\subseteq\cT$ such that every open set in $X$ is a union of sets in $\cB$. We call $X$ {\it second countable\/}\I{topological space!second countable} if $\cT$ has a countable basis.
\item An open cover $\{U_i:i\in I\}$ of $X$ is {\it locally finite\/} if every $x\in X$ has an open neighbourhood $W$ with $W\cap U_i\ne\es$ for only finitely many $i\in I$.

An open cover $\{V_j:j\in J\}$ of $X$ is a {\it refinement\/} of another open cover $\{U_i:i\in I\}$ if for all $j\in J$ there exists $i\in I$ with $V_j\subseteq U_i\subseteq X$.

We call $X$ {\it paracompact\/}\I{topological space!paracompact} if every open cover $\{U_i:i\in I\}$ of $X$ admits a locally finite refinement $\{V_j:j\in J\}$.
\item We call $X$ {\it Hausdorff\/}\I{topological space!Hausdorff} if for all $x,y\in X$ with $x\ne y$ there exist open $U,V\subseteq X$ with $x\in U$, $y\in V$ and $U\cap V=\es$.
\item We call $X$ {\it metrizable\/}\I{topological space!metrizable} if there exists a metric on $X$ inducing topology $\cT$.
\item We call $X$ {\it regular\/}\I{topological space!regular} if for every closed subset $C\subseteq X$ and each $x\in X\sm C$ there exist disjoint open sets $U,V\subseteq X$ with $C\subseteq U$ and~$x\in V$.
\item We call $X$ {\it completely regular\/}\I{topological space!completely regular} if for every closed $C\subseteq X$ and $x\in X\sm C$ there exists a continuous $f:X\ra[0,1]$ with $f\vert_C=0$ and $f(x)=1$.
\item We call $X$ {\it separable\/}\I{topological space!separable} if it has a countable dense subset $S\subseteq X$.
\item We call $X$ {\it locally compact\/}\I{topological space!locally compact} if for all $x\in X$ there exist $x\in U\subseteq C\subseteq X$ with $U$ open and $C$ compact.
\item We call $X$ {\it Lindel\"of\/}\I{topological space!Lindel\"of} if every open cover of $X$ has a countable subcover.
\end{itemize}

By well known results in topology, including Urysohn's metrization theorem, the following are equivalent:
\begin{itemize}
\setlength{\itemsep}{0pt}
\setlength{\parsep}{0pt}
\item[(i)] $X$ is Hausdorff, second countable and regular.
\item[(ii)] $X$ is second countable and metrizable.
\item[(iii)] $X$ is separable and metrizable.
\end{itemize}

Here are some useful implications:
\begin{itemize}
\setlength{\itemsep}{0pt}
\setlength{\parsep}{0pt}
\item $X$ Hausdorff and locally compact imply $X$ is regular.
\item $X$ metrizable implies $X$ is Hausdorff, paracompact, and regular.
\item $X$ second countable implies $X$ is Lindel\"of.
\item $X$ Lindel\"of and regular imply $X$ is paracompact.
\end{itemize}

\subsection{Sheaves on topological spaces}
\label{ag42}
\I{sheaf!on topological space|(}\I{sheaf|(}

Sheaves are a fundamental concept in algebraic geometry. They are
necessary even to define schemes, since a scheme is a topological
space $X$ equipped with a sheaf of rings $\O_X$. In this book,
sheaves of $C^\iy$-rings, and sheaves of modules over a sheaf of
$C^\iy$-rings, play a fundamental r\^ole.

We now summarize some basics of sheaf theory, following Hartshorne
\cite[\S II.1]{Hart}. A more detailed reference is Godement
\cite{Gode}. We concentrate on sheaves of abelian groups; to define
sheaves of $C^\iy$-rings, etc., one replaces abelian groups with
$C^\iy$-rings, etc., throughout. This is justified since limits in all these categories (including abelian groups) are computed at the level of underlying sets, because they are all algebras for algebraic theories.\I{sheaf!definition|(}

\begin{dfn} Let $X$ be a topological space. A {\it presheaf of
abelian groups\/}\I{sheaf!presheaf}\I{presheaf} $\cE$ on $X$
consists of the data of an abelian group $\cE(U)$ for every open set
$U\subseteq X$, and a morphism of abelian groups
$\rho_{UV}:\cE(U)\ra\cE(V)$ called the {\it restriction map\/} for
every inclusion $V\subseteq U\subseteq X$ of open sets, satisfying
the conditions that
\begin{itemize}
\setlength{\itemsep}{0pt}
\setlength{\parsep}{0pt}
\item[(i)] $\cE(\es)=0$;
\item[(ii)] $\rho_{UU}=\id_{\cE(U)}:\cE(U)\ra\cE(U)$ for all open
$U\subseteq X$; and
\item[(iii)] $\rho_{UW}=\rho_{VW}\ci\rho_{UV}:\cE(U)\ra\cE(W)$ for all
open~$W\subseteq V\subseteq U\subseteq X$.
\end{itemize}
That is, a presheaf is a functor $\cE:\mathop{\bf Open}(X)^{\bf op}\ra\mathop{\bf AbGp}$, where $\mathop{\bf Open}(X)$ is the category of open subsets of $X$ with morphisms inclusions, and $\mathop{\bf AbGp}$ is the category of abelian groups.

A presheaf of abelian groups $\cE$ on $X$ is called a {\it
sheaf\/}\I{sheaf!of abelian groups} if it also satisfies
\begin{itemize}
\setlength{\itemsep}{0pt}
\setlength{\parsep}{0pt}
\item[(iv)] If $U\subseteq X$ is open, $\{V_i:i\in I\}$ is an open
cover of $U$, and $s\in\cE(U)$ has $\rho_{UV_i}(s)=0$ in
$\cE(V_i)$ for all $i\in I$, then $s=0$ in $\cE(U)$; and
\item[(v)] If $U\subseteq X$ is open, $\{V_i:i\in I\}$ is an open cover of
$U$, and we are given elements $s_i\in\cE(V_i)$ for all $i\in I$
such that $\rho_{V_i(V_i\cap V_j)}(s_i)=\rho_{V_j(V_i\cap
V_j)}(s_j)$ in $\cE(V_i\cap V_j)$ for all $i,j\in I$, then there
exists $s\in\cE(U)$ with $\rho_{UV_i}(s)=s_i$ for all $i\in I$.
This $s$ is unique by~(iv).
\end{itemize}

Suppose $\cE,\cF$ are presheaves or sheaves of abelian groups on
$X$. A {\it morphism\/} $\phi:\cE\ra\cF$ consists of a morphism of
abelian groups $\phi(U):\cE(U)\ra\cF(U)$ for all open $U\subseteq
X$, such that the following diagram commutes for all open
$V\subseteq U\subseteq X$
\begin{equation*}
\xymatrix@C=90pt@R=14pt{
*+[r]{\cE(U)} \ar[r]_{\phi(U)} \ar[d]^{\rho_{UV}} & *+[l]{\cF(U)}
\ar[d]_{\rho_{UV}'} \\ *+[r]{\cE(V)} \ar[r]^{\phi(V)} & *+[l]{\cF(V),\!} }
\end{equation*}
where $\rho_{UV}$ is the restriction map for $\cE$, and $\rho_{UV}'$
the restriction map for~$\cF$.\I{sheaf!definition|)}
\label{ag4def1}
\end{dfn}

\begin{dfn} Let $\cE$ be a presheaf of abelian groups on $X$. For each
$x\in X$, the {\it stalk\/}\I{sheaf!stalk} $\cE_x$ is the direct
limit of the groups $\cE(U)$ for all $x\in U\subseteq X$, via the
restriction maps $\rho_{UV}$. It is an abelian group. A morphism
$\phi:\cE\ra\cF$ induces morphisms $\phi_x:\cE_x\ra\cF_x$ for all
$x\in X$. If $\cE,\cF$ are sheaves then $\phi$ is an isomorphism if
and only if $\phi_x$ is an isomorphism for all~$x\in X$.
\label{ag4def2}
\end{dfn}

Sheaves of abelian groups on $X$ form an {\it abelian
category\/}\I{abelian category} $\Sh(X)$.\G[Sh(X)]{$\Sh(X)$}{category
of sheaves of abelian groups on topological space $X$} Thus we have
(category-theoretic) notions of when a morphism $\phi:\cE\ra\cF$ in
$\Sh(X)$ is {\it injective\/} or {\it surjective\/} ({\it epimorphic\/}), and when a sequence $\cE\ra\cF\ra\cG$ in $\Sh(X)$ is {\it exact}. It turns out that $\phi:\cE\ra\cF$ is injective if and only if
$\phi(U):\cE(U)\ra\cF(U)$ is injective for all open $U\subseteq X$.
However $\phi:\cE\ra\cF$ surjective does not imply that
$\phi(U):\cE(U)\ra\cF(U)$ is surjective for all open $U\subseteq X$.
Instead, $\phi$ is surjective if and only if $\phi_x:\cE_x\ra\cF_x$
is surjective for all~$x\in X$.

\begin{dfn} Let $\cE$ be a presheaf of abelian groups on $X$. A
{\it sheafification\/}\I{presheaf!sheafification} of $\cE$ is a
sheaf of abelian groups $\hat\cE$ on $X$ and a morphism
$\pi:\cE\ra\hat\cE$, such that whenever $\cF$ is a sheaf of abelian
groups on $X$ and $\phi:\cE\ra\cF$ is a morphism, there is a unique
morphism $\hat\phi:\hat\cE\ra\cF$ with $\phi=\hat\phi\ci\pi$. As in
\cite[Prop.~II.1.2]{Hart}, a sheafification always exists, and is
unique up to canonical isomorphism; one can be constructed
explicitly using the stalks\I{sheaf!stalk} $\cE_x$ of~$\cE$.
\label{ag4def3}
\end{dfn}

Next we discuss {\it pushforwards\/} and {\it pullbacks\/} of
sheaves by continuous maps.

\begin{dfn} Let $f:X\ra Y$ be a continuous map of topological
spaces, and $\cE$ a sheaf of abelian groups on $X$. Define the {\it
pushforward\/}\I{sheaf!pushforward} ({\it direct
image\/})\I{sheaf!direct image} sheaf
$f_*(\cE)$\G[fEa]{$f_*(\cE)$}{pushforward (direct image) sheaf} on
$Y$ by $\bigl(f_*(\cE)\bigr)(U)=\cE\bigl(f^{-1}(U)\bigr)$ for all
open $U\subseteq V$, with restriction maps $\rho'_{UV}=
\rho_{f^{-1}(U)f^{-1}(V)}:\bigl(f_*(\cE)\bigr)(U)\ra
\bigl(f_*(\cE)\bigr)(V)$ for all open $V\subseteq U\subseteq Y$.
Then $f_*(\cE)$ is a sheaf of abelian groups on~$Y$.

If $\phi:\cE\ra\cF$ is a morphism in $\Sh(X)$ we define
$f_*(\phi):f_*(\cE)\ra f_*(\cF)$ by
$\bigl(f_*(\phi)\bigr)(u)=\phi\bigl(f^{-1}(U)\bigr)$ for all open
$U\subseteq Y$. Then $f_*(\phi)$ is a morphism in $\Sh(Y)$, and
$f_*$ is a functor $\Sh(X)\ra\Sh(Y)$. It is a left exact
functor\I{functor!left exact} between abelian categories,\I{abelian
category} but in general is not exact.\I{functor!exact} For
continuous maps $f:X\ra Y$, $g:Y\ra Z$ we have~$(g\ci f)_*=g_*\ci
f_*$.
\label{ag4def4}
\end{dfn}

\begin{dfn} Let $f:X\ra Y$ be a continuous map of topological
spaces, and $\cE$ a sheaf of abelian groups on $Y$. Define a
presheaf $\cP f^{-1}(\cE)$ on $X$ by $\bigl(\cP
f^{-1}(\cE)\bigr)(U)=\lim_{A\supseteq f(U)}\cE(A)$ for open
$A\subseteq X$, where the direct limit is taken over all open
$A\subseteq Y$ containing $f(U)$, using the restriction maps
$\rho_{AB}$ in $\cE$. For open $V\subseteq U\subseteq X$, define
$\rho_{UV}': \bigl(\cP f^{-1}(\cE)\bigr)(U)\ra \bigl(\cP
f^{-1}(\cE)\bigr)(V)$ as the direct limit of the morphisms
$\rho_{AB}$ in $\cE$ for $B\subseteq A\subseteq Y$ with
$f(U)\subseteq A$ and $f(V)\subseteq B$. Then we define the {\it
pullback\/}\I{sheaf!pullback} ({\it inverse
image\/})\I{sheaf!inverse image}
$f^{-1}(\cE)$\G[fEb]{$f^{-1}(\cE)$}{pullback (inverse image) sheaf}
to be the sheafification\I{presheaf!sheafification} of the
presheaf~$\cP f^{-1}(\cE)$.

Pullbacks $f^{-1}(\cE)$ are only unique up to canonical isomorphism,
rather than unique. By convention we choose once and for all a
pullback $f^{-1}(\cE)$ for all $X,Y,f,\cE$, using the Axiom of
Choice\I{Axiom of Choice} if necessary. If $\phi:\cE\ra\cF$ is a
morphism in $\Sh(Y)$, one can define a pullback morphism
$f^{-1}(\phi):f^{-1}(\cE)\ra f^{-1}(\cF)$. Then
$f^{-1}:\Sh(Y)\ra\Sh(X)$ is an exact functor\I{functor!exact}
between abelian categories.
\label{ag4def5}
\end{dfn}

We compare pushforwards and pullbacks:

\begin{rem}{\bf(a)} There are two kinds of pullback, with slightly
different notation. The first kind, written $f^{-1}(\cE)$ as in
Definition \ref{ag4def5}, is used for sheaves of abelian groups or
$C^\iy$-rings. The second kind, written $\uf^*(\cE)$ or $f^*(\cE)$
and discussed in \S\ref{ag53} and \S\ref{ag83}, is used for sheaves
of $\O_Y$-modules~$\cE$.
\smallskip

\noindent{\bf(b)} The definition of pushforward sheaves $f_*(\cE)$
is wholly elementary. In contrast, the definition of pullbacks
$f^{-1}(\cE)$ is complex, involving a direct limit followed by a
sheafification, and includes arbitrary choices.

Pushforwards $f_*$ are strictly functorial in the continuous map
$f:X\ra Y$, that is, for continuous $f:X\ra Y$, $g:Y\ra Z$ we have
$(g\ci f)_*=g_*\ci f_*:\Sh(X)\ra\Sh(Z)$. However, pullbacks $f^{-1}$
are only weakly functorial in $f$: if $\cE\in\Sh(Z)$ then we need
not have $(g\ci f)^{-1}(\cE)=f^{-1}(g^{-1}(\cE))$. This is because
pullbacks are only natural up to canonical isomorphism, and we make
an arbitrary choice for each pullback. So although
$f^{-1}(g^{-1}(\cE))$ is a possible pullback for $\cE$ by $g\ci f$,
it may not be the one we chose.

Thus, there is a canonical isomorphism $(g\ci f)^{-1}(\cE)\cong
f^{-1}(g^{-1}(\cE))$, which we will write as $I_{f,g}(\cE):(g\ci
f)^{-1}(\cE)\ra f^{-1}(g^{-1}(\cE))$.\G[Ifga]{$I_{f,g}(\cE):(g\ci
f)^{-1}(\cE)\ra f^{-1}(g^{-1}(\cE))$}{isomorphism of pullback
sheaves} The $I_{f,g}(\cE)$ for all $\cE\in\Sh(Z)$ comprise a
natural isomorphism of functors $I_{f,g}:(g\ci f)^{-1}\Ra f^{-1}\ci
g^{-1}$. Similarly, for $\cE\in\Sh(X)$ we may not have
$\id^{-1}_X(\cE)=\cE$, but instead there are canonical isomorphisms
$\de_X(\cE):\id^{-1}_X (\cE)\ra\cE$,\G[deXa]{$\de_X(\cE):\id^{-1}_X
(\cE)\ra\cE$}{canonical isomorphism of pullback sheaves} which make
up a natural isomorphism $\de_X:\id_X^{-1}\Ra\id_{\Sh(X)}$. Many
authors ignore the natural isomorphisms $I_{f,g},\de_X$ entirely.
\smallskip

\noindent{\bf(c)} Let $f:X\ra Y$ be a continuous map of topological
spaces. Then we have functors $f_*:\Sh(X)\ra\Sh(Y)$, and
$f^{-1}:\Sh(Y)\ra\Sh(X)$. As in \cite[Ex.~II.1.18]{Hart}, $f_*$ is
right adjoint\I{functor!adjoint}\I{adjoint functor} to $f^{-1}$.
That is, there is a natural bijection
\e
\Hom_X\bigl(f^{-1}(\cE),\cF\bigr)\cong\Hom_Y\bigl(\cE,f_*(\cF)\bigr)
\label{ag4eq1}
\e
for all $\cE\in\Sh(Y)$ and $\cF\in\Sh(X)$, with functorial
properties.\I{sheaf!on topological space|)}\I{sheaf|)}
\label{ag4rem1}
\end{rem}

We define {\it fine\/} sheaves, as in Godement \cite[\S II.3.7]{Gode} or Voisin \cite[Def.~4.35]{Vois}. They will be important in \S\ref{ag47} and~\S\ref{ag53}.

\begin{dfn} Let $X$ be a topological space (usually paracompact), and $\cE$ a sheaf of abelian groups on $X$, or more generally a sheaf of rings, or $C^\iy$-rings, or $\O_X$-modules, or any other objects which are also abelian groups. We call $\cE$ {\it fine\/}\I{fine sheaf}\I{sheaf!fine}\I{fine sheaf} if for any open cover $\{U_i:i\in I\}$ of $X$, a subordinate locally finite partition of unity $\{\ze_i:i\in I\}$\I{partition of unity} exists in the sheaf~${\cal H}om(\cE,\cE)$. 

Here $\ze_i:\cE\ra\cE$ is a morphism of sheaves of abelian groups (or rings, $C^\iy$-rings, \ldots) for each $i\in I$. For $\{\ze_i:i\in I\}$ to be {\it subordinate to\/} $\{U_i:i\in I\}$ means that $\ze_i$ is supported in $U_i$ for each $i\in I$, that is, there exists open $V_i\subseteq X$ with $\ze_i\vert_{V_i}=0$ and $U_i\cup V_i=X$. For $\{\ze_i:i\in I\}$ to be {\it locally finite\/} means that each $x\in X$ has an open neighbourhood $W$ with $\ze_i\vert_W\ne 0$ for only finitely many $i\in I$. For $\{\ze_i:i\in I\}$ to be a {\it partition of unity\/} means that $\sum_{i\in I}\ze_i=\id_\cE$, where the sum makes sense as $\{\ze_i:i\in I\}$ is locally finite.

If $\cE=\O_X$ is a sheaf of commutative rings or $C^\iy$-rings, then writing $\eta_i=\ze_i(1)$ in $\O_X(X)$, we see that $\ze_i=\eta_i\cdot{}$ is multiplication by $\eta_i$. So we can regard the partition of unity as living in $\O_X(X)$ rather than~${\cal H}om(\O_X,\O_X)$.
\label{ag4def6}
\end{dfn}

\subsection{\texorpdfstring{$C^\iy$-ringed spaces and local $C^\iy$-ringed spaces}{C∞-ringed spaces and local C∞-ringed spaces}}
\label{ag43}
\I{C-ringed space@$C^\iy$-ringed space|(}\I{sheaf!of C-rings@of
$C^\iy$-rings|(}

\begin{dfn} A {\it $C^\iy$-ringed space\/} $\uX=(X,\O_X)$ is a
topological space $X$ with a sheaf $\O_X$ of $C^\iy$-rings on $X$.
That is, for each open set $U\subseteq X$ we are given a $C^\iy$
ring $\O_X(U)$, and for each inclusion of open sets $V\subseteq
U\subseteq X$ we are given a morphism of $C^\iy$-rings
$\rho_{UV}:\O_X(U)\ra\O_X(V)$, called the {\it restriction maps},
and all this data satisfies the sheaf axioms in
Definition~\ref{ag4def1}.

Equivalently, $\O_X$ is a presheaf of $C^\iy$-rings on $X$, that is, a functor
\begin{equation*}
\O_X:\mathop{\bf Open}(X)^{\bf op}\longra\CRings,
\end{equation*}
whose underlying presheaf of abelian groups, or of sets, is a sheaf. The sheaf axioms Definition \ref{ag4def1}(iv),(v) do not use the $C^\iy$-ring structure.

A {\it morphism\/}\I{C-ringed space@$C^\iy$-ringed
space!morphism}\I{C-scheme@$C^\iy$-scheme!morphism}\G[fXYa]{$\uf:\uX\ra\uY$}{morphism of $C^\iy$-ringed spaces or $C^\iy$-schemes} $\uf=(f,f^\sh):(X,\O_X)\ra (Y,\O_Y)$ of $C^\iy$ ringed spaces is a continuous map $f:X\ra Y$ and a morphism
$f^\sh:f^{-1}(\O_Y)\ra\O_X$\G[fsha]{$f^\sh:f^{-1}(\O_Y)\ra\O_X$}{morphism
of sheaves of $C^\iy$-rings in $\uf:\uX\ra\uY$} of sheaves of
$C^\iy$-rings on $X$, for $f^{-1}(\O_Y)$ as in Definition
\ref{ag4def5}. Since $f_*$ is right
adjoint\I{functor!adjoint}\I{adjoint functor} to $f^{-1}$, as in
\eq{ag4eq1} there is a natural bijection
\e
\Hom_X\bigl(f^{-1}(\O_Y),\O_X\bigr)\cong\Hom_Y\bigl(\O_Y,f_*(\O_X)\bigr).
\label{ag4eq2}
\e
Write $f_\sh:\O_Y\ra f_*(\O_X)$\G[fshb]{$f_\sh:\O_Y\ra
f_*(\O_X)$}{morphism of sheaves of $C^\iy$-rings in $\uf:\uX\ra\uY$}
for the morphism of sheaves of $C^\iy$-rings on $Y$ corresponding to
$f^\sh$ under \eq{ag4eq2}, so that
\e
f^\sh:f^{-1}(\O_Y)\longra\O_X\quad \leftrightsquigarrow\quad
f_\sh:\O_Y\longra f_*(\O_X).
\label{ag4eq3}
\e

If $\uf:\uX\ra\uY$ and $\ug:\uY\ra\uZ$ are $C^\iy$-scheme morphisms,
the composition is
\begin{equation*}
\ug\ci\uf=\bigl(g\ci f,(g\ci f)^\sh\bigr)=\bigl(g\ci f,f^\sh\ci
f^{-1}(g^\sh)\ci I_{f,g}(\O_Z)\bigr),
\end{equation*}
where $I_{f,g}(\O_Z):(g\ci f)^{-1}(\O_Z)\ra f^{-1}(g^{-1}(\O_Z))$ is
the canonical isomorphism from Remark \ref{ag4rem1}(b). In terms of
$f_\sh:\O_Y\ra f_*(\O_X)$, composition is
\begin{equation*}
(g\ci f)_\sh=g_*(f_\sh)\ci g_\sh:\O_Z\longra (g\ci f)_*(\O_X)=g_*\ci
f_*(\O_X).
\end{equation*}

A {\it local\/ $C^\iy$-ringed space\/}\I{C-ringed
space@$C^\iy$-ringed space!local|(} $\uX=(X,\O_X)$ is a
$C^\iy$-ringed space for which the stalks\I{sheaf!stalk} $\O_{X,x}$
of $\O_X$ at $x$ are local $C^\iy$-rings\I{C-ring@$C^\iy$-ring!local} for all $x\in X$. As in Definition \ref{ag2def6}, since morphisms of local $C^\iy$-rings are automatically local morphisms, morphisms of local $C^\iy$-ringed spaces $(X,\O_X),(Y,\O_Y)$ are just morphisms of $C^\iy$-ringed spaces, without any additional locality condition. Moerdijk, van
Qu\^ e and Reyes \cite[\S 3]{MQR} call our local $C^\iy$-ringed
spaces {\it Archimedean\/ $C^\iy$-spaces}.

Write $\CRS$\G[CRS]{$\CRS$}{category of $C^\iy$-ringed spaces} for
the category of $C^\iy$-ringed spaces, and
$\LCRS$\G[LCRS]{$\LCRS$}{category of local $C^\iy$-ringed spaces}
for the full subcategory of local $C^\iy$-ringed spaces.

For brevity, we will use the notation that underlined upper case
letters $\uX,\uY,\uZ,\ldots$ represent $C^\iy$-ringed spaces
$(X,\O_X),(Y,\O_Y),(Z,\O_Z),\ldots,$ and underlined lower case
letters $\uf,\ug,\ldots$ represent morphisms of $C^\iy$-ringed
spaces $(f,f^\sh),\ab(g,g^\sh),\ab\ldots.$ When we write `$x\in\uX$'
we mean that $\uX=(X,\O_X)$ and $x\in X$. When we write `$\uU$ {\it
is open in\/} $\uX$' we mean that $\uU=(U,\O_U)$ and $\uX=(X,\O_X)$
with $U\subseteq X$ an open set and $\O_U=\O_X\vert_U$.
\label{ag4def7}
\end{dfn}

\begin{rem} As above, there are two equivalent ways to write morphisms of $C^\iy$-ringed spaces $(X,\O_X)\ra(Y,\O_Y)$, either using pullbacks as $(f,f^\sh)$ for $f^\sh:f^{-1}(\O_Y)\ra\O_X$, or using pushforwards as $(f,f_\sh)$ for $f_\sh:\O_Y\ra f_*(\O_X)$. Each definition has advantages and disadvantages. We choose to regard $f^\sh:f^{-1}(\O_Y)\ra\O_X$ as the primary object, and so define morphisms of $C^\iy$-ringed spaces as
$(f,f^\sh)$ rather than $(f,f_\sh)$, although we will use $f_\sh$ in a few places. We can always switch between the two points of view using~\eq{ag4eq3}.
\label{ag4rem3}
\end{rem}

\begin{ex} Let $X$ be a manifold,\I{manifold!C-scheme
of@$C^\iy$-scheme of}\I{manifold with corners!C-scheme
of@$C^\iy$-scheme of} which may have boundary or corners. Define a
$C^\iy$-ringed space $\uX=(X,\O_X)$ to have topological space $X$
and $\O_X(U)=C^\iy(U)$ for each open subset $U\subseteq X$, where
$C^\iy(U)$ is the $C^\iy$-ring of smooth maps $c:U\ra\R$, and if
$V\subseteq U\subseteq X$ are open we define $\rho_{UV}:C^\iy(U)\ra
C^\iy(V)$ by~$\rho_{UV}:c\mapsto c\vert_V$.

It is easy to verify that $\O_X$ is a sheaf of $C^\iy$-rings on $X$
(not just a presheaf), so $\uX=(X,\O_X)$ is a $C^\iy$-ringed space.
For each $x\in X$, the stalk\I{sheaf!stalk} $\O_{X,x}$ is the
local $C^\iy$-ring\I{C-ring@$C^\iy$-ring!local} of germs $[(c,U)]$ of
smooth functions $c:X\ra\R$ at $x\in X$, as in Example \ref{ag2ex5},
with unique maximal ideal ${\mathfrak
m}_{X,x}=\bigl\{[(c,U)]\in\O_{X,x}:c(x)=0\big\}$ and
$\O_{X,x}/{\mathfrak m}_{X,x}\cong\R$. Hence $\uX$ is a local
$C^\iy$-ringed space.

Let $X,Y$ be manifolds and $f:X\ra Y$ a weakly smooth
map.\I{manifold with corners!weakly smooth map} Define
$(X,\O_X),(Y,\O_Y)$ as above. For all open $U\subseteq Y$ define
$f_\sh(U):\O_Y(U)= C^\iy(U)\ra\O_X(f^{-1}(U))=C^\iy(f^{-1}(U))$ by
$f_\sh(U):c\mapsto c\ci f$ for all $c\in C^\iy(U)$. Then $f_\sh(U)$
is a morphism of $C^\iy$-rings, and $f_\sh:\O_Y\ra f_*(\O_X)$ is a
morphism of sheaves of $C^\iy$-rings on $Y$. Let
$f^\sh:f^{-1}(\O_Y)\ra\O_X$ correspond to $f_\sh$ under \eq{ag4eq3}.
Then $\uf=(f,f^\sh): (X,\O_X)\ra(Y,\O_Y)$ is a morphism of (local)
$C^\iy$-ringed spaces.
\label{ag4ex1}
\end{ex}

As the category $\Top$ of topological spaces has all finite limits,
and the construction of $\CRS$ involves $\Top$ in a covariant way
and the category $\CRings$ in a contravariant way, using Proposition
\ref{ag2prop1} one may prove:

\begin{prop} All finite limits\I{category!limit} exist in the
category\/~$\CRS$.
\label{ag4prop1}
\end{prop}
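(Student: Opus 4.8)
The plan is to construct finite limits in $\CRS$ explicitly, reducing to the well-known facts that $\Top$ has all finite limits (in particular a terminal object $*$ and pullbacks, hence all finite limits) and that $\CRings$ has all finite colimits by Proposition \ref{ag2prop1}. It suffices to produce a terminal object and fibre products in $\CRS$, since every finite limit can be built from these. The terminal object is $(\{*\},\R)$, where $\{*\}$ is the one-point space and the unique nonempty open set is assigned the $C^\iy$-ring $\R$ of Example \ref{ag2ex2}; any $\uX$ has a unique morphism to it, the structure-sheaf part being the unique $C^\iy$-ring morphism $\R\ra\O_X(X)$.

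For fibre products, given morphisms $\uf=(f,f^\sh)\colon\uX\ra\uZ$ and $\ug=(g,g^\sh)\colon\uY\ra\uZ$, I would first form $W=X\t_{f,Z,g}Y$ in $\Top$ with its projections $e\colon W\ra X$, $h\colon W\ra Y$, so $f\ci e=g\ci h$ as continuous maps. The structure sheaf $\O_W$ on $W$ is then defined as the sheafification of the presheaf whose value on an open set arising as $e^{-1}(U)\cap h^{-1}(V)$ (for $U$ open in $X$, $V$ open in $Y$, with $f(U)$ and $g(V)$ meeting inside a common open $S\subseteq Z$) is the pushout $\O_X(U)\amalg_{\O_Z(S)}\O_Y(V)$ in $\CRings$, formed along the maps induced by $f^\sh$ and $g^\sh$; such pushouts exist by Proposition \ref{ag2prop1}. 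One checks the transition maps are well defined using the universal property of pushouts, takes the associated sheaf, and obtains morphisms $\ue=(e,e^\sh)$, $\uh=(h,h^\sh)$ with $\uf\ci\ue=\ug\ci\uh$. The universal property is then verified: given $\ub\colon\uV\ra\uX$, $\uc\colon\uV\ra\uY$ with $\uf\ci\ub=\ug\ci\uc$, the continuous map $V\ra W$ is supplied by the universal property in $\Top$, and on structure sheaves the maps out of the pushouts $\O_X(U)\amalg_{\O_Z(S)}\O_Y(V)$ are supplied by the universal property of pushout in $\CRings$, then extended to the sheafification.

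I expect the main obstacle to be the bookkeeping around sheafification: the naive presheaf on the basis $\{e^{-1}(U)\cap h^{-1}(V)\}$ need not already be a sheaf, so one must argue that this family forms a basis for the topology of $W$, that the presheaf is well defined on it (independent of the choice of $U,V,S$ representing a given open set, which is where compatibility of the pushout maps with restriction in $\O_X,\O_Y,\O_Z$ is used), and that the universal property survives passage to the associated sheaf — using that morphisms of sheaves are determined by their action on a basis and that $f_*$ commutes suitably with the constructions. A cleaner route, which I would actually take, is to observe that $\CRS$ can be described as the category whose objects are pairs (space, sheaf of $C^\iy$-rings) and to invoke the general principle: if $\mathcal{T}$ has finite limits and $\mathcal{A}$ has finite colimits, then the category of $\mathcal{A}^{\mathrm{op}}$-valued sheaves over varying objects of $\mathcal{T}$ — built covariantly in $\mathcal{T}$ and contravariantly in $\mathcal{A}$ — has finite limits, computed as limits of underlying spaces and (sheafified) colimits of structure sheaves. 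This is exactly the remark preceding the proposition, so the proof can be kept short: reduce to terminal object plus fibre products, give both constructions as above, and cite the parenthetical general-nonsense argument for why the universal properties hold, referring to \cite[\S I]{MoRe2} or the analogous statement for ordinary ringed spaces in \cite[\S II]{Hart} for the template.
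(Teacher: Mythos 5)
Your proposal is correct and follows essentially the same route the paper indicates: the paper offers no detailed proof, only the remark that $\Top$ has finite limits, $\CRings$ has finite colimits (Proposition \ref{ag2prop1}), and $\CRS$ is built covariantly from $\Top$ and contravariantly from $\CRings$, which is exactly the general-nonsense argument you invoke and flesh out via the terminal object and fibre products with sheafified pushouts of structure sheaves. The only point to tidy is that the presheaf value on a basic open is not literally independent of the choice of $U,V,S$; one should take the (filtered) colimit over such choices, equivalently form the pushout of the inverse image sheaves $e^{-1}\O_X\amalg_{(f\ci e)^{-1}\O_Z}h^{-1}\O_Y$, after which the stalks and the universal property come out as you describe.
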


Dubuc \cite[Prop.~7]{Dubu3} proves:

\begin{prop} The full subcategory $\LCRS$ of local\/ $C^\iy$-ringed
spaces in $\CRS$ is closed under finite limits in~$\CRS$.\I{C-ringed
space@$C^\iy$-ringed space!local|)}\I{sheaf!of C-rings@of
$C^\iy$-rings|)}
\label{ag4prop2}
\end{prop}

\subsection{The spectrum functor}
\label{ag44}

We now define a {\it spectrum functor\/} $\Spec:\CRings^{\bf op}\ra\LCRS$. It is equivalent to those constructed by Dubuc \cite{Dubu2,Dubu3} and Moerdijk, van Qu\^e and Reyes \cite[\S 3]{MQR}, but our presentation is closer to that of Hartshorne~\cite[p.~70]{Hart}.

\begin{dfn} Let $\fC$ be a $C^\iy$-ring, and use the notation of Definition \ref{ag2def7}. Write $X_\fC$ for the set of all $\R$-points $x$ of $\fC$. Let ${\cal T}_\fC$ be the topology on $X_\fC$ generated by the basis of open sets $U_c=\bigl\{x\in X_\fC:x(c)\ne 0\bigr\}$ for all~$c\in\fC$.

For each $c\in\fC$ define $c_*:X_\fC\ra\R$ to map $c_*:x\mapsto x(c)$.
\label{ag4def8}
\end{dfn}

\begin{ex} Suppose $\fC$ is a finitely generated $C^\iy$-ring,\I{C-ring@$C^\iy$-ring!finitely generated} with exact sequence $0\ra I\hookra C^\iy(\R^n)\smash{\,{\buildrel\phi\over\longra}}\,\fC\ra 0$. Define a map $\phi_*:X_\fC\ra\R^n$ by $\phi_*:x\mapsto\bigl(x\ci\phi(x_1),\ldots,x\ci\phi(x_n)\bigr)$,
where $x_1,\ldots,x_n$ are the generators of $C^\iy(\R^n)$. Then
$\phi_*$ gives a homeomorphism
\e
\phi_*:X_\fC\,{\buildrel\cong\over\longra}\,X_\fC^\phi=\bigl\{
(x_1,\ldots,x_n)\in\R^n:\text{$f(x_1,\ldots,x_n)=0$ for all $f\in
I$}\bigr\},
\label{ag4eq4}
\e
where the right hand side is a closed subset of $\R^n$. So the
topological spaces $(X_\fC,\cT_\fC)$ for finitely generated $\fC$ are
homeomorphic to closed subsets of $\R^n$. 
\label{ag4ex2}
\end{ex}

Recall that a topological space $X$ is {\it regular\/} if whenever $S\subseteq X$ is closed and $x\in X\sm S$ then there exist open $U,V\subseteq X$ with $x\in U$, $S\subseteq V$ and~$U\cap V=\es$.

\begin{lem} In Definition\/ {\rm\ref{ag4def8},} the topology $\cT_\fC$ is also generated by the basis of open sets $c_*^{-1}(V)$ for all\/ $c\in\fC$ and open $V\subseteq\R$. That is, $\cT_\fC$ is the weakest topology on $X_\fC$ such that\/ $c_*:X_\fC\ra\R$ is continuous for all\/ $c\in\fC$. Also $(X_\fC,\cT_\fC)$ is a Hausdorff, regular topological space.
\label{ag4lem1}
\end{lem}

\begin{proof} Suppose $c\in\fC$ and $V\subseteq\R$ is open. Then there exists smooth $f:\R\ra\R$ with $V=\{x\in\R:f(x)\ne 0\}$. Set $c'=\Phi_f(c)$, using the $C^\iy$-ring operation $\Phi_f:\fC\ra\fC$. Then $c'_*=f\ci c_*$ as $c:\fC\ra\R$ is a $C^\iy$-ring morphism, so
\begin{equation*}
U_{c'}=(c'_*)^{-1}(\R\sm\{0\})=(f\ci c_*)^{-1}(\R\sm\{0\})=c_*^{-1}[f^{-1}(0)]=c_*^{-1}(V).
\end{equation*}
So $c_*^{-1}(V)$ is of the form $U_{c'}$. Conversely $U_c=c_*^{-1}(V)$ for $V=\R\sm\{0\}\subseteq\R$. So the two given bases for $\cT_\fC$ are the same, proving the first part.

Let $x,y$ be distinct points of $X_\fC$. Then there exists $c\in\fC$ with $x(c)\ne y(c)$, as $x\ne y$. Set $\ep=\ha\md{x(c)-y(c)}>0$ and $U=c_*^{-1}\bigl((x(c)-\ep,x(c)+\ep)\bigr)$, $V=c_*^{-1}\bigl((y(c)-\ep,y(c)+\ep)\bigr)$. Then $U,V\subseteq X_\fC$ are disjoint open sets with $x\in U$, $y\in V$, so $(X_\fC,\cT_\fC)$ is Hausdorff.

Suppose $S\subseteq X_\fC$ is closed, and $x\in X\sm S$. Then there exists $c\in\fC$ with $x\in U_c\subseteq X_\fC\sm S$, since $X_\fC\sm S$ is open in $X_\fC$ and the $U_c$ are a basis for $\cT_\fC$. Therefore $c_*(x)\ne 0$ and $c_*\vert_S=0$. Set $\ep=\ha\md{c_*(x)}>0$, $U=c_*^{-1}\bigl((c_*(x)-\ep,c_*(x)+\ep)\bigr)$ and $V=c_*^{-1}\bigl((-\ep,\ep)\bigr)$. Then $U,V\subseteq X_\fC$ are disjoint open sets with $x\in U$, $S\subseteq V$, so $(X_\fC,\cT_\fC)$ is regular.
\end{proof}

\begin{dfn} Let $\fC$ be a $C^\iy$-ring, and $X_\fC$ the topological space from Definition \ref{ag4def8}. For each open $U\subseteq X_\fC$, define $\O_{X_\fC}(U)$ to be the set of functions $s:U\ra \coprod_{x\in U}\fC_x$ with $s(x)\in\fC_x$ for all $x\in U$, and such that $U$ may be covered by open sets $W\subseteq U\subseteq X_\fC$ for which there exist $c\in\fC$ with $s(x)=\pi_x(c)\in\fC_x$ for all $x\in W$. Define operations $\Phi_f$ on $\O_{X_\fC}(U)$ pointwise in $x\in U$ using the operations $\Phi_f$ on $\fC_x$. This makes $\O_{X_\fC}(U)$ into a $C^\iy$-ring. If $V\subseteq U\subseteq X_\fC$ are open, the restriction map $\rho_{UV}:\O_{X_\fC}(U)\ra\O_{X_\fC}(V)$ mapping $\rho_{UV}:s\mapsto s\vert_V$ is a morphism of $C^\iy$-rings.

Clearly $\O_{X_\fC}$ is a sheaf of $C^\iy$-rings on $X_\fC$. Lemma \ref{ag4lem2} shows that the stalk\I{sheaf!stalk} $\O_{X_\fC,x}$ at $x\in X_\fC$ is $\fC_x$, which is a local $C^\iy$-ring.\I{C-ring@$C^\iy$-ring!local} Hence $(X_\fC,\O_{X_\fC})$ is a local $C^\iy$-ringed space,\I{C-ringed space@$C^\iy$-ringed space!local} which we call the {\it spectrum\/}\I{C-ring@$C^\iy$-ring!spectrum functor $\Spec$} of $\fC$, and write as~$\Spec\fC$.\G[Spec]{$\Spec:\CRings^{\bf op}\ra\LCRS$}{spectrum functor on $C^\iy$-rings}

Now let $\phi:\fC\ra\fD$ be a morphism of $C^\iy$-rings. Define
$f_\phi:X_\fD\ra X_\fC$ by $f_\phi(x)=x\ci\phi$. Then $f_\phi$ is
continuous. For $U\subseteq X_\fC$ open define
$(f_\phi)_\sh(U):\O_{X_\fC}(U)\ra\O_{X_\fD}(f_\phi^{-1}(U))$ by
$(f_\phi)_\sh(U)s:x\mapsto \phi_x(s(f_\phi(x)))$, where
$\phi_x:\fC_{f_\phi(x)}\ra\fD_x$ is the induced morphism of
local $C^\iy$-rings. Then $(f_\phi)_\sh:\O_{X_{\fC}}\ra (f_\phi)_*
(\O_{X_\fD})$ is a morphism of sheaves of $C^\iy$-rings on $X_\fC$.
Let $f_\phi^\sh:f_\phi^{-1}(\O_{X_{\fC}})\ra\O_{X_\fD}$ be the
corresponding morphism of sheaves of $C^\iy$-rings on $X_\fD$ under
\eq{ag4eq3}. Then $\uf_\phi=(f_\phi,f_\phi^\sh): (X_\fD,\O_{X_\fD})\ra
(X_\fC,\O_{X_\fC})$ is a morphism of local $C^\iy$-ringed spaces.
Define $\Spec\phi:\Spec\fD\ra\Spec\fC$ by $\Spec\phi=\uf_\phi$. Then
$\Spec$ is a functor $\CRings^{\bf op}\ra\LCRS$, the {\it spectrum functor}.
\label{ag4def9}
\end{dfn}

\begin{ex} Let $X$ be a manifold.\I{manifold!C-scheme of@$C^\iy$-scheme of}\I{manifold with corners!C-scheme of@$C^\iy$-scheme of} Then it follows from Theorem \ref{ag4thm6} below that the local $C^\iy$-ringed space\I{C-ringed space@$C^\iy$-ringed space!local} $\uX$ constructed in Example \ref{ag4ex1} is naturally isomorphic to~$\Spec C^\iy(X)$.
\label{ag4ex3}
\end{ex}

\begin{lem} In Definition\/ {\rm\ref{ag4def9},} the stalk\/ $\O_{X_\fC,x}$ of\/ $\O_{X_\fC}$ at\/ $x\in X_\fC$ is naturally isomorphic to\/~$\fC_x$.
\label{ag4lem2}
\end{lem}

\begin{proof} Elements of $\O_{X_\fC,x}$ are $\sim$-equivalence classes $[U,s]$ of pairs $(U,s)$, where $U$ is an open neighbourhood of $x$ in $X_\fC$ and $s\in\O_{X_\fC}(U)$, and $(U,s)\sim(U',s')$ if there exists open $x\in V\subseteq U\cap U'$ with $s\vert_V=s'\vert_V$. Define a $C^\iy$-ring morphism $\Pi:\O_{X_\fC,x}\ra\fC_x$ by $\Pi:[U,s]\mapsto s(x)$. 

Suppose $c_x\in\fC_x$. Then $c_x=\pi_x(c)$ for some $c\in\fC$ by Proposition \ref{ag2prop2}. The map $s:X_\fC\ra \coprod_{x'\in X_\fC}\fC_{x'}$ mapping $s:x'\mapsto\pi_{x'}(c)$ lies in $\O_{X_\fC}(X_\fC)$, and $\Pi:[X_\fC,s]\mapsto\pi_x(c)=c_x$. Hence $\Pi:\O_{X_\fC,x}\ra\fC_x$ is surjective.

Suppose $[U,s]\in \O_{X_\fC,x}$ with $\Pi([U,s])=0\in\fC_x$. As $s\in\O_{X_\fC}(U)$, there exist open $x\in V\subseteq U$ and $c\in\fC$ with $s(x')=\pi_{x'}(c)\in\fC_{x'}$ for all $x'\in V$. Then $\pi_x(c)=s(x)=\Pi([U,s])=0$, so $c$ lies in the ideal $I$ in \eq{ag2eq2} by Proposition \ref{ag2prop2}. Thus there exists $d\in\fC$ with $x(d)\ne 0$ in $\R$ and $cd=0$ in $\fC$. Set $W=\{x'\in V:x'(d)\ne 0\}$, so that $W$ is an open neighbourhood of $x$ in $U$. If $x'\in W$ then $x'(d)\ne 0$, so $\pi_{x'}(d)$ is invertible in $\fC_{x'}$. Thus
\begin{equation*}
s(x')=\pi_{x'}(c)=\pi_{x'}(c)\pi_{x'}(d)\pi_{x'}(d)^{-1}=\pi_{x'}(cd)\pi_{x'}(d)^{-1}=\pi_{x'}(0)\pi_{x'}(d)^{-1}=0.
\end{equation*}
Hence $[U,s]=[W,s\vert_W]=[W,0]=0$ in $\O_{X_\fC,x}$, so $\Pi:\O_{X_\fC,x}\ra\fC_x$ is injective. Thus $\Pi:\O_{X_\fC,x}\ra\fC_x$ is an isomorphism.
\end{proof}

\begin{dfn} The {\it global sections functor\/}\I{global sections functor
$\Ga$|(} $\Ga:\LCRS\ra\CRings^{\bf op}$\G[Gaa]{$\Ga:\LCRS\ra\CRings^{\bf op}$}{global sections functor on $C^\iy$-ringed spaces} acts on objects $(X,\O_X)$ by
$\Ga:(X,\O_X)\mapsto\O_X(X)$ and on morphisms $(f,f^\sh):(X,\O_X)\ra(Y,\O_Y)$ by $\Ga:(f,f^\sh)\mapsto f_\sh(Y)$, for $f_\sh:\O_Y\ra f_*(\O_X)$ as in~\eq{ag4eq3}. 

Then $\Ga\ci\Spec$ is a functor $\CRings^{\bf op}\ra\CRings^{\bf op}$, or equivalently a functor $\CRings\ra\CRings$. For each $C^\iy$-ring $\fC$ and $c\in\fC$, define $\Psi_\fC(c):X_\fC\ra\coprod_{x\in X_\fC}\fC_x$ by $\Psi_\fC(c):x\mapsto\pi_x(c)\in\fC_x$. Then $\Psi_\fC(c)\in\O_{X_\fC}(X_\fC)=\Ga\ci\Spec\fC$ by Definition \ref{ag4def9}, so $\Psi_\fC:\fC\ra \Ga\ci\Spec\fC$\G[Psia]{$\Psi_\fC:\fC\ra\Ga\ci\Spec\fC$}{canonical morphism for a $C^\iy$-ring $\fC$} is a map. Since $\pi_x:\fC\ra\fC_x$ is a $C^\iy$-ring morphism and the $C^\iy$-ring operations on $\O_{X_\fC}(X_\fC)$ are defined pointwise in the $\fC_x$, this $\Psi_\fC$ is a $C^\iy$-ring morphism. It is functorial in $\fC$, so that the $\Psi_\fC$ for all $\fC$ define a natural transformation $\Psi:\id_\CRings\Ra\Ga\ci\Spec$ of functors~$\id_\CRings,\Ga\ci\Spec:\CRings\ra\CRings$.
\label{ag4def10}
\end{dfn}

\begin{thm} The functor\/ $\Spec:\CRings^{\bf op}\ra\LCRS$ is \begin{bfseries}right adjoint\end{bfseries}\I{functor!adjoint|(}\I{adjoint functor|(} to $\Ga:\LCRS\ra\CRings^{\bf op}$. That is, for all\/ $\fC\in\CRings$ and\/ $\uX\in\LCRS$ there are inverse bijections
\e
\xymatrix@C=150pt{ *+[r]{\Hom_\CRings(\fC,\Ga(\uX))} \ar@<.5ex>[r]^{L_{\fC,\uX}}
& *+[l]{\Hom_\LCRS(\uX,\Spec\fC),} \ar@<.5ex>[l]^{R_{\fC,\uX}} }
\label{ag4eq5}
\e
which are functorial in the sense that if\/ $\la:\fC\ra\fD$ is a morphism in $\CRings$ and\/ $\ue:\uX\ra\uY$ a morphism in $\LCRS$ then the following commutes:
\e
\begin{gathered}
\xymatrix@C=150pt@R=15pt{
*+[r]{\Hom_\CRings(\fD,\Ga(\uY))} \ar@<.5ex>[r]^{L_{\fD,\uY}} \ar[d]^{\phi\mapsto \Ga(\ue)\ci \phi\ci\la}
& *+[l]{\Hom_\LCRS(\uY,\Spec\fD)} \ar@<.5ex>[l]^{R_{\fD,\uY}} \ar[d]_{\uf\mapsto\Spec\la\ci\uf\ci\ue} \\
*+[r]{\Hom_\CRings(\fC,\Ga(\uX))} \ar@<.5ex>[r]^{L_{\fC,\uX}}
& *+[l]{\Hom_\LCRS(\uX,\Spec\fC).\!} \ar@<.5ex>[l]^{R_{\fC,\uX}} }
\end{gathered}
\label{ag4eq6}
\e
When $\uX=\Spec\fC$ we have $\Psi_\fC=R_{\fC,\uX}(\uid_\uX),$ so that\/ $\Psi_\fC$ is the unit of the adjunction between $\Ga$ and\/~$\Spec$.
\label{ag4thm1}
\end{thm}

\begin{proof} Let $\fC\in\CRings$ and $\uX\in\LCRS$. Write $\uY=(Y,\O_Y)=\Spec\fC$. Define $R_{\fC,\uX}$ in \eq{ag4eq5} by, for each morphism $\uf:\uX\ra\uY$ in $\LCRS$, taking $R_{\fC,\uX}(\uf):\fC\ra\Ga(\uX)$ to be the composition
\e
\xymatrix@C=40pt{ \fC \ar[r]^(0.3){\Psi_\fC} & \Ga\ci\Spec\fC=\Ga(\uY) \ar[r]^(0.6){\Ga(\uf)} & \Ga(\uX). }
\label{ag4eq7}
\e
For the last part, if $\uX=\Spec\fC$ then $\Psi_\fC=R_{\fC,\uX}(\uid_\uX)$ as~$\Ga(\uid_\uX)=\id_{\Ga(\uX)}$.

Let $\phi:\fC\ra\Ga(\uX)$ be a morphism in $\CRings$. We will construct a morphism $\ug=(g,g^\sh):\uX\ra\uY$ in $\LCRS$, and set $L_{\fC,\uX}(\phi)=\ug$. For any $x\in X$ we have an $\R$-algebra morphism $x_*:\Ga(\uX)\ra\R$ by composing the stalk map $\si_x:\Ga(\uX)\ra\O_{X,x}$ with the unique morphism $\pi:\O_{X,x}\ra\R$, as $\O_{X,x}$ is a local $C^\iy$-ring. Then $x_*\ci\phi:\fC\ra\R$ is an $\R$-algebra morphism, and hence a point of $Y$. Define $g:X\ra Y$ by $g(x)=x_*\ci\phi$. If $c\in\fC$ then $U_c=\{y\in Y:y(c)\ne 0\}$ is open in $Y$, and $g^{-1}(U_c)=\{x\in X:x_*(\phi(c))\ne 0\}$ is open in $X$, as $x\mapsto x_*(d)$ is a continuous map $X\ra\R$ for any $d\in\Ga(\uX)$. Since the $U_c$ for $c\in\fC$ are a basis for the topology of $Y$ by Definition \ref{ag4def8}, $g:X\ra Y$ is continuous.

Let $x\in X$ with $g(x)=y\in Y$. Consider the diagram of $C^\iy$-rings
\e
\begin{gathered}
\xymatrix@C=100pt@R=15pt{ *+[r]{\fC} \ar[d]^{\pi_y} \ar[r]_\phi & *+[l]{\Ga(\uX)} \ar[d]_{\si_x} \\
*+[r]{\fC_y\cong\O_{Y,y}} \ar@{.>}[r]^{\phi_x} & *+[l]{\O_{X,x}.}
}
\end{gathered}
\label{ag4eq8}
\e
Here $\fC_y\cong\O_{Y,y}$ by Lemma \ref{ag4lem2}. If $c\in\fC$ with $y(c)\ne 0$ then $\si_x\ci\phi(c)\in\O_{X,x}$ with $\pi[\si_x\ci\phi(c)]\ne 0$, so $\si_x\ci\phi(c)$ is invertible in $\O_{X,x}$ as $\O_{X,x}$ is a local $C^\iy$-ring. Thus by the universal property of $\pi_y:\fC\ra\fC_y$ there is a unique morphism $\phi_x:\O_{Y,y}\ra \O_{X,x}$ making \eq{ag4eq8} commute.

For each open $V\subseteq Y$ with $U=g^{-1}(V)\subseteq X$, define $g_\sh(V):\O_Y(V)\ra g_*(\O_X)(V)=\O_X(U)$ by $g_\sh(V)s:x\mapsto \phi_x(s(g(x)))$ for $s\in \O_Y(V)$ and $x\in U\subseteq X$, so that $g(x)\in V$, and $s(g(x))\in\O_{Y,g(x)}$, and $\phi_x(s(g(x)))\in\O_{X,x}$. Here as $\O_X$ is a sheaf we may identify elements of $\O_X(U)$ with maps $t:U\ra\coprod_{x\in U}\O_{X,x}$ with $t(x)\in\O_{X,x}$ for $x\in U$, such that $t$ satisfies certain local conditions in $U$. If $s\in \O_Y(V)$ and $x\in U\subseteq X$ with $g(x)=y\in V\subseteq Y$, then by Definition \ref{ag4def9} there is an open neighbourhood $W_y$ of $y$ in $V$ and $c\in\fC$ with $s(y')=\pi_{y'}(c)\in\fC_{y'}\cong\O_{Y,y'}$ for all $y'\in W_y$. Therefore $g_\sh(V)s$ maps $x'\mapsto\si_{x'}(\phi(c))$ for all $x'$ in the open neighbourhood $g^{-1}(W_y)$ of $x$ in $U$, by \eq{ag4eq8}. Since the open subsets $g^{-1}(W_y)$ cover $U$, $g_\sh(V)s$ is a section of $\O_X\vert_U$, and $g_\sh(V)$ is well defined.

As the $\phi_x$ are $C^\iy$-ring morphisms, this defines a morphism $g_\sh:\O_Y\ra g_*(\O_X)$ of sheaves of $C^\iy$-rings on $Y$. Let $g^\sh:g^{-1}(\O_Y)\ra\O_X$ be the corresponding morphism of sheaves on $X$ under \eq{ag4eq3}. The stalk $g_x^\sh:\O_{Y,y}\ra\O_{X,x}$ of $g^\sh$ at $x\in X$ with $g(x)=y\in Y$ is $g_x^\sh=\phi_x$. Then $\ug=(g,g^\sh)$ is a morphism in $\LCRS$. Set $L_{\fC,\uX}(\phi)=\ug$. This defines $L_{\fC,\uX}$ in~\eq{ag4eq5}.

For $\phi,\ug$ as above, $c\in\fC$, and $x\in X$ with $g(x)=y=x_*\ci\phi\in Y$, we have
\begin{align*}
\si_x\bigl[\bigl(R_{\fC,\uX}\ci L_{\fC,\uX}(\phi)\bigr)(c)\bigr]&=\si_x\bigl[\Ga(\ug)\ci\Psi_\fC(c)\bigr]=g^\sh_x\ci \si_y[\Psi_\fC(c)]\\
&=\phi_x\ci\si_y[\Psi_\fC(c)]=\phi_x\ci\pi_y(c)=\si_x\ci\phi(c),
\end{align*}
using $L_{\fC,\uX}(\phi)=\ug$ and the definition \eq{ag4eq7} of $R_{\fC,\uX}(\ug)$ in the first step, $\si_x\ci\Ga(\ug)=g_x^\sh\ci\si_y:\Ga(\uY)\ra\O_{X,x}$ in the second, $g_x^\sh=\phi_x$ in the third, $\si_y\ci\Psi_\fC=\pi_y$ as maps $\fC\ra\O_{Y,y}\cong\fC_y$ in the fourth, and \eq{ag4eq8} in the fifth. As $\prod_{x\in X}\si_x:\Ga(\uX)\ra\prod_{x\in X}\O_{X,x}$ is injective, this implies that $\bigl(R_{\fC,\uX}\ci L_{\fC,\uX}(\phi)\bigr)(c)=\phi(c)$ for all $c\in\fC$, so $R_{\fC,\uX}\ci L_{\fC,\uX}(\phi)=\phi$, and~$R_{\fC,\uX}\ci L_{\fC,\uX}=\id$.

Suppose $\uf:\uX\ra\uY$ is a morphism in $\LCRS$, and set $\phi=R_{\fC,\uX}(\uf)$ and $\ug=L_{\fC,\uX}(\phi)$. Let $x\in X$ with $f(x)=y\in Y$. Then we have a commutative diagram in $\CRings$ 
\e
\begin{gathered}
\xymatrix@C=70pt@R=17pt{ \fC \ar[ddr]^(0.35)y \ar@<.8ex>@/^.5pc/[rr]^\phi \ar[d]^{\pi_y} \ar[r]_(0.5){\Psi_\fC} & \Ga\ci\Spec\fC=\Ga(\uY) \ar[d]^{\si_y} \ar[r]_(0.5){\Ga(\uf)} & \Ga(\uX) \ar[d]_{\si_x} \ar[ddl]_(0.35){x_*} \\
\fC_y \ar[dr]_\pi \ar[r]^(0.65)\cong & \O_{Y,y} \ar[d]_(0.4)\pi \ar[r]^(0.25){f^\sh_x} & \O_{X,x} \ar[dl]^\pi \\
& \R, }
\end{gathered}
\label{ag4eq9}
\e
where the isomorphism $\fC_y\cong\O_{Y,y}$ comes from Lemma \ref{ag4lem2}. Since $g(x)=x_*\ci\phi:\fC\ra\R$, this proves that $g(x)=y=f(x)$, so $f=g$. Also by definition the stalk $g^\sh_x:\O_{Y,y}\ra\O_{X,x}$ is $\phi_x$ in \eq{ag4eq8}, so comparing \eq{ag4eq8} and \eq{ag4eq9} and using $\pi_y:\fC\ra\fC_y$ surjective by Proposition \ref{ag2prop2} shows that $f^\sh_x=g^\sh_x$. As this holds for all $x\in X$ we have $f^\sh=g^\sh$, so $\uf=(f,f^\sh)=(g,g^\sh)=\ug$. Thus $L_{\fC,\uX}\ci R_{\fC,\uX}(\uf)=\uf$ for all $\uf:\uX\ra\uY$, so $L_{\fC,\uX}\ci R_{\fC,\uX}=\id$. Therefore $L_{\fC,\uX},R_{\fC,\uX}$ in \eq{ag4eq5} are inverse bijections.

It is easy to see that the rectangle in \eq{ag4eq6} involving $R_{\fD,\uY},R_{\fC,\uX}$ commutes using \eq{ag4eq7} and functoriality of the $\Psi_\fC$ and $\Ga$. Then the rectangle involving $L_{\fD,\uY},L_{\fC,\uX}$ commutes as $L_{\fD,\uY}=R_{\fD,\uY}^{-1}$ and $L_{\fC,\uX}=R_{\fC,\uX}^{-1}$. So \eq{ag4eq6} commutes. This completes the proof.
\end{proof}

\begin{rem}{\bf(a)} The fact in Theorem \ref{ag4thm1} that $\Spec:\CRings^{\bf op}\ra\LCRS$ is right adjoint to $\Ga:\LCRS\ra\CRings^{\bf op}$ determines $\Spec$ uniquely up to natural isomorphism, by properties of adjoint functors.

Dubuc \cite{Dubu3} and Moerdijk, van Qu\^e and Reyes \cite[\S 3]{MQR} both prove the existence of a right adjoint to $\Ga:\LCRS\ra\CRings^{\bf op}$, which is therefore naturally isomorphic to our functor $\Spec$ in Definition \ref{ag4def9}. But they show $\Spec$ exists by category theory, without constructing it explicitly as we do.

Moerdijk et al.\ \cite[\S 3]{MQR} call our functor $\Spec$ the {\it Archimedean spectrum}. They also give a nonequivalent definition \cite[\S 1]{MQR} of the spectrum $\Spec\fC$, in which the points are not $\R$-points, but `$C^\iy$-radical prime ideals'.
\smallskip

\noindent{\bf(b)} Since $\Spec$ is a right adjoint functor, it preserves limits, as in \cite[p.~687]{Dubu3}. Equivalently, $\Spec$ takes colimits in $\CRings$ to limits in $\LCRS$. So, for example, a pushout $\fC=\fD\amalg_\fF\fE$ of morphisms $\phi:\fF\ra\fD$, $\psi:\fF\ra\fE$ in $\CRings$ is mapped to a fibre product $\Spec\fC\cong \Spec\fD\t_{\Spec\fF}\Spec\fE$ of morphisms $\Spec\phi:\Spec\fD\ra\Spec\fF$, $\Spec\psi:\Spec\fE\ra\Spec\fF$ in $\LCRS$.\I{global sections functor $\Ga$|)}\I{functor!adjoint|)}\I{adjoint functor|)}\label{ag4rem4}
\end{rem}

Here are some properties of finitely generated and fair $C^\iy$-rings, due to Dubuc \cite[Th.~13]{Dubu3}. The reflection functor $\smash{R_{\rm fg}^{\rm fa}}$ is as in Definition~\ref{ag2def9}.

\begin{thm}{\bf(a)} If\/ $\fC$ is a finitely generated\/ $C^\iy$-ring, there is a natural isomorphism $\Ga\ci\Spec\fC\cong\smash{R_{\rm fg}^{\rm fa}}(\fC),$ which identifies $\Psi_\fC:\fC\ra\Ga (\Spec\fC)$ with the natural surjective projection\/ $\smash{\fC\ra R_{\rm fg}^{\rm fa}(\fC)}$.
 
These isomorphisms for all\/ $\fC$ form a natural isomorphism $\smash{R_{\rm fg}^{\rm fa}}\cong\Ga\ci\Spec$ of functors $\smash{R_{\rm fg}^{\rm fa}},\Ga\ci\Spec:\CRingsfg\ra\CRingsfa$.

Hence, if\/ $\fC$ is fair then $\Psi_\fC:\fC\ra\Ga(\Spec\fC)\cong\smash{R_{\rm fg}^{\rm fa}}(\fC)$ is an isomorphism.
\smallskip

\noindent{\bf(b)} If\/ $\fC$ is finitely generated then $\Spec\Psi_\fC:\Spec\fC\ra\Spec\Ga(\Spec\fC)\cong \Spec\smash{R_{\rm fg}^{\rm fa}}(\fC)$ is an isomorphism in $\LCRS$. 
\smallskip

\noindent{\bf(c)} The functor\/ $\Spec\vert_{\cdots}:(\CRingsfa)^{\bf op}\ra\LCRS$ is full\I{functor!full} and faithful, and takes finite limits in  $(\CRingsfa)^{\bf op}$ to finite limits in $\LCRS$.
\label{ag4thm2}
\end{thm}

To see that $\Spec$ is full and faithful on $(\CRingsfa)^{\bf op}$ in (c), let $\fC,\fD$ be fair $C^\iy$-rings. Then putting $\uX=\Spec\fD$ in \eq{ag4eq5} and using $\fD\cong\Ga\ci\Spec\fD$ by (a) shows that the following is a bijection.
\begin{equation*}
\Spec:\Hom_\CRings(\fC,\fD) \longra\Hom_\LCRS(\Spec\fD,\Spec\fC).
\end{equation*}
Note that $\Spec$ is neither full nor faithful on $(\CRingsfg)^{\bf op}$ or $\CRings^{\bf op}$. This is a contrast to conventional algebraic geometry, where $\Ga(\Spec R)\cong R$ for arbitrary rings $R$, as in \cite[Prop.~II.2.2]{Hart}, so that $\Spec$ is full and faithful. In \S\ref{ag46} we will generalize Theorem \ref{ag4thm2} to non-finitely-generated $C^\iy$-rings.

\subsection{\texorpdfstring{Affine $C^\iy$-schemes and $C^\iy$-schemes}{Affine C∞-schemes and C∞-schemes}}
\label{ag45}
\I{C-scheme@$C^\iy$-scheme|(}

As for the usual definitions of affine schemes and schemes, we define:

\begin{dfn} A local $C^\iy$-ringed space\I{C-ringed
space@$C^\iy$-ringed space!local} $\uX$ is called an {\it affine\/
$C^\iy$-scheme} if it is isomorphic in $\LCRS$ to $\Spec\fC$ for
some $C^\iy$-ring $\fC$. We call $\uX$ a {\it finitely
presented},\I{C-scheme@$C^\iy$-scheme!finitely presented affine} or
{\it fair},\I{C-scheme@$C^\iy$-scheme!fair affine} affine
$C^\iy$-scheme if $X\cong\Spec\fC$ for $\fC$ that kind of
$C^\iy$-ring. Write
$\ACSch,\ACSchfp,\ACSchfa$\G[ACSch]{$\ACSch$}{category of affine
$C^\iy$-schemes}\G[ACSchfa]{$\ACSchfa$}{category of fair affine
$C^\iy$-schemes}\G[ACSchfp]{$\ACSchfp$}{category of finitely
presented affine $C^\iy$-schemes} for the full subcategories of
affine $C^\iy$-schemes and of finitely presented, and fair, affine
$C^\iy$-schemes in $\LCRS$ respectively.

We do not define {\it finitely generated\/} affine $C^\iy$-schemes,
because Theorem \ref{ag4thm2}(b) implies that they coincide with fair affine $C^\iy$-schemes.

Let $\uX=(X,\O_X)$ be a local $C^\iy$-ringed space. We
call $\uX$ a $C^\iy$-{\it
scheme\/}\G[WXYZa]{$\uW,\uX,\uY,\uZ,\ldots$}{$C^\iy$-schemes} if $X$
can be covered by open sets $U\subseteq X$ such that
$(U,\O_X\vert_U)$ is an affine $C^\iy$-scheme. We call a
$C^\iy$-scheme $\uX$ {\it locally
fair},\I{C-scheme@$C^\iy$-scheme!definition}\I{C-scheme@$C^\iy$-scheme!locally
fair} or {\it locally finitely
presented},\I{C-scheme@$C^\iy$-scheme!locally finitely presented} if
$X$ can be covered by open $U\subseteq X$ with $(U,\O_X\vert_U)$ a
fair, or finitely presented, affine $C^\iy$-scheme, respectively.

We call a $C^\iy$-scheme $\uX$ {\it
Hausdorff},\I{C-scheme@$C^\iy$-scheme!Hausdorff} {\it second
countable},\I{C-scheme@$C^\iy$-scheme!second countable} {\it Lindel\"of},\I{C-scheme@$C^\iy$-scheme!Lindel\"of} {\it compact},\I{C-scheme@$C^\iy$-scheme!compact} {\it locally compact},\I{C-scheme@$C^\iy$-scheme!locally compact} {\it paracompact},\I{C-scheme@$C^\iy$-scheme!paracompact} {\it metrizable},\I{C-scheme@$C^\iy$-scheme!metrizable} {\it regular}, or \I{C-scheme@$C^\iy$-scheme!regular} {\it separable},\I{C-scheme@$C^\iy$-scheme!separable} if the topological space $X$ is.\I{topological space!Hausdorff}\I{topological space!locally compact}\I{topological space!second countable}\I{topological space!Lindel\"of}\I{topological space!paracompact}\I{topological space!metrizable} Affine $C^\iy$-schemes are Hausdorff and regular by Lemma~\ref{ag4lem1}.

Write $\CSchlf,\CSchlfp,\CSch$\G[CSch]{$\CSch$}{category of
$C^\iy$-schemes}\G[CSchlf]{$\CSchlf$}{category of locally fair
$C^\iy$-schemes}\G[CSchlfp]{$\CSchlfp$}{category of locally finitely
presented $C^\iy$-schemes} for the full subcategories in $\LCRS$ of locally fair $C^\iy$-schemes, locally finitely presented $C^\iy$-schemes, and all $C^\iy$-schemes, respectively. 
\label{ag4def11}
\end{dfn}

\begin{rem} Ordinary schemes are a much larger class than ordinary affine schemes,
and central examples such as $\CP^n$ are not affine schemes. However, affine $C^\iy$-schemes are already general enough for many purposes. For example, all second countable, metrizable $C^\iy$-schemes are affine, as in \S\ref{ag48}, including manifolds and manifolds with corners. Affine $C^\iy$-schemes are Hausdorff and regular, so any non-Hausdorff or non-regular $C^\iy$-scheme is not affine.
\label{ag4rem5}
\end{rem}

For the next theorem, part (a) follows from  Propositions \ref{ag2prop1}, \ref{ag2prop5} and \ref{ag2prop6}, Remark \ref{ag4rem4}(b), and Theorem \ref{ag4thm2}(c). Part (b) holds as finite limits in $\CSchlfp,\CSchlf,\CSch$ are locally modelled on finite limits in $\ACSchfp,\ACSchfa$ and $\ACSch$.

\begin{thm}{\bf(a)} The full subcategories\/ $\ACSchfp,\ACSchfa,\ab\ACSch$
are closed under all finite limits\I{category!limit} in\/ $\LCRS$.
Hence, fibre products\I{fibre product}\I{category!fibre
product}\I{C-scheme@$C^\iy$-scheme!fibre products} and all finite
limits exist in each of these subcategories.
\smallskip

\noindent{\bf(b)} The full subcategories\/ $\CSchlfp,\CSchlf$ and\/
$\CSch$ are closed under all finite limits in\/ $\LCRS$. Hence,
fibre products\I{fibre product}\I{category!fibre
product}\I{C-scheme@$C^\iy$-scheme!fibre products} and all finite
limits\I{category!limit} exist in each of these subcategories.
\label{ag4thm3}
\end{thm}

\begin{dfn} Define functors
\begin{align*}
F_\Man^\CSch:\Man&\longra\ACSchfp\subset \ACSch,\\
F_\Manb^\CSch:\Manb&\longra\ACSchfa\subset\ACSch,\\
F_\Manc^\CSch:\Manc&\longra\ACSchfa\subset\ACSch,
\end{align*}
by $F_{\Man^*}^\CSch=\Spec\ci F_{\Man^*}^\CRings$, in the notation
of Definitions \ref{ag3def} and \ref{ag4def9}.

By Example \ref{ag4ex3}, if $X$ is a manifold with corners then
$F_\Manc^\CSch(X)$ is naturally isomorphic to the local
$C^\iy$-ringed space $\uX$ in Example~\ref{ag4ex1}.

If $X,Y,\ldots$ are manifolds, or $f,g,\ldots$ are (weakly)
smooth\I{manifold with corners!weakly smooth map} maps, we may use
$\uX,\uY,\ldots,\uf,\ug,\ldots$ to denote the images of
$X,Y,\ldots,f,g,\ldots$ under $F_\Manc^\CSch$. So for instance we will
write $\ul{\R}^n$ and $\ul{[0,\iy)\!}\,$ for $F_\Man^\CSch(\R^n)$
and~$F_\Manb^\CSch\bigl([0,\iy)\bigr)$.

\label{ag4def12}
\end{dfn}

Our categories of spaces so far are related as
follows:
\begin{equation*}
\xymatrix@C=20pt@R=19pt{\Man \ar[d]^{F_\Man^\CSch} \ar[r]_\subset & \Manb
\ar[d]_{F_\Manb^\CSch} \ar[r]_\subset & \Manc \ar[dl]^(0.2){F_\Manc^\CSch}
\\ \ACSchfp \ar[r]_\subset \ar[d]^\subset & \ACSchfa \ar[r]_\subset
\ar[d]^\subset & \ACSch \ar[d]^\subset \ar[dr]^\subset  \\
\CSchlfp \ar[r]^\subset & \CSchlf \ar[r]^\subset & \CSch
\ar[r]^\subset & \LCRS \ar[r]^\subset & \CRS. }
\end{equation*}

By Corollary \ref{ag3cor} and Theorems \ref{ag3thm} and \ref{ag4thm2}(c), we find as in~\cite[Th.~16]{Dubu3}:

\begin{cor} $F_\Man^\CSch:\Man\hookra\ACSchfp\subset\ACSch$ is a full
and faithful functor, and $F_\Manb^\CSch:\Manb\ra\ACSchfa\subset
\ACSch,$ $F_\Manc^\CSch:\Manc\ra\ACSchfa\subset\ACSch$ are both
faithful functors, but are not full. Also these functors take
transverse fibre products\I{manifold!transverse fibre
product}\I{manifold with corners!transverse fibre product}\I{fibre
product}\I{category!fibre product}\I{C-scheme@$C^\iy$-scheme!fibre
products} in $\Man,\Manc$ to fibre products
in\/~$\ACSchfp,\ACSchfa$.
\label{ag4cor1}
\end{cor}

We study open subspaces of $C^\iy$-schemes. The definition of $\Spec\fC$ implies:

\begin{lem} Let\/ $\fC$ be a $C^\iy$-ring, and\/ $c\in\fC$. Write $\Spec\fC=(X,\O_X)$ and\/ $U_c=\{x\in X:x(c)\ne 0\}$. Then $U_c\subseteq X$ is open with\/ $(U_c,\O_X\vert_{U_c})\cong\Spec\fC[c^{-1}]$. 

\label{ag4lem3}
\end{lem}

\begin{cor} Let\/ $\uX=(X,\O_X)$ be a $C^\iy$-scheme and\/ $V\subseteq X$ be open. Then $\uV=(V,\O_X\vert_V)$ is also a $C^\iy$-scheme.
\label{ag4cor2}
\end{cor}

\begin{proof} Let $x\in V$. Then there exists an open $x\in Y\subseteq X$ with $Y\cong\Spec\fC$ for some $C^\iy$-ring $\fC$, as $\uX$ as a $C^\iy$-scheme. Identify $\uY$ with $\Spec\fC$. As $V\cap Y$ is open in $Y=X_\fC$, and the topology on $X_\fC$ is generated by subsets $U_c=\{\ti x\in X_\fC:\ti x(c)\ne 0\}$ for $c\in\fC$, there exists $c\in\fC$ such that $x\in U_c\subseteq V\cap Y$. Then $(U_c,\O_X\vert_{U_c})\cong\Spec\fC[c^{-1}]$ by Lemma \ref{ag4lem3}. So every $x\in\uV$ has an affine open neighbourhood, and $\uV$ is a $C^\iy$-scheme.
\end{proof}

\begin{lem} Let\/ $\fC$ be a finitely generated\/\I{C-ring@$C^\iy$-ring!finitely generated} $C^\iy$-ring and\/ $(X,\O_X)=\Spec\fC$. Suppose $V\subseteq X$ is open. Then there exists $c\in\fC$ with\/ $V=\{x\in X:x(c)\ne 0\}$. We call $c$ a \begin{bfseries}characteristic function\end{bfseries} for~$V$.
\label{ag4lem4}
\end{lem}

\begin{proof} As $\fC$ is a finitely generated $C^\iy$-ring it fits into an exact sequence $0\ra I\hookra C^\iy(\R^n)\,{\buildrel\phi\over\longra}\,\fC\ra 0$. Example \ref{ag4ex2} gives a  homeomorphism $\phi_*:X\ra X_\fC^\phi$ with a closed subset $X_\fC^\phi$ in
$\R^n$ given in \eq{ag4eq4}. Then $\phi_*(V)$ is open in $X_\fC^\phi$, so there exists an open $U\subseteq\R^n$ with $U\cap X_\fC^\phi=\phi_*(V)$. By \cite[Lem.~I.1.4]{MoRe2} there exists $f\in C^\iy(\R^n)$ with $U=\bigl\{x\in\R^n:f(x)\ne 0\bigr\}$. Then $c=\phi(f)\in\fC$ is a characteristic function for~$V$.
\end{proof}

\begin{ex} Let $I$ be an infinite set, and write $C^\iy(\R^I)$ for the free $C^\iy$-ring with generators $x_i$ for $i\in I$. Then $\uX=\Spec C^\iy(\R^I)$ has topological space $X=\R^I$ with points $(x_i)_{i\in I}$ for $x_i\in\R$. Elements of $C^\iy(\R^I)$ are functions $c:\R^I\ra\R$ depending only on $x_j$ for $j$ in a {\it finite\/} subset $J\subseteq I$, and which are smooth functions of these $x_j$, $j\in J$.

Let $V=\R^I\sm\{0\}$. Then $V$ is open in $X$. But no characteristic function $c$ exists for $V$ in $C^\iy(\R^I)$, since $c$ would depend only on $x_j$ for $j$ in a finite subset $J\subseteq I$, but $V$ depends on $x_i$ for all $i\in I$. Thus, infinitely generated $C^\iy$-rings need not admit characteristic functions, in contrast to Lemma~\ref{ag4lem4}.
\label{ag4ex4}
\end{ex}

If $\fC$ is a finitely generated (or finitely presented) $C^\iy$-ring and $c\in\fC$ then $\fC[c^{-1}]$ is also finitely generated (or finitely presented), since $\fC[c^{-1}]\cong \fC[x]/(c\cdot x-1)$ is the result of adding one extra generator and one extra relation to $\fC$. Thus from Lemmas \ref{ag4lem3} and \ref{ag4lem4} we deduce:

\begin{cor}{\bf(a)} Let\/ $(X,\O_X)$ be a fair (or finitely presented) affine $C^\iy$-scheme, and\/ $U\subseteq X$ be an open subset. Then\/ $(U,\O_X\vert_U)$ is also a fair (or finitely presented) affine $C^\iy$-scheme.
\smallskip

\noindent{\bf(b)} Let\/ $(X,\O_X)$ be a locally fair (or locally finitely presented)\/ $C^\iy$-scheme, and\/ $U\subseteq X$ be an open subset. Then\/ $(U,\O_X\vert_U)$ is also a locally fair (or locally finitely presented)\/ $C^\iy$-scheme.
\label{ag4cor3}
\end{cor}

Our next result describes the sheaf of $C^\iy$-rings $\O_X$ in $\Spec\fC$ for $\fC$ a finitely generated $C^\iy$-ring.\I{C-ring@$C^\iy$-ring!finitely generated} It is a version of \cite[Prop.~I.2.2(b)]{Hart} in algebraic geometry, and reduces to Moerdijk and Reyes \cite[Prop.~I.1.6]{MoRe2} when~$\fC=C^\iy(\R^n)$.

\begin{prop} Let\/ $\fC$ be a finitely generated\/
$C^\iy$-ring,\I{C-ring@$C^\iy$-ring!finitely generated} write
$(X,\O_X)=\Spec\fC,$ and let\/ $U\subseteq X$ be open. By
Lemma\/ {\rm\ref{ag4lem4}} we may choose a characteristic
function\/ $c\in\fC$ for $U$. Then there is a canonical isomorphism $\O_X(U)\cong
R_{\rm fg}^{\rm fa}(\fC[c^{-1}]),$\I{reflection
functor}\I{functor!reflection} in the notation of Definitions\/
{\rm\ref{ag2def7}} and\/ {\rm\ref{ag2def9}}. If\/ $\fC$ is finitely
presented\I{C-ring@$C^\iy$-ring!finitely presented}
then\/~$\O_X(U)\cong\fC[c^{-1}]$.
\label{ag4prop3}
\end{prop}

\begin{proof} We have morphisms of $C^\iy$-rings
$c_*:C^\iy(\R)\ra\fC$ and $i^*:C^\iy(\R)\ra C^\iy(\R\sm\{0\})$,
and $C^\iy(\R),C^\iy(\R\sm\{0\})$ are finitely presented
$C^\iy$-rings by Proposition \ref{ag3prop1}(a). So as $\Spec$
preserves limits in $(\CRingsfg)^{\bf op}$ we have
\begin{equation*}
\Spec\bigl(\fC\amalg_{c_*,C^\iy(\R),
i^*}C^\iy(\R\sm\{0\})\bigr)\cong \Spec\fC\t_{\uf,
\ul{\R},\ui}\ul{\R\sm\{0\}\!}\cong (U,\O_X\vert_U).
\end{equation*}
But $\fC\amalg_{C^\iy(\R)}C^\iy(\R\sm\{0\})\cong\fC[c^{-1}]$ for
formal reasons. Thus Theorem \ref{ag4thm2}(a) gives $\O_X(U)\cong
\Ga\bigl((U,\O_X\vert_U)\bigr)\cong R_{\rm fg}^{\rm
fa}(\fC[c^{-1}])$. If $\fC$ is finitely presented then $\fC[c^{-1}]$
is too, as in Corollary \ref{ag4cor3}, so $\fC[c^{-1}]$ is fair and $R_{\rm
fg}^{\rm fa}\bigl(\fC[c^{-1}]\bigr)=\fC[c^{-1}]$, and therefore~$\O_X(U)\cong\fC[c^{-1}]$.\I{C-ringed space@$C^\iy$-ringed
space|)}
\end{proof}

\subsection{\texorpdfstring{Complete $C^\iy$-rings}{Complete C∞-rings}}
\label{ag46}

The material of this section appears to be new.

\begin{prop} Let\/ $\fC$ be a $C^\iy$-ring, and\/ $\Psi_\fC$ be as in Definition\/ {\rm\ref{ag4def10}}. Then $\Spec\Psi_\fC:\Spec\ci\Ga\ci\Spec\fC\ra\Spec\fC$ is an isomorphism in $\LCRS$.
\label{ag4prop4}
\end{prop}

\begin{proof} Write $\fD=\Ga\ci\Spec\fC$, $\uX=\Spec\fC$, $\uY=\Spec\fD$, and $\uf=\Spec\Psi_\fC:\uY\ra\uX$. Let $x\in X$, and define $y=\pi\ci\Pi_x:\fD\ra\R$ to be the composition of the projection $\Pi_x:\fD\ra\fC_x$, noting that $\fD\subseteq\prod_{\ti x\in X}\fC_{\ti x}$ by Definition \ref{ag4def10}, and the unique morphism $\pi:\fC_x\ra\R$, as $\fC_x$ is a local $C^\iy$-ring. Then $f(y)=\pi\ci\pi_x=x:\fC\ra\R$ for $\pi_x:\fC\ra\fC_x$, so $f:Y\ra X$ is surjective.

Suppose now that $y\in Y$ with $f(y)=x$, so that $y:\fD\ra\R$ is an $\R$-algebra morphism. We will prove that $y=\pi\ci\Pi_x$ as above. Let $d\in\fD$. By definition of $\fD=\O_{X_\fC}(X_\fC)$ there exist an open neighbourhood $W$ of $x$ in $X$ and $c_1\in\fC$ such that $d(\ti x)=\pi_{\ti x}(c_1)$ in $\fC_{\ti x}$ for all $\ti x\in W$. By definition of the topology $\cT_\fC$, there exists $c_2\in\fC$ such that $U_{c_2}=\{\ti x\in X:\ti x(c_2)\ne 0\}$ is an open neighbourhood of $x$ in $W\subseteq X$. Hence $x(c_2)\ne 0$ and $\ti x(c_2)=0$ for all~$\ti x\in X\sm W$. 

Choose smooth functions $g,h:\R\ra\R$ with $g(x(c_2))=1$ and $g=0$ in an open neighbourhood $(-\ep,\ep)$ of 0 in $\R$, and $h(0)\ne 0$ and $h=0$ outside $(-\ep,\ep)$, so that $g\cdot h=0$. Set $c_3=\Phi_g(c_2)$ and $c_4=\Phi_h(c_2)$, with $\Phi_g,\Phi_h:\fC\ra\fC$ the $C^\iy$-ring operations. Then $x(c_3)=1$, and $\pi_{\ti x}(c_3)=0$ in $\fC_{\ti x}$ for all $\ti x\in X\sm W$,~as
\begin{equation*}
\pi_{\ti x}(c_3)\cdot\pi_{\ti x}(c_4)=\pi_{\ti x}\bigl(\Phi_g(c_2)\cdot \Phi_h(c_2)\bigr)=\pi_{\ti x}\ci\Phi_{gh}(c_2)=\pi_{\ti x}\ci\Phi_0(c_2)=0,
\end{equation*}
but $\pi_{\ti x}(c_4)$ is invertible in $\fC_{\ti x}$ as $\ti x(c_4)=h(\ti x(c_2))=h(0)\ne 0$. Thus we have $d\cdot\Psi_\fC(c_3)=\Psi_\fC(c_1)\cdot\Psi_\fC(c_3)=\Psi_\fC(c_1\cdot c_3)$ in $\fD$, as $d(\ti x)=\Psi_\fC(c_1)\ti x$ for all $\ti x\in W$, and $\Psi_\fC(c_3)\ti x=0$ for all $\ti x\in X\sm W$. Therefore
\begin{align*}
y(d)&=y(d)\cdot 1=y(d)\cdot x(c_3)=y(d)\cdot y(\Psi_\fC(c_3))=y\bigl(d\cdot\Psi_\fC(c_3)\bigr)\\
&=y\bigl(\Psi_\fC(c_1\cdot c_3)\bigr)\!=\!x(c_1\cdot c_3)\!=\!x(c_1)\cdot x(c_3)\!=\!\bigl(\pi\ci\Pi_x(d)\bigr)\cdot 1\!=\!\pi\ci\Pi_x(d).
\end{align*}
As this holds for all $d\in\fD$, we see that $y\in Y$ with $f(y)=x$ implies that $y=\pi\ci\Pi_x$. Hence $f:Y\ra X$ is injective, and so bijective.

From above $f:Y\ra X$ is continuous. To show $f^{-1}:X\ra Y$ is continuous, note that the topology on $Y$ is generated by the basis of open sets $V_d=\{y\in Y:y(d)\ne 0\}$ for all $d\in\fD$. So it is enough to show that $f(V_d)=\{x\in X:\pi\ci\Pi_x(d)=0\}$ is open in $X$ for all $d$. For fixed $d$, by definition we may cover $X$ by open $W\subseteq X$ for which there exist $c\in\fC$ with $d(x)=\pi_x(c)\in\fC_x$ for all $x\in W$. But then $W\cap f(V_d)=W\cap U_c$, where $U_c=\{x\in X:x(c)\ne 0\}$ is open in $X$. So we can cover $X$ by open $W\subseteq X$ with $W\cap f(V_d)$ open, and $f(V_d)$ is open. Therefore $f^{-1}$ is continuous, and $f:Y\ra X$ is a homeomorphism.

Let $y\in Y$ with $f(y)=x$. Taking stalks of $f^\sh:f^{-1}(\O_X)\ra\O_Y$ at $y$ gives a morphism $f^\sh_y:\O_{X,x}\ra\O_{Y,y}$, where $\O_{X,x}\cong\fC_x$ and $\O_{Y,y}\cong\fD_y$ by Lemma \ref{ag4lem2}, and we have a commutative diagram
\e
\begin{gathered}
\xymatrix@C=130pt@R=17pt{ *+[r]{\fC} \ar[d]^{\pi_x} \ar[r]_(0.25){\Psi_\fC} & *+[l]{\fD} \ar[d]_{\pi_y} \ar[dl]_(0.6){\Pi_x} \\
*+[r]{\fC_x\cong\O_{X,x}} \ar[r]^(0.61){\Psi_{\fC,x}\cong f^\sh_y} & *+[l]{\O_{Y,y}\cong\fD_y.}
}
\end{gathered}
\label{ag4eq10}
\e
Here the outer rectangle and top left triangle obviously commute. To see that the bottom right triangle commutes, we use that any $d\in\fD=\O_{X_\fC}(X_\fC)$ has $d(\ti x)=\Psi_\fC(c)\ti x$ for some $c\in\fC$ and all $\ti x$ in an open neighbourhood $W$ of $x$ in $X$. As in the first part of the proof, we can find $c_3\in\fC$ with $x(c_3)=1$ and $\pi_{\ti x}(c_3)=0$ in $\fC_{\ti x}$ for all $\ti x\in X\sm W$. Then evaluating at $\ti x\in W$ and $\ti x\in X\sm W$ we see that $\Psi_\fC(c)\cdot\Psi_\fC(c_3)=d\cdot\Psi_\fC(c_3)$, which forces $\pi_y(d)=\pi_y(\Psi_\fC(c))$, since $\pi_y\ci\Psi_\fC(c_3)$ is invertible in $\fD_y$ as $\pi\ci\pi_y\ci\Psi_\fC(c_3)=x(c_3)=1>0$. Thus
\begin{equation*}
\pi_y(d)=\pi_y\ci\Psi_\fC(c)=f_y^\sh\ci\pi_x(c)=f_y^\sh\ci\Pi_x\ci\Psi_\fC(c)=f_y^\sh\ci\Pi_x(d).
\end{equation*}

Since $\pi_y:\fD\ra\fD_y$ is surjective by Proposition \ref{ag2prop2}, the bottom right triangle in \eq{ag4eq10} implies that $f^\sh_y:\O_{X,x}\ra\O_{Y,y}$ is surjective. Suppose $c_x\in\O_{X,x}$ with $f^\sh_y(c_x)=0$ in $\O_{Y,y}$. As $\pi_x$ is surjective by Proposition \ref{ag2prop2} we may write $c_x=\pi_x(c)$ for $c\in\fC$. Then $\pi_y\ci\Psi_\fC(c)=f_y^\sh\ci\pi_x(c)=f_y^\sh(c_x)=0$, so $\Psi_\fC(c)\in\Ker\pi_y$. Write $I\subset\fC$ and $J\subset\fD$ for the ideals in \eq{ag2eq2} for $x,y$. Then $J=\Ker\pi_y$, so $\Psi_\fC(c)\in J$, and thus there exists $d\in\fD$ with $y(d)=\pi\ci\Pi_x(d)\ne 0$ in $\R$ and $\Psi_\fC(c)\cdot d=0$ in $\fD$. Applying $\Pi_x$ gives
\begin{equation*}
c_x\cdot \Pi_x(d)=\pi_x(c)\cdot\Pi_x(d)=\Pi_x(\Psi_\fC(c))\cdot\Pi_x(d)=\Pi_x(\Psi_\fC(c)\cdot d)=\Pi_x(0)=0.
\end{equation*}
But $\Pi_x(d)$ is invertible in $\fC_x$ as $\pi\ci\Pi_x(d)\ne 0$ in $\R$, so $c_x=0$. Thus $f^\sh_y:\O_{X,x}\ra\O_{Y,y}$ is injective, and so an isomorphism.

We have shown that $f:Y\ra X$ is a homeomorphism, and $f^\sh_y:\O_{X,f(y)}\ra\O_{Y,y}$ is an isomorphism on stalks at all $y\in Y$. Hence $\Spec\Psi_\fC=(f,f^\sh)$ is an isomorphism in $\LCRS$, as we have to prove.
\end{proof}

\begin{dfn} We call a $C^\iy$-ring $\fC$ {\it complete\/}\I{C-ring@$C^\iy$-ring!complete|(} if the morphism $\Psi_\fC:\fC\ra\Ga\ci\Spec\fC$ in Definition \ref{ag4def10} is an isomorphism. Write $\CRingsco$ for the full subcategory of complete $C^\iy$-rings $\fC$ in~$\CRings$.

If $\fC$ is any $C^\iy$-ring, applying $\Ga$ to $\Spec\Psi_\fC$ in Proposition \ref{ag4prop4} shows that
\begin{equation*}
\Ga\ci\Spec\Psi_\fC=\Psi_{\Ga\ci\Spec\fC}:\Ga\ci\Spec\fC\longra\Ga\ci\Spec(\Ga\ci\Spec\fC)
\end{equation*}
is an isomorphism in $\CRings$, where we check that $\Ga\ci\Spec\Psi_\fC=\Psi_{\Ga\ci\Spec\fC}$ from Definitions \ref{ag4def9} and \ref{ag4def10}. Hence $\Ga\ci\Spec\fC$ is a complete $C^\iy$-ring. Define a functor $R_{\rm all}^{\rm co}:\CRings\ra\CRingsco$ by $R_{\rm all}^{\rm co}=\Ga\ci\Spec$.\G[Rcaa]{$R_{\rm all}^{\rm co}:\CRings\ra\CRingsco$}{reflection functor}
\label{ag4def13}
\end{dfn}

The next result extends Definition \ref{ag2def9} and Theorem \ref{ag4thm2} from $\CRingsfa\ab\subset\CRingsfg$ to $\CRingsco\subset\CRings$.

\begin{thm}{\bf(a)} Let\/ $\uX$ be an affine $C^\iy$-scheme. Then $\uX\cong\Spec\O_X(X),$ where $\O_X(X)$ is a complete $C^\iy$-ring.
\smallskip

\noindent{\bf(b)} $\Spec\vert_{(\CRingsco)^{\bf op}}:(\CRingsco)^{\bf op}\ra\LCRS$ is full and faithful, and an equivalence of categories $\Spec\vert_{\cdots}:(\CRingsco)^{\bf op}\ra\ACSch$.
\smallskip

\noindent{\bf(c)} $R_{\rm all}^{\rm co}:\CRings\ra\CRingsco$ is left
adjoint\I{functor!adjoint}\I{adjoint functor} to the
inclusion functor\/ $\inc:\CRingsco\hookra\CRings$. That is, $R_{\rm all}^{\rm co}$ is a \begin{bfseries}reflection functor\end{bfseries}.\I{functor!reflection}\I{reflection functor}
\smallskip

\noindent{\bf(d)} All small colimits exist in $\CRingsco,$ although they may not coincide with the corresponding small colimits in $\CRings$.
\smallskip

\noindent{\bf(e)} $\Spec\vert_{(\CRingsco)^{\bf op}}=\Spec\ci\inc:(\CRingsco)^{\bf op}\ra\LCRS$ is right adjoint to $R_{\rm all}^{\rm co}\ci\Ga:\LCRS\ra(\CRingsco)^{\bf op}$. Thus $\Spec\vert_{\cdots}$ takes limits in $(\CRingsco)^{\bf op}$ (equivalently, colimits in $\CRingsco$) to limits in~$\LCRS$.

\label{ag4thm4}
\end{thm}

\begin{proof} For (a), if $\uX$ is an affine $C^\iy$-scheme then $\uX\cong\Spec\fC$ for some $C^\iy$-ring $\fC$, so $\O_X(X)\cong\Ga\ci\Spec\fC$, and thus $\uX\cong\Spec\O_X(X)$ by Proposition \ref{ag4prop4}. Also, applying $\Ga$ to $\Spec\Psi_\fC$ in Proposition \ref{ag4prop4} shows that
\begin{equation*}
\Ga\ci\Spec\Psi_\fC=\Psi_{\Ga\ci\Spec\fC}:\Ga\ci\Spec\fC\longra\Ga\ci\Spec(\Ga\ci\Spec\fC)
\end{equation*}
is an isomorphism in $\CRings$, where $\Ga\ci\Spec\Psi_\fC=\Psi_{\Ga\ci\Spec\fC}$ follows from the definitions. Hence $\Ga\ci\Spec\fC\cong\O_X(X)$ is complete, proving~(a).

For (b), if $\fC,\fD$ are complete $C^\iy$-rings then putting $\uX=\Spec\fD$ in Theorem \ref{ag4thm1} and using $\Ga\ci\Spec\fD\cong\fD$, equation \eq{ag4eq5} shows that
\begin{equation*}
\Spec=L_{\fC,\uX}:\Hom_\CRings(\fC,\fD)\longra\Hom_\LCRS(\Spec\fD,\Spec\fC)
\end{equation*}
is a bijection, where the definition of $L_{\fC,\uX}$ agrees with the definition of $\Spec$ on morphisms in this case. Thus $\Spec$ is full and faithful on complete $C^\iy$-rings. Therefore  $\Spec\vert_{\cdots}:(\CRingsco)^{\bf op}\ra\LCRS$ is an equivalence of categories from $(\CRingsco)^{\bf op}$ to its essential image in $\LCRS$, which is~$\ACSch$.

For (c), let $\fC,\fD$ be $C^\iy$-rings with $\fD$ complete. Then we have bijections
\ea
&\Hom_\CRingsco\bigl(R_{\rm all}^{\rm co}(\fC),\fD\bigr)\cong \Hom_\CRings\bigl(\Ga\ci\Spec\fC,\Ga\ci\Spec\fD\bigr)
\nonumber\\
&\cong \Hom_\LCRS\bigl(\Spec\fD,\Spec\ci\Ga\ci\Spec\fC\bigr)\cong\Hom_\LCRS\bigl(\Spec\fD,\Spec\fC\bigr)
\nonumber\\
&\cong \Hom_\CRings\bigl(\fC,\Ga\ci\Spec\fD\bigr)\cong \Hom_\CRings\bigl(\fC,\fD\bigr)
\nonumber\\
&=\Hom_\CRings\bigl(\fC,\inc(\fD)\bigr),
\label{ag4eq11}
\ea
using $\fD\cong\Ga\ci\Spec\fD$ as $\fD$ is complete in the first and fifth steps, Theorem \ref{ag4thm1} in the second and fourth, and Proposition \ref{ag4prop4} in the third. The bijections \eq{ag4eq11} are functorial in $\fC,\fD$ as each step is. Hence $R_{\rm all}^{\rm co}$ is left adjoint to~$\inc$.

For (d), note that $R_{\rm all}^{\rm co}:\CRings\ra\CRingsco$ takes colimits to colimits, as it is a left adjoint functor by (a). So given a functor $F:\cJ\ra\CRingsco$ for $\cJ$ a small category, we may take the colimit $\fC=\mathop{\rm colim}_\cJ F$ in $\CRings$, which exists by Proposition \ref{ag2prop1}, and then $\fD=R_{\rm all}^{\rm co}(\fC)$ is the colimit of $R_{\rm all}^{\rm co}\ci F$ in $\CRingsco$. But $R_{\rm all}^{\rm co}\ci F\cong F$ as $R_{\rm all}^{\rm co}\vert_{\CRingsco}\cong\id$. Hence $\fD=\mathop{\rm colim}_\cJ F$ in $\CRingsco$, and all small colimits exist in $\CRingsco$. In Example \ref{ag2ex10}, the colimits in $\CRingsco$ and $\CRings$ are different.

The first part of (e) holds by composing (c) and Theorem \ref{ag4thm1}, and the second part follows as right adjoint functors preserve limits. This completes the proof of Theorem~\ref{ag4thm4}.
\end{proof}

\begin{rem} Let $\fC$ be a $C^\iy$-ring, so that $\Psi_\fC:\fC\ra R_{\rm all}^{\rm co}(\fC)$ is a morphism of $C^\iy$-rings. If $\fC$ is finitely generated then Theorem \ref{ag4thm2}(a) gives an isomorphism $R_{\rm all}^{\rm co}(\fC)\cong R_{\rm fg}^{\rm fa}(\fC)$ identifying $\Psi_\fC$ with the surjective projection $\pi:\fC\ra R_{\rm fg}^{\rm fa}(\fC)$, for $R_{\rm fg}^{\rm fa}$ as in Definition \ref{ag2def9}. Thus $\Psi_\fC:\fC\ra R_{\rm all}^{\rm co}(\fC)$ is surjective in this case, and $R_{\rm all}^{\rm co},R_{\rm fg}^{\rm fa}$ agree on finitely generated $C^\iy$-rings up to natural isomorphism.

For $\fC$ infinitely generated, $\Psi_\fC:\fC\ra R_{\rm all}^{\rm co}(\fC)$ need not be surjective, and $R_{\rm all}^{\rm co}(\fC)$ can be much larger than $\fC$. For example, if $I$ is an infinite set and $\fC=C^\iy(\R^I)$ is as in Example \ref{ag4ex4}, then elements of $\fC$ are functions $c:\R^I\ra\R$ which depend smoothly only on $x_j$ for $j$ in a finite subset $J\subseteq I$, but elements of $R_{\rm all}^{\rm co}(\fC)$ are functions $c:\R^I\ra\R$ which {\it locally in\/} $\R^I$ depend smoothly only on $x_j$ for $j$ in a finite subset $J\subseteq I$, but globally may depend on $x_i$ for infinitely many $i\in I$. So $\Psi_\fC:\fC\ra R_{\rm all}^{\rm co}(\fC)$ is injective but not surjective.\I{C-ring@$C^\iy$-ring!complete|)}
\label{ag4rem6}
\end{rem}

\subsection{Partitions of unity}
\label{ag47}

We now study the existence of smooth partitions on unity on $C^\iy$-schemes and local $C^\iy$-ringed spaces. We will need the next definition.

\begin{dfn} Let $\uX=(X,\O_X)$ be a local $C^\iy$-ringed space. Then each $c\in\O_X(X)$ defines a continuous map $c_*:X\ra\R$ mapping $x\mapsto\pi\ci\pi_x(c)$, for $\pi_x:\O_X(X)\ra \O_{X,x}$ and $\pi:\O_{X,x}\ra\R$ the natural $C^\iy$-ring morphisms. Thus $U_c=\{x\in X:c_*(x)\ne 0\}$ is open in $X$. We say that {\it the topology on\/ $X$ is smoothly generated\/} if $\{U_c:c\in\O_X(X)\}$ is a basis for the topology on $X$.

This implies $X$ is a regular (and completely regular) topological space.
\label{ag4def14}
\end{dfn}

\begin{ex}{\bf(a)} Let $X$ be a completely regular topological space, and define a sheaf of $C^\iy$-rings $\O_X$ on $X$ by taking $\O_X(U)=C^0(U)$ to be the $C^\iy$-ring of continuous functions $c:U\ra\R$ for all open $U\subseteq X$. Then $\uX=(X,\O_X)$ is a local $C^\iy$-ringed space, and the topology on $X$ is smoothly generated.
\smallskip

\noindent{\bf(b)} Let $\uX$ be an affine $C^\iy$-scheme. Then $\uX\cong\Spec\O_X(X)$ by Theorem \ref{ag4thm4}(a). So the definition of the topology on $X$ in Definition \ref{ag4def8} implies that the topology on $X$ is smoothly generated.
\smallskip

\noindent{\bf(c)} Suppose $\uX$ is a regular $C^\iy$-scheme, and let $T\subseteq X$ be open and $x\in T$. Then $x$ has an affine open neighbourhood $\uY$ in $\uX$. Since $X$ is regular, there exist disjoint open neighbourhoods $V$ of $x$ and $W$ of $X\sm Y$ in~$X$. 

Then $x\in T\cap V\subseteq Y$, and the topology on $Y$ is smoothly generated by {\bf(b)}, so there exists $a\in\O_Y(Y)$ with $x\in U^Y_a\subseteq T\cap V$. Now $a_*(x)\ne 0$ and $a_*(y)=0$ for all $y\in Y\sm U^Y_a$, but this does not imply that $a$ is supported in $U^Y_a$, as we could have $\pi_y(a)\ne 0$ in $\O_{Y,y}$ even though $\pi\ci\pi_y(a)=0$ in $\R$. Choose smooth $f:\R\ra\R$ with $f(a_*(x))\ne 0$ and $f(t)=0$ for $t$ in an open neighbourhood of 0 in $\R$. Set $b=\Phi_f(a)$, for $\Phi_f:\O_Y(Y)\ra\O_Y(Y)$ the $C^\iy$-ring operation. 

Then $b_*(x)\ne 0$, and $U^Y_b\subseteq U^Y_a\subseteq T$, and $b$ is supported in $U^Y_a\subseteq V\subseteq Y$. Since $W$ is open in $X$ with $X\sm Y\subseteq W\subseteq Y\sm V$, there exists a unique $c\in\O_X(X)$ with $c\vert_Y=b$ and $c\vert_W=0$. We have $x\in U^X_c=U^Y_b\subseteq T$. Thus, for each open $T\subseteq X$ and $x\in T$ we can find $c\in\O_X(X)$ with $x\in U^X_c\subseteq T$. So the topology on $X$ is smoothly generated.
\smallskip

\noindent{\bf(d)} Let $X$ be an infinite-dimensional Banach space or Banach manifold, and make $X$ into a local $C^\iy$-ringed space $\uX=(X,\O_X)$ as in Example \ref{ag4ex1}. The question of when the topology of $X$ is smoothly generated (framed in terms of the existence of `smooth bump functions' on $X$) is very well understood, as in Bonic and Frampton \cite{BoNo} and Deville, Godefroy and Zizler \cite[\S V]{DGZ}. For example, if $X$ is a Hilbert manifold, or modelled on $L^q(Y)$ or $\ell^q$ for even $q\ge 2$, then the topology on $X$ is smoothly generated, but if $X$ is modelled on $L^q(Y)$ or $\ell^q$ for $q\in[1,\iy]$ not even, the topology on $X$ is not smoothly generated.
\label{ag4ex5}
\end{ex}

For the next theorem, \S\ref{ag41} defined Lindel\"of spaces, and explained their relation to other topological assumptions. Second countable implies Lindel\"of, and Lindel\"of and regular imply paracompact (note that $X$ is regular as its topology is smoothly generated). It is easy to see that $\O_X$ fine implies that the topology on $X$ is smoothly generated.

The proof of Theorem \ref{ag4thm5} is based on the proof of the existence of smooth partitions on unity on suitable separable Banach manifolds in Bonic and Frampton \cite[Th.~1]{BoNo} (see also Lang \cite[\S II.3]{Lang} and Deville et al.~\cite[\S VIII.3]{DGZ}). 

Theorem \ref{ag4thm5} applies to a very large class of $C^\iy$-schemes, showing that partitions of unity exist on most interesting examples of $C^\iy$-schemes.

\begin{thm} Let\/ $\uX=(X,\O_X)$ be a Lindel\"of local\/ $C^\iy$-ringed space, and suppose the topology on $X$ is smoothly generated. Then $\O_X$ is \begin{bfseries}fine\end{bfseries},\I{sheaf!fine|(} as in Definition\/ {\rm\ref{ag4def6}}. That is, for every open cover $\{V_i:i\in I\}$ of\/ $X$ there exists a subordinate locally finite partition of unity $\{\eta_i:i\in I\}$ in\/~$\O_X(X)$.

\label{ag4thm5}
\end{thm}

\begin{proof} For $c\in\O_X(X)$ and $x\in X$ we have $\pi_x(c)\in\O_{X,x}$ and $c_*(x)=\pi\ci\pi_x(c)\in\R$, where $\pi_x:\O_X(X)\ra\O_{X,x}$ and $\pi:\O_{X,x}\ra\R$ are the natural $C^\iy$-morphisms. Then $c_*:X\ra\R$ is continuous. Write $U_c=\{x\in X:c_*(x)\ne 0\}$, so that $U_c$ is open in $X$. The {\it support\/} of $c$ is~$\supp c=\{x\in X:\pi_x(c)\ne 0\}$. 

Then $\supp c$ is closed in $X$ with $U_c\subseteq\supp c$, but $\supp c$ may be larger than the closure of $U_c$. Note that an infinite sum $\sum_{j\in J}c_j$ in $\O_X(X)$ is defined, as a section of the sheaf $\O_X$, if $\{\supp c_j:j\in J\}$ is locally finite (that is, each $x\in X$ has an open neighbourhood $W_x$ intersecting $\supp c_j$ for only finitely many $j\in J$), but may not make sense if only $\{U_{c_j}:j\in J\}$ is locally finite. Because of this, we are careful to keep track of both $U_{c_j}$ and $\supp c_j$ in the following proof.

Let $\{V_i:i\in I\}$ be an open cover of $X$. Suppose $i\in I$ and $x\in V_i$. As the topology on $X$ is smoothly generated there exists $c\in\O_X(X)$ with $x\in U_c\subseteq V_i$. So $c_*(x)\ne 0$ and $c_*\vert_{X\sm V_i}=0$. We do not know that $\supp c\subseteq V_i$, but we can correct this as follows. Choose smooth $f:\R\ra\R$ such that $f(c_*(x))\ne 0$ and $f=0$ in a neighbourhood of 0 in $\R$. Set $c'=\Phi_f(c)$, where $\Phi_f:\O_X(X)\ra\O_X(X)$ is the $C^\iy$-ring operation. Then~$x\in U_{c'}\subseteq\supp c'\subseteq U_c\subseteq V_i\subseteq X$. 

Thus, we can choose a family $\{c_j:j\in J\}$ such that $c_j\in\O_X(X)$, and $U_{c_j}\subseteq\supp c_j\subseteq V_{i_j}\subseteq X$ for each $j\in J$ and some $i_j\in I$, and $\{U_{c_j}:j\in J\}$ is an open cover of $X$. Since $X$ is Lindel\"of we can take $J$ to be countable, and choose~$J=\N$.

Replacing $c_j$ by $c_j^2$ we have $(c_j)_*\ge 0$ on $X$. For each $j\in\N$, choose smooth $f_j:\R^{j+1}\ra\R$ such that $f_j(t_0,t_1,\ldots,t_j)>0$ if $t_i<1/j$ for $i=0,1,\ldots,j-1$ and $t_j>0$, and $f_j(t_0,t_1,\ldots,t_j)=0$ otherwise. Define $d_j=\Phi_{f_j}(c_0,c_1,\ldots,c_j)$, with $\Phi_{f_j}:\O_X(X)^{j+1}\ra\O_X(X)$ the $C^\iy$-ring operation. Then
\e
\begin{split}
&U_{d_j}=\bigl\{x\in X:(d_j)_*(x)\ne 0\bigr\}\\
&=\bigl\{x\in X:(c_i)_*(x)<1/j,\; i=0,\ldots,j-1,\;  (c_j)_*(x)\ne 0\bigr\}\subseteq V_{i_j},\\
&\supp d_j\subseteq \bigl\{x\in X:(c_i)_*(x)\le 1/j,\; i=1,\ldots,j-1\bigr\}\cap\supp c_j\subseteq V_{i_j}.
\end{split}
\label{ag4eq12}
\e

Fix $x\in X$. Then $x\in U_{c_j}$ for some $j\in\N$ as $\{U_{c_j}:j\in J\}$ covers $X$. Let $j\in\N$ be least with $x\in U_{c_j}$. Then $(c_j)_*(x)>0$ and $(c_i)_*(x)=0$ for $i=0,1,\ldots,j-1$. Thus $x\in U_{d_j}$, so $\{U_{d_j}:j\in\N\}$ is an open cover of $X$. Define $T_x=\{y\in X:(c_j)_*(y)>\ha (c_j)_*(x)\}$. Then $T_x$ is an open neighbourhood of $x$ in $X$, and $T_x\cap U_{d_k}=\es=T_x\cap\supp d_k$ provided $k>\max\bigl(j,2(c_j)_*(x)^{-1}\bigr)$ by \eq{ag4eq12}. Thus, both $\{U_{d_j}:j\in\N\}$ and $\{\supp d_j:j\in\N\}$ are locally finite.

For each $i\in I$, define $e_i=\sum_{j\in\N:i_j=i}d_j$ in $\O_X(X)$. This is well defined as $\{\supp d_j:j\in\N\}$ is locally finite. We have $U_{e_i}\subseteq\supp e_i\subseteq V_i$, since $U_{d_j}\subseteq \supp d_j\subseteq V_i$ for each $j\in\N$ with $i_j=i$. Both $\{U_{e_i}:i\in I\}$ and $\{\supp e_i:i\in I\}$ are locally finite, as $\{U_{d_j}:j\in\N\}$ and $\{\supp d_j:j\in\N\}$ are. Thus $e=\sum_{i\in I}e_i$ is well defined in $\O_X(X)$. If $x\in X$ then
\begin{equation*}
e_*(x)=\ts\sum_{i\in I}(e_i)_*(x)=\sum_{i\in I}\sum_{j\in\N:i_j=i}(d_j)_*(x)=\sum_{j\in\N}(d_j)_*(x)>0,
\end{equation*}
where each sum has only finitely many nonzero terms, and $\sum_{j\in\N}(d_j)_*(x)>0$ as $\{U_{d_j}:j\in\N\}$ covers $X$ with $(d_j)_*>0$ on $U_{d_j}$ and $(d_j)_*=0$ on $X\sm U_{d_j}$. Since $e_*$ is positive on $X$, $e$ is invertible in $\O_X(X)$. Set $\eta_i=e^{-1}\cdot e_i$ for $i\in I$. Then $\supp\eta_i\subseteq V_i$, as $\supp e_i\subseteq V_i$, and $\{\eta_i:i\in I\}$ is locally finite, as $\{\supp e_i:i\in I\}$ is, and $\sum_{i\in I}\eta_i=\sum_{i\in I}e^{-1}\cdot e_i=e^{-1}\cdot e=1$. Hence $\{\eta_i:i\in I\}$ is a locally finite partition of unity subordinate to $\{V_i:i\in I\}$, so $\O_X$ is fine.\I{sheaf!fine|)}
\end{proof}

\subsection{\texorpdfstring{A criterion for affine $C^\iy$-schemes}{A criterion for affine C∞-schemes}}
\label{ag48}

Here are sufficient conditions for a local $C^\iy$-ringed space $\uX$ to be an affine $C^\iy$-scheme. Note that affine $C^\iy$-schemes are Hausdorff with smoothly generated topology by Lemma \ref{ag4lem1} and Example \ref{ag4ex5}(b), so Lindel\"of is the only  condition in the theorem which is not also necessary.

\begin{thm} Let\/ $\uX=(X,\O_X)$ be a Hausdorff, Lindel\"of, local\/ $C^\iy$-ringed space, with smoothly generated topology. Then\/ $\uX$ is an affine\/ $C^\iy$-scheme.
\label{ag4thm6}
\end{thm}

\begin{proof} Let $\uX$ be as in the theorem. Note that Theorem \ref{ag4thm5} shows that $\O_X$ is fine. Write $\fC=\O_X(X)=\Ga(\uX)$, and $\uY=\Spec\fC$. Define a morphism $\uf:\uX\ra\uY$ by $\uf=L_{\fC,\uX}(\id_\fC)$, using the notation of Theorem \ref{ag4thm1}. We will show $\uf$ is an isomorphism, so that $\uX\cong\Spec\fC$ is an affine $C^\iy$-scheme.

Points $x\in X$ induce $C^\iy$-ring morphisms $\pi\ci\pi_x:\fC=\O_X(X)\ra\R$, where $\pi_x:\O_X(X)\ra\O_{X,x}$ and $\pi:\O_{X,x}\ra\R$ are the natural projections. Points $y\in Y$ are $C^\iy$-ring morphisms $y:\fC\ra\R$, and $f:X\ra Y$ is $f(x)=\pi\ci\pi_x$.

Suppose $x,x'\in X$ with $x\ne x'$, and set $f(x)=y$ and $f(x')=y'$. Since $X$ is Hausdorff there exists open $U\subseteq X$ with $x\in U$ and $x'\notin U$. As the topology on $X$ is smoothly generated there exists $c\in\O_X(X)$ with $c_*(x)\ne 0$ and $c_*\vert_{X\sm U}=0$, so that $c_*(x')=0$. Then $y(c)=c_*(x)\ne 0$ and $y'(c)=c_*(x')=0$, so $y\ne y'$. Hence $f:X\ra Y$ is injective.

Suppose for a contradiction that $y\in Y$, but $f(x)\ne y$ for all $x\in X$. Then for each $x\in X$, there exists $a\in\fC$ with $y(a)\ne\pi\ci\pi_x(a)$. Choose smooth $g:\R\ra\R$ with $g(y(a))=0$ and $g=1$ in an open neighbourhood of $\pi\ci\pi_x(a)$ in $\R$. Set $b=\Phi_g(a)$, where $\Phi_g:\fC\ra\fC$ is the $C^\iy$-ring operation. Then $y(b)=0$ and $\pi\ci\pi_{\ti x}(b)=1$ for $\ti x$ in an open neighbourhood $V$ of $x$ in~$X$.  

Thus we may choose a family of pairs $\{(V_j,b_j):j\in J\}$ such that for each $j\in J$ we have $V_j\subseteq X$ open and $b_j\in\fC$ with $y(b_j)=0$ and $\pi\ci\pi_x(b_j)=1$ for $x\in V_j$, and $\{V_j:j\in J\}$ is an open cover of $X$. As $X$ is Lindel\"of we can suppose $J$ is countable, and so take $J=\N$. By Theorem \ref{ag4thm5} there exists a locally finite partition of unity $\{\eta_j:j\in\N\}$ in $\fC$ subordinate to~$\{V_j:j\in\N\}$. 

Set $c=\sum_{j\in\N}j\cdot\eta_j\cdot b_j$ in $\fC=\O_X(X)$, which makes sense in global sections of $\O_X$ as $\{\eta_j:j\in\N\}$ is locally finite. Choose $n\in\N$ with $n>y(c)$, and define $d=c-y(c)\cdot 1_X+\sum_{j=0}^{n-1}(n-j)\cdot\eta_j\cdot b_j$ in $\fC$, where $1_X\in\fC$ is the identity. Then
\begin{equation*}
y(d)=y(c)-y(c)\cdot y(1_X)+\ts\sum_{j=0}^{n-1}(n-j)\cdot y(\eta_j)\cdot y(b_j)=0,
\end{equation*}
as $y(1_X)=1$ and $y(b_j)=0$. And if $x\in X$ then
\begin{align*}
\pi\ci\pi_x(d)&=\ts\pi\ci\pi_x\bigl[\sum_{j\in\N}j\cdot\eta_j\cdot b_j-y(c)\cdot \sum_{j\in\N}\eta_j+\sum_{j=0}^{n-1}(n-j)\cdot\eta_j\cdot b_j\bigr]\\
&=\ts\sum_{j\in\N}\bigl(\max(j,n)-y(c)\bigr)\pi\ci\pi_x(\eta_j)>0,
\end{align*}
where each sum has only finitely many nonzero terms, and we use $\sum_{j\in\N}\eta_j=1_X$,
$\pi\ci\pi_x(b_j)=1$, and $\max(j,n)-y(c)>0$, $\pi\ci\pi_x(\eta_j)\ge 0$ for $j\in\N$. 

Since $\pi\ci\pi_x(d)>0$ for all $x\in X$, we see that $d$ is invertible in $\fC=\O_X(X)$, but this contradicts $y(d)=0$. Hence each $y\in Y$ has $y=f(x)$ for some $x\in X$, and $f$ is surjective, so $f:X\ra Y$ is a bijection. By definition of $\uY=\Spec\fC$, the topology on $Y$ is generated by the open sets $U_c=\{y\in Y:y(c)\ne 0\}$ for all $c\in\fC$. As the topology on $X$ is smoothly generated, it is generated by the open sets $f^{-1}(U_c)=\{x\in X:c_*(x)\ne 0\}$ for $c\in\fC$. Therefore $f:X\ra Y$ is a bijection identifying bases for the topologies of $X,Y$, so $f$ is a homeomorphism.

Let $x\in X$ with $f(x)=y\in Y$. Taking stalks of $f^\sh:f^{-1}(\O_Y)\ra\O_X$ at $x$ gives a morphism $f^\sh_x:\O_{Y,y}\ra\O_{X,x}$. By the definition of $\uf=L_{\fC,\uX}(\id_\fC)$ in the proof of Theorem \ref{ag4thm1}, $f^\sh_x$ agrees with $\phi_x$ in \eq{ag4eq8}, and is the unique morphism making the following commute, where $\fC_y\cong\O_{Y,y}$ by Lemma~\ref{ag4lem2}:
\e
\begin{gathered}
\xymatrix@C=70pt@R=15pt{ *+[r]{\fC} \ar[d]^{\pi_y}\ar@/^.5pc/[ddr]^(0.42)y \ar@{=}[rr] && *+[l]{\O_X(X)} \ar[d]_{\pi_x} \\
*+[r]{\fC_y\cong\O_{Y,y}} \ar[dr] _\pi \ar[rr]^{f^\sh_x} && *+[l]{\O_{X,x}} \ar[dl]^\pi \\ & \R.\! }
\end{gathered}
\label{ag4eq13}
\e 

Suppose $a_y\in\O_{Y,y}$ with $f_x^\sh(a_y)=0$. Then $a_y=\pi_y(a)$ for some $a\in\fC=\O_X(X)$, as $\pi_y$ is surjective by Proposition \ref{ag2prop2}, and then $\pi_x(a)=0$ in $\O_{X,x}$, as \eq{ag4eq13} commutes. Hence there exists an open neighbourhood $U$ of $x$ in $X$ with $a\vert_U=0$ in $\O_X(U)$. As the topology on $X$ is smoothly generated, there exists $b\in\O_X(X)$ with $b_*(x)\ne 0$ and $b_*\vert_{X\sm U}=0$. Choose smooth $g:\R\ra\R$ with $g(b_*(x))\ne 0$ and $g=0$ near 0 in $\R$, and set $c=\Phi_g(b)$, where $\Phi_g:\O_X(X)\ra\O_X(X)$ is the $C^\iy$-ring operation. Then $y(c)=c_*(x)\ne 0$, and $c$ is supported in $U$. As $a\vert_U=0$ we see that $a\cdot c=0$ in $\O_X(X)$. Thus $a$ lies in the ideal $I$ in \eq{ag2eq2} which is the kernel of $\pi_y:\fC\ra\fC_y$, by Proposition \ref{ag2prop2}, and so $a_y=\pi_y(a)=0$. Therefore $f^\sh_x:\O_{Y,y}\ra\O_{X,x}$ is injective.

Suppose $a_x\in\O_{X,x}$. Then by definition of $\O_{X,x}$ there exists open $x\in U\subseteq X$ and $a\in\O_X(U)$ with $\pi_x(a)=a_x$. As the topology on $X$ is smoothly generated there exists $b\in\O_X(X)$ with $b_*(x)\ne 0$ and $b_*\vert_{X\sm U}=0$. Choose smooth $g:\R\ra\R$ with $g=1$ near $b_*(x)$ in $\R$ and $g=0$ near 0 in $\R$. Set $c=\Phi_g(b)$, where $\Phi_g:\O_X(X)\ra\O_X(X)$ is the $C^\iy$-ring operation. Then $c$ is supported in $U$, and there exists an open neighbourhood $V$ of $x$ in $U$ with $c\vert_V=1$. Since $c$ is supported in $U$, the section $c\vert_U\cdot a\in\O_X(U)$ can be extended by zero over $X\sm U$ to give a unique $d\in\O_X(X)$ supported in $U$ with $d\vert_U=c\vert_U\cdot a$. 

Then $d\vert_V=c\vert_V\cdot a\vert_V=1\cdot a\vert_V=a\vert_V$. Hence $f_x^\sh\ci\pi_y(d)=\pi_x(d)=a_x$, so $f^\sh_x:\O_{Y,y}\ra\O_{X,x}$ is surjective, and an isomorphism. This proves that $f^\sh:f^{-1}(\O_Y)\ra\O_X$ is an isomorphism on stalks at every $x\in X$, so $f^\sh$ is an isomorphism. As $f$ is a homeomorphism, $\uf=(f,f^\sh):\uX\ra\Spec\fC$ is an isomorphism. This completes the proof of Theorem~\ref{ag4thm6}.
\end{proof}

\begin{cor} Let\/ $\uX=(X,\O_X)$ be a local\/ $C^\iy$-ringed space. Then the following are equivalent:
\begin{itemize}
\setlength{\itemsep}{0pt}
\setlength{\parsep}{0pt}
\item[{\bf(i)}] $X$ is Hausdorff\I{topological space!Hausdorff} and second countable,\I{topological space!second countable} with smoothly generated topology. 
\item[{\bf(ii)}] $X$ is separable\I{topological space!separable} and metrizable,\I{topological space!metrizable} with smoothly generated topology.
\item[{\bf(iii)}] $\uX$ is a Hausdorff, second countable, regular\I{topological space!regular} $C^\iy$-scheme.
\item[{\bf(iv)}] $\uX$ is a separable, metrizable $C^\iy$-scheme.
\item[{\bf(v)}] $\uX$ is a second countable, affine $C^\iy$-scheme.
\end{itemize}
When these hold, $X$ is regular, normal,\I{topological space!normal} and paracompact,\I{topological space!paracompact} and\/ $\O_X$ is fine.\I{sheaf!fine}
\label{ag4cor4}
\end{cor}

\begin{proof} Section \ref{ag41} implies that (i),(ii) are equivalent (as $X$ smoothly generated topology implies $X$ regular), and (iii),(iv) are equivalent. Also (v) implies (iii) by Lemma \ref{ag4lem1}, and (iii) implies (i) by Example \ref{ag4ex5}(b), and (i) implies (v) by Theorem \ref{ag4thm6} (as second countable implies Lindel\"of). Hence (i)--(v) are equivalent. The last part follows from \S\ref{ag41} and Theorem~\ref{ag4thm5}.
\end{proof}

In comparison to Theorem \ref{ag4thm6}, we have strengthened the Lindel\"of assumption to second countable. The category of $C^\iy$-schemes in Corollary \ref{ag4cor4} is very large, and convenient to work in. They are closed under products, fibre products, and arbitrary subspaces (Lindel\"of spaces are none of these). They have partitions of unity, and as they are affine we can argue globally using $C^\iy$-rings.

\begin{ex} Let $\uX=(X,\O_X)$ be a second countable, affine $C^\iy$-scheme, and let $Y\subseteq X$ be {\it any\/} subset, not necessarily open or closed. Then $\uY=(Y,\O_X\vert_Y)$ is also a second countable, affine $C^\iy$-scheme by Corollary \ref{ag4cor4}, as being Hausdorff, second countable, and of smoothly generated topology, are all preserved under passing to subspaces, so $\uY$ satisfies Corollary \ref{ag4cor4}(i) as $\uX$ does.

\label{ag4ex6}
\end{ex}

\begin{ex} Let $X$ be a separable Banach manifold\I{Banach manifold} modelled locally on separable Banach spaces $B$ which admit `smooth bump functions' (that is, there exists a nonzero smooth function $f:B\ra\R$ with bounded support in $B$). See Deville et al.\ \cite[\S V]{DGZ} for results on when a Banach space $B$ has a smooth bump function, for example, every Hilbert space does. 

Make $X$ into a local $C^\iy$-ringed space $\uX=(X,\O_X)$ as in Example \ref{ag4ex1}. Then the topology on $X$ is smoothly generated as in Example \ref{ag4ex5}(d), so $\uX$ is an affine $C^\iy$-scheme by Corollary~\ref{ag4cor4}(ii),(v).
\label{ag4ex7}
\end{ex}

\subsection{\texorpdfstring{Quotients of $C^\iy$-schemes by finite groups}{Quotients of C∞-schemes by finite groups}}
\label{ag49}

Finally we discuss quotients of $C^\iy$-schemes by finite groups.

\begin{dfn} Let $\uX=(X,\O_X)$ be a local $C^\iy$-ringed space, $G$ a finite group, and $\ur:G\ra\Aut(\uX)$ an action of $G$ on $\uX$. We will define a local $C^\iy$-ringed space~$\uY=\uX/G$.

Set $Y=X/r(G)$ to be the quotient topological space. Open sets $V\subseteq Y$ are of the form $U/G$ for $U\subseteq X$ open and $G$-invariant. Then $\ga\mapsto r^\sh(\ga)(U)$ gives an action of $G$ on the $C^\iy$-ring $\O_X(U)$, so as in Proposition \ref{ag2prop4} we have a $C^\iy$-ring $\O_X(U)^G$, the $G$-invariant subspace in $\O_X(U)$. Define~$\O_Y(V)=\O_X(U)^G$.

If $V_2\subseteq V_1\subseteq Y$ are open then $V_1=U_1/G$, $V_2=U_2/G$ for $U_2\subseteq U_1\subseteq X$ open and $G$-invariant. The restriction morphism $\rho_{U_1U_2}:\O_X(U_1)\ra\O_X(U_2)$ in $\O_X$ is $G$-equivariant, and so restricts to $\rho_{U_1U_2}\vert_{\O_X(U_1)^G}:\O_X(U_1)^G\ra\O_X(U_2)^G$. Set $\rho_{V_1V_2}=\rho_{U_1U_2}\vert_{\O_X(U_1)^G}:\O_Y(V_1)\ra \O_Y(V_2)$. It is now easy to check that $\O_Y$ is a sheaf of $C^\iy$-rings on $Y$, so $\uY=(Y,\O_Y)$ is a $C^\iy$-ringed space.

If $x\in X$ and $y=xG\in Y$, the stalk $\O_{Y,y}$ of $\O_Y$ at $y$ is $(\O_{X,x})^H$, where $\O_{X,x}$ is a local $C^\iy$-ring, and $H=\bigl\{\ga\in G:\ga(x)=x\bigr\}$ is the stabilizer group of $x$ in $G$, which acts on $\O_{X,x}$ in the obvious way. As $\O_{X,x}$ is local there is an $\R$-algebra morphism $\pi:\O_{X,x}\ra\R$, such that $c\in\O_{X,x}$ is invertible if and only if $\pi(c)\ne 0$. Thus $\pi\vert_{(\O_{X,x})^H}:(\O_{X,x})^H\ra\R$ is an $\R$-algebra morphism, and $c\in(\O_{X,x})^H$ is invertible in $\O_{X,x}$ if and only if $\pi(c)\ne 0$. But if $c\in(\O_{X,x})^H$ is invertible in $\O_{X,x}$ then $c^{-1}$ is $H$-invariant, so $c$ is invertible in $(\O_{X,x})^H$. Therefore $\O_{Y,y}\cong(\O_{X,x})^H$ is a local $C^\iy$-ring, and $\uY$ is a local $C^\iy$-ringed space. Write~$\uX/G=\uY$.

Define $\pi:X\ra X/G$ to be the natural projection. Define a morphism $\pi_\sh:\O_Y\ra\pi_*(\O_X)$ of sheaves of $C^\iy$-rings on $Y=X/G$ by
\begin{equation*}
\pi_\sh(V)=\inc:\O_Y(V)=\O_X(U)^G\longra\O_X(U)=\pi_*(\O_X)(V)
\end{equation*}
for all open $V=U/G\subseteq Y=X/G$, where $\inc:\O_X(U)^G\hookra\O_X(U)$ is the inclusion. Let $\pi^\sh:\pi^{-1}(\O_Y)\ra\O_X$ be the morphism of sheaves of $C^\iy$-rings on $X$ corresponding to $\pi_\sh$ under \eq{ag4eq3}. Then $\upi=(\pi,\pi^\sh):\uX\ra\uX/G$ is a morphism of local $C^\iy$-ringed spaces.

It is easy to see that $\uX/G,\upi$ have the universal property that if $\uf:\uX\ra\uZ$ is a morphism in $\LCRS$ with $\uf\ci\ur(\ga)=\uf$ for all $\ga\in G$ then $\uf=\ug\ci\upi$ for a unique morphism $\ug:\uX/G\ra\uZ$ in~$\LCRS$.
\label{ag4def15}
\end{dfn}

\begin{prop} Let\/ $\uX=(X,\O_X)$ be an affine $C^\iy$-scheme, $G$ a finite group, and\/ $\ur:G\ra\Aut(\uX)$ an action of\/ $G$ on $\uX$. Suppose $X$ is Lindel\"of.

Then $\uX=\Spec\fC$ for $\fC=\O_X(X)$ a complete $C^\iy$-ring, and\/ $\ur=\Spec s$ for $s:G\ra\Aut(\fC)$ a unique action of\/ $G$ on $\fC$. Form the $G$-invariant\/ $C^\iy$-ring\/ $\fC^G\subseteq\fC$ as in Proposition\/ {\rm\ref{ag2prop4}}. Then $\fC^G$ is complete, and there is a canonical isomorphism $\uX/G\cong\Spec\fC^G$ in\/~$\LCRS$.
\label{ag4prop5}
\end{prop}

\begin{proof} Theorem \ref{ag4thm4}(a) shows that $\uX\cong\Spec\fC$, where $\fC=\O_X(X)$ is a complete $C^\iy$-ring. As $\Spec$ is full and faithful on complete $C^\iy$-rings by Theorem \ref{ag4thm4}(b), $\Spec:\Aut(\fC)\ra\Aut(\uX)$ is an isomorphism, so there is a unique action $s:G\ra\Aut(\fC)$ with $\ur=\Spec s$.

Let $\uY=\uX/G$ be as in Definition \ref{ag4def15}. Then $Y=X/G$ is Hausdorff, as $X$ is Hausdorff and $G$ is finite. Suppose $\{V_i:i\in I\}$ is an open cover of $Y$. Then $V_i=U_i/G$ for $\{U_i:i\in I\}$ an open cover of $X$. As $X$ is Lindel\"of there exists a subcover $\{U_i:i\in S\}$ for countable $S\subseteq I$, and then $\{V_i:i\in S\}$ is a countable subcover of $\{V_i:i\in I\}$. Hence $Y$ is Lindel\"of.

Suppose $V\subseteq Y$ is open and $y\in V$. Then $V=U/G$ and $y=xG$ for $G$-invariant open $U\subseteq X$ with $x\in U$. As the topology on $X$ is smoothly generated, there exists $c\in\fC$ with $c_*(x)\ne 0$ and $c_*(x')=0$ for all $x'\in X\sm U$. Define $d=\sum_{\ga\in G}\ga^*(c^2)$ in $\fC$. Then $d$ is $G$-invariant with $d_*(x)>0$ and $d_*(x')=0$ for all $x'\in X\sm U$. Hence $d\in\O_Y(Y)=\O_X(X)^G=\fC^G$, with $d_*(y)>0$ and $d_*(y')=0$ for all $y'\in Y\sm V$. Thus the topology of $Y$ is smoothly generated.

Theorem \ref{ag4thm6} now implies that $\uY=\uX/G$ is an affine $C^\iy$-scheme, and Theorem \ref{ag4thm4}(a) gives a canonical isomorphism $\uX/G\cong\Spec\O_Y(Y)=\Spec\fC^G$, where $\fC^G$ is complete.
\end{proof}

\begin{prop} Suppose $\uX$ is a Hausdorff, second countable\/ $C^\iy$-scheme, $G$ a finite group, and\/ $\ur:G\ra\Aut(\uX)$ an action of\/ $G$ on $\uX$. Then the quotient\/ $\uX/G$ is also a Hausdorff, second countable\/ $C^\iy$-scheme. If\/ $\uX$ is locally fair, or locally finitely presented, then so is $\uX/G$.
\label{ag4prop6}
\end{prop}

\begin{proof} Let $x\in X$, and write $H=\bigl\{\ga\in G:\ga(x)=x\bigr\}$. Then the $G$-orbit $xG$ is $\md{G}/\md{H}$ points. Since $X$ is Hausdorff and $G$ is finite, we can find an open neighbourhood $R$ of $x$ in $X$ such that $R$ is $H$-invariant and $R\cap\ga\cdot R=\es$ for all $\ga\in G\sm H$. As $\uX$ is a $C^\iy$-scheme, there is an open neighbourhood $S$ of $x$ in $R$ with $(S,\O_X\vert_S)$ an affine $C^\iy$-scheme. Then $T=\bigcap_{\ga\in H}\ga\cdot S$ is an $H$-invariant open neighbourhood of $x$ in $S$. Choose an open neighbourhood $U$ of $x$ in $T$ with $(U,\O_X\vert_U)$ an affine $C^\iy$-scheme.

Define $V=\bigcap_{\ga\in H}\ga\cdot U$. Then $V$ is an $H$-invariant open neighbourhood of $x$ in $U\subseteq T\subseteq S\subseteq R\subseteq X$. It is the intersection of the $\md{H}$ affine $C^\iy$-subschemes $(\ga\cdot U,\O_X\vert_{\ga\cdot U})$ for $\ga\in H$ inside the affine $C^\iy$-scheme $(S,\O_X\vert_S)$. Finite intersections of affine $C^\iy$-subschemes in an affine $C^\iy$-scheme are affine, as such intersections are fibre products and $\Spec:\CRings^{\bf op}\ra\LCRS$ preserves limits by Remark \ref{ag4rem4}(b). Thus $(V,\O_X\vert_V)$ is an affine $C^\iy$-scheme.

Set $W=\bigcup_{\ga H\in G/H}\ga\cdot V$. Then $W$ is a $G$-invariant open neighbourhood of $x$ in $X$, and $(W,\O_X\vert_W)$ is the disjoint union of $\md{G}/\md{H}$ affine $C^\iy$-schemes isomorphic to $(V,\O_X\vert_V)$, so it is affine. We have shown that every $x\in X$ has a $G$-invariant open neighbourhood $W\subseteq X$ with $\uW=(W,\O_X\vert_W)$ affine. Then $\uW/G$ is an open neighbourhood of $xG$ in $\uX/G$. As $\uX$ is second countable, $\uW$ is second countable and so Lindel\"of. Thus $\uW/G$ is an affine $C^\iy$-scheme by Proposition \ref{ag4prop5}. As we can cover $\uX/G$ by such open $\uW/G$, it is a $C^\iy$-scheme.

If $\uX$ is locally fair, or locally finitely presented, we can do the argument above with $\uS,\uU,\uV,\uW,\uW/G$ fair, or finitely presented, using Proposition \ref{ag2prop4} for $\uW/G$, so $\uX/G$ is also locally fair, or locally finitely presented.\I{C-scheme@$C^\iy$-scheme|)}\end{proof}

\section{\texorpdfstring{Modules over $C^\iy$-rings and $C^\iy$-schemes}{Modules over C∞-rings and C∞-schemes}}
\label{ag5}
\I{C-ring@$C^\iy$-ring!module|see{module over $C^\iy$-ring}}
\I{module over C-ring@module over $C^\iy$-ring|(}

Next we discuss modules over $C^\iy$-rings, and sheaves of modules on $C^\iy$-schemes. The author knows of no previous work on these, so all this section may be new, although much of it is a straightforward generalization of well known facts.

\subsection{\texorpdfstring{Modules over $C^\iy$-rings}{Modules over C∞-rings}}
\label{ag51}

\begin{dfn} Let $\fC$ be a $C^\iy$-ring. A {\it module\/ $M$
over\/} $\fC$, or $\fC$-{\it module}, is a module over $\fC$
regarded as a commutative $\R$-algebra\I{C-ring@$C^\iy$-ring!as
commutative $\R$-algebra} as in Definition \ref{ag2def3}, and morphisms of $\fC$-modules are morphisms of $\R$-algebra modules. We will write $\mu_M:\fC\t M\ra M$ for the multiplication map, and also write $\mu_M(c,m)=c\cdot m$ for $c\in\fC$ and $m\in M$. Then $\fC$-modules form an abelian category,\I{abelian category} which we write as
$\fCmod$.\G[Cmod]{$\fCmod$}{abelian category of modules over a
$C^\iy$-ring $\fC$} 

The action of $\fC$ on itself by multiplication makes $\fC$ into a $\fC$-module, and more generally $\fC\ot_\R V$ is a $\fC$-module for any $\R$-vector space $V$. A $\fC$-module $M$ is {\it finitely generated\/} if it fits into an exact sequence $\fC\ot\R^n\ra M\ra 0$ in $\fCmod$, and  {\it finitely presented\/} if it fits into an exact sequence $\fC\ot\R^m\ra \fC\ot\R^n\ra M\ra 0$.

Because $C^\iy$-rings such as $C^\iy(\R^n)$ are not noetherian, finitely generated $\fC$-modules generally need not be finitely presented.

Now let $\phi:\fC\ra\fD$ be a morphism of $C^\iy$-rings. If $M$ is a $\fC$-module then $\phi_*(M)=M\ot_\fC\fD$ is a $\fD$-module, and this induces a functor $\phi_*:\fCmod\ra\fDmod$. Also, any $\fD$-module $N$ may be regarded as a $\fC$-module $\phi^*(N)=N$ with $\fC$-action $\mu_{\phi^*(N)}(c,n)=\mu_N(\phi(c),n)$, and this defines a functor $\phi^*:\fDmod\ra\fCmod$. Note that $\phi_*:\fCmod\ra\fDmod$ takes finitely generated (or finitely presented) $\fC$-modules to finitely generated (or finitely presented) $\fD$-modules, but $\phi^*:\fDmod\ra\fCmod$ generally does not.
\label{ag5def1}
\end{dfn}

Vector bundles $E$ over manifolds $X$ give examples of modules
over~$C^\iy(X)$.

\begin{ex} Let $X$ be a manifold\I{manifold!vector bundles
on} and $E\ra X$ be a vector bundle, and write $\Ga^\iy(E)$
for the vector space of smooth sections $e$ of $E$. This is a module over the $C^\iy$-ring $C^\iy(X)$, multiplying functions on $X$ by sections of $E$.

Let $E,F\ra X$ be vector bundles over $X$ and $\la:E\ra F$ a
morphism of vector bundles. Then $\la_*:\Ga^\iy(E)\ra \Ga^\iy(F)$
defined by $\la_*:e\mapsto \la\ci e$ is a morphism of
$C^\iy(X)$-modules.

Now let $X,Y$ be manifolds and $f:X\ra Y$ a (weakly) smooth
map.\I{manifold with corners!weakly smooth map} Then
$f^*:C^\iy(Y)\ra C^\iy(X)$ is a morphism of $C^\iy$-rings. If $E\ra
Y$ is a vector bundle over $Y$, then $f^*(E)$ is a vector bundle
over $X$. Under the functor $(f^*)_*:C^\iy(Y)$-mod$\,\ra
C^\iy(X)$-mod of Definition \ref{ag5def1}, we see that
$(f^*)_*\bigl(\Ga^\iy(E) \bigr)=\Ga^\iy(E)\ot_{C^\iy(Y)}C^\iy(X)$ is
isomorphic as a $C^\iy(X)$-module to~$\Ga^\iy\bigl(f^*(E)\bigr)$.
\label{ag5ex1}
\end{ex}

If $E\ra X$ is any vector bundle over a manifold $X$ then by choosing
sections $e_1,\ldots,e_n\in \Ga^\iy(E)$ for $n\gg 0$ such that
$e_1\vert_x,\ldots,e_n\vert_x$ span $E\vert_x$ for all $x\in X$ we
obtain a surjective morphism of vector bundles $\psi:X\t\R^n\ra E$,
whose kernel is another vector bundle $F$. By choosing another
surjective morphism $\phi:X\t\R^m\ra F$ we obtain an exact sequence
of vector bundles
\begin{equation*}
\smash{\xymatrix@C=30pt{ X\t\R^m \ar[r]^\phi & X\t\R^n \ar[r]^(0.6)\psi & E \ar[r] & 0, }}
\end{equation*}
which induces an exact sequence of $C^\iy(X)$-modules
\begin{equation*}
\smash{\xymatrix@C=30pt{ C^\iy(X)\ot_\R\R^m \ar[r]^{\phi_*} & C^\iy(X)\ot_\R\R^n \ar[r]^(0.6){\psi_*} & \Ga^\iy(E) \ar[r] & 0. }}
\end{equation*}
Thus $\Ga^\iy(E)$ is a finitely presented $C^\iy(X)$-module.

\subsection{\texorpdfstring{Cotangent modules of $C^\iy$-rings}{Cotangent modules of C∞-rings}}
\label{ag52}
\I{module over C-ring@module over $C^\iy$-ring!cotangent module
$\Om_\fC$|(}\I{C-ring@$C^\iy$-ring!cotangent module $\Om_\fC$|(}

Given a $C^\iy$-ring $\fC$, we will define the {\it cotangent
module\/} $\Om_\fC$ of $\fC$. Although our definition of
$\fC$-module only used the commutative $\R$-algebra underlying the
$C^\iy$-ring $\fC$, our definition of the particular $\fC$-module
$\Om_\fC$ does use the $C^\iy$-ring structure in a nontrivial way. It is a $C^\iy$-ring version of the {\it module of relative differential forms\/} or {\it K\"ahler differentials\/} in Hartshorne \cite[p.~172]{Hart}, and is an example of a construction for Fermat theories by Dubuc and Kock~\cite{DuKo}.

\begin{dfn} Suppose $\fC$ is a $C^\iy$-ring, and $M$ a
$\fC$-module. A $C^\iy$-{\it derivation}\I{module over C-ring@module
over $C^\iy$-ring!C-derivation@$C^\iy$-derivation}%
\I{C-ring@$C^\iy$-ring!C-derivation@$C^\iy$-derivation} is an
$\R$-linear map $\d:\fC\ra M$ such that whenever $f:\R^n\ra\R$ is a
smooth map and $c_1,\ldots,c_n\in\fC$, we have
\e
\d\Phi_f(c_1,\ldots,c_n)=\ts\sum\limits_{i=1}^n\Phi_{\frac{\pd
f}{\pd x_i}}(c_1,\ldots,c_n)\cdot \d c_i.
\label{ag5eq1}
\e
Note that $\d$ is {\it not\/} a morphism of $\fC$-modules. We call
such a pair $M,\d$ a {\it cotangent module\/} for $\fC$ if it
has the universal property that for any $C^\iy$-derivation $\d':\fC\ra M'$, there exists a unique morphism of $\fC$-modules $\la:M\ra M'$ with~$\d'=\la\ci\d$.

There is a natural construction for a cotangent module: we take
$M$ to be the quotient of the free $\fC$-module with basis of
symbols $\d c$ for $c\in\fC$ by the $\fC$-submodule spanned by all
expressions of the form $\d\Phi_f(c_1,\ldots,c_n)-\sum_{i=1}^n
\Phi_{\frac{\pd f}{\pd x_i}}(c_1,\ldots,c_n)\cdot\d c_i$
for $f:\R^n\ra\R$ smooth and $c_1,\ldots,c_n\in\fC$. Thus cotangent
modules exist, and are unique up to unique isomorphism. When we
speak of `the' cotangent module, we mean that constructed above. We
write $\d_\fC:\fC\ra\Om_\fC$ for the cotangent module of~$\fC$.

Let $\fC,\fD$ be $C^\iy$-rings with cotangent modules
$\Om_\fC,\d_\fC$, $\Om_\fD,\d_\fD$, and $\phi:\fC\ra\fD$ be a morphism of $C^\iy$-rings. Then we may regard $\Om_\fD=\phi^*(\Om_\fD)$ as a $\fC$-module, and $\d_\fD\ci\phi:\fC\ra\Om_\fD$ as a $C^\iy$-derivation. Thus by the universal property of $\Om_\fC$, there exists a unique morphism of $\fC$-modules $\Om_\phi:\Om_\fC\ra\Om_\fD$ with $\d_\fD\ci\phi=\Om_\phi\ci\d_\fC$. This then induces a morphism of $\fD$-modules $(\Om_\phi)_*:
\Om_\fC\ot_\fC\fD\ra\Om_\fD$. If $\phi:\fC\ra\fD$, $\psi:\fD\ra\fE$
are morphisms of $C^\iy$-rings then~$\Om_{\psi\ci\phi}=
\Om_\psi\ci\Om_\phi:\Om_\fC\ra\Om_\fE$.
\label{ag5def2}
\end{dfn}

\begin{ex} Let $X$ be a manifold.\I{manifold!cotangent bundle} Then
the cotangent bundle $T^*X$ is a vector bundle over $X$, so as in
Example \ref{ag5ex1} it yields a $C^\iy(X)$-module $\Ga^\iy(T^*X)$.
The exterior derivative $\d:C^\iy(X)\ra \Ga^\iy(T^*X)$, $\d:c\mapsto
\d c$ is then a $C^\iy$-derivation,\I{module over C-ring@module
over $C^\iy$-ring!C-derivation@$C^\iy$-derivation}%
\I{C-ring@$C^\iy$-ring!C-derivation@$C^\iy$-derivation} since
equation \eq{ag5eq1} follows from
\begin{equation*}
\d\bigl(f(c_1,\ldots,c_n)\bigr)=\ts\sum_{i=1}^n\frac{\pd f}{\pd
x_i}(c_1,\ldots,c_n)\,\d c_n
\end{equation*}
for $f:\R^n\ra\R$ smooth and $c_1,\ldots,c_n\in C^\iy(X)$, which holds by the chain rule. It is easy to show that $\Ga^\iy(T^*X),\d$ have the universal property in Definition \ref{ag5def2}, and so form a cotangent module for~$C^\iy(X)$.

Now let $X,Y$ be manifolds, and $f:X\ra Y$ a smooth map. Then $f^*(T^*Y),\ab T^*X$ are vector bundles over $X$, and the derivative of $f$ gives a vector bundle morphism $\d f:f^*(T^*Y)\ra T^*X$. This induces a morphism of $C^\iy(X)$-modules $(\d f)_*:\Ga^\iy(f^*(T^*Y))\ra
\Ga^\iy(T^*X)$. This $(\d f)_*$ is identified with $(\Om_{f^*})_*$
under the natural isomorphism $\Ga^\iy(f^*(T^*Y))\cong
\Ga^\iy(T^*Y)\ot_{C^\iy(Y)}C^\iy(X)$, where we identify
$C^\iy(Y),C^\iy(X),f^*$ with $\fC,\fD,\phi$ in Definition~\ref{ag5def2}.
\label{ag5ex2}
\end{ex}

The importance of Definition \ref{ag5def2} is that it abstracts the
notion of cotangent bundle of a manifold in a way that makes sense
for any $C^\iy$-ring.

\begin{rem} There is a second way to define a cotangent-type module
for a $C^\iy$-ring $\fC$, namely the module ${\rm Kd}_\fC$ of {\it
K\"ahler differentials\/}\I{module over C-ring@module over
$C^\iy$-ring!module of K\"ahler
differentials}\I{C-ring@$C^\iy$-ring!module of K\"ahler
differentials} of the underlying $\R$-algebra of $\fC$. This is
defined as for $\Om_\fC$, but requiring \eq{ag5eq1} to hold only
when $f:\R^n\ra\R$ is a polynomial. Since we impose many fewer
relations, ${\rm Kd}_\fC$ is generally much larger than $\Om_\fC$,
so that ${\rm Kd}_{C^\iy(\R^n)}$ is not a finitely generated
$C^\iy(\R^n)$-module\I{module over C-ring@module over
$C^\iy$-ring!finitely generated} for $n>0$, for instance.
\label{ag5rem1}
\end{rem}

\begin{prop} If\/ $\fC$ is a finitely generated\/
$C^\iy$-ring\I{C-ring@$C^\iy$-ring!finitely generated} then
$\Om_\fC$ is a finitely generated\/ $\fC$-module. If\/ $\fC$ is finitely
presented,\I{C-ring@$C^\iy$-ring!finitely presented} then\/
$\Om_\fC$ is finitely presented.\I{module over C-ring@module over
$C^\iy$-ring!finitely presented}
\label{ag5prop1}
\end{prop}

\begin{proof} If $\fC$ is finitely generated we have an exact
sequence
\e
\smash{\xymatrix@C=30pt{ 0 \ar[r] & I\,\, \ar[r] & C^\iy(\R^n) \ar[r]^(0.6)\phi & \fC \ar[r] & 0. }}
\label{ag5eq2}
\e
Write $x_1,\ldots,x_n$ for the generators
of $C^\iy(\R^n)$. Then any $c\in\fC$ may be written as $\phi(f)$ for
some $f\in C^\iy(\R^n)$, and \eq{ag5eq1} implies that
\begin{equation*}
\d c=\d\Phi_f\bigl(\phi(x_1),\ldots,\phi(x_n)\bigr)
=\ts\sum_{i=1}^n\Phi_{\frac{\pd f}{\pd
x_i}}(\phi(x_1),\ldots,\phi(x_n))\cdot\d\ci\phi(x_i).
\end{equation*}
Hence the generators $\d c$ of $\Om_\fC$ for $c\in\fC$ are
$\fC$-linear combinations of $\d\ci\phi(x_i)$, $i=1,\ldots,n$, so
$\Om_\fC$ is spanned by the $\d\ci\phi(x_i)$, and is finitely
generated.

Suppose $\fC$ is finitely presented. Then we have an exact sequence \eq{ag5eq2} with ideal $I=(f_1,\ldots,f_m)$. We will define an exact sequence of $\fC$-modules
\e
\smash{\xymatrix@C=30pt{ \fC\ot_\R\R^m \ar[r]^\al & \fC\ot_\R\R^n \ar[r]^(0.6)\be & \Om_\fC \ar[r] & 0. }}
\label{ag5eq3}
\e
Write $(a_1,\ldots,a_m)$, $(b_1,\ldots,b_n)$ for bases of
$\R^m,\R^n$. As $\fC\ot_\R\R^m,\fC\ot_\R\R^n$ are free
$\fC$-modules, the $\fC$-module morphisms $\al,\be$ are specified
uniquely by giving $\al(a_i)$ for $i=1,\ldots,m$ and $\be(b_j)$ for
$j=1,\ldots,n$, which we define to be
\begin{equation*}
\al:a_i\longmapsto \ts\sum_{j=1}^n\Phi_{\frac{\pd
f_i}{\pd x_j}}\bigl(\phi(x_1),\ldots,\phi(x_n)\bigr)\cdot b_j
\quad\text{and}\quad \be:b_j\longmapsto \d_\fC\bigl(\phi(x_j)\bigr).
\end{equation*}
Then for $i=1,\ldots,m$ we have
\begin{align*}
\be\ci\al(a_i)&=\ts\sum_{j=1}^n\Phi_{\frac{\pd f_i}{\pd
x_j}}\bigl(\phi(x_1),\ldots,\phi(x_n)\bigr)\cdot
\d_\fC\bigl(\phi(x_j)\bigr)\\
&=\d_\fC\bigl(\Phi_{f_i}\bigl(\phi(x_1),\ldots,\phi(x_n)\bigr)\bigr)\\
&=\d_\fC\ci\phi\bigl(\Phi_{f_i}(x_1,\ldots,x_n)\bigr)
=\d_\fC\ci\phi\bigl(f_i(x_1,\ldots,x_n))=\d_\fC(0)=0,
\end{align*}
using \eq{ag5eq1} in the second step, $\phi$ a
morphism of $C^\iy$-rings in the third, the definition of
$C^\iy(\R^n)$ as a $C^\iy$-ring in the fourth, and
$f_i(x_1,\ldots,x_n)\in I=\Ker\phi$ in the fifth. Hence
$\be\ci\al=0$, and \eq{ag5eq3} is a complex.

Thus $\be$ induces $\be_*:(\fC\ot_\R\R^n)/\al(\fC\ot_\R\R^m)\ra
\Om_\fC$. We will show $\be_*$ is an isomorphism, so that
\eq{ag5eq3} is exact. Define $\d:\fC\ra(\fC\ot_\R\R^n)/
\al(\fC\ot_\R\R^m)$ by
\e
\ts\d\bigl(\phi(h)\bigr)=
\ts\sum_{j=1}^n\Phi_{\frac{\pd h}{\pd
x_j}}\bigl(\phi(x_1),\ldots,\phi(x_n)\bigr)\cdot b_j
+\al(\fC\ot_\R\R^m).
\label{ag5eq4}
\e
Here every $c\in\fC$ may be written as $\phi(h)$ for some $h\in C^\iy(\R^n)$ as $\phi$ is surjective. To show \eq{ag5eq4} is well-defined we must show the right hand side is independent of the choice of $h$ with $\phi(h)=c$, that is, we must show that the right hand side is zero if $h\in I$. It is enough to check this when $h$ is a generator $f_1,\ldots,f_m$ of $I$, and this holds by definition
of $\al$. Hence $\d$ in \eq{ag5eq4} is well-defined.

It is easy to see that $\d$ is a $C^\iy$-derivation, and that
$\be_*\ci\d=\d_\fC$. So by the universal property of $\Om_\fC$,
there is a unique $\fC$-module morphism $\psi:\Om_\fC\ra(\fC\ot_\R
\R^n)/\al(\fC\ot_\R\R^m)$ with $\d=\psi\ci\d_\fC$. Thus
$\be_*\ci\psi\ci\d_\fC=\be_*\ci\d=\d_\fC=\id_{\Om_\fC}\ci\d_\fC$, so
as $\Im\d_\fC$ generates $\Om_\fC$ as an $\fC$-module we see that
$\be_*\ci\psi=\id_{\Om_\fC}$. Similarly $\psi\ci\be_*$ is the
identity, so $\psi,\be_*$ are inverse, and $\be_*$ is an
isomorphism. Therefore \eq{ag5eq3} is exact, and $\Om_\fC$ is
finitely presented.
\end{proof}

Cotangent modules behave well under
localization.\I{C-ring@$C^\iy$-ring!localization}

\begin{prop} Let\/ $\fC$ be a $C^\iy$-ring, $S\subseteq\fC,$ and\/ $\fD=\fC[s^{-1}:s\in S]$ be the localization of\/ $\fC$ at\/ $S$ with projection $\pi:\fC\ra\fD,$ as in Definition\/ {\rm\ref{ag2def7}}. Then 
$(\Om_\pi)_*:\Om_\fC\ot_\fC\fD\ra\Om_\fD$ is
an isomorphism of\/ $\fD$-modules.
\label{ag5prop2}
\end{prop}

\begin{proof} Let $\Om_\fC,\Om_\fD$ be constructed as in Definition \ref{ag5def2}. As $\fD=\fC[s^{-1}:s\in S]$ is $\fC$ together with an extra generator $s^{-1}$ and an extra relation $s\cdot s^{-1}=1$ for each $s\in S$, we see that the $\fD$-module $\Om_\fD$ may be constructed from $\Om_\fC\ot_\fC\fD$ by adding an extra generator
$\d(s^{-1})$ and an extra relation $\d(s\cdot s^{-1}-1)=0$ for each $s\in S$. But
using \eq{ag5eq1} and $s\cdot s^{-1}=1$ in $\fD$, we see that this extra relation is equivalent to $\d(s^{-1})=-(s^{-1})^2\d s$. Thus the extra relations exactly cancel the effect of adding the extra generators, so $(\Om_\pi)_*$ is an isomorphism.
\end{proof}

Here is a useful exactness property of cotangent modules.

\begin{thm} Suppose we are given a pushout\I{category!pushout}\I{pushout}\I{C-ring@$C^\iy$-ring!pushout} diagram of\/ $C^\iy$-rings:
\e
\begin{gathered}
\xymatrix@C=60pt@R=14pt{ *+[r]{\fC} \ar[r]_\be \ar[d]^\al & *+[l]{\fE} \ar[d]_\de \\
*+[r]{\fD} \ar[r]^\ga & *+[l]{\fF,\!} }
\end{gathered}
\label{ag5eq5}
\e
so that\/ $\fF=\fD\amalg_\fC\fE$. Then the following sequence of\/
$\fF$-modules is exact:
\e
\xymatrix@C=15pt{ \Om_\fC\ot_{\fC,\ga\ci\al}\fF
\ar[rrr]^(0.5){(\Om_\al)_*\op -(\Om_\be)_*} &&&
{\raisebox{5pt}{$\begin{subarray}{l}\ts \Om_\fD\ot_{\fD,\ga}\fF\,\, \op\\
\ts\,\,\,\Om_\fE\ot_{\fE,\de}\fF \end{subarray}$}}
\ar[rrr]^(0.65){(\Om_\ga)_*\op(\Om_\de)_*} &&& \Om_\fF \ar[r] & 0. }
\label{ag5eq6}
\e
Here\/ $(\Om_\al)_*:\Om_\fC\ot_{\fC,\ga\ci\al}\fF\ra
\Om_\fD\ot_{\fD,\ga}\fF$ is induced by\/ $\Om_\al:\Om_\fC\ra
\Om_\fD,$ and so on. Note the sign of\/ $-(\Om_\be)_*$
in\/~\eq{ag5eq6}.
\label{ag5thm1}
\end{thm}

\begin{proof} By $\Om_{\psi\ci\phi}=\Om_\psi\ci\Om_\phi$ in
Definition \ref{ag5def2} and commutativity of \eq{ag5eq5} we have
$\Om_\ga\ci\Om_\al=\Om_{\ga\ci\al}=\Om_{\de\ci\be}=\Om_\de\ci
\Om_\be:\Om_\fC\ra\Om_\fF$. Tensoring with $\fF$ then gives
$(\Om_\ga)_*\ci(\Om_\al)_*=(\Om_\de)_*\ci(\Om_\be)_*:
\Om_\fC\ot_\fC\fF\ra\Om_\fF$. As the composition of morphisms in
\eq{ag5eq6} is $(\Om_\ga)_*\ci(\Om_\al)_*-(\Om_\de)_*\ci
(\Om_\be)_*$, this implies \eq{ag5eq6} is a complex.

For simplicity, first suppose $\fC,\fD,\fE,\fF$ are finitely
presented.\I{C-ring@$C^\iy$-ring!finitely presented} Use the
notation of Example \ref{ag2ex9} and the proof of Proposition
\ref{ag2prop5}, with exact sequences \eq{ag2eq3} and \eq{ag2eq4},
where $I=(h_1,\ldots,h_i)\subset C^\iy(\R^l)$,
$J=(d_1,\ldots,d_j)\subset C^\iy(\R^m)$ and
$K=(e_1,\ldots,e_k)\subset C^\iy(\R^n)$. Then $L$ is given by
\eq{ag2eq5}. Applying the proof of Proposition \ref{ag5prop1} to
\eq{ag2eq3}--\eq{ag2eq4} yields exact sequences of $\fF$-modules
\ea
\xymatrix{ \fF\ot_\R\R^i \ar[r]^{\ep_1} & \fF\ot_\R\R^l
\ar[r]^{\ze_1} & \Om_\fC\ot_\fC\fF \ar[r] & 0, }&
\label{ag5eq7}\\
\xymatrix{ \fF\ot_\R\R^j \ar[r]^{\ep_2} & \fF\ot_\R\R^m
\ar[r]^{\ze_2} & \Om_\fD\ot_\fD\fF \ar[r] & 0, }&
\label{ag5eq8}\\
\xymatrix{ \fF\ot_\R\R^k \ar[r]^{\ep_3} & \fF\ot_\R\R^n
\ar[r]^{\ze_3} & \Om_\fE\ot_\fE\fF \ar[r] & 0, }&
\label{ag5eq9}\\
\xymatrix@C=6pt{ \fF\ot_\R\R^{j+k+l} \ar[rr]^(0.3){\ep_4} &&
\fF\ot_\R\R^{m+n} \!=\!\fF\ot_\R\R^m\!\op\!\fF\ot_\R\R^n
\ar[rr]^(0.8){\ze_4} && \Om_\fF \ar[r] & 0, }&
\label{ag5eq10}
\ea
where for \eq{ag5eq7}--\eq{ag5eq9} we have tensored \eq{ag5eq3} for
$\fC,\fD,\fE$ with~$\fF$.

Define $\fF$-module morphisms $\th_1:\fF\ot_\R\R^l\ra\fF\ot_\R\R^m$,
$\th_2:\fF\ot_\R\R^l\ra\fF\ot_\R\R^n$ by
$\th_1(a_1,\ldots,a_l)=(b_1,\ldots,b_m)$,
$\th_2(a_1,\ldots,a_l)=(c_1,\ldots,c_n)$ with
\begin{equation*}
b_q=\sum_{p=1}^l \Phi_{\frac{\pd f_p}{\pd
y_q}}(\xi(y_1),\ldots,\xi(y_m))\cdot a_p, \quad c_r=\sum_{p=1}^l
\Phi_{\frac{\pd g_p}{\pd y_r}}(\xi(z_1),\ldots,\xi(z_n))\cdot a_p,
\end{equation*}
for $a_p,b_q,c_r\in\fF$. Now consider the diagram
\e
\begin{gathered}
\xymatrix@C=18pt@R=18pt{
{\begin{subarray}{l}\ts \fF\ot_\R\R^j\,\,\op \\
\ts \fF\ot_\R\R^k\,\,\op \\
\ts \fF\ot_\R\R^l\end{subarray}}
\ar[rrr]_(0.46){\ep_4=\left(\begin{subarray}{l} \ep_2 \,\, 0 \,\,
\phantom{-}\th_1 \\ 0 \,\, \ep_3 \,\, -\th_2\end{subarray}\right)}
\ar[d]^(0.6){\left(0 \,\, 0 \,\, \ze_1\right)} &&&
{\begin{subarray}{l}\ts \fF\ot_\R\R^m\,\,\op \\
\ts \fF\ot_\R\R^n\end{subarray}} \ar[rrr]_(0.55){\ze_4}
\ar[d]^{\left(\begin{subarray}{l} \ze_2 \,\, 0 \\ 0 \,\,
\ze_3\end{subarray}\right)} &&& \Om_\fF \ar[r]
\ar@{=}[d]^{{}\,\,\id_{\Om_\fF}} & 0 \\
\Om_\fC\ot_\fC\fF
\ar[rrr]^(0.5){\left(\begin{subarray}{l} (\Om_\al)_*\\
-(\Om_\be)_*\end{subarray}\right)} &&&
{\begin{subarray}{l}\ts \Om_\fD\ot_\fD\fF\, \op\\
\ts\Om_\fE\ot_\fE\fF \end{subarray}}
\ar[rrr]^(0.55){\left((\Om_\ga)_*\,\, (\Om_\de)_*\right)} &&&
\Om_\fF \ar[r] & 0, }
\end{gathered}
\label{ag5eq11}
\e
using matrix notation. The top line is the exact sequence
\eq{ag5eq10}, where the sign in $-\th_2$ comes from the sign of
$g_p$ in the generators $f_p(y_1,\ldots,y_m)-g_p(z_1,\ldots,z_n)$ of
$L$ in \eq{ag2eq5}. The bottom line is the complex~\eq{ag5eq6}.

The left hand square commutes as $\ze_2\ci\ep_2=\ze_3\ci\ep_3=0$ by
exactness of \eq{ag5eq8}--\eq{ag5eq9} and
$\ze_2\ci\th_1=(\Om_\al)_*\ci\ze_1$ follows from
$\al\ci\phi(x_p)=\psi(f_p)$, and $\ze_3\ci\th_2=(\Om_\be)_*\ci\ze_1$
follows from $\be\ci\phi(x_p)=\chi(g_p)$. The right hand square
commutes as $\ze_4$ and $(\Om_\ga)_*\ci\ze_2$ act on $\fF\ot_\R\R^m$
by $(a_1,\ldots,a_m)\mapsto\sum_{q=1}^ma_q\d_\fF\ci\xi(y_q)$, and
$\ze_4$ and $(\Om_\de)_*\ci\ze_3$ act on $\fF\ot_\R\R^n$ by
$(b_1,\ldots,b_n)\mapsto\sum_{r=1}^nb_r\d_\fF\ci\xi(z_r)$. Hence
\eq{ag5eq11} is commutative. The columns are surjective since
$\ze_1,\ze_2,\ze_3$ are surjective as \eq{ag5eq7}--\eq{ag5eq9} are
exact and identities are surjective.

The bottom right morphism $\bigl((\Om_\ga)_*\,(\Om_\de)_*\bigr)$ in
\eq{ag5eq11} is surjective as $\ze_4$ is and the right hand square
commutes. Also surjectivity of the middle column implies that it
maps $\Ker\ze_4$ surjectively onto $\Ker\bigl((\Om_\ga)_*\,
(\Om_\de)_*\bigr)$. But $\Ker\ze_4=\Im\ep_4$ as the top row is
exact, so as the left hand square commutes we see that
$\bigl((\Om_\al)_*\, -(\Om_\be)_*\bigr){}^T$ surjects onto
$\Ker\bigl((\Om_\ga)_*\, (\Om_\de)_*\bigr)$, and the bottom row of
\eq{ag5eq11} is exact. This proves the theorem for $\fC,\fD,\fE,\fF$
finitely presented. For the general case we can use the
same proof, but allowing $i,j,k,l,m,n$ infinite.
\end{proof}

Here is an example of the situation of Theorem \ref{ag5thm1} for manifolds.

\begin{ex} Let $W,X,Y,Z,e,f,g,h$ be as in Theorem \ref{ag3thm}, so
that \eq{ag3eq1} is a Cartesian square\I{Cartesian square} of
manifolds\I{manifold!transverse fibre product}\I{manifold with
corners!transverse fibre product}\I{fibre product}\I{category!fibre
product} and \eq{ag3eq2} a pushout\I{C-ring@$C^\iy$-ring!pushout}
square of $C^\iy$-rings. We have the following sequence of morphisms
of vector bundles on~$W$:
\e
\xymatrix@C=9.5pt{ 0 \ar[r] & (g\ci e)^*(T^*Z)
\ar[rrrr]^(0.47){\raisebox{5pt}{$\st e^*(\d g^*)\op -f^*(\d h^*)$}}
&&&& e^*(T^*X)\!\op\!f^*(T^*Y) \ar[rr]^(0.71){\raisebox{5pt}{$\st \d
e^*\op \d f^*$}} && T^*W \ar[r] & 0.}\!\!
\label{ag5eq12}
\e
Here $\d g:TX\ra g^*(TZ)$ is a morphism of vector bundles over $X$,
and $\d g^*:g^*(T^*Z)\ra T^*X$ is the dual morphism, and $e^*(\d
g^*):(g\ci e)^*(T^*Z)\ra e^*(T^*X)$ is the pullback of this dual
morphism to~$W$.

Since $g\ci e=h\ci f$, we have $\d e^*\ci e^*(\d g^*)=\d f^*\ci
f^*(\d h^*)$, and so \eq{ag5eq12} is a complex. As $g,h$ are
transverse and \eq{ag3eq1} is Cartesian, \eq{ag5eq12} is exact. So
passing to smooth sections in \eq{ag5eq12} we get an exact sequence
of $C^\iy(W)$-modules:
\begin{equation*}
\xymatrix@C=13pt{ 0 \ar[r] & \Ga^\iy\bigl((g\ci e)^*(T^*Z)\bigr)
\ar[rrr]^(0.53){\begin{subarray}{l}(e^*(\d g^*)\op \\ -f^*(\d h^*))_*
\end{subarray}} &&& {\raisebox{7pt}{$\begin{subarray}{l}\ts
\Ga^\iy\bigl(e^*(T^*X)\\
\ts{}\,\op f^*(T^*Y)\bigr)\end{subarray}$}}
\ar[rr]^(0.52){\begin{subarray}{l}(\d e^*\op\\ {}\,\,\,\d f^*)_*
\end{subarray}} && \Ga^\iy(T^*W) \ar[r] & 0.}
\end{equation*}
The final four terms are the exact sequence \eq{ag5eq6}
for the pushout\I{category!pushout}\I{pushout}
diagram~\eq{ag3eq2}.\I{module over C-ring@module over $C^\iy$-ring|)}\I{module over C-ring@module over $C^\iy$-ring!cotangent module $\Om_\fC$|)}\I{C-ring@$C^\iy$-ring!cotangent module $\Om_\fC$|)}

\label{ag5ex3}
\end{ex}

\subsection{\texorpdfstring{Sheaves of $\O_X$-modules on a $C^\iy$-ringed space $(X,\O_X)$}{Sheaves of modules on a C∞-ringed space}}
\label{ag53}
\I{C-ringed space@$C^\iy$-ringed space!sheaves of $\O_X$-modules on|(}%
\I{C-scheme@$C^\iy$-scheme!sheaves of $\O_X$-modules on|(}

We define sheaves of $\O_X$-modules on a $C^\iy$-ringed space,
following~\cite[\S II.5]{Hart}.

\begin{dfn} Let $(X,\O_X)$ be a $C^\iy$-ringed space. A {\it sheaf
of\/ $\O_X$-modules}, or simply an $\O_X$-{\it module}, $\cE$ on $X$
assigns a module $\cE(U)$ over the $C^\iy$-ring
$\O_X(U)$ for each open set $U\subseteq X$, and a linear map
$\cE_{UV}:\cE(U)\ra \cE(V)$ for each inclusion of open sets $V\subseteq
U\subseteq X$, such that the following commutes
\e
\begin{gathered}
\xymatrix@R=15pt@C=100pt{ *+[r]{\O_X(U)\t \cE(U)} \ar[d]^{\rho_{UV}\t\cE_{UV}}
\ar[r]_(0.6){\mu_{\cE(U)}} & *+[l]{\cE(U)}
\ar[d]_{\cE_{UV}} \\
*+[r]{\O_X(V)\t \cE(V)} \ar[r]^(0.6){\mu_{\cE(V)}} & *+[l]{\cE(V),\!} }
\end{gathered}
\label{ag5eq13}
\e
and all this data $\cE(U),\cE_{UV}$ satisfies the sheaf axioms in
Definition~\ref{ag4def1}.

A {\it morphism of sheaves of\/ $\O_X$-modules\/} $\phi:\cE\ra\cF$
assigns a morphism of $\O_X(U)$-modules $\phi(U):\cE(U)\ra\cF(U)$
for each open set $U\subseteq X$, such that
$\phi(V)\ci\cE_{UV}=\cF_{UV}\ci\phi(U)$ for each inclusion of open
sets $V\subseteq U\subseteq X$. Then $\O_X$-modules form an abelian
category,\I{abelian category} which we write
as~$\OXmod$.\G[OXmoda]{$\OXmod$}{abelian category of $\O_X$-modules
on $C^\iy$-scheme $\uX$}

An $\O_X$-module $\cE$ is called a {\it vector bundle of rank\/} $n$ if we may cover $X$ by open $U\subseteq X$ with $\cE\vert_U\cong\O_X\vert_U\ot_\R\R^n$.
\label{ag5def3}
\end{dfn}

In Definition \ref{ag4def6} we defined {\it fine\/} sheaves $\cE$ on a topological space $X$. In \S\ref{ag47} we gave sufficient conditions for when a $C^\iy$-ringed space $\uX=(X,\O_X)$ has $\O_X$ fine, which hold if $\uX$ is an affine $C^\iy$-scheme with $X$ Lindel\"of. Now if $\O_X$ is fine, then any $\O_X$-module $\cE$ is also fine, since partitions of unity in $\O_X$ induce partitions of unity in $\mathcal{H}om(\cE,\cE)$. 

As in Voisin \cite[Prop.~4.36]{Vois}, a fundamental property  of fine sheaves $\cE$ is that their cohomology groups $H^i(\cE)$ are zero for all $i>0$. This means that $H^0$ is an exact functor\I{functor!exact} on fine sheaves, rather than just left exact,\I{functor!left exact} since
$H^1$ measures the failure of $H^0$ to be right exact.\I{functor!right exact} If $X$ is second countable then $(U,\O_X\vert_U)$ is a Lindel\"of affine $C^\iy$-scheme for all open $U\subseteq X$. Thus we deduce:

\begin{prop} Let\/ $(X,\O_X)$ be an affine $C^\iy$-scheme with\/ $X$ Lindel\"of, and
\begin{equation*}
\smash{\xymatrix@C=30pt{ 
\cdots \ar[r] & \cE^i \ar[r]^{\phi^i} & \cE^{i+1} \ar[r]^{\phi^{i+1}} & \cE^{i+2} \ar[r] & \cdots }}
\end{equation*}
be an exact sequence in $\OXmod$. Then
\begin{equation*}
\smash{\xymatrix@C=32pt{ \cdots \ar[r] & \cE^i(X) \ar[r]^(0.45){\phi^i(X)} & \cE^{i+1}(X) \ar[r]^{\phi^{i+1}(X)} & \cE^{i+2}(X) \ar[r] & \cdots }}
\end{equation*}
is an exact sequence of\/ $\O_X(X)$-modules. If\/ $X$ is also second countable then the following is an exact sequence of\/ $\O_X(U)$-modules for all open\/~$U\subseteq X:$
\begin{equation*}
\smash{\xymatrix@C=32pt{ \cdots \ar[r] & \cE^i(U) \ar[r]^(0.45){\phi^i(U)} & \cE^{i+1}(U) \ar[r]^{\phi^{i+1}(U)} & \cE^{i+2}(U) \ar[r] & \cdots. }}
\end{equation*}

\label{ag5prop3}
\end{prop}

\begin{rem} Recall that a $C^\iy$-ring $\fC$ has an underlying
commutative $\R$-algebra,\I{C-ring@$C^\iy$-ring!as commutative
$\R$-algebra} and a module over $\fC$ is a module over this
$\R$-algebra, by Definitions \ref{ag2def3} and \ref{ag5def1}. Thus,
by truncating the $C^\iy$-rings $\O_X(U)$ to commutative
$\R$-algebras, regarded as rings, a $C^\iy$-ringed space $(X,\O_X)$
has an underlying ringed space in the usual sense of algebraic
geometry \cite[p.~72]{Hart}, \cite[\S 0.4]{Grot}. Our definition of
$\O_X$-modules are simply $\O_X$-modules on this underlying ringed
space \cite[\S II.5]{Hart}, \cite[\S 0.4.1]{Grot}. Thus we can apply
results from algebraic geometry without change, for instance that
$\OXmod$ is an abelian category, as in~\cite[p.~202]{Hart}.
\label{ag5rem2}
\end{rem}

\begin{dfn} Let $\uf=(f,f^\sh):(X,\O_X)\ra(Y,\O_Y)$ be a morphism
of $C^\iy$-ringed spaces, and $\cE$ be an $\O_Y$-module. Define the
{\it pullback\/}\I{C-ringed space@$C^\iy$-ringed space!sheaves of
$\O_X$-modules on!pullback}
$\uf^*(\cE)$\G[fEc]{$\uf^*(\cE)$}{pullback of sheaf of
$\O_Y$-modules under $\uf:\uX\ra\uY$} by $\uf^*(\cE)=f^{-1}(\cE)
\ot_{f^{-1}(\O_Y)}\O_X$, where $f^{-1}(\cE)$ is as in Definition
\ref{ag4def5}, a sheaf of modules over the sheaf of $C^\iy$-rings
$f^{-1}(\O_Y)$ on $X$, and the tensor product uses the morphism
$f^\sh:f^{-1}(\O_Y)\ra\O_X$. If $\phi:\cE\ra\cF$ is a morphism of
$\O_Y$-modules we have a morphism of
$\O_X$-modules~$\uf^*(\phi)=f^{-1}(\phi)
\ot\id_{\O_X}:\uf^*(\cE)\ra\uf^*(\cF)$.
\label{ag5def4}
\end{dfn}

\begin{rem} Pullbacks $\uf^*(\cE)$ are a kind of fibre
product,\I{fibre product}\I{category!fibre product} and may be
characterized by a universal property in $\OXmod$. So they should be regarded as
being {\it unique up to canonical isomorphism}, rather than unique.
One can give an explicit construction for pullbacks, or use the
Axiom of Choice\I{Axiom of Choice} to choose $\uf^*(\cE)$ for all
$\uf,\cE$, and so speak of `the' pullback $\uf^*(\cE)$. However, it
may not be possible to make these choices strictly functorial
in~$\uf$.

That is, if $\uf:\uX\ra\uY$, $\ug:\uY\ra\uZ$ are morphisms and
$\cE\in\OZmod$ then $(\ug\ci\uf)^*(\cE)$, $\uf^*(\ug^*(\cE))$ are
canonically isomorphic in $\OXmod$, but may not be equal. We will
write $I_{\uf,\ug}(\cE):(\ug\ci\uf)^*(\cE)\ra
\uf^*(\ug^*(\cE))$\G[Ifgb]{$I_{\uf,\ug}(\cE):(\ug\ci\uf)^*(\cE)
\ra\uf^{-1}(\ug^{-1}(\cE))$}{isomorphism of pullbacks in $\OXmod$}
for these canonical isomorphisms, as in Remark \ref{ag4rem1}(b).
Then $I_{\uf,\ug}:(\ug\ci\uf)^*\Ra \uf^*\ci\ug^*$ is a natural
isomorphism of functors. It is common to ignore this point and
identify $(\ug\ci\uf)^*$ with $\uf^*\ci\ug^*$. Vistoli \cite{Vist} makes careful use of
natural isomorphisms $(g\ci f)^*\Ra f^*\ci g^*$ in his treatment of
descent theory.

When $\uf$ is the identity $\uid_\uX:\uX\ra\uX$ and $\cE\in\OXmod$
we do not require $\uid^*_\uX(\cE)=\cE$, but as $\cE$ is a possible
pullback for $\uid^*_\uX(\cE)$ there is a canonical isomorphism
$\de_\uX(\cE):\uid^*_\uX(\cE)\ra\cE$,\G[deXb]{$\de_\uX(\cE):\uid^*_\uX
(\cE)\ra\cE$}{canonical isomorphism of pullbacks in $\OXmod$} and
then $\de_\uX:\uid^*_\uX\Ra\id_\OXmod$ is a natural isomorphism of
functors.
\label{ag5rem3}
\end{rem}

By Grothendieck \cite[\S 0.4.3.1]{Grot} we have:

\begin{prop} Let\/ $\uX,\uY$ be $C^\iy$-ringed spaces and\/
$\uf:\uX\ra\uY$ a morphism. Then pullback\/ $\uf^*:\OYmod\ra\OXmod$
is a \begin{bfseries}right exact
functor\end{bfseries}\I{functor!right exact} between abelian
categories.\I{abelian category} That is, if\/
$\cE\smash{\,{\buildrel\phi \over\longra}\,\cF
\,{\buildrel\psi\over\longra}\,}\cG\ra 0$ is exact in $\OYmod$ then
$\uf^*(\cE)\,{\buildrel\uf^*(\phi)\over \longra}\,\uf^*(\cF)
\,{\buildrel\uf^*(\psi)\over\longra} \,\uf^*(\cG)\ra 0$ is exact
in\/~$\OXmod$.
\label{ag5prop4}
\end{prop}

In general $\uf^*$ is not exact,\I{functor!exact} or left
exact,\I{functor!left exact} unless $\uf:\uX\ra\uY$ is
flat.\I{C-ringed space@$C^\iy$-ringed space!sheaves of
$\O_X$-modules on|)}

\subsection{\texorpdfstring{Sheaves on affine $C^\iy$-schemes, $\MSpec$ and $\Ga$}{Sheaves on affine C∞-schemes, MSpec and Γ}}
\label{ag54}
\I{C-scheme@$C^\iy$-scheme!affine!sheaves of $\O_X$-modules on|(}

In \S\ref{ag44} we defined $\Spec:\CRings^{\bf op}\ra\LCRS$.\I{C-ring@$C^\iy$-ring!spectrum functor $\Spec$} In a similar way, if $\fC$ is a $C^\iy$-ring and $(X,\O_X)=\Spec\fC$ we can define $\MSpec:\fCmod\ra\OXmod$, a spectrum functor for modules.

\begin{dfn} Let $(X,\O_X)=\Spec\fC$ for some $C^\iy$-ring $\fC$ and $M$ be a $\fC$-module. We will define an $\O_X$-module $\cE=\MSpec M$. For each open $U\subseteq X$, define $\cE(U)$ to be the $\R$-vector space of functions $e:U\ra \coprod_{x\in U}(M\ot_\fC\fC_x)$ with $e(x)\in M\ot_\fC\fC_x$ for all $x\in U$, and such that $U$ may be covered by open sets $W\subseteq U\subseteq X$ for which there exist $m\in M$ with $e(x)=m\ot 1\in M\ot_\fC\fC_x$ for all $x\in W$. Here the $\fC_x$-module $M\ot_\fC\fC_x$ is defined using the $\fC$-module structure on $M$ and the projection~$\pi_x:\fC\ra\fC_x$.

Definition \ref{ag4def9} defines $\O_X(U)$ as a set of functions $U\ra\coprod_{x\in U}\fC_x$. Define an $\O_X(U)$-module structure $\mu_{\cE(U)}:\O_X(U)\t\cE(U)\ra\cE(U)$ on $\cE(U)$ by
\begin{equation*}
\mu_{\cE(U)}(s,e):x\longmapsto s(x)\cdot e(x),
\end{equation*}
for all $s\in\O_X(U)$, $e\in\cE(U)$ and $x\in U$. For open $V\subseteq U\subseteq X$, define $\cE_{UV}:\cE(U)\ra\cE(V)$ by $\cE_{UV}:e\mapsto e\vert_V$. It is now easy to check that $\cE$ is a sheaf of $\O_X$-modules on $X$. Define $\MSpec M=\cE$ in~$\OXmod$.

An equivalent way to define $\MSpec M$ is as the sheafification of the presheaf $U\mapsto M\ot_\fC\O_X(U)$. The definition above performs the sheafification explicitly.

Now let $\al:M\ra N$ be a morphism in $\fCmod$, and set $\cE=\MSpec M$ and $\cF=\MSpec N$. For each open $U\subseteq X$, define $\la(U):\cE(U)\ra\cF(U)$ by 
\begin{equation*}
\la(U)(e):x\mapsto (\al\ot\id)(e(x)) \quad\text{for $x\in U$,}
\end{equation*}
where $\al\ot\id$ maps $M\ot_\fC\fC_x\ra N\ot_\fC\fC_x$. It is easy to check that $\la(U)$ is an $\O_X(U)$-module morphism and $\la(V)\ci\cE_{UV}=\cF_{UV}\ci\la(U):\cE(U)\ra\cF(V)$ for all open $V\subseteq U\subseteq X$. Hence $\la:\cE\ra\cF$ is a morphism in $\OXmod$. Define $\MSpec\al=\la$, so that $\MSpec\al:\MSpec M\ra\MSpec N$. This defines a functor $\MSpec:\fCmod\ra\OXmod$.\G[MSpec]{$\MSpec:\fCmod\ra\OXmod$}{spectrum functor for modules over a $C^\iy$-ring $\fC$} It is an exact functor of abelian categories, since $M\mapsto M\ot_\fC\fC_x$ is an exact functor $\fCmod\ra\fC_x\text{-mod}$ for each $x\in X$, as the localization $\pi_x:\fC\ra\fC_x$ is a flat morphism of $\R$-algebras.
\label{ag5def5}
\end{dfn}

\begin{dfn} Let $\fC$ be a $C^\iy$-ring, and $(X,\O_X)=\Spec\fC$. If $\cE$ is an $\O_X$-module then $\cE(X)$ is a module over $\O_X(X)$, so using $\Psi_\fC:\fC\ra\Ga(\Spec\fC)=\O_X(X)$ we may regard $\cE(X)$ as a $\fC$-module. Define $\Ga(\cE)$ to be the $\fC$-module $\cE(X)$. If $\al:\cE\ra\cF$ is a morphism of $\O_X$-modules then $\Ga(\al):=\al(X):\cE(X)\ra\cF(X)$ is a morphism $\Ga(\al):\Ga(\cE)\ra\Ga(\cF)$ in $\fCmod$. This defines the {\it global sections functor\/}~$\Ga:\OXmod\ra\fCmod$.\G[Gab]{$\Ga:\OXmod\ra\fCmod$}{global sections functor on $\O_X$-modules}

In general $\Ga$ is a left exact functor of abelian categories, but may not be right exact. However, if $X$ is Lindel\"of (for example, if $\fC$ is finitely  or countably generated) then Proposition \ref{ag5prop3} shows that $\Ga$ is an exact functor.

Now $\Ga\ci\MSpec$ is a functor $\fCmod\ra\fCmod$. For each $\fC$-module $M$ and $m\in M$, define $\Psi_M(m):X\ra\coprod_{x\in X}M\ot_\fC\fC_x$ by $\Psi_M(m):x\mapsto m\ot 1_{\fC_x}\in M\ot_\fC\fC_x$. Then $\Psi_M(m)\in \MSpec M(X)=\Ga\ci\MSpec M$ by Definition \ref{ag5def5}, so $\Psi_M:M\ra\Ga\ci\MSpec M$\G[Psib]{$\Psi_M:M\ra\Ga\ci\MSpec M$}{canonical morphism for a $\fC$-module $M$} is a linear map, and in fact a $\fC$-module morphism.

It is functorial in $M$, so that the $\Psi_M$ for all $M$ define a natural transformation $\Psi:\id_\fCmod\Ra\Ga\ci\MSpec$ of functors~$\id_\fCmod,\Ga\ci\MSpec:\fCmod\ra\fCmod$.
\label{ag5def6}
\end{dfn}

Here are the analogues of Lemma \ref{ag4lem2} and Theorem~\ref{ag4thm1}:

\begin{lem} In Definition\/ {\rm\ref{ag5def5},} the stalk\/ $(\MSpec M)_x=\cE_x$ of\/ $\MSpec M$ at\/ $x\in X$ is naturally isomorphic to\/ $M\ot_\fC\fC_x,$ as modules over\/~$\fC_x\cong\O_{X,x}$.
\label{ag5lem1}
\end{lem}

\begin{proof} Elements of $\cE_x$ are $\sim$-equivalence classes $[U,e]$ of pairs $(U,e)$, where $U$ is an open neighbourhood of $x$ in $X$ and $e\in\cE(U)$, and $(U,e)\sim(U',e')$ if there exists open $x\in V\subseteq U\cap U'$ with $e\vert_V=e'\vert_V$. Define a $\fC_x$-module morphism $\Pi:\cE_x\ra M\ot_\fC\fC_x$ by~$\Pi:[U,e]\mapsto e(x)$. 

Proposition \ref{ag2prop2} shows that $\fC_x\cong\fC/I$ for $I$ the ideal in \eq{ag2eq2}. Hence $M\ot_\fC\fC_x\cong M/(I\cdot M)$, and thus every element of $M\ot_\fC\fC_x$ is of the form $m\ot 1_{\fC_x}$ for some $m\in M$. But $\Psi_M(m)\in\cE(X)$, so that $[X,\Psi_M(m)]\in\cE_x$, with $\Pi:[X,\Psi_M(m)]\mapsto m\ot 1_{\fC_x}$. Hence $\Pi:\cE_x\ra M\ot_\fC\fC_x$ is surjective.

Suppose $[U,e]\in\cE_x$ with $\Pi([U,e])=0\in M\ot_\fC\fC_x$. As $e\in\cE(U)$, there exist open $x\in V\subseteq U$ and $m\in M$ with $e(x')=m\ot 1_{\fC_{x'}}\in M\ot_\fC\fC_{x'}$ for all $x'\in V$. Then $m\ot 1_{\fC_x}=e(x)=\Pi([U,e])=0$ in $M\ot_\fC\fC_x$, so $m\in I\cdot M\subseteq M$, and we may write $m=\sum_{a=1}^ki_a\cdot m_a$ for $i_a\in I$ and $m_a\in M$. By \eq{ag2eq2} we may choose $d_1,\ldots,d_k\in\fC$ with $x(d_a)\ne 0$ and $i_a\cdot d_a=0$ in $\fC$ for~$a=1,\ldots,k$. 

Set $W=\{x'\in V:x'(d_a)\ne 0,$ $a=1,\ldots,k\}$, so that $W$ is an open neighbourhood of $x$ in $U$. If $x'\in W$ then $x'(d_a)\ne 0$, so $\pi_{x'}(d_a)$ is invertible in $\fC_{x'}$. But $i_a\cdot d_a=0$, so $\pi_{x'}(i_a)=0$ in $\fC_{x'}$ for $a=1,\ldots,k$. As $m=\sum_{a=1}^ki_a\cdot m_a$ it follows that $e(x')=m\ot 1_{\fC_{x'}}=0$ in $M\ot_\fC\fC_{x'}$ for all $x'\in W$. Thus $e\vert_W=0$ in $\cE(W)$, so $[U,e]=[W,e\vert_W]=0$ in $\cE_x$. Therefore $\Pi:\cE_x\ra M\ot_\fC\fC_x$ is injective, and so an isomorphism.
\end{proof}
 
\begin{thm} Let\/ $\fC$ be a $C^\iy$-ring, and\/ $(X,\O_X)=\Spec\fC$. Then $\Ga:\OXmod\ra\fCmod$ is \begin{bfseries}right adjoint\end{bfseries}\I{functor!adjoint}\I{adjoint functor} to $\MSpec:\fCmod\ra\OXmod$. That is, for all\/ $M\in\fCmod$ and\/ $\cE\in\OXmod$ there are inverse bijections
\e
\xymatrix@C=150pt{ *+[r]{\Hom_\fCmod(M,\Ga(\cE))} \ar@<.5ex>[r]^(0.45){L_{M,\cE}}
& *+[l]{\Hom_\OXmod(\MSpec M,\cE),} \ar@<.5ex>[l]^(0.55){R_{M,\cE}} }
\label{ag5eq14}
\e
which are functorial in $M,\cE$. When $\cE=\MSpec M$ we have $\Psi_M=R_{M,\cE}(\id_\cE),$ so that\/ $\Psi_M$ is the unit of the adjunction between $\Ga$ and\/~$\MSpec$.
\label{ag5thm2}
\end{thm}

\begin{proof} Let $M\in\fCmod$ and $\cE\in\OXmod$, and set $\cD=\MSpec M$. Define $R_{M,\cE}$ in \eq{ag5eq14} by, for each morphism $\al:\cD\ra\cE$ in $\OXmod$, taking $R_{M,\cE}(\al):M\ra\Ga(\cE)$ to be the composition
\begin{equation*}
\xymatrix@C=40pt{ M \ar[r]^(0.3){\Psi_M} & \Ga\ci\MSpec M=\Ga(\cD) \ar[r]^(0.65){\Ga(\al)} & \Ga(\cE). }
\end{equation*}
For the last part, if $\cE=\MSpec M$ then $\Psi_M=R_{M,\cE}(\id_\cE)$ as~$\Ga(\id_\cE)=\id_{\Ga(\cE)}$.

Let $\be:M\ra\Ga(\cE)$ be a morphism in $\fCmod$. We will construct a morphism $\la:\cD\ra\cE$ in $\OXmod$, and set $L_{M,\cE}(\be)=\la$.
Let $x\in X$. Consider the diagram
\e
\begin{gathered}
\xymatrix@C=100pt@R=15pt{ *+[r]{M\ot_\fC\fC=M} \ar[d]^{\id\ot\pi_x} \ar[r]_(0.6)\be & *+[l]{\Ga(\cE)} \ar[d]_{\si_x} \\
*+[r]{M\ot_\fC\fC_x\cong\cD_x} \ar@{.>}[r]^(0.6){\be_x} & *+[l]{\cE_x}
}
\end{gathered}
\label{ag5eq15}
\e
in $\fCmod$, where the isomorphism $M\ot_\fC\fC_x\cong\cD_x$ comes from Lemma \ref{ag5lem1}. Here $\cE_x$ is the stalk of $\cE$ at $x$, and $\si_x:\Ga(\cE)=\cE(X)\ra\cE_x$ takes stalks at $x$. The $\fC$-action on $\Ga(\cE)$ factors via $\fC\,{\buildrel\Psi_\fC\over\longra}\,\O_X(X)$, and the $\fC$-action on $\cE_x$ factors via $\fC\,{\buildrel\Psi_\fC\over\longra}\,\O_X(X)\,{\buildrel\pi\over\longra}\,\O_{X,x}$, and $\be,\si_x$ are both $\fC$-module morphisms. But $\O_{X,x}\cong\fC_x$ by Lemma \ref{ag4lem2}, so $\si_x\ci\be:M\ra\cE_x$ is a $\fC$-module morphism, where the $\fC$-action on $\cE_x$ factors via $\fC\,{\buildrel\pi_x\over\longra}\,\fC_x$. Hence there is a unique $\O_{X,x}$-module morphism $\be_x:\cD_x\ra\cE_x$ making \eq{ag5eq15} commute.

For each open $U\subseteq X$, define $\la(U):\cD(U)\ra\cE(U)$ by $\la(U)d:x\mapsto \be_x(d(x))$ for $d\in\cD(U)$ and $x\in U\subseteq X$, and $d(x)\in\cD_x$, and $\be_x(d(x))\in\cE_x$. Here as $\cE$ is a sheaf we may identify elements of $\cE(U)$ with maps $e:U\ra\coprod_{x\in U}\cE_x$ with $e(x)\in\cE_x$ for $x\in U$, such that $e$ satisfies certain local conditions in $U$. 

If $d\in\cD(U)=\MSpec M(U)$ and $x\in U$ then by Definition \ref{ag5def5} we may cover $U$ by open $W\subseteq U$ for which there exist $m\in M$ with $d(x)=m\ot 1_{\fC_x}$ in $M\ot_\fC\fC_x$ for all $x\in W$. Therefore $\la(U)d$ maps $x\mapsto\si_x(\be(m))$ for all $x\in W$ by \eq{ag5eq15}, so $\la(U)d$ is a section $\be(m)\vert_W$ of $\cE$ on $W$. Hence $\la(U)d$ is a section of $\cE\vert_U$, as such $W$ cover $U$, and $\la(U):\cD(U)\ra\cE(U)$ is well defined.

As $\be_x$ is an $\O_{X,x}$-module morphism for all $x\in U$, $\la(U):\cD(U)\ra\cE(U)$ is an $\O_X(U)$-module morphism. The definition of $\la(U)$ is clearly compatible with restriction to open $V\subseteq U\subseteq X$. Thus the $\la(U)$ for all open $U\subseteq X$ define a sheaf morphism $\la:\cD\ra\cE$ in $\OXmod$. Set $L_{M,\cE}(\be)=\la$. This defines $L_{M,\cE}$ in \eq{ag5eq14}. A very similar proof to that of Theorem \ref{ag4thm1} shows that $L_{M,\cE},R_{M,\cE}$ are inverse maps, so they are bijections, and that they are functorial in~$M,\cE$. 
\end{proof}

We show that $\Ga$ is a right inverse for $\MSpec$:

\begin{prop} Let\/ $\fC$ be a $C^\iy$-ring, and\/ $(X,\O_X)=\Spec\fC,$ and\/ $\cE$ be an\/ $\O_X$-module. Set\/ $M=\Ga(\cE)$ in $\fCmod,$ and write\/ $\Psi_\cE=L_{M,\cE}(\id_M)$. Then\/ $\Psi_\cE:\MSpec\ci\Ga(\cE)\ra\cE$ is an isomorphism in\/ $\OXmod,$ for any\/~$\cE$. 

These isomorphisms\/ $\Psi_\cE$ are functorial in\/ $\cE,$ and so define a natural isomorphism\/ $\Psi:\MSpec\ci\Ga\Ra\id_\OXmod$ of functors\/~$\OXmod\ra\OXmod$.
\label{ag5prop5}
\end{prop}

\begin{proof} Set $\cD=\MSpec M=\MSpec\ci\Ga(\cE)$, and let $x\in X$. Then by definition of $\Psi_\cE=L_{M,\cE}(\id_M):\cD\ra\cE$ in the proof of Theorem \ref{ag5thm2}, as in \eq{ag5eq15} the stalk map $\Psi_{\cE,x}:\cD_x\ra\cE_x$ is the unique morphism of modules over $\fC_x\cong\O_{X,x}$ making the following diagram of $\fC$-modules commute:
\e
\begin{gathered}
\xymatrix@C=120pt@R=15pt{ *+[r]{M\ot_\fC\fC=M} \ar[d]^{\id\ot\pi_x} \ar[r]_(0.6){\id_M} & *+[l]{M=\Ga(\cE)} \ar[d]_{\si_x} \\
*+[r]{M\ot_\fC\fC_x\cong\cD_x} \ar[r]^(0.6){\Psi_{\cE,x}} & *+[l]{\cE_x.\!} }
\end{gathered}
\label{ag5eq16}
\e

Let $[U,e]\in\cE_x$, so that $x\in U\subseteq X$ is open and $e\in\cE(U)$. By Definition \ref{ag4def8} there exists $c\in\fC$ such that $x(c)\ne 0$ and $y(c)=0$ for all $y\in X\sm U$. Choose smooth $f:\R\ra\R$ such that $f=0$ near 0 in $\R$ and $f=1$ near $x(c)$ in $\R$. Set $c'=\Phi_f(c)$, where $\Phi_f:\fC\ra\fC$ is the $C^\iy$-ring operation. Then $\eta=\Psi_\fC(c')\in\O_X(X)$, and there exist open neighbourhoods $V$ of $X\sm U$ and $W$ of $x$ in $X$ with $\eta\vert_V=0$ and $\eta\vert_W=1$. Clearly $V\cap W=\es$, so $x\in W\subseteq U$. We have $\eta\vert_U\cdot e\in\cE(U)$, with $(\eta\vert_U\cdot e)\vert_{U\cap V}=0$ and $(\eta\vert_U\cdot e)\vert_W=e\vert_W$. 

Since $\{U,V\}$ is an open cover of $X$ and $(\eta\vert_U\cdot e)\vert_{U\cap V}=0=0\vert_{U\cap V}$, by the sheaf property of $\cE$ there is a unique $e'\in\cE(X)$ with $e'\vert_U=\eta\vert_U\cdot e$ and $e'\vert_V=0$. Then $e'\vert_W=(\eta\vert_U\cdot e)\vert_W=e\vert_W$. Thus
\begin{equation*}
\si_x(e')=[X,e']=[W,e'\vert_W]=[W,e\vert_W]=[U,e]
\end{equation*}
in $\cE_x$. Hence $\si_x:\Ga(\cE)\ra\cE_x$ is surjective, so $\Psi_{\cE,x}:\cD_x\ra\cE_x$ is surjective by \eq{ag5eq16}, as $\pi_x:\fC\ra\fC_x$ is surjective by Proposition~\ref{ag2prop2}.

Suppose $d\in\fD_x$ with $\Psi_{\cE,x}(d)=0$. We may write $m\ot 1_{\fC_x}\cong d$ under the isomorphism $M\ot_\fC\fC_x\cong\cD_x$ for some $m\in M$, and then \eq{ag5eq16} gives $\si_x(m)=\Psi_{\cE,x}(d)=0$. Hence there exists open $x\in U\subseteq X$ with $m\vert_U=0$. As above we may construct $\eta\in\O_X(X)$ and open $V,W\subseteq X$ with $X\sm U\subseteq V$, $x\in W\subseteq U$, $\eta\vert_V=0$ and $\eta\vert_W=1$. Then $\eta\cdot m=0$ in $M$ as $m\vert_U=0$, $\eta\vert_V=0$ with $U\cup V=X$, and $\pi_x(\eta)=1_{\fC_x}$ in $\fC_x$ as $\eta=1$ near $x$ in $X$. Hence
\begin{equation*}
m\ot 1_{\fC_x}\!=\!1_{\fC_x}\cdot(m\ot 1_{\fC_x})\!=\!\pi_x(\eta)\cdot(m\ot 1_{\fC_x})\!=\!(\eta\cdot m)\ot 1_{\fC_x}\!=\!0\ot 1_{\fC_x}\!=\!0
\end{equation*}
in $M\ot_\fC\fC_x$. Therefore $d=0$ in $\fD_x$, and $\Psi_{\cE,x}:\cD_x\ra\cE_x$ is injective, and so an isomorphism. As this holds for all $x\in X$, $\Psi_\cE:\cD\ra\cE$ is an isomorphism, proving the first part of the proposition. The second part follows from $L_{M,\cE}$ functorial in $M,\cE$ in Theorem~\ref{ag5thm2}.
\end{proof}

As for quasicoherent sheaves in conventional algebraic geometry, we define:

\begin{dfn} Let $\uX=(X,\O_X)$ be a $C^\iy$-scheme, and $\cE$ be an $\O_X$-module. We call $\cE$ {\it quasicoherent\/}\I{sheaf!quasicoherent} if we may cover $X$ with open $U\subseteq X$ such that $(U,\O_X\vert_U)\cong\Spec\fC$ and $\cE\vert_U\cong\MSpec M$ for some $C^\iy$-ring $\fC$ and $\fC$-module~$M$.

We write $\qcoh(\uX)$\G[qcoh(X)a]{$\qcoh(\uX)$}{abelian category of quasicoherent sheaves on $C^\iy$-scheme $\uX$} for the category of quasicoherent sheaves on~$\uX$.
\label{ag5def7}
\end{dfn}

If $(X,\O_X)$ is a $C^\iy$-scheme and $\cE$ an $\O_X$-module, we can cover $X$ by open $U\subseteq X$ with $(U,\O_X\vert_U)\cong\Spec\fC$ affine, and then Proposition \ref{ag5prop5} shows that $\cE\vert_U\cong\MSpec M$ for $M=\cE(U)$. Thus we have:

\begin{cor} Let\/ $\uX=(X,\O_X)$ be a\/ $C^\iy$-scheme. Then every $\O_X$-module $\cE$ is quasicoherent, so that\/~$\qcoh(\uX)=\OXmod$.
\label{ag5cor1}
\end{cor}

\begin{rem}{\bf(a)} In conventional algebraic geometry, as in Hartshorne \cite[\S II.5]{Hart}, if $R$ is a ring and $(X,\O_X)=\Spec R$ the corresponding affine scheme, we also have functors $\MSpec:\Rmod\ra\OXmod$ and $\Ga:\OXmod\ra\Rmod$. In $C^\iy$-algebraic geometry, as in Proposition \ref{ag5prop5}, $\Ga$ is a right inverse for $\MSpec$, but may not be a left inverse. But in algebraic geometry the opposite happens, as $\Ga$ is a left inverse for $\MSpec$ \cite[Cor.~II.5.5]{Hart}, but may not be a right inverse.

The fact that $\Ga$ is a right inverse for $\MSpec$ in $C^\iy$-algebraic geometry means that all $\O_X$-modules on a $C^\iy$-scheme $(X,\O_X)$ are quasicoherent, so quasicoherence is not a very useful idea. But in algebraic geometry, as $\Ga$ is not a right inverse for $\MSpec$, this is {\it false\/}: there are many examples of schemes $(X,\O_X)$ and $\O_X$-modules $\cE$ which are not quasicoherent. For instance, we may take $X={\mathbb A}^1$ and $\cE(U)=0$ if $0\in U$, $\cE(U)=\O_X(U)$ if $0\notin U$ for all open~$U\subseteq X$.

In \S\ref{ag55} we will define a module $M$ over a $C^\iy$ ring $\fC$ to be {\it complete\/} if $M\cong\Ga\ci\MSpec M$. Then $\Ga$ is a left inverse for $\MSpec$ on the subcategory $\fCmodco\subset\fCmod$ of complete $\fC$-modules. In general $\fC$-modules need not be complete. But in conventional algebraic geometry, as $\Ga$ is a left inverse for $\MSpec$ all $R$-modules are complete, so completeness is not a useful idea.
\smallskip

\noindent{\bf(b)} In conventional algebraic geometry one defines {\it coherent sheaves\/}\I{sheaf!coherent} \cite[\S II.5]{Hart} to be quasicoherent sheaves $\cE$ locally modelled on $\MSpec M$ for $M$ a finitely generated $\fC$-module. However, coherent sheaves are only well behaved on {\it noetherian\/} schemes, and most interesting $C^\iy$-rings, such as $C^\iy(\R^n)$ for $n>0$, are not noetherian $\R$-algebras. Because of this, coherent sheaves do not seem to be a useful idea in $C^\iy$-algebraic geometry (for instance, $\coh(\uX)$ is not closed under kernels in $\qcoh(\uX)$, and is not an abelian category), and we do not discuss them.

\label{ag5rem4}
\end{rem}

We can understand the pullback functor\I{C-scheme@$C^\iy$-scheme!sheaves of $\O_X$-modules on!pullback} $\uf^*$ in Definition \ref{ag5def4} explicitly in terms of modules over the corresponding $C^\iy$-rings:

\begin{prop} Let\/ $\fC,\fD$ be\/ $C^\iy$-rings, $\phi:\fD\ra\fC$ a
morphism, $M,N$ be $\fD$-modules, and\/ $\al:M\ra N$ a morphism of\/
$\fD$-modules. Write $\uX=\Spec\fC,$ $\uY=\Spec\fD,$
$\uf=\Spec\phi:\uX\ra\uY,$ and\/ $\cE=\MSpec M,$
$\cF=\MSpec N$ in $\qcoh(\uY)$. Then there are natural
isomorphisms\/ $\uf^*(\cE)\cong\MSpec(M\ot_\fD\fC)$ and\/
$\uf^*(\cF)\cong\MSpec(N\ot_\fD\fC)$ in $\qcoh(\uX)$. These identify
$\MSpec(\al\ot\id_\fC):\MSpec(M\ot_\fD\fC)\ra\MSpec(N\ot_\fD\fC)$
with\/~$\uf^*(\MSpec\al):\uf^*(\cE)\ra\uf^*(\cF)$.
\label{ag5prop6}
\end{prop}

\begin{proof} Write $\uX=(X,\O_X)$, $\uY=(Y,\O_Y)$ and
$\uf=(f,f^\sh)$. Then $\cE$ is the sheafification of the presheaf
$V\mapsto M\ot_\fD\O_Y(V)$, and $f^{-1}(\cE)$ is the sheafification
of the presheaf $U\mapsto\lim_{V\supseteq f(U)}\cE(V)$, and
$f^{-1}(\O_Y)$ is the sheafification of the presheaf
$U\mapsto\lim_{V\supseteq f(U)}\O_Y(V)$. In
$\uf^*(\cE)=f^{-1}(\cE)\ot_{f^{-1}(\O_Y)}\O_X$, these three
sheafifications combine into one, so $\uf^*(\cE)$ is the
sheafification of the presheaf $U\mapsto\lim_{V\supseteq
f(U)}(M\ot_\fD\O_Y(V)) \ot_{\O_Y(V)}\O_X(U)$. But
\begin{equation*}
(M\ot_\fD\O_Y(V))\ot_{\O_Y(V)}\O_X(U) \cong M\ot_\fD\O_X(U)\cong
(M\ot_\fD\fC)\ot_\fC\O_X(U),
\end{equation*}
so this is canonically isomorphic to the presheaf $U\mapsto
(M\ot_\fD\fC)\ot_\fC\O_X(U)$ whose sheafification is
$\MSpec(M\ot_\fD\fC)$. This gives a natural isomorphism
$\uf^*(\cE)\cong\MSpec(M\ot_\fD\fC)$. The same holds for $N$. The
identification of $\MSpec(\al\ot\id_\fC)$ and $\uf^*(\MSpec\al)$
follows by passing from morphisms of presheaves to morphisms of the
associated sheaves.\I{C-scheme@$C^\iy$-scheme!affine!sheaves of
$\O_X$-modules on|)}
\end{proof}

\subsection{\texorpdfstring{Complete modules over $C^\iy$-rings}{Complete modules over C∞-rings}}
\label{ag55}

Here are the module analogues of Definition \ref{ag4def13} and Theorem~\ref{ag4thm4}(b),(c).\I{module over C-ring@module over $C^\iy$-ring!complete|(}

\begin{dfn} Let $\fC$ be a $C^\iy$-ring, and $M$ a $\fC$-module. We call $M$ {\it complete\/} if $\Psi_M:M\ra\Ga\ci\MSpec M$ in Definition \ref{ag5def6} is an isomorphism. 

Write $\fCmodco$ for the full subcategory of complete $\fC$-modules in $\fCmod$.

If $M$ is a $\fC$-module then applying $\Ga$ to Proposition \ref{ag5prop5} shows that
\begin{equation*}
\Ga(\Psi_{\MSpec M}):\Ga\ci\MSpec(\Ga\ci\MSpec M)\longra\Ga\ci\MSpec M
\end{equation*}
is an isomorphism. From the definitions we can show that $\Psi_{\Ga\ci\MSpec M}=\Ga(\Psi_{\MSpec M})^{-1}$. Thus $\Ga\ci\MSpec M$ is complete, for any $\fC$-module $M$. Define a functor $R_{\rm all}^{\rm co}=\Ga\ci\MSpec:\fCmod\ra \fCmodco$.\G[Rcab]{$R_{\rm all}^{\rm co}:\fCmod\ra \fCmodco$}{reflection functor}\label{ag5def8}
\end{dfn}

\begin{thm} Let\/ $\fC$ be a $C^\iy$-ring, and\/ $\uX=(X,\O_X)=\Spec\fC$. Then
\begin{itemize}
\setlength{\itemsep}{0pt}
\setlength{\parsep}{0pt}
\item[{\bf(a)}] $\MSpec\vert_{\fCmodco}:\fCmodco\ra\qcoh(\uX)$ is an equivalence of categories.
\item[{\bf(b)}] $R_{\rm all}^{\rm co}:\fCmod\ra\fCmodco$ is left
adjoint\I{functor!adjoint}\I{adjoint functor} to the
inclusion functor\/ $\inc:\fCmodco\hookra\fCmod$. That is, $R_{\rm all}^{\rm co}$ is a \begin{bfseries}reflection functor\end{bfseries}.\I{functor!reflection}\I{reflection functor}
\end{itemize}
\label{ag5thm3}
\end{thm}

\begin{proof} For (a), if $M,N$ are complete $\fC$-modules then putting $\cE=\MSpec N$ in Theorem \ref{ag5thm2} and using $\Ga\ci\MSpec N\cong N$, equation \eq{ag5eq14} shows that
\begin{equation*}
\MSpec=L_{M,\cE}:\Hom_\fCmodco(M,N)\longra\Hom_\OXmod(\MSpec M,\MSpec N)
\end{equation*}
is a bijection, where the definition of $L_{M,\cE}$ agrees with the definition of $\MSpec$ on morphisms in this case. Thus $\MSpec$ is full and faithful on complete $\fC$-modules. 

If $\cE\in\OXmod=\qcoh(\uX)$ then $\cE\cong\MSpec\ci\Ga(\cE)$ by Proposition \ref{ag5prop5}. Thus $\Ga(\cE)\cong\Ga\ci\MSpec\ci\Ga(\cE)$, so $\Ga(\cE)$ is complete by Definition \ref{ag5def8}. Hence $\cE\cong\MSpec\vert_{\fCmodco}[\Ga(\cE)]$, and the essential image of $\MSpec\vert_{\fCmodco}$ is $\qcoh(\uX)$. Therefore $\MSpec\vert_{\fCmodco}$ is an equivalence of categories.

For (b), let $M,N$ be $\fC$-modules with $N$ complete. Then we have bijections
\ea
&\Hom_\fCmodco\bigl(R_{\rm all}^{\rm co}(M),N\bigr)\cong \Hom_\fCmod\bigl(\Ga\ci\MSpec M,\Ga\ci\MSpec N\bigr)
\nonumber\\
\begin{split}
&\cong \Hom_\OXmod\bigl(\MSpec \ci\Ga\ci\MSpec M,\MSpec N\bigr)\\
&\cong\Hom_\OXmod\bigl(\MSpec M,\MSpec N\bigr)
\end{split}
\label{ag5eq17}\\
&\cong \Hom_\fCmod\bigl(M,\Ga\ci\MSpec N\bigr)\!\cong\! \Hom_\fCmod\bigl(M,N\bigr)\!=\!\Hom_\fCmod\bigl(M,\inc(N)\bigr),
\nonumber
\ea
using $N\cong\Ga\ci\MSpec N$ as $N$ is complete in the first and fifth steps, Theorem \ref{ag5thm2} in the second and fourth, and Proposition \ref{ag5prop5} in the third. The bijections \eq{ag5eq17} are functorial in $M,N$ as each step is. Hence $R_{\rm all}^{\rm co}$ is left adjoint to~$\inc$.
\end{proof}

\begin{prop} Let\/ $\fC$ be a\/ $C^\iy$-ring and\/ $(X,\O_X)=\Spec\fC,$ and suppose $X$ is Lindel\"of. Then\/ $\fCmodco$ is closed under kernels, cokernels and extensions in\/ $\fCmod,$ that is, $\fCmodco$ is an abelian subcategory\I{abelian category} of\/~$\fCmod$.
\label{ag5prop7}
\end{prop}

\begin{proof} As in \S\ref{ag54}, $\MSpec:\fCmod\ra\OXmod$ is an exact functor, and as $X$ is Lindel\"of $\Ga:\OXmod\ra\fCmod$ is also exact by Proposition \ref{ag5prop3}. Hence $R_{\rm all}^{\rm co}=\Ga\ci\MSpec:\fCmod\ra\fCmod$ is an exact functor. Let $0\ra M_1\ra M_2\ra M_3$ be exact in $\fCmod$ with $M_2,M_3$ complete. Then we have a commutative diagram 
\begin{equation*}
\xymatrix@C=30pt@R=15pt{ 0 \ar[r] & M_1 \ar[d]^{\Psi_{M_1}} \ar[r] & M_2 \ar[d]^{\Psi_{M_2}}_\cong \ar[r] & M_3 \ar[d]^{\Psi_{M_3}}_\cong \\
0 \ar[r] & R_{\rm all}^{\rm co}(M_1) \ar[r] & R_{\rm all}^{\rm co}(M_2) \ar[r] & R_{\rm all}^{\rm co}(M_3) }
\end{equation*}
in $\fCmod$, where both rows are exact as $R_{\rm all}^{\rm co}$ is an exact functor, and the second and third columns are isomorphisms. Hence the first column is also an isomorphism, and $M_1$ is complete, so $\fCmodco$ is closed under kernels in $\fCmod$. It is closed under cokernels and extensions by very similar arguments.
\end{proof}

\begin{ex} Let $\fC$ be a $C^\iy$-ring with $(X,\O_X)=\Spec\fC$. Then:
\begin{itemize}
\setlength{\itemsep}{0pt}
\setlength{\parsep}{0pt}
\item[{\bf(a)}] Considering $\fC$ as a $\fC$-module, we have $\Ga\ci\MSpec\fC=\Ga\ci\Spec\fC=\O_X(X)$, and $\Psi_\fC:\fC\ra\O_X(X)$ in Definitions \ref{ag4def10} and \ref{ag5def6} coincide. Hence $\fC$ is complete as a $\fC$-module if and only if it is complete as a $C^\iy$-ring, in the sense of \S\ref{ag46}. So, if $\fC$ is a finitely generated but not fair $C^\iy$-ring, as in Examples \ref{ag2ex7} and \ref{ag2ex8}, then $\fC$ is a non-complete $\fC$-module.
\item[{\bf(b)}] Suppose $\fC$ is complete and $X$ is Lindel\"of. Let $M$ be a finitely presented $\fC$-module, so we have an exact sequence $\fC\ot\R^m\ra\fC\ot\R^n\ra M\ra 0$ in $\fCmod$. Here $\fC\ot\R^m,\fC\ot\R^n$ are complete as $\fC$ is by {\bf(a)}, so $M$ is complete by Proposition \ref{ag5prop7} as $\fCmod$ is closed under cokernels.
\item[{\bf(c)}] Suppose $\fC$ is complete, $X$ is Lindel\"of, and $I\subseteq \fC$ is a finitely generated ideal. Choose generators $i_1,\ldots,i_n$ for $I$. Then we have an exact sequence $\fC\ot\R^n\ra\fC\ra\fC/I\ra 0$ in $\fCmod$ with $\fC\ot\R^n,\fC$ complete, so $\fC/I$ is a complete $\fC$-module by Proposition \ref{ag5prop7}. Also we have an exact sequence $0\ra I\ra\fC\ra\fC/I$ with $\fC,\fC/I$ complete, so $I$ is a complete $\fC$-module.
\item[{\bf(d)}] Let $\fC$ be complete and $V$ be an infinite-dimensional $\R$-vector space. One can show that $\fC\ot_\R V$ is a complete $\fC$-module if and only if $X$ is compact.\I{module over C-ring@module over $C^\iy$-ring!complete|)}

\end{itemize}
\label{ag5ex4}
\end{ex}

\subsection{\texorpdfstring{Cotangent sheaves of $C^\iy$-schemes}{Cotangent sheaves of C∞-schemes}}
\label{ag56}
\I{C-scheme@$C^\iy$-scheme!cotangent sheaf|(}

We now define {\it cotangent sheaves}, the sheaf version of
cotangent modules in~\S\ref{ag52}.

\begin{dfn} Let $\uX=(X,\O_X)$ be a $C^\iy$-ringed space. Define $\cP T^*\uX$ to associate to each open $U\subseteq X$ the cotangent module $\Om_{\O_X(U)}$ of Definition \ref{ag5def2}, regarded as a module over the $C^\iy$-ring $\O_X(U)$, and to each inclusion of open sets $V\subseteq U\subseteq X$ the morphism of $\O_X(U)$-modules $\Om_{\rho_{UV}}:\Om_{\O_X(U)}\ra\Om_{\O_X(V)}$ associated to the morphism of $C^\iy$-rings $\rho_{UV}:\O_X(U)\ra\O_X(V)$. Then as we want for \eq{ag5eq13} the following commutes:
\begin{equation*}
\xymatrix@R=15pt@C=110pt{ *+[r]{\O_X(U)\t \Om_{\O_X(U)}}
\ar[d]^{\rho_{UV}\t\Om_{\rho_{UV}}} \ar[r]_(0.6){\mu_{\O_X(U)}} &
*+[l]{\Om_{\O_X(U)}} \ar[d]_{\Om_{\rho_{UV}}} \\
*+[r]{\O_X(V)\t \Om_{\O_X(V)}} \ar[r]^(0.6){\mu_{\O_X(V)}} & *+[l]{\Om_{\O_X(V)}.\!} }
\end{equation*}
Using this and functoriality of cotangent modules
$\Om_{\psi\ci\phi}=\Om_\psi\ci\Om_\phi$ in Definition \ref{ag5def2},
we see that $\cP T^*\uX$ is a presheaf\I{presheaf} of $\O_X$-modules on $\uX$. Define the {\it cotangent sheaf\/\I{C-ringed space@$C^\iy$-ringed space!cotangent sheaf} $T^*\uX$ of\/} $\uX$ to be the sheaf\I{presheaf!sheafification} of $\O_X$-modules associated to~$\cP T^*\uX$.

If $U\subseteq X$ is open then we have an equality of sheaves of
$\O_X\vert_U$-modules
\begin{equation*}
T^*(U,\O_X\vert_U)=T^*\uX\vert_U.
\end{equation*}

As in Example \ref{ag5ex2}, if $f:X\ra Y$ is a smooth map of
manifolds we have a morphism $\d f:f^*(T^*Y)\ra T^*X$ of vector
bundles over $X$. Here is an analogue for $C^\iy$-ringed spaces. Let
$\uf:\uX\ra\uY$ be a morphism of $C^\iy$-ringed spaces. Then by
Definition \ref{ag5def4}, $\uf^*(T^*\uY)=f^{-1}(T^*\uY)
\ot_{f^{-1}(\O_Y)}\O_X,$ where $T^*\uY$ is the sheafification of the
presheaf $V\mapsto\Om_{\O_Y(V)}$, and $f^{-1}(T^*\uY)$ the
sheafification of the presheaf $U\mapsto\lim_{V\supseteq f(U)}
(T^*\uY)(V)$, and $f^{-1}(\O_Y)$ the sheafification of the presheaf
$U\mapsto\lim_{V\supseteq f(U)}\O_Y(V)$. These three sheafifications
combine into one, so that $\uf^*(T^*\uY)$ is the
sheafification of the presheaf $\cP(\uf^*(T^*\uY))$ acting by
\begin{equation*}
U\longmapsto\cP(\uf^*(T^*\uY))(U)=
\ts\lim_{V\supseteq f(U)}\Om_{\O_Y(V)}\ot_{\O_Y(V)}\O_X(U).
\end{equation*}

Define a morphism of presheaves $\cP\Om_\uf:\cP(\uf^*(T^*\uY))\ra\cP
T^*\uX$ on $X$ by
\begin{equation*}
(\cP\Om_\uf)(U)=\ts\lim_{V\supseteq f(U)}
(\Om_{\rho_{f^{-1}(V)\,U}\ci f_\sh(V)})_*,
\end{equation*}
where $(\Om_{\rho_{f^{-1}(V)\,U}\ci f_\sh(V)})_*:\Om_{\O_Y(V)}
\ot_{\O_Y(V)}\O_X(U)\ra\Om_{\O_X(U)}=(\cP T^*\uX)(U)$ is constructed
as in Definition \ref{ag5def2} from the $C^\iy$-ring morphisms
$f_\sh(V):\O_Y(V)\ra\O_X(f^{-1}(V))$ from $f_\sh:\O_Y\ra f_*(\O_X)$
corresponding to $f^\sh$ in $\uf$ as in \eq{ag4eq3}, and
$\rho_{f^{-1}(V)\,U}:\O_X(f^{-1}(V))\ra\O_X(U)$ in $\O_X$. Define
$\Om_\uf:\uf^*(T^*\uY)\ra T^*\uX$ to be the induced
morphism of the associated sheaves.
\label{ag5def9}
\end{dfn}

\begin{rem} There is an alternative definition of the cotangent
sheaf $T^*\uX$ following Hartshorne \cite[p.~175]{Hart}. We can form
the product $\uX\t\uX$ in $\CRS$, and there is a natural diagonal
morphism $\uDe_\uX:\uX\ra\uX\t\uX$. Write $\cI_X$ for the
sheaf of ideals in $\O_{X\t X}$ vanishing on the closed
$C^\iy$-ringed subspace $\uDe_{\uX}$. Then $T^*\uX\cong
\uDe^*_\uX (\cI_X/\cI_X^2)$. This can be proved using the
equivalence of two definitions of cotangent module in \cite[Prop.~II.8.1A]{Hart}. An affine version of this also appears in Dubuc and Kock~\cite{DuKo}.
\label{ag5rem5}
\end{rem}

\begin{prop} Let\/ $\fC$ be a\/ $C^\iy$-ring and\/ $\uX=\Spec\fC$. Then there is a canonical isomorphism\/~$T^*\uX\cong\MSpec\Om_\fC$.
\label{ag5prop8}
\end{prop}

\begin{proof} By Definitions \ref{ag5def5} and \ref{ag5def9}, $\MSpec\Om_\fC$ and $T^*\uX$ are sheafifications of presheaves $\cP\MSpec\Om_\fC,\cP T^*\uX$, where for open $U\subseteq X$ we have
\begin{equation*}
\cP\MSpec\Om_\fC(U)=\Om_\fC\ot_\fC\O_X(U)\quad\text{and}\quad \cP T^*\uX(U)=\Om_{\O_X(U)}.
\end{equation*}
We have $C^\iy$-ring morphisms $\Psi_\fC:\fC\ra\O_X(X)$ from Definition \ref{ag4def10} and restriction $\rho_{XU}:\O_X(X)\ra\O_X(U)$ from $\O_X$, and so as in Definition \ref{ag5def2} a morphism of $\O_X(U)$-modules $\cP\rho(U):=(\rho_{XU}\ci\Psi_\fC)_*:\Om_\fC\ot_\fC\O_X(U)\ra\Om_{\O_X(U)}$. This defines a morphism of presheaves $\cP\rho:\cP\MSpec\Om_\fC\ra\cP T^*\uX$, and so sheafifying induces a morphism~$\rho:\MSpec\Om_\fC\ra T^*\uX$.

The induced morphism on stalks at $x\in X$ is $\rho_x=(\pi_x)_*:\Om_\fC\ot_\fC\fC_x\ra\Om_{\fC_x}$, where $\pi_x:\fC\ra\fC_x$ is projection to the local $C^\iy$-ring $\fC_x$, noting that $\O_{X,x}\cong\fC_x$. But $\fC_x$ is the localization $\fC[c^{-1}:c\in\fC$, $c(x)\ne 0]$, so Proposition \ref{ag5prop2} implies that $(\pi_x)_*:\Om_\fC\ot_\fC\fC_x\ra\Om_{\fC_x}$ is an isomorphism. Hence $\rho:\MSpec\Om_\fC\ra T^*\uX$ is a sheaf morphism which induces isomorphisms on stalks at all $x\in X$, so $\rho$ is an isomorphism.
\end{proof}

Here are some properties of the morphisms $\Om_\uf$ in
Definition \ref{ag5def9}. Equation \eq{ag5eq20} is an analogue of
\eq{ag5eq6} and~\eq{ag5eq12}.

\begin{thm}{\bf(a)} Let\/ $\uf:\uX\ra\uY$ and\/ $\ug:\uY\ra\uZ$ be
morphisms of\/ $C^\iy$-schemes. Then
\e
\Om_{\ug\ci\uf}=\Om_\uf\ci \uf^*(\Om_\ug)\ci I_{\uf,\ug}(T^*\uZ)
\label{ag5eq18}
\e
as morphisms $(\ug\ci\uf)^*(T^*\uZ)\ra T^*\uX$ in $\qcoh(\uX)$. Here
$\Om_\ug:\ug^*(T^*\uZ)\ra T^*\uY$ is a morphism in $\qcoh(\uY),$ so applying $\uf^*$ gives $\uf^*(\Om_\ug):\uf^*(\ug^*(T^*\uZ))\ra \uf^*(T^*\uY)$ in\/ $\qcoh(\uX),$ and\/ $I_{\uf,\ug}(T^*\uZ):(\ug\ci\uf)^*(T^*\uZ)
\ra\uf^*(\ug^*(T^*\uZ))$ is as in Remark\/~{\rm\ref{ag5rem3}}.

\smallskip

\noindent{\bf(b)} Suppose we are given a Cartesian square\I{Cartesian square} in $\CSch$
\e
\begin{gathered}
\xymatrix@C=80pt@R=14pt{ *+[r]{\uW} \ar[r]_\uf \ar[d]^\ue & *+[l]{\uY} \ar[d]_\uh \\
*+[r]{\uX} \ar[r]^\ug & *+[l]{\uZ,\!} }
\end{gathered}
\label{ag5eq19}
\e
so that\/ $\uW=\uX\t_\uZ\uY$. Then the following is exact in $\qcoh(\uW)\!:$
\e
\xymatrix@C=15pt{ (\ug\ci\ue)^*(T^*\uZ)
\ar[rrrr]^(0.49){\begin{subarray}{l}\ue^*(\Om_\ug)\ci
I_{\ue,\ug}(T^*\uZ)\op\\ -\uf^*(\Om_\uh)\ci
I_{\uf,\uh}(T^*\uZ)\end{subarray}} &&&&
{\raisebox{5pt}{$\begin{subarray}{l}\ts \;\>\ue^*(T^*\uX) \\ \ts {}\op\uf^*(T^*\uY)\end{subarray}$}} \ar[rr]^(0.55){\Om_\ue\op \Om_\uf}
&& T^*\uW \ar[r] & 0.}
\label{ag5eq20}
\e

\label{ag5thm4}
\end{thm}

\begin{proof} Combining several sheafifications\I{presheaf!sheafification} into one as in the proof of Proposition \ref{ag5prop6}, we see that the sheaves
$T^*\uX,\uf^*(T^*\uY),\uf^*(\ug^*(T^*\uZ))$ and
$(\ug\ci\uf)^*(T^*\uZ)$ on $\uX$ are isomorphic to the
sheafifications of the following presheaves:
\ea
&T^*\uX &\rightsquigarrow & \quad U \longmapsto \Om_{\O_X(U)},
\label{ag5eq21}\\
& \uf^*(T^*\uY) &\rightsquigarrow & \quad
U\longmapsto\lim_{V\supseteq f(U)} \Om_{\O_Y(V)}\ot_{\O_Y(V)}\O_X(U),
\label{ag5eq22}\\
& \uf^*(\ug^*(T^*\uZ)) &\rightsquigarrow & \quad
U\longmapsto\lim_{V\supseteq f(U)} \lim_{W\supseteq g(V)}\!\!\!
\begin{aligned}[t] \bigl(\Om_{\O_Z(W)}\ot_{\O_Z(W)}\O_Y(V)\bigr)
&\\[-3pt] \ot_{\O_Y(V)}\O_X(U),\,\,\,&
\end{aligned}
\label{ag5eq23}\\
& (\ug\ci\uf)^*(T^*\uZ) &\rightsquigarrow & \quad
U\longmapsto\lim_{W\supseteq g\ci f(U)}
\Om_{\O_Z(W)}\ot_{\O_Z(W)}\O_X(U).
\label{ag5eq24}
\ea

Then $\Om_\uf,\Om_{\ug\ci\uf},\uf^*(\Om_\ug),I_{\uf,\ug}(T^*\uZ)$
are the morphisms of sheaves associated to the following morphisms
of the presheaves in \eq{ag5eq21}--\eq{ag5eq24}:
\ea
&\Om_{\uf} &\rightsquigarrow & \quad U \longmapsto \lim_{V\supseteq
f(U)}(\Om_{\rho_{f^{-1}(V)\,U}\ci f_\sh(V)})_*,
\label{ag5eq25}\\
&\Om_{\ug\ci\uf} &\rightsquigarrow & \quad U \longmapsto
\lim_{W\supseteq g\ci f(U)}(\Om_{\rho_{(g\ci f)^{-1}(W)\,U}\ci (g\ci
f)_\sh(W)})_*,
\label{ag5eq26}\\
&\uf^*(\Om_\ug) &\rightsquigarrow & \quad U \longmapsto
\lim_{V\supseteq f(U)} \lim_{W\supseteq g(V)}
(\Om_{\rho_{g^{-1}(W)\,V}\ci g_\sh(W)})_*,
\label{ag5eq27}\\
&I_{\uf,\ug}(T^*\uZ)
&\rightsquigarrow & \quad U \longmapsto \lim_{V\supseteq f(U)}
\lim_{W\supseteq g(V)} I_{UVW},
\label{ag5eq28}
\ea
by Definition \ref{ag5def9}, where $I_{UVW}:\Om_{\O_Z(W)}
\ot_{\O_Z(W)}\O_X(U)\ra\bigl(\Om_{\O_Z(W)}\ot_{\O_Z(W)}\O_Y(V)
\bigr)\ot_{\O_Y(V)}\O_X(U)$ is the natural isomorphism.

Now if $U\!\subseteq\!X$, $V\!\subseteq\! Y$, $W\!\subseteq\! Z$ are
open with $V\!\supseteq\! f(U)$, $W\!\supseteq\! g(V)$ then
\begin{equation*}
\rho_{(g\ci f)^{-1}(W)\,U}\ci(g\ci f)_\sh(W)=\bigl[
\rho_{f^{-1}(V)\,U}\ci f_\sh(V)\bigr]\ci\bigl[\rho_{g^{-1}(W)\,V}\ci
g_\sh(W)\bigr]
\end{equation*}
as morphisms $\O_Z(W)\ra\O_X(U)$, so
$\Om_{\phi\ci\psi}=\Om_\phi\ci\Om_\psi$ in Definition \ref{ag5def2}
implies
\begin{equation*}
(\Om_{\rho_{(g\ci f)^{-1}(W)\,U}\ci(g\ci f)_\sh(W)})_*=
(\Om_{\rho_{f^{-1}(V)\,U}\ci
f_\sh(V)})_*\ci (\Om_{\rho_{g^{-1}(W)\,V}\ci
g_\sh(W)})_*\ci I_{UVW}.
\end{equation*}
Taking limits $\lim_{V\supseteq f(U)}\lim_{W\supseteq g(V)}$ implies
that the morphisms of presheaves in \eq{ag5eq25}--\eq{ag5eq28}
satisfy the analogue of \eq{ag5eq18}, so passing to sheaves
proves~(a).

For (b), first observe that as \eq{ag5eq19} is commutative, by (a) we
have
\begin{gather*}
\Om_\ue\ci\ue^*(\Om_\ug)\ci
I_{\ue,\ug}(T^*\uZ)=\Om_{\ug\ci\ue}=\Om_{\uh\ci\uf}=
\Om_\uf\ci\uf^*(\Om_\uh)\ci I_{\uf,\uh}(T^*\uZ),\\
\text{so}\qquad
\Om_\ue\ci\bigl(\ue^*(\Om_\ug)\ci I_{\ue,\ug}(T^*\uZ)\bigr)-\Om_\uf
\ci\bigl(\uf^*(\Om_\uh)\ci I_{\uf,\uh}(T^*\uZ)\bigr)=0,
\end{gather*}
and \eq{ag5eq20} is a complex. To show it is exact, note that as in the first part of the proof, \eq{ag5eq20} is the sheafification of a complex of presheaves, and the presheaves are defined as direct limits. Let $S\subseteq W$ be open. Then the complex of presheaves corresponding to \eq{ag5eq20} evaluated at $S\subseteq W$ is the direct limit over all open $T\subseteq X$, $U\subseteq Y$, $V\subseteq Z$ with $e(S)\subseteq T$, $f(S)\subseteq U$, $g(T)\subseteq V$, $h(U)\subseteq V$ of equation \eq{ag5eq6} with $\O_Z(V),\O_X(T),\O_Y(U),\O_W(S)$ in place of $\fC,\fD,\fE,\fF$. 

Since \eq{ag5eq6} is exact by Theorem \ref{ag5thm1} and direct limits are exact, the complex of presheaves whose sheafification is \eq{ag5eq20} is exact when evaluated on each open $S\subseteq W$, so it is exact. As sheafification is an exact functor, this implies that equation \eq{ag5eq20} is exact. This completes the proof.\I{C-scheme@$C^\iy$-scheme!cotangent
sheaf|)}\I{C-scheme@$C^\iy$-scheme!sheaves of $\O_X$-modules on|)}
\end{proof}

\section{\texorpdfstring{$C^\iy$-stacks}{C∞-stacks}}
\label{ag6}
\I{C-stack@$C^\iy$-stack|(}

We now discuss $C^\iy$-{\it stacks}, that is, geometric stacks over
the site $(\CSch,\cJ)$ of $C^\iy$-schemes with the open cover
topology. The author knows of no previous work on these. For the rest of the book, we will assume the reader has some familiarity with stacks in algebraic geometry. Appendix \ref{agA} summarizes the main definitions and results on stacks that we will use, but it is too brief to help someone learn about stacks for the first time. Readers with little experience of stacks are advised to first consult an introductory text such as Vistoli \cite{Vist}, Gomez \cite{Gome}, Laumon and Moret-Bailly \cite{LaMo}, or the online `Stacks Project'~\cite{Jong}.

The author found Metzler \cite{Metz} and Noohi \cite{Nooh} useful in writing this section.

\subsection{\texorpdfstring{$C^\iy$-stacks}{C∞-stacks}}
\label{ag61}

We use the material of \S\ref{agA2}--\S\ref{agA5}.

\begin{dfn} Define a Grothendieck pretopology\I{Grothendieck pretopology}
$\cP\cJ$ on the category of $C^\iy$-schemes $\CSch$ to have coverings
$\{\ui_a:\uU_a\ra\uU\}_{a\in A}$ where $V_a=i_a(U_a)$ is open in $U$
with $\ui_a:\uU_a\ra(V_a,\O_U\vert_{V_a})$ and isomorphism for all
$a\in A$, and $U=\bigcup_{a\in A}V_a$. Using Corollary
\ref{ag4cor2} we see that up to isomorphisms of the $\uU_a$, the
coverings $\{\ui_a:\uU_a\ra\uU\}_{a\in A}$ of $\uU$ correspond
exactly to open covers $\{V_a:a\in A\}$ of $U$. Write $\cJ$ for the associated Grothendieck topology.\I{Grothendieck topology}
\label{ag6def1}
\end{dfn}

It is a straightforward exercise in sheaf theory to prove:

\begin{prop} The site $(\CSch,\cJ)$ has descent for objects and
morphisms, in the sense of\/ {\rm\S\ref{agA3}}. Thus it is subcanonical.\I{site!subcanonical}
\label{ag6prop1}
\end{prop}

The point here is that since coverings of $\uU$ in $\cJ$ are just
open covers of the underlying topological space $U$, rather than
something more complicated like \'etale covers in algebraic
geometry, proving descent is easy: for objects, we glue the
topological spaces $X_a$ of $\uX_a$ together in the usual way to get
a topological space $X$, then we glue the $\O_{X_a}$ together to get
a presheaf of $C^\iy$-rings $\ti{\cal O}_X$ on $X$ isomorphic to
$\O_{X_a}$ on $X_a\subseteq X$ for all $a\in A$, and finally we
sheafify $\ti{\cal O}_X$ to a sheaf of $C^\iy$-rings $\O_X$ on $X$,
which is still isomorphic to $\O_{X_a}$ on $X_a\subseteq X$.

\begin{dfn} A $C^\iy$-{\it stack\/}\I{C-stack@$C^\iy$-stack!definition}
$\cX$\G[WXYZb]{$\cW,\cX,\cY,\cZ,\ldots$}{$C^\iy$-stacks} is a
geometric stack\I{stack!geometric} on the site\I{site}
$(\CSch,\cJ)$. Write $\CSta$\G[CSta]{$\CSta$}{2-category of
$C^\iy$-stacks} for the 2-category of $C^\iy$-stacks,
$\CSta=\GSta_{(\CSch,\cJ)}$. 

As in Definition \ref{agAdef12}, we will very often use the notation that if $\uX$ is a $C^\iy$-scheme then $\bar\uX$ is the associated $C^\iy$-stack,\G[X]{$\bar\uX$}{$C^\iy$-stack associated to a $C^\iy$-scheme $\uX$} and if $\uf:\uX\ra\uY$ is a morphism of $C^\iy$-schemes then $\bar\uf:\bar\uX\ra\bar\uY$\G[fXYb]{$\bar\uf:\bar\uX\ra\bar\uY$}{$C^\iy$-stack 1-morphism from a $C^\iy$-scheme morphism $\uf:\uX\ra\uY$} is the associated 1-morphism of $C^\iy$-stacks. Write $\bCSchlfp,\ab\bCSchlf,\ab\bCSch$\G[CSchx]{$\bCSch$}{2-subcategory of $\cX$ in $\CSta$ equivalent to a $C^\iy$-scheme
$\bar\uX$}\G[CSchy]{$\bCSchlf$}{2-subcategory of $\cX$ in $\CSta$
equivalent to $\bar\uX$ for $\uX$ locally
fair}\G[CSchz]{$\bCSchlfp$}{2-subcategory of $\cX$ in $\CSta$
equivalent to $\bar\uX$ for $\uX$ locally finitely presented} for
the full 2-subcategories of $C^\iy$-stacks $\cX$ in $\CSta$ which
are equivalent to $\bar\uX$ for $\uX$ in $\CSchlfp,\ab\CSchlf$ or
$\CSch$, respectively. When we say that a $C^\iy$-stack $\cX$ {\it
is a $C^\iy$-scheme},\I{C-stack@$C^\iy$-stack!is a $C^\iy$-scheme}
we mean that~$\cX\in\bCSch$.

Since $(\CSch,\cJ)$ is a subcanonical site, the embedding
$\CSch\ra\CSta$ taking $\uX\mapsto\bar\uX$, $\uf\mapsto\buf$ is
fully faithful. We write this as a full and faithful functor
$F_\CSch^\CSta:\CSch\ra\CSta$\G[FCSch]{$F_\CSch^\CSta:
\CSch\ra\CSta$}{inclusion from $C^\iy$-schemes to $C^\iy$-stacks}
mapping $F_\CSch^\CSta:\uX\mapsto\bar\uX$ on objects and
$F_\CSch^\CSta:\uf\mapsto\buf$ on (1-)morphisms. Hence
$\bCSchlfp,\ab\bCSchlf,\ab\bCSch$ are equivalent to
$\CSchlfp,\ab\CSchlf,\ab\CSch$, considered as 2-categories with only
identity 2-morphisms. In practice one often does not distinguish
between schemes and stacks which are equivalent to schemes, that is,
one identifies $\CSchlfp,\ldots,\CSch$
and~$\bCSchlfp,\ldots,\bCSch$.
\label{ag6def2}
\end{dfn}

\begin{rem} Behrend and Xu \cite[Def.~2.15]{BeXu} use `$C^\iy$-stack' to mean something different, a stack $X$ over the site\I{site} $(\Man,\cJ_\Man)$ of manifolds with Grothendieck topology\I{Grothendieck topology} $\cJ_\Man$ associated to the Grothendieck pretopology\I{Grothendieck pretopology} $\cP\cJ_\Man$ given by open covers, such that there exists a surjective representable submersion $\pi:\bar U\ra X$ from some manifold $U$. These are also called `smooth stacks' or `differentiable stacks' in \cite{BeXu,Hein,Metz,Nooh}. The quotient $[V/G]$ of a manifold $V$ by a Lie group $G$ is an example of a differentiable stack. By Zung's linearization theorem \cite[Th.~2.3]{Zung}, a differentiable stack $\cX$ with proper diagonal is Zariski locally equivalent to such a quotient $[V/G]$ with $G$ compact. Our $C^\iy$-stacks are a far larger class of more singular objects than the differentiable stacks of~\cite{BeXu,Hein,Metz,Nooh}.
\label{ag6rem1}
\end{rem}

Theorems \ref{ag4thm3}(b) and \ref{agAthm}, Corollary \ref{agAcor2} and
Proposition \ref{ag6prop1} imply:

\begin{thm} Let\/ $\cX$ be a $C^\iy$-stack. Then $\cX$ is equivalent to the stack\/\I{groupoid object}\I{category!groupoid object in}\I{stack!associated to a groupoid}\I{C-stack@$C^\iy$-stack!associated to a groupoid} $[\uV\rra\uU]$ associated to a groupoid\/ $(\uU,\uV,\us,\ut,\uu,\ui,\um)$ in\/ $\CSch$. Conversely, any groupoid in\/ $\CSch$ defines a $C^\iy$-stack\/ $[\uV\rra\uU]$. All fibre products exist in the\/ $2$-category\/~$\CSta$.\I{C-stack@$C^\iy$-stack!fibre products}
\label{ag6thm1}
\end{thm}

{\it Quotient\/ $C^\iy$-stacks\/} $[\uX/\uG]$ are a special class of $C^\iy$-stacks.\I{C-stack@$C^\iy$-stack!quotients
$[\protect\uX/G]$|(}\I{quotient C-stack@quotient $C^\iy$-stack|(}

\begin{dfn} A $C^\iy$-{\it group\/}\I{C-group@$C^\iy$-group}%
\I{C-scheme@$C^\iy$-scheme!C-group@$C^\iy$-group} $\uG$ is a group object in $\CSch$, that is, a $C^\iy$-scheme $\uG=(G,\O_G)$ equipped with an identity element
$1\in G$ and multiplication and inverse morphisms
$\um:\uG\t\uG\ra\uG$, $\ui:\uG\ra\uG$ in $\CSch$ such that
$(\ul{*},\uG,\upi,\upi, 1,\ui,\um)$ is a groupoid in $\CSch$.
Here $\ul{*}=\Spec\R$ is a point, and $\upi:\uG\ra\ul{*}$ is
the projection, and we regard $1\in G$ as a morphism~$1:\ul{*}\ra\uG$.

Let $\uG$ be a $C^\iy$-group, and $\uX$ a $C^\iy$-scheme. A ({\it
left\/}) {\it action\/} of $\uG$ on $\uX$ is a morphism
$\umu:\uG\t\uX\ra\uX$ such that
\e
\bigl(\uX,\uG\!\t\!\uX,\upi_{\uX},\umu,1\!\t\!\uid_\uX,
(\ui\ci\upi_{\uG})\!\t\!\umu,(\um\!\ci\!((\upi_\uG\!\ci\!\upi_1)
\!\t\!(\upi_\uG\!\ci\!\upi_2)))\!\t\!(\upi_\uX\!\ci\!\upi_2)\bigr)
\label{ag6eq1}
\e
is a groupoid object in $\CSch$, where in the final morphism
$\upi_1,\upi_2$ are the projections from $(\uG\t\uX)\t_{\upi_\uX,
\uX,\umu}(\uG\t\uX)$ to the first and second factors $\uG\t\uX$.
Then define the {\it quotient\/ $C^\iy$-stack\/} $[\uX/\uG]$ to be
the stack $[\uG\t\uX\rra\uX]$ associated to the groupoid \eq{ag6eq1}.
It is a $C^\iy$-stack.

If $\uG=(G,\O_G)$ is a $C^\iy$-group then the underlying space $G$
is a topological group, and is in particular a group, and if
$\uG=(G,\O_G)$ acts on $\uX=(X,\O_X)$ then $G$ acts continuously
on~$X$.

If $G$ is a Lie group then $\uG=F_\Man^\CSch(G)$ is a $C^\iy$-group
in a natural way, by applying $F_\Man^\CSch$ to the smooth
multiplication and inverse maps $m:G\t G\ra G$ and $i:G\ra G$. If a
Lie group $G$ acts smoothly on a manifold $X$ with action $\mu:G\t
X\ra X$ then the $C^\iy$-group $\uG=F_\Man^\CSch(G)$ acts on the
$C^\iy$-scheme $\uX=F_\Man^\CSch(X)$ with action
$\umu=F_\Man^\CSch(\mu):\uG\t\uX\ra\uX$, so we can form the quotient
$C^\iy$-stack~$[\uX/\uG]$.
\label{ag6def3}
\end{dfn}

\begin{ex} Let $\uG$ be a $C^\iy$-group, and $\uX=\ul{*}$ be the point in $\CSch$, with trivial $\uG$-action. The quotient $C^\iy$-stack $[\ul{*}/\uG]$ is known as $B\uG$, the classifying stack for principal $\uG$-bundles on $C^\iy$-schemes. 

If $\uS$ is a $C^\iy$-scheme, a {\it principal\/ $\uG$-bundle\/} $(\uP,\upi,\umu)$ over $\uS$ is a $C^\iy$-scheme $\uP$, a morphism $\upi:\uP\ra\uS$, and a $\uG$-action $\umu:\uG\t\uP\ra\uP$ of $\uG$ on $\uP$, such that $\upi$ is $\uG$-invariant, and $\uS$ may be covered by open $C^\iy$-subschemes $\uU\subseteq\uS$ such that there exists an isomorphism $\upi^{-1}(\uU)\cong\uG\t\uU$ which identifies the $\uG$-action on $\upi^{-1}(\uU)\subseteq\uP$ with the product of the left $\uG$-action on $\uG$ and the trivial $\uG$-action on $\uU$, and identifies $\upi\vert_{\cdots}:\upi^{-1}(\uU)\ra\uU$ with $\upi_\uU:\uG\t\uU\ra\uU$. Often we write $\uP$ as the principal bundle, leaving $\upi,\umu$ implicit.

One well known way to write $B\uG$ explicitly as a category fibred in groupoids $p_\cX:\cX\ra\CSch$, as in \S\ref{agA2}, is to define $\cX$ to be the category with objects pairs $(\uS,\uP)$ of a $C^\iy$-scheme $\uS$ and $\uP$ a principal $\uG$-bundle over $\uS$, and morphisms $(\uf,\uu):(\uS,\uP)\ra(\uT,\uQ)$ consisting of $C^\iy$-scheme morphisms $\uf:\uS\ra\uT$ and $\uu:\uP\ra\uQ$, such that $\uu$ is $\uG$-equivariant and
\e
\begin{gathered}
\xymatrix@C=90pt@R=15pt{ *+[r]{\uP} \ar[d]^\upi \ar[r]_\uu & *+[l]{\uQ} \ar[d]_\upi \\
*+[r]{\uS} \ar[r]^\uf & *+[l]{\uT} }
\end{gathered}
\label{ag6eq2}
\e
is a Cartesian square in $\CSch$, which implies that $\uP$ is canonically isomorphic to the pullback principal $\uG$-bundle $\uf^*(\uQ)$. Composition of morphisms is $(\ug,\uv)\ci(\uf,\uu)=(\ug\ci\uf,\uv\ci\uu)$, and identity morphisms are $\id_{(\uS,\uP)}=(\uid_\uS,\uid_\uP)$. The functor $p_\cX:\cX\ra\CSch$ maps $p_\cX:(\uS,\uP)\mapsto\uS$ on objects and $p_\cX:(\uf,\uu)\mapsto\uf$ on morphisms.
\label{ag6ex1}
\end{ex}

In \S\ref{ag71} we will give a more detailed treatment of
quotient $C^\iy$-stacks $[\uX/G]$ of a $C^\iy$-scheme $\uX$ by a
finite group~$G$.\I{C-stack@$C^\iy$-stack!quotients $[\protect\uX/G]$|)}%
\I{quotient C-stack@quotient $C^\iy$-stack|)}

\subsection{\texorpdfstring{Properties of 1-morphisms of $C^\iy$-stacks}{Properties of 1-morphisms of C∞-stacks}}
\label{ag62}
\I{C-scheme@$C^\iy$-scheme!morphism|(}

We use the material of \S\ref{agA4}. We define some classes of $C^\iy$-scheme morphisms.

\begin{dfn} Let $\uf=(f,f^\sh):\uX=(X,\O_X)\ra\uY=(Y,\O_Y)$ be a
morphism in $\CSch$. Then:
\begin{itemize}
\setlength{\itemsep}{0pt}
\setlength{\parsep}{0pt}
\item We call $\uf$ an {\it open
embedding\/}\I{C-scheme@$C^\iy$-scheme!open embedding} if
$V=f(X)$ is an open subset in $Y$ and
$(f,f^\sh):(X,\O_X)\ra(V,\O_Y\vert_V)$ is an isomorphism.
\item We call $\uf$ a {\it closed
embedding\/}\I{C-scheme@$C^\iy$-scheme!closed embedding} if
$f:X\ra Y$ is a homeomorphism with a closed subset of $\uY$, and
$f^\sh:f^{-1}(\O_Y)\ra\O_X$ is a surjective morphism of sheaves
of $C^\iy$-rings. Equivalently, $\uf$ is an isomorphism with a
closed $C^\iy$-subscheme of $\uY$. Over affine open subsets
$\uU\cong\Spec\fC$ in $\uY$, $\uf$ is modelled on the natural
morphism $\Spec(\fC/I)\hookra\Spec\fC$ for some ideal $I$
in~$\fC$.
\item We call $\uf$ an {\it
embedding\/}\I{C-scheme@$C^\iy$-scheme!embedding} if we may
write $\uf=\ug\ci\uh$ where $\uh$ is an open embedding and $\ug$
is a closed embedding.
\item We call $\uf$ {\it
\'etale\/}\I{C-scheme@$C^\iy$-scheme!etale morphism@\'etale
morphism} if each $x\in X$ has an open neighbourhood $U$ in $X$
such that $V=f(U)$ is open in $Y$ and
$(f\vert_U,f^\sh\vert_U):(U,\O_X\vert_U)\ra (V,\O_Y\vert_V)$ is
an isomorphism. That is, $\uf$ is a local isomorphism.
\item We call $\uf$ {\it
proper\/}\I{C-scheme@$C^\iy$-scheme!proper morphism} if $f:X\ra
Y$ is a proper map of topological spaces, that is, if
$S\subseteq Y$ is compact then $f^{-1}(S)\subseteq X$ is
compact.
\item We say that $\uf$ {\it has finite
fibres\/}\I{C-scheme@$C^\iy$-scheme!morphism with finite fibres}
if $f:X\ra Y$ is a finite map, that is, $f^{-1}(y)$ is a finite
subset of $X$ for all~$y\in Y$.
\item We call $\uf$ {\it
separated\/}\I{C-scheme@$C^\iy$-scheme!separated morphism} if
$f:X\ra Y$ is a separated map of topological spaces, that is,
$\De_X=\bigl\{ (x,x):x\in X\bigr\}$ is a closed subset of the
topological fibre product $X\t_{f,Y,f}X=\bigl\{(x,x')\in X\t
X:f(x)=f(x')\bigr\}$.
\item We call $\uf$ {\it
closed\/}\I{C-scheme@$C^\iy$-scheme!closed morphism} if $f:X\ra
Y$ is a closed map of topological spaces, that is, $S\subseteq
X$ closed implies $f(S)\subseteq Y$ closed.
\item We call $\uf$ {\it universally
closed\/}\I{C-scheme@$C^\iy$-scheme!universally closed morphism}
if whenever $\ug:\uW\ra\uY$ is a morphism then
$\upi_\uW:\uX\t_{\uf,\uY,\ug}\uW\ra\uW$ is closed.
\item We call $\uf$ a {\it
submersion\/}\I{C-scheme@$C^\iy$-scheme!submersion} if for all
$x\in X$ with $f(x)=y$, there exists an open neighbourhood $U$
of $y$ in $Y$ and a morphism
$\ug=(g,g^\sh):(U,\O_Y\vert_U)\ra(X,\O_X)$ with $g(y)=x$
and~$\uf\ci\ug=\id_{(U,\O_Y\vert_U)}$.
\item We call $\uf$ {\it locally
fair},\I{C-scheme@$C^\iy$-scheme!locally fair morphism} or {\it
locally finitely presented},\I{C-scheme@$C^\iy$-scheme!locally
finitely presented morphism} if whenever $\uU$ is a locally
fair,\I{C-scheme@$C^\iy$-scheme!locally fair} or locally
finitely presented
$C^\iy$-scheme,\I{C-scheme@$C^\iy$-scheme!locally finitely
presented} respectively, and $\ug:\uU\ra\uY$ is a morphism then
$\uX\t_{\uf,\uY,\ug}\uU$ is locally fair, or locally finitely
presented, respectively.
\end{itemize}
\label{ag6def4}
\end{dfn}

\begin{rem} These are mostly analogues of standard concepts in
algebraic geometry, as in Hartshorne \cite{Hart} for instance. But
because the topology on $C^\iy$-schemes is finer than the Zariski
topology in algebraic geometry --- for example, affine
$C^\iy$-schemes are Hausdorff --- our definitions of \'etale and
proper are simpler than in algebraic geometry. (Open or closed)
embeddings correspond to (open or closed) immersions in algebraic
geometry, but we prefer the word `embedding', as immersion has a
different meaning in differential geometry. Closed morphisms are not
invariant under base change, which is why we define universally
closed. If $X,Y$ are manifolds and $\uX,\uY=F_\Man^\CSch(X,Y)$, then
$\uf:\uX\ra\uY$ is a submersion of $C^\iy$-schemes if and only if
$\uf=F_\Man^\CSch(f)$ for $f:X\ra Y$ a submersion of manifolds.
\label{ag6rem2}
\end{rem}

\begin{dfn} Let $\bs P$ be a property of morphisms in $\CSch$. We
say that $\bs P$ {\it is stable under open embedding\/} if whenever
$\uf:\uU\ra\uV$ is $\bs P$ and $\ui:\uV\ra\uW$ is an open embedding,
then $\ui\ci\uf:\uU\ra\uW$ is $\bs P$.
\label{ag6def5}
\end{dfn}

The next proposition is elementary. See Laumon and Bailly \cite[\S
3.10]{LaMo} and Noohi \cite[Ex.~4.6]{Nooh} for similar lists for the
\'etale and topological sites.\I{site}

\begin{prop} The following properties of morphisms in $\CSch$ are
invariant under base change and local in the target in the site
$(\CSch,\cJ),$ in the sense of\/ {\rm\S\ref{agA4}:} open embedding,\I{C-scheme@$C^\iy$-scheme!open
embedding} closed embedding,\I{C-scheme@$C^\iy$-scheme!closed
embedding} embedding,\I{C-scheme@$C^\iy$-scheme!embedding}
\'etale,\I{C-scheme@$C^\iy$-scheme!etale morphism@\'etale morphism}
proper,\I{C-scheme@$C^\iy$-scheme!proper morphism} has finite
fibres,\I{C-scheme@$C^\iy$-scheme!morphism with finite fibres}
separated,\I{C-scheme@$C^\iy$-scheme!separated morphism} universally
closed,\I{C-scheme@$C^\iy$-scheme!universally closed morphism}
submersion,\I{C-scheme@$C^\iy$-scheme!submersion} locally
fair,\I{C-scheme@$C^\iy$-scheme!locally fair} locally finitely
presented.\I{C-scheme@$C^\iy$-scheme!locally finitely presented} The
following properties are also stable under open embedding, in the sense of Definition\/ {\rm\ref{ag6def5}:} open embedding, embedding, \'etale, has finite fibres, separated,
submersion, locally fair, locally finitely
presented.\I{C-scheme@$C^\iy$-scheme!morphism|)}
\label{ag6prop2}
\end{prop}

As in \S\ref{agA4}, this implies that these properties are also
defined for representable
1-morphisms\I{C-stack@$C^\iy$-stack!representable 1-morphism} in
$\CSta$. In particular, if $\cX$ is a $C^\iy$-stack then
$\De_\cX:\cX\ra\cX\t\cX$ is representable, and if $\Pi:\bar
\uU\ra\cX$ is an atlas\I{atlas}\I{C-stack@$C^\iy$-stack!atlas} then
$\Pi$ is representable, so we can
require that $\De_\cX$ or $\Pi$ has some of these properties.%
\I{C-stack@$C^\iy$-stack!open embedding}%
\I{C-stack@$C^\iy$-stack!closed embedding}%
\I{C-stack@$C^\iy$-stack!embedding}%
\I{C-stack@$C^\iy$-stack!etale 1-morphism@\'etale 1-morphism}%
\I{C-stack@$C^\iy$-stack!proper 1-morphism}%
\I{C-stack@$C^\iy$-stack!1-morphism with finite fibres}%
\I{C-stack@$C^\iy$-stack!separated 1-morphism}%
\I{C-stack@$C^\iy$-stack!universally closed 1-morphism}%
\I{C-stack@$C^\iy$-stack!submersion}%

\begin{dfn} Let $\cX$ be a $C^\iy$-stack. Following
\cite[Def.~7.6]{LaMo}, we say that $\cX$ is {\it
separated\/}\I{C-stack@$C^\iy$-stack!separated} if the diagonal
1-morphism $\De_\cX:\cX\ra\cX\t\cX$ is universally closed. If
$\cX=\bar\uX$ for some $C^\iy$-scheme $\uX=(X,\O_X)$ then $\cX$ is
separated if and only if $\De_X:X\ra X\t X$ is closed, that is, if
and only if $X$ is Hausdorff.
\label{ag6def6}
\end{dfn}

\begin{prop} Let\/ $\cW=\cX\t_{f,\cZ,g}\cY$ be a fibre
product\I{C-stack@$C^\iy$-stack!fibre products} of\/ $C^\iy$-stacks
with\/ $\cX,\cY$ separated. Then $\cW$ is separated.
\label{ag6prop3}
\end{prop}

\begin{proof} We have a 2-commutative diagram with both squares
2-Cartesian:\I{2-category!2-Cartesian square}
\e
\begin{gathered}
\xymatrix@!0@C=65pt@R=25pt{ & \cW \ar[rr]_{\De_\cW}
\ar[dl] \ar[dr]^(0.55){\pi_1} && \cW\t\cW \ar[dr] \\
\cZ \ar[dr]_{\jmath_\cZ} && \cX\t_{f\ci\De_\cZ,\cZ\t\cZ,g\ci\cZ}\cY
\ar[dl] \ar[dr]
\ar[ur]^(0.45){\pi_2} && \cX\t\cX\t\cY\t\cY. \\
& I_\cZ && \cX\t\cY \ar[ur]_{{}\,\,\,\De_\cX\t\De_\cY}}
\end{gathered}
\label{ag6eq3}
\e
Let $[\uV\rra\uU]$ be a groupoid
presentation\I{C-stack@$C^\iy$-stack!associated to a groupoid} of
$\cZ$, and consider the fourth 2-Cartesian diagram of \eq{agAeq12},
with surjective rows. The left hand morphism $\bar\uu\t\bar\uid_\uU$
has a left inverse $\pi_\uU$, and so is automatically universally
closed. Hence $\jmath_\cZ$ is universally closed by Propositions
\ref{agAprop2}(c) and \ref{ag6prop2}, so $\pi_1$ in \eq{ag6eq3} is
universally closed by Propositions \ref{agAprop2}(a) and
\ref{ag6prop2}. Also $\De_\cX,\De_\cY$ are universally closed as
$\cX,\cY$ are separated, so $\De_\cX\t\De_\cY$ in \eq{ag6eq3} is
universally closed, and $\pi_2$ is universally closed. Thus
$\De_\cW\cong\pi_2\ci\pi_1$ is universally closed, and $\cW$ is
separated.
\end{proof}

\subsection{\texorpdfstring{Open $C^\iy$-substacks and open covers}{Open C∞-substacks and open covers}}
\label{ag63}
\I{C-stack@$C^\iy$-stack!C-substack@$C^\iy$-substack}%
\I{C-stack@$C^\iy$-stack!C-substack@$C^\iy$-substack!open|(}%
\I{C-stack@$C^\iy$-stack!C-substack@$C^\iy$-substack!closed}%
\I{C-stack@$C^\iy$-stack!C-substack@$C^\iy$-substack!locally closed}%
\I{C-substack@$C^\iy$-substack}%
\I{C-substack@$C^\iy$-substack!open|(}%
\I{C-substack@$C^\iy$-substack!closed}%
\I{C-substack@$C^\iy$-substack!locally closed}%
\I{C-stack@$C^\iy$-stack!open cover|(}

\begin{dfn} Let $\cX$ be a $C^\iy$-stack. A $C^\iy$-{\it
substack\/} $\cY$ in $\cX$ is a substack of $\cX$, in the sense of
Definition \ref{agAdef7}, which is also a $C^\iy$-stack. It has a
natural inclusion 1-morphism $i_\cY:\cY\hookra\cX$. We call $\cY$ an
{\it open\/ $C^\iy$-substack\/} of $\cX$ if $i_\cY$ is a
representable\I{C-stack@$C^\iy$-stack!representable 1-morphism} open
embedding, a {\it closed\/ $C^\iy$-substack\/} of $\cX$ if $i_\cY$
is a representable closed embedding, and a {\it locally closed\/
$C^\iy$-substack\/} of $\cX$ if $i_\cY$ is a representable
embedding.

An {\it open cover\/} $\{\cU_a:a\in A\}$ of $\cX$ is a family of
open $C^\iy$-substacks $\cU_a$ in $\cX$ with $\coprod_{a\in
A}i_{\cU_a}:\coprod_{a\in A}\cU_a\ra\cX$ surjective. We write
$\cU\subseteq\cX$ when $\cU$ is an open $C^\iy$-substack of $\cX$,
and $\bigcup_{a\in A}\cU=\cX$ to mean that $\coprod_{a\in
A}i_{\cU_a}$ is surjective.
\label{ag6def7}
\end{dfn}

Some properties of $\De_\cX,\io_\cX,\jmath_\cX$ and
atlases\I{atlas}\I{C-stack@$C^\iy$-stack!atlas} for $\cX$ can be
tested on the elements of an open cover. The proof is elementary.

\begin{prop} Let\/ $\cX$ be a\/ $C^\iy$-stack, and\/
$\{\cU_a:a\in A\}$ an open cover of\/ $\cX$. Suppose $\bs P$ and\/
$\bs Q$ are properties of morphisms in $\CSch$ which are invariant
under base change and local in the target in $(\CSch,\cJ),$ and
that\/ $\bs P$ is stable under open embedding. Then:
\begin{itemize}
\setlength{\itemsep}{0pt}
\setlength{\parsep}{0pt}
\item[{\bf(a)}] Let\/ $\Pi_a:\bar\uU_a\ra\cU_a$ be an atlas
for\/ $\cU_a$ for\/ $a\in A$. Set\/ $\uU=\coprod_{a\in A}\uU_a$
and\/ $\Pi=\coprod_{a\in A}i_{\cU_a}\ci\Pi_a:\bar\uU\ra\cX$.
Then $\Pi$ is an atlas for $\cX,$ and\/ $\Pi$ is $\bs P$ if and
only if\/ $\Pi_a$ is $\bs P$ for all\/ $a\in A$.
\item[{\bf(b)}] $\De_\cX:\cX\!\ra\!\cX\!\t\!\cX$
is $\bs P$ if and only if\/
$\De_{\cU_a}:\cU_a\!\ra\!\cU_a\!\t\!\cU_a$ is $\bs P$ for
all\/~$a\!\in\!A$.
\item[{\bf(c)}] $\io_\cX:I_\cX\ra\cX$ is $\bs Q$
if and only if\/ $\io_{\cU_a}:I_{\cU_a}\ra\cU_a$ is $\bs Q$ for
all\/~$a\in A$.
\item[{\bf(d)}] $\jmath_\cX:\cX\ra I_\cX$ is $\bs Q$
if and only if\/ $\jmath_{\cU_a}:\cU_a\ra I_{\cU_a}$ is $\bs Q$
for all\/~$a\in A$.\I{C-stack@$C^\iy$-stack!open cover|)}
\end{itemize}
\label{ag6prop4}
\end{prop}

If $\cX=\bar\uU$ for some $C^\iy$-scheme $\uU=(U,\O_U)$, then the
open $C^\iy$-substacks of $\cX$ are precisely those subsheaves of the form 
$\,\overline{\!(V,\O_U\vert_V)\!}\,$ for all open $V\subseteq U$, that
is, they are the images in $\CSta$ of the open $C^\iy$-subschemes of
$U$. We can also describe the open substacks of stacks $[\uV\rra\uU]$ associated to groupoids:\I{C-stack@$C^\iy$-stack!associated to a groupoid}

\begin{prop} Let\/ $(\uU,\uV,\us,\ut,\uu,\ui,\um)$ be a groupoid
in\/ $\CSch$ and\/ $\cX=[\uV\rra\uU]$ the associated\/
$C^\iy$-stack, and write $\uU=(U,\O_U),$ and so on. Then open\/
$C^\iy$-substacks $\cX'$ of\/ $\cX$ are naturally in $1$-$1$
correspondence with open subsets\/ $U'\subseteq U$ with\/
$s^{-1}(U')=t^{-1}(U'),$ where $\cX'=[\uV'\rra\uU']$ for
$\uU'=(U',\O_U\vert_{U'})$ and\/ $\uV'=(s^{-1}(U'),\O_V
\vert_{s^{-1}(U')})$. If\/ $(\uU,\uV,\us,\ut,\uu,\ui,\um)$ is as in
{\rm\eq{ag6eq1},} so that\/ $\cX$ is a quotient\/ $C^\iy$-stack\/
$[\uU/\uG],$ then open\/ $C^\iy$-substacks $\cX'$ of\/ $\cX$
correspond to\/ $G$-invariant open subsets\/~$U'\subseteq U$.
\label{ag6prop5}
\end{prop}

\begin{proof} From Theorem \ref{agAthm}, as $\cX=[\uV\rra\uU]$ we
have a natural surjective, representable
1-morphism\I{C-stack@$C^\iy$-stack!representable 1-morphism}
$\Pi:\bar\uU\ra\cX$. If $\cX'$ is an open $C^\iy$-substack of $\cX$
then $\bar\uU\t_{\Pi,\cX,i_{\cX'}}\cX'$ is an open $C^\iy$-substack
of $\bar\uU$, and so is of the form
$\,\overline{\!(U',\O_U\vert_{U'})\!}\,$ for some open $U'\subseteq
U$. We have natural equivalences
\begin{align*}
&\,\overline{\!(s^{-1}(U'),\O_V\vert_{s^{-1}(U')})\!}\simeq\!
\bar U'\!\t_{i_{\bar U'},\bar U,\bar s}\!\bar V\!\simeq\!
{\cX'}\!\t_\cX\!(\bar U\!\t_{\id_{\bar U},\bar U,\bar s}\!
\bar V)\!\simeq\!{\cX'}\!\t_{i_\cX',\cX,\pi_\cX}\!\bar V\\
&\qquad\simeq {\cX'}\t_\cX(\bar U\t_{\id_{\bar U},\bar U,\bar t}
\bar V)\simeq \bar U'\t_{i_{\bar U'},\bar U,\bar t}\bar V\simeq\,
\overline{\!(t^{-1}(U'),\O_V\vert_{t^{-1}(U')})\!}\,,
\end{align*}
by associativity properties of fibre products in 2-categories, which implies that $s^{-1}(U')=t^{-1}(U')$. Conversely, if $s^{-1}(U')=t^{-1}(U')$ then defining $\uU',\uV'$ as in the proposition, we get a $C^\iy$-stack $\cX'=[\uV'\rra\uU']$ which is naturally an open $C^\iy$-substack of $\cX$. When $\cX=[\uU/\uG]$, we see that $s^{-1}(U')= t^{-1}(U')$ if
and only if $U'$ is $G$-invariant.%
\I{C-stack@$C^\iy$-stack!C-substack@$C^\iy$-substack!open|)}%
\I{C-substack@$C^\iy$-substack!open|)}%
\end{proof}

\subsection{\texorpdfstring{The underlying topological space of a $C^\iy$-stack}{The underlying topological space of a C∞-stack}}
\label{ag64}
\I{C-stack@$C^\iy$-stack!underlying topological space $\cX_\top$|(}

Following Noohi \cite[\S 4.3, \S 11]{Nooh} in the case of
topological stacks, we associate a topological space $\cX_\top$ to a
$C^\iy$-stack $\cX$. In \S\ref{ag74}, if $\cX$ is a Deligne--Mumford
$C^\iy$-stack, we will also give $\cX_\top$ the structure of a
$C^\iy$-scheme.

\begin{dfn} Let $\cX$ be a $C^\iy$-stack. Write $\ul{*}$ for the
point $\Spec\R$ in $\CSch$, and $\bar{\ul{*}}$ for the associated
point in $\CSta$. Define $\cX_\top$\G[Xtop]{$\cX_\top$}{underlying
topological space of a $C^\iy$-stack $\cX$} to be the set of
2-isomorphism classes $[x]$ of 1-morphisms $x:\bar{\ul{*}}\ra\cX$.

Suppose $\cU\subseteq\cX$ is an open $C^\iy$-substack. Since $\cU$
is a subcategory of $\cX$, any 1-morphism $u: \bar{\ul{*}}\ra\cU$,
regarded as a functor from the category $\bar{\ul{*}}$ to the
category $\cU$, is also a 1-morphism $u: \bar{\ul{*}}\ra\cX$. Also,
as $\cU$ is a strictly full subcategory of $\cX$, if
$x:\bar{\ul{*}}\ra\cX$ is a 1-morphism and $\eta:u\Ra x$ a
2-morphism of 1-morphisms $\bar{\ul{*}}\ra\cX$, then $x$ is also a
1-morphism $u: \bar{\ul{*}}\ra\cU$, and $\eta$ is also a 2-morphism
of 1-morphisms $\bar{\ul{*}}\ra\cU$. This implies that $\cU_\top$ is
a subset of $\cX_\top$.

Define $\cT_{\cX_\top}=\bigl\{\cU_\top:\cU\subseteq\cX$ is an
open $C^\iy$-substack in $\cX\bigr\}$, a set of subsets of
$\cX_\top$. We claim that $\cT_{\cX_\top}$ is a topology on
$\cX_\top$. To see this, note that taking $\cU$ to be $\cX$ or the
empty $C^\iy$-substack gives $\cX_\top,\es\in\cT_{\cX_\top}$.
If $\cU,\cV\subseteq\cX$ are open $C^\iy$-substacks of $\cX$ then
the intersection of subcategories $\cW=\cU\cap\cV$ is an open
$C^\iy$-substack of $\cX$ equivalent to the fibre
product\I{C-stack@$C^\iy$-stack!fibre products}
$\cU\t_{i_\cU,\cX,i_\cV}\cV$, with $\cW_\top=\cU_\top\cap\cV_\top$,
so $\cT_{\cX_\top}$ is closed under finite intersections.

If $\{\cU_a:a\in A\}$ is a family of open $C^\iy$-substacks in
$\cX$, define $\cV$ to be the unique smallest strictly full
subcategory of $\cX$ which contains $\cU_a$ for each $a\in A$ and is
closed under the stack axiom \eq{agAeq9} in Definition
\ref{agAdef6}. Then $\cV$ is an open $C^\iy$-substack of $\cX$,
which we write as $\cV=\bigcup_{a\in A}\cU_a$, and
$\cV_\top=\bigcup_{a\in A}\cU_{a\,\top}$. So $\cT_{\cX_\top}$
is closed under arbitrary unions.

Thus $(\cX_\top,\cT_{\cX_\top})$ is a topological space, which
we call the {\it underlying topological space\/} of $\cX$, and
usually write as $\cX_\top$. It has the following properties. If
$f:\cX\ra\cY$ is a 1-morphism of $C^\iy$-stacks then there is a
natural continuous map $f_\top:\cX_\top\ra\cY_\top$ defined by
$f_\top([x])=[f\ci x]$. If $f,g:\cX\ra\cY$ are 1-morphisms and
$\eta:f\Ra g$ is a 2-isomorphism then $f_\top=g_\top$. Mapping
$\cX\mapsto\cX_\top$, $f\mapsto f_\top$ and 2-morphisms to
identities defines a 2-functor $F_\CSta^\Top:\CSta\ra\Top$, where
the category of topological spaces $\Top$ is regarded as a
2-category with only identity 2-morphisms.

If $\uX=(X,\O_X)$ is a $C^\iy$-scheme, so that $\bar\uX$ is a
$C^\iy$-stack, then $\bar\uX_\top$ is naturally homeomorphic to $X$,
and we will identify $\bar\uX_\top$ with $X$. If
$\uf=(f,f^\sh):\uX=(X,\O_X)\ra\uY=(Y,\O_Y)$ is a morphism of
$C^\iy$-schemes, so that $\buf:\bar\uX\ra\bar\uY$ is a 1-morphism of
$C^\iy$-stacks, then $\buf_\top:\bar\uX_\top\ra\bar\uY_\top$
is~$f:X\ra Y$.

For a $C^\iy$-stack $\cX$, we can characterize $\cX_\top$ by the
following universal property. We are given a topological space
$\cX_\top$ and for every 1-morphism $f:\bar\uU\ra\cX$ for a
$C^\iy$-scheme $\uU=(U,\O_U)$ we are given a continuous map
$f_\top:U\ra\cX_\top$, such that if $f$ is 2-isomorphic to
$h\ci\bar\ug$ for some morphism $\ug=(g,g^\sh):\uU\ra\uV$ and
1-morphism $h:\uV\ra\cX$ then $f_\top=h_\top\ci g$. If $\cX_\top'$,
$f_\top'$ are alternative choices of data with these properties then
there is a unique continuous map $j:\cX_\top\ra \cX_\top'$ with
$f_\top'=j\ci f_\top$ for all~$f$.
\label{ag6def8}
\end{dfn}

We can also make $\cX_\top$ into a $C^\iy$-ringed
space~$\ucX_\top$:\I{C-stack@$C^\iy$-stack!underlying $C^\iy$-ringed
space $\protect\ucX_\top$|(}

\begin{dfn} Let $\cX$ be a $C^\iy$-stack. Define a sheaf of $C^\iy$-rings $\O_{\cX_\top}$ on $\cX_\top$ as follows: each open set in $\cX_\top$ is $\cU_\top$ for some unique open $C^\iy$-substack $\cU\subseteq\cX$. Define $\O_{\cX_\top}(\cU_\top)$ to be the set of 2-isomorphism classes
$[c]$ of 1-morphisms $c:\cU\ra\bar{\ul{\R}}$. If $f:\R^n\ra\R$ is
smooth and $[c_1],\ldots,[c_n]\in\O_{\cX_\top}(\cU_\top)$, define $\Phi_f\bigl([c_1],\ldots,[c_n]\bigr)=\bigl[\buf\ci(c_1\t\cdots\t c_n)\bigr]$, using the composition $\cU\,{\buildrel c_1\t\cdots\t c_n\over\longra}\,\bar{\ul{\R}}\t\cdots\t\bar{\ul{\R}}\,{\smash{\buildrel\buf\over\longra}}\,\bar{\ul{\R}}$. Then $\O_{\cX_\top}(\cU_\top)$ is a $C^\iy$-ring.

If $\cV_\top\subseteq\cU_\top\subseteq\cX_\top$ are open, so that
$\cV\subseteq\cU\subseteq\cX$, define a $C^\iy$-ring morphism
$\rho_{\cU\cV}:\O_{\cX_\top}(\cU_\top)\ra\O_{\cX_\top}(\cV_\top)$
by $\rho_{\cU\cV}:[c]\mapsto[c\vert_\cV]$. It is now easy to check
that $\O_{\cX_\top}$ is a presheaf of $C^\iy$-rings on $\cX_\top$, but it is less obvious that it is a sheaf. To see this, note that by general properties of stacks, $\cU\mapsto\bs\Hom(\cU,\bar{\ul{\R}})$ is a 2-sheaf (stack) of groupoids on the topological space $\cX_\top$, where $\bs\Hom(\cU,\bar{\ul{\R}})$ is the groupoid of 1- and 2-morphisms $\cU\ra\bar{\ul{\R}}$, and $\O_{\cX_\top}(\cU_\top)$ is its set of isomorphism classes. 

Starting with a 2-sheaf and taking sets of isomorphism classes generally yields only a presheaf of sets, not a sheaf. But as $\bar{\ul{\R}}$ is a $C^\iy$-scheme the groupoids $\bs\Hom(\cU,\bar{\ul{\R}})$ are discrete (have no nontrivial automorphisms), so taking isomorphism classes loses no information, and the 2-sheaf property implies that $\O_{\cX_\top}$ is a sheaf of sets, and so of $C^\iy$-rings. Thus $\ucX_\top=(\cX_\top,\O_{\cX_\top})$\G[Xtopb]{$\ucX_\top$}{underlying $C^\iy$-ringed space or $C^\iy$-scheme of a $C^\iy$-stack $\cX$} is a $C^\iy$-ringed space, the {\it underlying $C^\iy$-ringed space\/} of~$\cX$.

For general $\cX$ this $\ucX_\top$ need not be a $C^\iy$-scheme. If
it is, we call $\ucX_\top$ the {\it coarse moduli\/
$C^\iy$-scheme\/}\I{C-stack@$C^\iy$-stack!coarse moduli
$C^\iy$-scheme $\protect\ucX_\top$} of $\cX$. Coarse moduli
$C^\iy$-schemes have the following universal property: there is a
1-morphism $\pi:\cX\ra\bucX_\top$ called the {\it structural
morphism}, such that if $f:\cX\ra\bar\uY$ is a 1-morphism for any
$C^\iy$-scheme $\uY$ then $f$ is 2-isomorphic to $\bar\ug\ci\pi$ for
some unique $C^\iy$-scheme
morphism~$\ug:\ucX_\top\ra\uY$.\I{C-stack@$C^\iy$-stack!underlying
$C^\iy$-ringed space $\protect\ucX_\top$|)}

\label{ag6def9}
\end{dfn}

We can think of a $C^\iy$-stack $\cX$ as being a topological space
$\cX_\top$ equipped with some complicated extra geometrical
structure, just as manifolds and orbifolds\I{orbifold} are usually
thought of as topological spaces equipped with extra structure
coming from an atlas of charts. As in Noohi \cite[Ex.~4.13]{Nooh},
it is easy to describe $\cX_\top$ using a
groupoid\I{C-stack@$C^\iy$-stack!associated to a groupoid}
presentation $[\uV\rra\uU]$ of~$\cX$:

\begin{prop} Let\/ $\cX$ be equivalent to the $C^\iy$-stack\/
$[\uV\rra\uU]$ associated to a groupoid\/ $(\uU,\uV,\us,\ut,\uu,\ui,\um)$
in\/ $\CSch,$ where\/ $\uU=(U,\O_U),\us=(s,s^\sh),$ and so on.
Define $\sim$ on\/ $U$ by\/ $p\sim p'$ if there exists $q\in V$
with\/ $s(q)=p$ and\/ $t(q)=p'$. Then $\sim$ is an equivalence
relation on $U,$ so we can form the quotient\/ $U/\!\sim,$ with the
quotient topology. There is a natural homeomorphism\/~$\cX_\top\cong
U/\!\sim$.

For a quotient\/ $C^\iy$-stack\/ $\cX\simeq[\uU/\uG]$ we
have\/~$\cX_\top\cong U/G$.
\label{ag6prop6}
\end{prop}

Using this we can deduce properties of $\cX_\top$ from properties of
$\cX$ expressed in terms of $\uV\rra\uU$. For instance, if $\cX$ is
separated\I{C-stack@$C^\iy$-stack!separated} then $s\times t:V\ra
U\t U$ is (universally) closed, and we can take $U$ Hausdorff. But
the quotient of a Hausdorff topological space by a closed
equivalence relation is Hausdorff, yielding:

\begin{lem} Let\/ $\cX$ be a separated\/ $C^\iy$-stack. Then the
underlying topological space $\cX_\top$ is Hausdorff.\I{topological
space!Hausdorff}
\label{ag6lem}
\end{lem}

Next we discuss {\it isotropy groups\/} of $C^\iy$-stacks.

\begin{dfn} Let $\cX$ be a $C^\iy$-stack, and $[x]\in\cX_\top$.
Pick a representative $x$ for $[x]$, so that $x:\bar{\ul{*}}\ra\cX$
is a 1-morphism. Then there exists a $C^\iy$-scheme $\uG=(G,\O_G)$,
unique up to isomorphism, with
$\bar\uG=\bar{\ul{*}}\t_{x,\cX,x}\bar{\ul{*}}$. Applying the
construction of the groupoid\I{groupoid object}\I{category!groupoid
object in} in Definition \ref{agAdef18} with $\Pi:U\ra\cX$ replaced
by $x:\bar{\ul{*}}\ra\cX$, we give $\uG$ the structure of a
$C^\iy$-{\it group}. The underlying group $G$ is canonically isomorphic to the group of 2-morphisms~$\eta:x\Ra x$.

With $[x]$ fixed, this $C^\iy$-group\I{C-group@$C^\iy$-group}%
\I{C-scheme@$C^\iy$-scheme!C-group@$C^\iy$-group} $\uG$ is
independent of choices up to noncanonical isomorphism; roughly,
$\uG$ is canonical up to conjugation in $\uG$. We define the {\it
isotropy group\/}\I{C-stack@$C^\iy$-stack!isotropy group
$\Iso_\cX([x])$} (or {\it orbifold group}, or {\it stabilizer
group\/}) $\Iso_\cX([x])$ or $\Iso([x])$ of $[x]$ to be this
$C^\iy$-group $\uG$, regarded as a $C^\iy$-group up to noncanonical
isomorphism.\I{C-stack@$C^\iy$-stack!orbifold group|see{isotropy group}}\I{C-stack@$C^\iy$-stack!stabilizer group|see{isotropy group}}

If $\cX=[\uV\rra\uU]$ is associated to a groupoid $(\uU,\uV,\us,\ut,\uu,\ui,\um)$ then $x:\bar{\ul{*}}\ra\cX$ factors through $\bar w:\bar{\ul{*}}\ra\bar\uU$ up to 2-isomorphism for some point $w\in\uU$, and then $\uG$ is isomorphic to the $C^\iy$-subscheme $\uG'=\us^{-1}(w)\cap\ut^{-1}(w)$ in $\uV$, with identity $\uu\vert_w:\ul{*}\ra\uG'$, inverse $\ui\vert_{\uG'}:\uG'\ra\uG'$, and multiplication~$\um\vert_{\uG'\t\uG'}:\uG'\t\uG'\ra\uG'$.

If $f:\cX\ra\cY$ is a 1-morphism of $C^\iy$-stacks and
$[x]\in\cX_\top$ with $f_\top([x])=[y]\in\cY_\top$, for $y=f\ci x$,
then at the level of sets we define $f_*:\Iso_\cX([x])\ra\Iso_\cY([y])$ by
$f_*(\eta)=\id_f*\eta$. This is a group morphism, by compatibility of horizontal and vertical composition in 2-categories. We can extend $f_*$ naturally to a morphism $\uf{}_*:\Iso_\cX([x])\ra\Iso_\cY([y])$ of $C^\iy$-groups, such that 
\begin{equation*}
\bar\uf{}_*:\,\,\,\,\ov{\!\!\!\!\Iso_\cX([x])\!\!\!\!}\,\,\,\,=
\bar{\ul{*}}\t_{x,\cX,x}\bar{\ul{*}}\longra\bar{\ul{*}}\t_{f\ci x,\cY,f\ci x}\bar{\ul{*}}=\,\,\,\,\ov{\!\!\!\!\Iso_\cY([y])\!\!\!\!}\,\,\,\,
\end{equation*}
is induced from $f:\cX\ra\cY$ by the universal property of fibre products. Then $f_*,\uf_*$ are independent of the choice of $x\in[x]$ up to conjugation
in~$\Iso_\cY([y])$.\I{C-stack@$C^\iy$-stack!underlying topological
space $\cX_\top$|)}
\label{ag6def10}
\end{dfn}

\subsection{\texorpdfstring{Gluing $C^\iy$-stacks by equivalences}{Gluing C∞-stacks by equivalences}}
\label{ag65}
\I{C-stack@$C^\iy$-stack!gluing by equivalences|(}

Here are two propositions on gluing $C^\iy$-stacks by equivalences.
They are exercises in stack theory, with no special $C^\iy$ issues,
and also hold for other classes of stacks. See Rydh
\cite[Th.~C]{Rydh} for stronger results for algebraic stacks.

\begin{prop} Suppose $\cX,\cY$ are\/ $C^\iy$-stacks,
$\cU\subseteq\cX,$ $\cV\subseteq\cY$ are open $C^\iy$-substacks,
and\/ $f:\cU\ra\cV$ is an equivalence in $\CSta$. Then there exist
a\/ $C^\iy$-stack\/ $\cZ,$ open $C^\iy$-substacks $\hcX,\hat\cY$ in
$\cZ$ with\/ $\cZ=\hcX\cup\hat\cY,$ equivalences $g:\cX\ra\hcX$
and\/ $h:\cY\ra\hat\cY$ such that\/ $g\vert_\cU$ and\/ $h\vert_\cV$
are both equivalences with\/ $\hat \cX\cap\hat\cY,$ and a
$2$-morphism $\eta:g\vert_\cU\Ra h\ci f:\cU\ra\hcX\cap\hat\cY$ in
$\CSta$. Furthermore, $\cZ$ is independent of choices up to
equivalence.
\label{ag6prop7}
\end{prop}

\begin{prop} Suppose $\cX,\cY$ are $C^\iy$-stacks,
$\cU,\cV\subseteq\cX$ are open $C^\iy$-substacks with\/
$\cX=\cU\cup\cV,$ $f:\cU\ra\cY$ and\/ $g:\cV\ra\cY$ are
$1$-morphisms, and\/ $\eta:f\vert_{\cU\cap\cV}\Ra
g\vert_{\cU\cap\cV}$ is a $2$-morphism in $\CSta$. Then there exists
a $1$-morphism $h:\cX\ra\cY$ and\/ $2$-morphisms $\ze:h\vert_\cU\Ra
f,$ $\th:h\vert_\cV\Ra g$ such that\/ $\th\vert_{\cU\cap\cV}=
\eta\od\ze\vert_{\cU\cap\cV}:h\vert_{\cU\cap\cV}\Ra
g\vert_{\cU\cap\cV}$.\I{2-category!2-morphism!vertical composition}
This $h$ is unique up to $2$-isomorphism.

In general, $h$ is \begin{bfseries}not\end{bfseries} independent up
to $2$-isomorphism of the choice of\/~$\eta$.
\label{ag6prop8}
\end{prop}

Here is an example in which $h$ is not independent of $\eta$ up to
2-isomorphism in the last part of Proposition~\ref{ag6prop8}.

\begin{ex} Let $\cX$ be the $C^\iy$-stack associated to the circle
$X=\bigl\{(x,y)\in\R^2:x^2+y^2=1\bigr\}$, and $\cU,\cV\subseteq\cX$
the substacks associated to the open sets $U=\bigl\{(x,y)\in
X:x>-\ha\bigr\}$ and $V=\bigl\{(x,y)\in X:x<\ha\bigr\}$. Let $\cY$
be the quotient $C^\iy$-stack\I{C-stack@$C^\iy$-stack!quotients
$[\protect\uX/G]$}\I{quotient C-stack@quotient $C^\iy$-stack}
$[\ul{*}/\Z_2]$. Then 1-morphisms $f:\cX\ra\cY$ correspond to
principal $\Z_2$-bundles $P_f\ra X$, and for 1-morphisms
$f,g:\cX\ra\cY$ with principal $\Z_2$-bundles $P_f,P_g\ra X$, a
2-morphism $\eta:f\Ra g$ corresponds to an isomorphism of principal
$\Z_2$-bundles $P_f\cong P_g$. The same holds for 1-morphisms
$\cU,\cV,\cU\cup\cV\ra\cY$ and their 2-morphisms.

Let $f:\cU\ra\cY$ and $g:\cV\ra\cY$ be the 1-morphisms corresponding
to the trivial $\Z_2$-bundles $P_f=\Z_2\t U\ra U$, $P_g=\Z_2\t V\ra
V$. Then 2-morphisms $\eta:f\vert_{\cU\cap\cV}\Ra
g\vert_{\cU\cap\cV}$ correspond to automorphisms of the trivial
$\Z_2$-bundle $\Z_2\t(U\cap V)\ra U\cap V$, that is, to continuous
maps $U\cap V\ra\Z_2$. Note that $U\cap V$ has two connected
components $\bigl\{(x,y)\in X:-\ha<x<\ha$, $y>0\bigr\}$ and
$\bigl\{(x,y)\in X:-\ha<x<\ha$, $y<0\bigr\}$.

Define 2-morphisms $\eta_1,\eta_2:f\vert_{\cU\cap\cV}\Ra
g\vert_{\cU\cap\cV}$ such that $\eta_1$ corresponds to the map
$1:(U\cap V)\ra\Z_2=\{\pm 1\}$, and $\eta_1$ corresponds to the map
${\mathop{\rm sign}}(y):(U\cap V)\ra\Z_2=\{\pm 1\}$. Then
Proposition \ref{ag6prop8} gives 1-morphisms $h_1,h_2:\cX\ra\cY$
from $\eta_1,\eta_2$. The associated principal $\Z_2$-bundles
$P_{h_1},P_{h_2}$ over $X$ come from gluing $P_f,P_g$ over $U,V$
using the transition functions $1,{\mathop{\rm sign}}(y)$. Therefore
$P_{h_1}$ is the trivial $\Z_2$-bundle over $X={\cal S}^1$, and
$P_{h_2}$ the nontrivial $\Z_2$-bundle. Hence $P_{h_1},P_{h_2}$ are
not isomorphic as principal $\Z_2$-bundles, and $h_1,h_2$ are not
2-isomorphic. Hence in this example, $h$ is not independent up to
2-isomorphism of the choice
of~$\eta$.\I{C-stack@$C^\iy$-stack!gluing by equivalences|)}
\label{ag6ex2}
\end{ex}

\section{\texorpdfstring{Deligne--Mumford $C^\iy$-stacks}{Deligne--Mumford C∞-stacks}}
\label{ag7}
\I{C-stack@$C^\iy$-stack!Deligne--Mumford|see{Deligne-- \\ Mumford
$C^\iy$-stack}}%
\I{Deligne--Mumford $C^\iy$-stack|(}

We now introduce {\it Deligne--Mumford\/ $C^\iy$-stacks}, which are
$C^\iy$-stacks locally modelled on quotients $[\uU/G]$ for $\uU$ an
affine $C^\iy$-scheme and $G$ a finite group. As we explain in
\S\ref{ag76}, orbifolds\I{orbifold} may be defined as a
2-subcategory of Deligne--Mumford $C^\iy$-stacks.

\subsection{\texorpdfstring{Quotient $C^\iy$-stacks, 1-morphisms, and 2-morphisms}{Quotient C∞-stacks, 1-morphisms, and 2-morphisms}}
\label{ag71}
\I{C-stack@$C^\iy$-stack!quotients $[\protect\uX/G]$|(}\I{quotient
C-stack@quotient $C^\iy$-stack|(}

When a $C^\iy$-group\I{C-group@$C^\iy$-group}%
\I{C-scheme@$C^\iy$-scheme!C-group@$C^\iy$-group} $\uG$ acts on a
$C^\iy$-scheme $\uX$, Definition \ref{ag6def3} gives the quotient
$C^\iy$-stack $[\uX/\uG]$. It is a stack associated to a groupoid\I{stack!associated to a groupoid}\I{C-stack@$C^\iy$-stack!associated to a groupoid} $[\uG\t\uX\rra\uX]$ from Definition \ref{agAdef19}, which is the stackification\I{prestack!stackification} of a certain prestack. By
Proposition \ref{agAprop1}, stackifications always exist, and are
unique up to equivalence. Thus, Definition \ref{ag6def3} actually
only specifies $[\uX/\uG]$ up to equivalence in~$\CSta$.

When a finite group $G$ acts on a $C^\iy$-scheme $\uX$, we will now
define an explicit $C^\iy$-stack $[\uX/G]$, which is in the
equivalence class of $[\uX/\uG]$ in Definition \ref{ag6def3} for
$\uG=F_\Man^\CSch(G)$. These quotient $C^\iy$-stacks $[\uX/G]$ (for
$\uX$ Hausdorff\I{C-scheme@$C^\iy$-scheme!Hausdorff}) will be our
local models for defining Deligne--Mumford $C^\iy$-stacks
in~\S\ref{ag72}.

We will also define {\it quotient\/ $1$-morphisms\/}
$[\uf,\rho]:[\uX/G]\ra[\uY/H]$ of quotient $C^\iy$-stacks
$[\uX/G],[\uY/H]$ when $\rho:G\ra H$ is a group morphism and
$\uf:\uX\ra\uY$ a $\rho$-equivariant $C^\iy$-morphism, and {\it
quotient\/ $2$-morphisms\/} $[\de]:[\uf,\rho]\Ra [\ug,\si]$ for quotient 1-morphisms
$[\uf,\rho],[\ug,\si]:[\uX/G]\ra[\uY/H]$, when $\de\in H$ with
$\si(\ga)=\de\,\rho(\ga)\,\de^{-1}$ for all $\ga\in G$, and
$\ug=\de\cdot\uf$. We will see in \S\ref{ag74} that all 1- and
2-morphisms of Deligne--Mumford $C^\iy$-stacks are locally modelled
on quotient 1- and 2-morphisms.

\begin{dfn} Let $\uX$ be a $C^\iy$-scheme, $G$ a finite group, and
$\ur:G\ra\Aut(\uX)$ an action of $G$ on $\uX$ by isomorphisms. We
will define the {\it quotient $C^\iy$-stack\/}
\I{C-stack@$C^\iy$-stack!quotients
$[\protect\uX/G]$!definition}\I{quotient C-stack@quotient
$C^\iy$-stack!definition} $\cX=[\uX/G]$,\G[XG]{$[\uX/G]$}{quotient
$C^\iy$-stack} generalizing the description of $[\ul{*}/\uG]$ in Example \ref{ag6ex2}. It is a well known construction, as in Behrend et al.\ \cite[Ex.~2.6]{BEFF} and Noohi~\cite[Ex.~12.4]{Nooh}.

Define a category $\cX$ to have objects triples $(\uS,\uP,\ulp)$ where $\uS$ is a $C^\iy$-scheme, and $\uP$ is a principal $G$-bundle over $\uS$ in the sense of Example \ref{ag6ex1} (or $(\uP,\upi,\umu)$ rather than $\uP$, but we leave $\upi:\uP\ra\uS$ and the $G$-action $\umu$ implicit), and $\ulp:\uP\ra\uX$ is a $G$-equivariant morphism. Define morphisms $(\um,\uu):(\uS,\uP,\ulp)\ra(\uT,\uQ,\uq)$ in $\cX$ to be $C^\iy$-scheme morphisms $\um:\uS\ra\uT$ and $\uu:\uP\ra\uQ$, such that $\uu$ is $\uG$-equivariant, and \eq{ag6eq2} is a Cartesian square in $\CSch$, and $\ulp=\uq\ci\uu:\uP\ra\uX$. Composition of morphisms is $(\un,\uv)\ci(\um,\uu)=(\un\ci\um,\uv\ci\uu)$, and identity morphisms are $\id_{(\uS,\uP,\ulp)}=(\uid_\uS,\uid_\uP)$. The functor $p_\cX:\cX\ra\CSch$ maps $p_\cX:(\uS,\uP,\ulp)\mapsto\uS$ on objects and $p_\cX:(\um,\uu)\mapsto\um$ on morphisms.

Then $\cX$ is a $C^\iy$-stack, which we write as $[\uX/G]$. It is equivalent in $\CSta$ to the quotient $C^\iy$-stack $[\uX/\uG]$ in Definition \ref{ag6def3} for $\uG=F_\Man^\CSch(G)$. To see this, note that by Definition \ref{agAdef19} $[\uX/\uG]$ is the stackification of a prestack $p_{\cX'}:\cX'\ra\CSch$, where $\cX'$ may be written as the category whose objects are pairs $(\uS,\ulp')$ of a $C^\iy$-scheme $\uS$ and a morphism $\ulp':\uS\ra\uX$, and whose morphisms $(\um,\uu'):(\uS,\ulp')\ra(\uT,\uq')$ consist of morphisms $\um:\uS\ra\uT$  and $\uu':\uS\ra\uG$ with $\ulp'=\uq'\ci\um$, with composition $(\un,\uv')\ci(\um,\uu')=(\un\ci\um,\uu'\cdot (\uv'\ci\um))$, and $p_{\cX'}$ maps $p_{\cX'}:(\uS,\ulp')\mapsto\uS$ and~$p_{\cX'}:(\um,\uu')\mapsto\uu'$.

We may identify $\cX'$ with the full subcategory of $\cX$ with objects $(\uS,G\t\uS,\ulp)$ in which $\uP$ is the trivial principal $G$-bundle $G\t\uS\ra\uS$, where $(\uS,\ulp')$ in $\cX'$ is identified with $(\uS,G\t\uS,\ulp)$ in $\cX$  for $\ulp'=\ulp\vert_{\uS\t\{1\}}:\uS\cong\uS\t\{1\}\ra\uX$, and $(\um,\uu'):(\uS,\ulp')\ra(\uT,\uq')$ in $\cX'$ is identified with $(\um,\uu):(\uS,G\t\uS,\ulp)\ra(\uT,G\t\uT,\uq)$ in $\cX$, where $\uu:G\t\uS\ra G\t\uT$ maps $(\ga,s)\mapsto (\ga\cdot\uu'(s),\um(s))$. Stackifying $\cX'$ enlarges from trivial principal $G$-bundles to all principal $G$-bundles.

Define a functor $\pi_{[\uX/G]}:\ul{\bar X\!}\,\ra[\uX/G]$ by
$\pi_{[\uX/G]}:(\uS,\ulp')\mapsto(\uS,G\t\uS,\ulp)$ on objects, where $\ulp:G\t\uS\ra\uX$ is the unique $G$-equivariant morphism with $\ulp'=\ulp\vert_{\uS\t\{1\}}:\uS\cong\uS\t\{1\}\ra\uX$, and $\pi_{[\uX/G]}:\um\mapsto(\um,\id_G\t\um)$ on morphisms. Then $\pi_{[\uX/G]}:\ul{\bar X\!}\,\ra[\uX/G]$ is a representable 1-morphism,\I{C-stack@$C^\iy$-stack!representable 1-morphism} and makes $\ul{\bar X\!}\,$ into a principal $G$-bundle over~$[\uX/G]$.
\label{ag7def1}
\end{dfn}

\begin{dfn} Let $\uX,\uY$ be $C^\iy$-schemes acted on by finite
groups $G,H$ with actions $\ur:G\ra\Aut(\uX)$, $\us:H\ra\Aut(\uY)$,
so that we have quotient $C^\iy$-stacks $\cX=[\uX/G]$ and
$\cY=[\uY/H]$ as in Definition \ref{ag7def1}. Suppose we have
morphisms $\uf:\uX\ra\uY$ of $C^\iy$-schemes and $\rho:G\ra H$ of
groups, with $\uf\ci \ur(\ga)=\us(\rho(\ga))\ci\uf$ for all $\ga\in
G$. We will define a {\it quotient\/
$1$-morphism\/}\I{C-stack@$C^\iy$-stack!quotients
$[\protect\uX/G]$!quotient 1-morphism|(}\I{quotient C-stack@quotient
$C^\iy$-stack!1-morphism|(}
$[\uf,\rho]:\cX\ra\cY$.\G[frho]{$[\uf,\rho]:[\uX/G]\ra[\uY/H]$}{quotient
1-morphism of quotient $C^\iy$-stacks} 

Define a functor $[\uf,\rho]:\cX\ra\cY$ by $[\uf,\rho]:(\uS,\uP,\ulp)\mapsto(\uS,\ti\uP,\ti\ulp)$ on objects $(\uS,\uP,\ulp)$ in $\cX$, where $\ti\uP=(H\t\uP)/_\rho G$ is the principal $H$-bundle on $\uS$ constructed from $\uP$ and $\rho:G\ra H$, and $\ti\ulp:\ti\uP\ra\uY$ is the $H$-equivariant $C^\iy$-scheme morphism induced from the $\rho$-equivariant morphism $\uf\ci\ulp:\uP\ra\uY$, which acts on points by $\ti\ulp:(h,p)G\mapsto h\cdot \uf\ci\ulp(p)$. Define $[\uf,\rho]:(\um,\uu)\mapsto(\um,\ti\uu)$ on morphisms $(\um,\uu):(\uS,\uP,\ulp)\ra(\uT,\uQ,\uq)$ in $\cX$, where $\ti\uu:(H\t\uP)/_\rho G\ra(H\t\uQ)/_\rho G$ is induced by $\id_H\t\uu:H\t\uP\ra H\t\uQ$. Then
$[\uf,\rho]:\cX\ra\cY$ is a functor, with $p_\cX=p_\cY\ci[\uf,\rho]$, so $[\uf,\rho]$ is a 1-morphism of $C^\iy$-stacks, which we write as~$[\uf,\rho]:[\uX/G]\ra[\uY/H]$.

It is easy to check that $[\uf,\rho]\ci\pi_{[\uX/G]}\cong
\pi_{[\uY/H]}\ci\ul{\bar f\!}\,$, and if $[\uf,\rho]:[\uX/G]
\ra[\uY/H]$, $[\ug,\si]:[\uY/H]\ra[\uZ/I]$ are quotient 1-morphisms
then there is a canonical 2-isomorphism $[\ug,\si]\ci[\uf,\rho]\cong[\ug\ci\uf,
\si\ci\rho]$ coming from the canonical $C^\iy$-scheme isomorphisms~$\bigl(I\t ((H\t\uP)/_\rho G)\bigr)/_\si H\cong (I\t\uP)/_{\si\ci\rho}G$.
\label{ag7def2}
\end{dfn}

\begin{dfn} Let $[\uf,\rho]:[\uX/G]\ra[\uY/H]$ and
$[\ug,\si]:[\uX/G]\ra[\uY/H]$ be quotient 1-morphisms, so that
$\uf,\ug:\uX\ra\uY$ and $\rho,\si:G\ra H$ are morphisms. Suppose
$\de\in H$ satisfies $\si(\ga)=\de\,\rho(\ga)\,\de^{-1}$ for all
$\ga\in G$, and $\ug=\us(\de)\ci\uf$. We will define a 2-morphism
$[\de]:[\uf,\rho]\Ra [\ug,\si]$,\G[de]{$[\de]:[\uf,\rho]\Ra
[\ug,\si]$}{quotient 2-morphism of quotient 1-morphisms} which we
call a {\it quotient\/
$2$-morphism}.\I{C-stack@$C^\iy$-stack!quotients
$[\protect\uX/G]$!quotient 2-morphism|(}\I{quotient C-stack@quotient
$C^\iy$-stack!2-morphism|(}

Here $[\de]$ must be a natural isomorphism of functors
$[\uf,\rho]\Ra[\ug,\si]$. Let $(\uS,\uP,\ulp)$ be an
object in $[\uX/G]$. Define an isomorphism in $[\uY/H]$:
\begin{align*}
&[\de]\bigl((\uS,\uP,\ulp)\bigr)=\bigl(\uid_\uS,(r_{\de^{-1}}\t\id_\uP)_*\bigr):
[\uf,\rho]\bigl((\uS,\uP,\ulp)\bigr)=\\
&\qquad(\uS,(H\t\uP)/_\rho G,\ti\ulp)\longra
[\ug,\si]\bigl((\uS,\uP,\ulp)\bigr)=
(\uS,(H\t\uP)/_\si G,\ti\ulp),
\end{align*}
where $r_{\de^{-1}}:H\ra H$ maps $\ep\mapsto\ep\de^{-1}$, and $r_{\de^{-1}}\t\id_\uP:H\t\uP\ra H\t\uP$ is equivariant under the actions of $G$ on $H\t\uP$ induced by $\rho$ on the domain and $\si$ on the target, so that it descends to an isomorphism $(r_{\de^{-1}}\t\id_\uP)_*:(H\t\uP)/_\rho G\ra (H\t\uP)/_\si G$. It is now easy to check that $[\de]\bigl((\uS,\uP,\ulp)\bigr)$ is an isomorphism in $[\uY/H]$, and $[\de]$ is a
natural isomorphism of functors, and a 2-morphism $[\de]:[\uf,\rho]\Ra [\ug,\si]$ in $\CSta$. Quotient 2-morphisms have functorial properties under horizontal and vertical composition. For instance, if $[\uf,\rho],[\ug,\si],[\uh,\tau]:[\uX/G]\ra[\uY/H]$ are quotient 1-morphisms and $[\de]:[\uf,\rho]\Ra [\ug,\si]$,
$[\ep]:[\ug,\si]\Ra [\uh,\tau]$ are quotient 2-morphisms
then~$[\ep]\od[\de]=[\ep\de]:[\uf,\rho]\Ra[\uh,\tau]$.%
\I{2-category!2-morphism!vertical composition}
\label{ag7def3}
\end{dfn}

\begin{rem} Studying $C^\iy$-stacks $[\uX/G]$ and their 1- and 2-morphisms is a good way to develop geometric intuition about Deligne--Mumford $C^\iy$-stacks (including
orbifolds)\I{orbifold} and their 1- and 2-morphisms. If $[\uX/G],[\uY/H]$ are quotient $C^\iy$-stacks, then general 1-morphisms $f:[\uX/G]\ra[\uY/H]$ in $\CSta$ need not
be quotient 1-morphisms $[\uf,\rho]$, or even 2-isomorphic to
$[\uf,\rho]$. But Theorem \ref{ag7thm6}(b) says that $f\cong[\uf,\rho]$ locally in $[\uX/G]$. If $[\uf,\rho],[\ug,\si]:[\uX/G]\ra [\uY/H]$ are quotient 1-morphisms, and $[\uX/G]$ is connected, then Proposition \ref{ag7prop5} says that all 2-morphisms $\eta:[\uf,\rho]\Ra[\ug,\si]$ are quotient 2-morphisms $[\de]:[\uf,\rho]\Ra [\ug,\si]$.%
\I{C-stack@$C^\iy$-stack!quotients $[\protect\uX/G]$!quotient
1-morphism|)}\I{quotient
C-stack@quotient $C^\iy$-stack!1-morphism|)}%
\I{C-stack@$C^\iy$-stack!quotients $[\protect\uX/G]$!quotient
2-morphism|)}\I{quotient C-stack@quotient
$C^\iy$-stack!2-morphism|)}

\label{ag7rem1}
\end{rem}

\subsection{\texorpdfstring{Deligne--Mumford $C^\iy$-stacks}{Deligne--Mumford C∞-stacks}}
\label{ag72}

{\it Deligne--Mumford stacks\/} in algebraic geometry were
introduced in \cite{DeMu} to study moduli spaces of algebraic
curves. As in \cite[Th.~6.2]{LaMo}, Deligne--Mumford stacks are
locally modelled (in the \'etale topology, at least, but with
isomorphisms of isotropy groups) on quotient $C^\iy$-stacks $[X/G]$
for $X$ an affine scheme and $G$ a finite group. This motivates:

\begin{dfn} A {\it Deligne--Mumford\/
$C^\iy$-stack\/}\I{Deligne--Mumford $C^\iy$-stack!definition} is a
$C^\iy$-stack $\cX$ which admits a (Zariski) open cover
$\{\cU_a:a\in A\}$, as in Definition \ref{ag6def7}, with each
$\cU_a$ equivalent to a quotient $C^\iy$-stack $[\uU_a/G_a]$ in
Definition \ref{ag7def1} for $\uU_a$ an affine $C^\iy$-scheme and
$G_a$ a finite group. We call $\cX$ {\it locally
fair},\I{Deligne--Mumford $C^\iy$-stack!locally fair} or {\it
locally finitely presented},\I{Deligne--Mumford
$C^\iy$-stack!locally finitely presented} or {\it
locally Lindel\"of},\I{Deligne--Mumford
$C^\iy$-stack!locally Lindel\"of} if it admits such an open
cover with each $\uU_a$ a fair,\I{C-scheme@$C^\iy$-scheme!locally
fair} or finitely presented,\I{C-scheme@$C^\iy$-scheme!locally
finitely presented} or Lindel\"of, affine $C^\iy$-scheme, respectively. We call $\cX$ {\it second countable}\I{Deligne--Mumford $C^\iy$-stack!second countable} if the underlying topological
space\I{C-stack@$C^\iy$-stack!underlying topological space
$\cX_\top$} $\cX_\top$ is second countable. 

Write $\DMCStalf,\DMCStalfp$\G[DMCStalf]{$\DMCStalf$}{2-category of
locally fair Deligne--Mumford
$C^\iy$-stacks}\G[DMCStalfp]{$\DMCStalfp$}{2-category of locally
finitely presented Deligne--Mumford $C^\iy$-stacks} and
$\DMCSta$\G[DMCSta]{$\DMCSta$}{2-category of Deligne--Mumford
$C^\iy$-stacks} for the full 2-sub\-cat\-ego\-ries of locally fair,
locally finitely presented, and all, Deligne--Mumford $C^\iy$-stacks
in $\CSta$, respectively.

The functor $F_\CSch^\CSta:\CSch\ra\CSta$ in Definition
\ref{ag6def2} maps into $\DMCSta\subset\CSta$, so the 2-categories
$\bCSchlf,\ab\bCSchlfp,\ab\bCSch$ are 2-subcategories of
$\DMCStalf,\DMCStalfp,\DMCSta$, respectively. If a $C^\iy$-stack
$\cX$ is a $C^\iy$-scheme,\I{C-stack@$C^\iy$-stack!is a
$C^\iy$-scheme} then it is a Deligne--Mumford $C^\iy$-stack.
\label{ag7def4}
\end{dfn}

\begin{prop} $\DMCStalf,\DMCStalfp,\DMCSta$ are closed under taking
open\/ $C^\iy$-substacks in\/~$\CSta$.%
\I{C-stack@$C^\iy$-stack!C-substack@$C^\iy$-substack!open}%
\I{C-substack@$C^\iy$-substack!open}%
\label{ag7prop1}
\end{prop}

\begin{proof} Let $\cX$ lie in one of these 2-categories, and
$\cX'$ be an open $C^\iy$-substack of $\cX$. Then $\cX$ admits an
open cover $\{\cU_a:a\in A\}$ with $\cU_a\simeq[\uU_a/G_a]$ with
$\uU_a$ affine and $G_a$ finite, and $\{\cU_a':a\in A\}$ is an open
cover of $\cX'$, where $\cU_a'=\cU_a\t_\cX\cX'$ is an open
$C^\iy$-substack of $\cU_a$. Thus $\cU_a'\simeq[\uU_a'/G_a]$ by
Proposition \ref{ag6prop5}, where $\uU_a'$ is a $G_a$-invariant open
$C^\iy$-subscheme of $\uU_a$. If the $\uU_a$ are fair, or finitely
presented then the $\uU_a'$ are too by Corollary \ref{ag4cor3}(a).
Thus $\DMCStalf,\DMCStalfp$ are closed under open subsets.

For $\DMCSta$, as open subsets of affine $C^\iy$-schemes need not be
affine, the $\uU_a'$ need not be affine. We will show that we can
cover $\uU_a'$ by $G_a$-invariant open affine $C^\iy$-subschemes
$\uU_{au}'$. Write $\uU_a'=(U_a',\O_{\uU_a'})$ and
$G_a=(G_a,\O_{G_a})$. Then the finite group $G_a$ acts continuously
on $U_a'$. Let $u\in U_a'$, and $H_u=\{\ga\in G_a:\ga u=u\}$ be the
stabilizer of $u$ in $G_a$. Then the orbit $\{\ga u:\ga\in G\}\cong
G_a/H_u$ of $u$ is a finite set, so as $U_a'$ is Hausdorff we can
choose affine open neighbourhoods $V_{\ga u}$ of $\ga u$ for each
point in the orbit such that $V_{\ga u}\cap V_{\ga'u}=\es$ if $\ga
u\ne\ga'u$. Define $W_u=\bigcap_{\ga\in G}\ga^{-1}V_{\ga u}$. Then
$W_u$ is an $H_u$-invariant open neighbourhood of $u$ in $U_a'$, and
if $\ga\in G_a\sm H_u$ then~$\ga W_u\cap W_u=\es$.

As in Corollary \ref{ag4cor2} we can choose an affine open neighbourhood
$W_u'$ of $u$ in $W_u$. Define $W_u''=\bigcap_{\ga\in H_u}W_u'$, an
$H_u$-invariant open neighbourhood of $u$ in $W_u$. This a finite
intersection of affine open $C^\iy$-subschemes $\uW_u'$ in the
affine $C^\iy$-scheme $\uV_u$, and so is affine, since intersection
is a kind of fibre product, and $\ACSch$ is closed under fibre
products by Theorem \ref{ag4thm3}(a). Define $U_{au}'=\bigcup_{\ga\in
G_a}W_u''$. Then $U_{au}'$ is a $G_a$-invariant open neighbourhood
of $u$ in $U_a'$. Since $W_u''$ is $H_u$-invariant and $\ga
W_u''\cap W_u''=\es$ if $\ga\in G_a\sm H_u$, we see that $U_{au}'$
is isomorphic to the disjoint union of $\md{G_a}/\md{H_u}$ copies of
$W_u''$. Hence $\uU_{au}'=(U_{au}',\O_{\uU_a'}\vert_{U_{au}'})$ is a
finite disjoint union of affine $C^\iy$-schemes, and is an affine
$C^\iy$-scheme. Therefore we may cover $\uU_a'$ by $G_a$-invariant
open affine $C^\iy$-subschemes $\uU_{au}'$. Using these we obtain an
open cover $\bigl\{\cU'_{au}:a\in A$, $u\in U_a\bigr\}$ of $\cX'$
with $\cU'_{au}\simeq[\uU_{au}'/G_a]$, so $\cX'$ is
Deligne--Mumford.
\end{proof}

The proof of Proposition \ref{ag7prop1} only uses $\uU_a=(U_a,
\O_{U_a})$ a $C^\iy$-scheme and $U_a$ Hausdorff, it does not need
$\uU_a$ to be affine. So the same proof yields:

\begin{prop} Any\/ $C^\iy$-stack of the form\/ $[\uX/G]$ in\/
{\rm\S\ref{ag71}} with\/ $\uX$ a Hausdorff\/ $C^\iy$-scheme and\/
$G$ finite is a separated\I{C-stack@$C^\iy$-stack!separated}
Deligne--Mumford\/ $C^\iy$-stack.
\label{ag7prop2}
\end{prop}

However, if $\uX$ is not Hausdorff then $[\uX/G]$ need not be
Deligne--Mumford:

\begin{ex} Let $\uX$ be the non-Hausdorff $C^\iy$-scheme
$(\ul{\R}\amalg\ul{\R})/\sim$, where $\sim$ is the equivalence
relation which identifies the two copies of $\ul{\R}$ on
$\,\ul{\!(0,\iy)\!}\,$. Let $G=\Z_2$ act on $\uX$ by exchanging the
two copies of $\ul{\R}$. Let $\cX$ be the quotient $C^\iy$-stack
$[\uX/G]$. We can think of $\cX$ as a like copy of $\R$, where the
stabilizer group of $x\in\R$ is $\{1\}$ if $x\in(-\iy,0]$ and $\Z_2$
if $x\in(0,\iy)$. Using the obvious
atlas\I{atlas}\I{C-stack@$C^\iy$-stack!atlas}
$\Pi:\bar{\ul{\R}}\ra\cX$, the third diagram of \eq{agAeq12} yields a
2-Cartesian square\I{2-category!2-Cartesian square}
\begin{equation*}
\xymatrix@C=50pt@R=14pt{ *+[r]{\bar{\ul{\R}}\amalg\,\,\overline{\!
\ul{\!(0,\iy)\!}\!}\,\,} \ar[rr] \ar[d]
\drrtwocell_{}\omit^{}\omit{^{}} && *+[l]{I_\cX} \ar[d]_{\io_\cX} \\
*+[r]{\bar{\ul{\R}}} \ar[rr]^(0.7){\Pi} && *+[l]{\cX.\!{}} }
\end{equation*}
As the left hand column is not proper, $\io_\cX$ is not proper, so
$\cX=[\uX/G]$ is not Deligne--Mumford by Corollary \ref{ag7cor1}
below.
\label{ag7ex1}
\end{ex}

We show that the 2-subcategory of quotient $C^\iy$-stacks $[\uX/G]$
in $\CSta$ is closed under fibre
products:\I{C-stack@$C^\iy$-stack!fibre products}

\begin{prop} Suppose $g:[\uX/F]\ra[\uZ/H],$ $h:[\uY/G]\ra[\uZ/H]$
are $1$-morphisms of quotient\/ $C^\iy$-stacks, where $\uX,\uY,\uZ$
are $C^\iy$-schemes and\/ $F,G,H$ are finite groups. Then we have a
$2$-Cartesian square
\e
\begin{gathered}
\xymatrix@C=100pt@R=14pt{ *+[r]{[\uW/(F\t G)]} \ar[r]_(0.6){f}
\ar[d]^{e} \drtwocell_{}\omit^{}\omit{^{}} & *+[l]{[\uY/G]} \ar[d]_h \\
*+[r]{[\uX/F]} \ar[r]^(0.4)g & *+[l]{[\uZ/H],\!\!} }
\end{gathered}
\label{ag7eq1}
\e
where $\Pi_\uX:\bar\uX\ra[\uX/F],$ $\Pi_\uY:\bar\uY
\ra[\uY/G]$ are the natural atlases\I{atlas}\I{C-stack@$C^\iy$-stack!atlas} and\/ $\bar\uW=\bar\uX\t_{g\ci\Pi_\uX,[\uZ/H],h\ci\Pi_\uY}\bar \uY$. If\/ $\uX,\uY,\uZ$ are Hausdorff, or locally fair, or locally finitely
presented,\I{C-scheme@$C^\iy$-scheme!locally finitely presented}
then $\uW$ is too.
\label{ag7prop3}
\end{prop}

\begin{proof} Write $\cW=[\uX/F]\t_{[\uZ/H]}[\uY/G]$.
Then from the atlases $\Pi_\uX,\Pi_\uY$, Example \ref{agAex}
constructs an atlas\I{atlas}\I{C-stack@$C^\iy$-stack!atlas}
$\Pi_\uW:\bar\uW\ra\cW$ for $\cW$. Since
$[\uX/F]\simeq[F\t\uX\rra\uX]$ and $[\uY/G]\simeq[G\t\uY\rra\uY]$ it
follows from \eq{agAeq14} that $\cW$ is equivalent to the stack associated to the groupoid $[(F\t G)\t\uW\rra\uW]$ for a natural action of $F\t G$ on $\uW$. This proves~\eq{ag7eq1}.

If $\uX,\uY,\uZ$ are Hausdorff then $[\uZ/H]$ is Deligne--Mumford by
Proposition \ref{ag7prop2}, so $\De_{[\uZ/H]}$ is separated by
Corollary \ref{ag7cor1} below, and thus $\uW$ is Hausdorff as
$\uX,\uY$ are and $\bar\uW\cong (\bar\uX\t\bar\uY)\t_{[\uZ/H]\t
[\uZ/H], \De_{[\uZ/H]}}[\uZ/H]$. Form the diagram
\begin{equation*}
\xymatrix@!0@R=19pt@C=70pt{ & \bar\uW' \ar[dl]_{\pi_\uW} \ar@<-1ex>[ddr]
\ar[rr] && \bar\uY' \ar[dl]^(0.3){\pi_\uY} \ar[ddr] \\ \bar\uW
\ar[ddr] \ar[rr] && \bar\uY \ar@<1ex>[ddr]^(0.4){h\ci\Pi_\uY} \\ &&
\bar\uX'
\ar[rr] \ar[dl]_(0.6){\pi_\uX} && \bar\uZ \ar[dl]^(0.4){\Pi_\uZ} \\
& \uX \ar[rr]^(0.7){g\ci\Pi_\uX} && [\uZ/H]}
\end{equation*}
with squares 2-Cartesian, where $\uW',\uX',\uY'$ are
$C^\iy$-schemes. Then $\pi_\uW,\pi_\uX,\pi_\uY$ are \'etale and
surjective, as $\Pi_\uZ$ is. If $\uX,\uY,\uZ$ are locally fair, then
$\uX',\uY'$ are locally fair as $\uX,\uY$ are and $\pi_\uX,\pi_\uY$
are \'etale, so $\uW'\cong\uX'\t_\uZ\uY'$ is locally fair by Theorem
\ref{ag4thm3}(b), and thus $\uW$ is locally fair as
$\pi_\uW:\uW'\ra\uW$ is \'etale and surjective. The proof for
locally finitely presented is the same.
\end{proof}

Using this we prove:

\begin{thm} $\DMCSta,\DMCStalf$ and\/ $\DMCStalfp$ are closed under fibre
products\I{C-stack@$C^\iy$-stack!fibre products} in\/~$\CSta$.
\label{ag7thm1}
\end{thm}

\begin{proof} Let $\cW=\cX\t_\cZ\cY$ be a fibre product in $\CSta$
of Deligne--Mumford $C^\iy$-stacks $\cX,\cY,\cZ$. We must show $\cW$
is Deligne--Mumford. Now $\cZ$ admits an open cover $\{\cZ_c:c\in
C\}$ with $\cZ_c\simeq[\uZ_c/H_c]$ for $\uZ_c$ an affine
$C^\iy$-scheme and $H_c$ finite. For $c\in C$ define
$\cX_c=\cX\t_\cZ\cZ_c$ and $\cY_c=\cY\t_\cZ\cZ_c$, which are open
$C^\iy$-substacks%
\I{C-stack@$C^\iy$-stack!C-substack@$C^\iy$-substack!open}%
\I{C-substack@$C^\iy$-substack!open} of $\cX,\cY$, and so are
Deligne--Mumford by Proposition \ref{ag7prop1}. Then
$\{\cX_c\t_{\cZ_c}\cY_c:c\in C\}$ is an open cover of $\cW$, so it
is enough to prove $\cX_c\t_{\cZ_c}\cY_c$ is Deligne--Mumford. That
is, we may replace $\cZ$ by $\cZ_c\simeq[\uZ_c/H_c]$.

Similarly, by choosing open covers of $\cX_c,\cY_c$ by substacks
equivalent to $[\uX/F],[\uY/G]$, we reduce the problem to showing
$[\uX/F]\t_{[\uZ/H]}[\uY/G]$ is Deligne--Mumford, for $\uX,\uY,\uZ$
affine $C^\iy$-schemes and $F,G,H$ finite groups. This follows from
Propositions \ref{ag7prop2} and \ref{ag7prop3}, noting that
$\uX,\uY,\uZ$ are Hausdorff as they are affine, so $\uW$ is
Hausdorff in Proposition \ref{ag7prop3}. This shows $\DMCSta$ is
closed under fibre products. For $\DMCStalf,\DMCStalfp$ we use the
same argument with $\uZ_c,\uZ,\uX,\uY,\uW$ locally fair, or locally
finitely presented.
\end{proof}

Under weak conditions Deligne--Mumford $C^\iy$-stacks have coarse moduli $C^\iy$-schemes,\I{C-stack@$C^\iy$-stack!coarse moduli $C^\iy$-scheme $\protect\ucX_\top$}\I{Deligne--Mumford $C^\iy$-stack!coarse moduli $C^\iy$-scheme $\protect\ucX_\top$} in the sense of~\S\ref{ag64}.

\begin{thm} Let\/ $\cX$ be a locally Lindel\"of Deligne--Mumford\/ $C^\iy$-stack.\I{Deligne--Mumford $C^\iy$-stack!locally Lindel\"of} Then the $C^\iy$-ringed space\I{C-stack@$C^\iy$-stack!underlying $C^\iy$-ringed space $\protect\ucX_\top$} $\ucX_\top$ in Definition\/ {\rm\ref{ag6def9}}
is a\/ $C^\iy$-scheme. If\/ $\cX$ is locally fair,\I{Deligne--Mumford
$C^\iy$-stack!locally fair} or locally finitely presented,\I{Deligne--Mumford $C^\iy$-stack!locally finitely presented} then $\ucX_\top$ is too.
\label{ag7thm2}
\end{thm}

\begin{proof} By definition $\cX$ can be covered by open $C^\iy$-substacks\I{C-stack@$C^\iy$-stack!C-substack@$C^\iy$-substack!open}\I{C-substack@$C^\iy$-substack!open} $\cU$ equivalent to $[\uY/G]$
for $\uY$ a Lindel\"of affine $C^\iy$-scheme. Then the $C^\iy$-ringed space $\ucU_\top$ is isomorphic to $\uY/G$ in Definition \ref{ag4def15}, so Proposition \ref{ag4prop5} shows that $\ucU_\top$ is an affine $C^\iy$-scheme. Hence $\ucX_\top$ can be covered by open affine $\ucU_\top\subseteq\ucX_\top$, so $\ucX_\top$ is a $C^\iy$-scheme. If $\cX$ is locally fair (or locally finitely presented) the same argument works with $\uY$ fair (or finitely presented), and then $\ucU_\top\cong\uY/G$ is fair (or finitely presented), and $\ucX_\top$ is locally fair (or locally finitely presented).
\end{proof}

\begin{rem}\I{partition of unity|(}\I{Deligne--Mumford
$C^\iy$-stack!partition of unity on|(} In \S\ref{ag47} we discussed
{\it partitions of unity\/} on $C^\iy$-schemes. We can use Theorem
\ref{ag7thm2} to extend these ideas to Deligne--Mumford
$C^\iy$-stacks.

Let $\cX$ be a Deligne--Mumford $C^\iy$-stack, and suppose $\cX_\top$ is regular and Lindel\"of. Then $\ucX_\top$ is a $C^\iy$-scheme by Theorem \ref{ag7thm2}, and the topology on $\ucX_\top$ is smoothly generated by Example \ref{ag4ex5}(c) as $\cX_\top$ is regular. Hence Theorem \ref{ag4thm5} shows that $\O_{\cX_\top}$ is fine. Suppose $\{\cU_a:a\in A\}$ is a (Zariski) open cover of $\cX$. Then $\bigl\{\ucU_{a,\top}:a\in A\bigr\}$ is an open cover of $\ucX_\top$, so there exists a partition of unity $\{\eta_a:a\in A\}$ on $\ucX_\top$ subordinate to $\bigl\{\ucU_{a,\top}:a\in A\bigr\}$. Therefore $\{\pi^*(\eta_a):a\in A\}$ is (in a suitable sense) a partition of unity on $\cX$ subordinate to $\{\cU_a:a\in A\}$, where $\pi:\cX\ra\bucX_\top$ is the structural morphism.\I{partition of unity|)}\I{Deligne--Mumford $C^\iy$-stack!partition of unity on|)}
\label{ag7rem2}
\end{rem}

\subsection{\texorpdfstring{Characterizing Deligne--Mumford $C^\iy$-stacks}{Characterizing Deligne--Mumford C∞-stacks}}
\label{ag73}

We now explore ways to characterize when a $C^\iy$-stack $\cX$ is
Deligne--Mumford.

\begin{prop} Let\/ $\cX$ be a quotient $C^\iy$-stack\/
$[\uU/G]$ for\/ $\uU$ affine and\/ $G$ finite. Then the natural
$1$-morphism $\Pi:\bar\uU\ra\cX$ is an \'etale
atlas,\I{atlas!etale@\'etale}\I{C-stack@$C^\iy$-stack!atlas!etale@\'etale}
and\/ $\De_\cX:\cX\ra\cX\t\cX,$ $\io_\cX:I_\cX\ra\cX$ are
universally closed,\I{C-stack@$C^\iy$-stack!universally closed
1-morphism} proper,\I{C-stack@$C^\iy$-stack!proper 1-morphism} and
separated,\I{C-stack@$C^\iy$-stack!separated 1-morphism} with finite
fibres,\I{C-stack@$C^\iy$-stack!1-morphism with finite fibres} and\/
$\jmath_\cX:\cX\ra I_\cX$ is an open\I{C-stack@$C^\iy$-stack!open
embedding} and closed embedding.\I{C-stack@$C^\iy$-stack!closed
embedding}
\label{ag7prop4}
\end{prop}

\begin{proof} As in \eq{agAeq12} we have
2-Cartesian\I{2-category!2-Cartesian square} diagrams with
surjective rows:
\begin{equation*}
\xymatrix@!0@C=35pt@R=10pt{
*+[r]{\bar G\t\bar\uU} \ar[rrrr]_(0.7){\bar\umu} \ar[ddd]^{\bar\upi_\uU}
&&&& *+[l]{\bar\uU} \ar[ddd]_\Pi &
*+[r]{\bar G\t\bar\uU} \ar[rrrr]_(0.7){\Pi\ci\bar\upi_\uU}
\ar[ddd]^{\bar\upi_\uU\t\bar\umu} &&&& *+[l]{\cX}
\ar[ddd]_{\De_\cX}
\\
& \drrtwocell_{}\omit^{}\omit{^{}} &&&&& \drrtwocell_{}\omit^{}\omit{^{}}
\\
&&&&&&&&&
\\
*+[r]{\bar\uU} \ar[rrrr]^(0.7)\Pi &&&& *+[l]{\cX,\!\!{}}
& *+[r]{\bar\uU\t\bar\uU}
\ar[rrrr]^(0.7){\Pi\t\Pi} &&&& *+[l]{\cX\t\cX,\!\!{}}
\\
\\
*+[r]{(\bar G\t\bar\uU)\t_{\bar\uU\t\bar\uU}\bar\uU}
\ar[rrrr]_(0.7){\bar\umu} \ar[ddd]^{\bar\upi_\uU}
&&&& *+[l]{I_\cX}
\ar[ddd]_{\io_\cX} & *+[r]{\bar U}
\ar[ddd]^{(1\t\uid_\uU)\t\uid_\uU} \ar[rrrr]_(0.7)\Pi
&&&& *+[l]{\cX} \ar[ddd]_{\jmath_\cX}
\\
& \drrtwocell_{}\omit^{}\omit{^{}} &&&&& \drrtwocell_{}\omit^{}\omit{^{}}
\\
&&&&&&&&&
\\
*+[r]{\bar U} \ar[rrrr]^(0.7)\Pi &&&& *+[l]{\cX,\!\!{}} &
*+[r]{(\bar G\t\bar\uU)\t_{\bar\uU\t
\bar\uU}\bar\uU} \ar[rrrr]^(0.7){\bar\umu} &&&& *+[l]{I_\cX.\!\!{}} }
\end{equation*}
The left column $\bar\pi_\uU$ in the first diagram is \'etale. The
left columns in the second and third diagrams are both universally
closed, proper, and separated, with finite fibres, since $G$ is
finite with the discrete topology, and $U$ is Hausdorff as $\uU$ is
affine. This left column in the fourth is an open and closed
embedding. The result now follows from Propositions \ref{ag6prop2} and~\ref{agAprop2}(c).
\end{proof}

Propositions \ref{ag6prop2}, \ref{ag6prop4} and \ref{ag7prop4} now
imply:

\begin{cor} Let\/ $\cX$ be a Deligne--Mumford\/ $C^\iy$-stack.
Then $\cX$ has an \'etale
atlas\I{atlas!etale@\'etale}\I{C-stack@$C^\iy$-stack!atlas!etale@\'etale}
$\Pi:\bar U\!\ra\!\cX,$ the diagonal\/
$\De_\cX:\cX\!\ra\!\cX\!\t\!\cX$ is
separated\I{C-stack@$C^\iy$-stack!separated 1-morphism} with finite
fibres,\I{C-stack@$C^\iy$-stack!1-morphism with finite fibres} and
the inertia morphism\/ $\io_\cX:I_\cX\!\ra\!\cX$ is universally
closed,\I{C-stack@$C^\iy$-stack!universally closed 1-morphism}
proper,\I{C-stack@$C^\iy$-stack!proper 1-morphism} and separated,
with finite fibres, and\/ $\jmath_\cX:\cX\ra I_\cX$ is an
open\I{C-stack@$C^\iy$-stack!open embedding} and
closed\I{C-stack@$C^\iy$-stack!closed embedding} embedding. If\/
$\cX$ is separated\I{C-stack@$C^\iy$-stack!separated} then $\De_\cX$
is also universally closed and proper.
\label{ag7cor1}
\end{cor}

The last part holds as then $\De_\cX$ is universally closed with
finite fibres, which implies $\De_\cX$ is proper. Note that for
$\cX$ not separated we cannot conclude from Proposition
\ref{ag7prop4} that $\De_\cX$ is universally closed or proper, since
these properties are not stable under open embedding. Some of the
conclusions of Corollary \ref{ag7cor1} are sufficient for $\cX$ to
be separated and Deligne--Mumford.

\begin{thm} Let\/ $\cX$ be a $C^\iy$-stack, and suppose\/ $\cX$ has
an \'etale atlas\I{atlas!etale@\'etale} $\Pi:\bar\uU\ra\cX,$ and the
diagonal\/ $\De_\cX:\cX\ra\cX\t\cX$ is universally
closed\I{C-stack@$C^\iy$-stack!universally closed 1-morphism} and
separated.\I{C-stack@$C^\iy$-stack!separated 1-morphism} Then\/
$\cX$ is a separated Deligne--Mumford\/ $C^\iy$-stack.
\label{ag7thm3}
\end{thm}

\begin{proof} Let $(\uU,\uV,\us,\ut,\uu,\ui,\um)$ be the
groupoid\I{groupoid object|(}\I{category!groupoid object in|(} in
$\CSch$ constructed from $\Pi:\bar\uU\ra\cX$ as in \S\ref{agA5}, so
that $\cX\simeq[\uV\rra\uU]$. Then \eq{agAeq12} gives 2-Cartesian
diagrams with surjective rows. From the first and Propositions
\ref{agAprop2}(a) and \ref{ag6prop2} we see that $\us,\ut$ are
\'etale, since $\Pi$ is. From the second $\us\t\ut:\uV\ra\uU\t\uU$
is universally closed and separated, as $\De_\cX$ is. Let $p\in U$.
Define
\begin{equation*}
H=\bigl\{q\in V:s(q)=t(q)=p\bigr\}\subseteq s^{-1}(\{p\}).
\end{equation*}
It has the discrete topology, as $\us,\ut$ are \'etale.

Suppose for a contradiction that $H$ is infinite. Define a
$C^\iy$-ring
\begin{equation*}
\fC=\bigl\{c:H\amalg\{\iy\}\ra\R:\text{$c(q)=c(\iy)$ for
all but finitely many $q\in H$}\bigr\},
\end{equation*}
with $C^\iy$ operations defined pointwise in $H\amalg\{\iy\}$. Then
$\Spec\fC$ has underlying topological space the one point
compactification $H\amalg\{\iy\}$ of the discrete topological space
$H$, since $\fC=C^0(H\amalg\{\iy\})$ is the set of continuous maps $H\amalg\{\iy\}\ra\R$, with the natural $C^\iy$-ring structure. Define $\ug:\Spec\fC\ra\uU\t\uU$ to project $\Spec\fC$ to the point $(p,p)$. Then the morphism
\e
\upi_{\Spec\fC}:\uV\t_{\us\t\ut,\uU\t\uU,\ug}\Spec\fC\longra\Spec\fC
\label{ag7eq2}
\e
is the projection $H\t(H\amalg\{\iy\})\ra H\amalg\{\iy\}$. The
diagonal in $H$ is closed in $H\t(H\amalg\{\iy\})$, but its image is
$H$, which is not closed in $H\amalg\{\iy\}$. Hence \eq{ag7eq2} is
not a closed morphism, contradicting $\us\t\ut$ universally closed.
So $H$ is finite.

As $(\uU,\uV,\us,\ut,\uu,\ui,\um)$ is a groupoid, $H$ is a {\it
finite group}, with identity $u(p)$, inverse map $i\vert_H$, and
multiplication $m_H=m\vert_{H\t H}$. Since $\us,\ut$ are \'etale, we
can choose small open neighbourhoods $Z_q$ of $q$ in $V$ for all
$q\in H$ such that $\us\vert_{Z_q},\ut\vert_{Z_q}$ are isomorphisms
with open subsets of $\uU$. As $\us\t\ut$ is separated,
$\bigl\{(v,v):v\in V\bigr\}$ is closed in $\bigl\{(v,v')\in V\t
V:s(v)=s(v')$, $t(v)=t(v')\bigr\}$, which has the subspace topology
from $V\t V$. If $q\ne q'\in H$ then $(q,q')$ lies in
$\bigl\{(v,v')\in V\t V:s(v)=s(v')$, $t(v)=t(v')\bigr\}$ but not in
$\bigl\{(v,v):v\in V\bigr\}$, so $(q,q')$ has an open neighbourhood
in $V\t V$ which does not intersect $\bigl\{(v,v):v\in V\bigr\}$.
Making $Z_q,Z_{q'}$ smaller if necessary, we can take this open
neighbourhood to be $Z_q\t Z_{q'}$, and then $Z_q\cap Z_{q'}=\es$.
Thus, we can choose these open neighbourhoods $Z_q$ for $q\in H$ to
be {\it disjoint}.

Define $Y=\bigcap_{q\in H}s(Z_q)$ and $\uY=(Y,\O_U\vert_Y)$. Then
$Y$ is a small open neighbourhood of $p$ in $U$. Making $Y$ smaller
if necessary we can suppose it is contained in an affine open
neighbourhood of $p$ in $U$, and so is Hausdorff. Replace $Z_q$ by
$Z_q\cap s^{-1}(Y)$ for all $q\in H$. Then
$s\vert_{Z_q}:(Z_q,\O_V\vert_{Z_q})\ra \uY$ is an isomorphism for
$q\in H$. Set $Z=\bigcup_{q\in H}Z_q$, noting the union is disjoint,
and $\uZ=(Z,\O_V\vert_Z)$. Then we have an isomorphism
$\uphi=(\phi,\phi^\sh):H\t\uY\ra\uZ$, such that
$\us\ci\uphi=\uid_\uY$ and $\phi(q\t Y)=Z_q$ for~$q\in H$.

Now $\uZ$ is open in $\uV$, so $\uZ\t_{\us,\uU,\ut}\uZ$ is open in
$\uV\t_{\us,\uU,\ut}\uV$, and we can restrict the morphism $\um:
\uV\t_{\us,\uU,\ut}\uV\ra\uV$ to $\um\vert_{\uZ\t_\uU\uZ}:\uZ
\t_{\us,\uU,\ut}\uZ\ra\uV$. But
\begin{align*}
\uZ\t_{\us,\uU,\ut}\uZ&\cong(H\t\uY)\t_{\ui_\uY\ci\upi_\uY,\uU,\ut}
\uZ\\ &\cong H\t (Z\cap t^{-1}(Y),\O_V\vert_{Z\cap t^{-1}(Y)})
\subseteq H\t\uZ \cong H\t H\t\uY,
\end{align*}
using $\uphi$ an isomorphism and $\us\ci\uphi=\uid_\uY$. Write
$\ul{\Phi}:\uZ\t_{\us,\uU,\ut}\uZ\hookra H\t H\t\uY$ for the induced
open embedding. Define a second morphism $\um':\uZ
\t_{\us,\uU,\ut}\uZ\ra\uV$ by $\um'=\uphi\ci(\um_H\t\id_{\uY})
\ci\ul{\Phi}$, where $\um_H:H\t H\ra H$ is the group multiplication
$m_H:H\t H\ra H$, regarded as a morphism of $C^\iy$-schemes.

Following the definitions we find that $\us\ci(\um\vert_{\uZ
\t_\uU\uZ})=\us\ci\um':\uZ\t_{\us,\uU,\ut}\uZ\ra\uY\subset\uU$. Also
$H\subset Z$, and the definition of $m_H$ from $\um$ implies that
$m\vert_{Z\t_UZ}$ and $m'$ coincide on the finite set $H\t_UH$ in
$Z\t_UZ$. Since $\us$ is \'etale, this implies that
$\um\vert_{\uZ\t_\uU\uZ}$ and $\um'$ must coincide near the finite
set $H\t_UH$ in $Z\t_UZ$. Therefore by making the open neighbourhood
$Y$ of $p$ in $U$ smaller, and hence making $W_q,W,Z$ smaller too,
we can assume that~$\um\vert_{\uZ\t_\uU\uZ}=\um'$.

Let us summarize what we have done so far. We have constructed a
finite group $H$, a Hausdorff open neighbourhood $\uY$ of $p$ in
$\uU$, an open and closed subset $\uZ$ of $\us^{-1}(\uY)$ in $\uZ$
which contains $s^{-1}(p)\cap t^{-1}(p)$, and an isomorphism
$\uphi:H\t\uY\ra\uZ$ with $\us\ci\uphi=\upi_\uY$ which identifies
the groupoid multiplication $\um\vert_{\uZ\t_\uU\uZ}$ with the
restriction to $\uZ\t_\uU\uZ$ of the morphism $\um_H\t\uid_\uY:H\t
H\t\uY\ra\uY$ from multiplication in the finite group~$H$.

Consider the morphism $\ut\ci\uphi:H\t\uY\ra\uU\supset\uY$. Roughly
speaking, $\ut\ci\uphi$ is an $H$-action on $\uY$. More accurately,
there should an $H$-action on some open subset of $\uU$ containing
$\uY$, but $\uY$ may not be $H$-invariant, so that $\ut\ci\uphi$
need not map $H\t\uY$ to $\uY$. Replace $Y$ by $Y'=\bigcap_{q\in
H}t(Z_q)$, which is an open subset of $Y$ since when $q$ is the
identity $u(p)$ in $H$ we have $t(Z_{u(p)})=s(Z_{u(p)})=Y$, and
$p\in Y'$ as $p=t(q)\in t(Z_q)$ for $q\in H$. Replace $Z_q$ by
$Z_q'=Z_q\cap s^{-1}(Y)$ and $Z$ by $Z'=\bigcup_{q\in H}Z_q'$. Then
using $\um\vert_{\uZ\t_\uU\uZ}=\um'$ we can show that
$s(Z_q')=t(Z_q')=Y'$ for all $q\in H$, so $Y'$ is an $H$-invariant
open set, and $\ut\ci\uphi$ maps $H\t\uY'\ra\uY'$. Restricting the
groupoid\I{groupoid object|)}\I{category!groupoid object in|)}
axioms shows that $\ut\ci\uphi$ gives an action of $H$ on~$\uY'$.

Now consider the morphism
\begin{equation*}
\us\t\ut\vert_{s^{-1}(Y')\cap t^{-1}(Y')}:\bigl(s^{-1}(Y')\cap
t^{-1}(Y'),\O_V\vert_{s^{-1}(Y')\cap
t^{-1}(Y')}\bigr)\longra\uY'\t\uY'.
\end{equation*}
This is closed, as $\us\t\ut$ is universally closed. Since $Z'$ is
open and closed in $s^{-1}(Y')\cap t^{-1}(Y')$, its complement is
closed, so its image $\bigl\{(s(v),t(v))\in Y'\t Y':v\in V\sm
Z'\bigr\}$ is closed in $Y'$. But $(p,p)$ does not lie in this
image, since $s^{-1}(p)\cap t^{-1}(p)\subseteq Z'$. Thus, by making
the $H$-invariant open neighbourhood $Y'$ of $p$ in $U$ smaller if
necessary, we can suppose that $s^{-1}(Y')\cap t^{-1}(Y')=Z'$.

The quotient $C^\iy$-stack $[\uY'/H]$ is Deligne--Mumford by
Proposition \ref{ag7prop2}, since $Y'$ is Hausdorff. Thus there
exists an open embedding $\cY_p\hookra[\uY'/H]$ with
$\cY_p\simeq[\uU_p/G_p]$ for $\uU_p$ affine and $G_p$ finite, which
includes $p$ in its image. The inclusion morphisms $\uY'\hookra\uU$,
$\uZ'\hookra\uV$ induce a 1-morphism $[\uZ'\rra\uY']\hookra
[\uV\rra\uU]$, which is an open embedding as $\uY'$ is open in
$\uU$, $\uZ'$ is open in $\uV$ and $s^{-1}(Y')\cap t^{-1}(Y')=Z'$ in
$V$. Let $i_{\cY_p}:\cY_p\ra\cX$ be the composition $\cY_p
\hookra[\uY'/H] \simeq[\uZ'\rra\uY']\hookra [\uV\rra\uU]\simeq\cX$.
Then $i_{\cY_p}$ is an open embedding, as it is a composition of
open embeddings and equivalences. This works for all $p\in U$, and
$\{\cY_p:p\in U\}$ is an open cover of $\cX$ with
$\cY_p\simeq[\uU_p/G_p]$ for $\uU_p$ affine and $G_p$ finite. Hence
$\cX$ is Deligne--Mumford. It is
separated\I{C-stack@$C^\iy$-stack!separated} as $\De_\cX$ is
universally closed, by assumption.
\end{proof}

Suppose $\uf:\uX\ra\uY$ is a separated morphism of $C^\iy$-schemes
with finite fibres. Then $\uf$ universally closed implies $\uf$
proper. Conversely, if $X,Y$ are compactly generated topological
spaces then $\uf$ proper implies $\uf$ universally closed. If
$\uX,\uY$ are locally fair\I{C-scheme@$C^\iy$-scheme!locally fair}
then $X,Y$ are compactly generated, as they are locally homeomorphic
to closed subsets of $\R^n$. Thus, in Theorem \ref{ag7thm3}, if
$\uU,\uV$ are locally fair then we can replace $\De_\cX$ universally
closed by $\De_\cX$ proper, yielding:

\begin{thm} Let\/ $\cX$ be a $C^\iy$-stack, and suppose\/ $\cX$ has
an \'etale
atlas\I{atlas!etale@\'etale}\I{C-stack@$C^\iy$-stack!atlas!etale@\'etale}
$\Pi:\bar\uU\ra\cX$ with\/ $\uU$ locally fair, and the diagonal\/
$\De_\cX:\cX\ra\cX\t\cX$ is proper\I{C-stack@$C^\iy$-stack!proper
1-morphism} and separated.\I{C-stack@$C^\iy$-stack!separated
1-morphism} Then\/ $\cX$ is a separated, locally
fair\I{Deligne--Mumford $C^\iy$-stack!locally fair}
Deligne--Mumford\/ $C^\iy$-stack.
\label{ag7thm4}
\end{thm}

The same holds with locally finitely presented\I{Deligne--Mumford
$C^\iy$-stack!locally finitely presented} in place of locally fair.
If $\cX\simeq[\uV\rra\uU]$ with $\uU$ a Hausdorff $C^\iy$-scheme
then $\uV$ is Hausdorff if and only if $\De_\cX$ is separated. We
can always choose $\uU$ Hausdorff, by replacing $\uU$ by the
disjoint union of an open cover of $\uU$ by affine open subsets.
Thus we can replace the condition that $\De_\cX$ is separated by
$\uU,\uV$ Hausdorff. Combining this and the results above proves:

\begin{thm}{\bf(a)} A\/ $C^\iy$-stack\/ $\cX$ is
separated\I{C-stack@$C^\iy$-stack!separated} and Deligne--Mumford if
and only if it is equivalent to the stack associated to a groupoid\/\I{C-stack@$C^\iy$-stack!associated to a groupoid} $[\uV\rra\uU]$ where $\uU,\uV$ are Hausdorff\/ $C^\iy$-schemes, $\us:\uV\ra\uU$ is \'etale, and\/ $\us\t\ut:\uV\ra\uU\t\uU$ is universally closed.
\smallskip

\noindent{\bf(b)} A\/ $C^\iy$-stack\/ $\cX$ is separated,
Deligne--Mumford and locally fair (or locally finitely presented) if
and only if it is equivalent to some\/ $[\uV\rra\uU]$ with\/
$\uU,\uV$ Hausdorff, locally fair (or locally finitely presented)\/
$C^\iy$-schemes, $\us:\uV\ra\uU$ \'etale, and\/
$\us\t\ut:\uV\ra\uU\t\uU$ proper.
\label{ag7thm5}
\end{thm}

\subsection[\texorpdfstring{Quotient $C^\iy$-stacks, 1- and 2-morphisms as
local models for \\ objects, 1- and 2-morphisms in
$\DMCSta$}{Quotient C∞-stacks, 1- and 2-morphisms as
local models for objects, 1- and 2-morphisms in DMCSta}]{Quotient $C^\iy$-stacks, 1- and 2-morphisms as local \\
models for objects, 1- and 2-morphisms in $\DMCSta$}
\label{ag74}
\I{C-stack@$C^\iy$-stack!quotients $[\protect\uX/G]$!quotient 1-morphism|(}%
\I{quotient C-stack@quotient $C^\iy$-stack!1-morphism|(}%
\I{C-stack@$C^\iy$-stack!quotients $[\protect\uX/G]$!quotient
2-morphism|(}\I{quotient
C-stack@quotient $C^\iy$-stack!2-morphism|(}%

In our next theorem, we prove that Deligne--Mumford $C^\iy$-stacks
and their 1- and 2-morphisms are (Zariski) locally modelled on
quotient $C^\iy$-stacks $[\uX/G]$, quotient 1-morphisms
$[\uf,\rho]:[\uX/G]\ra[\uY/H]$, and quotient 2-morphisms
$[\de]:[\uf,\rho]\Ra [\ug,\si]$ from~\S\ref{ag71}.

\begin{thm}{\bf(a)} Let\/ $\cX$ be a Deligne--Mumford\/
$C^\iy$-stack and\/ $[x]\in\cX_\top,$ and write\/
$G=\Iso_\cX([x])$.\I{C-stack@$C^\iy$-stack!isotropy group
$\Iso_\cX([x])$} Then there exists a quotient\/ $C^\iy$-stack\/
$[\uU/G]$ for\/ $\uU$ an affine $C^\iy$-scheme, and a $1$-morphism
$i:[\uU/G]\ra\cX$ which is an equivalence with an open
$C^\iy$-substack\/%
\I{C-stack@$C^\iy$-stack!C-substack@$C^\iy$-substack!open}%
\I{C-substack@$C^\iy$-substack!open} $\cU$ in $\cX,$ such that\/
$i_\top:[u]\mapsto[x]\in \cU_\top\subseteq\cX_\top$ for some fixed
point\/ $u$ of\/ $G$ in~$\uU$.

\smallskip

\noindent{\bf(b)} Let\/ $f:\cX\ra\cY$ be a $1$-morphism of
Deligne--Mumford\/ $C^\iy$-stacks, and\/ $[x]\in\cX_\top$ with\/
$f_\top:[x]\mapsto[y]\in\cY_\top,$ and write\/ $G=\Iso_\cX([x])$
and\/ $H=\Iso_\cY([y])$. Part\/ {\bf(a)} gives $1$-morphisms
$i:[\uU/G]\ra\cX,$ $j:[\uV/H]\ra\cY$ which are equivalences with
open $\cU\subseteq\cX,$ $\cV\subseteq\cY,$ such that\/
$i_\top:[u]\mapsto[x]\in \cU_\top\subseteq\cX_\top,$
$j_\top:[v]\mapsto[y]\in \cV_\top\subseteq\cY_\top$ for $u,v$ fixed
points of\/ $G,H$ in~$\uU,\uV$.

Then there exists a $G$-invariant open neighbourhood\/ $\uU'$ of\/
$u$ in $\uU$ and a quotient\/ $1$-morphism
$[\uf,\rho]:[\uU'/G]\ra[\uV/H]$ such that\/ $\uf(u)=v,$ and\/
$\rho:G\ra H$ is $f_*:\Iso_\cX([x])\ra\Iso_\cY([y]),$ fitting into a
$2$-commutative diagram:
\e
\begin{gathered}
\xymatrix@C=130pt@R=13pt{ *+[r]{[\uU'/G]} \ar[r]_(0.3){[\uf,\rho]}
\ar[d]^{i\vert_{[\uU'/G]}}
\drtwocell_{}\omit^{}\omit{^\ze} & *+[l]{[\uV/H]} \ar[d]_j \\
*+[r]{\cX} \ar[r]^(0.63){f} & *+[l]{\cY.\!} }
\end{gathered}
\label{ag7eq3}
\e

\noindent{\bf(c)} Let\/ $f,g:\cX\ra\cY$ be $1$-morphisms of
Deligne--Mumford\/ $C^\iy$-stacks and\/ $\eta:f\Ra g$ a
$2$-morphism, let\/ $[x]\in\cX_\top$ with\/
$f_\top:[x]\mapsto[y]\in\cY_\top,$ and write\/ $G=\Iso_\cX([x])$
and\/ $H=\Iso_\cY([y])$. Part\/ {\bf(a)} gives $i:[\uU/G]\ra\cX,$
$j:[\uV/H]\ra\cY$ which are equivalences with open
$\cU\subseteq\cX,$ $\cV\subseteq\cY$ and map $i_\top:[u]\mapsto[x],$
$j_\top:[v]\mapsto[y]$ for $u,v$ fixed points of\/~$G,H$.

By making $\uU'$ smaller, we can take the same $\uU'$ in {\bf(b)}
for both\/ $f,g$. Thus part\/ {\bf(b)} gives a $G$-invariant open
$\uU'\subseteq\uU,$ quotient morphisms
$[\uf,\rho]:[\uU'/G]\ra[\uV/H]$ and\/ $[\ug,\si]:[\uU'/G]\ra[\uV/H]$
with\/ $\uf(u)=\ug(u)=v$ and\/ $\rho=f_*:\Iso_\cX([x])\ra
\Iso_\cY([y]),$ $\si=g_*:\Iso_\cX([x])\ra\Iso_\cY([y]),$ and\/
$2$-morphisms $\ze:f\ci i\vert_{[\uU'/G]}\Ra j\ci[\uf,\rho],$
$\th:g\ci i\vert_{[\uU'/G]}\Ra j\ci[\ug,\si]$.

Then there exists a $G$-invariant open neighbourhood\/ $\uU''$ of\/
$u$ in $\uU'$ and\/ $\de\in H$ such that\/
$\si(\ga)=\de\,\rho(\ga)\ci\de^{-1}$ for all\/ $\ga\in G$ and\/
$\ug\vert_{\smash{\uU''}}=\us(\de)\ci\uf\vert_{\smash{\uU''}},$ so
that\/ $[\de]:[\uf\vert_{\smash{\uU''}},\rho]
\Ra[\ug\vert_{\smash{\uU''}},\si]$ is a quotient\/ $2$-morphism, and
the following diagram of\/ $2$-morphisms in $\CSta$ commutes:
\e
\begin{gathered}
\xymatrix@C=130pt@R=15pt{ *+[r]{f\ci i\vert_{[\uU''/G]}}
\ar@{=>}[r]_{\eta*\id_{i\vert_{[\uU''/G]}}}
\ar@{=>}[d]^{{}\,\,\ze\vert_{[\uU''/G]}} & *+[l]{g\ci
i\vert_{[\uU''/G]}}
\ar@{=>}[d]_{\th\vert_{[\uU''/G]}\,\,{}} \\
*+[r]{j\ci [\uf\vert_{\uU''},\rho]} \ar@{=>}[r]^{\id_j*[\de]} &
*+[l]{j\ci [\ug\vert_{\uU''},\si].\!{}} }
\end{gathered}
\label{ag7eq4}
\e

\label{ag7thm6}
\end{thm}

\begin{proof} In this proof we will use the theory of 2-categories
from \S\ref{agA1}, including
vertical\I{2-category!2-morphism!vertical composition} and
horizontal composition\I{2-category!2-morphism!horizontal
composition} of 2-morphisms `$*$', `$\od$', and the definition of
fibre products\I{C-stack@$C^\iy$-stack!fibre products|(} and
2-Cartesian squares\I{2-category!2-Cartesian square} in
Definition~\ref{agAdef3}.

For (a), as $\cX$ is Deligne--Mumford it is covered by open
$C^\iy$-substacks $\cV$ equivalent to $[\uV/H]$ for $\uV$ affine and
$H$ finite, so we can choose such $\cV$ with $[x]\in\cV_\top$. Then
$\cV$ has an \'etale atlas $\Pi:\bar\uV\ra\cV$ and $\De_\cV$ is
universally closed and separated by Proposition \ref{ag7prop4}, so
we can apply the proof of Theorem \ref{ag7thm3} to $\cV$ for a point
$p\in V$ with $\Pi_*(p)=[x]$. This constructs an open
$C^\iy$-substack $\cU$ in $\cV$ equivalent to $[\uU/G]$, where $\uU$
is affine and $G=\Iso_\cX([x])$, as we want.

For (b), write $\pi_{[\uU/G]}:\ul{\bar U\!}\,\ra[\uU/G]$ and
$\pi_{[\uV/H]}:\ul{\bar V\kern -.25em}\kern .2em\ra[\uV/H]$ for the
projection 1-morphisms in $\CSta$. They are proper and
representable. Let $\ur:G\ra\Aut(\uU)$ and $\us:H\ra\Aut(\uV)$ be
the $G$- and $H$-actions on $\uU,\uV$. Then $\ul{\bar r\kern
-0.15em}\kern 0.15em(\ga):\ul{\bar U\!}\,\ra \ul{\bar U\!}\,$ for
$\ga\in G$ and $\ul{\bar s\kern -0.15em}\kern 0.15em(\de):\ul{\bar
V\kern -.25em}\kern .2em\ra \ul{\bar V\kern -.25em}\kern .2em$ for
$\de\in H$ are the corresponding $C^\iy$-stack 1-morphisms, and
there are natural 2-morphisms $\la_\ga: \pi_{[\uU/G]}\ci\ul{\bar
r\kern -0.15em}\kern 0.15em(\ga) \Ra \pi_{[\uU/G]}$ and~$\mu_\de:
\pi_{[\uV/H]}\ci\ul{\bar s\kern -0.15em}\kern 0.15em(\de) \Ra
\pi_{[\uV/H]}$.

Consider the $C^\iy$-stack fibre product $\ul{\bar U\kern
-.25em}\kern .2em\t_{f\ci
i\ci\pi_{[\uU/G]},\cY,j\ci\pi_{[\uV/H]}}\ul{\bar V\kern -.25em}\kern
.2em$. As $\pi_{[\uV/H]}$ is representable and $j$ is an equivalence
with an open $C^\iy$-substack, $j\ci\pi_{[\uV/H]}$ is representable,
and $\ul{\bar U\kern -.25em}\kern .2em$ is a $C^\iy$-stack, so this
fibre product is a $C^\iy$-scheme. So changing the fibre product up
to equivalence, we can take $\ul{\bar U\kern -.25em}\kern
.2em\t_\cY\ul{\bar V\kern -.25em}\kern .2em=\ul{\bar W\!\!}\,\,$ for
some $C^\iy$-scheme $\uW$ unique up to isomorphism. The fibre
product projections are 1-morphisms $\ul{\bar W\!\!}\,\,\ra\ul{\bar
U\kern -.25em}\kern .2em$ and $\ul{\bar W\!\!}\,\,\ra\ul{\bar V\kern
-.25em}\kern .2em,$ so they are 2-isomorphic to $\ul{\bar
a},\ul{\bar b\kern -0.1em}\kern 0.1em$ for unique morphisms
$\ua:\uW\ra\uU$, $\ub:\uW\ra\uV$. Hence we have a 2-Cartesian square
in $\CSta$, for some 2-morphism~$\om$:
\e
\begin{gathered}
\xymatrix@C=140pt@R=14pt{ *+[r]{\ul{\bar W\!\!}\,\,}
\ar[r]_(0.3){\ul{\bar b\kern -0.1em}\kern 0.1em} \ar[d]^{\ul{\bar
a}} \drtwocell_{}\omit^{}\omit{^\om} &
*+[l]{\ul{\bar V\kern -.25em}\kern .2em}
\ar[d]_{j\ci\pi_{[\uV/H]}} \\
*+[r]{\ul{\bar U\kern -.25em}\kern .2em} \ar[r]^(0.25){f\ci
i\ci\pi_{[\uU/G]}} & *+[l]{\cY.\!} }
\end{gathered}
\label{ag7eq5}
\e

We will show that the data $\ur(\ga),\la_\ga$ for $\ga\in G$ induces
an action of $G$ on $\uW$. Let $\ga\in G$, and apply the universal
property of the 2-Cartesian square \eq{ag7eq5} in Definition
\ref{agAdef3} to the 1-morphisms $\ul{\bar r\kern -0.15em}\kern
0.15em(\ga)\ci \ul{\bar a}:\ul{\bar W\!\!}\,\,\ra\ul{\bar U\kern
-.25em}\kern .2em$, $\ul{\bar b\kern -0.1em}\kern 0.1em:\ul{\bar
W\!\!}\,\,\ra\ul{\bar V\kern -.25em}\kern .2em$ and 2-morphism
$\om\od(\id_{f\ci i}*\la_\ga*\id_{\smash{\ul{\bar a}}}):(f\ci
i\ci\pi_{[\uU/G]})\ci(\ul{\bar r\kern -0.15em}\kern
0.15em(\ga)\ci\ul{\bar a})\Ra(j\ci\pi_{[\uV/H]})\ci \ul{\bar b\kern
-0.1em}\kern 0.1em$. This gives a 1-morphism $c_\ga:\ul{\bar
W\!\!}\,\,\ra \ul{\bar W\!\!}\,\,$, unique up to 2-isomorphism, and
2-morphisms $\ze_\ga: \ul{\bar a}\ci c_\ga\Ra \ul{\bar r\kern
-0.15em}\kern 0.15em(\ga)\ci\ul{\bar a}$, $\th_\ga:\ul{\bar b\kern
-0.1em}\kern 0.1em\ci c_\ga\Ra\ul{\bar b\kern -0.1em}\kern 0.1em$
such that \eq{agAeq6} commutes.

Now $c_\ga$ is 2-isomorphic to $\bar\uc_\ga$ for some unique
$\uc_\ga:\uW\ra\uW$, so we may replace $c_\ga$ by $\bar\uc_\ga$.
Then $\ze_\ga: \ul{\bar a}\ci \bar\uc_\ga\Ra\ul{\bar r\kern
-0.15em}\kern 0.15em(\ga)\ci\ul{\bar a}$, so we must have
$\ua\ci\uc_\ga=\ur(\ga)\ci\ua$ and $\ze_\ga=\id_{\smash{\ul{\bar
r\kern -0.15em}\kern 0.15em(\ga)\ci\ul{\bar a}}}$. Similarly
$\ub\ci\uc_\ga=\ub$ and $\th_\ga=\id_{\smash{\ul{\bar b\kern
-0.1em}\kern 0.1em}}$. Therefore \eq{agAeq6} reduces to
$\om*\id_{\smash{\bar\uc_\ga}}= \om\od(\id_{f\ci
i}*\la_\ga*\id_{\smash{\ul{\bar a}}})$. Using
$\ur(\ga)\ur(\ga')=\ur(\ga\ga')$ and a natural compatibility between
$\la_\ga,\la_\ga',\la_{\ga\ga'}$ we find that
$\uc_\ga\ci\uc_{\ga'}=\uc_{\ga\ga'}$ for $\ga,\ga'\in G$, and as
$\ur(1)=\uid_\uU$ and $\la_1=\id_{\smash{\pi_{[\uU/G]}}}$ we have
$\uc_1=\uid_\uW$. Hence $\ga\mapsto\uc_\ga$ is an action of $G$ on
$\uW$, and $\ua\ci\uc_\ga=\ur(\ga)\ci\ua$ means that $\ua:\uW\ra\uU$
is $G$-equivariant.

In the same way, we obtain unique isomorphisms $\ud_\de:\uW\ra\uW$
for $\de\in H$ with $\ua\ci\ud_\de=\ua$,
$\ub\ci\ud_\de=\us(\de)\ci\ub$ and $\om*\id_{\smash{\bar\ud_\de}}=
(\id_{j}*\mu_\de*\id_{\smash{\ul{\bar b\kern -0.1em}\kern
0.1em}})\od\om$, and $\de\mapsto\ud_\de$ is an action of $H$ on
$\uW$, and $\ub:\uW\ra\uV$ is $H$-equivariant. Using associativity
of $\od$ in $(\id_{j}*\mu_\de*\id_{\smash{\ul{\bar b\kern
-0.1em}\kern 0.1em}})\od\om\od(\id_{f\ci
i}*\la_\ga*\id_{\smash{\ul{\bar a}}})$, we see that $\uc_\ga$ and
$\ud_\de$ commute. Hence $(\ga,\de)\mapsto \uc_\ga\ci\ud_\de$ is an
action of $G\t H$ on~$\uW$.

Since $\pi_{[\uV/H]}:\ul{\bar V\kern -.25em}\kern .2em\ra[\uV/H]$ is
a principal $H$-bundle, and $j:[\uV/H]\ra\cY$ is an equivalence with
$\cV\subseteq\cY$, and \eq{ag7eq5} is 2-Cartesian, it follows that
$\ua:\uW\ra\uU$ is a principal $H$-bundle over the open
$C^\iy$-subscheme $\ul{\ti U\!}\,$ of $\uU$ mapped to $\cV$ by $f\ci
i\ci\pi_{[\uU/G]}$, where the $H$-action for the principal
$H$-bundle is $\de\mapsto\ud_\de$. As $u\in\ul{\ti U\!}\,$, this
implies that we can choose a $G$-invariant open neighbourhood $\uU'$
of $u$ in $\ul{\ti U\!}\,\subseteq\uU$ with an isomorphism
$\uW'=\ua^{-1}(\uU')\cong \uU'\t H$, that identifies
$\ud_\de\vert_{\uW'}:\uW'\ra\uW'$ with the product of
$\uid_{\smash{\uU'}}$ on $\uU'$ and $\ep\mapsto\de\ep$ on $H$.

Then $\ga\mapsto\uc_\ga\vert_{\smash{\uW'}}$ is an action of $G$ on
$\uW'\cong \uU'\t H$, and the projection $\uU'\t H\ra\uU'$ is
$G$-equivariant. Since $u\in\uU'$ is a fixed point of $G$, this
implies that $\uc_\ga$ fixes the finite subset $\{(u,\de):\de\in
H\}$ in $\uU'\t H$. Define $\rho:G\ra H$ by
$\uc_\ga(u,1)=(u,\rho(\ga)^{-1})$ for $\ga\in G$. Since $\ud_\de$
acts by $(u,\ep)\mapsto(u,\de\ep)$ and $\uc_\ga,\ud_\de$ commute, it
follows that $\uc_\ga(u,\de)=(u,\de\rho(\ga)^{-1})$ for $\ga\in G$,
$\de\in H$. Hence
\begin{equation*}
(u,\rho(\ga\ga')^{-1})\!=\!\uc_{\ga\ga'}(u,1)\!=\!\uc_\ga\ci
\uc_{\ga'}(u,1)\!=\!\uc_\ga(u,\rho(\ga')^{-1})\!=\!
(u,\rho(\ga')^{-1}\rho(\ga)^{-1}),
\end{equation*}
so $\rho(\ga\ga')^{-1}=\rho(\ga')^{-1}\rho(\ga)^{-1}$, and
$\rho(\ga\ga')=\rho(\ga)\rho(\ga')$ for $\ga,\ga'\in G$. Thus
$\rho:G\ra H$ is a group morphism.

Using $\uW'\cong\uU'\t H$, $\ua\ci\uc_\ga=\ur(\ga)\ci\ua$, and
$\uc_\ga(u,\de)=(u,\de\rho(\ga)^{-1})$, we see that close to
$\{u\}\t H$, $\uc_\ga\vert_{\uW'}:\uU'\t H\ra \uU'\t H$ acts as
$\ur(\ga)$ on $\uU'$ and $\de\mapsto\de\rho(\ga)^{-1}$ on $H$.
Making $\uU'$ smaller if necessary, we can suppose this happens on
all of $\uU'$. Write $\uk:\uU'\hookra\uW$ for the inclusion of
$\uU'$ as an open $C^\iy$-subscheme in $\uW$ via the identifications
$\uU'\cong\uU'\t\{1\}\subseteq \uU'\t H\cong\uW'\subseteq\uW$, and
define $\uf=\ub\ci\uk:\uU'\ra\uV$.

Let $\ga\in G$. Since $\uc_\ga\vert_{\uW'}$ acts as $\ur(\ga)$ on
$\uU'$ and $\de\mapsto\de\rho(\ga)^{-1}$ on $H$, and
$\ud_{\rho(\ga)}$ acts as $\de\mapsto\rho(\ga)\de$ on $H$, we see
that $\ud_{\rho(\ga)}\ci\uc_\ga$ acts as $\ur(\ga)\t\id_1$ on
$\uU'\t\{1\}$. Hence $\uk\ci\ur(\ga)\vert_{\uU'}=\ud_{\rho(\ga)}\ci
\uc_\ga\ci\uk$. Composing with $\ub$ gives
\begin{align*}
\uf\ci\ur(\ga)\vert_{\uU'}&=\ub\ci\uk\ci\ur(\ga)\vert_{\uU'}=
\ub\ci\ud_{\rho(\ga)}\ci\uc_\ga\ci\uk\\
&=\us(\rho(\ga))\ci\ub
\ci\uc_\ga\ci\uk=\us(\rho(\ga))\ci\ub\ci\uk=\us(\rho(\ga))\ci\uf,
\end{align*}
using $\ub\ci\ud_\de=\us(\de)\ci\ub$ and $\ub\ci\uc_\ga=\ub$. We
have now constructed a $C^\iy$-scheme morphism $\uf:\uU'\ra\uV$ and
a group morphism $\rho:G\ra H$ with $\uf\ci\ur(\ga)\vert_{\uU'}=
\us(\rho(\ga))\ci\uf$ for all $\ga\in G$. Thus Definition
\ref{ag7def2} defines~$[\uf,\rho]:[\uU'/G]\ra[\uV/H]$.

Consider the diagram of 2-morphisms:
\e
\begin{gathered}
\xymatrix@C=55pt@R=10pt{*+[r]{f\ci i\vert_{[\uU'/G]}\ci
\pi_{[\uU'/G]}} \ar@{=>}[r]_(0.63)\nu \ar@{=}[d] &
j\ci[\uf,\rho]\ci\pi_{[\uU'/G]} \ar@{=}[r]
& *+[l]{j\ci \pi_{[\uV/H]}\ci\ul{\bar f\!}\,{}} \ar@{=}[d] \\
*+[r]{f\ci i\ci \pi_{[\uU/G]}\ci \bar\ua\ci\ul{\bar k\kern -0.1em}
\kern 0.1em} \ar@{=>}[rr]^{\om*\id_{\ul{\bar k\kern -0.1em}\kern
0.1em}} && *+[l]{j\ci \pi_{[\uV/H]}\ci \ul{\bar b\kern -0.1em}\kern
0.1em\ci\ul{\bar k\kern -0.1em}\kern 0.1em.}}
\end{gathered}
\label{ag7eq6}
\e
Here $\om$ is as in \eq{ag7eq5}, and we have used $\uf=\ub\ci\uk$,
so that $\ul{\bar f\!}\,=\ul{\bar b\kern -0.1em}\kern
0.1em\ci\ul{\bar k\kern -0.1em}\kern 0.1em$, and
$\pi_{[\uU'/G]}=\pi_{[\uU/G]}\ci \bar\ua\ci\ul{\bar k\kern
-0.1em}\kern 0.1em$ since $\ua\ci\uk$ is the inclusion
$\uU'\hookra\uU$, and $[\uf,\rho]\ci\pi_{[\uU'/G]}=
\pi_{[\uV/H]}\ci\ul{\bar f\!}\,$. Thus there is a unique 2-morphism
$\nu=\om*\id_{\ul{\bar k\kern -0.1em}\kern 0.1em}$ making
\eq{ag7eq6} commute.

Using $\om*\id_{\smash{\bar\uc_\ga}}= \om\od(\id_{f\ci
i}*\la_\ga*\id_{\smash{\ul{\bar a}}})$ for $\ga\in G$ we can show
that $\nu$ is $G$-invariant in a suitable sense, and so pushes down
from $\ul{\bar U\kern -.25em}\kern .25em'$ to $[\uU'/G]$. That is,
there exists a unique 2-morphism $\ze:f\ci i\vert_{[\uU'/G]}\Ra
j\ci[\uf,\rho]$ with $\nu=\ze*\id_{\smash{\pi_{[\uU'/G]}}}$. So
\eq{ag7eq3} 2-commutes, completing part~(b).

For (c), let $\uW,\ua,\ub,\om,\uc_\ga,\ud_\de,\uW',\uk,\uf,\rho$ be
the data constructed in (b) above for $f:\cX\ra\cY$, and let
$\smash{\ul{\hat W\!\!}\,\,,\hat\ua,\ul{\hat b\kern -0.1em}\kern
0.1em,\hat\om, \hat\uc_\ga,\hat\ud_\de,\ul{\hat W\!\!}\,\,',\ul{\hat
k\kern -0.1em}\kern 0.1em,\ug,\si}$ be the corresponding data
constructed in (b) for $g:\cX\ra\cY$. Then combining $\eta:f\Ra g$
with the analogue of \eq{ag7eq5} for $g$, we have a 2-morphism
\begin{equation*}
\smash{(\eta*\id_{i\ci\pi_{[\uU/G]}\ci\ul{\bar{\hat a}}})\od\hat\om:
(f\ci i\ci\pi_{[\uU/G]})\ci\ul{\bar{\hat a}}\Longra
(j\ci\pi_{[\uV/H]})\ci \ul{\bar{\hat b}\kern -0.1em}\kern 0.1em.}
\end{equation*}
Arguing as in the construction of $\uc_\ga$ above, by the
2-Cartesian property of \eq{ag7eq5}, there exists a 1-morphism
$e:\ul{\bar{\hat W}\!\!}\,\,\ra \ul{\bar W\!\!}\,\,$, unique up to
2-isomorphism, and 2-morphisms $\hat\ze: \ul{\bar a}\ci e\Ra
\ul{\bar{\hat a}}$, $\hat\th:\ul{\bar b\kern -0.1em}\kern 0.1em\ci
e\Ra \ul{\bar{\hat b}\kern -0.1em}\kern 0.1em$ satisfying
\eq{agAeq6}. Then $e\cong\bar\ue$ for a unique $\ue:\ul{\hat
W\!\!}\,\,\ra\uW$. Replacing $e$ by $\bar\ue$, we have
$\ua\ci\ue=\hat\ua$, $\ub\ci\ue=\ul{\hat b\kern -0.1em}\kern 0.1em$,
$\hat\ze=\id_{\smash{\hat\ua}}$ and $\hat\th=\id_{\smash{\ul{\hat
b\kern -0.1em}\kern 0.1em}}$, and \eq{agAeq6} reduces
to~$\om*\id_{\smash{\bar\ue}}=(\eta*\id_{i\ci\pi_{[\uU/G]}\ci
\ul{\bar{\hat a}}})\od\hat\om$.

By repeating this for $\eta^{-1}:g\Ra f$, we can easily show that
$\ue:\ul{\hat W\!\!}\,\,\ra\uW$ is an isomorphism, and identifies
$\ua,\ub,\om,\uc_\ga,\ud_\de,\uW'$ with $\smash{\ul{\hat
W\!\!}\,\,,\hat\ua,\ul{\hat b\kern -0.1em}\kern 0.1em,\hat\om,
\hat\uc_\ga,\hat\ud_\de,\ul{\hat W\!\!}\,\,'}$, respectively.
However, the isomorphisms $\uW'\cong\uU'\t H$ and $\ul{\hat
W\!\!}\,\,'\cong\uU'\t H$ involved arbitrary choices of local
trivializations of the principal $H$-bundles $\ua:\uW\ra\uU$ and
$\hat\ua:\ul{\hat W\!\!}\,\,\ra\uU$, so $\ue$ need not identify
these isomorphisms.

Abuse notation by identifying $\uW'=\uU'\t H$ and $\ul{\hat
W\!\!}\,\,'=\uU'\t H$. Since $\ua\ci\ue(u,1)=\hat\ua(u,1)=u$ we see
that $\ue'(u,1)=(u,\de)$ for some unique $\de\in H$. As $\ue$
identifies $\ud_\ep$ and $\hat\ud_\ep$ for $\ep\in H$ we have
\e
\ue(u,\ep)=\ue\ci\hat\ud_\ep(u,1)=\ud_\ep\ci\ue(u,1)=
\ud_\ep(u,\de)=(u,\ep\de).
\label{ag7eq7}
\e
Similarly, as $\ue$ identifies $\uc_\ep$ and $\hat\uc_\ga$ for
$\ga\in G$, and $\uc_\ga,\hat\uc_\ga$ act on $\{u\}\t H$ by right
multiplication by $\rho(\ga)^{-1},\si(\ga)^{-1}$ in $H$, we have
\e
\ue(u,\si(\ga)^{-1})=\ue\ci\hat\uc_\ga(u,1)=\uc_\ga\ci\ue(u,1)=
\uc_\ga(u,\de)=(u,\de\rho(\ga)^{-1}).
\label{ag7eq8}
\e
Comparing \eq{ag7eq8} and \eq{ag7eq7} with $\ep=\si(\ga)^{-1}$, we
see that $\si(\ga)^{-1}\de=\de\rho(\ga)^{-1}$, so
$\si(\ga)=\de\,\rho(\ga)\,\de^{-1}$ for all~$\ga\in\Ga$.

Since $\ua\ci\ue=\hat\ua$, regarding $\ue\vert_{\smash{\uW'}}$ as a
morphism $\uU'\t H\ra\uU'\t H$, we have $\upi_{\smash{\uU'}}\ci
\ue\vert_{\smash{\uW'}}=\upi_{\smash{\uU'}}$. So by \eq{ag7eq7},
$\ue\vert_{\smash{\uW'}}$ is near $\{u\}\t H$ the product of
$\id_{\uU'}$ on $\uU'$ and $\ep\mapsto\ep\de$ on $H$. Choose a
$G$-invariant open neighbourhood $\uU''$ of $u$ in $\uU'$ such that
$\ue\vert_{\uU''\t H}$ is the product of $\id_{\uU''}$ and
$\ep\mapsto\ep\de$. Then
\begin{equation*}
\ug\vert_{\uU''}=\ul{\hat b\kern -0.1em}\kern 0.1em\ci\ul{\hat
k\kern -0.1em}\kern 0.1em\vert_{\uU''}=\ub\ci\ue\ci\ul{\hat
k\kern -0.1em}\kern 0.1em\vert_{\uU''}=\ub\ci\ud_\de\ci\uk\vert_{\uU''}
=\us(\de)\ci\ub\ci\uk\vert_{\uU''}=\us(\de)\ci\uf\vert_{\uU''}.
\end{equation*}
Hence $\si(\ga)=\de\,\rho(\ga)\,\de^{-1}$ for all $\ga\in G$ and
$\ug\vert_{\uU''}=\us(\de)\ci\uf\vert_{\uU''}$. Thus by Definition
\ref{ag7def3} we have a quotient 2-morphism $[\de]:
[\uf\vert_{\smash{\uU''}},\rho]\Ra[\ug\vert_{\smash{\uU''}},\si]$.
An argument similar to the last part of (b) then shows that
\eq{ag7eq4} commutes.\I{C-stack@$C^\iy$-stack!fibre products|)}
\end{proof}

Using the method of Theorem \ref{ag7thm6}(c), we can also prove:

\begin{prop} Let\/ $[\uf,\rho],[\ug,\si]:[\uX/G]\ra[\uY/H]$ be
quotient\/ $1$-morphisms of quotient\/ $C^\iy$-stacks in the sense
of\/ {\rm\S\ref{ag71},} and suppose\/ $[\uX/G]$ is connected, that
is, $X/G$ is connected as a topological space. Then every
$2$-morphism $\eta:[\uf,\rho]\Ra[\ug,\si]$ in $\CSta$ is a quotient
$2$-morphism $[\de]:[\uf,\rho]\Ra[\ug,\si]$ from Definition\/
{\rm\ref{ag7def3},} for some unique\/~$\de\in H$.
\label{ag7prop5}
\end{prop}

\begin{proof} Let $\eta:[\uf,\rho]\Ra[\ug,\si]$ be a 2-morphism.
The proof of Theorem \ref{ag7thm6}(c) shows that for each
$[x]\in[\uX/G]_\top\cong X/G$, there exists a unique $\de_{[x]}\in
H$ and an open neighbourhood $[\uU_{[x]}/G]$ of $[x]$ in $[\uX/G]$,
where $\uU_{[x]}\subseteq\uX$ is $G$-invariant and open, such that
$\eta\vert_{[\uU_{[x]}/G]}=[\de_{[x]}]\vert_{[\uU_{[x]}/G]}:
[\uf,\rho]\vert_{[\uU_{[x]}/G]}\Ra [\ug,\si]\vert_{[\uU_{[x]}/G]}$.
The map $X/G\ra H$ taking $[x]\mapsto\de_{[x]}$ is locally constant,
as it is constant on each such open $[\uU_{[x]}/G]$, so it is
globally constant as $X/G$ is connected, and $\de_{[x]}=\de\in H$
for all $[x]\in X/G$. Thus, $[\uX/G]$ may be covered by open
$[\uU_{[x]}/G]\subseteq [\uX/G]$ with
$\eta\vert_{[\uU_{[x]}/G]}=[\de] \vert_{[\uU_{[x]}/G]}$. As
2-morphisms in $\CSta$ form a sheaf, this proves that~$\eta=[\de]$.
\end{proof}

If $\cX=\bar\uX$ for some $C^\iy$-scheme $\uX$ then
$\Iso_\cX([x])\cong\{1\}$ for all $[x]\in\cX_\top$. Conversely, a
Deligne--Mumford $C^\iy$-stack with trivial isotropy groups is a
$C^\iy$-scheme.\I{C-stack@$C^\iy$-stack!is a $C^\iy$-scheme} Note
that in conventional algebraic geometry, a Deligne--Mumford stack
with trivial stabilizers is an {\it algebraic space},\I{algebraic
space} but need not be a scheme.

\begin{thm} Suppose $\cX$ is a Deligne--Mumford\/ $C^\iy$-stack
with\/ $\Iso_\cX([x])\cong\{1\}$\I{C-stack@$C^\iy$-stack!isotropy
group $\Iso_\cX([x])$} for all\/ $[x]\in\cX_\top$. Then\/ $\cX$ is
equivalent to\/ $\bar\uX$ for some\/ $C^\iy$-scheme\/~$\uX$.
\label{ag7thm7}
\end{thm}

\begin{proof} As $\Iso_\cX([x])\cong\{1\}$ for all $[x]\in\cX_\top$, by
Theorem \ref{ag7thm6}(a) there is an open cover $\{\cX_a:a\in A\}$
of $\cX$ with $\cX_a\simeq[\uX_a/\{1\}]\simeq\bar\uX_a$ for affine
$C^\iy$-schemes $\uX_a$, $a\in A$. Write $i_a:\bar\uX_a\ra\cX$ for
the corresponding open embedding. As $\De_\cX$ is representable, for
$a,b\in A$ the fibre product $\bar\uX_a\t_{i_a,\cX,i_b}\bar X_b$ is
represented by a $C^\iy$-scheme $\uX_{ab}=\uX_{ba}$ with open
embeddings $\ui_{ab}:\uX_{ab}\ra\uX_a$, $\ui_{ba}:\uX_{ba}\ra\uX_b$
identifying $\uX_{ab}$ with open $C^\iy$-subschemes
of~$\uX_a,\uX_b$.

The idea now is that the $C^\iy$-stack $\cX$ is made by gluing the
$C^\iy$-schemes $\uX_a$ for $a\in A$ together on the overlaps
$\uX_{ab}$, that is, we identify $\uX_a\supset
\ui_{ab}(\uX_{ab})\cong\uX_{ab}=\uX_{ba}\cong
\ui_{ab}(\uX_{ba})\subset\uX_b$. This is similar to the notion of
descent for objects in \S\ref{agA3}, and it is easy to check that
the natural 1-isomorphisms
\begin{equation*}
\bar\uX_{ab}\t_\cX\bar\uX_c\cong\bar\uX_{bc}\t_\cX
\bar\uX_a\cong\bar\uX_{ca}\t_\cX\bar\uX_b\cong\bar\uX_a
\t_\cX\bar\uX_b\t_\cX\bar\uX_c
\end{equation*}
imply the obvious compatibility conditions of the gluing morphisms
$\ui_{ab}$ on triple overlaps, and that $\uX_{aa}\cong\uX_a$. So by
a minor modification of the proof in Proposition \ref{ag6prop1} that
$(\CSch,\cJ)$ has descent for objects, we construct a $C^\iy$-scheme
$\uX$ with open embeddings $\uj_a:\uX_a\hookra\uX$ such that
$\{\uX_a:a\in A\}$ is an open cover of $\uX$, and
$\uX_a\t_{\uj_a,\uX,\uj_b}\uX_b$ is identified with $\uX_{ab}$ for
$a,b\in A$. Then by descent for morphisms in $(\CSch,\cJ)$, there
exists a 1-morphism $i:\bar\uX\ra\cX$ with $i_a$ 2-isomorphic to
$i\ci\bar\uj_a$ for all $a\in A$. This $i$ is an equivalence, so
$\cX\simeq\bar\uX$, as we have to prove.
\end{proof}

In fact in Theorem \ref{ag7thm7} we can take $\uX=\ucX_\top$, for
$\ucX_\top$\I{C-stack@$C^\iy$-stack!underlying $C^\iy$-ringed space
$\protect\ucX_\top$} as in Definition \ref{ag6def9}. Recall from
Definition \ref{agAdef13} that a 1-morphism of $C^\iy$-stacks
$f:\cX\ra\cY$ is {\it representable\/} if whenever $\uU$ is a
$C^\iy$-scheme and $g:\bar\uU\ra\cY$ a 1-morphism then the fibre
product $\cW=\cX\t_{f,\cY,g}\bar\uU$ in $\CSta$ is equivalent to a
$C^\iy$-scheme~$\bar\uV$.

\begin{cor} Let\/ $f:\cX\ra\cY$ be a $1$-morphism of
Deligne--Mumford\/ $C^\iy$-stacks. Then $f$ is representable if and
only if\/ $f_*:\Iso_\cX([x])\ra\Iso_\cY([y])$ in Definition\/
{\rm\ref{ag6def10}} is injective for all\/ $[x]\in\cX_\top$ with\/
$f_\top([x])=[y]\in\cY_\top$.
\label{ag7cor2}
\end{cor}

\begin{proof} Suppose $f$ is representable, and let $[x]\in
\cX_\top$ with $f_\top([x])=[y]\in\cY_\top$. Then $y:\bar{\ul *}
\ra\cY$, and $\cX\t_{f,\cY,y}\bar{\ul *}\simeq[{\ul *}/H]$, where
$H=\Ker\bigl(f_*:\Iso_\cX([x])\ra \Iso_\cY([y])\bigr)$. As $f$ is
representable, $[{\ul *}/H]$ is equivalent to a $C^\iy$-scheme, so
$H=\{1\}$, and $f_*$ is injective. This proves the `only if' part.

Now suppose $f_*$ is injective for all $[x]\in\cX_\top$. Let $\uU$
be a $C^\iy$-scheme and $g:\bar\uU\ra\cY$ a 1-morphism, and define
$\cW=\cX\t_{f,\cY,g}\bar\uU$, with projections $d:\cW\ra\cX$ and
$e:\cW\ra\bar\uU$. Then $\cW$ is a Deligne--Mumford $C^\iy$-stack by
Theorem \ref{ag7thm1}, as $\cX,\cY,\bar\uU$ are. Let $[w]\in
\cW_\top$, and set $[x]=d_\top([w])$ in $\cX_\top$,
$[u]=e_\top([w])$ in $\bar\uU_\top$, and $[y]=f_\top([x])=
g_\top([u])$ in $\cY_\top$. Then by properties of fibre products of
$C^\iy$-stacks we have a Cartesian square of groups
\begin{equation*}
\xymatrix@C=90pt@R=13pt{ *+[r]{\Iso_\cW([w])} \ar[r]_{e_*} \ar[d]^{d_*} &
*+[l]{\Iso_{\bar\uU}([u])} \ar[d]_{g_*} \\
*+[r]{\Iso_\cX([x])} \ar[r]^{f_*} & *+[l]{\Iso_\cY([y]).\!} }
\end{equation*}
But $\Iso_{\bar\uU}([u])=\{1\}$ as $\bar\uU$ is a $C^\iy$-scheme,
and $f_*$ is injective by assumption, so $\Iso_\cW([w])=\{1\}$, for
all $[w]\in \cW_\top$. Thus Theorem \ref{ag7thm7} shows $\cW$ is a
$C^\iy$-scheme, and $f$ is representable, proving the `if' part.
\end{proof}

We show that $\cX$ being Deligne--Mumford is essential in
Theorem~\ref{ag7thm7}:

\begin{ex} Let the group $\Z^2$ act on $\R$ by $(a,b):x\mapsto
x+a+b\sqrt{2}$ for $a,b\in\Z$ and $x\in\R$. As $\sqrt{2}$ is
irrational, this is a free action. It defines a groupoid\I{groupoid
object}\I{category!groupoid object in} $\Z^2\t\R\rra\R$ in $\Man$
which is \'etale, but not proper. Applying $F_\Man^\CSch$ gives a
groupoid $\ul{\Z}^2\t\ul{\R}\rra \ul{\R}$ in $\CSch$, and an
associated $C^\iy$-stack
$\cX=[\ul{\R}/\ul{\Z}^2]=[\ul{\Z}^2\t\ul{\R}\rra \ul{\R}]$. The
underlying topological space $\cX_\top$ is~$\R/\Z^2$.

Since each orbit of $\Z^2$ in $\R$ is dense in $\R$, $\cX_\top$ has
the indiscrete topology, that is, the only open sets are $\es$ and
$\cX_\top$. Thus $\cX_\top$ is not homeomorphic to $X$ for any
$C^\iy$-scheme $\uX=(X,\O_X)$, as each point of $X$ has an affine
and hence Hausdorff open neighbourhood. Therefore $\cX$ is not
equivalent to $\bar\uX$ for any $C^\iy$-scheme $\uX$. So $\cX$ is
not Deligne--Mumford by Theorem \ref{ag7thm7}. Hence, $C^\iy$-stacks
with finite isotropy groups need not be
Deligne--Mumford.\I{C-stack@$C^\iy$-stack!quotients
$[\protect\uX/G]$|)}\I{quotient
C-stack@quotient $C^\iy$-stack|)}%
\I{C-stack@$C^\iy$-stack!quotients $[\protect\uX/G]$!quotient 1-morphism|)}%
\I{quotient C-stack@quotient $C^\iy$-stack!1-morphism|)}%
\I{C-stack@$C^\iy$-stack!quotients $[\protect\uX/G]$!quotient
2-morphism|)}\I{quotient C-stack@quotient
$C^\iy$-stack!2-morphism|)}
\label{ag7ex2}
\end{ex}

\subsection{\texorpdfstring{Effective Deligne--Mumford $C^\iy$-stacks}{Effective Deligne--Mumford C∞-stacks}}
\label{ag75}
\I{Deligne--Mumford $C^\iy$-stack!effective|(}

\begin{dfn} A Deligne--Mumford $C^\iy$-stack $\cX$ is called {\it
effective\/} if whenever $[x]\in\cX_\top$ and $\cX$ near $[x]$ is
locally modelled near $[x]$ on a quotient $C^\iy$-stack $[\uU/G]$
near $[u]$, where $G=\Iso_\cX([x])$ and $u\in\uU$ is fixed by $G$,
as in Theorem \ref{ag7thm6}(a), then $G$ acts effectively on $\uU$
near $u$. That is, for each $1\ne\ga\in G$, we have
$\ur(\ga)\not\equiv\uid_\uU$ near $u$ in $\uU$, where
$\ur:G\ra\Aut(\uU)$ is the $G$-action.

Here the $C^\iy$-scheme $\uU$ in Theorem \ref{ag7thm6}(a) is
determined by $\cX,[x]$ up to $G$-equivariant isomorphism locally
near $u$. Hence to test whether $\cX$ is effective, it is enough to
consider one choice of $[\uU/G]$ for each~$[x]\in\cX_\top$.

A quotient $C^\iy$-stack $[\uX/G]$ is effective if and only if the
action $\ur:G\ra\Aut(\uX)$ of $G$ on $\uX$ is {\it locally
effective},\I{locally effective group action} that is, if for each
$1\ne\ga\in G$ we have $\ur(\ga)\vert_\uU\not\equiv\uid_\uU$ for
every nonempty open $C^\iy$-subscheme $\uU\subseteq\uX$. If a
Deligne--Mumford $C^\iy$-stack $\cX$ is a
$C^\iy$-scheme,\I{C-stack@$C^\iy$-stack!is a $C^\iy$-scheme} it is
automatically effective. Quotients $[\ul{*}/G]$ for $G\ne\{1\}$ are
not effective.
\label{ag7def5}
\end{dfn}

Here is a uniqueness property of 2-morphisms of effective
Deligne--Mumford $C^\iy$-stacks. Embeddings and submersions of
$C^\iy$-stacks are defined in~\S\ref{ag62}.

\begin{prop} Let\/ $f,g:\cX\ra\cY$ be $1$-morphisms of
Deligne--Mumford\/ $C^\iy$-stacks. Suppose any one of the following
conditions hold:
\begin{itemize}
\setlength{\itemsep}{0pt}
\setlength{\parsep}{0pt}
\item[{\bf(i)}] $\cX$ is effective and\/ $f$ is an embedding of\/
$C^\iy$-stacks (this implies $f_*:\Iso_\cX([x])\ra
\Iso_\cY(f_\top([x]))$\I{C-stack@$C^\iy$-stack!isotropy group
$\Iso_\cX([x])$} is an isomorphism for each $[x]\in\cX_\top$);
\item[{\bf(ii)}] $\cY$ is effective and\/ $f$ is a submersion;
or
\item[{\bf(iii)}] $\cY$ is a
$C^\iy$-scheme.\I{C-stack@$C^\iy$-stack!is a $C^\iy$-scheme}
\end{itemize}
Then there exists at most one $2$-morphism\/ $\eta:f\Ra g$. That is, the groupoid of such\/ $1$-morphisms is equivalent to a set. 
\label{ag7prop6}
\end{prop}

\begin{proof} Suppose $\eta,\ti\eta:f\Ra g$ are 2-morphisms. Let
$[x]\in\cX_\top$ with $f_\top([x])=[y]\in\cY_\top$. Apply Theorem
\ref{ag7thm6}(c) to $\eta,\ti\eta$. This first applies (a) to
$\cX,\cY$ at $[x],[y]$, giving $i:[\uU/G]\,{\buildrel\sim
\over\longra} \,\cU\subseteq\cX$ identifying $u\in\uU$ with $[x]$
and $j:[\uV/H]\,{\buildrel\sim\over\longra}\,\cV\subseteq\cY$
identifying $v\in\uU$ with $[y]$, and then applies (b) to $f,g$
giving $u\in\uU'\subseteq\uU$ and 1-morphisms
$[\uf,\rho],[\ug,\si]:[\uU/G]\ra[\uV/H]$. Then (c) for $\eta$ and
$\hat\eta$ gives $G$-invariant open $u\in\uU'',\utU{}''
\subseteq\uU'$ and elements $\de,\ti\de\in H$ with 2-morphisms
$[\de]:[\uf\vert_{\smash{\uU''}},\rho]\Ra[\ug
\vert_{\smash{\uU''}},\si]$, $[\ti\de]:[\uf\vert_{\smash{\utU{}''}},
\rho]\Ra[\ug\vert_{\smash{\utU{}''}},\si]$ such that \eq{ag7eq4} and
its analogue for $\ti\eta,\ti\de,\utU{}''$ commutes. Making
$\uU'',\utU{}''$ smaller, we can
take~$\uU''=\utU{}''$.\I{C-stack@$C^\iy$-stack!quotients
$[\protect\uX/G]$!quotient 1-morphism}%
\I{quotient C-stack@quotient $C^\iy$-stack!1-morphism}%
\I{C-stack@$C^\iy$-stack!quotients $[\protect\uX/G]$!quotient
2-morphism}\I{quotient C-stack@quotient $C^\iy$-stack!2-morphism}

The 2-morphisms $[\de],[\ti\de]:[\uf\vert_{\smash{\uU''}},
\rho]\Ra[\ug \vert_{\smash{\uU''}},\si]$ imply that
\e
\smash{\us(\de)\ci\uf\vert_{\smash{\uU''}}=\ug\vert_{\smash{\uU''}}
=\us(\hat\de)\ci\uf\vert_{\smash{\uU''}}.}
\label{ag7eq9}
\e
We will show that \eq{ag7eq9} and each of conditions (i)--(iii)
force $\de=\hat\de$. In case (i), as $f$ is an embedding, $\rho:G\ra
H$ is an isomorphism, and $\uf:\uU\ra\uV$ is an embedding of
$C^\iy$-schemes. Hence $\de=\rho(\ga)$, $\hat\de=\rho(\hat\ga)$ for
$\ga,\hat\ga\in G$, and
\begin{equation*}
\uf\ci\ur(\ga)\vert_{\smash{\uU''}}=\us(\de)\ci\uf\vert_{\smash{\uU''}}
=\us(\hat\de)\ci\uf\vert_{\smash{\uU''}}=\uf\ci\ur(\hat\ga)
\vert_{\smash{\uU''}}
\end{equation*}
by \eq{ag7eq9}. As $\uf$ is an embedding this implies that
$\ur(\ga)\vert_{\smash{\uU''}}=\ur(\hat\ga) \vert_{\smash{\uU''}}$,
so $\ga=\hat\ga$ as $G$ acts effectively on $\uU$ near $u$ since
$\cX$ is effective, and thus $\de=\hat\de$.

In case (ii), as $f$ is a submersion, $\uf:\uU\ra\uV$ is surjective
near $\uf(u)=v\in V$. Hence \eq{ag7eq9} implies that
$\us(\de)\vert_{\smash{\uV''}}= \us(\hat\de)\vert_{\smash{\uV''}}$
for some open neighbourhood $\uV''$ of $v$ in $\uV$. But $H$ acts
effectively on $\uV$ near $v$ as $\cY$ is effective, so
$\de=\hat\de$. In case (iii) $H=\Iso_\cY([y])=\{1\}$ as $\cY$ is a
$C^\iy$-scheme, so $\de=\hat\de=1$. Therefore $\de=\hat\de$ in each
case. Equation \eq{ag7eq4} for $\eta,\hat\eta$ now implies that
$\eta*\id_{\smash{i\vert_{[\uU''/G]}}}=\hat\eta*\id_{i\vert_{[\uU''/G]}}$.

Let $\cU''\subseteq\cU\subseteq\cX$ be the open $C^\iy$-substack
identified with $[\uU''/G]$. Then $i\vert_{\smash{[\uU''/G]}}:
[\uU''/G]\ra\cU''$ is an equivalence, so $\eta*\id_{\smash{
i\vert_{[\uU''/G]}}}=\hat\eta*\id_{\smash{i\vert_{[\uU''/G]}}}$
implies that $\eta\vert_{\smash{\cU''}}=\hat\eta
\vert_{\smash{\cU''}}$. Thus, each $[x]\in\cX_\top$ has an open
neighbourhood $\cU''$ in $\cX$ with $\eta\vert_{\smash{\cU''}}=
\hat\eta\vert_{\smash{\cU''}}$. As 2-morphisms form a sheaf on
restriction to Zariski open $C^\iy$-substacks, this implies that
$\eta=\hat\eta$, so $\eta:f\Ra g$ is unique.
\end{proof}

Similar arguments show that if $f,g:\cX\ra\cY$ are arbitrary
1-morphisms of Deligne--Mumford $C^\iy$-stacks with $\cX$ connected,
then there are at most finitely many 2-morphisms~$\eta:f\Ra
g$.\I{Deligne--Mumford $C^\iy$-stack!effective|)}

\subsection{\texorpdfstring{Orbifolds as Deligne--Mumford $C^\iy$-stacks}{Orbifolds as Deligne--Mumford C∞-stacks}}
\label{ag76}
\I{orbifold|(}\I{Deligne--Mumford $C^\iy$-stack!orbifold|(}

Orbifolds are geometric spaces locally modelled on $\R^n/G$ for $G$
a finite group acting linearly on $\R^n$, just as manifolds are
geometric spaces locally modelled on $\R^n$. Much has been written
about orbifolds, and there are several definitions, as either categories or 2-categories. See Lerman \cite{Lerm} for a good overview.

There are three main definitions of ordinary categories of orbifolds:
\begin{itemize}
\setlength{\itemsep}{0pt}
\setlength{\parsep}{0pt}
\item[(a)] Satake \cite{Sata} and Thurston \cite{Thur} defined an orbifold $\cX$ to be a Hausdorff topological space $X$ with an atlas $\bigl\{(V_i,\Ga_i,\psi_i):i\in I\bigr\}$ of orbifold charts $(V_i,\Ga_i,\psi_i)$, where $V_i$ is a manifold, $\Ga_i$ a finite group acting on $V_i$, and $\psi_i:V_i/\Ga_i\ra X$ a homeomorphism with an open set in $X$. Smooth maps $f:\cX\ra\cY$ between orbifolds are continuous maps $f:X\ra Y$ which lift locally to smooth maps on the charts, giving a category~$\Orb_{\rm ST}$.
\item[(b)] Chen and Ruan \cite[\S 4]{ChRu2} defined orbifolds $\cX$ in a similar way to \cite{Sata,Thur}, but using germs of orbifold charts $(V_p,\Ga_p,\psi_p)$ for $p\in X$. Their morphisms $f:\cX\ra\cY$ are called {\it good maps}, giving a category $\Orb_{\rm CR}$. 
\item[(c)] Moerdijk and Pronk \cite{Moer,MoPr} defined a category of orbifolds $\Orb_{\rm MP}$ as {\it proper \'etale Lie groupoids\/} in $\Man$. Their smooth maps $f:\cX\ra\cY$, called {\it strong maps\/} \cite[\S 5]{MoPr}, are equivalence classes of diagrams $\smash{\cX\,{\buildrel\phi\over \longleftarrow}\,\cX'\,{\buildrel\psi\over\longra}\,\cY}$, where $\cX'$ is a third orbifold, and $\phi,\psi$ are morphisms of groupoids with $\phi$ an equivalence.
\end{itemize}
A book on orbifolds in the sense of \cite{ChRu2,Moer,MoPr} is Adem, Leida and Ruan~\cite{ALR}.

There are four main definitions of 2-categories of orbifolds: 
\begin{itemize}
\setlength{\itemsep}{0pt}
\setlength{\parsep}{0pt}
\item[(i)] Pronk \cite{Pron} defines a strict 2-category $\bf LieGpd$ of Lie groupoids in $\Man$ as in (c), with the obvious 1-morphisms of groupoids, and localizes by a class of weak equivalences $\cW$ to get a weak 2-category $\Orb_{\rm Pr}={\bf LieGpd}[\cW^{-1}]$.
\item[(ii)] Lerman \cite[\S 3.3]{Lerm} defines a weak 2-category $\Orb_{\rm Le}$ of Lie groupoids in $\Man$ as in (c), with a non-obvious notion of 1-morphism called `Hilsum--Skandalis morphisms' involving `bibundles', and does not need to localize.

Henriques and Metzler \cite{HeMe} also use Hilsum--Skandalis morphisms.
\item[(iii)] Behrend and Xu \cite[\S 2]{BeXu}, Lerman \cite[\S 4]{Lerm} and Metzler \cite[\S 3.5]{Metz} define a strict 2-category of orbifolds $\Orb_{\rm ManSta}$ as a class of Deligne--Mumford stacks on the site $(\Man,{\cal J}_\Man)$ of manifolds with Grothendieck topology ${\cal J}_\Man$ coming from open covers.
\item[(iv)] The author \cite[\S 4.5]{Joyc5} defines a weak 2-category of orbifolds $\Orb_{\rm Kur}$ as special examples of Kuranishi spaces.
\end{itemize}

As in Behrend and Xu \cite[\S 2.6]{BeXu}, Lerman \cite{Lerm}, Pronk \cite{Pron}, and the author \cite[Rem.~4.51(a)]{Joyc5}, approaches (i)--(iv) give equivalent weak 2-categories $\Orb_{\rm Pr},\ab\Orb_{\rm Le},\ab\Orb_{\rm ManSta},\ab\Orb_{\rm Kur}$. Properties of localization imply that $\Orb_{\rm MP}\simeq \Ho(\Orb_{\rm Pr})$. Thus, all of (c) and (i)--(iv) are equivalent at the level of weak 2-categories or homotopy categories. 

Here is yet another definition of a strict 2-category of orbifolds $\Orb_{C^\iy{\rm Sta}}$, which is similar to (iii), but defining orbifolds
as a class of $C^\iy$-stacks, that is, as stacks on the site
$(\CSch,\cJ)$ rather than on~$(\Man,\cJ_\Man)$.

\begin{dfn} A $C^\iy$-stack $\cX$ is called an {\it orbifold\/}
if it is equivalent to the $C^\iy$-stack\I{groupoid
object}\I{category!groupoid object in}\I{stack!associated to a
groupoid}\I{C-stack@$C^\iy$-stack!associated to a groupoid}
$[\uV\rra\uU]$ associated to a groupoid $(\uU,\uV,\us,\ut,\uu,\ui,\um)$ in
$\CSch$ which is the image under $F_\Man^\CSch$ of a groupoid
$(U,V,s,t,u,i,m)$ in $\Man$, where $s:V\ra U$ is an \'etale smooth
map, and $s\times t:V\ra U\t U$ is a proper smooth map. That is,
$\cX$ is the $C^\iy$-stack associated to a {\it proper \'etale Lie
groupoid\/} in $\Man$. Write $\Orb_{C^\iy{\rm Sta}}$ for the full 2-subcategory of
orbifolds in~$\CSta$.

As in \S\ref{ag44}, $\uU,\uV$ are finitely presented
affine\I{C-scheme@$C^\iy$-scheme!finitely presented affine}
$C^\iy$-schemes, and thus $\cX$ is a
separated,\I{C-stack@$C^\iy$-stack!separated} locally finitely
presented\I{Deligne--Mumford $C^\iy$-stack!locally finitely
presented} Deligne--Mumford $C^\iy$-stack by Theorem
\ref{ag7thm5}(b). Hence~$\Orb_{C^\iy{\rm Sta}}\subset\DMCStalfp$.
\label{ag7def6}
\end{dfn}

The next theorem follows from the proofs in \cite{BeXu,Lerm,Joyc5,Pron} that (i)--(iv) above are equivalent 2-categories (in particular, that orbifolds in (iii) as stacks on $(\Man,{\cal J}_\Man)$ associated to proper \'etale Lie groupoids are equivalent to (i),(ii),(iv)), and the fact that the inclusion $\Man\hookra\CSch$ is full and faithful, with open covers $\cJ$ in $\CSch$ restricting to open covers $\cJ_\Man$ in~$\Man$. 

\begin{thm} This $2$-category of orbifolds $\Orb_{C^\iy{\rm Sta}}$ is equivalent to the $2$-categories of orbifolds $\Orb_{\rm Pr},\ab\Orb_{\rm Le},\ab\Orb_{\rm ManSta},\ab\Orb_{\rm Kur}$ in {\rm\cite{BeXu,Lerm,Metz,Pron,Joyc5}} described in {\rm(i)--(iv)} above. Also the homotopy category\I{homotopy category} $\Ho(\Orb_{C^\iy{\rm Sta}})$\G[HoOrb]{$\Ho(\Orb)$}{homotopy category of the
2-category of orbifolds $\Orb$} is equivalent to the category of orbifolds $\Orb_{\rm MP}$ in {\rm\cite{Moer,MoPr}} described in {\rm(c)} above.\label{ag7thm8}
\end{thm}

By Corollary \ref{ag4cor1} $F_\Man^\CSch$ takes transverse fibre
products\I{manifold!transverse fibre product} in $\Man$ to fibre
products in $\CSch$. As fibre products of orbifolds are locally
modelled on fibre products of manifolds, and fibre products of
Deligne--Mumford $C^\iy$-stacks are locally modelled on fibre
products of $C^\iy$-schemes, we deduce:

\begin{cor} Transverse fibre products\I{orbifold!transverse fibre
product} in\/ $\Orb_{C^\iy{\rm Sta}}$ agree with the corresponding fibre
products\I{C-stack@$C^\iy$-stack!fibre products} in\/~$\CSta$.\I{orbifold|)}\I{Deligne--Mumford $C^\iy$-stack|)}\I{Deligne--Mumford $C^\iy$-stack!orbifold|)}\I{C-stack@$C^\iy$-stack|)}\label{ag7cor3}
\end{cor}

\section{\texorpdfstring{Sheaves on Deligne--Mumford $C^\iy$-stacks}{Sheaves on Deligne--Mumford C∞-stacks}}
\label{ag8}
\I{Deligne--Mumford $C^\iy$-stack!quasicoherent sheaves on|(}

Next we discuss quasicoherent sheaves on Deligne--Mumford $C^\iy$-stacks $\cX$, generalizing \S\ref{ag5} for $C^\iy$-schemes. Some references on
sheaves on orbifolds or stacks are Behrend and Xu \cite[\S
3.1]{BeXu}, Deligne and Mumford \cite[Def.~4.10]{DeMu}, Heinloth
\cite[\S 4]{Hein}, Laumon and Moret-Bailly \cite[\S 13]{LaMo}, and
Moerdijk and Pronk \cite[\S 2]{MoPr}. Our definitions are closest to \cite{Hein,MoPr}. Almost everything in this section is an exercise in stack theory, not special to $C^\iy$-stacks, and would also work for sheaves (with \'etale descent) on other kinds of Deligne--Mumford stacks.

\subsection{Quasicoherent sheaves}
\label{ag81}

We build our notions of sheaves on Deligne--Mumford $C^\iy$-stacks $\cX$
from those of sheaves on $C^\iy$-schemes $\uU$ in \S\ref{ag5}, by lifting
to \'etale covers $\bar\uU\ra\cX$. Since all $\O_U$-modules on a $C^\iy$-scheme $\uU$ are quasicoherent by Corollary \ref{ag5cor1}, we do not distinguish between $\O_\cX$-modules and quasicoherent sheaves on a Deligne--Mumford $C^\iy$-stack $\cX$, and we will just call them quasicoherent sheaves.

\begin{dfn} Let $\cX$ be a Deligne--Mumford $C^\iy$-stack. Define
a category $\cC_\cX$ to have objects pairs $(\uU,u)$ where $\uU$ is
a $C^\iy$-scheme and $u:\bar\uU\ra\cX$ is an \'etale morphism, and
morphisms $(\uf,\eta):(\uU,u)\ra(\uV,v)$ where $\uf:\uU\ra\uV$ is an
\'etale morphism of $C^\iy$-schemes, and $\eta:u\Ra v\ci\buf$ is a
2-isomorphism. (Here $\uf$ \'etale is implied by $u,v$ \'etale and
$u\cong v\ci\buf$.) If $(\uf,\eta):(\uU,u)\ra(\uV,v)$ and
$(\ug,\ze):(\uV,v)\ra(\uW,w)$ are morphisms in $\cC_\cX$ then we
define the composition $(\ug,\ze)\ci(\uf,\eta)$ to be
$(\ug\ci\uf,\th):(\uU,u)\ra(\uW,w)$, where $\th$ is the composition
of 2-morphisms across the diagram:
\begin{equation*}
\xymatrix@C=35pt@R=11pt{ \bar\uU \ar[dr]^(0.6){\buf}
\ar@/^/@<1ex>[drrr]_(0.4)u \ar[dd]_{\overline{\ug\ci\uf}}
\dduppertwocell_{}\omit^{}\omit{<-2.5>^{\id}} & \\
& \bar\uV \ar[rr]^(0.5){v} \ar[dl]^(0.45){\bar\ug} &
\ultwocell_{}\omit^{}\omit{^\eta} & \cX.  \\
\bar\uW \ar@/_/@<-1ex>[urrr]^(0.35)w && {}
\ultwocell_{}\omit^{}\omit{^\ze} }
\end{equation*}

Define a {\it quasicoherent sheaf\/ $\cE$ on\/} $\cX$ to assign a quasicoherent sheaf $\cE(\uU,u)$ on $\uU$ for all objects $(\uU,u)$ in $\cC_\cX$, and an isomorphism $\cE_{(\uf,\eta)}:\uf^*(\cE(\uV,v))\ra\cE(\uU,u)$ in $\qcoh(\uU)$ for all morphisms $(\uf,\eta):(\uU,u)\ra(\uV,v)$ in $\cC_\cX$, such that for all $(\uf,\eta),(\ug,\ze), (\ug\ci\uf,\th)$ as above the following diagram of isomorphisms in $\qcoh(\uU)$ commutes:
\e
\begin{gathered}
\xymatrix@C=7pt@R=11pt{ (\ug\ci\uf)^*\bigl(\cE(\uW,w)\bigr)
\ar[rrrrr]_(0.4){\cE_{(\ug\ci\uf,\th)}}
\ar[dr]_{I_{\uf,\ug}(\cE(\uW,w))
\,\,\,\,\,\,\,\,\,\,\,\,{}} &&&&& \cE(\uU,u), \\
& \uf^*\bigl(\ug^*(\cE(\uW,w)\bigr)
\ar[rrr]^(0.55){\uf^*(\cE_{(\ug,\ze)})} &&&
\uf^*\bigl(\cE(\uV,v)\bigr) \ar[ur]_{{}\,\,\,\cE_{(\uf,\eta)}} }
\end{gathered}
\label{ag8eq1}
\e
for $I_{\uf,\ug}(\cE)$ as in Remark~\ref{ag5rem3}.

A {\it morphism of quasicoherent sheaves\/}\I{Deligne--Mumford $C^\iy$-stack!quasicoherent sheaves on!morphism} $\phi:\cE\ra\cF$ assigns a morphism $\phi(\uU,u):\cE(\uU,u)\ra\cF(\uU,u)$ in $\qcoh(\uU)$ for each object $(\uU,u)$ in $\cC_\cX$, such that for all morphisms $(\uf,\eta):(\uU,u)\ra(\uV,v)$ in $\cC_\cX$ the following commutes:
\begin{equation*}
\xymatrix@C=110pt@R=15pt{ *+[r]{\uf^*\bigl(\cE(\uV,v)\bigr)}
\ar[d]^{\uf^*(\phi(\uV,v))} \ar[r]_{\cE_{(\uf,\eta)}} & *+[l]{\cE(\uU,u)}
\ar[d]_{\phi(\uU,u)} \\ *+[r]{\uf^*\bigl(\cF(\uV,v)\bigr)}
\ar[r]^{\cF_{(\uf,\eta)}} & *+[l]{\cF(\uU,u).\!} }
\end{equation*}
We call $\cE$ a {\it vector bundle\I{Deligne--Mumford $C^\iy$-stack!vector bundles on} of rank\/} $n$ if $\cE(\uU,u)$ is a vector bundle of rank $n$ for all $(\uU,u)\in\cC_\cX$. Write $\qcoh(\cX)$\G[qcoh(X)b]{$\qcoh(\cX)$}{abelian category of quasicoherent sheaves on Deligne--Mumford $C^\iy$-stack $\cX$} for the category of quasicoherent sheaves on~$\cX$. 

\label{ag8def1}
\end{dfn}

\begin{rem}{\bf(a)} Here is a second way to define quasicoherent sheaves, closer to \cite[\S 3.1]{BeXu}, \cite[Def.~4.10]{DeMu}. Define a Grothendieck pretopology\I{Grothendieck pretopology} $\cP\cJ_\cX$ on $\cC_\cX$ to have coverings $\bigl\{(\ui_a,\eta_a):(\uU_a,u_a)\ra(\uU,u)
\bigr\}{}_{a\in A}$ where $\ui_a:\uU_a\ra\uU$ is an open embedding
for all $a\in A$ and $U=\bigcup_{a\in A}i_a(U_a)$. Let $\cJ_\cX$ be the associated Grothendieck topology.\I{Grothendieck topology} Then $(\cC_\cX,\cJ_\cX)$ is a site.\I{site}

We can now use the standard notion of {\it sheaves on a site}, as in
Artin \cite{Arti} or Metzler~\cite[\S 2.1]{Metz}. For all $(\uU,u)$
in $\cC_\cX$, define a $C^\iy$-ring $\O_\cX(\uU,u)=\O_U(U)$, where
$\uU=(U,\O_U)$. For all morphisms $(\uf,\eta):(\uV,v)\ra(\uU,u)$,
define a morphism of $C^\iy$-rings
$\rho_{(\uU,u)(\uV,v)}:\O_\cX(\uU,u)\ra\O_\cX(\uV,v)$ by
$\rho_{(\uU,u)(\uV,v)}=f_\sh(U):\O_U(U)\ra\O_V(V)$. Then $\O_\cX$ is
a {\it sheaf of\/ $C^\iy$-rings on the site\/} $(\cC_\cX,\cJ_\cX)$.

Define a {\it quasicoherent sheaf\/} $\cE'$ to be a sheaf of
$\O_\cX$-modules on $(\cC_\cX,\cJ_\cX)$. That is, $\cE'$ assigns
an $\O_\cX(\uU,u)$-module $\cE'(\uU,u)$ for all $(\uU,u)$ in
$\cC_\cX$, and a linear map $\cE'_{(\uf,\eta)}:\cE(\uU,u)\ra\cE(\uV,v)$ for all $(\uf,\eta):(\uV,v)\ra(\uU,u)$ in $\cC_\cX$, such that the analogue of \eq{ag5eq13} commutes, and the axioms for sheaves on a site hold.

If $\cE$ is as in Definition \ref{ag8def1} then defining
$\cE'(\uU,u)=\Ga\bigl(\cE(\uU,u)\bigr)$ gives a quasicoherent sheaf in
the sense of this second definition. Conversely, any quasicoherent sheaf in this second sense extends to one in the first sense uniquely up
to canonical isomorphism. Thus the two definitions yield equivalent
categories.

\smallskip

\noindent{\bf(b)} As quasicoherent sheaves are a kind of sheaves of sets
on a site,\I{site} not sheaves of categories on a site as stacks
are, $\qcoh(\cX)$ is a category not a 2-category.
\smallskip

\noindent{\bf(c)} In Definition \ref{ag8def1} we require the
1-morphisms $u,v,w$ and morphisms $\uf,\ug$ to be {\it \'etale}.
This is important in several places below: for instance, if
$\uf:\uU\ra\uV$ is \'etale then $\uf^*:\qcoh(\uV)\ra\qcoh(\uU)$ is
exact,\I{functor!exact} not just right exact,\I{functor!right exact}
which is needed in Proposition \ref{ag8prop1} to show $\qcoh(\cX)$ is abelian, and also $\Om_\uf:\uf^*(T^*\uV)\ra T^*\uU$ is an isomorphism, which is needed to define the cotangent sheaf~$T^*\cX$. We restricted to Deligne--Mumford $C^\iy$-stacks $\cX$ in order to be able to use
\'etale (1-)morphisms in this way. For $C^\iy$-stacks $\cX$ which do
not admit an \'etale atlas,\I{atlas!etale@\'etale}\I{C-stack@$C^\iy$-stack!atlas!etale@\'etale} the approach above is inadequate and would need to be modified.
\smallskip

\noindent{\bf(d)} Our notion of vector bundles $\cE$ over $\cX$
correspond to {\it orbifold vector bundles\/} when $\cX$ is an
orbifold.\I{orbifold!vector bundle} That is, the isotropy groups
$\Iso_\cX([x])$\I{C-stack@$C^\iy$-stack!isotropy group
$\Iso_\cX([x])$} of $\cX$ are allowed to act nontrivially on the
vector space fibres $\cE\vert_x$ of~$\cE$.
\smallskip

\noindent{\bf(e)} We can also use the method of Definition \ref{ag8def1}  (or the approach of {\bf(a)}) to define other kinds of sheaves on a Deligne--Mumford $C^\iy$-stack $\cX$, such as sheaves of sets, abelian groups, $C^\iy$-rings, \ldots, in the obvious way: we just take the $\cE(\uU,u)$ to be a sheaf of sets, \ldots\ on $\uU$ instead of a quasicoherent sheaf.
\label{ag8rem1}
\end{rem}

\begin{prop} Let\/ $\cX$ be a Deligne--Mumford\/ $C^\iy$-stack.
Then\/ $\qcoh(\cX)$ is an abelian category.\I{abelian category}\label{ag8prop1}
\end{prop}

\begin{proof} We define a complex in $\qcoh(\cX)$
\begin{equation*}
\smash{\xymatrix@C=30pt{ 0 \ar[r] & \cE \ar[r]^\phi & \cF \ar[r]^\psi & \cG \ar[r] & 0 }}
\end{equation*} 
to be {\it exact\/} if and only if
\begin{equation*}
\smash{\xymatrix@C=35pt{ 0 \ar[r] & \cE(\uU,u) \ar[r]^{\phi(\uU,u)} & \cF(\uU,u)
\ar[r]^{\psi(\uU,u)} & \cG(\uU,u) \ar[r] & 0 }}
\end{equation*}
is exact in $\qcoh(\uU)$ for all $(\uU,u)$ in $\cC_\cX$. Since each $\qcoh(\uU)$ in Definition \ref{ag8def1} is abelian, and the functors $\uf^*$ in Definition \ref{ag8def1} are exact\I{functor!exact} (not just right exact)\I{functor!right exact} as $\uf$ is \'etale, it is easy to show this makes $\qcoh(\cX)$ into an abelian category.
\end{proof}

\begin{ex} Let $\uX$ be a $C^\iy$-scheme. Then $\cX=\bar\uX$ is
a Deligne--Mumford $C^\iy$-stack. We will define an equivalence of categories $\cI_\uX:\qcoh(\uX)\ra\qcoh(\cX)$.\G[IX]{$\cI_\uX:\qcoh(\uX)\ra\qcoh(\cX)$}{inclusion functor from sheaves on a $C^\iy$-scheme $\uX$ to sheaves on the associated Deligne--Mumford $C^\iy$-stack $\cX=\bar\uX$} 

Let $\cE$ be an object in $\qcoh(\uX)$. If $(\uU,u)$ is an object in
$\cC_\cX$ then $u:\bar\uU\ra\cX=\bar\uX$ is a 1-morphism, so as
$\CSch,\bCSch$ are equivalent (2-)categories $u$ is 2-isomorphic to
$\bar\uu:\bar\uU\ra\bar\uX$ for some unique morphism
$\uu:\uU\ra\uX$. Define $\cE'(\uU,u)=\uu^*(\cE)$. If
$(\uf,\eta):(\uU,u)\ra(\uV,v)$ is a morphism in $\cC_\cX$ and
$\uu,\uv$ are associated to $u,v$ as above, so that $\uu=\uv\ci\uf$,
then define
\begin{equation*}
\smash{\cE'_{(\uf,\eta)}= I_{\uf,\uv}(\cE)^{-1}:\uf^*(\cE'(\uV,v))=
\uf^*\bigl(\uv^*(\cE)\bigr)\ra (\uv\ci\uf)^*(\cE)=\cE'(\uU,u).}
\end{equation*}
Then \eq{ag8eq1} commutes for all $(\uf,\eta),(\ug,\ze)$, so $\cE'$
is a quasicoherent sheaf on~$\cX$.

If $\phi:\cE\ra\cF$ is a morphism in $\qcoh(\uX)$ define a morphism $\phi':\cE'\ra\cF'$ in $\qcoh(\cX)$ by $\phi'(\uU,u)=\uu^*(\phi)$ for $\uu$ associated to $u$ as above. Then defining $\cI_\uX:\cE\mapsto\cE'$, $\cI_\uX:\phi\mapsto\phi'$ gives a functor $\qcoh(\uX)\ra\qcoh(\cX)$. There is a natural inverse construction: if $\ti{\cal E}$ is an object in $\qcoh(\cX)$ then $\ti{\cal E}(\uX,\bar\uid_\uX)$ is an object in $\qcoh(\uX)$, and $\ti{\cal E}$ is canonically isomorphic to $\cI_\uX\bigl(\ti{\cal E}(\uX,\bar\uid{}_\uX)\bigr)$. Using this we can show $\cI_\uX$ is an equivalence of categories.
\label{ag8ex1}
\end{ex}

\subsection{Writing sheaves in terms of a groupoid presentation}
\label{ag82}
\I{groupoid object|(}\I{category!groupoid object in|(}\I{stack!associated to a
groupoid|(}\I{C-stack@$C^\iy$-stack!associated to a groupoid|(}

Let $\cX$ be a Deligne--Mumford $C^\iy$-stack. Then $\cX$ admits an
\'etale
atlas\I{atlas!etale@\'etale}\I{C-stack@$C^\iy$-stack!atlas!etale@\'etale}
$\Pi:\bar\uU\ra\cX$, and as in \S\ref{agA5} from $\Pi$ we can
construct a groupoid $(\uU,\uV,\us,\ut,\uu,\ui,\um)$ in $\CSch$,
with $\us,\ut:\uV\ra\uU$ \'etale, such that $\cX$ is equivalent to
the associated $C^\iy$-stack $[\uV\rra\uU]$, and we have a 2-Cartesian
diagram\I{2-category!2-Cartesian square}
\begin{equation*}
\xymatrix@C=100pt@R=11pt{ *+[r]{\bar\uV} \ar[r]_(0.3){\bar\ut} \ar[d]^{\bar\us}
\drtwocell_{}\omit^{}\omit{^\eta} & *+[l]{\bar{\uU}} \ar[d]_\Pi \\
*+[r]{\bar{\uU}} \ar[r]^(0.7)\Pi & *+[l]{\cX.\!} }
\end{equation*}
We can now consider the objects $(\uU,\Pi)$ and $(\uV,\Pi\ci\us)$ in
$\cC_\cX$, and the two morphisms
$(\us,\id_{\Pi\ci\us}):(\uV,\Pi\ci\us)\ra(\uU,\Pi)$
and~$(\ut,\eta):(\uV,\Pi\ci\us)\ra(\uU,\Pi)$.

Now let $\cE$ be an object in $\qcoh(\cX)$. Then we have quasicoherent sheaves $E=\cE(\uU,\Pi)$ on $\uU$ and $E'=\cE(\uV,\Pi\ci\us)$ on $\uV$, and isomorphisms $\cE_{(\us,\id_{\Pi\ci\us})}:\us^*(E)\ra E'$ and
$\cE_{(\ut,\eta)}:\ut^*(E)\ra E'$ in $\qcoh(\uV)$. Hence
$\Phi=\cE_{(\ut,\eta)}^{-1}\ci \cE_{(\us,\id_{\Pi\ci\us})}$ is an
isomorphism of $\Phi:\us^*(E)\ra\ut^*(E)$ in~$\qcoh(\uV)$.

We also have a 2-commutative diagram with all squares 2-Cartesian:
\begin{equation*}
\xymatrix@!0@C=65pt@R=21pt{ & \bar\uW \ar[dl]_(0.6){\bar\upi_1}
\ar[ddr]^(0.4){\bar\upi_2} \ar[rr]_(0.55){\bar\um}
&& \bar\uV \ar[dl]^(0.4){\bar\ut} \ar[ddr]^{\bar\us} \\
\bar\uV \ar[ddr]_{\bar\us} \ar[rr]_(0.4){\bar\ut} && \bar\uU
\ar[ddr]^(0.35){\Pi} \\ && \bar\uV \ar[rr]^(0.6){\bar\us}
\ar[dl]_(0.4){\bar\ut} && \bar\uU \ar[dl]^(0.4){\Pi} \\
& \bar\uU \ar[rr]^(0.6){\Pi} && \cX,}
\end{equation*}
omitting 2-morphisms, where $\uW=\uV\t_{\bar\us,\bar\uU,\ut}
\bar\uV$, and $\upi_1,\upi_2:\uW\ra\uV$ are projections to the first
and second factors in the fibre
product.\I{C-stack@$C^\iy$-stack!fibre products} So we have an
object $(\uW,\Pi\ci\bar\us\ci\bar\upi_1)$ in $\cC_\cX$, and we can
define $E''=\cE(\uW,\Pi\ci\bar\us\ci\bar\upi_1)$. Then we have a
commutative diagram of isomorphisms in $\qcoh(\uW)$:
\e
\begin{gathered}
\xymatrix@!0@C=62pt@R=37pt{ & E'' && \um^*(E')
\ar[ll]^(0.45){\cE_{(\um,\th_3)}}
\\
\upi_1^*(E') \ar[ur]^(0.4){\cE_{(\upi_1,\th_1)}} &&
{\begin{subarray}{l}\ts(\ut\ci\upi_1)^*(E)=\\
{}\quad\ts(\ut\ci\um)^*(E)\end{subarray}}
\ar[ll]^(0.6){\begin{subarray}{l} \upi_1^*(\cE_{(\ut,\eta)})\\
\ci I_{\upi_1,\ut}(E)\end{subarray}}
\ar[ur]_(0.7){\begin{subarray}{l} \um^*(\cE_{(\ut,\eta)})\\
\ci I_{\um,\ut}(E)\end{subarray}}
\\
&& \upi_2^*(E') \ar@<1ex>[uul]_(0.75){\cE_{(\upi_2,\th_2)}} &&
{\begin{subarray}{l}\ts(\us\ci\upi_2)^*(E)=\\
\ts(\us\ci\um)^*(E)\end{subarray}} \ar[uul]_{\begin{subarray}{l}
\um^*(\cE_{(\us,\id_{\Pi\ci\us})})\\
\ci I_{\um,\us}(E)\end{subarray}} \ar[ll]^(0.6){\begin{subarray}{l}
\upi_2^*(\cE_{(\us,\id_{\Pi\ci\us})})\\ \ci
I_{\upi_2,\us}(E)\end{subarray}} \ar@{-->}@(d,l)[dlll]_(0.53){\be}
\ar@{-->}[ull]^{\al}
\\
& {\begin{subarray}{l}\ts(\us\ci\upi_1)^*(E)= (\ut\ci\upi_2)^*(E)
\end{subarray}}\ar[uul]^(0.65){\begin{subarray}{l}
\upi_1^*(\cE_{(\us,\id_{\Pi\ci\us})}) \\ \ci
I_{\upi_1,\us}(E)\end{subarray}}
\ar[ur]^(0.4){\begin{subarray}{l} \upi_2^*(\cE_{(\ut,\eta)})\\
\ci I_{\upi_2,\ut}(E)\end{subarray}} \ar@{-->}[uur]_(0.6){\ga} }\!\!\!\!{}
\end{gathered}
\label{ag8eq2}
\e
Here the morphisms `$\dashra$' are given by $\al=I_{\um,\ut}(E
)^{-1}\ci \um^*(\Phi)\ci I_{\um,\us}(E)$,
$\be=I_{\upi_2,\ut}(E)^{-1}\ci\upi_2^*(\Phi)\ci I_{\upi_2,\us}(E)$
and $\ga=I_{\upi_1,\ut}(E)^{-1} \ci\upi_1^*(\Phi)\ci
I_{\upi_1,\us}(E)$, and as \eq{ag8eq2} commutes we have
$\al=\ga\ci\be$. This motivates:

\begin{dfn} Let $(\uU,\uV,\us,\ut,\uu,\ui,\um)$ be a groupoid in
$\CSch$, with $\us,\ut:\uV\ra\uU$ \'etale, which we write as
$\uV\rra\uU$ for short. Define a {\it quasicoherent sheaf on\/} $\uV\rra\uU$ to be a pair $(E,\Phi)$ where $E$ is a quasicoherent sheaf on $\uU$ and $\Phi:\us^*(E)\ra\ut^*(E)$ is an isomorphism in $\qcoh(\uV)$, such that
\begin{align*}
I_{\um,\ut}(E)^{-1}\ci\um^*(\Phi)\ci I_{\um,\us}(E)=\,
&\bigl(I_{\upi_1,\ut}(E)^{-1}\ci\upi_1^*(\Phi)\ci
I_{\upi_1,\us}(E)\bigr)\ci\\
&\bigl(I_{\upi_2,\ut}(E)^{-1}\ci\upi_2^*(\Phi)\ci
I_{\upi_2,\us}(E)\bigr)
\end{align*}
in morphisms $(\us\ci\um)^*(E)\ra(\ut\ci\um)^*(E)$ in $\qcoh(\uW)$. Define a {\it morphism\/} $\phi:(E,\Phi)\ra(F,\Psi)$ of such sheaves to be a morphism $\phi:E\ra F$ in $\qcoh(\uU)$ such that
$\Psi\ci\us^*(\phi)=\ut^*(\phi)\ci\Phi:\us^*(E)\ra\ut^*(F)$ in $\qcoh(\uV)$. Then quasicoherent sheaves on $\uV\rra\uU$ form an abelian category\I{abelian category} $\qcoh(\uV\rra\uU)$.\G[qcohVU]{$\qcoh(\uV\rra\uU)$}{category of quasicoherent sheaves on a groupoid $\uV\rra\uU$} 

If $\cX$ is a Deligne--Mumford $C^\iy$-stack equivalent to $[\uV\rra\uU]$ with atlas\I{atlas}\I{C-stack@$C^\iy$-stack!atlas} $\Pi:\bar\uU\ra\cX$ then we have a functor $F_\Pi:\qcoh(\cX)\ra\qcoh(\uV\rra\uU)$ defined by $F_\Pi:\cE\mapsto\bigl(\cE(\uU,\Pi),\cE_{(\ut,\eta)}^{-1}\ci\cE_{(\us,\id_{\Pi\ci\us})}\bigr)$ and~$F_\Pi:\phi\mapsto\phi(\uU,\Pi)$.
\label{ag8def2}
\end{dfn}

The next theorem is proved as in Laumon and Moret-Bailly \cite[Prop.~12.4.5]{LaMo} or Olsson \cite[Prop.~4.4]{Olss1}.

\begin{thm} The functor $F_\Pi:\qcoh(\cX)\ra\qcoh(\uV\rra\uU)$ above is an equivalence of categories.
\label{ag8thm1}
\end{thm}

For quotient $C^\iy$-stacks $[\uU/G]$ with $G$ a finite group, so
that $\uV=G\t\uU$, a quasicoherent sheaf $(E,\Phi)$ on $\uV\rra\uU$ is a quasicoherent sheaf $E$ on $\uU$ with a lift $\Phi$ of the $G$-action on $\uU$ up to $E$. That is, $(E,\Phi)$ is a $G$-{\it equivariant quasicoherent sheaf on\/} $\uU$. Hence, if a Deligne--Mumford $C^\iy$-stack $\cX$ is equivalent to a quotient $[\uU/G]$ with $G$ finite, then
$\qcoh(\cX)$ is equivalent to the category $\qcoh^G(\uU)$\G[qcoh(X)c]{$\qcoh^G(\uX)$}{abelian category of $G$-equivariant quasicoherent sheaves on a $C^\iy$-scheme $\uX$ acted on by a finite group $G$} of $G$-equivariant quasicoherent sheaves on~$\uU$.\I{groupoid object|)}\I{category!groupoid object in|)}\I{stack!associated to a groupoid|)}\I{C-stack@$C^\iy$-stack!associated to a groupoid|)}

\subsection{Pullback of sheaves as a weak 2-functor}
\label{ag83}
\I{Deligne--Mumford $C^\iy$-stack!quasicoherent sheaves on!pullback|(}

In Definition \ref{ag5def4}, for a morphism of $C^\iy$-schemes
$\uf:\uX\ra\uY$ we defined a right exact functor\I{functor!right
exact} $\uf^*:\qcoh(\uY)\ra\qcoh(\uX)$. As in Remarks \ref{ag4rem1}(b) and
\ref{ag5rem3}, pullbacks cannot always be made strictly functorial
in $\uf$, that is, we do not have
$\uf^*(\ug^*(\cE))=(\ug\ci\uf)^*(\cE)$ for all $\uf:\uX\ra\uY$,
$\ug:\uY\ra\uZ$ and $\cE\in\qcoh(\uZ)$, but instead we have canonical
isomorphisms~$I_{\uf,\ug}(\cE):(\ug\ci\uf)^*(\cE)\ra\uf^*(\ug^*(\cE))$.

We now generalize this to pullback for sheaves on Deligne--Mumford
$C^\iy$-stacks. The new factor to consider is that we have not only
1-morphisms $f:\cX\ra\cY$, but also 2-morphisms $\eta:f\Ra g$ for
1-morphisms $f,g:\cX\ra\cY$, and we must interpret pullback for
2-morphisms as well as 1-morphisms.

\begin{dfn} Let $f:\cX\ra\cY$ be a 1-morphism of Deligne--Mumford $C^\iy$-stacks, and $\cF\in\qcoh(\cY)$. A {\it pullback\/} of $\cF$ to $\cX$ is $\cE\in\qcoh(\cX)$, together with the following data: if $\uU,\uV$ are $C^\iy$-schemes and $u:\bar\uU\ra\cX$ and $v:\bar\uV\ra\cY$ are \'etale 1-morphisms, then there is a $C^\iy$-scheme $\uW$ and morphisms $\upi_\uU:\uW\ra\uU$, $\upi_\uV:\uW\ra\uV$ giving a 2-Cartesian diagram:\I{2-category!2-Cartesian square}
\e
\begin{gathered}
\xymatrix@C=90pt@R=14pt{ *+[r]{\bar\uW} \ar[r]_(0.3){\bar\upi_\uV}
\ar[d]^{\bar\upi_\uU}
\drtwocell_{}\omit^{}\omit{^\ze} & *+[l]{\bar\uV} \ar[d]_v \\
*+[r]{\bar\uU} \ar[r]^(0.7){f\ci u} & *+[l]{\cY.\!} }
\end{gathered}
\label{ag8eq3}
\e
Then an isomorphism $i(\cF,f,u,v,\ze):\ab\upi^*_\uU\bigl(\cE(\uU,u)\bigr)\ra
\upi^*_\uV\bigl(\cF(\uV,v)\bigr)$ in $\qcoh(\uW)$ should be given,
which is functorial in $(\uU,u)$ in $\cC_\cX$ and $(\uV,v)$ in
$\cC_\cY$ and the 2-isomorphism $\ze$ in \eq{ag8eq3}. We usually
write pullbacks $\cE$ as~$f^*(\cF)$.\G[fEd]{$f^*(\cE)$}{pullback of
quasicoherent sheaf $\cE$ under $f:\cX\ra\cY$}
\label{ag8def3}
\end{dfn}

By a similar proof to Theorem \ref{ag8thm1}, but using descent for
objects and morphisms for quasicoherent sheaves on $C^\iy$-schemes $\uY$ in
the \'etale topology rather than the open cover topology on $\uY$,
we can prove:

\begin{prop} Let\/ $f:\cX\ra\cY$ be a\/ $1$-morphism of Deligne--Mumford\/ $C^\iy$-stacks, and\/ $\cF$ be a quasicoherent sheaf on $\cY$. Then a pullback\/ $f^*(\cF)$ exists in $\qcoh(\cX),$ and is unique up to canonical isomorphism.
\label{ag8prop2}
\end{prop}

From now on we will assume that we have {\it chosen\/} a pullback
$f^*(\cF)$ for all such $f:\cX\ra\cY$ and $\cF$. This could be done
either by some explicit construction of pullbacks, as in the
$C^\iy$-scheme case in \S\ref{ag53}, or by using the Axiom of
Choice.\I{Axiom of Choice} As in Remark \ref{ag5rem3} we cannot
necessarily make these choices functorial in~$f$.

\begin{dfn} Choose pullbacks $f^*(\cF)$ for all 1-morphisms
$f:\cX\ra\cY$ of Deligne--Mumford $C^\iy$-stacks and all
$\cF\in\qcoh(\cY)$, as above.

Let $f:\cX\ra\cY$ be such a 1-morphism, and $\phi:\cE\ra\cF$ be a
morphism in $\qcoh(\cY)$. Then $f^*(\cE),f^*(\cF)\in\qcoh(\cX)$. Define
the {\it pullback morphism\/} $f^*(\phi):f^*(\cE)\ra f^*(\cF)$ to be
the morphism in $\qcoh(\cX)$ characterized as follows. Let
$u:\bar\uU\ra\cX$, $v:\bar\uV\ra\cY$, $\uW,\upi_\uU,\upi_\uV$ be as
in Definition \ref{ag8def3}, with \eq{ag8eq3} 2-Cartesian. Then
the following diagram of morphisms in $\qcoh(\uW)$ commutes:
\begin{equation*}
\xymatrix@C=140pt@R=15pt{ *+[r]{\upi^*_\uU\bigl(f^*(\cE)(\uU,u)\bigr)}
\ar[r]_(0.53){i(\cE,f,u,v,\ze)} \ar[d]^{\pi_\uU^*(f^*(\phi)(\uU,u))} &
*+[l]{\upi^*_\uV\bigl(\cE(\uV,v)\bigr)} \ar[d]_{\pi_\uV^*(\phi(\uV,v))} \\
*+[r]{\upi^*_\uU\bigl(f^*(\cF)(\uU,u)\bigr)} \ar[r]^(0.53){i(\cF,f,u,v,\ze)} &
*+[l]{\upi^*_\uV\bigl(\cF(\uV,v)\bigr).\!} }
\end{equation*}
Using descent for morphisms in $\qcoh(\uY)$ on $C^\iy$-schemes
$\uY$ in the \'etale topology, one can show that there is a unique
morphism $f^*(\phi)$ with this property. This defines a functor $f^*:\qcoh(\cY)\ra\qcoh(\cX)$. 

Let $f:\cX\ra\cY$ and $g:\cY\ra\cZ$ be 1-morphisms of
Deligne--Mumford $C^\iy$-stacks, and $\cE\in\qcoh(\cZ)$. Then $(g\ci
f)^*(\cE)$ and $f^*(g^*(\cE))$ both lie in $\qcoh(\cX)$. One can show
that $f^*(g^*(\cE))$ is a possible pullback of $\cE$ by $g\ci f$.
Thus as in Remark \ref{ag5rem3}, we have a canonical isomorphism
$I_{f,g}(\cE): (g\ci f)^*(\cE)\ra f^*(g^*(\cE))$. This defines a
natural isomorphism of functors~$I_{f,g}:(g\ci f)^*\Ra f^*\ci g^*$.

Let $f,g:\cX\ra\cY$ be 1-morphisms of Deligne--Mumford
$C^\iy$-stacks, $\eta:f\Ra g$ a 2-morphism, and $\cE\in\qcoh(\cY)$.
Then we have $f^*(\cE),g^*(\cE)\in\qcoh(\cX)$. Let $u:\bar\uU\ra\cX$, $v:\bar\uV\ra\cY$, $\uW,\upi_\uU,\upi_\uV$ be as in Definition \ref{ag8def3}. Then as in \eq{ag8eq3} we have 2-Cartesian diagrams\I{2-category!2-Cartesian square}
\begin{equation*}
\xymatrix@C=30pt@R=13pt{ *+[r]{\bar\uW} \ar[rrr]_(0.8){\bar\upi_\uV}
\ar[d]^{\bar\upi_\uU}  &
\drrtwocell_{}\omit^{}\omit{^{\!\!\!\!\!\!\!\!\!\!\!
\!\!\!\!\!\!\!\!\!\!\!\!\!\ze\od(\eta*\id_{u\ci\bar\upi_\uU})
\,\,\,\,\,\,\,\,\,\,\,\,{}}} && *+[l]{\bar\uV} \ar[d]_v & *+[r]{\bar\uW}
\ar[rrr]_(0.7){\bar\upi_\uV} \ar[d]^{\bar\upi_\uU}
& \drtwocell_{}\omit^{}\omit{^\ze} && *+[l]{\bar\uV} \ar[d]_v \\
*+[r]{\bar\uU} \ar[rrr]^(0.8){f\ci u} &&& *+[l]{\cY,\!} & *+[r]{\bar\uU}
\ar[rrr]^(0.7){g\ci u} &&& *+[l]{\cY,\!} }
\end{equation*}
where in $\ze\od(\eta*\id_{u\ci\bar\upi_\uU})$ `$*$' is horizontal
composition\I{2-category!2-morphism!horizontal composition} and
`$\od$' vertical\I{2-category!2-morphism!vertical composition}
composition of 2-morphisms. Thus we have isomorphisms in $\qcoh(\uW)$:
\begin{equation*}
\xymatrix@C=100pt@R=0pt{ \upi^*_\uU\bigl(f^*(\cE)(\uU,u)\bigr)
\ar[dr]^{i(\cE,f,u,v,\ze\od(\eta*\id_{u\ci\bar\upi_\uU}))}
\ar@{..>}[dd] \\
& \upi^*_\uV\bigl(\cE(\uV,v)\bigr). \\
\upi^*_\uU\bigl(g^*(\cE)(\uU,u)\bigr) \ar[ur]_{i(\cE,g,u,v,\ze)}}
\end{equation*}
There is a unique isomorphism `$\dashra$' making this diagram
commute. Taken over all $(\uV,v)$, using descent for morphisms we
can show these isomorphisms are pullbacks of a unique isomorphism
$f^*(\cE)(\uU,u)\ra g^*(\cE)(\uU,u)$, and taken over all $(\uU,u)$
these give an isomorphism $\eta^*(\cE):f^*(\cE)\ra g^*(\cE)$ in $\qcoh(\cX)$. Over all $\cE\in\qcoh(\cY)$, this defines a natural isomorphism~$\eta^*:f^*\Ra g^*$.

If $\cX$ is a Deligne--Mumford $C^\iy$-stack with identity
1-morphism $\id_\cX:\cX\ra\cX$ then for each $\cE\in\qcoh(\cX)$, $\cE$
is a possible pullback $\id_\cX^*(\cE)$, so we have a canonical
isomorphism $\de_\cX(\cE): \id_\cX^*(\cE)\ra\cE$. These define a
natural isomorphism~$\de_\cX:\id_\cX^*\Ra\id_{\qcoh(\cX)}$.
\label{ag8def4}
\end{dfn}

The proof of the next theorem is long but straightforward. Weak $2$-functors are defined in Definition~\ref{agAdef2}.

\begin{thm} Mapping $\cX$ to $\qcoh(\cX)$ for objects\/ $\cX$ in
$\DMCSta,$ and mapping $1$-morphisms\/ $f:\cX\ra\cY$ to\/
$f^*:\qcoh(\cY)\ra\qcoh(\cX),$ and mapping\/ $2$-morphisms\/ $\eta:f\Ra g$
to\/ $\eta^*:f^*\Ra g^*$ for $1$-morphisms $f,g:\cX\ra\cY,$ and the
natural isomorphisms\/ $I_{f,g}:(g\ci f)^*\Ra f^*\ci g^*$ for all\/
$1$-morphisms\/ $f:\cX\ra\cY$ and\/ $g:\cY\ra\cZ$ in $\DMCSta,$
and\/ $\de_\cX$ for all\/ $\cX\in\DMCSta,$ together make up a weak\/ $2$-functor $(\DMCSta)^{\bf op}\ra\mathop{\bf AbCat},$ where $\mathop{\bf AbCat}$ is the\/
$2$-category of abelian categories.\I{abelian category} That is,
they satisfy the conditions:
\begin{itemize}
\setlength{\itemsep}{0pt}
\setlength{\parsep}{0pt}
\item[{\bf(a)}] If\/ $f:\cW\ra\cX,$ $g:\cX\ra\cY,$
$h:\cY\ra\cZ$ are $1$-morphisms in $\DMCSta$ and\/
$\cE\in\qcoh(\cZ)$ then the following diagram commutes
in\/~$\qcoh(\cX):$
\begin{equation*}
\xymatrix@C=120pt@R=15pt{ *+[r]{(h\ci g\ci f)^*(\cE)}
\ar[r]_{I_{f,h\ci g}(\cE)} \ar[d]^{I_{g\ci f,h}(\cE)}
& *+[l]{f^*\bigl((h\ci g)^*(\cE)\bigr)} \ar[d]_{f^*(I_{g,h}(\cE))} \\
*+[r]{(g\ci f)^*\bigl(h^*(\cE)\bigr)} \ar[r]^{I_{f,g}(h^*(\cE))} &
*+[l]{f^*\bigl(g^*(h^*(\cE))\bigr).\!} }
\end{equation*}
\item[{\bf(b)}] If\/ $f:\cX\ra\cY$ is a $1$-morphism in\/
$\DMCSta$ and\/ $\cE\in\qcoh(\cY)$ then the following pairs of
morphisms in\/ $\qcoh(\cX)$ are inverse:
\begin{equation*}
\xymatrix@C=8pt@R=10pt{ {\begin{subarray}{l}\ts f^*(\cE)=\\
\ts (f\!\ci\!\id_\cX)^*(\cE)\end{subarray}}
\ar@/^/[rrr]^{I_{\id_\cX,f}(\cE)} &&& \id_\cX^*(f^*(\cE)),
\ar@/^/[lll]^{\de_\cX(f^*(\cE))} &
{\begin{subarray}{l}\ts f^*(\cE)=\\ \ts (\id_\cY\!\ci\!
f)^*(\cE)\end{subarray}} \ar@/^/[rrr]^{I_{f,\id_\cY}(\cE)} &&&
f^*(\id_\cY^*(\cE)). \ar@/^/[lll]^{f^*(\de_\cY(\cE))} }
\end{equation*}
Also $(\id_f)^*(\id_{\cE})=\id_{f^*(\cE)}:f^*(\cE)\ra f^*(\cE)$.
\item[{\bf(c)}] If\/ $f,g,h:\cX\ra\cY$ are
$1$-morphisms and\/ $\eta:f\Ra g,$ $\ze:g\Ra h$ are
$2$-morphisms in\/ $\DMCSta,$ so that\/ $\ze\od\eta:f\Ra h$ is
the vertical\I{2-category!2-morphism!vertical composition}
composition, and\/ $\cE\in\qcoh(\cY),$ then
\begin{equation*}
\ze^*(\cF)\ci\eta^*(\cE)=(\ze\od\eta)^*(\cE):f^*(\cE)\longra h^*(\cE)
\quad\text{in\/ $\qcoh(\cX)$.}
\end{equation*}
\item[{\bf(d)}] If\/ $f,\ti f:\cX\ra\cY,$ $g,\ti g:\cY\ra\cZ$
are $1$-morphisms and\/ $\eta:f\Ra f',$ $\ze:g\Ra g'$
$2$-morphisms in\/ $\DMCSta,$ so that\/ $\ze*\eta:g\ci f\Ra \ti
g\ci\ti f$ is the horizontal
composition,\I{2-category!2-morphism!horizontal composition}
and\/ $\cE\in\qcoh(\cZ),$ then the following commutes
in\/~$\qcoh(\cX):$
\begin{equation*}
\xymatrix@C=70pt@R=14pt{ *+[r]{(g\ci f)^*(\cE)}
\ar[rr]_{(\ze*\eta)^*(\cE)} \ar[d]^{I_{f,g}(\cE)} &&
*+[l]{(\ti g\ci\ti f)^*(\cE)} \ar[d]_{I_{\ti f,\ti g}(\cE)} \\
*+[r]{f^*(g^*(\cE))} \ar[r]^{\eta^*(g^*(\cE))} & \ti
f^*(g^*(\cE)) \ar[r]^{\ti f^*(\ze^*(\cE))} & *+[l]{\ti
f^*(\ti g^*(\cE)).\!} }
\end{equation*}
\end{itemize}
\label{ag8thm2}
\end{thm}

Using Proposition \ref{ag5prop4} we may prove:

\begin{prop} Let\/ $f:\cX\ra\cY$ be a $1$-morphism of
Deligne--Mumford\/ $C^\iy$-stacks. Then pullback\/
$f^*:\qcoh(\cY)\ra\qcoh(\cX)$ is a right exact functor.\I{functor!right
exact}\I{Deligne--Mumford $C^\iy$-stack!quasicoherent sheaves on!pullback|)}\label{ag8prop3}
\end{prop}

\subsection{\texorpdfstring{Cotangent sheaves of Deligne--Mumford $C^\iy$-stacks}{Cotangent sheaves of Deligne--Mumford C∞-stacks}}
\label{ag84}
\I{Deligne--Mumford $C^\iy$-stack!cotangent sheaf|(}

We now develop the analogue of the ideas of~\S\ref{ag56}.

\begin{dfn} Let $\cX$ be a Deligne--Mumford $C^\iy$-stack. Define
a quasicoherent sheaf $T^*\cX$ on $\cX$ called the {\it cotangent sheaf\/} of $\cX$ by $(T^*\cX)(\uU,u)=T^*\uU$ for all objects $(\uU,u)$ in
$\cC_\cX$ and $(T^*\cX)_{(\uf,\eta)}=\Om_\uf:\uf^*(T^*\uV)\ra T^*\uU$ for all morphisms $(\uf,\eta):(\uU,u)\ra(\uV,v)$ in $\cC_\cX$, where $T^*\uU$ and $\Om_\uf$ are as in \S\ref{ag56}. Here as $\uf:\uU\ra\uV$ is \'etale $\Om_\uf$ is an isomorphism, so $(T^*\cX)_{(\uf,\eta)}$ is an isomorphism in $\qcoh(\uU)$ as required. Also Theorem \ref{ag5thm4}(a) shows that \eq{ag8eq1} commutes for $\cE=T^*\cX$ for all such $(\uf,\eta),(\ug,\ze)$. Hence $T^*\cX$ is a quasicoherent sheaf.

Let $f:\cX\ra\cY$ be a 1-morphism of Deligne--Mumford
$C^\iy$-stacks. Define $\Om_f:f^*(T^*\cY)\ra T^*\cX$ to be the unique morphism in $\qcoh(\cX)$ characterized as follows. Let $u:\bar\uU\ra\cX$, $v:\bar\uV\ra\cY$,
$\uW,\upi_\uU,\upi_\uV$ be as in Definition \ref{ag8def3}, with
\eq{ag8eq3} Cartesian. Then the following diagram in $\qcoh(\uW)$ commutes:
\begin{equation*}
\xymatrix@C=30pt@R=15pt{ *+[r]{\upi^*_\uU\bigl(f^*(T^*\cY)(\uU,u)\bigr)}
\ar[d]^{\pi_\uU^*(\Om_f(\uU,u))} \ar[rrr]_(0.6){i(T^*\cY,f,u,v,\ze)}
&&& \upi^*_\uV\bigl((T^*\cY)(\uV,v)\bigr) \ar@{=}[r] &
*+[l]{\upi^*_\uV(T^*\uV)} \ar[d]_{\Om_{\upi_\uV}} \\
*+[r]{\upi^*_\uU\bigl((T^*\cX)(\uU,u)\bigr)}
\ar[rrr]^(0.57){(T^*\cX)_{(\upi_\uU,\id_{u\ci\upi_\uU})}}
&&& (T^*\cX)(\uW,u\ci\upi_\uU) \ar@{=}[r] & *+[l]{T^*\uW.\!} }
\end{equation*}
This determines $\pi_\uU^*(\Om_f(\uU,u))$ uniquely. Over all
$(\uV,v)$, using descent for morphisms in $\qcoh(\uU)$ on
$C^\iy$-schemes $\uU$ in the \'etale topology, this determines the
morphisms $\Om_f(\uU,u)$, and over all $(\uU,u)$ these
determine~$\Om_f$.
\label{ag8def5}
\end{dfn}

If $\cX$ is an orbifold\I{orbifold!vector bundle} of dimension $n$ then $T^*\cX$ is a vector bundle of rank $n$. Here is the analogue of Theorem~\ref{ag5thm4}:

\begin{thm}{\bf(a)} Let\/ $f:\cX\ra\cY$ and\/ $g:\cY\ra \cZ$ be\/
$1$-morphisms of Deligne--Mumford\/ $C^\iy$-stacks. Then
\e
\Om_{g\ci f}=\Om_f\ci f^*(\Om_g)\ci I_{f,g}(T^*\cZ)
\label{ag8eq4}
\e
as morphisms\/ $(g\ci f)^*(T^*\cZ)\ra T^*\cX$ in\/~$\qcoh(\cX)$.
\smallskip

\noindent{\bf(b)} Let\/ $f,g:\cX\ra\cY$ be\/ $1$-morphisms of
Deligne--Mumford\/ $C^\iy$-stacks and\/ $\eta:f\Ra g$ a
$2$-morphism. Then $\Om_f=\Om_g\ci\eta^*(T^*\cY):f^*(T^*\cY)\ra
T^*\cX$.
\smallskip

\noindent{\bf(c)} Let\/ $\cW,\cX,\cY,\cZ$ be Deligne--Mumford\/ $C^\iy$-stacks in a $2$-Cartesian square
\begin{equation*}
\xymatrix@C=90pt@R=14pt{ *+[r]{\cW} \ar[r]_(0.25)f \ar[d]^e
\drtwocell_{}\omit^{}\omit{^\eta}
 & *+[l]{\cY} \ar[d]_h \\
*+[r]{\cX} \ar[r]^(0.7)g & *+[l]{\cZ} }
\end{equation*}
in $\DMCSta,$ so that\/ $\cW=\cX\t_\cZ\cY$. Then the following is
exact in $\qcoh(\cW)\!:$
\e
\xymatrix@C=13pt{ (g\!\ci\!e)^*(T^*\cZ)
\ar[rrrrrr]^(0.51){\begin{subarray}{l}e^*(\Om_g)\ci I_{e,g}(T^*\cZ)\op\\
-f^*(\Om_h)\ci I_{f,h}(T^*\cZ)\ci\eta^*(T^*\cZ)
\end{subarray}} &&&&&&
{\raisebox{5pt}{$\displaystyle \begin{subarray}{l}\ts e^*(T^*\cX)\op\\
\ts f^*(T^*\cY)\end{subarray}$}} \ar[rr]^(0.55){\Om_e\op \Om_f} &&
T^*\cW \ar[r] & 0.}
\label{ag8eq5}
\e

\label{ag8thm3}
\end{thm}

\begin{proof} For (a), let $u:\bar\uU\ra\cX$, $v:\bar\uV\ra\cY$ and
$w:\bar\uW\ra\cZ$ be \'etale. Then there is a $C^\iy$-scheme $\uV'$
with $\bar\uV{}'=\bar\uV\t_{g\ci v,\cZ,w}\bar\uW$, and fibre product
projections $\upi_\uV:\uV'\ra\uV$, $\upi_\uW:\uV'\ra\uW$. Define
$v'=v\ci\bar\upi_\uV:\bar\uV{}'\ra\cY$. Then $v'$ is \'etale, as $v$
is and $w$ is so $\upi_\uV$ is. Similarly, there is a $C^\iy$-scheme
$\uU'$ with $\bar\uU{}'=\bar\uU\t_{f\ci u,\cY,v'}\bar\uV{}'$, and
fibre product projections $\upi_\uU:\uU'\ra\uU$,
$\upi_{\uV'}:\uU'\ra\uV'$. Define an \'etale 1-morphism
$u'=u\ci\bar\upi_\uU:\bar\uU{}'\ra\cX$. Then we have a 2-commutative
diagram
\begin{equation*}
\xymatrix@C=56pt@R=1pt{ \cX \ar[rr]^f && \cY \ar[rr]^g
&& \cZ \\
& \bar\uU \ar[ul]_(0.2)u \ar[ur] & \ddtwocell_{}\omit^{}\omit{^\eta}
& \bar\uV \ar[ul]_(0.2)v \ar[ur] & \dltwocell_{}\omit^{}\omit{^\ze}
\\ &&&& \bar\uW \ar[uu]^w \\
&&& \bar\uV{}' \ar[uuul]^{v'} \ar[uu]_{\bar\upi_\uV}
\ar[ur]_{\bar\upi_\uW} \\ && \bar\uU{}' \ar[uuuull]^{u'}
\ar[uuul]_{\bar\upi_\uU} \ar[ur]_{\bar\upi_{\uV'}} }
\end{equation*}
with 2-Cartesian squares. On $\uU'$ and $\uV'$ we have commutative
diagrams:
\ea
\begin{gathered}
\xymatrix@C=28pt@R=15pt{ *+[r]{\upi^*_\uU\bigl(f^*(T^*\cY)(\uU,u)\bigr)}
\ar[rrr]^(0.58){i(T^*\cY,f,u,v',\eta)}_(0.58)\cong
\ar@<1.5ex>[d]_\cong^(0.45){(f^*(T^*\cY))_{(\upi_\uU,\id_{u'})}} &&&
\upi_{\uV'}^*\bigl((T^*\cY)(\uV',v')\bigr) \ar@{=}[r] &
*+[l]{\upi_{\uV'}^*(T^*\uV')} \ar[d]_{\Om_{\upi_{\uV'}}} \\
*+[r]{(f^*(T^*\cY))(\uU',u')} \ar[rrr]^(0.58){\Om_f(\uU',u')} &&&
(T^*\cX)(\uU',u') \ar@{=}[r] & *+[l]{T^*\uU',\!} }\!\!\!\!\!{}
\end{gathered}
\label{ag8eq6}\\
\begin{gathered}
\xymatrix@C=28pt@R=15pt{ *+[r]{\upi^*_\uV\bigl(g^*(T^*\cZ)(\uV,v)\bigr)}
\ar[rrr]^(0.58){i(T^*\cZ,g,v,w,\ze)}_(0.58)\cong
\ar@<1.5ex>[d]_\cong^(0.45){(g^*(T^*\cZ))_{(\upi_\uV,\id_{v'})}}
&&& \upi_\uW^*\bigl(T^*\cZ(\uW,w)\bigr) \ar@{=}[r] &
*+[l]{\upi_\uW^*(T^*\uW)} \ar[d]_{\Om_{\upi_\uW}}
\\
*+[r]{(g^*(T^*\cZ))(\uV',v')} \ar[rrr]^(0.58){\Om_g(\uV',v')} &&&
(T^*\cY)(\uV',v') \ar@{=}[r] & *+[l]{T^*\uV'.\!} }\!\!\!\!\!{}
\end{gathered}
\label{ag8eq7}
\ea
Applying $\upi_{\uV'}^*$ to \eq{ag8eq7} we make another
commutative diagram on $\uU'$:
\e
\begin{gathered}
\xymatrix@C=170pt@R=15pt{
*+[r]{\upi_{\uV'}^*\bigl(\upi^*_\uV(g^*(T^*\cZ)(\uV,v))\bigr)}
\ar[r]^(0.57){\upi_{\uV'}^*(i(T^*\cZ,g,v,w,\ze))}_(0.57)\cong
\ar@<1.5ex>[d]_\cong^{\upi_{\uV'}^*((g^*(T^*\cZ))_{(\upi_\uV,\id_{v'})})}
& *+[l]{\upi_{\uV'}^*\bigl(\upi_\uW^*(T^*\uW)\bigr)}
\ar@<-1.5ex>[d]_{\upi_{\uV'}^*(\Om_{\upi_\uW})} \\
*+[r]{\upi_{\uV'}^*\bigl((g^*(T^*\cZ))(\uV',v')\bigr)}
\ar[r]^(0.57){\upi_{\uV'}^*(\Om_g(\uV',v'))}
\ar@<1.5ex>[d]_\cong^{(f^*(g^*(T^*\cZ)))_{(\upi_\uU,\id_{u'})}}
& *+[l]{\upi_{\uV'}^*(T^*\uV')}
\ar@<-1.5ex>[d]^\cong_{(f^*(T^*\cU))_{(\upi_\uU,\id_{u'})}} \\
*+[r]{\bigl(f^*(g^*(T^*\cZ))\bigr)(\uU',u')} \ar[r]^(0.57){(f^*(\Om_g))(\uU',u')}
& *+[l]{\bigl(f^*(T^*\cY)\bigr)(\uU',u').\!} }
\end{gathered}
\label{ag8eq8}
\e
By Theorem \ref{ag5thm4}(a) the following commutes:
\e
\begin{gathered}
\xymatrix@C=140pt@R=15pt{ *+[r]{(\upi_\uW\ci\upi_{\uV'})^*(T^*\uW)}
\ar[r]_(0.55){\Om_{\upi_\uW\ci\upi_{\uV'}}}
\ar@<1.5ex>[d]^{I_{\upi_{\uV'},\upi_\uW}(T^*\uW)}_\cong & *+[l]{T^*\uU'} \\
*+[r]{\upi_{\uV'}^*\bigl(\upi_\uW^*(T^*\uW)\bigr)}
\ar[r]^(0.55){\upi_{\uV'}^*(\Om_{\upi_\uW})} & *+[l]{\upi_{\uV'}^*(T^*\uV').\!}
\ar[u]^{\Om_{\upi_{\uV'}}} }
\end{gathered}
\label{ag8eq9}
\e
Using all this we obtain a commutative diagram on $\uU'$:
\e
\begin{gathered}
\xymatrix@!0@C=75pt@R=35pt{ \bigl((g\ci f)^*(T^*\cZ)\bigr)(\uU',u')
\ar[rrr]_{\Om_{g\ci f}(\uU',u')}
\ar@<-7ex>[ddd]_\cong^{(I_{f,g}(T^*\cZ))(\uU',u')}
\ar@{<->}[dr]^\cong &&& (T^*\cX)(\uU',u') \\
& (\upi_\uW\ci\upi_{\uV'})^*(T^*\uW) \ar[r]
\ar[d]^\cong & T^*\uU' \ar@{=}[ur] \\
& \upi_{\uV'}^*\bigl(\upi_\uW^*(T^*\uW)\bigr)
\ar[r] & \upi_{\uV'}^*(T^*\uV') \ar[u] \\
\bigl(f^*(g^*(T^*\cZ))\bigr)(\uU',u') \ar@{<->}[ur]^(0.4)\cong
\ar[rrr]^{(f^*(\Om_g))(\uU',u')} &&&
\bigl(f^*(T^*\cY)\bigr)(\uU',u'). \ar@{<->}[ul]_(0.4)\cong
\ar@<-3ex>[uuu]^{\Om_f(\uU',u')} }
\end{gathered}\!\!\!\!\!\!\!\!\!\!\!\!\!\!\!\!\!\!{}
\label{ag8eq10}
\e
Here the right hand quadrilateral of \eq{ag8eq10} comes from
\eq{ag8eq6}, the bottom quadrilateral from \eq{ag8eq8}, the
central square is \eq{ag8eq9}, and the remaining two
quadrilaterals are similar. Thus, the outer square of \eq{ag8eq10}
commutes. But this is just \eq{ag8eq4} evaluated at $(\uU',u')$. If
$u:\bar\uU\ra\cX$, $v:\bar\uV\ra\cY$ and $w:\bar\uW\ra\cZ$ are
\'etale atlases then $u':\bar\uU{}'\ra\cX$ is also an \'etale atlas,
and \eq{ag8eq4} evaluated on an atlas implies it in general. This
proves part~(a).

Part (b) is immediate from the definitions. For (c), let
$u:\bar\uU\ra\cX$, $v:\bar\uV\ra\cY$ and $w:\bar\uW\ra\cZ$ be
\'etale. There are $C^\iy$-schemes $\uU',\uV',$ with
$\bar\uU{}'=\bar\uU\t_{g\ci u,\cZ,w}\bar\uW$,
$\bar\uV{}'=\bar\uV\t_{h\ci v,\cZ,w}\bar\uW$, and fibre product
projections $\upi_\uU:\uU'\ra\uU$, $\upi_\uW:\uU'\ra\uW$,
$\upi_\uV:\uV'\ra\uV$, $\upi_\uW:\uV'\ra\uW$. Then $\pi_\uU,\pi_\uV$
are \'etale as $w$ is. Define a $C^\iy$-scheme
$\uT=\uU'\t_{\upi_\uW,\uW,\upi_\uW}\uV'$. The 1-morphisms
$u'\ci\bar\upi_{\uU'}:\bar\uT\ra\cX$ and
$v'\ci\bar\upi_{\uV'}:\bar\uT\ra\cY$ have a natural 2-isomorphism
$g\ci(u'\ci\bar\upi_{\uU'})\Ra h\ci(v'\ci\bar\upi_{\uV'})$
constructed from the 2-isomorphisms in the 2-Cartesian squares
defining $\uU',\uV'$. Thus as $\cW=\cX\t_\cZ\cY$ there is a
1-morphism $t:\bar\uT\ra\cW$, unique up to 2-isomorphism, such that
$u'\ci\bar\upi_{\uU'}\cong e\ci t$ and $v'\ci\bar\upi_{\uV'}\cong
f\ci t$. Also $t$ is \'etale. This gives a 2-commutative diagram
\begin{equation*}
\xymatrix@!0@C=60pt@R=21pt{ & \bar\uT \ar[dl]_(0.6){\bar\upi_{\uU'}}
\ar[ddr]^(0.4){\bar\upi_{\uV'}} \ar[rr]^(0.6){t}
&& \cW \ar[dl]^(0.4){e} \ar[ddr]^{f} \\
\bar\uU{}' \ar[ddr]_{\bar\pi_\uW} \ar[rr]_(0.4){u'} && \cX
\ar[ddr]^(0.35){g} \\ && \bar\uV{}' \ar[rr]^(0.6){v'}
\ar[dl]_(0.4){\bar\upi_\uW} && \cY \ar[dl]^(0.4){h} \\
& \bar\uW \ar[rr]^(0.6){w} && \cZ,}
\end{equation*}
in which the leftmost and rightmost squares are 2-Cartesian.

Applying Theorem \ref{ag5thm4}(b) to the Cartesian square defining
$\uT$ gives an exact sequence in $\qcoh(\uT)$:
\e
\xymatrix@C=12.5pt{ {\raisebox{6pt}{$
\begin{subarray}{l}\ts(\upi_\uW\!\ci\!\upi_{\uU'})^*\\
\ts {}\,\,\, (T^*\uW)\end{subarray}$}}
\ar[rrrrrr]^(0.49){\begin{subarray}{l}\upi_{\uU'}^*(\Om_{\upi_\uW})\ci
I_{\upi_{\uU'},\upi_\uW}(T^*\uW)\op\\
-\upi_{\uV'}^*(\Om_{\upi_\uW})\ci
I_{\upi_{\uV'},\upi_\uW}(T^*\uW)\end{subarray}} &&&&&&
{\raisebox{6pt}{$\begin{subarray}{l}\ts{}\,\,\,
\upi_{\uU'}^*(T^*\uU')\\
\ts\op\upi_{\uV'}^*(T^*\uV')\end{subarray}$}}
\ar[rr]^(0.6){\begin{subarray}{l}\Om_{\upi_{\uU'}}\op\\
\Om_{\upi_{\uV'}}\end{subarray}} && T^*\uT \ar[r] & 0.}\!\!{}
\label{ag8eq11}
\e
By a similar argument to (a), we can use \eq{ag8eq11} to deduce
that \eq{ag8eq5} evaluated at $(\uT,t)$ holds. If
$u:\bar\uU\ra\cX$, $v:\bar\uV\ra\cY$ and $w:\bar\uW\ra\cZ$ are
atlases then $t:\bar\uT\ra\cW$ is an atlas, so this implies
\eq{ag8eq5}, and proves~(c).\I{Deligne--Mumford $C^\iy$-stack!quasicoherent sheaves on|)}\I{Deligne--Mumford $C^\iy$-stack!cotangent sheaf|)}
\end{proof}

\section{\texorpdfstring{Orbifold strata of $C^\iy$-stacks}{Orbifold strata of C∞-stacks}}
\label{ag9}
\I{Deligne--Mumford $C^\iy$-stack!orbifold strata|(}

Let $\cX$ be a Deligne--Mumford $C^\iy$-stack, with topological
space $\cX_\top$. Then each point $[x]\in\cX_\top$ has an isotropy group $\Iso_\cX([x])$,\I{C-stack@$C^\iy$-stack!isotropy group
$\Iso_\cX([x])$} a finite group defined up to isomorphism. For each
finite group $\Ga$ we write $\tcX_{\ci,\top}^\Ga=
\bigl\{[x]\in\cX_\top:\Iso_\cX([x])\cong\Ga\bigr\}$. This is a
locally closed subset of $\cX_\top$, coming from a locally closed
$C^\iy$-substack $\tcX^\Ga_\ci$ of $\cX$ with inclusion $\ti
O{}_\ci^\Ga(\cX):\tcX^\Ga_\ci\ra\cX$, with
\e
\cX_\top=\coprod\nolimits_{\substack{\text{isomorphism classes}\\
\text{of finite groups $\Ga$}}} \tcX_{\ci,\top}^\Ga.
\label{ag9eq1}
\e
One can show that for each $\Ga$, the closure
$\,\ov{\!\tcX}{}_{\ci,\top}^{\,\Ga}$ of $\tcX_{\ci,\top}^\Ga$ in
$\cX_\top$ satisfies
\begin{equation*}
\,\ov{\!\tcX}{}_{\ci,\top}^{\,\Ga}\subseteq\coprod\nolimits_{\substack{
\text{isomorphism classes of finite groups $\De$:}\\
\text{$\Ga$ is isomorphic to a subgroup of
$\De$}}}\tcX_{\ci,\top}^\De.
\end{equation*}
Thus \eq{ag9eq1} is a stratification of $\cX_\top$. The
$\tcX^\Ga_\ci$ are called {\it orbifold strata\/} of~$\cX$.

When $\cX$ is an orbifold,\I{orbifold} as in \S\ref{ag76}, the orbifold strata are manifolds (actually, at the level of $C^\iy$-stacks, the alternative versions $\hcX^\Ga_\ci$ below are manifolds), and are well studied. Orbifold strata of orbifolds come up in areas such as the Atiyah--Singer Index Theorem for orbifolds as in Kawasaki \cite{Kawa}, cobordism of orbifolds as in Druschel \cite{Drus}, String Theory of orbifolds as in Dixon et al.\ \cite{DHVW}, and (quantum) cohomology of orbifolds as in Chen and Ruan~\cite{ChRu1}.

However, very little appears to have been done in considering orbifold strata from the point of view of category theory or stacks, or about orbifold strata of other kinds of Deligne--Mumford stacks. We now define and study orbifold strata of Deligne--Mumford $C^\iy$-stacks. Actually, almost all of \S\ref{ag9} is an exercise in stack theory, not specific to $C^\iy$-stacks. But the author has been unable to find any references on it. 

We will define six variations on $\tcX^\Ga_\ci$ outlined above, Deligne--Mumford
$C^\iy$-stacks written $\cX^\Ga,\tcX^\Ga,\hcX^\Ga$, and open
$C^\iy$-substacks%
\I{C-stack@$C^\iy$-stack!C-substack@$C^\iy$-substack!open}%
\I{C-substack@$C^\iy$-substack!open}
$\cX{}^\Ga_\ci\subseteq\cX^\Ga$, $\tcX^\Ga_\ci\subseteq\tcX^\Ga$,
$\hcX^\Ga_\ci\subseteq \hcX^\Ga$.\G[XGa]{$\cX^\Ga,\tcX^\Ga,\hcX^\Ga,
\cX{}^\Ga_\ci,\tcX^\Ga_\ci,\hcX^\Ga_\ci$}{orbifold strata of a
Deligne--Mumford $C^\iy$-stack $\cX$} The points and
isotropy groups of $\cX^\Ga,\ldots,\hcX^\Ga_\ci$ are given by:
\begin{itemize}
\setlength{\itemsep}{0pt}
\setlength{\parsep}{0pt}
\item[(i)] Points of $\cX^\Ga$ are isomorphism classes
$[x,\rho]$, where $[x]\in\cX_\top$ and
$\rho:\Ga\ra\Iso_\cX([x])$ is an injective morphism, and
$\Iso_{\smash{\cX^\Ga}}([x,\rho])$ is the centralizer of
$\rho(\Ga)$ in $\Iso_\cX([x])$. Points of
$\cX{}^\Ga_\ci\subseteq\cX^\Ga$ are $[x,\rho]$ with $\rho$ an
isomorphism, and $\Iso_{\smash{ \cX{}^\Ga_\ci}}([x,\rho])\cong
C(\Ga)$, the centre of $\Ga$.
\item[(ii)] Points of $\tcX^\Ga$ are pairs $[x,\De]$, where
$[x]\in\cX_\top$ and $\De\subseteq\Iso_\cX([x])$ is isomorphic
to $\Ga$, and $\Iso_{\smash{\tcX^\Ga}}([x,\De])$ is the
normalizer of $\De$ in $\Iso_\cX([x])$. Points of
$\tcX^\Ga_\ci\subseteq\tcX^\Ga$ are $[x,\De]$ with
$\De=\Iso_\cX([x])$, and~$\Iso_{\smash{\tcX^\Ga_\ci}}
([x,\De])\cong\Ga$.
\item[(iii)] Points $[x,\De]$ of $\hcX^\Ga,\hcX^\Ga_\ci$
are the same as for $\tcX^\Ga,\tcX^\Ga_\ci$, but with isotropy
groups $\Iso_{\smash{\hcX^\Ga}}([x,\De])\cong
\Iso_{\smash{\tcX^\Ga}}([x,\De])/\De$
and~$\Iso_{\smash{\hcX^\Ga_\ci}}([x,\De])\cong\{1\}$.
\end{itemize}
There are 1-morphisms $O^\Ga(\cX),\ldots,
\hat\Pi{}^\Ga_\ci(\cX)$\G[OGaXa]{$O^\Ga(\cX),\ti
O^\Ga(\cX),O{}^\Ga_\ci(\cX),\ti O{}^\Ga_\ci(\cX)$}{1-morphisms of
orbifold strata $\cX^\Ga,\ldots,\hcX^\Ga_\ci$ of a Deligne--Mumford
$C^\iy$-stack $\cX$}\G[PiGaXa]{$\ti\Pi^\Ga(\cX),\hat\Pi{}^\Ga(\cX),
\ti\Pi^\Ga_\ci(\cX),\hat\Pi{}^\Ga_\ci(\cX)$}{1-morphisms of orbifold
strata $\cX^\Ga,\ldots,\hcX^\Ga_\ci$ of a Deligne--Mumford
$C^\iy$-stack $\cX$} forming a 2-commutative diagram, where
the columns are inclusions of open $C^\iy$-substacks:%
\I{C-stack@$C^\iy$-stack!C-substack@$C^\iy$-substack!open}%
\I{C-substack@$C^\iy$-substack!open}
\e
\begin{gathered}
\xymatrix@C=55pt@R=6pt{ \cX{}^\Ga_\ci
\ar[rr]^{\ti\Pi{}^\Ga_\ci(\cX)} \ar[dr]_(0.3){O{}^\Ga_\ci(\cX)}
\ar[dd]_\subset \ar@(ul,l)[]_(0.7){\Aut(\Ga)} && \tcX^\Ga_\ci
\ar[r]^{\hat\Pi{}^\Ga_\ci(\cX)} \ar[dl]^(0.3){\ti O^\Ga_\ci(\cX)}
\ar[dd]^\subset  &
*+[r]{\hcX^\Ga_\ci\simeq
\bar{\hat\uX}{}^\Ga_\ci\!\!\!\!\!\!}
\ar@<.5ex>[dd]^\subset \\ & \cX \\
\cX^\Ga \ar[rr]_{\ti\Pi^\Ga(\cX)} \ar[ur]^(0.3){O^\Ga(\cX)}
\ar@(dl,l)[]^(0.7){\Aut(\Ga)} && \tcX^\Ga
\ar[r]_{\hat\Pi{}^\Ga(\cX)} \ar[ul]_(0.3){\ti O^\Ga(\cX)} &
*+[r]{\hcX^\Ga.} }
\end{gathered}
\label{ag9eq2}
\e
Also $\Aut(\Ga)$ acts on $\cX^\Ga,\cX{}^\Ga_\ci$, with
$\tcX^\Ga\simeq[\cX^\Ga/\Aut(\Ga)]$,
$\tcX^\Ga_\ci\simeq[\cX^\Ga_\ci/\Aut(\Ga)]$. Note that there are in general {\it no natural\/ $1$-morphisms\/} from $\hcX^\Ga,\hcX^\Ga_\ci$ to any of $\cX,\cX^\Ga,
\cX^\Ga_\ci,\tcX^\Ga,\tcX^\Ga_\ci$. 

\subsection{\texorpdfstring{The definition of orbifold strata
$\cX^\Ga,\ldots,\hcX^\Ga_\ci$}{The definition of orbifold strata}}
\label{ag91}

We now define the orbifold strata $\cX^\Ga,\ldots,\hcX^\Ga_\ci$ and
study their properties.

\begin{dfn} Let $\cX$ be a Deligne--Mumford
$C^\iy$-stack, and $\Ga$ a finite group. We will explicitly define
another Deligne--Mumford $C^\iy$-stack $\cX^\Ga$. Since $\cX$ is a
stack on the site\I{site} $(\CSch,{\cal J})$, $\cX$ is a category
with a functor $p_\cX:\cX\ra\CSch$ satisfying many conditions. To
define $\cX^\Ga$ we must define another category $\cX^\Ga$ and a
functor~$p_{\smash{\cX^\Ga}}:\cX^\Ga\ra\CSch$.

Define objects of the category $\cX^\Ga$ to be pairs $(A,\rho)$
satisfying:
\begin{itemize}
\setlength{\itemsep}{0pt}
\setlength{\parsep}{0pt}
\item[(a)] $A$ is an object in $\cX$, with $p_\cX(A)=\uU$ for some
object~$\uU\in\CSch$;
\item[(b)] $\rho:\Ga\ra\Aut(A)$ is a group morphism, where $\Aut(A)$
is the group of isomorphisms $a:A\ra A$ in $\cX$, and
$p_\cX\ci\rho(\ga)=\uid_\uU$ for all $\ga\in\Ga$; and
\item[(c)] Let $u$ be a point in $\uU$, and $\uu:\ul{*}\ra\uU$ the
corresponding morphism in $\CSch$. Since $p_\cX:\cX\ra\CSch$ is
a category fibred in groupoids, as in Definition \ref{agAdef5},
there exists a morphism $a_u:A_u\ra A$ in $\cX$ with
$p_\cX(A_u)=\ul{*}$ and $p_\cX(a_u)=\uu$, where $A_u$ is unique
up to isomorphism in~$\cX$.

Having fixed $A_u,a_u$, Definition \ref{agAdef5} also implies
that for each $\ga\in\Ga$ there is a unique isomorphism $\rho_u
(\ga):A_u\ra A_u$ such that $a_u\ci\rho_u(\ga)=\rho(\ga)\ci
a_u:A_u\ra A$, and $p_\cX(\rho_u(\ga))=\uid_{\ul{*}}$. Then
$\rho_u:\Ga\ra\Aut(A_u)$ is a group morphism. We require that
$\rho_u:\Ga\ra\Aut(A_u)$ should be injective for all $u\in\uU$.
This condition is independent of the choice of~$A_u,a_u$.
\end{itemize}

Define morphisms $c:(A,\rho)\ra(B,\si)$ of the category $\cX^\Ga$ to
be morphisms $c:A\ra B$ in $\cX$ satisfying $\si(\ga)\ci
c=c\ci\rho(\ga):A\ra B$ in $\cX$ for all $\ga\in\Ga$. Given
morphisms $c:(A,\rho)\ra(B,\si)$, $d:(B,\si)\ra(C,\tau)$ in
$\cX^\Ga$, define composition $d\ci c:(A,\rho)\ra(C,\tau)$ in
$\cX^\Ga$ to be the composition $d\ci c:A\ra C$ in $\cX$. For each
object $(A,\rho)$ in $\cX^\Ga$, define the identity morphism
$\id_{(A,\rho)}:(A,\rho)\ra(A,\rho)$ in $\cX^\Ga$ to be $\id_A:A\ra
A$ in $\cX$. Define a functor $p_{\smash{\cX^\Ga}}: \cX^\Ga\ra\CSch$
by $p_{\smash{\cX^\Ga}}:(A,\rho)\mapsto \uU=p_\cX(A)$ on objects and
$p_{\smash{\cX^\Ga}}:c\mapsto p_\cX(c)$ on morphisms.

Define $\cX{}^\Ga_\ci$ to be the full subcategory of objects
$(A,\rho)$ in $\cX^\Ga$ such that $\rho_u:\Ga\ra\Aut(A_u)$ in (c)
above is an isomorphism for all $u\in\uU$. Define a functor
$p_{\smash{\cX{}^\Ga_\ci}}=p_\cX\vert_{\smash{\cX{}^\Ga_\ci}}:
\cX{}^\Ga_\ci\ra\CSch$. By Theorem \ref{ag9thm1}(a) below,
$\cX^\Ga$ is a Deligne--Mumford $C^\iy$-stack, and $\cX{}^\Ga_\ci$
is an open $C^\iy$-substack in~$\cX^\Ga$.%
\I{C-stack@$C^\iy$-stack!C-substack@$C^\iy$-substack!open}%
\I{C-substack@$C^\iy$-substack!open}
\label{ag9def1}
\end{dfn}

\begin{dfn} Let $\cX$ be a Deligne--Mumford $C^\iy$-stack, and
$\Ga$ a finite group. Define a category ${\cal P}\tcX^\Ga$ to have
objects pairs $(A,\De)$ satisfying:
\begin{itemize}
\setlength{\itemsep}{0pt}
\setlength{\parsep}{0pt}
\item[(a)] $A$ is an object in $\cX$, with $p_\cX(A)=\uU$ for some
object~$\uU\in\CSch$;
\item[(b)] $\De\subseteq\Aut(A)$ is a subgroup isomorphic to $\Ga$,
where $\Aut(A)$ is the group of isomorphisms $a:A\ra A$ in
$\cX$, and $p_\cX(\de)=\uid_\uU$ for all $\de\in\De$; and
\item[(c)] Let $u$ be a point in $\uU$, and $\uu:\ul{*}\ra\uU$ the
corresponding morphism in $\CSch$. Since $p_\cX:\cX\ra\CSch$ is
a category fibred in groupoids, there exists a morphism
$a_u:A_u\ra A$ in $\cX$ with $p_\cX(A_u)=\ul{*}$ and
$p_\cX(a_u)=\uu$, where $A_u$ is unique up to isomorphism in
$\cX$. For each $\de\in\De$ there is a unique isomorphism
$\de_u:A_u\ra A_u$ such that $a_u\ci\de_u=\de\ci a_u:A_u\ra A$,
and $p_\cX(\de_u)=\uid_{\ul{*}}$. Then $\{\de_u:\de\in\De\}$ is
a subgroup of $\Aut(A_u)$, and $\de\mapsto\de_u$ is a group
morphism. We require that the map $\de\mapsto\de_u$ should be
{\it injective\/} for all $u\in\uU$.
\end{itemize}

Define morphisms $(A,\De)\ra(A',\De')$ of ${\cal P}\tcX^\Ga$ to be
pairs $(c,\io)$, where $c:A\ra A'$ is a morphism in $\cX$ and
$\io:\De\ra\De'$ is a group isomorphism, satisfying $\io(\de)\ci
c=c\ci\de:A\ra A'$ for all $\de\in\De$. Given morphisms
$(c,\io):(A,\De)\ra(A',\De')$, $(c',\io'):(A',\De')\ra(A'',\De'')$
in ${\cal P}\cX^\Ga$, define composition $(c',\io')\ci(c,\io)=(c'\ci
c,\io'\ci\io)$. Define
identities~$\id_{(A,\De)}=(\id_A,\id_\De):(A,\De)\ra(A,\De)$.

Define a functor $p_{\smash{{\cal P}\tcX^\Ga}}:{\cal P}\tcX^\Ga
\ra\CSch$ by $p_{\smash{{\cal P}\tcX^\Ga}}:
(A,\De)\mapsto\uU=p_\cX(A)$ on objects and $p_{\smash{{\cal
P}\tcX^\Ga}}:(c,\io)\mapsto p_\cX(c)$ on morphisms. Define ${\cal
P}\tcX^\Ga_\ci$ to be the full subcategory of objects $(A,\De)$ in
${\cal P}\tcX^\Ga$ with $\{\de_u:\de\in\De\}=\Aut(A_u)$ in (c) above
for all $u\in\uU$. Define a functor $p_{\smash{{\cal
P}\tcX^\Ga_\ci}}= p_{\smash{{\cal P}\tcX^\Ga}}\vert_{\smash{{\cal
P}\tcX^\Ga_\ci}}:{\cal P}\tcX^\Ga_\ci\ra\CSch$.

Although ${\cal P}\tcX^\Ga,{\cal P}\tcX^\Ga_\ci$ are in general not
$C^\iy$-stacks, they are prestacks on the site\I{site} $(\CSch,{\cal
J})$ in the sense of Definition \ref{agAdef6} (that is, morphisms in
${\cal P}\tcX^\Ga,{\cal P}\tcX^\Ga_\ci$ satisfy a sheaf-like
condition over $(\CSch,{\cal J})$, but objects may not). Thus,
${\cal P}\tcX^\Ga,{\cal P}\tcX^\Ga_\ci$ have
stackifications\I{prestack!stackification|(}
$\tcX^\Ga,\tcX^\Ga_\ci$, defined up to equivalence, which are stacks
on the site $(\CSch,{\cal J})$. By Theorem \ref{ag9thm1}(a) below,
$\tcX^\Ga$ is a Deligne--Mumford $C^\iy$-stack, and
$\tcX^\Ga_\ci$ is an open $C^\iy$-substack in~$\tcX^\Ga$.%
\I{C-stack@$C^\iy$-stack!C-substack@$C^\iy$-substack!open}%
\I{C-substack@$C^\iy$-substack!open}

Let $(A,\De),(A',\De')$ be objects in ${\cal P}\tcX^\Ga$. Define a
right action of $\De$ on morphisms $(c,\io):(A,\De)\ra(A',\De')$ in
${\cal P}\tcX^\Ga$ by $(c,\io)\cdot\de=(c\ci\de,\io^\de)$, where
$\io^\de:\De\ra\De'$ maps~$\io^\de:\ep\mapsto\io(\de\ci\ep
\ci\de^{-1})$. If $(c',\io'):(A',\De')\ra(A'',\De'')$ is another
morphism and $\de'\in\De'$, it is easy to show that
\e
\bigl((c',\io')\cdot\de'\bigr)\ci\bigl((c,\io)\cdot
\de\bigr)=\bigl((c',\io')\ci(c,\io)\bigr)\cdot(\io^{-1}(\de')\ci\de).
\label{ag9eq3}
\e

Define a category ${\cal P}\hcX^\Ga$ to have objects $(A,\De)$ as in
${\cal P}\tcX^\Ga$, and to have morphisms $(c,\io)\De:
(A,\De)\ra(A',\De')$ for morphisms $(c,\io):(A,\De)\ra(A',\De')$ in
${\cal P}\tcX^\Ga$, where $(c,\io)\De=\{(c,\io)\cdot\de:
\de\in\De\}$ is the $\De$-orbit of $(c,\io)$. Define composition of
morphisms in ${\cal P}\hcX^\Ga$ by $\bigl((c',\io')
\De'\bigr)\ci\bigl((c,\io)\De\bigr)=\bigl((c',\io')\ci(c,\io)
\bigr)\De$, where $(c',\io')\ci(c,\io)$ is composition of morphisms
in ${\cal P}\tcX^\Ga$. Equation \eq{ag9eq3} shows this is
well-defined. Define identity morphisms
$\id_{(A,\De)}=(\id_A,\id_\De)\De:(A,\De) \ra(A,\De)$ in ${\cal
P}\hcX^\Ga$. Define a functor $p_{\smash{{\cal P}\hcX^\Ga}}:{\cal
P}\hcX^\Ga\ra\CSch$ to map $(A,\De)\mapsto p_\cX(A)$ on objects and
$(c,\io)\De\mapsto p_\cX(c)$ on morphisms.

Define ${\cal P}\hcX^\Ga_\ci$ to be the full subcategory of ${\cal
P}\hcX^\Ga$ whose objects are objects of ${\cal P}\tcX^\Ga_\ci$, and
define $p_{\smash{{\cal P}\hcX {}^\Ga_\ci}}=p_{\smash{{\cal
P}\hcX^\Ga}}\vert_{\smash{{\cal P}\hcX^\Ga_\ci}}:{\cal
P}\hcX^\Ga_\ci\ra\CSch$. Then as for ${\cal P}\hcX^\Ga,{\cal
P}\hcX^\Ga_\ci$ are prestacks on $(\CSch,{\cal J})$, and by Theorem
\ref{ag9thm1}(a) their stackifications $\hcX^\Ga,\hcX^\Ga_\ci$ are
Deligne--Mumford $C^\iy$-stacks. Furthermore, by Theorem
\ref{ag9thm1}(g) below $\hcX^\Ga_\ci$ has trivial isotropy groups,
so by Theorem \ref{ag7thm7} there is a $C^\iy$-scheme
$\hat\uX{}^\Ga_\ci$, unique up to isomorphism, such
that~$\hcX^\Ga_\ci\simeq\bar{\hat\uX} {}^\Ga_\ci$.
\label{ag9def2}
\end{dfn}

Next, we define all the 1-morphisms in~\eq{ag9eq2}.

\begin{dfn} In Definitions \ref{ag9def1} and \ref{ag9def2},
for $\La\in\Aut(\Ga)$ define functors
\begin{gather*}
L^\Ga(\La,\cX):\cX^\Ga\longra\cX^\Ga,\quad
O^\Ga(\cX):\cX^\Ga\longra\cX, \quad {\cal P}\ti O{}^\Ga(\cX):{\cal
P}\tcX^\Ga\longra\cX, \\
{\cal P}\ti\Pi{}^\Ga(\cX):\cX^\Ga\longra {\cal
P}\tcX^\Ga\quad\text{and}\quad {\cal P}\hat\Pi{}^\Ga(\cX):{\cal
P}\tcX^\Ga\longra {\cal P}\hcX^\Ga
\end{gather*}
on objects by
\begin{gather*}
L^\Ga(\La,\cX):(A,\rho)\mapsto (A,\rho\ci\La^{-1}),\;
O^\Ga(\cX):(A,\rho)\mapsto A,\;
{\cal P}\ti O{}^\Ga(\cX):(A,\De)\mapsto A,\\
{\cal P}\ti\Pi{}^\Ga(\cX):(A,\rho)\longmapsto\bigl(A,\rho(\Ga)
\bigr) \quad\text{and}\quad {\cal P}\hat\Pi{}^\Ga(\cX):(A,\De)
\longmapsto(A,\De),
\end{gather*}
and on morphisms by
\begin{gather*}
L^\Ga(\La,\cX):c\longmapsto c,\quad O^\Ga(\cX):c\longmapsto c,\quad
{\cal P}\ti O{}^\Ga(\cX):(c,\io)\longmapsto c, \\
\begin{aligned}
{\cal P}\ti\Pi{}^\Ga(\cX)&:c\mapsto (c,\si\ci\rho^{-1}) &&\text{on
$c:(A,\rho)\ra(B,\si)$, and}\\
{\cal P}\hat\Pi^\Ga(\cX)&:(c,\io)\mapsto(c,\io)\De &&\text{on
$(c,\io):(A,\De)\ra(A',\De')$.}
\end{aligned}
\end{gather*}

It is trivial to check that these are all functors, and commute with
the projections $p_\cX,p_{\smash{\cX^\Ga}}, p_{\smash{\tcX^\Ga}},
p_{\smash{\hcX^\Ga}}$ to $\CSch$. Hence $L^\Ga(\La,\cX),O^\Ga(\cX)$
are 1-morphisms of $C^\iy$-stacks. Note that $L^\Ga(\La,\cX)\ci
L^\Ga(\La',\cX)=L^\Ga(\La\ci\La',\cX)$ and
$L^\Ga(\La^{-1},\cX)=L^\Ga(\La,\cX)^{-1}$ for
$\La,\La'\in\Aut(\Ga)$, so $L^\Ga(-,\cX)$ is an action of
$\Aut(\Ga)$ on $\cX^\Ga$ by 1-isomorphisms.

Now ${\cal P}\ti O{}^\Ga(\cX),{\cal P}\ti\Pi{}^\Ga(\cX),{\cal
P}\hat\Pi{}^\Ga(\cX)$ are 1-morphisms of prestacks, so stackifying
gives 1-morphisms of $C^\iy$-stacks $\ti
O{}^\Ga(\cX):\tcX^\Ga\ra\cX$, $\ti\Pi{}^\Ga(\cX):\cX^\Ga\ra
\tcX^\Ga$, $\hat\Pi{}^\Ga(\cX):\tcX^\Ga\ra\hcX^\Ga$. Define
1-morphisms of $C^\iy$-stacks
\begin{gather*}
L^\Ga_\ci(\La,\cX):\cX^\Ga_\ci\longra\cX^\Ga_\ci,\quad
O^\Ga_\ci(\cX):\cX^\Ga_\ci\longra\cX, \quad \ti O{}^\Ga_\ci(\cX):
\tcX^\Ga_\ci\longra\cX, \\
\ti\Pi{}^\Ga_\ci(\cX):\cX^\Ga_\ci\longra
\tcX^\Ga_\ci\quad\text{and}\quad
\hat\Pi{}^\Ga_\ci(\cX):\tcX^\Ga_\ci\longra\hcX^\Ga_\ci,
\end{gather*}
to be the restrictions of $L^\Ga(\La,\cX),\ldots,\hat\Pi{}^\Ga(\cX)$
to the open $C^\iy$-substacks $\cX^\Ga_\ci,\tcX^\Ga_\ci$. Then
$L^\Ga_\ci(-,\cX)$ is an action of $\Aut(\Ga)$ on $\cX^\Ga_\ci$ by
1-isomorphisms.

It is easy to see that the analogue of \eq{ag9eq2} with prestacks
${\cal P}\tcX^\Ga,\ldots,{\cal P}\hcX^\Ga_\ci$ and prestack
1-morphisms ${\cal P}\ti O{}^\Ga(\cX),\ldots,{\cal
P}\hat\Pi{}^\Ga_\ci(\cX)$ is strictly commutative, i.e.\
2-commutative with identity 2-morphisms. Thus on stackifying,
\eq{ag9eq2} commutes up to canonical 2-isomorphisms.
\label{ag9def3}
\end{dfn}

\begin{dfn} Let the 1-morphisms $O^\Ga(\cX):\cX^\Ga\ra\cX$,
$O^\Ga_\ci(\cX):\cX^\Ga_\ci\ra\cX$ be as in Definition
\ref{ag9def3}. We will define actions of $\Ga$ on
$O^\Ga(\cX),O^\Ga_\ci(\cX)$ by 2-morphisms. For each $\ga\in\Ga$ and
$(A,\rho)\in\cX^\Ga$, define an isomorphism
$E^\Ga(\ga,\cX)(A,\rho):O^\Ga(\cX)(A,\rho)\ra O^\Ga(\cX)(A,\rho)$ in
$\cX$ by $E^\Ga(\ga,\cX)=\rho(\ga):A\ra A$. If
$c:(A,\rho)\ra(B,\si)$ is a morphism in $\cX^\Ga$ then
\begin{equation*}
O^\Ga(\cX)(c)\ci E^\Ga(\ga,\cX)(A,\rho)\!=\!c\ci\rho(\ga)\!=\!
\si(\ga)\ci\rho\!=\!E^\Ga(\ga,\cX)(B,\si)\ci O^\Ga(\cX)(c).
\end{equation*}
Hence $E^\Ga(\ga,\cX):O^\Ga(\cX)\Ra O^\Ga(\cX)$ is a natural
isomorphism of functors. Since $p_\cX(E^\Ga(\ga,\cX)(A,\rho))=
p_\cX(\rho(\ga))=\uid_{p_\cX(A)}$ for all $(A,\rho)$, we have
$p_\cX*E^\Ga(\ga,\cX)=p_{\smash{\cX^\Ga}}$, so $E^\Ga(\ga,\cX):
O^\Ga(\cX)\Ra O^\Ga(\cX)$ is a 2-morphism of $C^\iy$-stacks. Clearly
$E^\Ga(1,\cX)=\id_{\smash{O^\Ga(\cX)}}$ and $E^\Ga(\ga,\cX)\od
E^\Ga(\de,\cX)=E^\Ga(\ga\de,\cX)$\I{2-category!2-morphism!vertical
composition} for all $\ga,\de\in\Ga$, so
$E^\Ga(-,\cX):\Ga\ra\Aut\bigl(O^\Ga(\cX) \bigr)$ is a group
morphism. We define 2-morphisms
$E^\Ga_\ci(\ga,\cX):O^\Ga_\ci(\cX)\Ra O^\Ga_\ci(\cX)$ for
$\ga\in\Ga$ in the same way.
\label{ag9def4}
\end{dfn}

Here are some basic properties of these definitions.

\begin{thm}{\bf(a)} $\cX^\Ga,\tcX^\Ga,\hcX^\Ga$ are
Deligne--Mumford\/ $C^\iy$-stacks, and\/ $\cX{}^\Ga_\ci\subseteq
\cX^\Ga,$ $\tcX^\Ga_\ci\subseteq\tcX^\Ga,$ $\hcX^\Ga_\ci\subseteq
\hcX^\Ga$ are open $C^\iy$-substacks. Also
$\tcX^\Ga\simeq[\cX^\Ga/\Aut(\Ga)]$ and\/
$\tcX^\Ga_\ci\simeq[\cX^\Ga_\ci/\Aut(\Ga)],$ where the
$\Aut(\Ga)$-actions are $L^\Ga(-,\cX)$ and\/~$L^\Ga_\ci(-,\cX)$.%
\I{C-stack@$C^\iy$-stack!C-substack@$C^\iy$-substack!open}%
\I{C-substack@$C^\iy$-substack!open}
\smallskip

\noindent{\bf(b)} If\/ $\cX$ is
separated,\I{C-stack@$C^\iy$-stack!separated} locally
fair,\I{Deligne--Mumford $C^\iy$-stack!locally fair} locally
finitely presented,\I{Deligne--Mumford $C^\iy$-stack!locally
finitely presented} or second countable,\I{Deligne--Mumford
$C^\iy$-stack!second countable} then\/
$\cX^\Ga,\cX{}^\Ga_\ci,\tcX^\Ga,
\tcX^\Ga_\ci,\ab\hcX^\Ga,\ab\hcX^\Ga_\ci$ are separated, locally
fair, locally finitely presented, or second countable, respectively.

If\/ $\cX$ is compact then $\cX^\Ga,\tcX^\Ga,\hcX^\Ga$ are compact.
\smallskip

\noindent{\bf(c)} Points of\/ $\cX^\Ga_\top$ are equivalence classes
$[x,\rho]$ of pairs $(x,\rho),$ where $x:\bar{\ul *}\ra\cX$ is a
$1$-morphism and $\rho:\Ga\ra\Aut(x)$ is an injective group morphism
into the group $\Aut(x)$ of\/ $2$-isomorphisms $\eta:x\Ra x,$ and
pairs $(x,\rho),(x',\rho')$ are equivalent if there exists $\ze:x\Ra
x'$ with\/ $\ze\od\rho(\ga)=\rho'(\ga)\od\ze:x\Ra x'$ for all\/
$\ga\in\Ga$.\I{2-category!2-morphism!vertical composition} They have
isotropy groups\I{C-stack@$C^\iy$-stack!isotropy group
$\Iso_\cX([x])$|(}
\begin{equation*}
\Iso_{\smash{\cX^\Ga}}([x,\rho])=\bigl\{\eta\in\Aut(x):
\rho(\ga)=\eta\rho(\ga)\eta^{-1}\;\>\forall \ga\in\Ga\bigr\}.
\end{equation*}
Points of\/ $\cX^\Ga_{\ci,\top}$ are $[x,\rho]$ with\/
$\rho:\Ga\ra\Aut(x)$ an isomorphism, and have canonical
isomorphisms\/ $\Iso_{\smash{\cX^\Ga_\ci}}([x,\rho])\cong C(\Ga),$
where $C(\Ga)$ is the centre of\/~$\Ga$.
\smallskip

\noindent{\bf(d)} Points of\/ $\tcX^\Ga_\top$ are equivalence
classes $[x,\De]$ of pairs $(x,\De),$ where $x:\bar{\ul *}\ra\cX$ is
a $1$-morphism and\/ $\De\subseteq\Aut(x)$ is a subgroup isomorphic
to $\Ga,$ and pairs $(x,\De),(x',\De')$ are equivalent if there
exists a $2$-isomorphism $\ze:x\Ra x'$ with\/
$\De'=\ze\od\De\od\ze^{-1}$. They have isotropy groups
\begin{equation*}
\Iso_{\smash{\tcX^\Ga}}([x,\De])\cong\bigl\{\eta\in\Aut(x):\De=
\eta\De\eta^{-1}\bigr\}.
\end{equation*}
Points of\/ $\tcX^\Ga_{\ci,\top}$ are $[x,\De]$ with\/
$\De=\Aut(x),$ and have non-canonical
isomorphisms\/~$\Iso_{\smash{\tcX^\Ga_\ci}}([x,\De])\cong\Ga$.
\smallskip

\noindent{\bf(e)} As topological spaces $\hcX^\Ga_\top=
\tcX^\Ga_\top$ and\/ $\hcX^\Ga_{\ci,\top}= \tcX^\Ga_{\ci,\top},$
and\/ $\hat\Pi{}^\Ga(\cX)_\top,\ab \hat\Pi{}^\Ga_\ci(\cX)_\top$ are
the identity maps. For $[x,\De]\in\hcX^\Ga_\top$ we have
\begin{equation*}
\Iso_{\smash{\hcX^\Ga}}([x,\De])\cong\bigl\{\eta\in\Aut(x):\De=
\eta\De\eta^{-1}\bigr\}/\De.
\end{equation*}
Also\/ $\Iso_{\smash{\hcX^\Ga_\ci}}([x,\De])=\{1\}$ for all\/
$[x,\De]\in\hcX^\Ga_{\ci,\top},$ so $\hcX^\Ga_\ci$ is a
$C^\iy$-scheme.\I{C-stack@$C^\iy$-stack!isotropy group
$\Iso_\cX([x])$|)}\I{C-stack@$C^\iy$-stack!is a $C^\iy$-scheme}
\smallskip

\noindent{\bf(f)} $L^\Ga(\La,\cX),\hskip 1pt plus 2pt
L^\Ga_\ci(\La,\cX),\hskip 1pt plus 2pt O^\Ga(\cX),\hskip 1pt plus
2pt O{}^\Ga_\ci(\cX),\hskip 1pt plus 2pt \ti O{}^\Ga(\cX), \hskip
1pt plus 2pt \ti O{}^\Ga_\ci(\cX),\hskip 1pt plus 2pt
\ti\Pi{}^\Ga(\cX),\hskip 1pt plus 2pt \ti\Pi{}^\Ga_\ci(\cX)$ are all
representable, but\/ $\hat\Pi{}^\Ga(\cX),\hat\Pi{}^\Ga_\ci(\cX)$ in general are not
representable.\I{C-stack@$C^\iy$-stack!representable 1-morphism}
\smallskip

\noindent{\bf(g)} $L^\Ga(\La,\cX),\hskip 1pt plus 2pt
L^\Ga_\ci(\La,\cX),\hskip 1pt plus 2pt O^\Ga(\cX),\hskip 1pt plus
2pt \ti O{}^\Ga(\cX),\hskip 1pt plus 2pt \ti\Pi{}^\Ga(\cX),\hskip
1pt plus 2pt \ti\Pi{}^\Ga_\ci(\cX),\hskip 1pt plus 2pt
\hat\Pi{}^\Ga(\cX),\hskip 1pt plus 2pt \hat\Pi{}^\Ga_\ci(\cX)$ are
all proper,\I{C-stack@$C^\iy$-stack!proper 1-morphism} but\/
$O{}^\Ga_\ci(\cX),\ti O{}^\Ga_\ci(\cX)$ in general are not.
\smallskip

\noindent{\bf(h)} $O^\Ga_\ci (\cX)_\top:\cX^\Ga_{\ci,\top}\ra
\cX_\top$ takes $\md{\Aut(\Ga)}\cdot\md{C(\Ga)}/\md{\Ga}$ points
$[x,\rho]$ of\/ $\cX^\Ga_{\ci,\top}$ to each point\/
$[x]\in\cX_\top$ with\/ $\Iso_\cX([x])\cong\Ga$. Also\/ $\ti
O{}^\Ga_\ci (\cX)_\top:\tcX^\Ga_{\ci,\top}\ra \cX_\top$ is a
bijection with the subset of\/ $[x]\in\cX_\top$
with\/~$\Iso_\cX([x])\cong\Ga$.
\label{ag9thm1}
\end{thm}

\begin{proof} For (a), we first prove that $\cX^\Ga$ is a
Deligne--Mumford $C^\iy$-stack. The {\it inertia stack\/}\I{inertia
stack} of $\cX$ is the fibre product\I{C-stack@$C^\iy$-stack!fibre
products} $\cI_\cX=\cX\t_{\De_\cX,\cX\t\cX,\De_\cX}\cX$, where
$\De_\cX:\cX\ra\cX\t\cX$ is the diagonal 1-morphism. There is a
canonical construction of fibre products of stacks. Taking $\cI_\cX$
to be given by this construction, by definition objects of the
category $\cI_\cX$ are triples $(A,B,c)$ where $A,B$ are objects in
$\cX$ with $p_\cX(A)=p_\cX(B)=\uU$ in $\CSch$, and $c:\De_\cX(A)\ra
\De_\cX(B)$ is a morphism in $\cX\t\cX$ with $p_{\cX\t\cX}(c)=
\uid_\uU$. But $\De_\cX(A)=(A,A)$ and $\De_\cX(B)=(B,B)$, so
$c=(c_1,c_2)$ for $c_1,c_2:A\ra B$ morphisms in $\cX$
with~$p_\cX(c_i)=\uid_\uU$.

Thus we may write objects of $\cI_\cX$ as quadruples $(A,B,c,d)$,
where $A,B$ are objects in $\cX$ with $p_\cX(A)=p_\cX(B)=\uU$, and
$c,d:A\ra B$ are isomorphisms in $\cX$ with $p_\cX(c)=p_\cX(d)
=\uid_\uU$. Morphisms $(A,B,c,d)\ra (A',B',c',d')$ in $\cI_\cX$ are
pairs $(a,b)$ with $a:A\ra A'$ and $b:B\ra B'$ morphisms in $\cX$
such that $b\ci c=c'\ci a$ and $b\ci d=d'\ci a$. This forces
$p_\cX(a)=p_\cX(b)$. The functor $p_{\cI_\cX}:\cI_\cX\ra\CSch$ acts
by $p_{\cI_\cX}:(A,B,c,d)\mapsto p_\cX(A)=p_\cX(B)$ on objects and
$p_{\cI_\cX}:(a,b)\mapsto p_\cX(a)=p_\cX(b)$ on morphisms.

Write $i_\cX:\cX\ra\cI_\cX$ for the 1-morphism mapping $A\mapsto
(A,A,\id_A,\id_A)$ on objects and $a\mapsto(a,a)$ on morphisms.
Since $\cX$ is Deligne--Mumford, $i_\cX$ is an equivalence with an
open and closed $C^\iy$-substack $i_\cX(\cX)$ in $\cI_\cX$. Here
$i_\cX(\cX)$ is the subcategory of objects in $\cI_\cX$ isomorphic
to some $(A,A,\id_A,\id_A)$. Thus $i_\cX(\cX)$ is the full
subcategory of objects $(A,B,c,d)$ in $\cI_\cX$ with~$c=d$.

Since $i_\cX(\cX)$ is open and closed in $\cI_\cX$, its complement
$\cJ_\cX=\cI_\cX\sm i_\cX(\cX)$ as a $C^\iy$-stack is also an open
and closed $C^\iy$-substack in $\cI_\cX$. As a subcategory,
$\cJ_\cX$ is not simply the complement of the subcategory
$i_\cX(\cX)$. Instead, $\cJ_\cX$ is the full subcategory of objects
$(A,B,c,d)$ in $\cI_\cX$ satisfying the following condition $(*)$
analogous to Definition~\ref{ag9def1}(c):
\begin{itemize}
\setlength{\itemsep}{0pt}
\setlength{\parsep}{0pt}
\item[$(*)$] Write $\uU=p_\cX(A)=p_\cX(B)$, and let $u\in\uU$,
and $\uu:\ul{*}\ra\uU$ the corresponding morphism in $\CSch$.
Since $p_\cX:\cX\ra\CSch$ is a category fibred in groupoids,
there exist $a_u:A_u\ra A$, $b_u:B_u\ra B$ in $\cX$ with
$p_\cX(A_u)=p_\cX(B_u)=\ul{*}$ and $p_\cX(a_u)=p_\cX(b_u)=\uu$,
and unique isomorphisms $c_u,d_u:A_u\ra B_u$ such that $a_u\ci
c_u=c\ci a_u$ and $a_u\ci d_u=d\ci a_u$, and
$p_\cX(c_u)=p_\cX(d_u)=\uid_{\ul{*}}$. We require that $c_u\ne
d_u$ for all $u\in\uU$.
\end{itemize}

Now form the product $\prod_{\ga\in\Ga}\cX$ of $\md{\Ga}$ copies of
$\cX$, and write $\De_\cX^\Ga: \cX\ra\prod_{\ga\in\Ga}\cX$ for the
diagonal 1-morphism. Consider the $C^\iy$-stack fibre
product\I{C-stack@$C^\iy$-stack!fibre products}
\begin{equation*}
\cY=\cX\t_{\De_\cX^\Ga,\prod_{\ga\in\Ga}\cX,\De_\cX^\Ga}\cX.
\end{equation*}
It is a Deligne--Mumford $C^\iy$-stack by Theorem \ref{ag7thm1}. As
for $\cI_\cX$, we can take objects of $\cY$ to be
$(\md{\Ga}\!+\!2)$-tuples $(A,B,c_\ga:\ga\in\Ga)$, where $A,B$ are
objects in $\cX$ with $p_\cX(A)=p_\cY(B)=\uU$, and $c_\ga:A\ra B$
for $\ga\in\Ga$ are isomorphisms in $\cX$ with
$p_\cX(c_\ga)=\uid_\uU$. Morphisms $(A,B,c_\ga:\ga\in\Ga)\ra
(A',B',c_\ga':\ga\in\Ga)$ in $\cY$ are pairs $(a,b)$ with $a:A\ra
A'$ and $b:B\ra B'$ morphisms in $\cX$ such that $b\ci
c_\ga=c_\ga'\ci a:A\ra B'$ for all $\ga\in\Ga$. The functor
$p_\cY:\cY\ra\CSch$ acts by $p_\cY:(A,B,c_\ga:\ga\in\Ga)\mapsto
p_\cX(A)=p_\cX(B)$ on objects and $p_\cY:(a,b)\mapsto
p_\cX(a)=p_\cX(b)$ on morphisms.

For $\de,\ep\in\Ga$ define $K_{\de,\ep}:\cY\ra \cI_\cX$ to map
$(A,B,c_\ga:\ga\in\Ga)\mapsto(A,B,c_{\de\ep},c_\de\ci c_1^{-1}\ci
c_\ep)$ on objects and $(a,b)\mapsto(a,b)$ on morphisms. It is easy
to show that $K_{\de,\ep}$ is a functor, with $p_{\cI_\cX}\ci
K_{\de,\ep}=p_\cY$. Hence $K_{\de,\ep}:\cY\ra\cI_\cX$ is a
1-morphism of Deligne--Mumford $C^\iy$-stacks. Thus
$K_{\de,\ep}^{-1}\bigl(i_\cX(\cX)\bigr)$ is an open and closed
$C^\iy$-substack in $\cY$, since $i_\cX(\cX)$ is open and closed
in~$\cI_\cX$.

Similarly, for $\de\ne\ep\in\Ga$, define $L_{\de,\ep}:\cY\ra
\cI_\cX$ to map $(A,B,c_\ga:\ga\in\Ga)\mapsto(A,B,c_\de,c_\ep)$ on
objects and $(a,b)\mapsto(a,b)$ on morphisms. Then $L_{\de,\ep}:\cY
\ra\cI_\cX$ is a 1-morphism, so $L_{\de,\ep}^{-1}(\cJ_\cX)$ is an
open and closed $C^\iy$-substack in $\cY$, since $\cJ_\cX$ is open
and closed in $\cI_\cX$. Define
\begin{equation*}
\cY'=\bigcap_{\de,\ep\in\Ga}K_{\de,\ep}^{-1}\bigl(i_\cX(\cX)\bigr)
\cap\bigcap_{\de\ne\ep\in\Ga}L_{\de,\ep}^{-1}\bigl(\cJ_\cX\bigr).
\end{equation*}
Then $\cY'$ is an open and closed $C^\iy$-substack in $\cY$, as it
is a finite intersection of open and closed $C^\iy$-substacks
in~$\cY$.

Define a functor $M:\cX^\Ga\ra\cY'$ to map $M:(A,\rho)\mapsto
(A,A,\rho(\ga):\ga\in\Ga)$ on objects and $M:a\mapsto (a,a)$ on
morphisms. The nontrivial claim here is that if $(A,\rho)$ is an
object in $\cX^\Ga$ then $M\bigl((A,\rho)\bigr)=(A,A,\rho(\ga):
\ga\in\Ga)$ is an object in $\cY'$. The reason for this is that as
$\rho:\Ga\ra\Aut(A)$ is a group morphism, for each $\de,\ep\in\Ga$
we have $\rho(\de\ep)=\rho(\de)\rho(\ep)=\rho(\de)\rho(1)^{-1}
\rho(\ep)$, so $(A,A,\rho(\ga):\ga\in\Ga)$ lies in
$K_{\de,\ep}^{-1}\bigl(i_\cX(\cX)\bigr)$. Also, in Definition
\ref{ag9def1}(c) $\rho_u:\Ga\ra\Aut(A_u)$ is injective, so
$\rho_u(\de)\ne\rho_u(\ep)$ for $\de\ne\ep\in\Ga$. This is
equivalent to condition $(*)$ for $L_{\de,\ep}\bigl((A,A,\rho(\ga):
\ga\in\Ga)\bigr)$, so $(A,A,\rho(\ga):\ga\in\Ga)$ lies
in~$L_{\de,\ep}^{-1}\bigl(\cJ_\cX\bigr)$.

Similarly, define a functor $N:\cY'\ra\cX^\Ga$ to map
$N:(A,B,c_\ga:\ga\in\Ga)\mapsto(A,\rho)$ on objects, where we define
$\rho(\ga)=c_1^{-1}\ci c_\ga$ for $\ga\in\Ga$, and to map
$N:(a,b)\mapsto a$ on morphisms. The nontrivial claim is that if
$(A,B,c_\ga:\ga\in\Ga)$ is an object in $\cY'$ then
$N\bigl((A,B,c_\ga:\ga\in\Ga)\bigr)=(A,\rho)$ is an object in
$\cX^\Ga$. This holds because $(A,B,c_\ga:\ga\in\Ga)\in
K_{\de,\ep}^{-1}(i_\cX(\cX))$ forces $\rho(\de\ep)=
\rho(\de)\rho(\ep)$ for all $\de,\ep$, so $\rho:\Ga\ra\Aut(A)$ is a
group morphism, and $(A,B,c_\ga:\ga\in\Ga)\in
L_{\de,\ep}^{-1}(\cJ_\cX)$ for $\de\ne\ep$ forces
$\rho_u(\de)\ne\rho_u(\ep)$ in Definition \ref{ag9def1}(c), so
$\rho_u$ is injective.

Now $N\ci M=\id_{\smash{\cX^\Ga}}$, and there is a natural
transformation $\eta:M\ci N\Ra\id_{\smash{\cY'}}$ acting by
$\eta:(A,B,c_\ga:\ga\in\Ga)\mapsto(\id_A,c_1)$. So $\cX^\Ga,\cY'$
are equivalent categories. Also $p_{\smash{\cY'}}\ci
M=p_{\smash{\cX^\Ga}}$ and $p_{\smash{\cX^\Ga}}\ci
N=p_{\smash{\cY'}}$. Therefore $M,N$ define equivalences of
$C^\iy$-stacks, so as $\cY'$ is a Deligne--Mumford $C^\iy$-stack,
$\cX^\Ga$ is also a Deligne--Mumford $C^\iy$-stack equivalent to
$\cY'$. This proves the first part of~(a).

To see that $\cX{}^\Ga_\ci$ is an open $C^\iy$-substack%
\I{C-stack@$C^\iy$-stack!C-substack@$C^\iy$-substack!open}%
\I{C-substack@$C^\iy$-substack!open} of $\cX^\Ga$, note that the map
$\cX_\top\ra\N$ mapping $[x]\mapsto\bmd{\Iso_\cX([x])}$ is upper
semicontinuous, so the subset of points $[x]$ in $\cX_\top$ with
$\bmd{\Iso_\cX([x])}\le\md{\Ga}$ is open, and corresponds to an open
$C^\iy$-substack $\cX_{\le\md{\Ga}}$ in $\cX$. But then
$\cX{}^\Ga_\ci\simeq\cX^\Ga\t_{O^\Ga(\cX),\cX,{\rm inc}}
\cX_{\le\md{\Ga}}$, so $\cX{}^\Ga_\ci$ is the open $C^\iy$-substack
in $\cX^\Ga$ corresponding to $\cX_{\le\md{\Ga}}$ in $\cX$, as we
have to prove.

Now $L^\Ga(-,\cX)$ defines an action of the finite group $\Aut(\Ga)$
on the Deligne--Mumford $C^\iy$-stack $\cX^\Ga$ by 1-isomorphisms,
so we may form the quotient $C^\iy$-stack $[\cX^\Ga/\Aut(\Ga)]$,
which is also a Deligne--Mumford $C^\iy$-stack. To define
$[\cX^\Ga/\Aut(\Ga)]$ we first define a prestack $\cX^\Ga/\Aut(\Ga)$
which is the quotient of the category $\cX^\Ga$ by $\Aut(\Ga)$, and
then $[\cX^\Ga/\Aut(\Ga)]$ is its stackification. Since ${\cal
P}\tcX^\Ga$ was defined to be equivalent to $\cX^\Ga/\Aut(\Ga)$, its
stackification $\tcX^\Ga$ is equivalent to $[\cX^\Ga/\Aut(\Ga)]$.
This proves that $\tcX^\Ga$ is a Deligne--Mumford $C^\iy$-stack and
$\tcX^\Ga \simeq[\cX^\Ga/\Aut(\Ga)]$, as in (a). Similarly
$\tcX^\Ga_\ci\subseteq\tcX^\Ga$ is an open $C^\iy$-substack,
and~$\tcX^\Ga_\ci\simeq[\cX^\Ga_\ci/\Aut(\Ga)]$.

To show $\hcX^\Ga$ is Deligne--Mumford, we first observe that ${\cal
P}\hcX^\Ga$ is a prestack,\I{prestack} so $\hcX^\Ga$ is a stack on
$(\CSch,{\cal J})$, and then either note that $\hat\Pi{}^\Ga(\cX):
\tcX^\Ga\ra\hcX^\Ga$ has fibre $[\ul{*}/\Ga]$ and $\tcX^\Ga$ is
Deligne--Mumford, or use the local models for $\hcX^\Ga$ given by
Theorem \ref{ag9thm2}. Then $\hcX^\Ga_\ci\subseteq \hcX^\Ga$ is
open as for $\cX{}^\Ga_\ci,\tcX^\Ga_\ci$. This completes~(a).

For (b), if $\cX$ is separated, locally fair, locally finitely
presented, second countable, or compact, then
$\cY=\cX\t_{\smash{\prod_\ga\cX}}\cX$ is separated, \ldots, compact,
so $\cX^\Ga$ are separated, \ldots, compact as it is equivalent to
an open and closed $C^\iy$-substack $\cY'$ of $\cY$, and
$\cX{}^\Ga_\ci$ is separated, locally fair, locally finitely
presented, or second countable (but not necessarily compact) as it
is open in $\cX^\Ga$. The result for
$\tcX^\Ga,\tcX^\Ga_\ci,\hcX^\Ga,\hcX^\Ga_\ci$ follows as
$\tcX^\Ga\simeq[\cX^\Ga/\Aut(\Ga)]$,
$\tcX^\Ga_\ci\simeq[\cX^\Ga_\ci/\Aut(\Ga)]$, and
$\hcX^\Ga,\hcX^\Ga_\ci$ fibre over $\tcX^\Ga, \tcX^\Ga_\ci$ with
fibre~$[\bar{\ul{*}}/\Ga]$.

For (c), there is a 1-1 correspondence between 1-morphisms
$x:\bar{\ul{*}}\ra\cX$ and objects $A_x$ in $\cX$ with
$p_\cX(A_x)=\ul{*}$, and if $x,y:\bar{\ul{*}}\ra\cX$ correspond to
$A_x,A_y$ in $\cX$ there is a 1-1 correspondence between 2-morphisms
$\eta:x\Ra y$ and morphisms $a_\eta:A_x\ra A_y$ in $\cX$ with
$p_\cX(a_\eta)=\uid_{\ul{*}}$. The same correspondences hold for
$\cX^\Ga$. Thus, each 1-morphism $y:\bar{\ul{*}}\ra\cX^\Ga$
corresponds uniquely to some $(B,\si)$ in $\cX^\Ga$ with
$p_\cX(B)=\ul{*}$, so $B=A_x$ for some unique 1-morphism
$x:\bar{\ul{*}}\ra\cX$, and each $\si(\ga):A_x\ra A_x$ is
$a_{\rho(\ga)}$ for some unique 2-morphism $\rho(\ga):x\Ra x$, and
$\rho:\Ga\ra\Aut(x)$ is a group morphism. Definition \ref{ag9def1}
implies that $\rho$ is injective.

This establishes a 1-1 correspondence between 1-morphisms
$y:\bar{\ul{*}}\ra\cX^\Ga$ and pairs $(x,\rho)$, where
$x:\bar{\ul{*}}\ra\cX$ is a 1-morphism and $\rho:\Ga\ra\Aut(x)$ an
injective group morphism. Similarly, if $y,y':\bar{\ul{*}}
\ra\cX^\Ga$ correspond to $(x,\rho),(x',\rho')$ then 2-morphisms
$\th:y\Ra y'$ correspond to 2-morphisms $\ze:x\Ra x'$ with
$\ze\od\rho(\ga)=\rho'(\ga)\od\ze:x\Ra x'$ for all $\ga\in\Ga$. Also
1-morphisms $y:\bar{\ul{*}}\ra\cX^\Ga_\ci$ correspond to pairs
$(x,\rho)$ with $\rho:\Ga\ra\Aut(x)$ an isomorphism. Part (c) then
follows. Parts (d),(e) come from the definitions of ${\cal
P}\tcX^\Ga,\ldots,{\cal P}\hcX^\Ga_\ci$ in the same way, noting that
stackifying does not change 1-morphisms $\bar{\ul{*}}\ra{\cal
P}\tcX^\Ga$ or their 2-morphisms.

For (f), $L^\Ga(\La,\cX)$ is representable as it is a 1-isomorphism. Suppose $(A,\rho)$ is an object in $\cX^\Ga$ with $p_{\smash{\cX^\Ga}} (A,\rho)=\uU$, so that
$O^\Ga(\cX):(A,\rho)\mapsto A$, and $a:A\ra A'$ is an isomorphism in
$\cX$ with $p_\cX(a)=\uid_\uU$. Then $a:(A,\rho)\ra (A',a\ci\rho\ci
a^{-1})$ is the unique isomorphism in $\cX^\Ga$ with
$O^\Ga(\cX):a\mapsto a$, so $O^\Ga(\cX)$ is representable.
The action ${\cal P}\ti O{}^\Ga(\cX):(c,\io)\mapsto c$ of ${\cal
P}\ti O{}^\Ga(\cX)$ on 1-morphisms is injective, as $c$ determines
$\io$ by $\io(\de)\ci c=c\ci\de$ for $\de\in\De$. This implies that
the stackification $\ti O{}^\Ga(\cX)$ is representable. Then $\ti\Pi{}^\Ga(\cX)$ representable follows from $\ti O{}^\Ga(\cX)\ci\ti\Pi{}^\Ga(\cX)\cong O^\Ga(\cX)$ with $O^\Ga(\cX),\ti O{}^\Ga(\cX)$ representable. Also $L^\Ga_\ci(\La,\cX),
O^\Ga_\ci(\cX),\ti O{}^\Ga_\ci(\cX),\ti\Pi{}^\Ga_\ci(\cX)$ are
representable, as they are restrictions of $L^\Ga(\La,\cX),\ldots,\ti\Pi{}^\Ga(\cX)$ to open $C^\iy$-substacks. The actions of $\hat\Pi{}^\Ga(\cX),\hat
\Pi{}^\Ga_\ci(\cX)$ on isotropy groups have kernels isomorphic to
$\Ga$. So if $\Ga\ne\{1\}$ these actions are not injective, and
$\hat\Pi{}^\Ga(\cX),\hat \Pi{}^\Ga_\ci(\cX)$ are not representable.

For (g), $L^\Ga(\La,\cX),\,L^\Ga_\ci(\La,\cX)$ are 1-isomorphisms,
$\ti\Pi{}^\Ga(\cX),\ti\Pi{}^\Ga_\ci(\cX)$ project to quotients by
$\Aut(\Ga)$, and $\hat\Pi{}^\Ga(\cX),\hat\Pi{}^\Ga_\ci(\cX)$ are
fibrations with fibre $[\bar{\ul{*}}/\Ga]$, so these are all proper.
We can see that $O^\Ga(\cX),\ti O{}^\Ga(\cX)$ are proper, but
$O^\Ga_\ci(\cX),\ti O{}^\Ga_\ci(\cX)$ in general are not, using
Theorem \ref{ag9thm2} and the fact that every Deligne--Mumford
$C^\iy$-stack is locally of the form~$[\uX/G]$.

For (h), if $[x]\in\cX_\top$ with $\Iso_\cX([x])\cong\Ga$, then by
(c) points $[x,\rho]\in\cX^\Ga_{\ci,\top}$ with $O^\Ga_\ci
(\cX)_\top: [x,\rho]\mapsto[x]$ are given by isomorphisms
$\rho:\Ga\ra\Iso_\cX([x])$. There are $\md{\Aut(\Ga)}$ such $\rho$.
If $\rho,\rho'$ are two such isomorphisms, then (c) shows
$[x,\rho]=[x,\rho']$ if and only if $\rho'=\rho^\al$ for some
$\al\in\Ga$, where $\rho^\al:\ga\mapsto\al\ga\al^{-1}$. For
$\al_1,\al_2\in\Ga$, we see that $\rho^{\al_1}= \rho^{\al_2}$ if and
only if $(\al_2^{-1}\al_1)\ga=\ga(\al_2^{-1}\al_1)$ for all
$\ga\in\Ga$, that is, if $\al_2^{-1}\al_1\in C(\Ga)$. Hence, the
$\rho^\al$ for $\al\in\Ga$ realize $\md{\Ga}/\md{C(\Ga)}$ distinct
isomorphisms $\rho':\Ga\ra\Iso_\cX([x])$. So the $\md{\Aut(\Ga)}$
isomorphisms $\rho:\Ga\ra\Iso_\cX([x])$ are identified in groups of
$\md{\Ga}/\md{C(\Ga)}$ to make $\md{\Aut(\Ga)}\cdot\md{C(\Ga)}/
\md{\Ga}$ points $[x,\rho]$ in $\cX^\Ga_{\ci,\top}$. The statement
for $\ti O{}^\Ga_\ci (\cX)_\top$ is immediate as if
$[x,\De]\in\tcX^\Ga_{\ci,\top}$ then $\De=\Aut(x)$, so
$[x,\De]\mapsto[x]$ is a 1-1 correspondence. This completes the
proof of Theorem~\ref{ag9thm1}.
\end{proof}

\begin{ex} Let $\cX$ be a Deligne--Mumford $C^\iy$-stack, and
$\cI_\cX$ the {\it inertia stack\/}\I{inertia stack} of $\cX$, as in
the proof of Theorem \ref{ag9thm1}. Then there is an equivalence
\begin{equation*}
\ts\cI_\cX=\cX\t_{\De_\cX,\cX\t\cX,\De_\cX}\cX\simeq\coprod_{k\ge
1}\cX^{\Z_k}.
\end{equation*}
To see this, note that points of $\cI_\cX$ are equivalence classes
$[x,\eta]$, where $[x]\in\cX_\top$ and $\eta\in\Iso_\cX([x])$. Since
$\cX$ is Deligne--Mumford, $\Iso_\cX([x])$ is a finite group, so
each $\eta\in\Iso_\cX([x])$ has some finite order $k\ge 1$, and
generates an injective morphism $\rho:\Z_k\ra\Iso_\cX([x])$ mapping
$\rho:a\mapsto\eta^a$. We may identify $\cX^{\Z_k}$ with the open
and closed $C^\iy$-substack of $[x,\eta]$ in $\cI_\cX$ for which
$\eta$ has order~$k$.
\label{ag9ex1}
\end{ex}

\subsection{Lifting 1- and 2-morphisms to orbifold strata}
\label{ag92}
\I{Deligne--Mumford $C^\iy$-stack!orbifold strata!lifting 1- and
2-morphisms to|(}

The construction of $\cX^\Ga,\tcX^\Ga,\hcX^\Ga$ extends functorially
to 1- and 2-morphisms.

\begin{dfn} Let $\cX,\cY$ be Deligne--Mumford $C^\iy$-stacks, $\Ga$
a finite group, and $f:\cX\ra\cY$ a representable
1-morphism,\I{C-stack@$C^\iy$-stack!representable 1-morphism|(} so
that $f:\cX\ra\cY$ is a functor with $p_\cY\ci f=p_\cX$. We will
define a representable 1-morphism~$f^\Ga:\cX^\Ga\ra\cY^\Ga$.

On objects $(A,\rho)$ in $\cX^\Ga$, define
$f^\Ga(A,\rho)=(f(A),f\ci\rho)$. We must check that $f^\Ga(A,\rho)$
satisfies Definition \ref{ag9def1}(a)--(c). Parts (a),(b) hold as
$f$ is a functor with $p_\cY\ci f=p_\cX$. For (c), if $u\in\uU$ then
(c) for $(A,\rho)$ shows that $\rho_u:\Ga\ra\Aut(A_u)$ is injective,
so $f\ci\rho_u:\Ga\ra\Aut(f(A_u))$ is injective as $f$ is
representable, and this gives (c) for $\bigl(f(A),f\ci\rho\bigr)$.
On morphisms $c:(A,\rho)\ra(B,\si)$ in $\cX^\Ga$, define
$f^\Ga(c):f^\Ga(A,\rho)\ra f^\Ga(B,\si)$ by~$f^\Ga(c)=f(c):f(A)\ra
f(B)$.

Then $f^\Ga:\cX^\Ga\ra\cY^\Ga$ is a functor, and $p_\cY\ci f=p_\cX$
implies that $p_{\smash{\cY^\Ga}}\ci f^\Ga=p_{\smash{\cX^\Ga}}$.
Hence $f^\Ga:\cX^\Ga\ra\cY^\Ga$ is a 1-morphism of $C^\iy$-stacks.
It is the unique such 1-morphism with $O^\Ga(\cY)\ci f^\Ga=f\ci
O^\Ga(\cX):\cX^\Ga\ra\cY$. Also, $f^\Ga$ is injective on morphisms,
as $f$ is, so $f^\Ga$ is representable.

Now let $f,g:\cX\ra\cY$ be representable, and $\eta:f\Ra g$ be a
2-morphism. Then $f,g:\cX\ra\cY$ are functors, and $\eta:f\Ra g$ is
a natural isomorphism. Define $\eta^\Ga:f^\Ga\Ra g^\Ga$ by taking
the isomorphism $\eta^\Ga(A,\rho):f^\Ga(A,\rho)\ra g^\Ga(A,\rho)$ in
$\cY^\Ga$ for each object $(A,\rho)$ in $\cX^\Ga$ to be the
isomorphism $\eta^\Ga(A,\rho)=\eta(A):f(A)\ra g(A)$ in $\cY$. Then
$\eta^\Ga:f^\Ga\Ra g^\Ga$ is a natural isomorphism of functors, and
hence a 2-morphism in $\DMCSta$. It is the unique such 2-morphism
with~$\id_{O^\Ga(\cY)}*\eta^\Ga=\eta*\id_{O^\Ga(\cX)}$.

Write $\DMCSta^{\bf re}$\G[DMCStare]{$\DMCSta^{\bf re}$}{2-category
of Deligne--Mumford $C^\iy$-stacks with representable 1-morphisms}
for the 2-subcategory of $\DMCSta$ with only representable
1-morphisms. Define $F^\Ga:\DMCSta^{\bf re}\ra\DMCSta^{\bf re}$ by $F^\Ga:\cX\mapsto F^\Ga(\cX)=\cX^\Ga$ on objects, $F^\Ga:f\mapsto F^\Ga(f)=f^\Ga$ on representable
1-morphisms, and $F^\Ga:\eta\mapsto F^\Ga(\eta)=\eta^\Ga$ on
2-morphisms. Then $F^\Ga$ is a strict 2-functor, in the sense of~\S\ref{agA1}.
\label{ag9def5}
\end{dfn}

\begin{dfn} Let $\cX,\cY$ be Deligne--Mumford $C^\iy$-stacks, $\Ga$
a finite group, and $f:\cX\ra\cY$ a representable 1-morphism. Define functors
${\cal P}\ti f{}^\Ga:{\cal P}\tcX^\Ga\ra{\cal P}\ti\cY{}^\Ga$
mapping $(A,\De)\mapsto(f(A),f(\De))$ on objects and
$(c,\io)\mapsto(f(c),f\ci\io\ci f\vert_\De^{-1})$ on morphisms, and
${\cal P}\hat f{}^\Ga:{\cal P}\hcX^\Ga\ra{\cal P}\hat\cY{}^\Ga$
mapping $(A,\De)\mapsto(f(A),f(\De))$ and
$(c,\io)\De\mapsto(f(c),f\ci\io\ci f\vert_\De^{-1})f(\De)$. Then
${\cal P}\ti f{}^\Ga,{\cal P}\hat f{}^\Ga$ are 1-morphisms of
prestacks, so stackifying gives 1-morphisms $\ti
f{}^\Ga:\tcX^\Ga\ra\ti\cY{}^\Ga$ and $\hat
f{}^\Ga:\hcX^\Ga\ra\hat\cY{}^\Ga$.

If $f,g:\cX\ra\cY$ are representable, and $\eta:f\Ra g$ is a
2-morphism, we define ${\cal P}\ti\eta{}^\Ga:{\cal P}\ti
f{}^\Ga\Ra{\cal P}\ti g{}^\Ga$ and ${\cal P}\hat\eta{}^\Ga:{\cal
P}\hat f{}^\Ga\Ra{\cal P}\hat g{}^\Ga$ by ${\cal
P}\ti\eta{}^\Ga:(A,\De)\mapsto (\eta(A),\io^\eta)$, where
$\io^\eta:f(\De)\ra g(\De)$ maps $\io^\eta:f(\de)\mapsto
g(\de)=\eta(A)\ci f(\de)\ci\eta(A)^{-1}$ for $\de\in\De$, and ${\cal
P}\hat\eta{}^\Ga:(A,\De)\mapsto (\eta(A),\io^\eta)f(\De)$. Then
${\cal P}\ti\eta{}^\Ga,{\cal P}\hat\eta{}^\Ga$ are 2-morphisms of
prestacks, so stackifying gives 2-morphisms $\ti\eta{}^\Ga:\ti
f{}^\Ga\Ra\ti g{}^\Ga$ and~$\hat\eta{}^\Ga:\hat f{}^\Ga\Ra\hat
g{}^\Ga$.

As in Definition \ref{ag9def5}, we would like to define 2-functors
\e
\ti F{}^\Ga,\hat F{}^\Ga:\DMCSta^{\bf re}\longra\DMCSta^{\bf re}
\label{ag9eq4}
\e
by $\ti F{}^\Ga(\cX)=\tcX^\Ga$ on objects, $\ti F{}^\Ga(f)=\ti f{}^\Ga$ on 1-morphisms, $\ti F{}^\Ga(\eta)=\ti\eta{}^\Ga$ on 2-morphisms, and so on. But there is a difference. Stackifications of 1-morphisms of prestacks involve arbitrary choices, and are unique only up to 2-isomorphism. Therefore strict equalities of 1-morphisms of prestacks translate, on stackification, to 2-isomorphisms of their stackifications, rather than strict equalities. 

For representable 1-morphisms $f:\cX\ra\cY$, $g:\cY\ra\cZ$, in prestack
1-morphisms we have ${\cal P}\ti g{}^\Ga\ci{\cal P}\ti f{}^\Ga={\cal P}\,\,\,\,\widetilde{\!\!\!\!(g\ci f)\!\!\!\!}\,\,\,\,{}^\Ga$. Thus, stackification gives a 2-isomorphism $\ti F{}^\Ga_{g,f}:\ti g{}^\Ga\ci\ti f{}^\Ga\Ra\,\,\,\,\widetilde{\!\!\!\!(g\ci f)\!\!\!\!}\,\,\,\,{}^\Ga$, which need not be the identity. Similarly, $\cP(\,\widetilde{\!\id_\cX\!}\,)^\Ga=\id_{{\cal P}\tcX^\Ga}:{\cal P}\tcX^\Ga\ra{\cal P}\ti\cX{}^\Ga$, but on stackification we get a 2-isomorphism $\ti F{}^\Ga_\cX:(\,\widetilde{\!\id_\cX\!}\,)^\Ga\Ra\id_{\tcX^\Ga}$, which need not be the identity. Because of this, $\ti F{}^\Ga,\hat F{}^\Ga$ in \eq{ag9eq4} are weak 2-functors rather than strict 2-functors, in the sense of~\S\ref{agA1}.

The 1-morphisms in \eq{ag9eq2} are compatible with $\ti
f{}^\Ga,\hat f{}^\Ga$ by 2-isomorphisms
\begin{align*}
\ti O{}^\Ga(\cY)\!\ci\!\ti
f{}^\Ga\!\cong\! f\!\ci\!\ti O{}^\Ga(\cX),\;\>
\ti\Pi{}^\Ga(\cY)\!\ci\! f^\Ga\!\cong\! \ti f{}^\Ga\!\ci\!\ti\Pi{}^\Ga(\cX),\;\>
\hat\Pi{}^\Ga(\cY)\!\ci\!\ti f{}^\Ga\!\cong\!\hat
f{}^\Ga\!\ci\!\hat\Pi{}^\Ga(\cX),
\end{align*}
which follow by stackifying equalities of 1-morphisms of prestacks.%
\I{C-stack@$C^\iy$-stack!representable 1-morphism|)}
\label{ag9def6}
\end{dfn}

\begin{rem} For $f:\cX\ra\cY$ and $\Ga$ as above, the restriction
$f^\Ga\vert_{\smash{\cX^\Ga_\ci}}$ need not map
$\cX^\Ga_\ci\ra\cY^\Ga_\ci$, but only $\cX^\Ga_\ci\ra\cY^\Ga$,
unless $f$ induces isomorphisms on isotropy groups. Thus we do not
define a 1-morphism $f^\Ga_\ci:\cX^\Ga_\ci\ra\cY^\Ga_\ci$, or a
2-functor $F^\Ga_\ci:\DMCSta^{\bf re}\ra\DMCSta^{\bf re}$. The same
applies for the actions of $f$ on orbifold
strata~$\tcX^\Ga_\ci,\hcX^\Ga_\ci$.\I{prestack!stackification|)}%
\I{Deligne--Mumford $C^\iy$-stack!orbifold strata!lifting 1- and
2-morphisms to|)}
\label{ag9rem1}
\end{rem}

\subsection{\texorpdfstring{Orbifold strata of quotient $C^\iy$-stacks
$[\protect\uX/G]$}{Orbifold strata of quotient C∞-stacks
[X/G]}}
\label{ag93}
\I{Deligne--Mumford $C^\iy$-stack!orbifold
strata!of quotient $C^\iy$-stacks|(}%
\I{C-stack@$C^\iy$-stack!quotients $[\protect\uX/G]$!orbifold strata|(}%
\I{quotient C-stack@quotient $C^\iy$-stack!orbifold strata|(}

The next theorem describes $\cX^\Ga,\ldots,\hat\uX{}^\Ga_\ci$
explicitly when $\cX$ is a quotient $C^\iy$-stack $[\uX/G]$, as in
\S\ref{ag71}. We can prove it by showing the explicit constructions
of Definition \ref{ag7def1} and Definitions
\ref{ag9def1}--\ref{ag9def2} commute up to equivalence.

\begin{thm} Let\/ $\uX$ be a
Hausdorff\/\I{C-scheme@$C^\iy$-scheme!Hausdorff} $C^\iy$-scheme
and\/ $G$ a finite group acting on $\uX$ by isomorphisms, and
write\/ $\cX=[\uX/G]$ for the quotient\/ $C^\iy$-stack, which is a
Deligne--Mumford\/ $C^\iy$-stack. Let\/ $\Ga$ be a finite group.
Then there are equivalences of\/ $C^\iy$-stacks
\ea
\cX^\Ga&\simeq\ts\bigl[\bigl(\coprod_{\text{injective group
morphisms $\rho:\Ga\ra G$}}\uX^{\rho(\Ga)}\bigr)/G\bigr],
\label{ag9eq5}\\
\cX{}^\Ga_\ci&\simeq\ts\bigl[\bigl(\coprod_{\text{injective group
morphisms $\rho:\Ga\ra G$}}\uX^{\rho(\Ga)}_\ci\bigr)/G\bigr],
\label{ag9eq6}\\
\tcX^\Ga&\simeq\ts\bigl[\bigl(\coprod_{\text{subgroups $\De\subseteq
G$: $\De\cong\Ga$}}\uX^\De\bigr)/G\bigr],
\label{ag9eq7}\\
\tcX^\Ga_\ci&\simeq\ts\bigl[\bigl(\coprod_{\text{subgroups
$\De\subseteq G$: $\De\cong\Ga$}}\uX^\De_\ci\bigr)/G\bigr],
\label{ag9eq8}
\ea
where for each subgroup\/ $\De\subseteq G,$ we write\/ $\uX^\De$ for
the closed\/ $C^\iy$-subscheme in\/ $\uX$ fixed by $\De$ in\/ $G,$
and\/ $\uX^\De_\ci$ for the open $C^\iy$-subscheme in $\uX^\De$ of
points in $\uX$ whose stabilizer group in $G$ is exactly\/~$\De$.

Here the action of\/ $G$ on $\coprod_\rho\uX^{\rho(\Ga)}$ in
\eq{ag9eq5} is defined as follows. Let\/ $g\in G$ and\/
$\rho:\Ga\ra G$ be an injective morphism. Define another injective
morphism $\rho^g:\Ga\ra G$ by $\rho^g:\ga\mapsto g\rho(\ga)g^{-1}$.
Then $g(\uX^{\rho(\Ga)})=\uX^{\rho^g(\Ga)},$ as $C^\iy$-subschemes
of\/ $\uX,$ and the action of\/ $g$ on $\coprod_\rho\uX^{\rho(\Ga)}$
maps $\uX^{\rho(\Ga)}\ra \uX^{\rho^g(\Ga)}$ by the restriction of\/
$g:\uX\ra\uX$ to\/ $\uX^{\rho(\Ga)}$. The $G$-actions for
\eq{ag9eq6}--\eq{ag9eq8} are similar.

We can also rewrite equations \eq{ag9eq5}--\eq{ag9eq8} as
\ea
\cX^\Ga&\simeq\coprod_{\begin{subarray}{l}\text{conjugacy classes
$[\rho]$ of injective}\\ \text{group morphisms $\rho:\Ga\ra
G$}\end{subarray}\!\!\!\!\!\!\!\!\!\!\!\!\!\!\!\!\!\!\!
\!\!\!\!\!\!\!\!\!} \bigl[\,\uX^{\rho(\Ga)}/\bigl\{g\in
G:g\rho(\ga)=\rho(\ga)g\;\> \forall\ga\in\Ga\bigr\}\bigr],
\label{ag9eq9}\\
\cX^\Ga_\ci&\simeq\coprod_{\begin{subarray}{l}\text{conjugacy
classes $[\rho]$ of injective}\\ \text{group morphisms $\rho:\Ga\ra
G$}\end{subarray}\!\!\!\!\!\!\!\!\!\!\!\!\!\!\!\!\!\!\!
\!\!\!\!\!\!\!\!\!} \bigl[\,\uX^{\rho(\Ga)}_\ci/\bigl\{g\in
G:g\rho(\ga)=\rho(\ga)g\;\> \forall\ga\in\Ga\bigr\}\bigr],
\label{ag9eq10}\\
\tcX^\Ga&\simeq\coprod_{\text{conjugacy classes $[\De]$ of subgroups
$\De\subseteq G$ with $\De\cong\Ga$} \!\!\!\!\!\!\!\!\!
\!\!\!\!\!\!\!\!\!\!\!\!\!\!\!\!\!\!\!\!\!\!\!\!\!\!\!\!\!\!\!\!
\!\!\!\!\!\!\!\!\!\!\!\!\!\!\!\!\!\!\!\!\!\!\!}
\bigl[\,\uX^\De/\bigl\{g\in G:\De=g\De g^{-1}\bigr\}\bigr],
\label{ag9eq11}\\
\tcX^\Ga_\ci&\simeq\coprod_{\text{conjugacy classes $[\De]$ of
subgroups $\De\subseteq G$ with $\De\cong\Ga$} \!\!\!\!\!\!\!\!\!
\!\!\!\!\!\!\!\!\!\!\!\!\!\!\!\!\!\!\!\!\!\!\!\!\!\!\!\!\!\!\!\!
\!\!\!\!\!\!\!\!\!\!\!\!\!\!\!\!\!\!\!\!\!\!\!}
\bigl[\,\uX^\De_\ci/\bigl\{g\in G:\De=g\De g^{-1}\bigr\}\bigr].
\label{ag9eq12}
\ea
Here morphisms $\rho,\rho':\Ga\ra G$ are conjugate if\/
$\rho'=\rho^g$ for some $g\in G,$ and subgroups $\De,\De'\subseteq
G$ are conjugate if\/ $\De=g\De'g^{-1}$ for some $g\in G$. In
\eq{ag9eq9}--\eq{ag9eq12} we sum over one representative $\rho$ or
$\De$ for each conjugacy class.

In the notation of\/ {\rm\eq{ag9eq11}--\eq{ag9eq12},} there are
equivalences of\/ $C^\iy$-stacks
\ea
\hcX^\Ga&\simeq\coprod_{\text{conjugacy classes $[\De]$ of subgroups
$\De\subseteq G$ with $\De\cong\Ga$} \!\!\!\!\!\!\!\!\!
\!\!\!\!\!\!\!\!\!\!\!\!\!\!\!\!\!\!\!\!\!\!\!\!\!\!\!\!\!\!\!\!
\!\!\!\!\!\!\!\!\!\!\!\!\!\!\!\!\!\!\!\!\!\!\!}
\bigl[\,\uX^\De\big/\bigl(\{g\in G:\De=g\De
g^{-1}\}/\De\bigr)\bigr],
\label{ag9eq13}\\
\hcX^\Ga_\ci&\simeq\coprod_{\text{conjugacy classes $[\De]$ of
subgroups $\De\subseteq G$ with $\De\cong\Ga$} \!\!\!\!\!\!\!\!\!
\!\!\!\!\!\!\!\!\!\!\!\!\!\!\!\!\!\!\!\!\!\!\!\!\!\!\!\!\!\!\!\!
\!\!\!\!\!\!\!\!\!\!\!\!\!\!\!\!\!\!\!\!\!\!\!}
\bigl[\,\uX^\De_\ci\big/\bigl(\{g\in G:\De=g\De
g^{-1}\}/\De\bigr)\bigr].
\label{ag9eq14}
\ea

Under the equivalences {\rm\eq{ag9eq5}--\eq{ag9eq14},} the
$1$-morphisms in \eq{ag9eq2} are identified up to $2$-isomorphism
with\/ $1$-morphisms between quotient\/ $C^\iy$-stacks induced by
natural\/ $C^\iy$-scheme morphisms between
$\coprod_\rho\uX^{\rho(\Ga)},\uX,\ldots.$ For example, the disjoint
union over $\rho$ of the inclusion $\uX^{\rho(\Ga)} \hookra\uX$ is a
$G$-equivariant morphism $\coprod_\rho\uX^{\rho(\Ga)}\ra\uX,$
inducing a $1$-morphism $\smash{[\coprod_\rho\uX^{\rho(\Ga)}/G]\ra
[\uX/G]}$. This is identified with\/ $O^\Ga(\cX):\cX^\Ga\ra\cX$ by\/
\eq{ag9eq5}. Similarly, $\ti\Pi^\Ga(\cX):\cX^\Ga\ra\tcX^\Ga$ is
identified by {\rm\eq{ag9eq5}, \eq{ag9eq7}} with the $1$-morphism
$\smash{[\coprod_\rho\uX^{\rho(\Ga)}/G]\ra[\coprod_\De\uX^\De/G]}$
induced by the $C^\iy$-scheme morphism
$\coprod_\rho\uX^{\rho(\Ga)}\ra \coprod_\De\uX^\De$ mapping
morphisms $\rho$ to subgroups $\De=\rho(\Ga),$ and acting by
$\uid_{\smash{\uX^\De}}: \uX^{\rho(\Ga)}\ra\uX^\De$ for
$\De=\rho(\Ga)$.\I{Deligne--Mumford $C^\iy$-stack!orbifold
strata!of quotient $C^\iy$-stacks|)}%
\I{C-stack@$C^\iy$-stack!quotients $[\protect\uX/G]$!orbifold strata|)}%
\I{quotient C-stack@quotient $C^\iy$-stack!orbifold strata|)}
\label{ag9thm2}
\end{thm}

\subsection{Sheaves on orbifold strata}
\label{ag94}
\I{Deligne--Mumford $C^\iy$-stack!quasicoherent sheaves on|(}\I{Deligne--Mumford $C^\iy$-stack!orbifold strata!sheaves on|(}

Let $\cX$ be a Deligne--Mumford $C^\iy$-stack, $\Ga$ a finite group,
and $\cE\in\qcoh(\cX)$, so that $\cE^\Ga:=O^\Ga(\cX)^*(\cE)\in
\qcoh(\cX^\Ga)$. We will show that there is a natural representation
of $\Ga$ on $\cE^\Ga$, and also the action of $\Aut(\Ga)$ on
$\cX^\Ga$ lifts to $\cE^\Ga$, so that $\Aut(\Ga)\lt\Ga$ acts
equivariantly on~$\cE^\Ga$.

\begin{dfn} Let $\cX$ be a Deligne--Mumford $C^\iy$-stack, and $\Ga$ a
finite group, so that \S\ref{ag91} defines the orbifold stratum
$\cX^\Ga$, a 1-morphism $O^\Ga(\cX):\cX^\Ga\!\ra\!\cX$, an action of
$\Aut(\Ga)$ on $O^\Ga(\cX)$ by 2-isomorphisms
$E^\Ga(\ga,\cX):O^\Ga(\cX)\!\Ra\!O^\Ga(\cX)$, and an action of
$\Aut(\Ga)$ on $\cX^\Ga$ by
1-isomorphisms~$L^\Ga(\La,\cX):\cX^\Ga\!\ra\!\cX^\Ga$.

Suppose $\cE$ is a quasicoherent sheaf on $\cX,$ and write $\cE^\Ga$
for the pullback sheaf $O^\Ga(\cX)^*(\cE)$ in $\qcoh(\cX^\Ga)$.
Using the notation of Definition \ref{ag8def3}, for each
$\ga\in\Ga$ and $\La\in\Aut(\Ga)$ define morphisms
$R^\Ga(\ga,\cE):\cE^\Ga\ra\cE^\Ga$ and $S^\Ga(\La,\cE):
L^\Ga(\La,\cX)^*(\cE^\Ga)\ra\cE^\Ga$ in $\qcoh(\cX^\Ga)$ by
\begin{align*}
R^\Ga(\ga,\cE)&=E^\Ga(\ga,\cX)^*(\cE):O^\Ga(\cX)^*(\cE)\longra
O^\Ga(\cX)^*(\cE)\qquad\text{and}\\
S^\Ga(\La,\cE)&=I_{L^\Ga(\La,\cX),O^\Ga(\cX)}(\cE)^{-1}:
L^\Ga(\La,\cX)^*\ci O^\Ga(\cX)^*(\cE)\longra O^\Ga(\cX)^*(\cE),
\end{align*}
where the definition of $S^\Ga(\La,\cE)$ uses~$O^\Ga(\cX)\ci
L^\Ga(\La,\cX)=O^\Ga(\cX)$.

Since $E^\Ga(1,\cX)=\id_{\smash{O^\Ga(\cX)}}$ and $E^\Ga(\ga,\cX)\od
E^\Ga(\de,\cX)=E^\Ga(\ga\de,\cX)$\I{2-category!2-morphism!vertical
composition} for $\ga,\de\in\Ga$ as in Definition \ref{ag9def4}, we
have
\begin{equation*}
R^\Ga(1,\cE)=\id_{\smash{\cE^\Ga}}\;\>\text{and}\;\>
R^\Ga(\ga,\cE)\ci R^\Ga(\de,\cE)=R^\Ga(\ga\de,\cE)\;\>\text{for all
$\ga,\de\in\Ga$.}
\end{equation*}
Hence $R^\Ga(-,\cE)$ is an action of $\Ga$ on $\cE^\Ga$ by
isomorphisms.

As $L^\Ga(\id_\Ga,\cX)=\id_{\smash{\cX^\Ga}}$ and $L^\Ga(\La,\cX)\ci
L^\Ga(\La,\cX)=L^\Ga(\La\La',\cX)$ for $\La,\La'\in\Aut(\La)$, by
properties of morphisms $I_{*,*}(*)$ we find that
\begin{align*}
S^\Ga(\id_\Ga,\cE)&=\de_{\smash{\cX^\Ga}}(\cE^\Ga):
\id_{\smash{\cX^\Ga}}^*(\cE^\Ga)\longra \cE^\Ga,\;\>\text{and}\\
S^\Ga(\La\La',\cE)&=S^\Ga(\La',\cE)\!\ci\!
L^\Ga(\La',\cX)^*(S^\Ga(\La,\cE))\!\ci\!
I_{L^\Ga(\La',\cX),L^\Ga(\La,\cX)}(\cE^\Ga).
\end{align*}
This means that the $S^\Ga(\La,\cE)$ define a lift of the action of
$\Aut(\Ga)$ on $\cX^\Ga$ to $\cE^\Ga$, that is, $\cE^\Ga$ is an
$\Aut(\Ga)$-equivariant sheaf on~$\cX^\Ga$.

If $\ga\in\Ga$ and $\La\in\Aut(\Ga)$ then noting that $O^\Ga(\cX)\ci
L^\Ga(\La,\cX)=O^\Ga(\cX)$, one can show from Definitions
\ref{ag9def3} and \ref{ag9def4} that
\begin{equation*}
E^\Ga(\La(\ga),\cX)*\id_{L^\Ga(\La,\cX)}=E^\Ga(\ga,\cX):
O^\Ga(\cX)\Longra O^\Ga(\cX).
\end{equation*}
Pulling back $\cE$ by this equation and using properties of the
$I_{*,*}(*)$ we find that
\e
R^\Ga(\ga,\cE)\ci S^\Ga(\La,\cE)=S^\Ga(\La,\cE)\ci
L^\Ga(\La,\cX)^*(R^\Ga(\La(\ga),\cE)).
\label{ag9eq15}
\e
This is a compatibility between the actions of $\Ga$ and $\Aut(\Ga)$
on $\cE^\Ga$. It says that the action of $\Aut(\Ga)$ on $\cX^\Ga$
lifts to an action of $\Aut(\Ga)\lt\Ga$ on~$\cE^\Ga$.

Let $\al:\cE_1\ra\cE_2$ be a morphism in $\qcoh(\cX)$. Then
$\al^\Ga:=O^\Ga(\cX)^*(\al):\cE_1^\Ga\ra\cE_2^\Ga$ is a morphism in
$\qcoh(\cX^\Ga)$. Since $E^\Ga(\ga,\cX)^*:O^\Ga(\cX)^*\Ra
O^\Ga(\cX)^*$ is a natural isomorphism of functors, we see that
\begin{equation*}
\al^\Ga\ci R^\Ga(\ga,\cE_1)=R^\Ga(\ga,\cE_2)\ci\al^\Ga\quad\text{for
$\ga\in\Ga$}.
\end{equation*}
Similarly we find that
\begin{equation*}
\al^\Ga\ci S^\Ga(\La,\cE_1)=S^\Ga(\La,\cE_2)\ci
L^\Ga(\La,\cX)^*(\al^\Ga)\quad\text{for $\La\in\Aut(\Ga)$.}
\end{equation*}
These imply that $R(\ga,-)$ and\/ $S(\La,-)$ are natural
isomorphisms of functors.

Now let $f:\cX\ra\cY$ be a representable 1-morphism of
$C^\iy$-stacks, so that as in \S\ref{ag92} we have
$f^\Ga:\cX^\Ga\ra\cY^\Ga$. Let $\cF\in\qcoh(\cY)$. Then we may form
$f^*(\cF)\in\qcoh(\cX)$ and hence
$f^*(\cF)^\Ga=O^\Ga(\cX)^*(f^*(\cF))\in\qcoh(\cX^\Ga)$, or we may
form $\cF^\Ga=O^\Ga(\cY)^*(\cF)\in\qcoh(\cY^\Ga)$ and hence
$(f^\Ga)^*(\cF^\Ga)\in\qcoh(\cX^\Ga)$. Since $O^\Ga(\cY)\ci
f^\Ga=f\ci O^\Ga(\cX)$, these are related by the canonical
isomorphism
\e
T^\Ga(f,\cF):=I_{f^\Ga,O^\Ga(\cY)}(\cF)\ci I_{O^\Ga(\cX),
f}(\cF)^{-1}: f^*(\cF)^\Ga\longra(f^\Ga)^*(\cF^\Ga).
\label{ag9eq16}
\e
Using properties of $I_{*,*}(*)$, it is easy to show that
\e
(f^\Ga)^*(R^\Ga(\ga,\cF))\ci T^\Ga(f,\cF)=T^\Ga(f,\cF)\ci
R^\Ga(\ga,f^*(\cF)) \quad\text{for $\ga\in\Ga$,}
\label{ag9eq17}
\e
and noting that $f^\Ga\ci L^\Ga(\La,\cX)=L^\Ga(\La,\cY)\ci f^\Ga$,
we also find that
\begin{align*}
T^\Ga(f,\cF)\ci S^\Ga(\La,f^*(\cF))=\,&(f^\Ga)^*(S^\Ga(\La,\cF))\ci
I_{f^\Ga,L^\Ga(\La,\cY)}(\cF^\Ga)\,\ci\\
&I_{L^\Ga(\La,\cX),f^\Ga}(\cF^\Ga)^{-1}\!\ci\!
L^\Ga(\La,\cX)^*(T^\Ga(f,\cF)).
\end{align*}
This shows that the isomorphisms $T^\Ga(f,\cF)$ identify the
$(\Aut(\Ga)\lt\Ga)$-actions on $f^*(\cF)^\Ga$
and~$(f^\Ga)^*(\cF^\Ga)$.

Now let $\cX,\Ga,\cX^\Ga,\cE$ and $\cE^\Ga$ be as above, and write
$R_0,\ldots,R_k$ for the irreducible representations of $\Ga$ over
$\R$ (that is, we choose one representative $R_i$ in each
isomorphism class of irreducible representations), with $R_0=\R$ the
trivial representation. Then since $R^\Ga(-,\cE)$ is an action of
$\Ga$ on $\cE^\Ga$ by isomorphisms, by elementary representation
theory we have a canonical decomposition
\e
\cE^\Ga\cong\ts\bigop_{i=0}^k\cE^\Ga_i\ot R_i\quad\text{for
$\cE_0^\Ga,\ldots,\cE^\Ga_k\in\qcoh(\cX^\Ga)$.}
\label{ag9eq18}
\e
We will be interested in splitting $\cE^\Ga$ into {\it trivial\/}
and {\it nontrivial\/} representations of $\Ga$, denoted by
subscripts `tr' and `nt'. So we write
\e
\cE^\Ga=\cE^\Ga_\tr\op\cE^\Ga_\nt,
\label{ag9eq19}
\e
where $\cE^\Ga_\tr,\cE^\Ga_\nt$ are the subsheaves of $\cE^\Ga$
corresponding to the factors $\cE^\Ga_0\ot R_0$ and
$\bigop_{i=1}^k\cE^\Ga_i\ot R_i$ respectively. Equivalently,
consider $\frac{1}{\md{\Ga}}\sum_{\ga\in\Ga}
R^\Ga(\ga,\cE):\cE^\Ga\ra\cE^\Ga$. It is a projection (its square is
itself), with image $\cE^\Ga_\tr$ and kernel~$\cE^\Ga_\nt$.

If $\Ga$ acts on $R_i$ by $\rho_i:\Ga\ra\Aut(R_i)$, and
$\La\in\Aut(\Ga)$, then $\rho_i\ci\La^{-1}:\Ga\ra\Aut(R_i)$ is also
an irreducible representation of $\Ga$, and so is isomorphic to
$R_{\La(i)}$ for some unique $\La(i)=0,\ldots,k$. This defines an
action of $\Aut(\Ga)$ on $\{0,\ldots,k\}$ by permutations. One can
show using \eq{ag9eq15} that $S^\Ga(\La,\cE)$ acts on the splitting
\eq{ag9eq18} by mapping $L^\Ga(\La,\cX)^*(\cE^\Ga_i\ot R_i)\ra
\cE^\Ga_{\La^{-1}(i)}\ot R_{\La^{-1}(i)}$. Since $\La(0)=0$, it
follows that $S^\Ga(\La,\cE)$ maps $L^\Ga(\La,\cX)^*(\cE^\Ga_\tr)\ra
\cE^\Ga_\tr$ and $L^\Ga(\La,\cX)^*(\cE^\Ga_\nt)\ra \cE^\Ga_\nt$,
that is, $S^\Ga(\La,\cE)$ preserves the splitting~\eq{ag9eq19}.

Equation \eq{ag9eq17} implies that $T^\Ga(f,\cF)$ canonically maps
$f^*(\cF)^\Ga_i\ot R_i\ra(f^\Ga)^*(\cF^\Ga_i\ot R_i)$ in
\eq{ag9eq18} for $f^*(\cF)^\Ga,\cF^\Ga$, and so maps
$f^*(\cF)^\Ga_\tr \ra(f^\Ga)^*(\cF^\Ga_\tr)$ and
$f^*(\cF)^\Ga_\nt\ra (f^\Ga)^*(\cF^\Ga_\nt)$ in~\eq{ag9eq19}.
\label{ag9def7}
\end{dfn}

The next two definitions explain to what extent this generalizes
to~$\tcX^\Ga,\hcX^\Ga$.

\begin{dfn} Let $\cX$ be a Deligne--Mumford $C^\iy$-stack, and $\Ga$ a
finite group, so that \S\ref{ag91} defines the orbifold strata
$\cX^\Ga,\tcX^\Ga$ with $\tcX^\Ga\simeq [\cX^\Ga/\Aut(\Ga)]$, and
1-morphisms $O^\Ga(\cX):\cX{}^\Ga\ra\cX$, $\ti
O^\Ga(\cX):\tcX^\Ga\ra\cX$ and $\ti\Pi{}^\Ga(\cX):\cX{}^\Ga\ra
\tcX^\Ga$ with~$\ti O^\Ga(\cX)\ci\ti\Pi{}^\Ga(\cX)=O^\Ga(\cX)$.

Let us ask: how much of the structure on $\cE{}^\Ga$ in Definition
\ref{ag9def7} descends to $\ti{\cal E}{}^\Ga$? It turns out that
$\ti{\cal E}{}^\Ga$ does not have natural representations of $\Ga$
or $\Aut(\Ga)$, since we do not have actions of $\Ga$ on $\ti
O^\Ga(\cX)$ by 2-isomorphisms or of $\Aut(\Ga)$ on $\tcX^\Ga$ by
1-isomorphisms. In effect, taking the quotient by $\Aut(\Ga)$ in
$\tcX^\Ga\simeq [\cX^\Ga/\Aut(\Ga)]$ destroys both these actions.

However, at least part of the natural decompositions
\eq{ag9eq18}--\eq{ag9eq19} descends to $\ti{\cal E}{}^\Ga$. As in
Definition \ref{ag9def7}, write $R_0,\ldots,R_k$ for the
irreducible representations of $\Ga$, so that $\Aut(\Ga)$ acts on
the indexing set $\{0,\ldots,k\}$. Form the quotient set
$\{0,\ldots,k\}/\Aut(\Ga)$, so that points of
$\{0,\ldots,k\}/\Aut(\Ga)$ are orbits $O$ of $\Aut(\Ga)$ in
$\{0,\ldots,k\}$. Then we may rewrite \eq{ag9eq18} as
\begin{equation*}
\cE^\Ga\cong\ts\bigop_{O\in\{0,\ldots,k\}/\Aut(\Ga)}
\bigl[\bigop_{i\in O} \cE^\Ga_i\ot R_i\bigr].
\end{equation*}

Since $S^\Ga(\La,\cE)$ maps $L^\Ga(\La,\cX)^*(\cE^\Ga_i\ot R_i)\ra
\cE^\Ga_{\La^{-1}(i)}\ot R_{\La^{-1}(i)}$, we see that
\begin{equation*}
\ts S^\Ga(\La,\cE):L^\Ga(\La,\cX)^*\bigl(\bigop_{i\in O} \cE^\Ga_i\ot
R_i\bigr)\longra \bigop_{i\in O} \cE^\Ga_i\ot R_i
\end{equation*}
for each $O\in\{0,\ldots,k\}/\Aut(\Ga)$. Now the $S^\Ga(\La,\cE)$
lift the action of $\Aut(\Ga)$ on $\cX^\Ga$ to $\cE^\Ga$, and
$\ti{\cal E}{}^\Ga$ is essentially the quotient of $\cE^\Ga$ by this
lifted action of $\Aut(\Ga)$ under the equivalence $\tcX^\Ga\simeq
[\cX^\Ga/\Aut(\Ga)]$. Therefore any decomposition of $\cE^\Ga$ which
is invariant under $S^\Ga(\La,\cE)$ for all $\La\in\Aut(\Ga)$
corresponds to a decomposition of $\ti{\cal E}{}^\Ga$. Hence there
is a canonical splitting
\e
\begin{split}
\ti{\cal E}{}^\Ga&=\ts\bigop_{O\in\{0,\ldots,k\}/\Aut(\Ga)}
\ti{\cal E}{}^\Ga_O,\quad\text{where}\\
I_{\ti\Pi{}^\Ga(\cX),\ti O^\Ga(\cX)}(\cE)^{-1}
\bigl[\ti\Pi{}^\Ga(\cX)^*(\ti{\cal E}{}^\Ga_O)\bigr]&\cong
\ts\bigop_{i\in O} \cE^\Ga_i\ot R_i\quad\text{under \eq{ag9eq18}.}
\end{split}
\label{ag9eq20}
\e
As for \eq{ag9eq19} we define the {\it trivial\/} and {\it
nontrivial\/} parts of $\ti{\cal E}{}^\Ga$ by $\ti{\cal
E}{}^\Ga_\tr= \ti{\cal E}{}^\Ga_{\{0\}}$ and $\ti{\cal E}{}^\Ga_\nt=
\bigop_{O\in \{1,\ldots,k\}/\Aut(\Ga)}\ti{\cal E}{}^\Ga_O$. Then
\e
\begin{split}
\ti{\cal E}{}^\Ga=\ti{\cal E}{}^\Ga_\tr\op\ti{\cal
E}{}^\Ga_\nt,\;\>\text{where} \;\> I_{\ti\Pi{}^\Ga(\cX),\ti
O^\Ga(\cX)}(\cE)^{-1}\bigl[
\ti\Pi{}^\Ga(\cX)^*(\ti{\cal E}{}^\Ga_\tr)\bigr]&=\cE{}^\Ga_\tr\\
\text{and}\;\> I_{\ti\Pi{}^\Ga(\cX),\ti O^\Ga(\cX)}(\cE)^{-1}
\bigl[\ti\Pi{}^\Ga(\cX)^*(\ti{\cal
E}{}^\Ga_\nt)\bigr]&=\cE{}^\Ga_\nt.
\end{split}
\label{ag9eq21}
\e

Each point $[x,\De]$ of $\tcX^\Ga_\top$ has isotropy
group\I{C-stack@$C^\iy$-stack!isotropy group $\Iso_\cX([x])$}
$\Iso_{\smash{\tcX^\Ga}}([x,\De])$ with a distinguished subgroup
$\De$ with a noncanonical isomorphism $\De\cong\Ga$. The fibre of
$\ti{\cal E}{}^\Ga$ at $[x,\De]$ is a representation of
$\Iso_{\smash{\tcX^\Ga}}([x,\De])$, and hence a representation of
$\De$. Equation \eq{ag9eq21} corresponds to splitting the fibre of
$\ti{\cal E}{}^\Ga$ at $[x,\De]$ into trivial and nontrivial
representations of $\De$. Equation \eq{ag9eq20} corresponds to
decomposing the fibre of $\ti{\cal E}{}^\Ga$ at $[x,\De]$ into
families of irreducible representations of $\De\cong\Ga$ that are
independent of the choice of isomorphism~$\De\cong\Ga$.

Now let $f:\cX\ra\cY$ be a representable 1-morphism of
$C^\iy$-stacks, so that as in \S\ref{ag92} we have a representable
1-morphism $\ti f{}^\Ga:\tcX^\Ga\ra\ti\cY{}^\Ga$ with $f\ci \ti
O{}^\Ga(\cX)=\ti O{}^\Ga(\cY) \ci\ti f{}^\Ga$. Let
$\cF\in\qcoh(\cY)$, so that $\ti\cF{}^\Ga\in\qcoh(\ti\cY{}^\Ga)$,
$f^*(\cF)\in\qcoh(\cX)$, and $\,\,\,\widetilde{\!\!\!f^*(\cF)\!\!\!}
\,\,\,{}^\Ga\in\qcoh(\tcX^\Ga)$. As for \eq{ag9eq16}, we have a
canonical isomorphism
\begin{equation*}
\ti T{}^\Ga(f,\cF):=I_{\ti f{}^\Ga,\ti O^\Ga(\cY)}(\cF)\ci I_{\ti
O{}^\Ga(\cX), f}(\cF)^{-1}:\,\,\,
\widetilde{\!\!\!f^*(\cF)\!\!\!}\,\,\,{}^\Ga\longra(\ti
f{}^\Ga)^*(\ti\cF{}^\Ga).
\end{equation*}
As for $T^\Ga(f,\cF)$ in Definition \ref{ag9def7}, $\ti
T{}^\Ga(f,\cF)$ maps ${}\,\,\,\widetilde{\!\!\!
f^*(\cF)\!\!\!}\,\,\,{}^\Ga_O\ra(\ti f{}^\Ga)^*(\cF^\Ga_O)$ in
\eq{ag9eq20} for ${}\,\,\,\widetilde{\!\!\!f^*(\cF)\!\!\!}
\,\,\,{}^\Ga,\ti\cF{}^\Ga$, and so maps $\,\,\,\widetilde{
\!\!\!f^*(\cF)\!\!\!}\,\,\,{}^\Ga_\tr \ra(\ti f{}^\Ga)^*
(\ti\cF{}^\Ga_\tr)$ and ${}\,\,\,\widetilde{\!\!\!f^*(\cF)\!\!\!}
\,\,\,{}^\Ga_\nt\ra(\ti f{}^\Ga)^*(\ti\cF{}^\Ga_\nt)$
in~\eq{ag9eq21}.
\label{ag9def8}
\end{dfn}

\begin{dfn} Let $\cX$ be a Deligne--Mumford $C^\iy$-stack, and
$\Ga$ a finite group, so that \S\ref{ag91} defines the orbifold
strata $\tcX^\Ga,\hcX^\Ga$ and 1-morphisms $\ti
O^\Ga(\cX):\tcX^\Ga\ra\cX$ and $\hat\Pi{}^\Ga:\tcX^\Ga\ra \hcX^\Ga$,
where $\hat\Pi{}^\Ga$ is non-representable, with
fibre~$[\bar{\ul{*}}/\Ga]$.

Suppose $\cE$ is a quasicoherent sheaf on $\cX$. Since we have no
1-morphism $\hcX^\Ga\ra\cX$, we cannot pull $\cE$ back to $\hcX^\Ga$
to define $\hat\cE{}^\Ga$ in $\qcoh(\hcX^\Ga)$. But we do have
$\ti{\cal E}{}^\Ga=\ti O^\Ga(\cX)^*(\cE)$ in $\qcoh(\tcX^\Ga)$, with
splitting $\ti{\cal E}{}^\Ga=\ti{\cal E}{}^\Ga_\tr \op\ti{\cal
E}{}^\Ga_\nt$ as in \eq{ag9eq21}, so we can form the pushforward
$\hat\Pi{}^\Ga_*(\ti{\cal E}{}^\Ga)$ in $\qcoh(\hcX^\Ga)$. Now
pushforwards take global sections of a sheaf on the fibres of the
1-morphism. The fibres of $\hat\Pi{}^\Ga$ are $[\bar{\ul{*}}/\Ga]$.
Quasicoherent sheaves on $[\bar{\ul{*}}/\Ga]$ correspond to
$\Ga$-representations, and the global sections correspond to the
trivial ($\Ga$-invariant) part.

As the $\Ga$-invariant part of $\ti{\cal E}{}^\Ga$ is $\ti{\cal
E}{}^\Ga_\tr$, we see that $\hat\Pi{}^\Ga_*(\ti{\cal
E}{}^\Ga_\nt)=0$, that is, $\cE{}^\Ga_\nt$ {\it and\/ $\ti{\cal
E}{}^\Ga_\nt$ do not descend to\/} $\hcX^\Ga$. Define
$\hat\cE{}^\Ga_\tr=\hat\Pi{}^\Ga_* (\ti{\cal E}{}^\Ga_\tr)$ in
$\qcoh(\hcX^\Ga)$. This is the natural analogue of
$\cE{}^\Ga_\tr,\ti{\cal E}{}^\Ga_\tr$ on $\hcX^\Ga$, and has a
canonical isomorphism
\e
(\hat\Pi{}^\Ga)^*(\hat\cE{}^\Ga_\tr)\cong\ti{\cal E}{}^\Ga_\tr.
\label{ag9eq22}
\e

Now let $f:\cX\ra\cY$ be a representable 1-morphism of
$C^\iy$-stacks, so that as in \S\ref{ag92} we have a representable
1-morphism $\ti f{}^\Ga:\tcX^\Ga\ra\ti\cY{}^\Ga$. Then there is a
canonical isomorphism
\begin{equation*}
\hat T{}^\Ga_\tr(f,\cF):
\,\,\,\widehat{\!\!\!f^*(\cF)\!\!\!}\,\,\,{}^\Ga_\tr\longra(\hat
f{}^\Ga)^*(\hat\cF{}^\Ga_\tr),
\end{equation*}
the composition of the natural isomorphism $\hat\Pi{}^\Ga_*\ci (\ti
f{}^\Ga)^*(\ti\cF{}^\Ga_\tr)\ra (\hat f{}^\Ga)^*\ci
\hat\Pi{}^\Ga_*(\ti\cF{}^\Ga_\tr)$ with~$\hat\Pi{}^\Ga_*(\ti
T{}^\Ga(f,\cF)\vert_{\smash{\,\,\,\widetilde{\!\!\!f^*(\cF)\!\!\!}
\,\,\,{}^\Ga_\tr}}\bigr)$.
\label{ag9def9}
\end{dfn}

\subsection{\texorpdfstring{Sheaves on orbifold strata of quotients
{$[\protect\uX/G]$}}{Sheaves on orbifold strata of quotients [X/G]}}
\label{ag95}
\I{quotient C-stack@quotient $C^\iy$-stack!orbifold strata!sheaves
on|(}\I{Deligne--Mumford $C^\iy$-stack!orbifold
strata!of quotient $C^\iy$-stacks|(}%
\I{C-stack@$C^\iy$-stack!quotients $[\protect\uX/G]$!orbifold
strata|(}

In the next theorem we take $\cX=[\uX/G]$, and use the explicit
description of $\cX^\Ga$ in Theorem \ref{ag9thm2} to give an
alternative formula for the action $R^\Ga(-,\cE)$ of $\Ga$ on
$\cE^\Ga$ in Definition \ref{ag9def7}. This then allows us to
understand the splittings \eq{ag9eq18}--\eq{ag9eq22} in terms of
sheaves on $\uX$. The proof is a long but straightforward
consequence of the definitions, and we leave it as an exercise.

\begin{thm} Let\/ $\uX$ be a
Hausdorff\/\I{C-scheme@$C^\iy$-scheme!Hausdorff} $C^\iy$-scheme, $G$
a finite group, $\ur:G\ra\Aut(\uX)$ an action of\/ $G$ on\/ $\uX,$
and\/ $\cX=[\uX/G]$ the quotient Deligne--Mumford\/ $C^\iy$-stack.
Then \eq{ag9eq5} gives an equivalence
$\cX^\Ga\simeq[\coprod_{\text{injective $\rho:\Ga\ra
G$}}\uX^{\rho(\Ga)}/G]$.

Write $\qcoh^G(\uX)$ for the abelian category\I{abelian
category} of\/ $G$-equivariant quasicoherent sheaves on $\uX,$ with
objects pairs $(\cE,\Phi)$ for $\cE\in\qcoh(\uX)$ and\/
$\Phi(g):\ur(g)^*(\cE)\ra\cE$ is an isomorphism in $\qcoh(\uX)$ for
all\/ $g\in G$ satisfying $\Phi(1)=\de_\uX(\cE)$ and
\begin{equation*}
\Phi(g h)=\Phi(h)\ci\ur(h)^*(\Phi(g))\ci I_{\ur(h),\ur(g)}(\cE)
\quad\text{for all\/ $g,h\in G,$}
\end{equation*}
and morphisms $\al:(\cE,\Phi)\ra(\cF,\Psi)$ in $\qcoh^G(\uX)$ are
morphisms $\al:\cE\ra\cF$ in $\qcoh(\uX)$ with\/
$\al\ci\Phi(g)=\Psi(g)\ci\ur(g)^*(\al)$ for all\/~$g\in G$.

Then $\qcoh^G(\uX)$ is isomorphic to $\qcoh(G\t\uX\rra\uX)$ in
Definition\/ {\rm\ref{ag8def2},} so Theorem\/ {\rm\ref{ag8thm1}}
gives an equivalence of categories $F_\Pi:\qcoh(\cX)\ra
\qcoh^G(\uX)$. Using \eq{ag9eq5} we also get an equivalence
$F_\Pi^\Ga:\qcoh(\cX^\Ga)\ra\qcoh^G(\coprod_\rho\uX^{\rho(\Ga)})$.
These categories and functors fit into a $2$-commutative diagram:
\e
\begin{gathered}
\xymatrix@C=130pt@R=11pt{ *+[r]{\qcoh(\cX)} \ar[r]_(0.35){F_\Pi}
\ar[d]^{O^\Ga(\cX)^*}
\drtwocell_{}\omit^{}\omit{^{{}\,\,\,\,\,\,\,\,\,\,\,\,\,\,\,
\,\,\,\,\,\,\,\,\,\,\,\,\,\,\,\,\,\,\,\, N^\Ga(\cX)}} &
*+[l]{\qcoh^G(\uX)} \ar[d]_{\ui^*_\uX}  \\
*+[r]{\qcoh(\cX^\Ga)} \ar[r]^(0.35){F_\Pi^\Ga} &
*+[l]{\qcoh^G(\coprod_\rho\uX^{\rho(\Ga)}),\!} }
\end{gathered}
\label{ag9eq23}
\e
where $\ui_\uX:\coprod_\rho\uX^{\rho(\Ga)}\ra\uX$ is the union over
$\rho$ of the inclusion morphisms $\uX^{\rho(\Ga)}\ra\uX,$ which is
$G$-equivariant and so induces a pullback functor $\ui^*_\uX$ as
shown, and\/ $N^\Ga(\cX)$ is a natural isomorphism of functors.

Let\/ $(E,\Phi)\in\qcoh^G(\uX),$ so that\/ $\ui^*_\uX(E,\Phi)\in
\qcoh^G(\coprod_\rho\uX^{\rho(\Ga)})$. Define $\bar
R{}^\Ga\bigl(\ga,(E,\Phi)\bigr):\ui^*_\uX(E,\Phi)\ra
\ui^*_\uX(E,\Phi)$ in $\qcoh^G(\coprod_\rho\uX^{\rho(\Ga)})$ for\/
$\ga\in\Ga$ such that
\begin{align*}
&\bar R{}^\Ga\bigl(\ga,(E,\Phi)\bigr)\vert_{\uX^{\rho(\Ga)}}:
\ui_\uX\vert_{\uX^{\rho(\Ga)}}^*(E)\longra
\ui_\uX\vert_{\uX^{\rho(\Ga)}}^*(E)\;\>\text{is given by}\\
&\bar R{}^\Ga\bigl(\ga,(E,\Phi)\bigr)\vert_{\uX^{\rho(\Ga)}}=
\ui_\uX\vert_{\uX^{\rho(\Ga)}}^*(\Phi(\rho(\ga^{-1})))\ci
I_{\ui_\uX\vert_{\uX^{\rho(\Ga)}},\ur(\rho(\ga^{-1}))}(\cE)
\end{align*}
for each $\rho,$ noting that\/
$\ur(\rho(\ga^{-1}))\ci\ui_\uX\vert_{\uX^{\rho(\Ga)}}=
\ui_\uX\vert_{\uX^{\rho(\Ga)}}$. Then $\bar
R{}^\Ga\bigl(-,(E,\Phi)\bigr)$ is an action of\/ $\Ga$ on
$\ui_\uX\vert_{\uX^{\rho(\Ga)}}^*(E)$ by isomorphisms. Furthermore,
for each\/ $\cE$ in $\qcoh(\cX)$ and\/ $\ga$ in $\Ga,$ the following
diagram in $\qcoh^G(\coprod_\rho\uX^{\rho(\Ga)})$ commutes:
\begin{equation*}
\xymatrix@C=140pt@R=15pt{ *+[r]{F_\Pi^\Ga(\cE^\Ga)}
\ar[r]_{F_\Pi^\Ga(R^\Ga(\ga,\cE))} \ar[d]^{N^\Ga(\cX)(\cE)} &
*+[l]{F_\Pi^\Ga(\cE^\Ga)} \ar[d]_{N^\Ga(\cX)(\cE)} \\
*+[r]{\ui^*_\uX\ci F_\Pi(\cE)} \ar[r]^{\bar R{}^\Ga(\ga,F_\Pi(\cE))} &
*+[l]{\ui^*_\uX\ci F_\Pi(\cE).\!} }
\end{equation*}
That is, the equivalences of categories $F_\Pi,F_\Pi^\Ga$ in
\eq{ag9eq23} identify the $\Ga$-actions $R^\Ga(-,-)$ on
$O^\Ga(\cX)^*$ and\/ $\bar R{}^\Ga(-,-)$ on $\ui^*_\uX$ by natural
isomorphisms.\I{quotient C-stack@quotient $C^\iy$-stack!orbifold
strata!sheaves on|)}\I{Deligne--Mumford $C^\iy$-stack!orbifold
strata!of quotient $C^\iy$-stacks|)}%
\I{C-stack@$C^\iy$-stack!quotients $[\protect\uX/G]$!orbifold
strata|)}
\label{ag9thm3}
\end{thm}

\subsection{Cotangent sheaves of orbifold strata}
\label{ag96}
\I{Deligne--Mumford $C^\iy$-stack!orbifold strata!cotangent
sheaves|(}

Finally we apply these ideas to write the cotangent sheaves of
$\cX^\Ga,\tcX^\Ga,\hcX^\Ga$ in terms of the pullbacks of $T^*\cX$.
The theorem illustrates the principle that when passing to orbifold
strata, it is often natural to restrict to the trivial parts
$\cE{}^\Ga_\tr,\ti{\cal E}{}^\Ga_\tr,\hat\cE{}^\Ga_\tr$ of the
pullbacks of $\cE$. The nontrivial parts $(T^*\cX)^\Ga_\nt,
\,\,\,\,\, \widetilde{\!\!\!\!\!(T^*\cX)\!\!\!\!\!}
\,\,\,\,\,^\Ga_\nt$ should be interpreted as the {\it conormal
sheaves\/} of $\cX^\Ga,\tcX^\Ga$ in~$\cX$.

\begin{thm} Let\/ $\cX$ be a Deligne--Mumford\/
$C^\iy$-stack and\/ $\Ga$ a finite group, so that\/
{\rm\S\ref{ag91}} defines $O^\Ga(\cX):\cX^\Ga\ra\cX$. As in
Definition\/ {\rm\ref{ag8def5}} we have cotangent sheaves
$T^*\cX,T^*(\cX^\Ga)$ and a morphism
$\Om_{\smash{O^\Ga(\cX)}}:O^\Ga(\cX)^*(T^*\cX)\ab \ra T^*(\cX^\Ga)$
in $\qcoh(\cX^\Ga)$. But\/ $O^\Ga(\cX)^*(T^*\cX)=(T^*\cX)^\Ga,$ so
by \eq{ag9eq19} we have a splitting
$(T^*\cX)^\Ga=(T^*\cX)^\Ga_\tr\op(T^*\cX)^\Ga_\nt$. Then
$\Om_{\smash{O^\Ga(\cX)}}\vert_{\smash{(T^*\cX)^\Ga_\tr}}:
(T^*\cX)^\Ga_\tr\ra T^*(\cX^\Ga)$ is an isomorphism,
and\/~$\Om_{O^\Ga(\cX)}\vert_{(T^*\cX)^\Ga_\nt}=0$.

Similarly, using the $1$-morphism $\ti\O{}^\Ga(\cX):\tcX^\Ga\ra\cX$
and the splitting \eq{ag9eq21} for
${}\,\,\,\,\,\widetilde{\!\!\!\!\!(T^*\cX)\!\!\!\!\!}
\,\,\,\,\,^\Ga$ we find that\/ $\Om_{\smash{\ti\O{}^\Ga(\cX)}}
\vert_{\smash{\,\,\,\,\, \widetilde{\!\!\!\!\!(T^*\cX)\!\!\!\!\!}
\,\,\,\,\, ^\Ga_\tr}}: \,\,\,\,\,\widetilde{\!\!\!\!\!(T^*\cX)
\!\!\!\!\!}\,\,\,\,\,^\Ga_\tr\ra T^*(\tcX^\Ga)$ is an isomorphism,
and\/ $\Om_{\smash{\ti\O{}^\Ga(\cX)}} \vert_{\smash{\,\,\,\,\,
\widetilde{\!\!\!\!\!(T^*\cX)\!\!\!\!\!} \,\,\,\,\,^\Ga_\nt}}=0$.

Also, there is a natural isomorphism $\smash{{}\,\,\,\,\,\widehat{
\!\!\!\!\!(T^*\cX) \!\!\!\!\!}\,\,\,\,\,^\Ga_\tr\cong
T^*(\hcX^\Ga)}$ in\/~$\qcoh(\hcX^\Ga)$.
\label{ag9thm4}
\end{thm}

\begin{proof} All of the claims are local statements on
$\cX^\Ga,\tcX^\Ga,\hcX^\Ga$, that is, it is enough to prove them on
open covers of $\cX^\Ga,\tcX^\Ga,\hcX^\Ga$. As $\cX$ is Deligne--Mumford it is covered by open $C^\iy$-substacks $\cU$ equivalent to $[\uU/G]$ for $\uU$ an affine $C^\iy$-scheme and $G$ a finite group. Then $\cX^\Ga,\tcX^\Ga,\hcX^\Ga$ are covered by
the corresponding $\cU^\Ga,\ti\cU{}^\Ga,\hat\cU{}^\Ga$. Thus it is
sufficient to prove the theorem when $\cX\simeq[\uX/G]$ for $\uX$ an affine $C^\iy$-scheme and $G$ a finite group acting on $\uX$. As the theorem is independent of $\cX$ up to equivalence, we may
take~$\cX=[\uX/G]$.

Thus we can apply Theorems \ref{ag9thm2} and \ref{ag9thm3} to
translate each part of the theorem into statements about
$\uX,\uX^{\rho(\Ga)},\ldots.$ For the first part, using the notation
of Theorem \ref{ag9thm3}, we find that $F_\Pi(T^*\cX)=
(T^*\uX,\Phi)$, where $\Phi(g)=\Om_{\ur(g)}$ for $g\in G$. Similarly
$F_\Pi^\Ga(T^*\cX^\Ga)=\bigl(T^*(
\coprod_\rho\uX^{\rho(\Ga)}),\Phi^\Ga\bigr)$, where
$\Phi^\Ga(g)=\coprod_\rho\Om_{\ur(g)\vert_{\uX^{\rho(\Ga)}}}$, and
$\ur(g)\vert_{\uX^{\rho(\Ga)}}$ maps~$\uX^{\rho(\Ga)}\ra
\uX^{\rho^g(\Ga)}$.

Fix an injective morphism $\rho:\Ga\ra G$, and write
$\ui^\rho_\uX:\uX^{\rho(\Ga)}\ra\uX$ for the inclusion of
$\uX^{\rho(\Ga)}$ as a $C^\iy$-subscheme. Then $(\ui^\rho_\uX)^*
(T^*\uX)=\ui^*_\uX(T^*\uX)\vert_{\smash{\uX^{\rho(\Ga)}}}$ in
$\qcoh(\uX^{\rho(\Ga)})$, and $\Om_{\ui^\rho_\uX}=\Om_{\ui_\uX}
\vert_{\smash{\uX^{\rho(\Ga)}}}$. Theorem \ref{ag9thm3} and
$\Phi(g)=\Om_{\ur(g)}$ show that the $\Ga$-action $\bar
R{}^\Ga\bigl(\ga,(T^*\uX,\Phi)\bigr)$ on $\bigl(\ui^*_\uX(T^*\uX),
\ui^*_\uX(\Phi)\bigr)$ acts on $(\ui^\rho_\uX)^*(T^*\uX)$ by
\begin{equation*}
\bar R{}^\Ga\bigl(\ga,(T^*\uX,\Phi)\bigr)
\vert_{(\ui^\rho_\uX)^*(T^*\uX)}=
(\ui^\rho_\uX)^*(\Om_{\ur(\rho(\ga^{-1}))})\ci
I_{\ui^\rho_\uX,\ur(\rho(\ga^{-1}))}.
\end{equation*}

Let $(\ui^\rho_\uX)^*(T^*\uX)=(\ui^\rho_\uX)^*(T^*\uX)_\tr\op
(\ui^\rho_\uX)^*(T^*\uX)_\nt$ be the decomposition of
$(\ui^\rho_\uX)^*(T^*\uX)$ into trivial and nontrivial
$\Ga$-representations under the action of $\bar
R{}^\Ga(-,(T^*\uX,\Phi))$. Since Theorem \ref{ag9thm3} shows that
the $\Ga$-actions $R{}^\Ga(-,T^*\cX)$ and $\bar R{}^\Ga(-,
(T^*\uX,\Phi))$ are intertwined by $F_\Pi^\Ga$, the splitting into
trivial and nontrivial parts corresponds. As $F_\Pi^\Ga$ is an
equivalence of categories by Theorem \ref{ag8thm1}, the first part
of the theorem is thus equivalent to showing that
\begin{equation*}
\Om_{\ui^\rho_\uX}:(\ui^\rho_\uX)^*(T^*\uX)=
(\ui^\rho_\uX)^*(T^*\uX)_\tr\op(\ui^\rho_\uX)^*(T^*\uX)_\nt
\longra T^*\uX^{\rho(\Ga)}
\end{equation*}
is an isomorphism $(\ui^\rho_\uX)^*(T^*\uX)_\tr\ra
T^*\uX^{\rho(\Ga)}$, and is zero on $(\ui^\rho_\uX)^*(T^*\uX)_\nt$.

To see this, let $x\in\uX^{\rho(\Ga)}\subseteq\uX$, and write $\fC_x$ for the local $C^\iy$-ring $\O_{X,x}$, the stalk of $\O_X$ at $x$. Then the action $\rho$ of $\Ga$ on $\uX$ fixing $x$ induces an action $\phi:\Ga\ra\Aut(\fC_x)$ of $\Ga$ on $\fC_x$. Since each $\phi(\ga):\fC_x\ra\fC_x$ acts on $\fC_x$ as a $C^\iy$-ring isomorphism, it is an $\R$-linear map, so we may split $\fC_x=\fC_{x,\tr}\op\fC_{x,\nt}$ into trivial and nontrivial $\Ga$-representations. Write $(\fC_{x,\nt})$ for the ideal in $\fC_x$ generated by $\fC_{x,\nt}$, and $\fD_x=\fC_x/(\fC_{x,\nt})$ for the quotient
$C^\iy$-ring, with projection $\pi_x:\fC_x\ra\fD_x$. Then
$\O_{X^{\rho(\Ga)},x}\cong\fD_x$ and~$\ui^\rho_{\uX,x}\cong\pi_x:\fC_x\ra\fD_x$.

We have cotangent modules $\Om_{\fC_x},\Om_{\fD_x}$ with morphisms
$\Om_{\pi_x}:\Om_{\fC_x}\ra\Om_{\fD_x}$ and $(\Om_{\pi_x})_*:\Om_{\fC_x}\ot_{\fC_x}\fD_x\ra\Om_{\fD_x}$. In stalks at $x\in\uX^{\rho(\Ga)}\subseteq\uX$ we have
$[T^*\uX]_x\cong\Om_{\fC_x}$, $[T^*\uX^{\rho(\Ga)}]_x\cong\Om_{\fD_x}$, $[(\ui^\rho_\uX)^*(T^*\uX)]_x\cong\Om_{\fC_x}\ot_{\fC_x}\fD_x$ and $[\Om_{\ui^\rho_\uX}]_x\cong(\Om_{\pi_x})_*:\Om_{\fC_x}\ot_{\fC_x}\fD_x\ra\Om_{\fD_x}$. The $\Ga$-action on $\fC_x$ induces one on $\Om_{\fC_x}$, and hence one on $\Om_{\fC_x}\ot_{\fC_x}\fD_x$. Thus
we split into trivial and nontrivial $\Ga$-representations,
$\Om_{\fC_x}\ot_{\fC_x}\fD_x=(\Om_{\fC_x}\ot_{\fC_x}\fD_x)_\tr\op(\Om_{\fC_x}
\ot_{\fC_x}\fD_x)_\nt$. This $\Ga$-action is identified with that on the stalk
$[(\ui^\rho_\uX)^*(T^*\uX)]_x$. Hence $[(\ui^\rho_\uX)^*(T^*\uX)_\tr]_x\cong(\Om_{\fC_x}\ot_{\fC_x}\fD_x)_\tr$ and~$[(\ui^\rho_\uX)^*(T^*\uX)_\nt]_x\cong(\Om_{\fC_x}\ot_{\fC_x}\fD_x)_\nt$.

We have a linear map $\d_{\fC_x}:\fC_x\ra\Om_{\fC_x}$, whose image generates
$\Om_{\fC_x}$ as a $\fC_x$-module. It induces a linear map $\d_{\fC_x}\ot
\pi_x:\fC_x\ra \Om_{\fC_x}\ot_{\fC_x}\fD_x$, whose image generates
$\Om_{\fC_x}\ot_{\fC_x}\fD_x$ as a $\fD_x$-module. As $\d_{\fC_x}\ot\pi_x$ is
$\Ga$-equivariant, it maps $\fC_{x,\tr}$ and $\fC_{x,\nt}$ to
$(\Om_{\fC_x}\ot_{\fC_x}\fD_x)_\tr$ and $(\Om_{\fC_x}\ot_{\fC_x}\fD_x)_\nt$,
respectively. Hence $(\Om_{\fC_x}\ot_{\fC_x}\fD_x)_\tr$ and
$(\Om_{\fC_x}\ot_{\fC_x}\fD_x)_\nt$ are generated as $\fD_x$-modules by
$(\d_{\fC_x}\ot\pi_x)(\fC_{x,\tr})$ and~$(\d_{\fC_x}\ot\pi_x)(\fC_{x,\nt})$.

Since $\fD_x=\fC_x/(\fC_{x,\nt})$, we see that $(\Om_{\pi_x})_*:\Om_{\fC_x}\ot_{\fC_x}\fD_x\ra\Om_{\fD_x}$ is surjective, with kernel generated by $(\d_{\fC_x}\ot\pi_x)\bigl((\fC_{x,\nt})\bigr)$. It is enough to use not the whole ideal $(\fC_{x,\nt})$, but only the generating subspace $\fC_{x,\nt}$. The $\fD_x$-submodule generated by $(\d_{\fC_x}\ot\pi_x)(\fC_{x,\nt})$ is $(\Om_{\fC_x}\ot_{\fC_x}\fD_x)_\nt$. Thus, $(\Om_{\pi_x})_*$ is surjective with kernel $(\Om_{\fC_x}\ot_{\fC_x}\fD_x)_\nt$, so
$(\Om_{\pi_x})_*\vert_{(\Om_{\fC_x}\ot_{\fC_x}\fD_x)_\tr}:(\Om_{\fC_x}\ot_{\fC_x}\fD_x)_\tr\ra\Om_{\fD_x}$ is an isomorphism, and $(\Om_{\pi_x})_*\vert_{(\Om_{\fC_x}\ot_{\fC_x}\fD_x)_\nt}=0$. Therefore $[\Om_{\ui^\rho_\uX}\vert_{\smash{(\ui^\rho_\uX)^*(T^*\uX)_\tr}}]_x:[(\ui^\rho_\uX)^*(T^*\uX)_\tr]_x\ra[T^*\uX^{\rho(\Ga)}]_x$ is an isomorphism, and $[\Om_{\ui^\rho_\uX}\vert_{\smash{(\ui^\rho_\uX)^*(T^*\uX)_\nt}}]_x=0$. As this holds for all $x\in\uX^{\rho(\Ga)}\subseteq\uX$, the first part follows.

For the second part, Theorem \ref{ag8thm3}(a) and $\ti
O^\Ga(\cX)\ci\ti\Pi{}^\Ga(\cX)=O^\Ga(\cX)$ give a commutative
diagram in $\qcoh(\cX^\Ga)$:
\e
\begin{gathered}
\xymatrix@C=150pt@R=16pt{
*+[r]{\begin{subarray}{l}\ts \ti\Pi{}^\Ga(\cX)^*(\ti
O{}^\Ga(\cX)^*(T^*\cX))=\\
\ts\ti\Pi{}^\Ga(\cX)^*(\,\,\,\,\,
\widetilde{\!\!\!\!\!(T^*\cX)\!\!\!\!\!} \,\,\,\,\, ^\Ga_\tr)\op
\ti\Pi{}^\Ga(\cX)^*(\,\,\,\,\,
\widetilde{\!\!\!\!\!(T^*\cX)\!\!\!\!\!} \,\,\,\,\,
^\Ga_\nt)\end{subarray}} \ar[r]^(0.6){\ti\Pi{}^\Ga(\cX)^*(\Om_{\ti
O{}^\Ga(\cX)})}  &
*+[l]{\ti\Pi{}^\Ga(\cX)^*(T^*(\tcX^\Ga))}
\ar[d]_{\Om_{\ti\Pi{}^\Ga(\cX)}}
\\
*+[r]{\begin{subarray}{l}\ts O^\Ga(\cX)^*(T^*\cX)=\\
\ts(T^*\cX)^\Ga_\tr\op(T^*\cX)^\Ga_\nt\end{subarray}}
\ar[u]_{I_{\ti\Pi{}^\Ga(\cX),\ti O^\Ga(\cX)}(\cE)}
\ar[r]^(0.65){\Om_{O^\Ga(\cX)}} & *+[l]{T^*(\cX^\Ga).} }
\end{gathered}
\label{ag9eq24}
\e
As $\ti\Pi^\Ga(\cX)$ is the projection
$\cX^\Ga\ra[\cX^\Ga/\Aut(\Ga)]$, it is \'etale, so
$\Om_{\smash{\ti\Pi^\Ga(\cX)}}$ is an isomorphism. Also
$I_{\ti\Pi{}^\Ga(\cX),\ti O^\Ga(\cX)}(\cE)$ identifies `tr',`nt'
with `tr',`nt' components. Thus \eq{ag9eq24} and the first part
show $\ti\Pi{}^\Ga(\cX)^*(\Om_{\smash{ \ti\O{}^\Ga(\cX)}}
\vert_{\smash{\,\,\,\,\, \widetilde{\!\!\!\!\!(T^*\cX)\!\!\!\!\!}
\,\,\,\,\,^\Ga_\tr}})\!:\!\ti\Pi{}^\Ga(\cX)^*(\,\,\,\,\,\widetilde{
\!\!\!\!\!(T^*\cX)\!\!\!\!\!}\,\,\,\,\,^\Ga_\tr)\ab\ra
\ti\Pi{}^\Ga(\cX)^*(T^*(\tcX^\Ga))$ is an isomorphism, and\/
$\ti\Pi{}^\Ga(\cX)^*(\Om_{\smash{\ti\O{}^\Ga(\cX)}}
\vert_{\smash{\,\,\,\,\, \widetilde{\!\!\!\!\!(T^*\cX)\!\!\!\!\!}
\,\,\,\,\,^\Ga_\nt}})=0$. As $\ti\Pi{}^\Ga(\cX)$ is \'etale and
surjective, the second part of the theorem follows. The third part
is proved by a similar argument
involving~$\hat\Pi{}^\Ga$.\I{Deligne--Mumford $C^\iy$-stack!quasicoherent sheaves on|)}\I{Deligne--Mumford $C^\iy$-stack!orbifold
strata|)}\I{Deligne--Mumford $C^\iy$-stack!orbifold strata!sheaves
on|)}\I{Deligne--Mumford $C^\iy$-stack!orbifold strata!cotangent
sheaves|)}
\end{proof}

\appendix

\section{Background material on stacks}
\label{agA}

Finally we recall some background material on stacks needed in \S\ref{ag6}--\S\ref{ag9}. Readers unfamiliar with stacks are advised to look at an introductory text such as Olsson \cite{Olss2}, Vistoli \cite{Vist}, Gomez \cite{Gome}, or Laumon and Moret-Bailly \cite{LaMo} before reading this section.

Stacks of any kind form a strict 2-category $\bs{\cal C}$, with objects
$\cX,\cY$, 1-morphisms $f,g:\cX\ra\cY$, and 2-morphisms $\eta:f\Ra
g$. So we begin in \S\ref{agA1} with an introduction to 2-categories. Sections \ref{agA2}--\ref{agA5} cover Grothendieck (pre)topologies, sites, prestacks and stacks, descent theory, properties of 1-morphisms of stacks, geometric stacks, and stacks associated to groupoids.

Our principal references were Artin \cite{Arti}, Behrend et al.\ \cite{BEFF}, Gomez
\cite{Gome}, Laumon and Moret-Bailly \cite{LaMo}, Metzler
\cite{Metz}, Noohi \cite{Nooh}, and Olsson \cite{Olss2}. The topological and smooth stacks discussed by Metzler and Noohi are closer to our situation than the stacks in algebraic geometry of
\cite{BEFF,Gome,LaMo}, so we often follow \cite{Metz,Nooh},
particularly in \S\ref{agA5} which is based on Metzler \cite[\S
3]{Metz}. Heinloth \cite{Hein} and Behrend and Xu \cite{BeXu} also
discuss smooth stacks.

\subsection{Introduction to 2-categories}
\label{agA1}
\I{2-category|(}

A good reference on 2-categories for our purposes is Behrend et al.\
\cite[App.~B]{BEFF}, and Borceux \cite[\S 7]{Borc} and Kelly and Street \cite{KeSt} are also helpful.

\begin{dfn} A {\it strict\/} 2-{\it category\/} $\bs{\cal C}$\I{2-category!strict} consists of a proper class of {\it objects\/} $\Obj(\bs{\cal C})$, for all
$X,Y$ in $\Obj(\bs{\cal C})$ a small category $\Hom(X,Y)$, for all
$X$ in $\Obj(\bs{\cal C})$ an object $\id_X$ in $\Hom(X,X)$ called the
{\it identity $1$-morphism}, and for all $X,Y,Z$ in $\Obj(\bs{\cal C})$
a functor
\begin{equation*}
\mu_{X,Y,Z}:\Hom(X,Y)\t\Hom(Y,Z)\longra\Hom(X,Z).
\end{equation*}
These must
satisfy the {\it identity property}, that
\e
\mu_{X,X,Y}(\id_X,-)=\mu_{X,Y,Y}(-,\id_Y)=\id_{\Hom(X,Y)}
\label{agAeq1}
\e
as functors $\Hom(X,Y)\ra\Hom(X,Y)$, and the {\it associativity property}, that
\e
\mu_{W,Y,Z}\ci(\mu_{W,X,Y}\t\id_{\Hom(Y,Z)})
=\mu_{W,X,Z}\ci(\id_{\Hom(W,X)}\t\mu_{X,Y,Z}) 
\label{agAeq2}
\e
for all $W,X,Y,Z$, as functors
\begin{equation*}
\Hom(W,X)\t\Hom(X,Y)\t\Hom(Y,Z)\longra\Hom(W,X).
\end{equation*} 

Objects $f$ of $\Hom(X,Y)$ are called 1-{\it
morphisms},\I{2-category!1-morphism} written $f:X\ra Y$. For
1-morphisms $f,g:X\ra Y$, morphisms $\eta\in \Hom_{\Hom(X,Y)}(f,g)$
are called 2-{\it morphisms},\I{2-category!2-morphism} written
$\eta:f\Ra g$. Thus, a 2-category has objects $X$, and two kinds of
morphisms, 1-morphisms $f:X\ra Y$ between objects, and 2-morphisms
$\eta:f\Ra g$ between 1-morphisms. 

A {\it weak\/ $2$-category},\I{2-category!weak} or {\it bicategory}, is like a strict 2-category, except that the equations of functors \eq{agAeq1}, \eq{agAeq2} are required to hold only up to specified natural isomorphisms, which should themselves satisfy identities. Strict 2-categories are examples of weak 2-categories in which these specified natural isomorphisms are identities. We will not give much detail on weak 2-categories, since the 2-categories of stacks we are interested in are strict.

In many examples, all 2-morphisms are 2-isomorphisms (i.e.\ have an inverse), so that the categories $\Hom(X,Y)$ are groupoids. Such 2-categories are called (2,1)-{\it
categories}.
\label{agAdef1}
\end{dfn}

This is quite a complicated structure. There are three kinds of
composition in a 2-category, satisfying various associativity
relations. If $f:X\ra Y$ and $g:Y\ra Z$ are 1-morphisms then
$\mu_{X,Y,Z}(f,g)$ is the {\it composition of\/
$1$-morphisms},\I{2-category!1-morphism!composition} written $g\ci
f:X\ra Z$. If $f,g,h:X\ra Y$ are 1-morphisms and $\eta:f\Ra g$,
$\ze:g\Ra h$ are 2-morphisms then composition of $\eta,\ze$ in the
category $\Hom(X,Y)$ gives the {\it vertical composition of\/
$2$-morphisms}\I{2-category!2-morphism!vertical composition} of
$\eta,\ze$, written $\ze\od\eta:f\Ra h$, as a diagram
\begin{equation*}
\xymatrix@C=25pt{ X \rruppertwocell^f{\eta} \rrlowertwocell_h{\ze}
\ar[rr]_(0.35)g && Y & \ar@{~>}[r] && X
\rrtwocell^f_h{{}\,\,\,\,\ze\od\eta\!\!\!\!\!} && Y.}
\end{equation*}
And if $f,\ti f:X\ra Y$ and $g,\ti g:Y\ra Z$ are 1-morphisms and
$\eta:f\Ra\ti f$, $\ze:g\Ra\ti g$ are 2-morphisms then
$\mu_{X,Y,Z}(\eta,\ze)$ is the {\it horizontal composition of\/
$2$-morphisms},\I{2-category!2-morphism!horizontal composition}
written $\ze*\eta:g\ci f\Ra\ti g\ci\ti f$, as a diagram
\begin{equation*}
\xymatrix@C=20pt{ X \rrtwocell^f_{\ti f}{\eta} && Y
\rrtwocell^g_{\ti g}{\ze} && Z & \ar@{~>}[r] && X \rrtwocell^{g\ci
f}_{\ti g\ci\ti f}{{}\,\,\,\ze*\eta\!\!\!\!\!} && Z. }
\end{equation*}
There are also two kinds of identity: {\it identity\/
$1$-morphisms\/} $\id_X:X\ra X$ and {\it identity\/
$2$-morphisms\/}~$\id_f:f\Ra f$.

In a strict 2-category $\bs\cC$, composition of 1-morphisms is strictly associative, $(g\ci f)\ci e=g\ci(f\ci e)$, and horizontal composition of 2-morphisms is strictly associative, $(\ze*\eta)*\ep=\ze*(\eta*\ep)$. In a weak 2-category $\bs\cC$, composition of 1-morphisms is associative up to specified 2-isomorphisms. 

A basic example is the {\it $2$-category of categories\/}
$\mathfrak{Cat}$, with objects small categories $\cC$, 1-morphisms
functors $F:\cC\ra\cD$, and 2-morphisms natural transformations
$\eta:F\Ra G$ for functors $F,G:\cC\ra\cD$. Orbifolds naturally form
a 2-category, as do Deligne--Mumford\I{stack} and Artin
stacks in algebraic geometry.

In a 2-category $\bs{\cal C}$, there are three notions of when
objects $X,Y$ in $\bs{\cal C}$ are `the same': {\it equality\/}
$X=Y$, and {\it isomorphism}, that is we have 1-morphisms $f:X\ra
Y$, $g:Y\ra X$ with $g\ci f=\id_X$ and $f\ci g=\id_Y$, and {\it
equivalence},\I{2-category!equivalence in} that is we have
1-morphisms $f:X\ra Y$, $g:Y\ra X$ and 2-isomorphisms $\eta:g\ci
f\Ra\id_X$ and $\ze:f\ci g\Ra\id_Y$. Usually equivalence is the most
useful. For example, isomorphisms are not preserved by equivalences of 2-categories, whereas equivalences are.

Let $\bs{\cal C}$ be a 2-category. The {\it homotopy
category\/}\I{homotopy category} $\Ho(\bs{\cal C})$ of $\bs{\cal C}$
is the category whose objects are objects of $\bs{\cal C}$, and
whose morphisms $[f]:X\ra Y$ are 2-isomorphism classes $[f]$ of
1-morphisms $f:X\ra Y$ in $\bs{\cal C}$. Then equivalences in
$\bs{\cal C}$ become isomorphisms in $\Ho(\bs{\cal C})$,
2-commutative diagrams in $\bs{\cal C}$ become commutative diagrams
in $\Ho(\bs{\cal C})$, and so on.

{\it Commutative diagrams\/} in 2-categories should in general only
commute {\it up to (specified)\/ $2$-isomorphisms}, rather than
strictly. Then we say the diagram 2-{\it commutes}. A simple example
of a commutative diagram in a 2-category $\bs{\cal C}$
is\I{2-category!2-commutative diagram}
\begin{equation*}
\xymatrix@C=50pt@R=8pt{ & Y \ar[dr]^g \ar@{=>}[d]^\eta \\
X \ar[ur]^f \ar[rr]_h && Z, }
\end{equation*}
which means that $X,Y,Z$ are objects of $\bs{\cal C}$, $f:X\ra Y$,
$g:Y\ra Z$ and $h:X\ra Z$ are 1-morphisms in $\bs{\cal C}$, and
$\eta:g\ci f\Ra h$ is a 2-isomorphism.

Next we discuss 2-functors between 2-categories, following Borceux \cite[\S 7.2, \S 7.5]{Borc} and Behrend et al.~\cite[\S B.4]{BEFF}.

\begin{dfn} Let $\bs\cC,\bs\cD$ be strict 2-categories. A {\it strict\/ $2$-functor\/} $F:\bs\cC\ra\bs\cD$ assigns an object $F(X)$ in $\bs\cD$ for each object $X$ in $\bs\cC$, a 1-morphism $F(f):F(X)\ra F(Y)$ in $\bs\cD$ for each 1-morphism $f:X\ra Y$ in $\bs\cC$, and a 2-morphism $F(\eta):F(f)\Ra F(g)$ in $\bs\cD$ for each 2-morphism $\eta:f\Ra g$ in $\bs\cC$, such that $F$ preserves all the structures on $\bs\cC,\bs\cD$, that is,
\ea
F(g\ci f)&=F(g)\ci F(f), & \!F(\id_X)&=\id_{F(X)}, & \!F(\ze*\eta)&=\!F(\ze)\!* \!F(\eta), 
\label{agAeq3}\\
F(\ze\od\eta)&=F(\ze)\od F(\eta), & \!F(\id_f)&=\id_{F(f)}.
\label{agAeq4}
\ea

Now let $\bs\cC,\bs\cD$ be weak 2-categories. Then strict 2-functors $F:\bs\cC\ra\bs\cD$ are not well-behaved. To fix this, we need to relax \eq{agAeq3} to hold only up to specified 2-isomorphisms. A {\it weak\/ $2$-functor\/} (or {\it pseudofunctor\/}) $F:\bs\cC\ra\bs\cD$ assigns an object $F(X)$ in $\bs\cD$ for each object $X$ in $\bs\cC$, a 1-morphism $F(f):F(X)\ra F(Y)$ in $\bs\cD$ for each 1-morphism $f:X\ra Y$ in $\bs\cC$, a 2-morphism $F(\eta):F(f)\Ra F(g)$ in $\bs\cD$ for each 2-morphism $\eta:f\Ra g$ in $\bs\cC$, a 2-isomorphism $F_{g,f}:F(g)\ci F(f)\Ra F(g\ci f)$ in $\bs\cD$ for all 1-morphisms $f:X\ra Y$, $g:Y\ra Z$ in $\bs\cC$, and a 2-isomorphism $F_X:F(\id_X)\Ra \id_{F(X)}$ in $\bs\cD$ for all objects $X$ in $\bs\cC$ such that \eq{agAeq4} holds, and for all $e:W\ra X$, $f:X\ra Y$, $g:Y\ra Z$ in $\bs\cC$ the following diagram of 2-isomorphisms commutes in $\bs\cD$:
\begin{equation*}
\xymatrix@C=87pt@R=15pt{ *+[r]{(F(g)\ci F(f))\ci F(e)} \ar@{=>}[d]^{\al_{F(g),F(f),F(e)}} \ar@{=>}[r]_(0.65){F_{g,f}*\id_{F(e)}} & F(g\ci f)\ci F(e) \ar@{=>}[r]_(0.4){F_{g\ci f,e}} & *+[l]{F((g\ci f)\ci e)} \ar@{=>}[d]_{F(\al_{g,f,e})} \\
*+[r]{F(g)\ci (F(f)\ci F(e))} \ar@{=>}[r]^(0.65){\id_{F(g)}*F_{f,e}} & F(g)\ci F(f\ci e) \ar@{=>}[r]^(0.4){F_{g,f\ci e}} &
*+[l]{F(g\ci (f\ci e)),\!\!} } 
\end{equation*}
and for all 1-morphisms $f:X\ra Y$ in $\bs\cC$, the following commute in $\bs\cD$:
\begin{equation*}
\xymatrix@!0@C=25pt@R=30pt{
*+[r]{F(f)\ci F(\id_X)} \ar@{=>}[rrrrrr]_(0.58){F_{f,\id_X}} \ar@{=>}[d]^{\id_{F(f)}*F_X} &&&&&& *+[l]{F(f\ci\id_X)} \ar@{=>}[d]_{F(\be_f)} & *+[r]{F(\id_Y)\ci F(f)} \ar@{=>}[rrrrrr]_(0.58){F_{\id_Y,f} } \ar@{=>}[d]^{F_Y*\id_{F(f)}} &&&&&& *+[l]{F(\id_Y\ci f)} \ar@{=>}[d]_{F(\ga_f)} \\
*+[r]{F(f)\ci \id_{F(X)}} \ar@{=>}[rrrrrr]^(0.6){\be_{F(f)}} &&&&&& *+[l]{F(f),\!\!} &
*+[r]{\id_{F(Y)}\ci F(f)} \ar@{=>}[rrrrrr]^(0.6){\ga_{F(f)}} &&&&&& *+[l]{F(f),\!\!} } 
\end{equation*}
and if $f,\dot f:X\ra Y$ and $g,\dot g:Y\ra Z$ are 1-morphisms and
$\eta:f\Ra\dot f$, $\ze:g\Ra\dot g$ are 2-morphisms in $\bs\cC$ then the following commutes in $\bs\cD$:
\begin{equation*}
\xymatrix@C=140pt@R=15pt{ *+[r]{F(g)\ci F(f)} \ar@{=>}[d]^{F(\ze)*F(\eta)} \ar@{=>}[r]_{F_{g,f}} & *+[l]{F(g\ci f)} \ar@{=>}[d]_{F(\ze*\eta)} \\
*+[r]{F(\dot g)\ci F(\dot f)} \ar@{=>}[r]^{F_{\dot g,\dot f}} & *+[l]{F(\dot g\ci\dot f).\!\!} } 
\end{equation*}

There are obvious notions of {\it composition\/} $G\ci F$ of strict and weak 2-functors $F:\bs\cC\ra\bs\cD$, $G:\bs\cD\ra\bs\cE$, {\it identity\/ $2$-functors\/} $\id_{\bs\cC}$, and so on.

If $\bs\cC,\bs\cD$ are strict 2-categories, then a strict $2$-functor $F:\bs\cC\ra\bs\cD$ can be made into a weak 2-functor by taking all $F_{g,f},F_X$ to be identity 2-morphisms. 
\label{agAdef2}
\end{dfn}

Here are some well-known facts about 2-categories and 2-functors:
\begin{itemize}
\setlength{\itemsep}{0pt}
\setlength{\parsep}{0pt}
\item[(i)] Every weak 2-category $\bs\cC$ is equivalent as a weak 2-category to a strict 2-category $\bs\cC'$, that is, weak 2-categories can always be strictified.
\item[(ii)] If $\bs\cC,\bs\cD$ are strict 2-categories, and $F:\bs\cC\ra\bs\cD$ is a weak 2-functor, it may not be true that $F$ is 2-naturally isomorphic to a strict 2-functor $F':\bs\cC\ra\bs\cD$. That is, weak 2-functors cannot necessarily be strictified. 

Even if one is working with strict 2-categories, weak 2-functors are often the correct notion of functor between them.
\end{itemize}

We define fibre products\I{2-category!fibre products in|(} in 2-categories, following
\cite[Def.~B.13]{BEFF}.

\begin{dfn} Let $\bs{\cal C}$ be a 2-category and $g:X\ra Z$,
$h:Y\ra Z$ be 1-morphisms in $\bs{\cal C}$. A {\it fibre product\/}
$X\t_ZY$ in $\bs{\cal C}$ consists of an object $W$, 1-morphisms
$e:W\ra X$ and $f:W\ra Y$ and a 2-isomorphism $\eta:g\ci e\Ra h\ci
f$ in $\bs{\cal C}$, so that we have a 2-commutative diagram
\e
\begin{gathered}
\xymatrix@C=100pt@R=14pt{ *+[r]{W} \ar[r]_(0.25){f} \ar[d]^{e}
\drtwocell_{}\omit^{}\omit{^{\eta}}
 & *+[l]{Y} \ar[d]_{h} \\ *+[r]{X} \ar[r]^(0.7){g} & *+[l]{Z} }
\end{gathered}
\label{agAeq5}
\e
with the following universal property: suppose $e':W'\ra X$ and
$f':W'\ra Y$ are 1-morphisms and $\eta':g\ci e'\Ra h\ci f'$ is a
2-isomorphism in $\bs{\cal C}$. Then there should exist a 1-morphism
$b:W'\ra W$ and 2-isomorphisms $\ze:e\ci b\Ra e'$, $\th:f\ci b\Ra
f'$ such that the following diagram of 2-isomorphisms commutes:
\e
\begin{gathered}
\xymatrix@C=90pt@R=15pt{ *+[r]{g\ci e\ci b} \ar@{=>}[r]_{\eta*\id_b}
\ar@{=>}[d]^{{}\id_g*\ze} & *+[l]{h\ci f\ci b} \ar@{=>}[d]_{\id_h*\th{}}
\\ *+[r]{g\ci e'} \ar@{=>}[r]^{\eta'} & *+[l]{h\ci f'.\!} }
\end{gathered}
\label{agAeq6}
\e
Furthermore, if $\ti b,\ti\ze,\ti\th$ are alternative choices of
$b,\ze,\th$ then there should exist a unique 2-isomorphism $\ep:\ti
b\Ra b$ with
\begin{equation*}
\ti\ze=\ze\od(\id_{e}*\ep)\quad\text{and}\quad
\ti\th=\th\od(\id_{f}*\ep).
\end{equation*}
We call such a fibre product diagram \eq{agAeq5} a 2-{\it Cartesian
square}.\I{2-category!2-Cartesian square} If a fibre product
$X\t_ZY$ in $\bs{\cal C}$ exists then it is unique up to equivalence
in~$\bs{\cal C}$.
\label{agAdef3}
\end{dfn}

Orbifolds,\I{orbifold!transverse fibre product} and stacks\I{stack}
in algebraic geometry, form 2-categories, and Definition
\ref{agAdef3} is the right way to define fibre products of orbifolds
or stacks, as in \cite{BEFF}. Given a 2-commutative diagram in a 2-category
\begin{equation*}
\xymatrix@C=80pt@R=15pt{ U \ar[r]_(0.25){f} \ar[d]^{e}
\drtwocell_{}\omit^{}\omit{^{\eta}}
 & W \ar[r]_(0.25){i} \ar[d]^{h}
\drtwocell_{}\omit^{}\omit{^{\ze}}
 & Y \ar[d]_{k} \\ V \ar[r]^(0.7){g} & X \ar[r]^(0.7){j} & Z,\!}
\end{equation*}
if the two small rectangles are 2-Cartesian, then the outer rectangle is too.\I{2-category!fibre products
in|)}\I{2-category|)}

\subsection{Grothendieck topologies, sites, prestacks, and stacks}
\label{agA2}
\I{stack|(}\I{site|(}

Some references for this section are Olsson \cite{Olss2}, Artin \cite{Arti}, Behrend et al.\ \cite{BEFF}, and Laumon and Moret-Bailly~\cite{LaMo}.

\begin{dfn} Let $\cC$ be a category, and $U\in\cC$. A {\it sieve\/} $\cS$ on $U$ is a collection of morphisms $\phi:V\ra U$ in $\cC$ closed under precomposition, that is, if $\phi:V\ra U$ lies in $\cS$ and $\psi:W\ra V$ is a morphism in $\cC$ then $\phi\ci\psi:W\ra U$ lies in $\cS$.

A {\it Grothendieck topology\/}\I{Grothendieck topology\/} on $\cC$ is a collection of distinguished sieves for each object $U\in\cC$ called {\it covering sieves}, satisfying some axioms we will not give. A {\it site\/} $(\cC,\cJ)$ is a category $\cC$ with a Grothendieck
topology~$\cJ$.

It is often convenient to define Grothendieck topologies using Grothendieck pretopologies. A {\it Grothendieck pretopology\/}\I{Grothendieck pretopology} $\cP\cJ$ on $\cC$ is a collection of families $\{\vp_a:U_a\ra U\}_{a\in A}$ of morphisms in $\cC$ called {\it
coverings}, satisfying:
\begin{itemize}
\setlength{\itemsep}{0pt}
\setlength{\parsep}{0pt}
\item[(i)] If $\vp:V\ra U$ is an isomorphism in $\cC$, then
$\{\vp:V\ra U\}$ is a covering;
\item[(ii)] If $\{\vp_a:U_a\ra U\}_{a\in A}$ is a covering, and
$\{\psi_{ab}:V_{ab}\ra U_a\}_{b\in B_a}$ is a covering for all
$a\in A$, then $\{\vp_a\ci\psi_{ab}:V_{ab}\ra U\}_{a\in A,\;
b\in B_a}$ is a covering.
\item[(iii)] If $\{\vp_a:U_a\ra U\}_{a\in A}$ is a covering and
$\psi:V\ra U$ is a morphism in $\cC$ then
$\{\pi_V:U_a\t_{\vp_a,U,\psi}V\ra V\}_{a\in A}$ is a covering,
where the fibre product\I{fibre product}\I{category!fibre
product} $U_a\t_UV$ exists in $\cC$ for all $a\in A$.
\end{itemize}
Each Grothendieck pretopology $\cP\cJ$ has an associated Grothendieck topology $\cJ$, in which a sieve $\cS$ on $U\in\cC$ is a covering sieve in $\cJ$ if and only if it contains a covering $\{\vp_a:U_a\ra U\}_{a\in A}$ in $\cP\cJ$.

A Grothendieck pretopology $\cP\cJ$ gives a notion of {\it open cover\/} of objects in $\cC$. For example, if $\cC$ is the category of topological spaces $\Top$, we could define $\cP\cJ$ to be the collection of families $\{\vp_a:U_a\ra U\}_{a\in A}$ in $\Top$ such that $\vp_a:U_a\ra U$ is a homeomorphism with an open subset $\vp_a(U_a)\subseteq U$ for $a\in A$, with $U=\bigcup_{a\in A}\vp_a(U_a)$, so that $\{\vp_a:U_a\ra U\}_{a\in A}$ is an open cover of~$U$.
\label{agAdef4}
\end{dfn}

\begin{dfn} Let $\cC$ be a category. A {\it category fibred in
groupoids over\/}\I{category!fibred in groupoids} $\cC$ is a functor
$p_\cX:\cX\ra\cC$, where $\cX$ is a category, such that given any
morphism $g:C_1\ra C_2$ in $\cC$ and $X_2\in\cX$ with
$p_\cX(X_2)=C_2$, there exists a morphism $f:X_1\ra X_2$ in $\cX$
with $p_\cX(f)=g$, and given commutative diagrams (on the left) in
$\cX$, in which $g$ is to be determined, and (on the right) in
$\cC$:
\e
\begin{gathered}
\xymatrix@C=25pt@R=10pt{ X_1 \ar@{..>}[rr]_g \ar[dr]_f && X_2
\ar[dl]^h \\ & X_3 }\end{gathered}\quad {\buildrel p_\cX \over
\rightsquigarrow} \quad \begin{gathered} \xymatrix@C=10pt@R=5pt{
p_\cX(X_1) \ar[rr]_{g'} \ar[dr]_{p_\cX(f)} && p_\cX(X_2)
\ar[dl]^{p_\cX(h)} \\ & p_\cX(X_3), }
\end{gathered}
\label{agAeq7}
\e
then there exists a unique morphism $g$ as shown with $p_\cX(g)=g'$
and $f=h\ci g$. Often we refer to $\cX$ as the category fibred in
groupoids (or prestack, or stack, etc.), leaving $p_\cX$ implicit.

If $p_\cX:\cX\ra\cC$ is a category fibred in groupoids and $C$ is an object in $\cC$, the {\it fibre\/} $\cX_C$ is the subcategory of $\cX$ with objects those $X\in\cX$ with $p_\cX(X)=C$, and morphisms those $f:X_1\ra X_2$ with $p_\cX(f)=\id_C:C\ra C$. Then $\cX_C$ is a groupoid (i.e.\ a category with all morphisms isomorphisms).
\label{agAdef5}
\end{dfn}

\begin{dfn} Let $(\cC,\cJ)$ be a site, and $p_\cX:\cX\ra\cC$ be a
category fibred in groupoids over $\cC$. We call $\cX$ a {\it
prestack\/}\I{prestack} if whenever $\{\vp_a:U_a\ra U\}_{a\in A}$ is
a covering family in $\cJ$ and we are given commutative diagrams in
$\cX,\cC$ for all $a,b\in A$, in which $f$ is to be determined:
\e
\begin{gathered}
\xymatrix@R=7pt@C=6pt{ & X_{ab} \ar[dl] \ar@<-1ex>[ddr] \ar[rr] &&
Y_{ab} \ar[dl] \ar[ddr] \\ X_a \ar[ddr]_{x_a} \ar[rr] && Y_a
\ar@<1ex>[ddr]^(0.4){y_a} \\ && X_b \ar[rr] \ar[dl]_(0.4){x_b} &&
Y_b \ar[dl]^(0.4){y_b} \\ & X \ar@{..>}[rr]^(0.7)f && Y}
\end{gathered}
\;\>{\buildrel p_\cX \over \rightsquigarrow}\;\>
\begin{gathered} \xymatrix@R=6pt@C=6pt{ & \!\!U_a\!\t_U\!U_b
\ar[dl]_(0.7){\pi_{U_a}} \ar[ddr]^(0.4){\pi_{U_b}} \ar@{=}[rr] &&
U_a\!\t_U\!U_b\!\! \ar[dl]^{\pi_{U_a}}
\ar[ddr]^(0.6){\pi_{U_b}} \\
U_a \ar[ddr]_{\vp_a} \ar@{=}[rr] && U_a \ar[ddr]^(0.35){\vp_a} \\ &&
U_b \ar@{=}[rr] \ar[dl]^(0.3){\vp_b} && U_b \ar[dl]^(0.4){\vp_b} \\
& U \ar@{=}[rr] && U,}\!\!\!{}
\end{gathered}
\label{agAeq8}
\e
then there exists a unique $f:X\ra Y$ in $\cX$ with $p_\cX(f)=\id_U$
making \eq{agAeq8} commute for all~$a\in A$.

Let $p_\cX:\cX\ra\cC$ be a prestack. We call $\cX$ a {\it
stack\/}\I{stack!definition} if whenever $\{\vp_a:U_a\ra U\}_{a\in
A}$ is a covering family in $\cJ$ and we are given commutative
diagrams in $\cX,\cC$ for all $a,b,c\in A$, with $X_{ab}=X_{ba}$,
$X_{abc}=X_{bac}=X_{acb}$, etc., in which the object $X$ and
morphisms $x_a$ are be determined:
\begin{small}
\begin{equation}
\begin{gathered}
\xymatrix@R=6pt@C=3.5pt{ & X_{abc} \ar[dl] \ar@<-1ex>[ddr] \ar[rr]
&& X_{ac} \ar[dl]^(0.4){x_{ac}} \ar[ddr]^(0.4){x_{ca}} \\
X_{ab} \ar[ddr]_{x_{ba}} \ar[rr]_{x_{ab}} && X_a
\ar@{..>}@<1ex>[ddr]^(0.45){x_a} \\ && X_{bc} \ar[rr]^(0.7){x_{cb}}
\ar[dl]_(0.4){x_{bc}} && X_c \ar@{..>}[dl]^(0.4){x_c} \\ & X_b
\ar@{..>}[rr]^(0.7){x_b} && X}
\end{gathered}
\;{\buildrel p_\cX \over \rightsquigarrow}\>\;\> \begin{gathered}
\xymatrix@!0@R=18pt@C=35pt{ & \!\!\!\!\!U_a\!\t_U\!U_b\!\t_U\!U_c \ar[dl]
\ar[ddr] \ar[rr] && U_a\!\t_U\!U_c\!\!\! \ar[dl]
\ar[ddr] \\
\!\!\!U_a\!\t_U\!U_b\!\! \ar[ddr] \ar[rr] && U_a
\ar@<1ex>[ddr]^{\vp_a} \\ && \!\!\!U_b\!\t_U\!U_c \ar[rr] \ar[dl]
&& U_c \ar[dl]^{\vp_c} \\ & U_b \ar[rr]^{\vp_b} && U,}
\end{gathered}
\label{agAeq9}
\end{equation}\end{small}then there exists $X\in\cX$ and morphisms
$x_a:X_a\ra X$ with $p_\cX(x_a)=\vp_a$ for all $a\in A$, making
\eq{agAeq9} commute for all $a,b,c\in A$.
\label{agAdef6}
\end{dfn}

Thus, in a prestack we have a sheaf-like condition allowing us to
glue morphisms in $\cX$ uniquely over covers in $\cC$; in a
stack we also have a sheaf-like condition allowing us to glue
objects in $\cX$ over covers in~$\cC$.

\begin{dfn} Let $(\cC,\cJ)$ be a site. A 1-{\it
morphism\/}\I{stack!1-morphism}\I{prestack!1-morphism} between
(pre)stacks $\cX,\cY$ on $(\cC,\cJ)$ is a functor
$F:\cX\ra\cY$ with $p_\cY\ci F=p_\cX:\cX\ra\cC$. If $F,G:\cX\ra\cY$
are 1-morphisms, a 2-{\it
morphism\/}\I{stack!2-morphism}\I{prestack!2-morphism} $\eta:F\Ra G$
is an isomorphism of functors with $\id_{p_\cY}*\eta=
\id_{p_\cX}:p_\cY\ci F\Ra p_\cY\ci G$. That is, for all $X\in\cX$ we
are given an isomorphism $\eta(X):F(X)\ra G(X)$ in $\cY$ with
$p_\cY(\eta(X))=\id_{p_\cX(X)}$, such that if $f:X_1\ra X_2$ is a
morphism in $\cX$ then $\eta(X_2)\ci F(f)=G(f)\ci\eta(X_1):F(X_1)\ra
G(X_2)$ in $\cY$. With these definitions, the stacks and prestacks
on $(\cC,\cJ)$ form ({\it strict\/}) 2-{\it
categories},\I{2-category} which we write as
$\Sta_{(\cC,\cJ)}$\G[StaCJ]{$\Sta_{(\cC,\cJ)}$}{2-category of stacks
on a site $(\cC,\cJ)$} and
$\Presta_{(\cC,\cJ)}$.\G[PrestaCJ]{$\Presta_{(\cC,\cJ)}$}{2-category
of prestacks on a site $(\cC,\cJ)$} All 2-morphisms in
$\Sta_{(\cC,\cJ)},\Presta_{(\cC,\cJ)}$ are invertible, that is, are
2-isomorphisms, so $\Sta_{(\cC,\cJ)},\Presta_{(\cC,\cJ)}$ are (2,1)-categories.

A {\it substack\/}\I{stack!substack} $\cY$ of a stack $\cX$ is a
strictly full subcategory $\cY$ in $\cX$ such that
$p_\cY:=p_\cX\vert_\cY:\cY\ra\cC$ is a stack. The inclusion functor
$i_\cY:\cY\hookra\cX$ is then a 1-morphism of stacks.
\label{agAdef7}
\end{dfn}

\begin{dfn} Let $(\cC,\cJ)$ be a site, and $\cX$ a
prestack on $(\cC,\cJ)$, so that $\Sta_{(\cC,\cJ)}$ and
$\Presta_{(\cC,\cJ)}$ are 2-categories. A {\it stack associated
to\/} $\cX$, or {\it stackification of\/}\I{prestack!stackification}
$\cX$, is a stack $\hcX$ with a 1-morphism of prestacks
$i:\cX\ra\hcX$, such that for every stack $\cY$, composition with
$i$ yields an equivalence of categories $\Hom(\hcX,\cY)\,{\buildrel
i^*\over\longra}\,\Hom(\cX,\cY)$.
\label{agAdef8}
\end{dfn}

As in \cite[Lem.~3.2]{LaMo}, every prestack has an associated stack,
just as every presheaf has an associated sheaf.

\begin{prop} For every prestack\/ $\cX$ on $(\cC,\cJ)$
there exists an associated stack\/ $i:\cX\ra\hcX,$ which is unique
up to equivalence in $\Sta_{(\cC,\cJ)}$.
\label{agAprop1}
\end{prop}

There is a natural construction of fibre products in the
2-category~$\Sta_{(\cC,\cJ)}$:

\begin{dfn} Let $(\cC,\cJ)$ be a site, $\cX,\cY,\cZ$ be stacks on
$(\cC,\cJ)$, and $F:\cX\ra\cZ$, $G:\cY\ra\cZ$ be 1-morphisms. Define
a category $\cW$ to have objects $(X,Y,\al)$, where $X\in\cX$,
$Y\in\cY$ and $\al:F(X)\ra G(Y)$ is an isomorphism in $\cZ$ with
$p_\cX(X)=p_\cY(Y)=U$ and $p_\cX(\al)=\id_U$ in $\cC$, and for
objects $(X_1,Y_1,\al_1),(X_2,Y_2,\al_2)$ in $\cW$ a morphism
$(f,g):(X_1,Y_1,\al_1)\ra(X_2,Y_2,\al_2)$ in $\cW$ is a pair of
morphisms $f:X_1\ra X_2$ in $\cX$ and $g:Y_1\ra Y_2$ in $\cY$ with
$p_\cX(f)=p_\cY(g)=\vp:U\ra V$ in $\cC$ and $\al_2\ci
F(f)=G(g)\ci\al_1:F(X_1)\ra G(Y_2)$ in $\cZ$. Then $\cW$ is a stack
over~$(\cC,\cJ)$.

Define 1-morphisms $p_\cW:\cW\ra\cC$ by $p_\cW:(X,Y,\al)\mapsto
p_\cX(X)$ and $p_\cW:(f,g)\mapsto p_\cX(f)$, and $\pi_\cX:\cW\ra\cX$
by $\pi_\cX:(X,Y,\al)\mapsto X$ and $\pi_\cX:(f,g)\mapsto f$, and
$\pi_\cY:\cW\ra\cY$ by $\pi_\cY:(X,Y,\al)\mapsto Y$ and
$\pi_\cY:(f,g)\mapsto g$. Define a 2-morphism $\eta:F\ci\pi_\cX\Ra
G\ci\pi_\cY$ by $\eta(X,Y,\al)=\al$. Then $\cW,\pi_\cX,\pi_\cY,\eta$
is a fibre product\I{stack!fibre product}\I{2-category!fibre
products in} $\cX\t_\cZ\cY$ in $\Sta_{(\cC,\cJ)}$, in the sense of
Definition~\ref{agAdef3}.

The functor $\id_\cC:\cC\ra\cC$ is a {\it terminal object\/} in
$\Sta_{(\cC,\cJ)}$, and may be thought of as a point $*$. {\it
Products\/} $\cX\t\cY$ in $\Sta_{(\cC,\cJ)}$ are fibre products over
$*$. If $\cX$ is a stack, the {\it diagonal\/ $1$-morphism\/} is the
natural 1-morphism $\De_\cX:\cX\ra\cX\t\cX$. The {\it inertia
stack\/}\I{inertia stack} $I_\cX$ of $\cX$ is the fibre product
$\cX\t_{\De_\cX,\cX\t\cX,\De_\cX}\cX$, with natural {\it inertia\/
$1$-morphism\/} $\io_\cX:I_\cX\ra\cX$ from projection to the first
factor of $\cX$. Then we have a 2-Cartesian
diagram\I{2-category!2-Cartesian square} in~$\Sta_{(\cC,\cJ)}$:
\begin{equation*}
\xymatrix@C=90pt@R=14pt{ *+[r]{I_\cX} \ar[r] \ar[d]^{\io_\cX}
\drtwocell_{}\omit^{}\omit{^{}} & *+[l]{\cX} \ar[d]_{\De_\cX} \\
*+[r]{\cX} \ar[r]^(0.25){\De_\cX} & *+[l]{\cX\t\cX.\!} }
\end{equation*}
There is also a natural 1-morphism $\jmath_\cX:\cX\ra I_\cX$ induced
by the 1-morphism $\id_\cX$ from $\cX$ to the two factors $\cX$ in
$I_\cX=\cX\t_{\cX\t\cX}\cX$ and the identity 2-morphism on
$\De_\cX\ci\id_\cX:\cX\ra\cX\t\cX$.
\label{agAdef9}
\end{dfn}

\subsection{Descent theory on a site}
\label{agA3}
\I{descent theory|(}

The theory of descent in algebraic geometry, due to Grothendieck,
says that objects and morphisms over a scheme $U$ can be described
locally on an open cover $\{U_i:i\in I\}$ of $U$. It is described by
Behrend et al.\ \cite[App.~A]{BEFF} and Olsson \cite[\S 4]{Olss2}, and at length by Vistoli
\cite{Vist}. We shall express descent as conditions on a general
site~$(\cC,\cJ)$.

\begin{dfn} Let $(\cC,\cJ)$ be a site. We say that
$(\cC,\cJ)$ {\it has descent for objects\/}\I{site!has descent for
objects} if whenever $\{\vp_a:U_a\ra U\}_{a\in A}$ is a covering in
$\cJ$ and we are given morphisms $f_a:X_a\ra U_a$ in $\cC$ for all
$a\in A$ and isomorphisms $g_{ab}:X_a\t_{\vp_a\ci f_a,U,\vp_b}U_b\ra
X_b\t_{\vp_b\ci f_b,U,\vp_a}U_a$ in $\cC$ for all $a,b\in A$ with
$g_{ab}=g_{\smash{ba}}^{\smash{-1}}$ such that for all $a,b,c\in A$
the following diagram commutes:
\begin{gather*}
\xymatrix@R=30pt@C=-60pt{ {\begin{subarray}{l}\ts(X_a\t_{\vp_a\ci
f_a,U,\vp_b}U_b)\t_{\pi_U,U,\vp_c}U_c\cong \\ \ts(X_a\t_{\vp_a\ci
f_a,U,\vp_c}U_c)\t_{\pi_U,U,\vp_b}U_b\end{subarray}}
\ar@<.5ex>[rr]^{g_{ab}\t\id_{U_c}}
\ar@<.5ex>[dr]^(0.65){g_{ac}\t\id_{U_b}} &&
{\begin{subarray}{l}\ts(X_b\t_{\vp_b\ci
f_b,U,\vp_c}U_c)\t_{\pi_U,U,\vp_a}U_a\cong \\ \ts(X_b\t_{\vp_b\ci
f_b,U,\vp_a}U_a)\t_{\pi_U,U,\vp_c}U_c\end{subarray}}
\ar@<.5ex>[dl]^(0.35){{}\,\,g_{bc}\t\id_{U_a}}
\ar@<.5ex>[ll]^{g_{ba}\t\id_{U_c}} \\
& {\begin{subarray}{l}\ts(X_c\t_{\vp_c\ci
f_c,U,\vp_a}U_a)\t_{\pi_U,U,\vp_b}U_b\cong \\ \ts(X_c\t_{\vp_c\ci
f_c,U,\vp_b}U_b)\t_{\pi_U,U,\vp_a}U_a,\end{subarray}}
\ar@<.5ex>[ul]^(0.65){g_{ca}\t\id_{U_b}}
\ar@<.5ex>[ur]^(0.38){g_{cb}\t\id_{U_a}\,\,{}} }
\end{gather*}
then there exist a morphism $f:X\ra U$ in $\cC$ and isomorphisms
$g_a:X_a\ra X\t_{f,U,\vp_a}U_a$ for all $a\in A$ such that
$f_a=\pi_{U_a}\ci g_a$ and the diagram below commutes for
all~$a,b\in A$:
\begin{equation*}
\xymatrix@C=150pt@R=14pt{*+[r]{X_a\t_{\vp_a\ci f_a,U,\vp_b}U_b}
\ar[r]_(0.4){g_a\t\id_{U_b}} \ar[dd]^{g_{ab}} &
*+[l]{(X\t_{f,U,\vp_a}U_a)\t_{\vp_a\ci \pi_{U_a},U,\vp_b}U_b}
\ar[d]_{\cong} \\
& *+[l]{X\t_{f,U,\pi_U}(U_a\t_{\vp_a,U,\vp_b}U_b)} \ar[d]_{\cong} \\
*+[r]{X_b\t_{\vp_b\ci f_b,U,\vp_a}U_a} &
*+[l]{(X\t_{f,U,\vp_b}U_b)\t_{\vp_b\ci\pi_{U_b},U,\vp_a}U_a.\!}
\ar[l]_(0.6){g_b^{-1}\t\id_{U_a}}}
\end{equation*}
Furthermore $X,f$ should be unique up to canonical isomorphism. Note
that all the fibre products used above exist in $\cC$ by
Definition~\ref{agAdef4}(iii).
\label{agAdef10}
\end{dfn}

\begin{dfn} Let $(\cC,\cJ)$ be a site. We say that
$(\cC,\cJ)$ {\it has descent for morphisms\/}\I{site!has descent for
morphisms} if whenever $\{\vp_a:U_a\ra U\}_{a\in A}$ is a covering
in $\cJ$ and $f:X\ra U$, $g:Y\ra U$ and $h_a:X\t_{f,U,\vp_a}U_a\ra
Y\t_{g,U,\vp_a}U_a$ for all $a\in A$ are morphisms in $\cC$ with
$\pi_{U_a}\ci h_a=\pi_{U_a}$ and for all $a,b\in A$ the following
diagram commutes:
\begin{equation*}
\xymatrix@C=170pt@R=14pt{
*+[r]{(X\t_{f,U,\vp_a}U_a)\t_{\vp_a\ci \pi_{U_a},U,\vp_b}U_b}
\ar[d]^{\cong} \ar[r]_{h_a\t\id_{U_b}} &
*+[l]{(Y\t_{g,U,\vp_a}U_a)\t_{\vp_a\ci \pi_{U_a},U,\vp_b}U_b}
\ar[d]_{\cong} \\
*+[r]{X\t_{f,U,\pi_U}(U_a\t_{\vp_a,U,\vp_b}U_b)}
\ar[d]^{\cong} & *+[l]{Y\t_{g,U,\pi_U}(U_a\t_{\vp_a,U,\vp_b}U_b)}
\ar[d]_{\cong} \\
*+[r]{(X\t_{f,U,\vp_b}U_b)\t_{\vp_b\ci\pi_{U_b},U,\vp_a}U_a}
\ar[r]^{h_b\t\id_{U_a}} & *+[l]{(Y\t_{g,U,\vp_b}U_b)
\t_{\vp_b\ci\pi_{U_b},U,\vp_a}U_a,\!} }
\end{equation*}
then there exists a unique $h:X\ra Y$ in $\cC$ with
$h_a=h\t\id_{U_a}$ for all~$a\in A$.
\label{agAdef11}
\end{dfn}

Then \cite[Prop.s A.12, A.13 \& \S A.6]{BEFF} show that descent
holds for objects and morphisms for affine schemes with the fppf
topology, but for arbitrary schemes with the fppf topology, descent
holds for morphisms and fails for objects.\I{descent theory|)}

\subsection{Properties of 1-morphisms}
\label{agA4}

Objects $V$ in $\cC$ yield stacks $\bar V$ on~$(\cC,\cJ)$.

\begin{dfn} Let $(\cC,\cJ)$ be a site, and $V$ an object
of $\cC$. Define a category $\bar V$ to have objects $(U,\th)$ where
$U\in\cC$ and $\th:U\ra V$ is a morphism in $\cC$, and to have
morphisms $\psi:(U_1,\th_1)\ra (U_2,\th_2)$ where $\psi:U_1\ra U_2$
is a morphism in $\cC$ with $\th_2\ci\psi=\th_1:U_1\ra V$. Define a
functor $p_{\bar V}:\bar V\ra\cC$ by $p_{\bar V}:(U,\th)\mapsto U$
and $p_{\bar V}:\psi\mapsto\psi$. Note that $p_{\bar V}$ is {\it
injective on morphisms}. It is then automatic that $p_{\bar V}:\bar
V\ra\cC$ is a category fibred in groupoids, since in \eq{agAeq7} we
can take $g=g'$. It is also automatic that $p_{\bar V}:\bar V\ra\cC$
is a prestack, since in \eq{agAeq8} we must have
$X_a=Y_a=(U_a,\th_a)$, $x_a=y_a=\vp_a$, $X=Y=(U,\th)$, etc., and the
unique solution for $f$ is~$f=\id_U$.

The site $(\cC,\cJ)$ is called {\it
subcanonical\/}\I{site!subcanonical|(} if $\bar V$ is a stack for
all objects $V\in\cC$. If descent for morphisms holds for
$(\cC,\cJ)$ then $(\cC,\cJ)$ is subcanonical. Most sites used in practice are subcanonical. Suppose $(\cC,\cJ)$ is a subcanonical site. If
$f:V\ra W$ is a morphism in $\cC$, define a 1-morphism $\bar f:\bar
V\ra\bar W$ in $\Sta_{(\cC,\cJ)}$ by $\bar f:(U,\th)\mapsto
(U,f\ci\th)$ and $\bar f:\psi\mapsto\psi$. Then the (2-)functor
$V\mapsto\bar V$, $f\mapsto\bar f$ embeds $\cC$ as a full discrete
2-subcategory of~$\Sta_{(\cC,\cJ)}$.
\label{agAdef12}
\end{dfn}

\begin{dfn} Let $(\cC,\cJ)$ be a subcanonical site. A
stack $\cX$ over $(\cC,\cJ)$ is called {\it
representable\/}\I{stack!representable} if it is equivalent in
$\Sta_{(\cC,\cJ)}$ to a stack of the form $\bar V$ for some
$V\in\cC$. A 1-morphism $F:\cX\ra\cY$ in $\Sta_{(\cC,\cJ)}$ is
called {\it representable\/}\I{stack!1-morphism!representable} if
for all $V\in\cC$ and all 1-morphisms $G:\bar V\ra\cY$, the fibre
product $\cX\t_{F,\cY,G}\bar V$ in $\Sta_{(\cC,\cJ)}$ is a
representable stack.
\label{agAdef13}
\end{dfn}

\begin{rem} For stacks in algebraic geometry, one often takes a
different definition of representable objects and 1-morphisms:
$(\cC,\cJ)$ is a category of schemes with the \'etale topology, but
stacks are called representable if they are equivalent to an {\it
algebraic space\/}\I{algebraic space} rather than a scheme. This is
because schemes are not general enough for some purposes, e.g.\ the
quotient of a scheme by an \'etale equivalence relation may be an
algebraic space but not a scheme.

In our situation, we will have no need to enlarge $C^\iy$-schemes to
some category of `$C^\iy$-algebraic spaces', as $C^\iy$-schemes are
already general enough, e.g.\ the quotient of a locally fair
$C^\iy$-scheme\I{C-scheme@$C^\iy$-scheme!locally fair} by an \'etale
equivalence relation is a locally fair $C^\iy$-scheme. This is
because the natural topology on $C^\iy$-schemes is much finer than
the Zariski\I{Zariski topology} or \'etale topology\I{etale
topology@\'etale topology} on schemes, for instance, affine
$C^\iy$-schemes are always Hausdorff.
\label{agArem1}
\end{rem}

\begin{dfn} Let $(\cC,\cJ)$ be a subcanonical site. Let
$\bs P$ be a property of morphisms in $\cC$. (For instance, if $\cC$
is the category $\Top$ of topological spaces, then $\bs P$ could be
`proper', `open', `surjective', `covering map', \ldots). We say that
$\bs P$ is {\it invariant under base change\/} if for all Cartesian
squares\I{Cartesian square} in $\cC$
\begin{equation*}
\xymatrix@C=90pt@R=15pt{ *+[r]{W}\ar[r]_f \ar[d]^e & *+[l]{Y} \ar[d]_h \\
*+[r]{X} \ar[r]^g & *+[l]{Z,\!} }
\end{equation*}
if $g$ is $\bs P$, then $f$ is $\bs P$. We say that $\bs P$ is {\it
local on the target\/} if whenever $f:U\ra V$ is a morphism in $\cC$
and $\{\vp_a:V_a\ra V\}_{a\in A}$ is a covering in $\cJ$ such that
$\pi_{V_a}:U\t_{f,V,\vp_a}V_a\ra V_a$ is $\bs P$ for all $a\in A$,
then $f$ is~$\bs P$.

Let $\bs P$ be invariant under base change and local in the target,
and let $F:\cX\ra\cY$ be a representable
1-morphism\I{stack!1-morphism!representable} in $\Sta_{(\cC,\cJ)}$.
If $W\in\cC$ and $G:\bar W\ra\cY$ is a 1-morphism then
$\cX\t_{F,\cY,G}\bar W$ is equivalent to $\bar V$ for some
$V\in\cC$, and under this equivalence the 1-morphism $\pi_{\bar
W}:\cX\t_{F,\cY,G}\bar W\ra\bar W$ is 2-isomorphic to $\bar f:\bar
V\ra\bar W$ for some unique morphism $f:V\ra W$ in $\cC$. We say
that $F$ {\it has property\/} $\bs P$ if for all $W\in\cC$ and
1-morphisms $G:\bar W\ra\cY$, the morphism $f:V\ra W$ in $\cC$
corresponding to $\pi_{\bar W}:\cX\t_{F,\cY,G}\bar W\ra\bar W$ has
property~$\bs P$.
\label{agAdef14}
\end{dfn}

We define {\it surjective\/} 1-morphisms without requiring them
representable.

\begin{dfn} Let $(\cC,\cJ)$ be a site, and $F:\cX\ra\cY$ be a
1-morphism in $\Sta_{(\cC,\cJ)}$. We call $F$ {\it
surjective\/}\I{stack!1-morphism!surjective} if whenever $Y\in\cY$
with $p_\cY(Y)=U\in\cC$, there exists a covering $\{\vp_a:U_a\ra
U\}_{a\in A}$ in $\cJ$ such that for all $a\in A$ there exists
$X_a\in\cX$ with $p_\cX(X_a)=U_a$ and a morphism $g_a:F(X_a)\ra Y$
in $\cY$ with~$p_\cY(g_a)=\vp_a$.
\label{agAdef15}
\end{dfn}

Following \cite[Prop.~3.8.1, Lem.~4.3.3 \& Rem.~4.14.1]{LaMo},
\cite[\S 6]{Nooh}, we may prove:

\begin{prop} Let\/ $(\cC,\cJ)$ be a subcanonical site, and
\begin{equation*}
\xymatrix@C=90pt@R=15pt{ *+[r]{\cW} \ar[r]_(0.25){f} \ar[d]^{e}
\drtwocell_{}\omit^{}\omit{^\eta} & *+[l]{\cY} \ar[d]_h \\ 
*+[r]{\cX} \ar[r]^(0.7)g & *+[l]{\cZ} }
\end{equation*}
be a $2$-Cartesian square in $\Sta_{(\cC,\cJ)}$. Let\/ $\bs P$ be a
property of morphisms in $\cC$ which is invariant under base change
and local in the target. Then:
\begin{itemize}
\setlength{\itemsep}{0pt}
\setlength{\parsep}{0pt}
\item[{\bf(a)}] If\/ $h$ is
representable,\I{stack!1-morphism!representable} then\/ $e$ is
representable. If also $h$ is\/ $\bs P,$ then\/ $e$ is\/~$\bs
P$.
\item[{\bf(b)}] If\/ $g$ is surjective, then\/ $f$ is surjective.
\end{itemize}
Now suppose also that\/ $(\cC,\cJ)$ has descent for objects and
morphisms, and that\/ $g$ (and hence\/ $f$) is surjective. Then:
\begin{itemize}
\setlength{\itemsep}{0pt}
\setlength{\parsep}{0pt}
\item[{\bf(c)}] If\/ $e$ is surjective then\/ $h$ is
surjective, and if\/ $e$ is representable, then\/ $h$ is
representable, and if also $e$ is\/ $\bs P,$ then\/ $h$
is\/~$\bs P$.\I{site!subcanonical|)}
\end{itemize}
\label{agAprop2}
\end{prop}

\subsection{Geometric stacks, and stacks associated to groupoids}
\label{agA5}

The 2-category $\Sta_{(\cC,\cJ)}$ of all stacks over a site
$(\cC,\cJ)$ is usually too general to do geometry with. To obtain a
smaller 2-category whose objects have better properties, we impose
extra conditions on a stack~$\cX$:

\begin{dfn} Let $(\cC,\cJ)$ be a site. We call a stack
$\cX$ on $(\cC,\cJ)$ {\it geometric\/}\I{stack!geometric|(} if the
diagonal 1-morphism $\De_\cX:\cX\ra\cX\t\cX$ is
representable,\I{stack!1-morphism!representable} and there exists
$U\in\cC$ and a surjective 1-morphism $\Pi:\bar U\ra\cX$, which we
call an {\it atlas}\I{atlas}\I{stack!atlas} for $\cX$. Write
$\GSta_{(\cC,\cJ)}$\G[GStaCJ]{$\GSta_{(\cC,\cJ)}$}{2-category of
geometric stacks on a site $(\cC,\cJ)$} for the full 2-subcategory
of geometric stacks in $\Sta_{(\cC,\cJ)}$. Here $\De_\cX$
representable implies $\Pi$ is representable.
\label{agAdef16}
\end{dfn}

To obtain nice classes of stacks, one usually requires further
properties $\bs P$ of $\De_\cX$ and $\Pi$. For example, in algebraic
geometry with $(\cC,\cJ)$ schemes with the \'etale topology, we
assume $\De_\cX$ is quasicompact and separated, and $\Pi$ is \'etale
for Deligne--Mumford stacks $\cX$, and $\Pi$ is smooth for Artin
stacks~$\cX$.

The following material is based on Metzler \cite[\S 3.1 \& \S
3.3]{Metz}, Laumon and Moret-Bailly \cite[\S\S 2.4.3, 3.4.3, 3.8,
4.3]{LaMo}, and Lerman~\cite[\S 4.4]{Lerm}.

We can characterize geometric stacks $\cX$ up to equivalence solely
in terms of objects and morphisms in $\cC$, using the idea of {\it
groupoid objects\/} in~$\cC$.

\begin{dfn} A {\it groupoid object\/}\I{groupoid
object|(}\I{category!groupoid object in|(} $(U,V,s,t,u,i,m)$ in a
category $\cC$, or simply {\it groupoid\/} in $\cC$, consists of
objects $U,V$ in $\cC$ and morphisms $s,t:V\ra U$, $u:U\ra V$,
$i:V\ra V$ and $m:V\t_{s,U,t}V\ra V$ satisfying the identities
\begin{gather}
s\ci u=t\ci u=\id_{U},\;\> s\ci i=t,\;\> t\ci
i=s,\;\> s\ci m=s\ci\pi_2,\;\> t\ci m=t\ci\pi_1,
\nonumber\\
\begin{gathered}
m\ci (i\t\id_{V})=u\ci s,\;\> m\ci (\id_{V}\t i)=u\ci t,\\
m\ci (m\t\id_{V})= m\ci(\id_{V}\t m):
V\t_{U}V\t_{U}V\longra V,
\end{gathered}
\label{agAeq10}\\
m\ci(\id_{V}\t u)=m\ci(u\t\id_{V}):V=V\t_{U}U\longra V,
\nonumber
\end{gather}
where we suppose all the fibre products exist.

Groupoids in $\cC$ are so called because a groupoid in $\Sets$ is a
groupoid in the usual sense, that is, a category with invertible
morphisms, where $U$ is the set of {\it objects}, $V$ the set of
{\it morphisms}, $s:V\ra U$ the {\it source\/} of a morphism,
$t:V\ra U$ the {\it target\/} of a morphism, $u:U\ra V$ the {\it
unit\/} taking $X\mapsto\id_X$, $i$ the {\it inverse\/} taking
$f\mapsto f^{-1}$, and $m$ the {\it multiplication\/} taking
$(f,g)\mapsto f\ci g$ when $s(f)=t(g)$. Then \eq{agAeq10} reduces to
the usual axioms for a groupoid.
\label{agAdef17}
\end{dfn}

From a geometric stack with an atlas,\I{atlas} we can construct a
groupoid in~$\cC$.

\begin{dfn} Let $(\cC,\cJ)$ be a subcanonical site, and
suppose $\cX$ is a geometric stack on $(\cC,\cJ)$ with atlas
$\Pi:\bar U\ra\cX$. Then $\bar U\t_{\Pi,\cX,\Pi}\bar U$ is
equivalent to $\bar V$ for some $V\in\cC$ as $\Pi$ is
representable.\I{stack!1-morphism!representable} Hence we can take
$\bar V$ to be the fibre product, and we have a 2-Cartesian
square\I{2-category!2-Cartesian square}
\e
\begin{gathered}
\xymatrix@C=90pt@R=15pt{ *+[r]{\bar V} \ar[r]_(0.25){\bar t} \ar[d]^{\bar
s} \drtwocell_{}\omit^{}\omit{^\eta} & *+[l]{\bar U}
\ar[d]_\Pi \\
*+[r]{\bar U} \ar[r]^(0.7)\Pi & *+[l]{\cX} }
\end{gathered}
\label{agAeq11}
\e
in $\Sta_{(\cC,\cJ)}$. Here as $(\cC,\cJ)$ is subcanonical, any
1-morphism $\bar V\ra\bar U$ in $\Sta_{(\cC,\cJ)}$ is 2-isomorphic
to $\bar f$ for some unique morphism $f:V\ra U$ in $\cC$. Thus we
may write the projections in \eq{agAeq11} as $\bar s,\bar t$ for some
unique $s,t:V\ra U$ in~$\cC$.

By the universal property of fibre products there exists a
1-morphism $H:\bar U\ra\bar V$, unique up to 2-isomorphism, with
$\bar s\ci H\cong\id_{\bar U}\cong \bar t\ci H$. This $H$ is
2-isomorphic to $\bar u:\bar U\ra\bar V$ for some unique morphism
$u:U\ra V$ in $\cC$, and then $s\ci u=t\ci u=\id_U$. Similarly,
exchanging the two factors of $U$ in the fibre product we obtain a
unique morphism $i:V\ra V$ in $\cC$ with $s\ci i=t$ and $t\ci i=s$.
In $\Sta_{(\cC,\cJ)}$ we have equivalences
\begin{equation*}
\overline{V\t_{s,U,t}V}\simeq \bar V\t_{\bar s,\bar U,\bar t}\bar V
\simeq (\bar U\t_\cX\bar U)\t_{\bar U}(\bar U\t_\cX\bar U)
\simeq \bar U\t_\cX\bar U\t_\cX\bar U.
\end{equation*}
Let $m:V\t_{s,U,t}V\ra V$ be the unique morphism in $\cC$ such that
$\bar m$ is 2-isomorphic to the projection
$\overline{V\t_{s,U,t}V}\ra\bar V=\bar U\t_\cX\bar U$ corresponding
to projection to the first and third factors of $\bar U$ in the
final fibre product. It is now not difficult to verify that
$(U,V,s,t,u,i,m)$ is a groupoid in~$\cC$.
\label{agAdef18}
\end{dfn}

Conversely, given a groupoid in $\cC$ we can construct a
stack~$\cX$.

\begin{dfn} Let $(\cC,\cJ)$ be a site with descent for
morphisms, and $(U,\ab V,\ab s,\ab t,\ab u,\ab i,m)$ be a groupoid
in $\cC$. Define a prestack $\cX'$ on $(\cC,\cJ)$ as follows: let
$\cX'$ be the category whose objects are pairs $(T,f)$ where $f:T\ra
U$ is a morphism in $\cC$, and morphisms are $(p,q):(T_1,f_1)\ra
(T_2,f_2)$ where $p:T_1\ra T_2$ and $q:T_1\ra V$ are morphisms in
$\cC$ with $f_1=s\ci q$ and $f_2\ci p=t\ci q$. Given morphisms
$(p_1,q_1):(T_1,f_1)\ra (T_2,f_2)$ and $(p_2,q_2):(T_2,f_2)\ra
(T_3,f_3)$ the composition is $(p_2,q_2)\ci (p_1,q_1)=\bigl(p_2\ci
p_1,m\ci (q_1\t (q_2\ci p_2))\bigr)$, where $q_1\t(q_2\ci
p_2):T_1\ra V\t_{t,U,s}V$ is induced by the morphisms $q_1:T_1\ra V$
and $q_2\ci p_2:T_1\ra V$, which satisfy~$t\ci q_1=f_2\ci
p_1=s\ci(q_2\ci p_2)$.

Define a functor $p_{\cX'}:\cX'\ra\cC$ by $p_{\cX'}:(T,f)\mapsto T$
and $p_{\cX'}:(p,q)\mapsto p$. Using the groupoid axioms \eq{agAeq10}
we can show that $p_{\cX'}:\cX'\ra\cC$ is a category fibred in
groupoids. Since $(\cC,\cJ)$ has descent for morphisms, we can also
show $\cX'$ is a prestack. But in general it is not a stack. Let
$\cX$ be the associated stack from Proposition \ref{agAprop1}. We
call $\cX$ the {\it stack associated to the
groupoid\/}\I{stack!associated to a groupoid|(} $(U,V,s,t,u,i,m)$.
It fits into a natural 2-commutative diagram~\eq{agAeq11}.
\label{agAdef19}
\end{dfn}

Groupoids in $\cC$ are often written $V\rra U$, to emphasize
$s,t:V\ra U$, leaving $u,i,m$ implicit. The associated stack is then
written as~$[V\rra U]$.

Our next theorem is proved by Metzler \cite[Prop.~70]{Metz} when
$(\cC,\cJ)$ is the site of topological spaces with open covers, but
examining the proof shows that all he uses about $(\cC,\cJ)$ is that
fibre products exist in $\cC$ and $(\cC,\cJ)$ has descent for
objects and morphisms. See also Lerman \cite[Prop.~4.31]{Lerm}. If
fibre products may not exist in $\cC$ then one must also require the
morphisms $s,t$ in $(U,V,s,t,u,i,m)$ to be {\it
representable\/}\I{stack!1-morphism!representable} in $\cC$, that
is, for all $f:T\ra U$ in $\cC$ the fibre products $T_{f,U,s}V$ and
$T_{f,U,t}V$ exist in~$\cC$.

\begin{thm} Let\/ $(\cC,\cJ)$ be a site, and suppose that
all fibre products exist in $\cC,$ and that descent for objects and
morphisms holds in\/ $(\cC,\cJ)$. Then the constructions of
Definitions\/ {\rm\ref{agAdef18}, \ref{agAdef19}} are
inverse. That is, if\/ $(U,V,s,t,u,i,m)$ is a groupoid in\/ $\cC$
and\/ $\cX$ is the associated stack, then $\cX$ is a geometric
stack, and the $2$-commutative diagram \eq{agAeq11} is $2$-Cartesian,
and\/ $\Pi$ in \eq{agAeq11} is surjective and so an atlas\I{atlas}
for $\cX,$ and\/ $(U,V,s,t,u,i,m)$ is canonically isomorphic to the
groupoid constructed in Definition\/ {\rm\ref{agAdef18}} from the
atlas\/ $\Pi:\bar U\ra\cX$. Conversely, if\/ $\cX$ is a geometric
stack with atlas\/ $\Pi:\bar U\ra\cX,$ and\/ $(U,V,s,t,u,i,m)$ is
the groupoid in\/ $\cC$ constructed from $\Pi$ in Definition\/
{\rm\ref{agAdef18},} and\/ $\tcX$ is the stack associated to
$(U,V,s,t,u,i,m)$ in Definition\/ {\rm\ref{agAdef19},} then\/ $\cX$
is equivalent to\/ $\tcX$ in\/ $\Sta_{(\cC,\cJ)}$. Thus every
geometric stack is associated to a groupoid.
\label{agAthm}
\end{thm}

In the situation of Theorem \ref{agAthm} we have 2-Cartesian
diagrams\I{2-category!2-Cartesian square}
\e
\begin{gathered}
\xymatrix@!0@C=32pt@R=10pt{ *+[r]{\bar V} \ar[rrrr]_(0.7){\bar t}
\ar[ddd]^{\bar s} &&&& *+[l]{\bar U} \ar[ddd]_\Pi & *+[r]{\bar V}
\ar[rrrr]_(0.7){\Pi\ci\bar s} \ar[ddd]^{\bar s\t\bar t} &&&&
*+[l]{\cX} \ar[ddd]_{\De_\cX}
\\
& \drrtwocell_{}\omit^{}\omit{^{}} &&&&&
\drrtwocell_{}\omit^{}\omit{^{}}
\\ &&&&&&&& \\
*+[r]{\bar U} \ar[rrrr]^(0.7)\Pi &&&& *+[l]{\cX,\!{}} &
*+[r]{\bar U\t\bar U} \ar[rrrr]^(0.65){\Pi\t\Pi} &&&& *+[l]{\cX\t\cX,\!{}}
\\ \\
*+[r]{\bar V\t_{\bar s\t\bar t,\bar U\t\bar U,\De_{\bar U}}\!\bar U}
\ar[rrrr]_(0.75){\Pi\ci\bar s\t \Pi\ci\bar t} \ar[ddd]^{\pi_{\bar
U}} &&&& *+[l]{I_\cX} \ar[ddd]_{\io_\cX} & *+[r]{\bar U}
\ar[ddd]^(0.45){\bar u\t\bar\id_U} \ar[rrrr]_(0.7){\Pi} &&&&
*+[l]{\cX} \ar[ddd]_{\jmath_\cX}
\\
& \drrtwocell_{}\omit^{}\omit{^{}}  &&&&&
\drrtwocell_{}\omit^{}\omit{^{}}
\\ &&&&&&&& \\
*+[r]{\bar U} \ar[rrrr]^(0.7){\Pi} &&&& *+[l]{\cX,\!{}} &
*+[r]{\bar V\!\t_{\bar s\t\bar
t,\bar U\t\bar U,\De_{\bar U}}\!\bar U} \ar[rrrr]^(0.75){\Pi\ci\bar
s\t \Pi\ci\bar t} &&&& *+[l]{I_\cX,\!{}} }
\end{gathered}
\label{agAeq12}
\e
with surjective rows. So from Proposition \ref{agAprop2} we deduce:

\begin{cor} In the situation of Theorem\/ {\rm\ref{agAthm},} let\/
$\bs P$ be a property of morphisms in $\cC$ which is invariant under
base change and local in the target. Then\/ $\bar\Pi:\bar U\ra\cX$
is $\bs P$ if and only if\/ $s:V\ra U$ is $\bs P,$ and\/
$\De_\cX:\cX\ra\cX\t\cX$ is\/ $\bs P$ if and only if\/ $s\times
t:V\ra U\t U$ is\/ $\bs P,$ and\/ $\io_\cX:I_\cX\ra\cX$ is $\bs P$
if and only if\/ $\pi_U:V\t_{s\times t,U\t U,\De_U}U\ra U$ is\/ $\bs
P,$ and\/ $\jmath_\cX:\cX\ra I_\cX$ is $\bs P$ if and only if\/
$u\t\id_U:U\ra V\t_{s\times t,U\t U,\De_U}U$ is\/~$\bs P$.
\label{agAcor1}
\end{cor}

We can describe atlases\I{atlas} for fibre products of geometric
stacks.

\begin{ex} Suppose $(\cC,\cJ)$ is a subcanonical site,
and all fibre products exist in $\cC$. Let
\begin{equation*}
\xymatrix@C=90pt@R=14pt{ *+[r]{\cW} \ar[r]_(0.2){f} \ar[d]^{e}
\drtwocell_{}\omit^{}\omit{^\eta} & *+[l]{\cY} \ar[d]_h \\ *+[r]{\cX} 
\ar[r]^(0.7)g & *+[l]{\cZ} }
\end{equation*}
be a 2-Cartesian diagram\I{2-category!2-Cartesian square} in
$\Sta_{(\cC,\cJ)}$, where $\cX,\cY,\cZ$ are geometric stacks. Let
$\Pi_\cX:\bar U_\cX\ra\cX$ and $\Pi_\cY:\bar U_\cY\ra\cY$ be
atlases. As $\De_\cZ$ is
representable\I{stack!1-morphism!representable} the fibre product
$\bar U_\cX\t_{g\ci\Pi_\cX,\cZ,h\ci\Pi_\cY}\bar U_\cY$ is
represented by an object $U_\cW$ of $\cC$. Then we have a
2-commutative diagram, where we omit 2-morphisms:
\e
\begin{gathered}
\xymatrix@R=7pt@C=27pt{ & \bar U_W:=\bar U_\cX\t_\cZ\bar U_\cY
\ar@/^2pc/@(ur,ur)@{..>}[rrrd]^{\Pi_\cW} \ar[rr]^{\pi_1} \ar[dl]
\ar[ddr] && \cX\t_\cZ\bar U_\cY \ar[dl] \ar[ddl] \ar[dr]^(0.6){\pi_2} \\
\bar U_\cX \ar@<1ex>[ddr] \ar[rr]^{\Pi_\cX} && \cX
\ar[ddl]_g && \cW \ar[ll]^e \ar[ddl]^f \\
&& \bar U_\cY \ar[dl] \ar[dr]^{\Pi_\cY} \\
& \cZ && \cY \ar[ll]_h }
\end{gathered}
\label{agAeq13}
\e
Here the five squares in \eq{agAeq13} are 2-Cartesian. Define $\Pi_\cW=\pi_2\ci\pi_1:\bar U_\cW\ra\cW$, where $\pi_1,\pi_2$ are as in \eq{agAeq13}. Proposition
\ref{agAprop2}(a),(b) imply that $\pi_1,\pi_2$ are representable and
surjective, since $\Pi_\cX,\Pi_\cY$ are. Hence
$\Pi_\cW=\pi_2\ci\pi_1$ is also representable and surjective, so
$\cW$ is a geometric stack, and $\Pi_\cW$ is an atlas for $\cW$. In
the same way, if $\bs P$ is a property of morphisms in $\cC$ which
is invariant under base change and local in the target and closed
under compositions, and $\Pi_\cX,\Pi_\cY$ are $\bs P$, then
$\Pi_\cW$ is~$\bs P$.

Now let $\bar V_\cW=\bar U_\cW\t_\cW\bar U_\cW$ and complete to a
groupoid $(U_\cW,V_\cW,\ab s_\cW,\ab t_\cW,\ab u_\cW,\ab i_\cW,\ab
m_\cW)$ in $\cC$ as above, with $\cW\simeq[V_\cW\rra U_\cW]$, and do
the same for $\cX,\cY$. Then by a diagram chase similar to
\eq{agAeq13} we can show that
\e
\bar V_\cW\cong \bar V_\cX\t_\cZ\bar V_\cY\quad\text{and}\quad
V_\cW\cong (U_\cW\t_{U_\cX}V_\cX)\t_{U_\cY}V_\cY.
\label{agAeq14}
\e
\label{agAex}
\end{ex}

\begin{cor} Suppose\/ $(\cC,\cJ)$ is a subcanonical site,
and all fibre products exist in\/ $\cC$. Then the\/
$2$-subcategory\/ $\GSta_{(\cC,\cJ)}$ of geometric stacks is closed
under fibre products in\/~$\Sta_{(\cC,\cJ)}$.\I{stack|)}%
\I{site|)}\I{stack!geometric|)}\I{groupoid
object|)}\I{category!groupoid object in|)}\I{stack!associated to a
groupoid|)}
\label{agAcor2}
\end{cor}

\clearpage

\clearpage
\printnomenclature[1.3cm]
\clearpage
\addcontentsline{toc}{section}{Index}
\printindex

\end{document}